 \newcommand{\omicron}{o}
\newcommand{\compact}{\mrm{C}_{c}}
\newcommand{\Qp}{\mathbb{Q}_{p}}
\newcommand{\Qv}{\mathbb{Q}_{v}}
\newcommand{\Zp}{\mathbb{Z}_{p}}
\newcommand{\QS}{\mathbb{Q}_{S}}
\newcommand{\ZS}{\mathbb{Z}[S^{-1}]}
\newcommand{\Sf}{S_{\mathrm{f}}}
\newcommand{\Zf}{\mathbb{Z}_{\mathrm{f}}}
\newcommand{\GL}{\mathrm{GL}}
\newcommand{\PGL}{\mathrm{PGL}}
\newcommand{\Mat}{\mathrm{Mat}}
\newcommand{\SL}{\mathrm{SL}}
\newcommand{\PO}{\mathrm{PO}}
\newcommand{\Stab}{\mathrm{Stab}}
\newcommand{\Ad}{\mathrm{Ad}}
\newcommand{\GammaS}{\Gamma_{S}}
\newcommand{\liepgl}{\mathfrak{pgl}}
\newcommand{\ad}{\mathrm{ad}}
\newcommand{\norm}[1]{\left\lVert {#1} \right\rVert}
\newcommand{\tr}{\mathrm{tr}}
\renewcommand{\mod}{\,\mathrm{mod}\,}
\newcommand{\der}[1][]{
  \ifthenelse{ \equal{#1}{} }%
  {\ensuremath{\mathrm{d}}}%
  {\ensuremath{\mathrm{d}_{#1}\!}}%
}
\newcommand{\quotient}[2]{\mathchoice
  {
    \raisebox{.6ex}{\small \newline ${#2}$}\!\big/\!\raisebox{-.6ex}{\small ${#1}$}
  }
  {
    {#2}/{#1}
  }
  {
    {#2}/{#1}
  }
  {
    {#2}/{#1}
  }
}
\newcommand{\rquotient}[2]{\mathchoice
  {
    \raisebox{-.6ex}{\small \newline ${#1}$}\!\big\backslash\!\raisebox{.6ex}{\small ${#2}$}
  }
  {
    {#1}\backslash{#2}
  }
  {
    \raisebox{-.4ex}{\tiny \newline ${#1}$}\!\backslash\!\raisebox{.4ex}{\tiny ${#2}$}
  }
  {
    \raisebox{-.3ex}{\tiny \newline ${#1}$}\!\backslash\!\raisebox{.3ex}{\tiny ${#2}$}
  }
}
\newcommand{\biquotient}[3]{\mathchoice
  {
    \raisebox{-.6ex}{\small \newline ${#3}$}\!\big\backslash\!\raisebox{.6ex}{\small ${#2}$}\!\big/\!\raisebox{-.6ex}{\small ${#1}$}
  }
  {
    {#3}\backslash{#2}/{#1}
  }
  {
    \raisebox{-.4ex}{\tiny \newline ${#3}$}\!\backslash\!\raisebox{.4ex}{\tiny ${#2}$}\!/\!\raisebox{-.4ex}{\tiny ${#1}$}
  }
  {
    \raisebox{-.3ex}{\tiny \newline ${#3}$}\!\backslash\!\raisebox{.3ex}{\tiny ${#2}$}\!/\!\raisebox{-.3ex}{\tiny ${#1}$}
  }
}
\newcommand{\vol}[1]{\mathrm{Vol}( {#1} )}
\newcommand{\supp}{\mathrm{supp}}
\newcommand{\bA}{\mathbb{A}}
\newcommand{\bC}{\mathbb{C}}
\renewcommand{\L}{\mathbb{L}}
\newcommand{\N}{\mathbb{N}}
\newcommand{\bN}{\mathbb{N}}
\newcommand{\Q}{\mathbb{Q}}
\newcommand{\bQ}{\mathbb{Q}}
\newcommand{\R}{\mathbb{R}}
\newcommand{\bR}{\mathbb{R}}
\newcommand{\Z}{\mathbb{Z}}
\newcommand{\bZ}{\mathbb{Z}}
\newcommand{\Dcal}{\mathcal{D}}
\newcommand{\Ecal}{\mathcal{E}}
\newcommand{\Fcal}{\mathcal{F}}
\newcommand{\Gcal}{\mathcal{G}}
\newcommand{\Ical}{\mathcal{I}}
\newcommand{\Jcal}{\mathcal{J}}
\newcommand{\Kcal}{\mathcal{K}}
\newcommand{\Lcal}{\mathcal{L}}
\newcommand{\Rcal}{\mathcal{R}}
\newcommand{\Ocal}{\mathcal{O}}
\newcommand{\Pcal}{\mathcal{P}}
\newcommand{\Qcal}{\mathcal{Q}}
\newcommand{\Scal}{\mathcal{S}}
\newcommand{\Vcal}{\mathcal{V}}
\newcommand{\Zcal}{\mathcal{Z}}
\newcommand{\Xfrak}{\mathfrak{X}}
\newcommand{\Bbf}{\mathbf{B}}
\newcommand{\Gbf}{\mathbf{G}}
\newcommand{\Hbf}{\mathbf{H}}
\newcommand{\ybf}{\mathbf{y}}
\newcommand{\vbf}{\mathbf{v}}
\newcommand{\pbf}{\mathbf{p}}
\newcommand{\xbf}{\mathbf{x}}
\newtheorem{theorem}{Theorem}[section]
\newtheorem{lem}[theorem]{Lemma}
\newtheorem{theoremAlph}{Theorem}
\newtheorem{question}[theorem]{Question}
\newtheorem{proposition}[theorem]{Proposition}
\newtheorem{prop}[theorem]{Proposition}
\newtheorem{corollary}[theorem]{Corollary}
\theoremstyle{definition}
\newtheorem{definition}[theorem]{Definition}
\newtheorem*{definition-nono}{Definition}
\newtheorem{remark}[theorem]{Remark}
\newtheorem*{acknowledgement}{Acknowledgements}
\newtheoremstyle{step}{}{}{}{}{}{:}{ }{}
\theoremstyle{step}
\newtheorem{step}{\textbf{Step}}
\newcommand{\mc}{\mathcal}
\newcommand{\mf}{\mathfrak}
\newcommand{\mrm}{\mathrm}
\newcommand{\bfm}{\mathbf}
\renewcommand{\a}{\alpha}
\renewcommand{\b}{\beta}
\newcommand{\g}{\gamma}
\renewcommand{\d}{\delta}
\newcommand{\e}{\varepsilon}
\renewcommand{\l}{\lambda}
\renewcommand{\L}{\Lambda}
\newcommand{\w}{\omega}
\newcommand{\s}{\sigma}
\newcommand{\vp}{\varphi}
\renewcommand{\t}{\tau}
\renewcommand{\th}{\theta}
\renewcommand{\k}{\kappa}
\newcommand{\Ga}{\Gamma}
\renewcommand{\r}{\rightarrow}
\def\multiset#1#2{\ensuremath{\left(\kern-.3em\left(\genfrac{}{}{0pt}{}{#1}{#2}\right)\kern-.3em\right)}}
\newcommand{\seti}[1]{\left\{#1\right\}}
\newcommand{\dx}[1]{\;\mrm{d}#1}
\newcommand{\cutoff}{0.839}
\newcommand{\lval}{3} 
\newcommand{\miss}{5} 
\numberwithin{equation}{section}
\title[Khintchine's Theorem on Fractals]{Random Walks, Spectral Gaps, and Khintchine's Theorem on Fractals}
\author{Osama Khalil}
 \address{Department of Mathematics, University of Utah, Salt Lake City, UT}
 \email{khalil@math.utah.edu}
\author{Manuel Luethi}
\address{Department of Mathematics, Tel Aviv University}
\email{manuelluthi@mail.tau.ac.il}
\date{}
\begin{document}

\begin{abstract}
     This work addresses problems on simultaneous Diophantine approximation on fractals, motivated by a long standing problem of Mahler regarding Cantor's middle $1/3$ set.
    We obtain the first instances where a complete analogue of Khintchine's Theorem holds for fractal measures. 
    Our results apply to fractals which are self-similar by a system of rational similarities of $\mathbb{R}^d$ (for any $d\geq 1$) and have sufficiently small Hausdorff co-dimension. 
    A concrete example of such measures in the context of Mahler's problem is the Hausdorff measure on the ``middle $1/\miss$ Cantor set''; i.e.~the set of numbers whose base $\miss$ expansions miss a single digit.

    The key new ingredient is an effective equidistribution theorem for certain fractal measures on the homogeneous space $\mathcal{L}_{d+1}$ of unimodular lattices; a result of independent interest. The latter is established via a new technique involving the construction of $S$-arithmetic operators possessing a spectral gap and encoding the arithmetic structure of the maps generating the fractal. As a consequence of our methods, we show that spherical averages of certain random walks naturally associated to the fractal measures effectively equidistribute on $\mathcal{L}_{d+1}$.
\end{abstract}

\maketitle

\section{Introduction}

    Given a function $\psi: \N\r\R_+$, we say that $\xbf = (x_1,\dots,x_d) \in \R^d$ is $\mathbf{\psi}$\textbf{-approximable} if for infinitely many $q\in \N$, we have
    \begin{equation}\label{eq: psi-approx}
    \max_{1\leqslant i\leqslant d} |  qx_i  -p_i | < \psi(q),\quad \text{ for some }
    \bfm{p} =(p_1,\dots,p_d)\in \Z^d.
    \end{equation}
    We denote by $W(\psi) \subseteq \R^d$ the set of $\psi$-approximable vectors.
    A much studied example is the function $\psi_\t(q):= q^{-\t-1/d}$.
    The union over $\t>0$ of $W(\psi_\t)$
    comprises the set of Very Well Approximable (VWA) vectors.
     Khintchine's Theorem~\cite{Khintchine} in its modern formulation asserts that if $\psi$ is non-increasing and $\mrm{Leb}$ is the Lebesgue measure on $\R^d$, then
    \begin{align}\label{eq:Khinchine}
        \mrm{Leb}(W(\psi))= \begin{cases}
        0 & \text{ if }\sum_{q\geqslant 1} \psi^d(q) < \infty,\\
        \mrm{FULL} & \text{ if } \sum_{q\geqslant 1} \psi^d(q) = \infty.
        \end{cases}
    \end{align}

    Motivated by the study of approximation of real numbers by algebraic numbers, Mahler conjectured in 1932 that the Veronese curve $\Vcal$ parametrized by $x\mapsto (x,x^2,\dots,x^d)$ is \emph{extremal}, i.e., the Lebesgue measure on $\Vcal$ assigns zero mass to the set of VWA vectors.
    This conjecture set forth a field of study aiming to understand the prevalence of $\psi$-approximable vectors with respect to measures which may be singular with respect to Lebesgue on $\R^d$, e.g., volume measures on manifolds and affine subspaces as well as fractal measures. 
    In particular, in 1984, Mahler asked:
    \begin{question}[Section 2, \cite{Mahler}]
    \label{q:Mahler}
    How close can irrational elements of Cantor's set be approximated by rational numbers not in Cantor's set?
    \end{question}
    
    Mahler's conjecture on the extremality of $\Vcal$ was settled by Sprind\v zuk who conjectured that every non-degenerate submanifold (e.g., analytic submanifolds which are not contained in a proper affine subspace) of $\R^d$ is extremal.
    This latter conjecture was resolved by Kleinbock and Margulis in~\cite{KleinbockMargulis-Sprinzuk}.
    Subsequently, Weiss proved in~\cite{Weiss-Cantor} that a large class of measures on $\R$, which includes the canonical measure on Cantor's set, is extremal.
    This result, along with the work of~\cite{KleinbockMargulis-Sprinzuk}, was generalized in~\cite{KleinbockLindenstraussWeiss} to show the extremality of a wide class of measures on $\R^d$ which the authors called \emph{friendly measures}.
    This class includes volume measures on non-degenerate manifolds as well as measures which are self-similar by an irreducible iterated function system (IFS for short); cf.~Section~\ref{section: prelim} for the corresponding definitions.
    Naturally, the authors posed the following problem.
    \begin{question}[Question 10.1, \cite{KleinbockLindenstraussWeiss}]
    \label{q:KLW}
    Suppose $\mu$ is a friendly measure on $\R^d$. Does the analogue of~\eqref{eq:Khinchine} hold with $\mrm{Leb}$ replaced by $\mu$?
    \end{question}
    Questions~\ref{q:Mahler} and~\ref{q:KLW} have generated intense activity in recent years.
    We refer the reader to~\cites{BeresnevichDickinsonVelani,Beresnevich-divergence,VaughanVelani,BerEtal-convergence,Huang} and references therein for recent breakthroughs on this problem for non-degenerate manifolds.
    The question for fractals remains wide open however.

    Finally, we refer the reader to Question~\ref{Q: BQ} and Theorem~\ref{cor:BQ} below for another motivation for our work beyond Diophantine approximation, regarding the equidistribution of ``spherical averages" of random walks on homogeneous spaces.
    
    \subsection{Statement of the results}
    The goal of this article is to obtain a complete analogue of Khintchine's Theorem for certain self-similar fractal measures, Theorem~\ref{thm:main simplified}.
    This answers Question~\ref{q:KLW} in the affirmative for those measures.
    The class of measures for which our results hold includes Hausdorff measures on missing digit Cantor sets of sufficiently small Hausdorff co-dimension, Theorem~\ref{thm:intro Cantor}. This provides the first evidence that a similar result is to be expected for Cantor's set in the setting of Question~\ref{q:Mahler}.
    The main ingredient in our proof is a new effective equidistribution result for fractal measures on the space of unimodular lattices, Theorem~\ref{intro-thm:equidist}.

     We introduce notation to be used throughout the rest of the introduction.
    We refer the reader to Section~\ref{section: prelim} for detailed definitions.

    Let $\Fcal =\seti{f_i: i\in \L }$ be an IFS consisting of a finite collection of contractive similarities of $\R^d$ with respect to some inner product.
   We say that $\Fcal$ is \textbf{rational} if $f_i=\rho_iO_i+b_i$ with 
   \begin{equation}\label{def:rational IFS}
    0<\rho_i<1,\qquad    \rho_i \in\Q,\qquad O_i\in \mrm{SO}_d(\R)\cap\mrm{SL}_d(\Q),\qquad b_i\in \Q^d, \qquad  \forall i\in\L,
   \end{equation}
   and $\mrm{SO}_d(\R)$ is the special orthogonal group of some inner product on $\R^d$.
    
    Let $(\l_i)_{i\in\L}$ be a probability vector, i.e., $\l_i>0$ for all $i$ and $\sum \l_i =1$. Denote by $\mu$ the unique self-similar probability measure on $\R^d$ determined by $\Fcal$ and $\l$ and
    by $s$ the Hausdorff dimension (denoted $\dim_H$) of the attractor of $\Fcal$.
    Set 
    \begin{equation*}
        \rho_{\min} := \min\seti{\rho_i:i\in\L}, \qquad \rho_{\max}:= \max \seti{\rho_i: i\in\L}.
    \end{equation*}
    We define $\l_{\min}$ and $\l_{\max}$ similarly.
     The following is the main result of this article.
    \begin{theoremAlph}
    \label{thm:main simplified}
    There exists an explicit $\epsilon_0>0$, depending only on $d$, such that the following holds.
    Suppose $\Fcal$ is a rational IFS satisfying the open set condition.
    Assume further that
     \begin{equation}\label{eq:thickness hypothesis}
        \left(\frac{d\log\rho_{\min}}{\log \l_{\max}}-1\right) \frac{\log\l_{\min}}{s \log\rho_{\max} }  < \epsilon_0.
    \end{equation}
    Let $\psi:\N\r \R_+$ be any non-increasing function. Then,
        \begin{align}\label{eq:khinchine mu}
        \mu(W(\psi))= \begin{cases}
        0 & \text{ if }\sum_{q\geqslant 1} \psi^d(q) < \infty,\\
        1 & \text{ if } \sum_{q\geqslant 1} \psi^d(q) = \infty.
        \end{cases}
        \end{align}
    \end{theoremAlph}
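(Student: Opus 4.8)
\emph{Strategy.} The plan is to combine the Dani correspondence with the effective equidistribution of $\mu$ on the space $\mathcal{L}_{d+1}$ of unimodular lattices (Theorem~\ref{intro-thm:equidist}, whose hypotheses are exactly what~\eqref{eq:thickness hypothesis} secures), and then to run a quasi-independence Borel--Cantelli argument. Set $g_t=\mathrm{diag}(e^{-t},e^{t/d},\dots,e^{t/d})\in\SL_{d+1}(\R)$, and for $\mathbf{x}\in\R^d$ let $u(\mathbf{x})\in\SL_{d+1}(\R)$ be the lower-triangular unipotent with $\Lambda_{\mathbf{x}}:=u(\mathbf{x})\Z^{d+1}=\{(q,\,q\mathbf{x}-\mathbf{p}):q\in\Z,\ \mathbf{p}\in\Z^d\}$. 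The first step is to rewrite~\eqref{eq: psi-approx}: by Dani's correspondence, the inequality there holds for $\mathbf{x}$ with some $q\sim e^{t}$ iff $g_t\Lambda_{\mathbf{x}}$ lies in the set $E_t\subseteq\mathcal{L}_{d+1}$ of lattices having a nonzero vector $(v_0,\mathbf{v})$ with $v_0\in[\tfrac12,1]$ and $\|\mathbf{v}\|_\infty\leq e^{t/d}\psi(e^t)$. Siegel's mean-value formula gives $m(E_t)\asymp\min\{1,e^{t}\psi(e^t)^d\}$, where $m$ is Haar probability on $\mathcal{L}_{d+1}$, and monotonicity of $\psi$ makes $\sum_{n\geq1}m(E_n)=\infty$ equivalent to $\sum_q\psi(q)^d=\infty$. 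Using monotonicity once more to pass from ``$q\sim e^t$'' to ``$q\leq e^t$'' up to a $\mu$-null set (classical Cassels--Gallagher type reductions), the target~\eqref{eq:khinchine mu} reduces to computing $\mu(\limsup_n A_n)$ for $A_n:=\{\mathbf{x}\in\supp\mu:g_n\Lambda_{\mathbf{x}}\in E_n\}$.

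For the \emph{convergence case} — the easier direction — one would smooth $\one_{E_n}$ at scale $e^{-cn}$, truncate the cusp of $\mathcal{L}_{d+1}$ using quantitative non-divergence for the friendly measure $\mu$, and feed the result into Theorem~\ref{intro-thm:equidist} to obtain $\mu(A_n)\ll m(E_n)+e^{-c'n}$ for fixed $c,c'>0$ (the power-saving error absorbing the polynomial Sobolev growth of the smoothed cusp indicator). Then $\sum_n\mu(A_n)\ll\sum_q\psi(q)^d+O(1)<\infty$, and the first Borel--Cantelli lemma gives $\mu(W(\psi))=0$.

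For the \emph{divergence case} one must show $\mu(\limsup_n A_n)=1$; it is enough to prove positivity, since a self-similarity argument (the same estimates localised in any ball meeting $\supp\mu$, together with the Federer property of $\mu$) upgrades positive measure to full measure. To prove $\mu(\limsup_n A_n)>0$ I would invoke the divergence Borel--Cantelli lemma in quasi-independence form, which requires $\sum_{n\leq N}\mu(A_n)\to\infty$ and $\sum_{m,n\leq N}\mu(A_m\cap A_n)\ll\big(\sum_{n\leq N}\mu(A_n)\big)^2$. Discarding the indices with $m(E_n)<e^{-c'n/2}$ (whose total mass is finite, by the convergence-case bound), Theorem~\ref{intro-thm:equidist} yields $\mu(A_n)\asymp m(E_n)$ for the rest, so the first sum diverges. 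Pairs with $|m-n|\leq T_0$ contribute $\ll T_0\sum_n\mu(A_n)$, which is harmless. The remaining, and decisive, input is the off-diagonal bound: for $m<n$ with $\tau:=n-m$ large, writing $g_m=g_{-\tau}g_n$ one has $A_m\cap A_n=\{\mathbf{x}:g_n\Lambda_{\mathbf{x}}\in E_n\cap g_\tau E_m\}$, and $E_n\cap g_\tau E_m$ translates, in Diophantine terms, into pairs of rationals $\mathbf{p}/q$, $\mathbf{p}'/q'$ with $q\sim e^m$, $q'\sim e^n$ both lying near $\supp\mu$. The plan is to separate the contribution of coincident reduced rationals (bounded after summation by $\ll\sum_n m(E_n)$ using monotonicity) from that of distinct ones, and to control the latter via an effective count of rational points near the attractor with a power-saving error term — itself deduced from Theorem~\ref{intro-thm:equidist} applied to Siegel transforms of shrinking boxes. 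This should give $\mu(A_m\cap A_n)\ll m(E_m)m(E_n)+(\text{error summing to }\ll\sum_n m(E_n))$, hence the quasi-independence bound and $\mu(W(\psi))=1$.

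\emph{Main obstacle.} The hard part is this off-diagonal correlation estimate. The sets $E_n$ are thin tubes that also penetrate the cusp of $\mathcal{L}_{d+1}$, so a naive appeal to effective mixing fails — its error would carry the large Sobolev norms of smoothed cusp indicators — and the estimate must instead be routed through the rational-point count and the cusp truncation while preserving a strictly positive error exponent at every stage, in particular beating the combinatorics of nearly proportional lattice vectors uniformly in $(m,n)$. Keeping a usable exponent forces the spectral gap of the $S$-arithmetic averaging operator encoding the rational IFS $\Fcal$ to dominate the contraction and entropy data $(\rho_i,\lambda_i,s)$ of $\Fcal$, which is precisely the content of the co-dimension smallness hypothesis~\eqref{eq:thickness hypothesis}: $\epsilon_0$ is the threshold below which the final exponent remains positive.
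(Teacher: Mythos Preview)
Your convergence argument is essentially the paper's (Theorem~\ref{thm: convergence thm}): smooth the cusp indicator, apply Theorem~\ref{intro-thm:equidist}, and invoke Borel--Cantelli. The divergence argument, however, has a genuine gap at the off-diagonal correlation step. You propose to obtain $\mu(A_m\cap A_n)\ll m(E_m)m(E_n)+(\text{summable error})$ for all sufficiently separated $m<n$ by routing through a rational-point count derived from Theorem~\ref{intro-thm:equidist}. The paper shows this clean estimate is \emph{not} available for all such pairs. The natural route to it is ``double equidistribution'' (Proposition~\ref{prop: double equidist}): partition $\Kcal$ into cylinders $\Kcal_\a$ at scale $\sim e^{-s}$, freeze the scale-$s$ factor on each piece, and apply equidistribution at the shifted basepoint $x_\a^{\mrm{u}}$. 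But the error at $x_\a^{\mrm{u}}$ (Corollary~\ref{cor:SL equidist}) carries a factor $\rho_\a^{-A_\ast}$ that blows up with $|\a|$; balancing it against the separation gain only works when $t\geq C_\ast s$ or $s\leq t\leq(1+\e_\ast)s$, leaving the intermediate range $(1+\e_\ast)s<t<C_\ast s$ uncovered. Any rational-point count deduced from Theorem~\ref{intro-thm:equidist} inherits the same basepoint non-uniformity, so your proposed resolution is circular. Note also that your ``$|m-n|\leq T_0$ is harmless'' reduction does not help, since the missing range has width proportional to $m$, not bounded.

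The paper fills the intermediate range by a different, non-dynamical argument (Proposition~\ref{prop: weak quasi-independence}): the simplex lemma confines the relevant rationals of height $\sim 2^n$ near each $\Kcal_\a$ to $O(1)$ hyperplanes, and absolute decay of $\mu$ (Proposition~\ref{prop: friendly}) then yields the weaker bound $\mu(A_m^\ast\cap A_n^\ast)\ll\mu(A_m^\ast)\max\{\mu(A_n^\ast)^\sigma,2^{-\sigma(n-m)}\}$. Combining this with the restricted double-equidistribution estimate requires a bespoke Borel--Cantelli converse (Proposition~\ref{lem: BC divergence}) that accepts a strong bound on some pairs, a weak bound on the rest, and a weak-monotonicity condition on $\mu(A_n^\ast)$; the standard Chung--Erd\H{o}s inequality you invoke is not enough. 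Finally, your upgrade from positive to full measure via a Federer/density argument is not what works here (Cassels' zero--full law does not apply to fractal measures); the paper instead re-runs the whole argument for each pushed measure $(f_\th)_\ast\mu$, which again needs the basepoint-dependent Corollary~\ref{cor:SL equidist} rather than Theorem~\ref{intro-thm:equidist} alone.
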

    An explicit choice of $\epsilon_0$ is stated in~\eqref{eq:epsilon0}. We note that even the convergence part of Theorem~\ref{thm:main simplified} is new.
    \begin{remark}
      In the special case of equal contraction ratios and $\l$ being the uniform probability vector, Condition~\eqref{eq:thickness hypothesis} amounts to requiring that the Hausdorff dimension of the fractal is sufficiently close to that of the ambient Euclidean space. 
    \end{remark}

     The key ingredient in the proof of Theorem~\ref{thm:main simplified} is the following dynamical theorem.
    Let $G=\mrm{SL}_{d+1}(\R)$ and $\Ga= \mrm{SL}_{d+1}(\Z)$.
    For $t>0$ and $\xbf \in \R^d$, define the following elements of $G$:
    \begin{equation}\label{eq: simultaneous parametrization}
    	g_t = \begin{pmatrix} e^{t/d} \mrm{Id}_d & \bfm{0}\\ \bfm{0} &  e^{-t}\end{pmatrix}, \qquad
    	u(\xbf) = \begin{pmatrix}
    		\mrm{Id}_d & \xbf \\ \mathbf{0} & 1
    	\end{pmatrix},        
    \end{equation}
    where $\mathrm{Id}_d$ denotes the $d\times d$ identity matrix.
    \begin{theoremAlph}\label{intro-thm:equidist}
    Under the hypotheses of Theorem~\ref{thm:main simplified}, there exists $\d>0$ and $\ell\in\N$ such that for every $\vp\in \mrm{C}_c^\ell(\quotient{\Gamma}{G} )$, and $t>0$,
    the following holds:
    \begin{equation*}
        \int \vp\left(g_tu(\xbf)\Ga \right)\;d\mu(\xbf) = \int \vp \; d m_{\quotient{\Gamma}{G}}
       + O\big( \Scal_{\infty,\ell}(\vp)  e^{-\d t} \big),
    \end{equation*}
    where $m_{G/\Ga}$ is the unique $G$-invariant measure on $G/\Ga$ and the Sobolev norm $\Scal_{\infty,\ell}$ is defined in~\eqref{sec: sobolev}.
    \end{theoremAlph}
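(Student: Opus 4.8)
\emph{Strategy.} Convert the self-similarity $\mu=\sum_i\lambda_i(f_i)_*\mu$ into a random-walk identity on an $S$-arithmetic extension of $\quotient{\Gamma}{G}$, and combine a spectral gap for the resulting averaging operator with a Sobolev interpolation. For Step~1, let $\Sf$ be the finite set of primes occurring in the denominators of the $\rho_i$, of the entries of the $O_i$, and of the coordinates of the $b_i$, and put $S=\{\infty\}\cup\Sf$, so that every $f_i$ has coefficients in $\mathbb Z[S^{-1}]$. Work in $G_S=\PGL_{d+1}(\mathbb Q_S)$ with its lattice $\Gamma_S=\PGL_{d+1}(\mathbb Z[S^{-1}])$ embedded diagonally, $X_S=G_S/\Gamma_S$; reduction at the finite places gives a proper fibration $\pi\colon X_S\to\quotient{\Gamma}{G}$ identifying $\quotient{\Gamma}{G}$ with $X_S/K_f$ for $K_f=\prod_{p\in\Sf}\PGL_{d+1}(\mathbb Z_p)$. (Replacing $\SL_{d+1}$ by $\PGL_{d+1}$ is harmless and makes the rational diagonal matrices $\mathrm{diag}(\rho_i\,\mathrm{Id}_d,1)$ available.) A test function $\varphi$ on $\quotient{\Gamma}{G}$ pulls back to a compactly supported $K_f$-invariant $\tilde\varphi$ on $X_S$; write $\tilde\varphi_0=\tilde\varphi-\int\varphi\,dm\in L^2_0(X_S)$ with $m$ the Haar probability measure. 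Lift $\mu$ by $\iota_t(\xbf)=(g_tu(\xbf),e)\Gamma_S$ and set $\mu_t=(\iota_t)_*\mu$, so $\int\varphi\,d\pi_*\mu_t$ is the integral in the statement.

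\emph{The random-walk identity.} For $f_i=\rho_iO_i+b_i$, using that $u(\cdot)$ is additive, that $\mathrm{diag}(O_i,1)$ commutes with the flow $g_s$, that $\mathrm{diag}(\rho_i\,\mathrm{Id}_d,1)$ equals $g_{-\sigma_i}$ in $\PGL_{d+1}(\mathbb R)$ with $\sigma_i:=-\tfrac{d}{d+1}\log\rho_i>0$, and that the affine matrices of the $f_i$ and the translations by $\mathbb Z[S^{-1}]^d$ have classes in $\Gamma_S$ (hence may be absorbed on the right), one derives in $X_S$ the identity
\[
\iota_t(f_i\ybf)=\theta_i\cdot\bigl(g_{t-\sigma_i}u(\ybf),e\bigr)\Gamma_S,\qquad
\theta_i=\Bigl(\mathrm{diag}(O_i,1),\ \bigl[\begin{smallmatrix}\rho_iO_i&-b_i\\0&1\end{smallmatrix}\bigr]_{\Sf}\Bigr)\in G_S,
\]
whose finite component is a $p$-adic ``affine Hecke'' element and whose archimedean component lies in a fixed maximal compact $\SO_{d+1}$. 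Iterating along a word $\mathbf j=(j_1,\dots,j_m)$ and writing $\sigma_{\mathbf j}=\sum_k\sigma_{j_k}$, $\theta_{\mathbf j}=\theta_{j_1}\cdots\theta_{j_m}$, gives $\iota_t(f_{\mathbf j}\ybf)=\theta_{\mathbf j}\bigl(g_{t-\sigma_{\mathbf j}}u(\ybf),e\bigr)\Gamma_S$. Applying $\mu=\sum_{\mathbf j\in\Lambda^n}\lambda_{\mathbf j}(f_{\mathbf j})_*\mu$ with $n\ge t/\min_i\sigma_i$, so that every $\mathbf j\in\Lambda^n$ has $\sigma_{\mathbf j}\ge t$, and regrouping by the unique prefix of $\mathbf j$ in the finite prefix code $W_t=\{\mathbf j:\sigma_{\mathbf j}\ge t>\sigma_{\mathbf j'}\}$ (here the open set condition ensures $\mu=\sum_{\mathbf j\in W}\lambda_{\mathbf j}(f_{\mathbf j})_*\mu$ for every such code), the trailing letters re-sum to $\mu$ and one obtains the exact formula
\[
\int\varphi\,d\pi_*\mu_t=\int (\mathfrak m_t\star\tilde\varphi)\,d\nu_0,\qquad
\mathfrak m_t=\sum_{\mathbf j\in W_t}\lambda_{\mathbf j}\,\delta_{\theta_{\mathbf j}(g_{t-\sigma_{\mathbf j}},e)},\quad \nu_0=(g_0u(\cdot),e)_*\mu,
\]
where $(\mathfrak m\star\psi)(x)=\int\psi(gx)\,d\mathfrak m(g)$. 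This exhibits $\pi_*\mu_t$ as a spherical average of the random walk driven by the $\theta_i$; since $\mathfrak m_t$ is a probability measure built from translations, $\mathfrak m_t\star$ fixes constants and preserves $L^2_0(X_S)$, and the theorem reduces to $\|\mathfrak m_t\star\tilde\varphi_0\|_{C^0(X_S)}\ll e^{-\delta t}\,\Scal_{\infty,\ell}(\varphi)$.

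\emph{The spectral gap (the crux).} The naive operator $\psi\mapsto\sum_i\lambda_i\psi(\theta_i\,\cdot)$ is only an average of unitaries of $L^2(X_S)$: its support generates a subgroup of the conformal–affine group $\mathrm{GO}_d\ltimes\mathbb G_a^d$, which is amenable and contained in a proper parabolic of $\PGL_{d+1}$, so on its own it has no spectral gap on $L^2_0(X_S)$. The decisive point is therefore to manufacture, out of the $\theta_i$, the $\mathbb Z[S^{-1}]^d$-translations, and elements of $\Gamma_S$ (which act trivially on $X_S$ and may be inserted freely), a finite symmetric family of $S$-arithmetic operators which (i) encode the arithmetic of the $f_i$, in that suitable products of them reproduce $\mathfrak m_t\star$ up to the bounded archimedean factors $g_{t-\sigma_{\mathbf j}}$, and (ii) generate a Zariski-dense — in particular non-amenable — subgroup of $\PGL_{d+1}(\mathbb Q_S)$; the thickness hypothesis~\eqref{eq:thickness hypothesis} is precisely what makes this generation rich enough and keeps the renewal clock $\{\sigma_{\mathbf j}\}$ non-degenerate. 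Given such a family, property (T) of $\PGL_{d+1}(\mathbb Z[S^{-1}])$ for $d\ge2$ — and, for $d=1$, property $(\tau)$ with respect to congruence subgroups (Selberg's $3/16$ bound and its $S$-arithmetic form) — yields $\kappa_1>0$ such that the averaging operator attached to $W_{t_0}$ has norm $\le1-\kappa_1$ on $L^2_0(X_S)$ for a fixed $t_0$. The non-lattice renewal is then handled via the self-similarity $\mathfrak m_t=\sum_{\mathbf j\in W_{t_0}}\lambda_{\mathbf j}\,\delta_{\theta_{\mathbf j}}*\mathfrak m_{t-\sigma_{\mathbf j}}$: iterating $\sim t/t_0$ times, using at each stage that $\mathfrak m_{t'}\star\tilde\varphi_0\in L^2_0$, gives $\|\mathfrak m_t\star\tilde\varphi_0\|_{L^2(X_S)}\ll e^{-\delta' t}\|\tilde\varphi_0\|_{L^2}$.

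\emph{Sobolev interpolation and conclusion.} Taking $\Scal_{\infty,\ell}$ to be the Sobolev norm built from the archimedean Casimir (hence bi-$\SO_{d+1}$-invariant), each $\psi\mapsto\psi(\theta_i\,\cdot)$ is an isometry of $W^\ell(X_S)$ — the archimedean factor carries the derivatives, the finite factor permutes $p$-adic sheets, and $\mathrm{diag}(O_i,1)\in\SO_{d+1}$ acts isometrically — while the bounded factors $g_{t-\sigma_{\mathbf j}}$, $|t-\sigma_{\mathbf j}|\le\max_i\sigma_i$, distort $W^\ell$-norms by a constant depending only on $d$ and the $\rho_i$; hence $\|\mathfrak m_t\star\tilde\varphi_0\|_{W^\ell(X_S)}\ll\Scal_{\infty,\ell}(\varphi)$ uniformly in $t$. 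Choosing $\ell$ large and interpolating sheetwise between this bound and the $L^2$-decay above yields $\|\mathfrak m_t\star\tilde\varphi_0\|_{C^0(X_S)}\ll e^{-\delta t}\Scal_{\infty,\ell}(\varphi)$ for an explicit $\delta$, a fixed fraction of $\delta'$; since $\nu_0$ is a probability measure, $\bigl|\int\varphi\,d\pi_*\mu_t-\int\varphi\,dm\bigr|=\bigl|\int\mathfrak m_t\star\tilde\varphi_0\,d\nu_0\bigr|\le\|\mathfrak m_t\star\tilde\varphi_0\|_{C^0}\ll e^{-\delta t}\Scal_{\infty,\ell}(\varphi)$, which is the assertion. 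The one genuinely hard step is the spectral gap: producing $S$-arithmetic operators that simultaneously encode the IFS and escape the amenable conformal–affine (parabolic) subgroup the raw maps live in, and verifying that the co-dimension condition~\eqref{eq:thickness hypothesis} is exactly what forces the generated group to be large enough — and the renewal clock non-degenerate — for the gap, and hence the quantitative rate, to persist.
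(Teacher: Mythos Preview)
Your $S$-arithmetic lift, the identity $\iota_t(f_i\ybf)=\theta_i\cdot\iota_{t-\sigma_i}(\ybf)$, and the reduction to an averaging operator are all correct and match the paper. But the two substantive steps --- the spectral gap and the passage from $L^2$ to pointwise decay --- are not as you describe, and each gap is fatal.

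\textbf{The spectral gap.} Your proposed mechanism does not work: elements of $\Gamma_S$ act trivially on $X_S=G_S/\Gamma_S$ only on the \emph{right}, whereas your operators $\psi\mapsto\psi(\theta_i\,\cdot)$ act by \emph{left} translation, and left multiplication by a nontrivial $\gamma\in\Gamma_S$ is never the identity on $X_S$. So one cannot ``insert $\Gamma_S$ freely'' to enlarge the semigroup $\langle\theta_i\rangle$; that semigroup sits inside a proper parabolic of $\PGL_{d+1}$ regardless of how thick the fractal is, and no appeal to property~(T)/$(\tau)$ for a Zariski-dense group is available. The paper's mechanism is different and does not use Zariski density at all: one writes $\gamma_\omega=u(b_\omega,\mathbf 0)\,\tilde\gamma_\omega$ with $\tilde\gamma_\omega\in\Gamma_S$ and the archimedean factors $u(b_\omega,\mathbf 0)$ lying in a fixed compact set, so that $\langle\gamma_\omega\varphi,\gamma_\eta\varphi\rangle$ becomes a matrix coefficient $\langle\tilde\gamma_\eta^{-1}\tilde\gamma_\omega\,\varphi',\varphi''\rangle$ with $\tilde\gamma_\eta^{-1}\tilde\gamma_\omega\in\Gamma_S$; then the uniform decay of matrix coefficients on $L^2_{00}(X_S)$ together with $\ell^p$-summability of the Harish--Chandra function over the lattice gives $\|\Pcal_\lambda^n\varphi\|_2^2\ll\Scal(\varphi)^2\bigl(\sum_i\lambda_i^q\bigr)^{2n/q}$ for an explicit $q>1$. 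This holds for \emph{every} rational IFS without exact overlaps; the thickness hypothesis plays no role here.

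\textbf{From $L^2$ to pointwise and the role of the thickness hypothesis.} Your ``sheetwise Sobolev interpolation'' cannot be uniform in $t$: after $n$ steps the function $\mathfrak m_t\star\tilde\varphi_0$ is only $K_{\mrm f}[N]$-invariant with $[\Gamma(1):\Gamma(N)]\asymp\rho_{\min}^{-nL}$, and on the cover $X_\infty(N)$ the embedding constant in $\|\psi\|_{C^0}\ll\|\psi\|_{L^2}^{1-\theta}\Scal_{\infty,\ell}(\psi)^\theta$ blows up with that index (a bump supported on a single sheet has $C^0$-norm $1$ but $L^2(X_S)$-norm $\asymp[\Gamma(1):\Gamma(N)]^{-1/2}$). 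The paper avoids this entirely: it approximates $\mu$ by an absolutely continuous measure $P_\lambda^{m+n}(\nu)$ (error $r^{m+n}t$), peels off $n$ steps of $\Pcal_\lambda$ via the key identity, then applies Cauchy--Schwarz to split $\int\Pcal_\lambda^n\varphi\,dP_\lambda^m(\nu)$ into a ``mass term'' $\asymp r^{-2\sigma m}$ and a ``horospherical term'' $\int(\Pcal_\lambda^n\varphi)^2(a(t)u(\xbf)x)\,d\xbf$, and equidistributes the latter on $X_\infty(N)$ with an error carrying the factor $\sqrt{[\Gamma(1):\Gamma(N)]}\,\Scal_{\infty,\ell}\bigl((\Pcal_\lambda^n\varphi)^2\bigr)\,t^{-\kappa}$. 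The thickness hypothesis~\eqref{eq:thickness hypothesis} is precisely the condition that the $L^2$-contraction $r^{o_\varepsilon n}$ beats the combined growth $r^{-\sigma m}\cdot r^{-\upsilon n}$ of the mass term, the index, and the Sobolev norm when $m,n$ are optimized against $t$; it has nothing to do with Zariski density or non-degeneracy of the renewal clock.
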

    
    It is worth noting that Theorem~\ref{intro-thm:equidist} is new even in its qualitative form.
    The reader is referred to Theorem~\ref{thm: effective single equidist} for a more precise statement.
   In the special case of missing digit Cantor sets, we obtain the following sharper statement.
   
   \begin{theoremAlph}\label{thm:intro Cantor} 
   Theorems~\ref{thm:main simplified} and~\ref{intro-thm:equidist} hold when $\mu$ is the Hausdorff measure on a missing digit Cantor set $\Kcal$ in a prime base, cf. Definition~\ref{def: p cantor}, satisfying
   \begin{equation}\label{eq:intro Cantor cutoff}
       \dim_H(\Kcal)> \cutoff.
   \end{equation}
   In particular, these results hold for $\Kcal$ the set of numbers whose base $\miss$ expansions miss a single digit.
   \end{theoremAlph}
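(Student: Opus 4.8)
The plan is to obtain Theorem~\ref{thm:intro Cantor} by verifying the hypotheses of Theorems~\ref{thm:main simplified} and~\ref{intro-thm:equidist} for missing digit sets, and then to recover the sharper threshold $\cutoff$ by re-running the proof of Theorem~\ref{intro-thm:equidist} using the extra rigidity of this family.

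\emph{Setup of the IFS.} A missing digit Cantor set $\Kcal$ in a prime base $p$ (Definition~\ref{def: p cantor}) is the attractor of the rational IFS $\Fcal=\seti{f_i(x)=(x+a_i)/p\ :\ i\in\L}$ on $\R$, where $\L\subseteq\{0,1,\dots,p-1\}$; thus $d=1$, $\rho_i=1/p\in\Q$, $O_i=1$, and $b_i=a_i/p\in\Q$, so~\eqref{def:rational IFS} holds. The sub-intervals $f_i\big((0,1)\big)=\big(a_i/p,(a_i+1)/p\big)$ are pairwise disjoint and contained in $(0,1)$, so $\Fcal$ satisfies the open set condition. The Hausdorff measure on $\Kcal$ is the self-similar measure for the uniform vector $\l_i=1/|\L|$, and Hutchinson's formula gives $s=\dim_H(\Kcal)=\log|\L|/\log p$; for a single omitted digit $s=\log(p-1)/\log p$.

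\emph{Collapsing the thickness hypothesis.} Since all $\rho_i$ equal $1/p$ and all $\l_i$ equal $1/|\L|$, we have $\rho_{\min}=\rho_{\max}=1/p$ and $\l_{\min}=\l_{\max}=1/|\L|$. Hence $\frac{d\log\rho_{\min}}{\log\l_{\max}}=\frac{\log p}{\log|\L|}=\frac1s$ and $\frac{\log\l_{\min}}{s\log\rho_{\max}}=\frac{\log|\L|}{s\log p}=1$, so~\eqref{eq:thickness hypothesis} reduces to
\[
\frac{1-s}{s}<\epsilon_0,\qquad\text{equivalently}\qquad s>\frac{1}{1+\epsilon_0}.
\]
With $\epsilon_0$ as in~\eqref{eq:epsilon0}, Theorems~\ref{thm:main simplified} and~\ref{intro-thm:equidist} therefore already hold for every missing digit set in a prime base with $\dim_H(\Kcal)>1/(1+\epsilon_0)$.

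\emph{Improving the constant.} To lower the threshold to $\cutoff$, I would revisit the proof of Theorem~\ref{intro-thm:equidist} (i.e.\ Theorem~\ref{thm: effective single equidist}), exploiting three simplifications peculiar to these sets: only the single place $p$ occurs, so the ambient $S$-arithmetic space is built from $\mrm{SL}_2(\R)\times\mrm{SL}_2(\Qp)$ with $\mrm{SL}_2(\ZP)$ as a lattice; all rotations $O_i$ are trivial and all contraction ratios coincide, so the $S$-arithmetic averaging operator encoding $\Fcal$ is an honest convolution operator supported on a single coset of the scaling element, whose $n$-th iterate matches the equidistribution at time $t$ with $p^n\asymp e^{t}$; and its $p$-adic part is comparable to the adjacency operator of an explicit $|\L|$-regular graph on $\mrm{SL}_2(\Qp)$-orbits, for which a quantitative spectral gap is available (via bounds toward Ramanujan / property~($\tau$) for $\mrm{SL}_2(\ZP)$, or via the unitary representation theory of $\mrm{SL}_2(\Qp)$). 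Propagating the improved decay rate $\d$ through the overlap and counting estimates that turn Theorem~\ref{intro-thm:equidist} into Theorem~\ref{thm:main simplified} yields an admissible $\epsilon_0$ with $1/(1+\epsilon_0)<\cutoff$. I expect the bookkeeping of these constants --- extracting a numerically usable spectral gap in particular --- to be the main obstacle. Finally, the base-$\miss$ set missing one digit has $s=\log 4/\log 5=0.861\ldots>\cutoff$, so it falls under the improved statement, giving the last assertion of Theorem~\ref{thm:intro Cantor}.
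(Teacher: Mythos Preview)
Your reduction of~\eqref{eq:thickness hypothesis} to $s>1/(1+\epsilon_0)$ is correct, and you have rightly identified that reaching the threshold $\cutoff$ requires re-running the proof of Theorem~\ref{thm: effective single equidist} with sharper spectral input. However, the plan you sketch is too vague at exactly the point where the paper introduces two specific innovations, and without them the numerics will not close.

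First, the decisive step is not merely a better spectral gap for $\Pcal_\l$ in $\mrm{L}^2$, but controlling the \emph{Sobolev norm} of $(\a\cdot\Pcal_\l)^n(\vp)^2$ appearing in the horospherical error term (your analogue of~\eqref{eq:estimate Sobolev norm of operator}). In the general proof this norm grows like $\rho_{\min}^{-2\ell n}$ via the Archimedean adjoint action, which is the source of the parameter $\upsilon$ in~\eqref{def: a, b,c} and the reason the cutoff depends on $\ell$. The paper exploits equal contraction ratios by introducing modified operators $\a\cdot\Qcal_\l$ built from elements $\t_\w$ with \emph{trivial} Archimedean component, related to $\Pcal_\l$ by a single global shift $a(p^{-n},1)$; this makes the differential operators commute with $\Qcal_\l$, so that $\Scal_{2,\ell}\big((\a\cdot\Qcal_\l)^n(\vp)^2\big)$ actually \emph{decays} in $n$ (via an $\mrm{L}^4$ bound derived from the $\mrm{L}^2$ spectral gap). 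Your remark that the operator is ``supported on a single coset of the scaling element'' gestures in this direction but does not capture the mechanism.

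Second, the paper pairs this with two further explicit inputs you do not mention: a direct computation of $|b_\eta-b_\w|_p$ giving the spectral bound $p^{-(\min\{s,25/32\}-\e)n}$ for $\Qcal_\l$ (Proposition~\ref{prop:cantor spectral gap}), and Str\"ombergsson's sharper version of Proposition~\ref{prop:banana} with $\k=25/64$ (Proposition~\ref{prop:Strombergsson}). It is the combination of \emph{all three} --- Sobolev decay via $\Qcal_\l$, the explicit $p$-adic norm computation, and the improved horospherical rate --- that produces the inequality $(1-s)(\omicron_\e+3/2)<2\k\omicron_\e$, which gives $\cutoff$. Your graph/Ramanujan heuristic is morally related to the first of these, but the proposal as written does not supply the arithmetic needed to land on the stated numerical threshold.
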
 
   
   \begin{remark}
     For comparison, we note that Theorem~\ref{thm:main simplified} implies that Khintchine's Theorem holds for the Hausdorff measure on a missing digit set $\Kcal$ whenever $\dim_H(\Kcal)\geq 0.9992$.
   \end{remark}

    \subsection{Random walks}

To demonstrate the scope of the methods introduced in this article, we establish the equidistribution of certain random walks on $G/\Ga$, motivated by the breakthroughs of Benoist-Quint and Bourgain-Furman-Lindenstrauss-Mozes. 
Given $\Fcal$ and $\l$ as above, let
\begin{equation}\label{eq:BQ notation}
    c_i=\rho_i^{-1/(d+1)}, \qquad
    s_i = \begin{pmatrix} c_i O_i &c_i b_i\\ \mathbf{0} & c_i^{-d}
    \end{pmatrix},
    \qquad \nu= \sum_{i\in \L} \l_i \d_{s_i},
\end{equation}
where we regard $\nu$ as a probability measure on $G$.
Then, the IFS induces a random walk on $G/\Ga$ with law $\nu$.
The methods used to establish Theorem~\ref{intro-thm:equidist} yield the following:

\begin{theoremAlph}\label{cor:BQ}
There exists $\varrho_0>0$ such that the following holds.
Suppose $\Fcal$ is a missing digit IFS on $\R$, cf.~Definition~\ref{def: p cantor}, with attractor $\Kcal$ and $\l$ is the uniform probability measure on $\L$.
Assume that $\dim_H(\Kcal)>1-\varrho_0$. Let $\nu$ be as in~\eqref{eq:BQ notation}. Then,
\begin{equation*}
    \nu^{\ast n} \ast\d_e \to m_{G/\Ga},
\end{equation*}
where $e\in G/\Ga$ is the coset of the identity element in $G$.
The speed of convergence is exponential in $n$ for sufficiently smooth functions on $G/\Ga$.
\end{theoremAlph}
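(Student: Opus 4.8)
Throughout I work in the setting of Theorem~\ref{thm:intro Cantor}: $d=1$, $G=\SL_2(\R)$, $\Gamma=\SL_2(\Z)$, every rotation $O_i$ is trivial, all contraction ratios equal $1/p$ with $p$ the prime base, $b_i=d_i/p$ for the allowed digits $d_i$, and $\l$ is uniform.

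\textbf{The arithmetic dictionary and a smoothing identity.} Put $\tau:=\tfrac12\log p$, so that $c_i=p^{1/2}$ and, in the notation of~\eqref{eq: simultaneous parametrization}, $s_i=g_\tau u(b_i)$. From the commutation relation $g_tu(\mathbf x)g_t^{-1}=u(e^{2t}\mathbf x)$ together with $e^{2n\tau}=p^n$ one gets $s_{i_n}\cdots s_{i_1}=g_{n\tau}u(f_{\mathbf i}(0))$, where $f_{\mathbf i}:=f_{i_1}\circ\cdots\circ f_{i_n}$, whence, for $\varphi\in\mrm{C}^\infty_c(G/\Gamma)$,
\[
\int\varphi\;d(\nu^{\ast n}\ast\delta_e)=(\mathcal{A}_\nu^{\,n}\varphi)(e)=\sum_{\mathbf i\in\L^n}\l_{\mathbf i}\,\varphi\big(g_{n\tau}u(f_{\mathbf i}(0))\Gamma\big)=\int\varphi\big(g_{n\tau}u(\mathbf x)\Gamma\big)\;d\mu_n(\mathbf x),
\]
where $\mathcal{A}_\nu\varphi(x):=\sum_i\l_i\varphi(s_ix)$ is the averaging operator, $\l_{\mathbf i}:=\prod_j\l_{i_j}$, and $\mu_n:=\sum_{\mathbf i}\l_{\mathbf i}\delta_{f_{\mathbf i}(0)}$ is the level-$n$ discretization of $\mu$. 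Moreover, using the self-similarity $\mu=\sum_{\mathbf i\in\L^n}\l_{\mathbf i}(f_{\mathbf i})_\ast\mu$ with $f_{\mathbf i}(\mathbf z)=f_{\mathbf i}(0)+p^{-n}\mathbf z$, together with $g_{n\tau}u(p^{-n}\mathbf z)=u(\mathbf z)g_{n\tau}$ and the commutativity of $U=\{u(\mathbf x)\}$, one obtains the exact identity of probability measures on $G/\Gamma$
\[
\widetilde\mu\ast(\nu^{\ast n}\ast\delta_e)=(g_{n\tau}u)_\ast\mu,\qquad\widetilde\mu:=u_\ast\mu,
\]
where $\widetilde\mu$ is regarded as a compactly supported measure on $U\subset G$. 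Thus the $\widetilde\mu$-smoothing of the random-walk measure is \emph{exactly} the fractal average of Theorem~\ref{intro-thm:equidist}; since the latter is tight uniformly in $t$ (by the non-divergence estimates behind that theorem) and $\widetilde\mu$ is compactly supported in $G$, the walk measures $\nu^{\ast n}\ast\delta_e$ are themselves tight, with quantitative control of the mass near the cusp.

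\textbf{The spectral gap.} A classification of $\nu$-stationary measures is unavailable here, since every $s_i$ is upper triangular and so $\nu$ is supported in the Borel subgroup of $G$; the argument must instead rest on a spectral gap. In view of the factorization $s_i=g_\tau u(b_i)$ --- and, if one prefers the rational diagonal element $g_{2\tau}=\mrm{diag}(p,1/p)$, after passing to $\mathcal{A}_\nu^{2}$ --- the $S$-arithmetic spectral-gap estimates established in the proof of Theorem~\ref{intro-thm:equidist} provide $\theta<1$ and $\ell\in\N$ with
\[
\norm{\mathcal{A}_\nu^{\,n}\varphi-\int\varphi\;dm}_{L^2(G/\Gamma)}\ll_\ell\theta^{\,n}\,\Scal_{2,\ell}(\varphi),\qquad m:=m_{G/\Gamma}.
\]
One also notes that left translations commute with right-invariant derivatives, so that $\mathcal{A}_\nu$ is a contraction for every Sobolev norm $\Scal_{\infty,\ell}$ and $\Scal_{2,\ell}$.

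\textbf{From $L^2$ to the value at $e$: the main obstacle.} The remaining --- and substantive --- task is to deduce from the last display that $(\mathcal{A}_\nu^{\,n}\varphi)(e)\to\int\varphi\,dm$, despite $\nu^{\ast n}\ast\delta_e$ being atomic with its ``support point'' $g_{n\tau}\Gamma$ escaping into the cusp. I would split $n=n_1+n_2$ and write $(\mathcal{A}_\nu^{\,n}\varphi)(e)=\int(\mathcal{A}_\nu^{\,n_2}\varphi)\,d(\nu^{\ast n_1}\ast\delta_e)$; set $h:=\mathcal{A}_\nu^{\,n_2}\varphi$, so $\norm{h-\int\varphi\,dm}_{L^2}\ll\theta^{n_2}\Scal_{2,\ell}(\varphi)$ while $\Scal_{\infty,\ell}(h)\le\Scal_{\infty,\ell}(\varphi)$. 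Then discard the part of $\sigma:=\nu^{\ast n_1}\ast\delta_e$ outside a compact set $K$ chosen so that its complement carries $\sigma$-mass $\le\epsilon_1$ --- permissible by the quantitative tightness established above --- while the injectivity radius on $K$ exceeds a scale $\epsilon$, mollify the remaining part at scale $\epsilon$, and estimate $|\int h\,d(\chi_\epsilon\ast\sigma)-\int\varphi\,dm|\le\norm{h-\int\varphi\,dm}_{L^2}\norm{\chi_\epsilon\ast\sigma}_{L^2}$ with $\norm{\chi_\epsilon\ast\sigma}_{L^2}\le\norm{\chi_\epsilon}_{L^2}\ll\epsilon^{-(\dim G)/2}$ by Young's inequality, and $|\int h\,d\sigma-\int h\,d(\chi_\epsilon\ast\sigma)|\ll\epsilon\,\Scal_{\infty,\ell}(h)$. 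Balancing $\epsilon$ against $\theta^{n_2}$, and then $\epsilon_1$, $n_1$, $n_2$ against $n$, yields an exponential rate of convergence of $(\mathcal{A}_\nu^{\,n}\varphi)(e)$ to $\int\varphi\,dm$ for smooth $\varphi$, hence $\nu^{\ast n}\ast\delta_e\to m_{G/\Gamma}$ weak-$\ast$ in general. The hard point is the quantitative non-divergence of the walk measures $\nu^{\ast n_1}\ast\delta_e$ powering the truncation: this is exactly the type of estimate developed in the course of proving Theorem~\ref{intro-thm:equidist} (and reflected in the smoothing identity above), and it --- together with the spectral gap, and not merely the \emph{statement} of Theorem~\ref{intro-thm:equidist} --- is what the proof must borrow.
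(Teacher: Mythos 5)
Your arithmetic dictionary ($s_{\mathbf i}=g_{n\tau}u(f_{\mathbf i}(0))$) and the smoothing identity $\widetilde\mu\ast(\nu^{\ast n}\ast\delta_e)=(g_{n\tau}u)_\ast\mu$ are correct, and the latter does give quantitative tightness of the walk via Theorem~\ref{intro-thm:equidist}. But there are two genuine problems, and the second is fatal to the argument as written.

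First, the asserted spectral gap $\lVert\mathcal{A}_\nu^{\,n}\varphi-\int\varphi\,dm\rVert_{\mrm{L}^2(G/\Gamma)}\ll\theta^n\Scal_{2,\ell}(\varphi)$ is \emph{not} a consequence of the paper's $S$-arithmetic estimates. Those estimates concern the operator $\Pcal_\lambda$, built from the elements $\gamma_i$ of~\eqref{def: gamma_w}, whose Archimedean component is the \emph{contraction} $a(\rho_i)$ (with the expansion absorbed into a nontrivial $p$-adic component), acting on $\mrm{L}^2(X_S)$; moreover $\Pcal_\lambda$ does not even preserve the subspace of $K_{\mrm{f}}$-invariant functions, which is what corresponds to $\mrm{L}^2(G/\Gamma)$. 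The operator $\mathcal{A}_\nu$, built from $s_i=g_\tau u(b_i)$ and acting on $\mrm{L}^2(G/\Gamma)$, is a genuinely different object. An $\mrm{L}^2(G/\Gamma)$ gap for $\mathcal{A}_\nu$ can plausibly be proved by a separate argument (expanding $\lVert\mathcal{A}_\nu^n\varphi\rVert^2=\sum_{\omega,\eta}\lambda_\omega\lambda_\eta\langle u(p^n(b_\omega-b_\eta))\varphi,\varphi\rangle$ and using polynomial decay of Archimedean horocycle matrix coefficients together with the $s$-dimensional two-point correlation of $\mu$ --- an Archimedean cousin of Proposition~\ref{prop:cantor spectral gap}), but you assert it without providing that argument, and you attribute it to the wrong source.

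Second, and this is the real gap, the mollification step in your "$L^2$ to pointwise" reduction does not close. Let $h=\mathcal{A}_\nu^{\,n_2}\varphi$ and $\sigma=\nu^{\ast n_1}\ast\delta_e$. The error $\lvert\int h\,d\sigma-\int h\,d(\chi_\epsilon\ast\sigma)\rvert$ is governed by the \emph{left}-Lipschitz constant of $h$ (right mollification is not available since right translation is undefined on $G/\Gamma$). Since $h(x)=\sum_\omega\lambda_\omega\varphi(s_\omega x)$ with $\lVert\Ad(s_\omega)\rVert\asymp p^{n_2}$, this constant is $\asymp p^{n_2}\mrm{Lip}(\varphi)$, not $\ll\mrm{Lip}(\varphi)$ as you assume. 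Balancing $\epsilon\,p^{n_2}$ against $\theta^{n_2}\epsilon^{-3/2}$ gives $\epsilon\asymp(\theta/p)^{2n_2/5}$ and a total error $\asymp\theta^{2n_2/5}p^{3n_2/5}$, which decays only if $\theta<p^{-3/2}$; but any spectral gap $\theta$ one can produce here is governed by the Ramanujan exponent, hence $\theta\gg p^{-1/2}>p^{-3/2}$. Anisotropic mollifiers aligned with the $\Ad(s_\omega)$ action do not help: shrinking the unstable direction must be paid for either in the $\mrm{L}^2$ norm of the mollifier or in the injectivity-radius requirement. This is exactly the obstruction the paper records in Remark~\ref{rem:random walks}: the discretization lives at precisely the Lipschitz scale of the translated test function. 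The paper circumvents it not by smoothing but by replacing the smoothing step altogether: it applies the key identity~\eqref{eq: lift identity} with $\nu=\delta_{\mathbf 0}$ to split $n=k+m$, uses Cauchy--Schwarz to dominate the sum over missing-digit rationals of denominator $p^m$ by the sum over \emph{all} rationals of denominator $p^m$ (costing a factor $p^{sm}$), invokes Proposition~\ref{prop:rational-points} --- effective equidistribution of these full sets of rational points on expanding horocycles, uniform over congruence covers --- for the off-spectral part, and applies the $S$-arithmetic gap (Proposition~\ref{prop:cantor spectral gap}) for the main term on $X_S$. Proposition~\ref{prop:rational-points} is the missing ingredient your proposal has no substitute for.
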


In fact, our methods apply to certain more general basepoints and IFS; cf.~Eq.~\eqref{eq:BQ general basepoints} and Remark~\ref{rem:random walks}.
 This result is motivated by the following well-known open problem. 

\begin{question}[Question 3,~\cite{BQ-takagi}]
\label{Q: BQ}
Suppose $\nu$ is a compactly supported measure on $G$ and let $\Ga_\nu$ denote the subsemigroup generated by its support. Assume that the Zariski closure of $\Ga_\nu$ is semisimple without compact factors. Let $x\in G/\Ga$. As $n\to\infty$, do the measures $\nu^{\ast n}\ast\d_x$ converge towards the unique homogeneous probability measure on $\overline{\Ga_\nu\cdot x}$?
\end{question}

Note that the measures $\nu$ in~\eqref{eq:BQ notation} do not fall under Question~\ref{Q: BQ}.
In the setting of Question~\ref{Q: BQ}, Benoist and Quint showed that $\frac{1}{N}\sum_{1}^N \nu^{\ast n}\ast\d_x$ converge to the expected limit~\cites{BenoistQuint,BQ-II,BQ-III}.
Question~\ref{Q: BQ} was previously resolved in~\cite{BFLM} in the setting of random walks on the torus (under certain additional hypotheses). In that result, a rate of equidistribution was also provided.

Generalizing the work of Benoist-Quint, Simmons and Weiss~\cite{SimmonsWeiss} studied random walks with law $\nu$ as in~\eqref{eq:BQ notation} arising from IFS and proved that for all $x\in G/\Ga$,
\begin{equation}\label{eq:averaged conjecture}
    \frac{1}{N}\sum_{n=1}^N \nu^{\ast n} \ast\d_x \to m_{G/\Ga}.
\end{equation}

Motivated by Question~\ref{Q: BQ}, it is natural to ask whether the Cesaro averaging can be removed in~\eqref{eq:averaged conjecture}. Theorem~\ref{cor:BQ} solves this problem in the cases considered.

    \subsection{Generalizations}
    It is worth noting that we do not require the probability vector to be rational nor the contraction ratios be equal. Our results also apply to fractals in all dimensions.
    
    In order to keep the article to a manageable length, we have not included the most general statements that can be obtained with our methods. We describe below several generalizations of our results we hope to address in forthcoming work.
    
    \begin{enumerate}
    
        \item Jarn\'ik-Besicovitch Theorem:
        H.~Yu recently proved, using Fourier analytic techniques, that the set of VWA numbers have full dimension inside missing digit Cantor sets $\Kcal\subset \R$ whose Hausdorff dimensions are close to $1$ \cite{Han-fractals}. Recalling the notation at the beginning of the introduction, his methods also yield the exact value of the dimension of $W(\psi_\t)\cap \Kcal $ when $\t$ is sufficiently small. 
        We expect our results can be used to provide an alternative proof of those facts. We hope to provide a more complete Hausdorff measure theory of the intersections $W(\psi)\cap\Kcal$ and to address more general fractals $\Kcal$ in future work.

        \item Gallagher's Theorem:
        Our proof of Theorem~\ref{intro-thm:equidist} extends with minor modifications to more general diagonal flows, which commute with the IFS in a suitable sense. In ongoing work, we are studying the application of such extensions to obtain generalizations of Gallagher's Theorem in multiplicative Diophantine approximation~\cite{Gallagher} for fractal measures.
        The reader is referred to~\cite{ChowYang} for related recent developments.
        
        \item Khintchine-Groshev Theorem: Our proof of Theorem~\ref{intro-thm:equidist} also extends to cover rational self-similar measures on the space of systems of linear forms under a suitable analogue of Hypothesis~\eqref{eq:thickness hypothesis}. In particular, the convergence case of Theorem~\ref{thm:main simplified} holds for those measures as well. We leave the divergence case for those measures to future work.
    \end{enumerate}

\subsection{Related work}

The best known result towards Question~\ref{q:KLW} for fractal measures was obtained by Pollington and Velani in~\cite{PollingtonVelani} (cf.~\cite{Weiss-parameter} for the case of Cantor sets on the line).
    They show that for an absolutely friendly measure $\mu$ which is $(C,\a)$-absolutely decaying for some constants $C,\a>0$ (cf.~\cite{PollingtonVelani} and~\eqref{eq:absolute decay} for definitions), the following holds for non-increasing functions $\psi$:
    \begin{align}\label{eq:pollington velani}
        \sum_{q\geq 1} q^{\frac{\a}{d}-1} \psi^\a(q) <\infty \Longrightarrow \mu(W(\psi)) =0.
    \end{align}

   Simmons-Weiss~\cite{SimmonsWeiss} recently proved that for $\mu$-almost every $\xbf$ the measures 
   \begin{equation*}
        \frac{1}{T}\int_0^T \d_{g_t u(\xbf) \Ga}\;\der t 
   \end{equation*}
   converge towards the Haar measure under the minimal necessary hypotheses, i.e., that the IFS is irreducible. For comparison with Theorem~\ref{intro-thm:equidist}, this implies that the averaged measures 
   \begin{equation}\label{eq:averaged mu_t}
        \frac{1}{T}\int_0^T \int \d_{g_t u(\xbf) \Ga}\;\der\mu(\xbf)\;\der t
   \end{equation}
   converge to the Haar measure.
   At the heart of their proof is a generalization of the measure classification results of Benoist and Quint~\cite{BenoistQuint}.
   A weaker equidistribution result for the measures in~\eqref{eq:averaged mu_t} was obtained earlier in~\cite{EinsiedlerFishmanShapira} in the case where $\mu$ is a $\times n$-invariant measure on the circle or an invariant measure for a hyperbolic toral automorphism. Their method relies on the measure classification results of Lindenstrauss~\cite{Lindenstrauss-AQUE}.
   In particular, the methods in both instances are inherently non-effective.
   Moreover, the additional averaging in $t$ is necessary in both cases.
   
   In~\cite{ChowYang}, Chow and Yang showed that the translates by certain diagonal flows (in the interior of the standard positive Weyl chamber) of the Lebesgue measure on a straight line (with Diophantine parameters) become effectively equidistributed on $\mrm{SL}_3(\R)/\mrm{SL}_3(\Z)$. They applied this result to obtain refinements of Gallagher's Theorem~\cite{Gallagher}. Their methods are of a completely different nature to ours and build on an effective equidistribution theorem by Str\"ombergsson on the space of affine lattices in $\R^2$.

   The reader is also referred
   to~\cites{LevesleySalpVelani-Cantor,AllenBarany} for related results on intrinsic Diophantine approximation on fractals and to~\cite{AllenChowYu} for results on the approximation of points on Cantor's middle thirds set by dyadic rationals.

    \subsection{Outline of the proof}
    We first describe the deduction of Theorem~\ref{thm:main simplified} from the equidistribution theorem\footnote{
    Here and throughout, we refer to Theorem~\ref{thm: effective single equidist}, which is the more precise form of Theorem~\ref{intro-thm:equidist}, as the equidistribution theorem.}.
    In view of the connection between $\psi$-approximability and cusp excursions,
    we show in Section~\ref{sec: convergence} that
    the convergence part of Theorem~\ref{thm:main simplified} holds for any (not necessarily self-similar) measure $\mu$ satisfying
     the conclusion of Theorem~\ref{intro-thm:equidist}.
     In Section~\ref{sec:divergence}, we show that the divergence part holds for any (not necessarily rational) self-similar measure with the open set condition satisfying a stronger form of Theorem~\ref{intro-thm:equidist}; namely Corollary~\ref{cor:SL equidist}.
    
    The main difficulties in deducing the divergence part from Corollary~\ref{cor:SL equidist} arise from the fact that our error terms are in terms of Sobolev norms of $\mrm{L}^\infty$-type and are not uniform over certain basepoints associated to the fractal, even as they vary in a fixed compact set in $G/\Ga$. This complicates the independence arguments, especially when the approximation function $\psi$ has slowly diverging partial sums.
    We remark that these issues do not arise in~\cite{KleinbockMargulis-Loglaws}; for instance the error terms in \textit{loc.~cit.~}are in terms of $\mrm{L}^2$-Sobolev norms.
    In particular, for smooth approximations of shrinking cusp neighborhoods, these $\mrm{L}^2$-Sobolev norms provide additional decay in the error terms due to the decay of the measure of the support of such functions.

    To overcome these issues, we prove a converse to the classical Borel-Cantelli lemma, Proposition~\ref{lem: BC divergence}, which is adapted to our problem.
    This result requires as input two quasi-independence estimates of different nature.
    To explain the idea, suppose $E_n$ is a sequence of events in a probability space $(\Omega,\mu)$ such that $\sum\mu(E_n)=\infty$.
    The first such independence estimate roughly takes the form
    \begin{equation}\label{eq:indep long}
        \mu(E_n \cap E_m) \leq C\mu(E_n) \mu(E_m) + O(e^{-\d |n-m|}),
    \end{equation}
    whenever
    \begin{equation}
        n\geq C_\ast m \qquad \text{or} \qquad m\leq n\leq (1+\e_\ast)m
    \end{equation}
    for some constants $C,C_\ast \geq 1$ and $0<\d,\e_\ast<1$ and for all $n,m\in\N$.
    This estimate is most useful when $n\geq m$ are sufficiently separated; namely when $n\gtrsim m-\log \mu(E_m)$.
    To account for close-by pairs of $n$ and $m$, we use an estimate roughly of the form
    \begin{equation}\label{eq:indep short}
        \mu(E_n\cap E_m) \ll \mu(E_m)\mu(E_n)^\e,
    \end{equation}
    for some $\e>0$ and for all $n> m$.
    Additionally, this result requires control over the failure of monotonicity of the measures $\mu(E_n)$; cf. Proposition~\ref{lem: BC divergence}(\ref{item:weak monotone}).

    The first estimate~\eqref{eq:indep long} is deduced in Proposition~\ref{prop: double equidist} from a stronger version of the equidistribution theorem, Corollary~\ref{cor:SL equidist}, which holds for more general basepoints besides the identity cosets.
    Proposition~\ref{prop: double equidist} can be viewed as a substitute for mixing of the flow $g_t$.
    The reason we cannot establish~\eqref{eq:indep long} for all pairs $n$ and $m$ is explained below.

    The second estimate~\eqref{eq:indep short} is proved in Proposition~\ref{prop: weak quasi-independence}.
    The proof relies on the simplex lemma and self-similarity and is similar in spirit to some proofs of the classical Khintchine Theorem.
    It also requires our equidistribution theorem.
    The proof of the divergence part of Theorem~\ref{thm:main simplified} is completed in Section~\ref{sec:divergence}.

    The key ingredient in verifying all the above estimates is the equidistribution theorem. To explain its proof, define
    \begin{equation}\label{eq:mu_t}
        \mu_t =\int \d_{g_t u(\xbf) \Ga}\;d\mu(\xbf), \qquad t\in \R.
    \end{equation}
    The key idea is to construct a random walk which \emph{commutes} with the flow $g_t$ and which leaves each $\mu_t$ stationary.
    To do this, we lift the problem to a suitable $S$-arithmetic cover $G_S/\Ga_S$ of $G/\Ga$.
    The set of finite primes used to define the cover comes from the rational parameters of the IFS.
    The random walk is supported on a finite set $\seti{\g_i:i\in\L} \subset G_S$, defined in~\eqref{def: gamma_w}, and satisfying the following key identity
    \begin{equation*}
        \g_i \cdot g_tu(\xbf)\Ga_S = g_t u(f_i(\xbf))\Ga_S, \qquad (i\in\L),
    \end{equation*}
    for all $t\in\R$ and $\xbf\in \R^d$.
    
    Denote by $\Pcal$ the averaging operator associated to this random walk and the probability vector $\l$.
    A key step in the proof is to show that $\Pcal$ has a spectral gap, in a suitable sense, as an operator on $\mrm{L}^2(G_S/\Ga_S)$.
    This is Proposition~\ref{prop: spectral gap} where we give an explicit estimate on the size of the spectral gap.
    The essential observation used in the proof is that the subsemigroup generated by the support of the random walk remains at a uniformly bounded distance from the lattice $\Ga_S$.
    This allows us to use the fact that the matrix coefficients of $\Ga_S$, acting on $\mrm{L}^2(G_S/\Ga_S)$, belong to $\ell^p(\Ga_S)$ to deduce that $\Pcal$ has a spectral gap.
    At some stage in the proof, we use the fact that this subsemigroup is free and hence our proof is valid for all IFS without exact overlaps, cf.~\eqref{eq:exact overlaps}.
    
    In Section~\ref{sec:summability}, we find an explicit choice of $p$ so that the matrix coefficients belong to $\ell^p(\Ga_S)$.
    In Appendix~\ref{sec:Cantor spectral gap}, we give sharper estimates on the spectral gap of $\Pcal$ in the special case of missing digit Cantor sets, using more elementary techniques; cf.~Proposition~\ref{prop:cantor spectral gap}.
    It is desirable to generalize these methods to more general fractals.
    
    The proof of Theorem~\ref{intro-thm:equidist} is carried out in Section~\ref{sec:equidist}.
    Using the fact that all the maps in the IFS are contractions, we approximate $\mu$---with an explicit bound on the approximation error---by an absolutely continuous probability measure on $\R^d$ (Theorem~\ref{thm: convergence in L-metric}).
    As the approximation happens along the unstable manifold of $g_t$, the approximation errors blow up with $t$.
    A crucial Cauchy-Schwarz step allows us to bring the spectral properties of $\Pcal$ into the argument, cf.~\eqref{eq: Cauchy Schwarz}. 
    When our assumption in~\eqref{eq:thickness hypothesis} holds, the spectral gap of $\Pcal$ is stronger than the approximation error allowing us to obtain the result in this case.

    Over the course of the proof, we apply effective equidistribution of translates of absolutely continuous measures by $g_t$ to functions of the form $\Pcal^n(\vp)$, where $\vp$ is the lift of a smooth function from $G/\Ga$ to $G_S/\Ga_S$. As $n\to \infty$, the functions $\Pcal^n(\vp)$ become less smooth in the $S$-arithmetic sense, i.e., they correspond to functions which live on suitable (congruence) covers of $G/\Ga$. In Proposition~\ref{prop:banana}, we verify the needed equidistribution statements, with uniform error rates and \textit{uniform implied constants} over the family of covers in question.

    Using suitable conjugation of the operator $\Pcal$, along with the above arguments, allows us to prove an equidistribution statement of translates of fractal measures anchored at certain rational basepoints (cf.~\eqref{eq:unimodular-rep}) in $G/\Ga$ which are naturally associated with the IFS. This more general statement is crucial for the independence result in Proposition~\eqref{prop: double equidist} which is a key ingredient in the divergence part of Theorem~\ref{thm:main simplified}.
    However, the index of the congruence cover on which we apply Proposition~\ref{prop:banana} depends on the conjugation of $\Pcal$; i.e. on the basepoint.
    As the error terms in Proposition~\ref{prop:banana} depend on the index of the congruence cover, this causes non-uniformity of our error terms for Corollary~\ref{cor:SL equidist} over basepoints in a compact set in $G/\Ga$.
    This is the reason we are not able to prove the estimate~\eqref{eq:indep long} for all pairs $n$ and $m$.

    Theorem~\ref{cor:BQ} is proved in Section~\ref{sec:BQ} using a similar strategy to the proof of the equidistribution theorem. In this case, we appeal to the equidistribution of rational points instead of absolutely continuous measures (Proposition~\ref{prop:rational-points}); cf. Remark~\ref{rem:random walks} for a discussion of the reason for this difference. 
    In Appendix~\ref{sec:Cantor spectral gap}, we provide the needed modifications on the proofs to obtain Theorem~\ref{thm:intro Cantor}. 
     
    \begin{acknowledgement}
        We would like to thank Jon Chaika and Samantha Fairchild for generously sharing their version of Proposition~\ref{lem: BC divergence} and, in particular, for explaining how an estimate like~\eqref{eq:indep short} can be used for short range correlations. We further thank Manfred Einsiedler, Nimish Shah, Andreas Str\"ombergsson, and Barak Weiss for earlier discussions surrounding this project.  
        Both authors thank the Hausdorff Research Institute for Mathematics at the Universit\"at Bonn for its hospitality during the trimester program ``Dynamics: Topology and Numbers''. M.~L.~thanks the Ohio State University for their hospitality during his visit where this project was started.
        M.~L.~acknowledges the financial support of the ISF through grant 1483/16.
        The authors would like to thank the referees for numerous corrections and suggestions that improved the exposition.
    \end{acknowledgement}

\section{Preliminaries} \label{section: prelim}

    In this section, we recall several facts regarding self similar measures.
    
\subsection{Iterated Function Systems}
    We fix an arbitrary inner product on $\R^d$ and denote by $\mrm{SO}_d(\R)$ the subgroup of $\mrm{SL}_d(\R)$ which preserves it.
	A finite collection of maps $\Fcal= \seti{f_i: i\in \L}$ on $\R^d$ is said to be an \textbf{iterated function system} (IFS for short) if each $f_i$ is a contractive similarity of $\R^d$ relative to our chosen inner product, i.e., $f_i$ has the form
    \begin{equation*}
         f_i = \rho_i O_i + b_i,
    \end{equation*}
    where $0<\rho_i <1$, $O_i \in \mrm{SO}_d(\R)$, and $b_i \in \R^d$. Let
    \begin{equation*}
        \L^\ast = \bigcup_{k\geq 0} \L^k,
    \end{equation*}
    where we use the convention $\L^0 = \seti{\emptyset}$ and $f_\emptyset $ is the identity mapping.
    In particular, $\rho_\emptyset = 1$, $b_\emptyset = \mathbf{0}$, and $O_\emptyset$ is the identity element of $\mrm{SO}_d(\R)$.

    Using a fixed point theorem, it is shown in~\cite{Hutchinson} that there exists a unique compact set $\mc{K} \subset \R^d$ which is invariant by $\Fcal$ in the sense that
    \begin{equation}\label{eq:limit set}
    	\mc{K} = \bigcup_{i\in \L} f_i(\mc{K}).
    \end{equation}
    We refer to the set $\mc{K}$ as the \textbf{attractor} of $\Fcal$.
    Given $\omega=(\omega_i)\in\L^{k}$, we let
\begin{equation*}
  f_{\omega}=f_{\omega_1}\circ\cdots\circ f_{\omega_{k}}.
\end{equation*}
Denoting by $\mathbf{0}$ the origin in $\bR^{d}$, the maps $f_\w$ take the form $\rho_\w O_\w + b_\w$, where
    \begin{equation}\label{eq: composition parameters}
     \rho_\w = \prod_{i=0}^{k} \rho_{\w_i},
     \quad O_\w = O_{\w_{1}} \cdots O_{\w_k},
     \quad b_\w = f_\w(\mathbf{0}).
    \end{equation}
    Hence, by induction, for all $k\in\N$, we have
    \begin{equation*}
        \Kcal=\bigcup_{\w\in \L^k} \Kcal_\w, \qquad \Kcal_\w := f_\w(\Kcal).
    \end{equation*}
    
    \begin{definition}\label{def: markov operator}
Given an IFS $\Fcal=\seti{f_i:i\in \L}$ and probability vector $\l$ on $\L$, define the operator $P_\l$ on $\mrm{C}(\bR^{d})$ as follows. For all $\vp\in \mrm{C}(\R^d)$ let
    \begin{equation*}
        P_\l(\vp)(\xbf) =  \sum_{i\in\L}\l_i \vp(f_i(\xbf))\quad(\xbf\in\bR^{d}).
    \end{equation*}
    The dual operator on measures, also denoted $P_\l$, is defined similarly by  
 \begin{equation*}
        P_\l(\nu) =  \sum_{i\in\L}\l_i (f_i)_\ast\nu,
\end{equation*}
    for all Borel measures $\nu$ on $\R^d$. We say a compactly supported probability measure $\mu$ on $\R^d$ is \textbf{self-similar} if
    \begin{equation} \label{eqn: self similar measure}
    	\mu = P_\l(\mu).
    \end{equation}
\end{definition}

Given a probability vector $\l$, induction applied to~\eqref{eqn: self similar measure} shows that
\begin{equation}\label{eq: iterated P_l}
    \mu = P_\l^k(\mu)= \sum_{\w\in \L^k} \l_\w (f_\w)_\ast \mu,
\end{equation}
whenever $\mu$ is a self-similar measure, where $\l_\w = \prod_{i=1}^{k}\l_i$.
We also note that given a probability measure $\nu$ and $\psi\in\mrm{C}_c(\R^d)$, we have
    \begin{equation}\label{eq: MF on functions}
        \int \psi(\xbf) \;\mrm{d} P_\l^{k}(\nu)(\xbf) = \sum_{\w \in \L^k} \l_\w \psi(f_\w(\xbf))\;\mrm{d}\nu(\xbf).
    \end{equation}

We say a map $P:X\r X$ of a metric space $X$ is a contraction with ratio $r\in (0,1)$ if for every $x_1,x_2\in X$, $d(P(x_1),P(x_2)) \leq r d(x_1,x_2)$. We need the following elementary lemma.

\begin{lem}\label{lem: rate of convergence in fixed point theorem}
  Suppose $P:X\r X$ is a contraction mapping of a metric space with contraction ratio $r \in (0,1)$. Let $y, x_0 \in X$ be such that $P^{n_k}(y)\to x_0$ along a sequence $n_k\in\N$. Then, $P^n(y)\to x_0$, $x_0$ is the unique fixed point of $P$, and for all $n\in\bN$ we have
  \begin{equation}\label{eq:contraction in metric spaces}
      d(x_0, P^n(y)) \leqslant r^n \frac{d(y,P(y))}{1-r} . 
  \end{equation}
\end{lem}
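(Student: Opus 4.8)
The plan is to reduce everything to a single telescoping estimate. First I would record that, for each $k\in\bN$, the iterate $P^{k}$ is a contraction with ratio $r^{k}$, so that $d\big(P^{k}(y),P^{k+1}(y)\big)=d\big(P^{k}(y),P^{k}(P(y))\big)\leqslant r^{k}\,d(y,P(y))$. Summing a geometric series then yields, for all $n\geqslant 0$ and $m\geqslant 1$,
\[
  d\big(P^{n}(y),P^{n+m}(y)\big)\;\leqslant\;\sum_{j=0}^{m-1} d\big(P^{n+j}(y),P^{n+j+1}(y)\big)\;\leqslant\;\sum_{j=0}^{m-1} r^{n+j}\,d(y,P(y))\;\leqslant\;\frac{r^{n}}{1-r}\,d(y,P(y)).
\]
In particular, taking $n=0$ shows that $\big(P^{n}(y)\big)_{n}$ is a Cauchy sequence in $X$.

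Since $X$ is not assumed complete, I cannot appeal to Banach's theorem directly; instead I would use the hypothesis. A Cauchy sequence that has a subsequence converging to $x_0$ converges to $x_0$ itself, so $P^{n}(y)\to x_0$, which is the first assertion. Because $P$ is $r$-Lipschitz, hence continuous, it follows that $P(x_0)=\lim_n P\big(P^{n}(y)\big)=\lim_n P^{n+1}(y)=x_0$, so $x_0$ is a fixed point; and if $x_0'$ were another fixed point, then $d(x_0,x_0')=d\big(P(x_0),P(x_0')\big)\leqslant r\,d(x_0,x_0')$ forces $d(x_0,x_0')=0$, giving uniqueness.

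Finally, for the quantitative bound I would fix $n$ and let $m\to\infty$ in the displayed estimate: since $P^{n+m}(y)\to x_0$ and $a\mapsto d\big(a,P^{n}(y)\big)$ is continuous, the left-hand side converges to $d\big(x_0,P^{n}(y)\big)$, while the right-hand side does not depend on $m$; this gives~\eqref{eq:contraction in metric spaces}. The argument is entirely elementary; the only subtlety worth flagging is precisely the absence of completeness of $X$, which is why one must extract the limit from the given convergent subsequence rather than from the Cauchy property alone.
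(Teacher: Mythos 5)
Your proof is correct and follows essentially the same route as the paper's: both establish the telescoping estimate $d(P^n(y),P^{n+m}(y))\leqslant \tfrac{r^n}{1-r}d(y,P(y))$, use the given convergent subsequence (rather than completeness) to identify the limit, pass to the limit by continuity of the metric to get~\eqref{eq:contraction in metric spaces}, and finish with continuity of $P$ for the fixed-point identity and contraction for uniqueness. The only cosmetic difference is ordering: the paper derives the quantitative bound directly from $y_{n_k}\to x_0$ and deduces convergence of the whole sequence from it, whereas you first invoke the ``Cauchy sequence with a convergent subsequence converges'' fact and then separately let $m\to\infty$ for the bound.
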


\begin{proof}
    For each $m\in \N$, let $y_m = P^m(y)$ and let $y_0 = y$.
    Then, for every $m>n$,
  \begin{align*}
  d(y_{m},y_{n}) &\leqslant d(y_{m},y_{m-1})+d(y_{m-1},y_{m-2})+\cdots +d(y_{n+1},y_{n})\\
  &\leqslant r^{m-1}d(y_{1},y_{0})+r^{m-2}d(y_{1},y_{0})+\cdots +r^{n}d(y_{1},y_{0})\\
  &=r^{n}d(y_{1},y_{0})\sum _{k=0}^{m-n-1}r^{k}
  \leqslant r^{n}d(y_{1},y_{0})\sum _{k=0}^{\infty }r^{k}
  =r^{n}{\frac {d(y_{1},y_{0})}{1-r}}.
  \end{align*}
  Since $y_{n_k} \r x_0$, then~\eqref{eq:contraction in metric spaces} follows by continuity of the distance function. It follows that $y_n\to x_0$. 
  In particular, $x_0 = \lim_n y_n = P(\lim_n y_{n-1}) = P(x_0)$ and hence $x_0$ is fixed by $P$. Uniqueness follows since $P$ is a contraction.
\end{proof}

Given a measure space $(X,\mu)$ and $\vp\in \mrm{L}^1(\mu)$, we use the notation
\begin{equation*}
    \mu(\vp) := \int\vp\;\der\mu.
\end{equation*}

Given a Lipschitz function $\vp$ on $\R^d$, we let $\mrm{Lip}(\vp)$ denote its Lipschitz constant. We use $\norm{\vp}_\infty$ to denote the sup-norm of $\vp$. Denote by $\mrm{Prob}_\mrm{c}(\R^d)$ the space of compactly supported probability measures on $\R^d$.
Following~\cite{Hutchinson}, we define the $L$-metric on $\mrm{Prob}_\mrm{c}(\R^d)$ as follows:
\begin{equation*}
    L(\mu,\nu) = \sup \left|\mu(\vp) - \nu(\vp)\right|,
\end{equation*}
for all $\mu,\nu \in \mrm{Prob}_\mrm{c}(\R^d)$, where the supremum is taken over all Lipschitz functions $\vp$ with Lipschitz constant at most $1$.

\begin{theorem}[Theorem 4.4.1 (ii), \cite{Hutchinson}]
\label{thm: convergence in L-metric}
    Let $\Fcal$ be an IFS and $\l$ be a probability vector.
    There exists a unique probability measure $\mu\in \mrm{Prob}_c(\R^d)$ satisfying~\eqref{eqn: self similar measure}.
    Moreover, for every Lipschitz function $\vp$ on $\R^d$, $\nu\in\mrm{Prob}_\mrm{c}(\R^d)$, and all $m\in \N$, we have
  \begin{equation*}
    \lvert 
    \mu(\vp)- P_\l^m(\nu)(\vp)\rvert\ll_\nu r^{m}\mrm{Lip}(\vp),
  \end{equation*}
  where $r$ is given by
  \begin{equation*}
      r = \sum_{i\in \L} \l_i \rho_i.
  \end{equation*}
\end{theorem}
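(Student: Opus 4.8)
The plan is to realise $\mu$ as the unique fixed point of the contraction $P_\l$ acting on a suitable compact metric space of measures, in the spirit of~\cite{Hutchinson}, and then to read off the quantitative rate from Lemma~\ref{lem: rate of convergence in fixed point theorem}.

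First I would pass to a compact ambient set. Since each $f_i=\rho_i O_i+b_i$ is a similarity with ratio $\rho_i<1$ (as $O_i$ preserves the inner product), the closed ball $B=\{\xbf\in\R^d:\ \norm{\xbf}\leqslant R\}$ satisfies $f_i(B)\subseteq B$ for every $i\in\L$ as soon as $R\geqslant\max_{i\in\L}\norm{b_i}/(1-\rho_i)$; enlarging $R$ if necessary we may also arrange $\supp(\nu)\subseteq B$, and this is the only point at which the implied constant acquires its dependence on $\nu$. Let $\Mcal$ denote the set of Borel probability measures supported in $B$, equipped with the $L$-metric. Then $P_\l$ maps $\Mcal$ into itself, since $(f_i)_\ast\eta$ is supported in $f_i(B)\subseteq B$. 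I would then invoke the standard fact that the $L$-metric is the Kantorovich--Rubinstein metric, which metrises the weak-$\ast$ topology on the probability measures supported in $B$; as $B$ is compact, $(\Mcal,L)$ is a compact, hence sequentially compact, metric space.

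Next I would verify that $P_\l$ contracts $(\Mcal,L)$ with ratio $r=\sum_{i\in\L}\l_i\rho_i<1$. For $\eta,\z\in\Mcal$ and any Lipschitz $\vp$ with $\mrm{Lip}(\vp)\leqslant 1$, each $\vp\circ f_i$ has Lipschitz constant at most $\rho_i$ because $f_i$ is a similarity with ratio $\rho_i$, whence
\[
  \lvert P_\l(\eta)(\vp)-P_\l(\z)(\vp)\rvert
  =\Bigl\lvert\sum_{i\in\L}\l_i\bigl(\eta(\vp\circ f_i)-\z(\vp\circ f_i)\bigr)\Bigr\rvert
  \leqslant\sum_{i\in\L}\l_i\,\rho_i\,L(\eta,\z)=r\,L(\eta,\z),
\]
and taking the supremum over such $\vp$ gives $L(P_\l\eta,P_\l\z)\leqslant r\,L(\eta,\z)$.

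Finally I would apply Lemma~\ref{lem: rate of convergence in fixed point theorem}: given $\nu\in\Mcal$, sequential compactness yields a subsequence along which $P_\l^{n_k}(\nu)$ converges in $(\Mcal,L)$ to some $\mu$, and the lemma then shows $P_\l^n(\nu)\to\mu$, identifies $\mu$ as the unique fixed point of $P_\l$ in $\Mcal$, and gives $L(\mu,P_\l^n(\nu))\leqslant r^n\,L(\nu,P_\l\nu)/(1-r)$. To see that this $\mu$ is the unique self-similar probability measure in all of $\mrm{Prob}_c(\R^d)$, I would note that any compactly supported $\mu'$ with $\mu'=P_\l(\mu')$ has $\supp(\mu')=\bigcup_{i\in\L}f_i(\supp\mu')$, a nonempty compact $\Fcal$-invariant set, hence $\supp(\mu')=\Kcal\subseteq B$ by the uniqueness of the attractor in~\eqref{eq:limit set}; so $\mu'\in\Mcal$. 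For the stated estimate with a general Lipschitz $\vp$: if $\mrm{Lip}(\vp)=0$ then $\vp$ is constant and both sides vanish, while otherwise applying the definition of the $L$-metric to $\vp/\mrm{Lip}(\vp)$ gives $\lvert\mu(\vp)-P_\l^m(\nu)(\vp)\rvert\leqslant\mrm{Lip}(\vp)\,L(\mu,P_\l^m(\nu))\leqslant\bigl(L(\nu,P_\l\nu)/(1-r)\bigr)\,r^m\,\mrm{Lip}(\vp)$, with $L(\nu,P_\l\nu)\leqslant\diam{B}<\infty$ depending only on $\nu$ and $\Fcal$. The one input that is not entirely elementary is the identification of the $L$-metric topology with the weak-$\ast$ topology on the measures supported in $B$ together with the resulting compactness; I expect that, and the passage from uniqueness within $\Mcal$ to uniqueness within $\mrm{Prob}_c(\R^d)$, to be the only points requiring a few careful lines, the rest reducing to the two-line contraction estimate above and an invocation of Lemma~\ref{lem: rate of convergence in fixed point theorem}.
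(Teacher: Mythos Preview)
Your proposal is correct and follows essentially the same route as the paper: verify that $P_\l$ is an $L$-metric contraction with ratio $r$, obtain a subsequential limit by compactness of $\mrm{Prob}(B)$, and conclude via Lemma~\ref{lem: rate of convergence in fixed point theorem}. The only cosmetic difference is that you invoke the Kantorovich--Rubinstein identification of the $L$-metric with the weak-$\ast$ topology as a black box, whereas the paper supplies a self-contained argument (reduce to Lipschitz functions vanishing at $\mathbf{0}$, bound them uniformly by the radius of $B$, and apply Arzel\`a--Ascoli); your additional remark pinning down uniqueness in all of $\mrm{Prob}_c(\R^d)$ via $\supp(\mu')=\Kcal$ is a nice touch the paper leaves implicit.
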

\begin{proof}

  First, we show that $P_\l$ is a contraction in the $L$-metric on $\mrm{Prob}_\mrm{c}(\R^d)$ with ratio $r$. Indeed,
  we have for all $\nu_1,\nu_2 \in \mrm{Prob}_\mrm{c}(\R^d)$,
  \begin{align*}
      \left| P_\l(\nu_1)(\vp) - P_\l(\nu_2)(\vp) \right| &\leqslant
      \sum_{i\in \L} \l_i \rho_i \left| \nu_1(\rho_i^{-1} \vp\circ f_i) - \nu_2(\rho_i^{-1} \vp\circ f_i)  \right| \\
      &\leqslant r L(\nu_1,\nu_2) \mrm{Lip}(\vp), 
  \end{align*}
  where on the second line, we used the fact that $\mrm{Lip}(\vp \circ f_i)\leq\rho_i\mrm{Lip}(\vp)$.
  
  In order to apply Lemma~\ref{lem: rate of convergence in fixed point theorem}, it remains to check that $P_\l^n(\nu)$ converges along a subsequence to some $\mu\in\mrm{Prob}_c(\R^d)$ in the $L$-metric.
  Since all the maps in $\Fcal$ are contractions, there is a closed ball $B\subset \R^d$ around $\mathbf{0}$ containing the supports of the measures $P_\l^n(\nu)$ for all $n$.
  By compactness of the weak-$\ast$ topology on $\mrm{Prob}(B)$, we can find $\mu\in \mrm{Prob}(B)\subset \mrm{Prob}_c(\R^d)$ and a sequence $n_k$ such that $P_\l^{n_k}(\nu)\to \mu$ in the weak-$\ast$ topology.
  
  We note that this implies convergence in the $L$-metric on $\mrm{Prob}_c(\R^d)$. Indeed, for every $\vp\in \mrm{C}(B)$ with $\mrm{Lip}(\vp)\leq 1$, and for $\nu_1,\nu_2\in \mrm{Prob}(B)$, we have 
  \[|\nu_1(\vp)-\nu_2(\vp)| = |\nu_1(\vp-\vp(\mathbf{0})) - \nu_2(\vp-\vp(\mathbf{0}))|. \]
  Hence, it suffices to check convergence in the $L$-metric where the supremum is restricted to functions which vanish at $\mathbf{0}$.
  For such functions, we have $\norm{\vp}_\infty \leq |\vp(\mathbf{0})|+R=R$, where $R$ is the radius of $B$ since $\mrm{Lip}(\vp)\leq 1$. This set of function is pre-compact in the uniform norm on $\mrm{C}(B)$ in view of the Arzel\`a-Ascoli theorem. One then verifies that this implies convergence in the $L$-metric. Applying Lemma~\ref{lem: rate of convergence in fixed point theorem} completes the proof.

\end{proof}

    Finally, we record the following simple lemma concerning averages of multiplicative cocycles.
    \begin{lem}\label{lem: average of submul coc is subadditive}
    Suppose a tuple $(\t_i)_{i\in \L} \in \R^{|\L|} $ and a probability vector $\l$ on $\L$ are given.
    For $\w=(\w_1,\dots ,\w_k)\in \L^k$, let  $\t_\w = \t_{\w_1}\cdots \t_{\w_k}$. 
    Then, for all $n\in \N$,
    
      \begin{equation*}
        \sum_{\w\in\L^n}\l_\w \t_\w = \left(\sum_{i\in\L} \lambda_{i}\t_i \right)^n.
      \end{equation*}

    \end{lem}
    
    \begin{proof}
    Let $a_n = \sum_{\w\in\L^n}\l_\w \t_\w$.
    Given two words $\a$ and $\w$, let $\a\w$ denote the word obtained by concatenating $\w$ to the end of $\a$.
    We then note that $\l_{\a\w}=\l_\a\l_\w$ and $\t_{\a\w}=\t_\a\t_\w$.
    It follows that, for all $m,n \in \N$, we have
    \begin{align*}
        a_{m+n} = \sum_{\w\in\L^{m+n}}\l_\w \t_\w
        &= \sum_{\a\in \L^{m}} \sum_{\w\in \L^n} \l_{\a\w}\t_{\a\w}
         \sum_{\a\in \L^{m}} \sum_{\w\in \L^n} \l_{\a}\l_\w \t_{\a}\t_\w = a_m a_n.
    \end{align*}
    \end{proof}

   \subsection{Notational Convention}\label{sec: IFS}
      For the remainder of this article, we will denote by $(\Fcal,\lambda)$ a tuple consisting of an IFS and a probability vector $\lambda$. With such a tuple, we implicitly fix a choice of an inner product relative to which $\Fcal$ consists of similarity maps and denote the resulting norm simply by $\norm{\cdot}$.
       We denote by $\mrm{SO}_d(\R)$ the orthogonal group of this inner product.
      We extend this norm to $\R^{d+1}$ as follows:
      \begin{equation*}
          \norm{(x_1,\dots,x_{d+1})} = \max\seti{\norm{(x_1,\dots,x_d)}, |x_{d+1}| }.
      \end{equation*}
      We will denote by $\Kcal=\Kcal_{\Fcal}$ the attractor, which is completely determined by $\Fcal$, and by $\mu=\mu_{(\Fcal,\lambda)}$ the unique self-similar measure for the operator $P_{\lambda}$ provided by Theorem~\ref{thm: convergence in L-metric}.

   \subsection{The open set condition and null overlaps}
   \label{sec:osc}
   In general, serious problems in the analysis occur if the images of the fractal by distinct maps of the IFS overlap.
   We recall several conditions under which such overlap becomes negligible in a precise sense.

    We say $\Fcal$ satisfies the \textbf{open set condition} if there exists an open set $U\subset \R^d$ such that for all $i\neq j\in \L$,
    \begin{equation}\label{eq:OSC}
    	f_i (U) \subseteq U, \qquad
        f_i(U) \cap f_j(U) =\emptyset.
    \end{equation}
    We say that a self-similar measure $\mu$ has \textbf{null overlaps} if for all $i\neq j\in \L$,
    \begin{equation}\label{def:null overlaps}
        \mu(\Kcal_i \cap \Kcal_j) = 0.
    \end{equation}
    Note that by self similarity, the null overlaps property implies that $\mu(\Kcal_\a \cap \Kcal_\w) =0$ for all $\a\neq \w \in \L^n$ and for all $n\in\N$.

    \begin{lem}[Lemma 3.3,~\cite{Graf}]
    \label{lem: null overlap}
    Suppose $\mc{F}$ is an IFS satisfying the open set condition, $\lambda$ is a probability vector and $\mu$ is a self-similar measure for $(\mc{F},\lambda)$. Then, $\mu$ has null overlaps.
    \end{lem}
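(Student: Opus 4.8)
The plan is to realize $\mu$ as a pushforward of a Bernoulli measure and to show that, under the open set condition, the associated symbolic coding is injective $\mu$-almost everywhere. Let $\Omega=\L^{\N}$, let $\mathbb P=\l^{\otimes\N}$ be the Bernoulli measure, and let $\pi\colon\Omega\to\Kcal$, $\pi(\w)=\lim_{n}f_{\w_1\cdots\w_n}(\mathbf 0)$, be the coding map; the limit exists because the $f_i$ are uniform contractions, $\pi$ is continuous with $\pi\circ\s_i=f_i\circ\pi$ (where $\s_i$ prepends the symbol $i$), and $\pi(\Omega)$ is a compact set invariant under $\Fcal$, hence equals $\Kcal$ by the uniqueness in Theorem~\ref{thm: convergence in L-metric}. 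A computation with cylinders gives $\sum_{i\in\L}\l_i(\s_i)_\ast\mathbb P=\mathbb P$, so $P_\l(\pi_\ast\mathbb P)=\pi_\ast\big(\sum_i\l_i(\s_i)_\ast\mathbb P\big)=\pi_\ast\mathbb P$, and the uniqueness in Theorem~\ref{thm: convergence in L-metric} forces $\mu=\pi_\ast\mathbb P$. Let $M=\{x\in\Kcal:x\text{ has at least two }\pi\text{-preimages}\}$. If $x\in\Kcal_i\cap\Kcal_j$ with $i\ne j$, writing $x=f_i(y)=f_j(z)$ and choosing codes $\a,\b$ of $y,z$ produces the distinct codes $i\a,j\b$ of $x$, so $x\in M$; hence it suffices to prove $\mu(M)=0$.

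Next I would extract the separation statement hidden in~\eqref{eq:OSC}: \emph{if $x\in f_\w(U)$ for some $\w\in\L^n$, then every code $\tau$ of $x$ satisfies $\tau_1\cdots\tau_n=\w$.} Iterating the first relation in~\eqref{eq:OSC} gives $f_{\w'}(U)\subseteq U$ for all $\w'$, and peeling off the longest common prefix of two distinct $\w',\w''\in\L^n$ and applying the second relation at the first place they differ gives $f_{\w'}(U)\cap f_{\w''}(U)=\emptyset$. Replacing $U$ by a suitable open set if necessary, we may assume $U\cap\Kcal\ne\emptyset$ (a standard consequence of the open set condition); fixing $x_0\in U\cap\Kcal$ one gets $\Kcal=\pi(\Omega)\subseteq\overline U$, since $\pi(\w)=\lim_n f_{\w_1\cdots\w_n}(x_0)$ and each $f_{\w_1\cdots\w_n}(x_0)\in U$. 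Consequently $x=\pi(\tau)\in f_{\tau_1\cdots\tau_n}(\Kcal)\subseteq\overline{f_{\tau_1\cdots\tau_n}(U)}$, while $x$ also lies in the open set $f_\w(U)$; as an open set disjoint from $f_{\tau_1\cdots\tau_n}(U)$ (unless the two coincide) is disjoint from its closure, we conclude $\tau_1\cdots\tau_n=\w$.

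Finally I would run an ergodic-recurrence argument. Since every $\l_i>0$ we have $\supp\mu=\Kcal$, so picking $r>0$ with $\overline{B(x_0,r)}\subseteq U$ yields $m_0:=\mu(B(x_0,r))=\mathbb P\big(\pi^{-1}B(x_0,r)\big)>0$. The shift $(\Omega,\mathbb P,\s)$ is ergodic, so by Birkhoff's theorem $\mathbb P$-almost every $\w$ has $\s^n\w\in\pi^{-1}B(x_0,r)$ for infinitely many $n$; equivalently, $\pi(\w)\in f_{\w_1\cdots\w_n}(B(x_0,r))$ for infinitely many $n$. Call such $\w$ \emph{good}; then $\mathbb P(\{\w:\w\text{ good}\})=1$, and since $\pi^{-1}\{x:x\text{ has no good code}\}$ contains no good sequence, $\mu$-a.e.\ $x$ admits a good code $\w$. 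For such $x$, if $\tau\ne\w$ were a second code, first disagreeing with $\w$ at position $k$, choose $n\ge k$ with $\pi(\w)\in f_{\w_1\cdots\w_n}(B(x_0,r))\subseteq f_{\w_1\cdots\w_n}(U)$; the separation statement then forces $\tau_1\cdots\tau_n=\w_1\cdots\w_n$, contradicting $\tau_k\ne\w_k$. Hence $\mu$-a.e.\ $x$ has a unique code, so $\mu(M)=0$, which completes the proof. The heart of the argument is this recurrence step, and the point that makes it non-vacuous is that $U$ may be taken to meet $\Kcal$ (so that $\mu$ charges a ball inside $U$); the remaining manipulations with codes are routine.
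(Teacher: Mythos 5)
The paper does not give a proof of this lemma; it is cited verbatim from Graf's paper, so there is no in-paper argument to compare against. Your proposal therefore supplies a proof where the paper supplies none. The route you take—identify $\mu=\pi_\ast\l^{\otimes\N}$ via the coding map, reduce null overlaps to almost-sure uniqueness of coding, and get that uniqueness from a Birkhoff/Poincar\'e recurrence into a positive-measure ball inside the OSC set $U$—is a clean and essentially correct way to do it, and is in the same spirit as the coding arguments the paper itself uses in Lemma~\ref{lem:Lebesgue-density} and Lemma~\ref{lem:Leb density for cylinders}. (Small citation slip: uniqueness of the attractor $\Kcal$ comes from Hutchinson, as quoted around \eqref{eq:limit set}; Theorem~\ref{thm: convergence in L-metric} is about uniqueness of the self-similar measure.)

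The one step you cannot wave away is ``Replacing $U$ by a suitable open set if necessary, we may assume $U\cap\Kcal\ne\emptyset$ (a standard consequence of the open set condition).'' That assertion is precisely Schief's theorem that the OSC implies the strong OSC for self-similar systems of similarities in $\R^d$ (A.~Schief, \emph{Separation properties for self-similar sets}, Proc.~Amer.~Math.~Soc.~122 (1994), 111--115); it is a genuine theorem with a non-trivial proof, not a routine remark, and it fails for more general conformal systems. Your argument really does need it: you must pick $x_0\in U\cap\Kcal$ so that $\mu\bigl(B(x_0,r)\bigr)>0$ (since $\supp\mu=\Kcal$), otherwise the recurrence step is vacuous. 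By contrast, the earlier deduction $\Kcal\subseteq\overline U$ only requires some $x_0\in U$, not $x_0\in U\cap\Kcal$; it is the positive-measure ball that forces the strong OSC. So the proof is correct, but this step should carry an explicit citation to Schief rather than being dismissed as standard, and the reader should be told that this is where the real work is being outsourced.
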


    For a Borel set $A$ and a Borel measure $\mu$, we denote by $\mu\vert_A$ the restriction of $\mu$ to $A$. That is for every Borel set $B$, $\mu\vert_A(B) = \mu(B\cap A)$.
    The following consequence of null overlaps will be useful for us.
    
    \begin{lem}\label{lem: transformation of self-similar measures}
    Suppose $\mc{F}$ is an IFS, $\l$ is a probability vector and $\mu$ is a self-similar measure for $(\mc{F},\l)$ having null overlaps. Then, for every $k\in \N$ and all $\w\in \L^k$,
    \begin{equation*}
         \mu \vert_{f_\w(\Kcal)}= \l_\w (f_\w)_\ast\mu
    \end{equation*}
    and, in particular, $\mu(\Kcal_{\w})=\l_{\w}$.
    \end{lem}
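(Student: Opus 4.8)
The plan is to restrict the $k$-fold self-similarity identity
\[
\mu = P_\l^{k}(\mu) = \sum_{\a\in\L^{k}} \l_\a (f_\a)_\ast\mu ,
\]
furnished by~\eqref{eq: iterated P_l}, to the single piece $\Kcal_\w$ and to discard all the terms with $\a\neq\w$ using the null-overlaps hypothesis. Before doing so I would record two elementary facts about the measures $(f_\a)_\ast\mu$ for $\a\in\L^k$. First, since $\mu$ is supported on $\Kcal$ and $f_\a$ is continuous, $(f_\a)_\ast\mu$ is supported on $f_\a(\Kcal)=\Kcal_\a$; consequently $(f_\a)_\ast\mu(B)=(f_\a)_\ast\mu(B\cap\Kcal_\a)$ for every Borel set $B$. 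Second, because $f_\a$ is an injective similarity one has $f_\a^{-1}(\Kcal_\a)=\Kcal$, so $(f_\a)_\ast\mu(\Kcal_\a)=\mu(\Kcal)=1$; in other words $(f_\a)_\ast\mu$ is a probability measure concentrated on $\Kcal_\a$.

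The key step is to upgrade the null-overlaps property of $\mu$ to the statement that $(f_\a)_\ast\mu$ gives zero mass to $\Kcal_\w$ whenever $\a\neq\w$ in $\L^k$. For this I would feed the Borel set $\Kcal_\a\cap\Kcal_\w$ into the displayed identity: every summand on the right is a non-negative measure, while the left side equals $\mu(\Kcal_\a\cap\Kcal_\w)=0$ by the remark following~\eqref{def:null overlaps}. Hence each individual term vanishes on $\Kcal_\a\cap\Kcal_\w$, and since $\l_\a>0$ we get $(f_\a)_\ast\mu(\Kcal_\a\cap\Kcal_\w)=0$. Combining this with the first bullet above (that $(f_\a)_\ast\mu$ lives on $\Kcal_\a$) yields $(f_\a)_\ast\mu(\Kcal_\w)=(f_\a)_\ast\mu(\Kcal_\a\cap\Kcal_\w)=0$ for all $\a\neq\w$.

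Finally I would evaluate the same identity on $B\cap\Kcal_\w$ for an arbitrary Borel set $B$. All terms with $\a\neq\w$ drop out by the previous step, while the $\a=\w$ term is $\l_\w(f_\w)_\ast\mu(B\cap\Kcal_\w)=\l_\w(f_\w)_\ast\mu(B)$, again because $(f_\w)_\ast\mu$ is concentrated on $\Kcal_\w$. This gives $\mu(B\cap\Kcal_\w)=\l_\w(f_\w)_\ast\mu(B)$ for every Borel $B$, i.e.\ $\mu\vert_{\Kcal_\w}=\l_\w(f_\w)_\ast\mu$; taking $B=\R^d$ and using that $(f_\w)_\ast\mu$ is a probability measure gives $\mu(\Kcal_\w)=\l_\w$. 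The only point requiring care—the ``main obstacle,'' such as it is—is that null overlaps is stated for $\mu$ itself, not for the push-forwards $(f_\a)_\ast\mu$; the non-negativity of the terms in the self-similarity decomposition is precisely what lets one transfer it, so no separate argument is needed.
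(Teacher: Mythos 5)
Your proof is correct and follows exactly the route the paper indicates: combine the iterated self-similarity identity $\mu = \sum_{\a\in\L^k}\l_\a(f_\a)_\ast\mu$ from~\eqref{eq: iterated P_l} with the null-overlaps property to kill the off-diagonal terms when restricting to $\Kcal_\w$. The paper states this as a one-line observation; you have merely filled in the details, including the (correct and worth noting) subtlety that null overlaps must first be transferred from $\mu$ to each $(f_\a)_\ast\mu$ via the non-negativity of the summands.
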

    \begin{proof}
    The main assertion follows from the null overlaps property and equation~\eqref{eq: iterated P_l}.
    \end{proof}
    
    The weakest notion of separation for an IFS is having \textbf{no exact overlaps}, where $\Fcal$ has no exact overlaps if 
    \begin{equation}\label{eq:exact overlaps}
        \a\neq \w \Longrightarrow f_\a \neq f_\w, \qquad \forall \a,\w\in\L^\ast.
    \end{equation}
    Having no exact overlaps turns out to be sufficient for spectral gap arguments, cf.~\textsection~\ref{sec:spectral gap}.
    
    \subsection{A Zero-Full law for fractals}
    The following lemma will be used in the proof of the divergence part of Theorem~\ref{thm:main simplified} to upgrade positivity of the measure of the set of $\psi$-approximable vectors to the statement that they have full measure. 
    Note that Cassel's Zero-Full law~\cite{Cassels-Zerofull} does not apply directly to fractal measures; cf.~Remark~\ref{rem:ergodicity}.
    
       \begin{lem}\label{lem:Lebesgue-density}
         Let $(\Fcal,\lambda)$ as in Section~\ref{sec: IFS} and suppose that the associated self-similar measure $\mu$ has null overlaps.
         Let $A \subseteq \R^d$ be a Borel measurable set and suppose that there exists $c > 0$ such that for every $\a\in\L^{\ast}$, we have that $\mu(A \cap  \Kcal_\a) \geq c\mu(\Kcal_\a)$. Then, $\mu(A) = 1$.
       \end{lem}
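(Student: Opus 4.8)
The plan is to argue by contradiction using a density-point argument adapted to the symbolic coding of the fractal. Suppose $\mu(A) < 1$, so that the complement $A^c$ has positive measure $\mu(A^c) = 1 - \mu(A) > 0$. The key geometric input is that the cylinder sets $\Kcal_\a$, $\a \in \L^\ast$, generate the Borel $\sigma$-algebra of $\Kcal$ up to $\mu$-null sets; more precisely, because $\rho_{\max} < 1$ the diameters $\diam{\Kcal_\a} \to 0$ uniformly as $|\a| \to \infty$, so the cylinders form a Vitali-type basis for $\mu$. First I would invoke the martingale convergence theorem (or the Lebesgue differentiation theorem relative to the filtration $\Fcal_n$ generated by $\{\Kcal_\a : \a \in \L^n\}$) applied to the conditional expectations $\E[\one_{A^c} \mid \Fcal_n]$: for $\mu$-almost every $x \in \Kcal$, writing $\a_n(x)$ for the unique word of length $n$ with $x \in \Kcal_{\a_n(x)}$ (which is well-defined off the null overlap set by Lemma~\ref{lem: null overlap}-type reasoning, and using Lemma~\ref{lem: transformation of self-similar measures} that $\mu(\Kcal_\a) = \l_\a$), one has
\begin{equation*}
    \frac{\mu(A^c \cap \Kcal_{\a_n(x)})}{\mu(\Kcal_{\a_n(x)})} \longrightarrow \one_{A^c}(x) \qquad \text{as } n \to \infty.
\end{equation*}

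Next, since $\mu(A^c) > 0$, the limit above equals $1$ on a set of positive measure, in particular it is nonzero somewhere; pick a point $x_0 \in A^c$ that is such a density point. Then for $n$ large enough,
\begin{equation*}
    \frac{\mu(A^c \cap \Kcal_{\a_n(x_0)})}{\mu(\Kcal_{\a_n(x_0)})} > 1 - c,
\end{equation*}
which is equivalent to $\mu(A \cap \Kcal_{\a_n(x_0)}) < c\, \mu(\Kcal_{\a_n(x_0)})$. Taking $\a = \a_n(x_0)$ contradicts the hypothesis that $\mu(A \cap \Kcal_\a) \geq c\,\mu(\Kcal_\a)$ for every $\a \in \L^\ast$. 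Hence $\mu(A^c) = 0$, i.e.\ $\mu(A) = 1$.

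I expect the main technical point — though it is routine rather than deep — to be the justification that the cylinder filtration is a legitimate differentiation basis for $\mu$: one must check that $\bigcap_n \Kcal_{\a_n(x)} = \{x\}$ for $\mu$-a.e.\ $x$ (immediate from $\diam{\Kcal_\a} \le \rho_{\max}^{|\a|}\diam{\Kcal} \to 0$) and that $\mu$ restricted to $\Kcal$ is, up to the null overlap set, carried by well-defined nested cylinders, so that $n \mapsto \a_n(x)$ is defined $\mu$-a.e.; this is exactly where Lemma~\ref{lem: null overlap} and the null overlaps hypothesis enter. An alternative, if one prefers to avoid invoking martingale convergence, is to run a direct Vitali covering argument: cover $A^c$ (up to $\e$ in measure) by cylinders on which the density of $A^c$ exceeds $1-c$, which exist by the same diameter-shrinking fact, and again extract the contradicting cylinder. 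Either route is short; the substance of the lemma is entirely in the density/differentiation step, which the null overlaps property makes available.
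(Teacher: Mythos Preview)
Your proof is correct and follows essentially the same density-point strategy as the paper: find a cylinder on which the complement of $A$ has density exceeding $1-c$, contradicting the hypothesis. The only cosmetic difference is that the paper lifts everything to the symbolic space $\Sigma = \L^{\N}$ via the coding map $\pi$ (so that cylinders are genuinely disjoint) and invokes a Lebesgue density lemma for Bernoulli measures there, whereas you work directly on $\Kcal$ with the cylinder filtration and martingale convergence; the null-overlaps hypothesis is what makes these two viewpoints equivalent.
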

       \begin{proof}
       Let $\Sigma = \L^\N$ and endow it with the product topology induced from the discrete topology on $\L$.
       Denote by $\l^{\bN}$ the product measure on $\Sigma$ defined by $\l$.
       Let $\pi: \L^\N \to \Kcal$ be the coding map defined by $\pi(\a) = \lim_{n\to\infty} (f_{\a_1}\circ\cdots\circ f_{\a_n})(\mathbf{0})$. Then, $\pi$ is continuous and $\mu = \pi_\ast \l^\N$; cf.~\cite[Theorems 3.1(3) and 4.4(4)]{Hutchinson}.
       In particular, $\mu(A)=\l^\N(\pi^{-1}(A))$.
       For $\a\in \L^\ast$, let $\Sigma_\a$ denote the cylinder set determined by $\a$. For all $x$, let $\Sigma(x,k)$ denote the unique cylinder set of length $k$ containing $x$.
       
       By our null overlaps hypothesis, the symmetric difference between $\pi^{-1}(\Kcal_\a)$ and $\Sigma_\a$ has measure $0$.
       Hence, our hypothesis shows that 
       \begin{equation}\label{eq:dense in cylinders}
           \l^\N(\pi^{-1}(A)\cap\Sigma_\a) \geq c \l^{\N}(\Sigma_\a).
       \end{equation}
       
       On the other hand, if $B = \Sigma \setminus \pi^{-1}(A)$, then by a version of the Lebesgue density theorem for $(\Sigma,\l^\N)$, cf. Lemma~\ref{lem:Leb density for cylinders}, for almost every $x\in B$, we have
       \begin{equation*}
           \lim_{k\to\infty}\frac{\l^\N(\Sigma(x,k)\cap B)}{\l^\N(\Sigma(x,k))}=1.
       \end{equation*}
       It follows that if $B$ has positive measure, then we can find some cylinder $\Sigma(x,k)$ so that $\l^\N(\Sigma(x,k)\cap B)> (1-c)\l^\N(\Sigma(x,k))$. 
       This contradicts~\eqref{eq:dense in cylinders}.
        
       \end{proof}

    \subsection{Absolutely decaying measures}
    For a subset $\Lcal \subset \R^d$ and $\e>0$, we denote
    \begin{equation*}
        \Lcal^{(\e)} = \big\{\xbf\in \R^d: d(\xbf,\Lcal) <\e\big\}.
    \end{equation*}

    The IFS $\Fcal$ is said to be \textbf{irreducible} if no proper affine subspace of $\R^d$ is invariant under all the maps in $\Fcal$.
    
    The following absolute decay property was established in~\cite{KleinbockLindenstraussWeiss} for a wide class of natural measures on $\R^d$.
    We recall their result in our setting.

\begin{prop}[Theorem 2.3,~\cite{KleinbockLindenstraussWeiss}]
\label{prop: friendly}
Let $(\Fcal,\lambda)$ as in Section~\ref{sec: IFS} and suppose that $\Fcal$ is irreducible and satisfies the open set condition.
Let $\mu$ be the associated self-similar measure.
Then, there exist constants $C\geq 1$ and $\g >0$ such that for every word $\a \in  \Lambda^\ast$, 
for every proper affine subspace $\Lcal$, and for every $\e>0$, we have
\begin{equation}\label{eq:absolute decay}
    \mu(\Kcal_\a \cap \Lcal^{(\e)}) \leqslant C \left(\frac{\e}{\rho_\a}\right)^\g \mu(\Kcal_\a).
\end{equation}
\end{prop}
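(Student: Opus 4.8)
The plan is to reduce the assertion to the case of the empty word, and then to a single fixed scale via a bootstrap; the only non-formal ingredient is a uniform non-concentration estimate, which is where irreducibility enters.

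\emph{Reduction.} Since every proper affine subspace sits inside a hyperplane and $\Lcal\subseteq\Lcal'$ forces $\Lcal^{(\e)}\subseteq\Lcal'^{(\e)}$, it suffices to bound $\mu(\Kcal_\a\cap H^{(\e)})$ for affine hyperplanes $H$. For $\a\in\L^\ast$ the map $f_\a$ is a similarity of ratio $\rho_\a$, so $f_\a^{-1}(H^{(\e)})=(f_\a^{-1}H)^{(\e/\rho_\a)}$ and $f_\a^{-1}H$ is again a hyperplane; since the open set condition forces null overlaps (Lemma~\ref{lem: null overlap}), Lemma~\ref{lem: transformation of self-similar measures} gives
\[
\mu(\Kcal_\a\cap H^{(\e)})=\l_\a\,\mu\big((f_\a^{-1}H)^{(\e/\rho_\a)}\big)=\mu(\Kcal_\a)\,\mu\big((f_\a^{-1}H)^{(\e/\rho_\a)}\big).
\]
Hence the proposition reduces to the claim that $D(\e):=\sup_H\mu(H^{(\e)})\leq C\e^\g$ for all $\e>0$ and some $C\geq1$, $\g>0$ (the supremum over hyperplanes); when $\e/\rho_\a\geq1$ one just uses $D(\e/\rho_\a)\leq1\leq C(\e/\rho_\a)^\g$.

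\emph{Single-scale non-concentration.} I would first show there are $\e_\ast\in(0,1)$ and $c_0\in(0,1)$ with $D(\e_\ast)\leq c_0$. If not, there are hyperplanes $H_n$ with $\mu(H_n^{(1/n)})>1-1/n$. As $\mu$ is supported on a fixed ball $B$, we may assume each $H_n$ meets a fixed neighborhood of $B$; writing $H_n=\{x:\langle x,v_n\rangle=t_n\}$ with $|v_n|=1$ and $|t_n|$ bounded, compactness yields a subsequential limit $H_\infty=\{x:\langle x,v\rangle=t\}$. On $B$ the functions $x\mapsto d(x,H_n)$ converge uniformly to $d(\cdot,H_\infty)$, so $H_n^{(1/n)}\cap B$ lies in the closed $\delta_n$-neighborhood of $H_\infty$ for some $\delta_n\to0$; letting $n\to\infty$ and then shrinking the neighborhood gives $\mu(H_\infty)=1$, i.e. $\Kcal=\supp\mu\subseteq H_\infty$. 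This contradicts irreducibility, since then the affine hull of $\Kcal$ is a proper affine subspace invariant under every $f_i$; in particular $\diam{\Kcal}>0$.

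\emph{Bootstrap to power decay.} I would then establish the submultiplicative inequality
\[
D(\theta\e)\leq c_0\, D(C'\e)\qquad(0<\e\leq1/C'),
\]
with $\theta:=\e_\ast\rho_{\min}$ and $C':=\theta+\diam{\Kcal}$. Fix a hyperplane $H$ and $\e$, and let $\Scal$ be the stopping set of words $\w$ minimal subject to $\rho_\w<\e$; then $\bigcup_{\w\in\Scal}\Kcal_\w=\Kcal$, the $\Kcal_\w$ ($\w\in\Scal$) are pairwise $\mu$-null-overlapping, $\sum_{\w\in\Scal}\l_\w=1$, and $\e\rho_{\min}\leq\rho_\w<\e$. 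If $\Kcal_\w\cap H^{(\theta\e)}\neq\emptyset$ then, since $\diam{\Kcal_\w}=\rho_\w\diam{\Kcal}<\e\diam{\Kcal}$, we get $\Kcal_\w\subseteq H^{(C'\e)}$, so the total $\l$-mass of such words is $\leq\mu(H^{(C'\e)})\leq D(C'\e)$; and for each such $\w$, Lemma~\ref{lem: transformation of self-similar measures} gives
\[
\mu(\Kcal_\w\cap H^{(\theta\e)})=\l_\w\,\mu\big((f_\w^{-1}H)^{(\theta\e/\rho_\w)}\big)\leq\l_\w\,D(\e_\ast)\leq\l_\w c_0,
\]
because $\theta\e/\rho_\w\leq\theta/\rho_{\min}=\e_\ast$. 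Summing over $\w\in\Scal$ and taking the supremum over $H$ yields the displayed inequality. The substitution $\eta=C'\e$ turns it into $D(q\eta)\leq c_0D(\eta)$ for $0<\eta\leq1$ with $q:=\theta/C'\in(0,1)$; iterating, $D(q^n)\leq c_0^n$ for all $n$, and since $c_0=q^\g$ for $\g:=\log c_0/\log q>0$, monotonicity of $D$ gives $D(\e)\leq c_0^{-1}\e^\g$ for all $\e\in(0,1]$ (and trivially for $\e>1$). Together with the reduction step this proves the proposition with $C=1/c_0$.

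\emph{Main obstacle.} The only step that is not bookkeeping with the self-similar structure is the single-scale non-concentration estimate, which uses irreducibility through compactness of the space of hyperplanes meeting a bounded set and continuity of $\mu$ along shrinking neighborhoods of a hyperplane. The one delicate point in the bootstrap is to retain the enlarged scale $C'\e$ on the right-hand side of the submultiplicative inequality rather than bounding the relevant $\l$-mass crudely by $1$: it is precisely this feedback that upgrades a one-scale gap to genuine power decay.
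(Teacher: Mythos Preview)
Your proof is correct and amounts to a self-contained reconstruction of the argument in \cite[\S8]{KleinbockLindenstraussWeiss} that the paper simply cites, noting only that the scaling identity $\mu(f_\w(A\cap\Kcal))=\rho_\w^{s}\mu(A)$ used there must be replaced by $\mu(f_\w(A\cap\Kcal))=\l_\w\mu(A)$ (precisely your appeal to Lemma~\ref{lem: transformation of self-similar measures}). The three ingredients you isolate---reduction to the empty word via self-similarity, single-scale non-concentration from irreducibility via compactness of hyperplanes meeting a bounded set, and the submultiplicative bootstrap through a stopping-time complete prefix set---are exactly those of the cited proof.
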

\begin{proof}
Let $s=\dim_H\Kcal$.
 The proposition follows from the argument establishing the $(C,\a)$-absolutely decaying property for $\mu$ in~\cite[Theorem 2.3]{KleinbockLindenstraussWeiss}, under the open set condition, in the case $\mu$ is the Hausdorff measure supported on the fractal, i.e., $\mu$ is the self-similar measure for the probability vector $(\rho_i^s)_{i\in\L}$.
 The proof adapts verbatim to general self-similar measures. Indeed, the only place in the proof in~\cite[Section 8]{KleinbockLindenstraussWeiss} where the fact that $\mu$ is the Hausdorff measure is used is to show that for any Borel set $A\subset \R^d$,
 \begin{equation}\label{eq: scaling mu}
     \mu(f_\w(A\cap\Kcal)) = \rho^s_\w \mu(A),
 \end{equation}
 for all $\w\in \L^\ast$; cf.~proof of (8.5) and (8.6) in \textit{loc.~cit}.
 For general probability vectors $\l$,~\eqref{eq: scaling mu} holds with $\l_\w$ in place of $\rho_\w^s$ by Lemmas~\ref{lem: null overlap} and~\ref{lem: transformation of self-similar measures}.
\end{proof}


\section{Congruence quotients of PGL}

The goal of this section is to introduce notation for the $S$-arithmetic groups and homogeneous spaces we work with in our proofs. We also discuss several technical connectedness issues which arise naturally in equidistribution problems and which stem from the failure of Strong Approximation for $\PGL$. We also recall bounds on matrix coefficients of $\PGL$ which we use in later sections.

\subsection{S-arithmetic Setup} \label{sec: s-arith}

We let $\PGL_{d+1}$ denote the automorphism group of the algebra $\Mat_{d+1}$. Throughout this article we fix the $\bQ$-structure (and the integral structures) to be the one induced by the faithful representation $\PGL_{d+1}\hookrightarrow\GL_{(d+1)^2}$ induced by the standard basis of $\Mat_{d+1}$.

In what follows, we write $\Gbf=\PGL_{d+1}$.
Given a finite set $S$ of places of $\bQ$ possibly containing the archimedean place, we denote 
\[
\Sf=S\setminus\{\infty\},\quad
\Z_{\infty}=\R,\quad
\QS=\prod_{v\in S}\Qv,\quad 
\Zf=\prod_{p\in\Sf}\Zp,\quad
\ZS=\Z[\tfrac{1}{p};p\in \Sf].
\]
Accordingly, we define the following:
\begin{align*}
  \Gbf_S &= \Gbf(\QS), \quad
  \Gbf_\infty = \Gbf(\R),\quad
  \Gbf_{\mrm{f}} = \Gbf(\Q_{\Sf}),\\
  \GammaS &= \Gbf(\ZS)=\Gbf(\Q_{S})\cap\GL_{(d+1)^{2}}(\ZS),
  \\
  K_{\mrm{f}} &= \Gbf(\Zf)=\Gbf(\Q_{\Sf})\cap\GL_{(d+1)^{2}}(\Zf).
\end{align*}
If $\infty\in S$, then $\GammaS$ embedded diagonally in $\Gbf_S$ is a lattice and we denote
\begin{equation*}
X_S=\Gbf_S/\Gamma_S.     
\end{equation*}
We also use the following notation:
\begin{equation*}
    M_v = \begin{cases} 
    \PO_{d+1}(\bR) & \text{if } v=\infty,\\
    \Gbf(\bZ_{v}) & \text{if } v \text{ is finite.}
    \end{cases}
\end{equation*}

We will denote by $\Phi:\GL_{d+1}\to\GL_{(d+1)^{2}}$ the rational representation defined by mapping an element $x\in\GL_{d+1}$ to the automorphism $\Phi(x)$ of $\Mat_{d+1}$ defined by
\begin{equation}\label{eq:representation-of-GL}
  \Phi(x)w=xwx^{-1}\quad(w\in\Mat_{d+1}).
\end{equation}
By the Skolem-Noether theorem, for every field $k$ we have $\Gbf(k)=\Phi(\GL_{d+1}(k))$. We also note that $\ker\Phi=k^{\times}$ and therefore $\Gbf(k)\cong\GL_{d+1}(k)/k^{\times}$.

We will denote by $\lVert\cdot\rVert_{v}$ the operator norm on $\GL(\Mat_{d+1}(\bQ_{v}))$ given by the sup-norm with respect to the standard basis if $v$ is finite, and induced by the inner product
\begin{equation*}
  \langle L,M\rangle=\mrm{tr}({}^{t}LM)\quad\big(L,M\in\GL(\Mat_{d+1}(\bQ_{v}))\big)
\end{equation*}
if $v$ is infinite. Abusing notation, we denote
\begin{equation}\label{def:adjoint norm}
  \lVert g\rVert_{v}=\lVert\Ad_{g}\rVert_{v}\quad(g\in\Gbf(\bQ_{v})).
\end{equation}
We note that for all $g\in\Gbf(\bQ_{v})$ and for all $k_{1},k_{2}\in M_{v}$ we have (cf. Corollary~\ref{cor:trivialcorollary})
\begin{equation}\label{eq:invariance of norm}
  \lVert k_{1}gk_{2}\rVert_{v}=\lVert g\rVert_{v}.
\end{equation}

\begin{lem}\label{lem:identification-space-of-lattices}
  Let $\SL_{d+1}(\bR)$ act on $\Gbf_{\infty}/\Gbf(\bZ)$ via the representation $\Phi$. This action is transitive and the stabilizer of the identity coset is $\SL_{d+1}(\bZ)$. In particular, we have
  \begin{equation*}
    \quotient{\SL_{d+1}(\bZ)}{\SL_{d+1}(\bR)}\cong\quotient{\Gbf(\bZ)}{\Gbf_{\infty}}
  \end{equation*}
  as $\SL_{d+1}(\bR)$-spaces.
\end{lem}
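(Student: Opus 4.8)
The plan is to unwind the definitions and use the Skolem--Noether theorem together with a determinant argument. First I would recall that $\Gbf_\infty/\Gbf(\bZ) = \PGL_{d+1}(\R)/\PGL_{d+1}(\bZ)$ via the representation $\Phi$, where $\PGL_{d+1}(\R) = \GL_{d+1}(\R)/\R^\times$ and $\Phi(x)w = xwx^{-1}$ on $\Mat_{d+1}$. The composite $\SL_{d+1}(\R) \hookrightarrow \GL_{d+1}(\R) \twoheadrightarrow \PGL_{d+1}(\R)$ realizes the action of $\SL_{d+1}(\R)$ on $\Gbf_\infty/\Gbf(\bZ)$ by left translation after applying $\Phi$.

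For transitivity: given $g \in \GL_{d+1}(\R)$, I want to find $h \in \SL_{d+1}(\R)$ with $\Phi(h) = \Phi(g)$ in $\PGL_{d+1}(\R)$, i.e.\ with $h \in g\cdot\R^\times$. Writing $h = \lambda g$ and imposing $\det h = 1$ gives $\lambda^{d+1}\det g = 1$, which is solvable over $\R$ precisely because every real number has a real $(d+1)$st root (when $d+1$ is odd this is automatic; when $d+1$ is even one first notes $\GL_{d+1}(\R)$ has two components but one may instead observe that $\PGL_{d+1}(\R)$ is covered already by the identity component, or more simply replace $\det g$ by $|\det g|$ after absorbing a sign into $g$ — actually the cleanest route is: $\SL_{d+1}(\R)\R^\times = \GL_{d+1}(\R)$ since the map $\R^\times \to \R^\times$, $\lambda\mapsto\lambda^{d+1}$ together with $\SL_{d+1}$ surjects onto all of $\GL_{d+1}(\R)$ by scaling). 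Hence the image of $\SL_{d+1}(\R)$ in $\PGL_{d+1}(\R)$ is all of $\PGL_{d+1}(\R) = \Phi(\GL_{d+1}(\R))$, which acts transitively on $\Gbf_\infty/\Gbf(\bZ)$ by left translations; therefore so does $\SL_{d+1}(\R)$.

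For the stabilizer: an element $h \in \SL_{d+1}(\R)$ fixes the identity coset iff $\Phi(h) \in \Gbf(\bZ) = \Phi(\GL_{d+1}(\R)) \cap \GL_{(d+1)^2}(\bZ)$. By Skolem--Noether over $\bQ$, $\Gbf(\bZ)$ consists of those $\Phi(x)$ with $x\in\GL_{d+1}(\bQ)$ such that conjugation by $x$ preserves the lattice $\Mat_{d+1}(\bZ) \subset \Mat_{d+1}(\bQ)$; standard arguments (e.g.\ $x$ and $x^{-1}$ each scaled to have integer entries with content $1$) show $\Phi(x) \in \GL_{(d+1)^2}(\bZ)$ forces $x \in \R^\times \cdot \GL_{d+1}(\bZ)$. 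Intersecting with $\SL_{d+1}(\R)$ and using $\det = 1$ pins down $h \in \{\pm 1\}\cdot\SL_{d+1}(\bZ)$, and $-\mrm{Id} \in \SL_{d+1}(\bZ)$ when $d+1$ is even while $-\mrm{Id}$ acts trivially under $\Phi$ in any case, so the stabilizer is exactly $\SL_{d+1}(\bZ)$. Combining transitivity with the stabilizer computation gives the claimed $\SL_{d+1}(\R)$-equivariant bijection $\SL_{d+1}(\bZ)\backslash\SL_{d+1}(\R) \cong \Gbf(\bZ)\backslash\Gbf_\infty$.

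The main obstacle is the integrality part of the stabilizer computation: verifying that a matrix $x\in\GL_{d+1}(\bQ)$ with $\Phi(x)$ integral and $\Phi(x)^{-1}$ integral must be a scalar times an element of $\GL_{d+1}(\bZ)$. This is where one genuinely uses the arithmetic (clearing denominators and a content/gcd argument on the entries of $x$ and $x^{-1}$), as opposed to the transitivity statement which is purely a fact about the surjection $\SL_{d+1}(\R)\to\PGL_{d+1}(\R)$.
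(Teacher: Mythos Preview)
Your transitivity argument has a genuine gap when $d+1$ is even. The assertion $\SL_{d+1}(\R)\,\R^\times = \GL_{d+1}(\R)$ is false in that case: if $\det g < 0$ then $\det(\lambda h) = \lambda^{d+1} > 0$ for every $\lambda \in \R^\times$ and $h \in \SL_{d+1}(\R)$, so $g$ is not in $\SL_{d+1}(\R)\,\R^\times$. Equivalently, when $d+1$ is even the group $\PGL_{d+1}(\R)$ has two connected components and the image of $\SL_{d+1}(\R)$ is only the identity component $\Gbf_\infty^+$. Your proposed repairs (``$\PGL_{d+1}(\R)$ is covered by the identity component'', ``absorb a sign into $g$'') fail for the same parity reason: multiplying by $-\mrm{Id}$ does not change the sign of the determinant when $d+1$ is even. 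A correct fix is to note that the easy inclusion $\Phi(\GL_{d+1}(\Z)) \subseteq \Gbf(\Z)$ already places elements of $\Gbf(\Z)$ in the non-identity component (take any integer matrix of determinant $-1$), whence $\Gbf_\infty^+ \cdot \Gbf(\Z) = \Gbf_\infty$ and transitivity on the quotient follows. The paper avoids the parity issue altogether by arguing in the opposite direction: it identifies $\Gbf_\infty/\Gbf(\Z)$ with homothety classes of lattices in $\R^{d+1}$, observes that each class has a unique unimodular representative, and reads off the bijection with $\SL_{d+1}(\R)/\SL_{d+1}(\Z)$ without ever needing $\SL_{d+1}(\R)\to\PGL_{d+1}(\R)$ to be surjective.

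Your stabilizer computation, by contrast, is essentially correct, and the content argument you sketch (scale $x\in\GL_{d+1}(\Q)$ to a primitive integer matrix $y$, then use that $(yE_{ij}y^{-1})_{kl}=y_{ki}(y^{-1})_{jl}\in\Z$ together with $\gcd_{k,i}y_{ki}=1$ to force $y^{-1}\in\Mat_{d+1}(\Z)$) is more elementary than the paper's route. The paper instead first proves the local statement $\Gbf(\Z_p) = \Phi(\GL_{d+1}(\Z_p))$ at each prime $p$ via the $p$-adic $KAK$ decomposition, and then assembles these into $\Gbf(\Z) = \Phi(\GL_{d+1}(\Z))$ by a local--global argument; this is heavier machinery, but the local identity is reused later in the paper (in the class-number-one computation and in bounding indices of congruence subgroups).
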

\begin{remark}
The above statement is a well-known fact in the theory of lattices in $\bR^{d+1}$ once we know that $\Gbf(\Z)$ agrees with the image of $\GL_{d+1}(\bZ)$ under projection with respect to the center, which is not apriori clear.
\end{remark}
\begin{proof}
  We first show that $\Gbf(\bZ_{p})=\Phi(\GL_{d+1}(\bZ_{p}))$ for every prime $p$. In fact, for every $x\in\GL_{d+1}(\bQ_{p})$ satisfying $\Phi(x)\in\Gbf(\bZ_{p})$ there is some $n_{p}\in\bZ$ such that $p^{n_{p}}x\in\GL_{d+1}(\bZ_{p})$. Indeed, choose $n_{p}\in\bZ$ such that $\lVert p^{n_{p}}x\rVert_{p}=1$. Using the KAK-decomposition on $\GL_{d+1}(\bQ_{p})$, let $p^{n_{p}}x=k_{1}ak_{2}$, where $k_{1},k_{2}\in\GL_{d+1}(\bZ_{p})$ and $a$ is a diagonal matrix whose entries are decreasingly ordered with respect to the $p$-adic valuation. As $\GL_{d+1}(\bZ_{p})$ preserves $\Mat_{d+1}(\bZ_{p})$ under left- and right-multiplication, we find that that
  \begin{equation*}
    a\Mat_{d+1}(\bZ_{p})a^{-1}=\Mat_{d+1}(\bZ_{p}).
  \end{equation*}
  In particular, the automorphism of $\Mat_{d+1}(\bZ_{p})$ given by conjugation by $a$ maps the standard basis to a $\bZ_{p}$-basis of $\Mat_{d+1}(\bZ_{p})$. This implies that $a=\mrm{Id}_{d+1}$ and in particular $p^{n_{p}}x\in\GL_{d+1}(\bZ_{p})$. As $\Phi(\lambda x)=\Phi(x)$ for all $\lambda\in\bQ_{p}^{\times}$, the initial claim then follows.

  We next deduce that $\Gbf(\bZ)=\Phi(\GL_{d+1}(\bZ))$. To this end let $g\in\Gbf(\bZ)$ and let $x\in\GL_{d+1}(\bQ)$ such that $g=\Phi(x)$. As of the previous argument, we know that for all primes $p$ there is $n_{p}\in\bZ$ such that $p^{n_{p}}x\in\GL_{d+1}(\bZ_{p})$ and we note that $n_{p}=0$ for almost all $p$. Therefore the number
  \begin{equation*}
    c=\prod_{p}p^{n_{p}}\in\bQ^{\times}
  \end{equation*}
  is well-defined and $cx\in\GL_{d+1}(\bZ_{p})$ for all primes $p$. As
  \begin{equation*}
    \GL_{d+1}(\bZ)=\bigcap_{p}\big(\GL_{d+1}(\bQ)\cap\GL_{d+1}(\bZ_{p})\big),
  \end{equation*}
  we get $g=\Phi(cx)\in\Phi(\GL_{d+1}(\bZ))$.

    Two lattices $\Delta_{1},\Delta_{2}\subseteq\bR^{d+1}$ are homothetic if there exists a scalar $c\in\bR$ such that $\Delta_{1}=c\Delta_{2}$. The natural action of $\GL_{d+1}(\bR)$ on lattices in $\bR^{d+1}$ induces a transitive action of $\Gbf_{\infty}$. Moreover, as every homothety class admits a unique unimodular representative, this induces a transitive action of $\Gbf_{\infty}$ on $\SL_{d+1}(\bR)/\SL_{d+1}(\bZ)$ and the stabilizer of the identity coset is $\Gbf(\bZ)$. The identification is clearly $\SL_{d+1}(\bR)$-equivariant and hence the claim follows.    
\end{proof}

Given $N\in\bN$ we also denote by $\Gamma(N)\leq\Gbf(\bZ)$ the principal congruence subgroup of level $N$, i.e., the kernel of the homomorphism defined by coordinate-wise reduction mod $N$.
For the remainder of the article, we use the notation
\begin{equation*}
  X_\infty(N) := \Gbf_\infty/\Ga(N).
\end{equation*}
We abuse notation and let $\Gamma(1)=\Gbf(\bZ)$. A subgroup $\Delta\leq\Gamma(1)$ is called a congruence subgroup if it contains a principal congruence subgroup.

We note that for the chosen representation of $\Gbf$, the group has class number one (cf. Proposition~\ref{prop:classnumber} and Corollary~\ref{cor:adelic-to-S-arithmetic}), i.e., 
\begin{equation}\label{eq: class number one}
  (\Gbf_{\infty}\times K_{\mrm{f}})\GammaS=\Gbf_{S}.
\end{equation}

Denote by $\N_0$ the set of non-negative integers.
Given $\mathbf{m} = (m_p)_{p\in \Sf}\in\N_{0}^{\Sf}$, let $\Sf^{\mathbf{m}}=\prod_{p\in\Sf}p^{m_{p}}$ and denote by $K_{\mrm{f}}[\Sf^{\mathbf{m}}]$ the kernel of the canonical homomorphism $\Gbf(\Zf)\to\prod_{p\in\Sf}\Gbf(\quotient{p^{m_{p}}\Z}{\Z})$. It will be useful to abuse notation and let $K_{\mrm{f}}[1]=K_{\mrm{f}}$. Note that $\Gamma(\Sf^{\mathbf{m}})=\Gbf(\bQ)\cap K_{\mrm{f}}[S^{\mathbf{m}}]$.

As $\Gbf$ has class number one, there is $r(\mathbf{m})\in\bN$ such that
\begin{equation}\label{eq:congruence}
  \biquotient{\GammaS}{\Gbf_S}{K_{\mrm{f}}[\Sf^{\mathbf{m}}]}\cong\bigsqcup_{i=1}^{r(\mathbf{m})}X_\infty(\Sf^{\mathbf{m}}),
\end{equation}
as $\Gbf_{\infty}$-spaces; cf.~Proposition ~\ref{prop:correspondence}.

Let $\Gbf_{S}^{+}$ denote the image of $\SL_{d+1}(\bQ_{S})$ in $\Gbf_{S}$ and similarly $\Gbf_{v}^{+}$ denotes the image of $\SL_{d+1}(\Q_v)$, for each $v\in S$. These are normal subgroups of finite index in $\Gbf_{S}$ and $\Gbf_{v}$ respectively.
The number $r(\mathbf{m})$ of cosets in~\eqref{eq:congruence} is bounded by the index of $\Gbf_S^+$ inside $\Gbf_S$, cf. Proposition~\ref{prop:components-as-quotients}.

In what follows, we will call the copies of $X_{\infty}(\Sf^{\mathbf{m}})$ the components of $K_{\mrm{f}}[\Sf^{\mathbf{m}}]\backslash\Gbf_{S}/\GammaS$. We want to point out that these components are in general not connected. In fact, the connected components are precisely the $\Gbf_{\infty}^{+}$ orbits.


    \subsection{Sobolev norms} \label{sec: sobolev}
    We introduce a family of Sobolev norms to be used throughout the article.
    We fix a basis $\mathcal{G}_{d}$ of the Lie algebra of $\Gbf_\infty$ and, given $\ell\in\N_{0}$, we denote by $\Xfrak_{\ell}$ the set of all monomials in the elements of $\mathcal{G}_{d}$ of degree at most $\ell$. 
    The elements of $\Xfrak_{\ell}$ act as differential operators on $\mrm{C}^{\infty}(\quotient{\Gamma}{\Gbf_\infty})$, for any lattice $\Ga$ in $\Gbf_\infty$.
    Given $\varphi\in \mrm{C}^{\infty}(\Gbf_\infty/\Gamma)$, we denote by $\Scal_{p,\ell}$ the $\mrm{L}^{p}$-Sobolev norm of degree $\ell$ defined by
    \begin{equation}\label{eq:SobolevNorms}
      \Scal_{p,\ell}(\varphi)=\sum_{\Dcal\in\Xfrak_{\ell}}\lVert \Dcal\varphi\rVert_{p},
    \end{equation}
    where the $\mrm{L}^{p}$-norm is defined with respect to the $\Gbf_{\infty}$-invariant probability measure. Given $p\in[1,\infty]$, we let
    \begin{equation}\label{eq:smooth vectors}
     \mrm{B}_{p,\ell}^\infty(\Gbf_\infty/\Gamma) = \seti{\vp\in \mrm{C}^\infty(\Gbf_\infty/\Gamma):\Scal_{p,\ell}(\vp)<\infty }.
    \end{equation}
    Note that in case $\ell=0$, $\mrm{B}_{p,\ell}^{\infty}(\Gbf_\infty/\Gamma)=\mrm{C}^{\infty}(\Gbf_\infty/\Gamma)\cap \mrm{L}^{p}(\Gbf_\infty/\Gamma)$. 

We write $\mrm{C}^\infty(X_S)$ for the space of functions on $X_S$ which are invariant by some compact open subgroup of $K_\mrm{f}$ and which are smooth along the $\Gbf_\infty$ directions.
 Accordingly, we can extend~\eqref{eq:SobolevNorms} and~\eqref{eq:smooth vectors} to $\mrm{C}^\infty(X_{S})$, where the $\mrm{L}^{p}$-norms are defined with respect to the $\Gbf_S$-invariant probability measure.
    We shall use the following basic estimates on these norms.
    Throughout the article, for a function $\vp$ on $X_S$ and $g\in \Gbf_S$, we use $\vp\circ g$ to denote the composition of $\vp$ with the left multiplication action of $g$ on $X_S$.
    \begin{lem}\label{lem:Sobolev}
    For all $\ell\in\N$, the following holds with implicit constants depending on $\ell$:
    
      \begin{enumerate}
        
        \item \label{item:scaling of Sobolev norms}
        For all $\vp \in \mrm{B}_{\infty,\ell}^\infty(X_S)$ and $g\in \Gbf_S$,
        $\Scal_{\infty,\ell}(\vp\circ g) \ll \norm{\mrm{Ad}(g)}_\infty^{\ell} \Scal_{\infty,\ell}(\vp)$, where $\norm{\mrm{Ad}(g)}_\infty$ denotes the operator norm of the adjoint action of the Archimedean component of $g$ on the Lie algebra of $\Gbf_\infty$.
         \item\label{item:Sobolev of product} For any $p\geq 1$ and $\vp,\psi\in \mrm{B}_{p,\ell}^\infty(X_S)$, $\Scal_{p,\ell}(\vp\psi)\ll_\ell \Scal_{2p,\ell}(\vp)\Scal_{2p,\ell}(\psi)$.
         In particular, we have $\Scal_{\infty,\ell}(\vp^2) \ll \Scal_{\infty,\ell}(\vp)^2$.
        \item \label{item:Sobolev embedding diff}For each non-zero $\Dcal$ in the Lie algebra of $\Gbf_\infty$, $\norm{\Dcal\vp}_\infty \ll \norm{\Dcal} \Scal_{\infty,1}(\vp)$, where $\norm{\Dcal}$ is taken with respect to any fixed choice of norm on the Lie algebra and the implicit constant depends on such choice.
        \item \label{item:Sobolev convolution}
        For $\th\in \mrm{C}_\mrm{c}^\infty(\Gbf_\infty)$ and $\vp\in \mrm{L}^\infty(X_S)$, $\Scal_{\infty,\ell}(\th\ast \vp) \ll
         \mrm{m}_{\Gbf_\infty}(\mrm{supp}(\th))
        \Scal_{\infty,\ell}(\th)
        \norm{\vp}_\infty$, where $\th\ast \vp$ denotes the convolution of the two functions and $\mrm{m}_{\Gbf_\infty}$ is the Haar measure on $\Gbf_\infty$.
      \end{enumerate}
    \end{lem}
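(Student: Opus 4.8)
The plan is to prove the four estimates in Lemma~\ref{lem:Sobolev} more or less directly from the definition~\eqref{eq:SobolevNorms}, using only the fact that $\Xfrak_{\ell}$ consists of monomials of degree $\le\ell$ in a \emph{fixed} basis $\mathcal{G}_{d}$ of the Lie algebra of $\Gbf_\infty$, together with standard properties of invariant differential operators and of convolution on a Lie group. Since every function in sight is smooth only along the Archimedean directions and locally constant along $K_\mrm{f}$, all differentiations happen on the factor $\Gbf_\infty$, and on that factor the invariant probability measure restricts to (a multiple of) Haar measure on the relevant congruence quotient; this is what makes the $\mrm{L}^p$-norms behave well under the manipulations below.

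For part~(\ref{item:scaling of Sobolev norms}): right translation by $g$ intertwines a left-invariant differential operator $\Dcal$ with $\Ad(g^{-1})\Dcal$ (i.e. $\Dcal(\vp\circ g)=\big((\Ad(g)\Dcal)\vp\big)\circ g$ for the convention used here), and right translation preserves the invariant probability measure, so $\norm{\Dcal(\vp\circ g)}_\infty = \norm{(\Ad(g)\Dcal)\vp}_\infty$. Writing $\Ad(g)\Dcal$ as a polynomial of degree $\le\ell$ in the basis $\mathcal{G}_d$, the coefficients are controlled by $\norm{\Ad(g)}_\infty^{\ell}$ (each of the $\le\ell$ basis factors picks up one factor of the operator norm), and collecting terms over $\Dcal\in\Xfrak_\ell$ gives the claimed bound with an implicit constant depending only on $\ell$ and $d$. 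For part~(\ref{item:Sobolev of product}): the Leibniz rule expands $\Dcal(\vp\psi)$ as a sum over ways of distributing the $\le\ell$ derivatives in $\Dcal$ between $\vp$ and $\psi$, giving a finite sum (with $\ell$-dependent cardinality) of terms $(\Dcal_1\vp)(\Dcal_2\psi)$ with $\deg\Dcal_i\le\ell$; applying Cauchy--Schwarz (i.e. Hölder with exponents $2p,2p$) to each term $\norm{(\Dcal_1\vp)(\Dcal_2\psi)}_p\le\norm{\Dcal_1\vp}_{2p}\norm{\Dcal_2\psi}_{2p}$ and summing yields $\Scal_{p,\ell}(\vp\psi)\ll_\ell\Scal_{2p,\ell}(\vp)\Scal_{2p,\ell}(\psi)$. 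The special case $p=\infty$, $\vp=\psi$ is immediate. Part~(\ref{item:Sobolev embedding diff}) is the most elementary: writing a nonzero $\Dcal$ in the Lie algebra as a linear combination of the elements of $\mathcal{G}_d$ with coefficients bounded by $\norm{\Dcal}$ (up to a constant depending on the choice of norm and on the fixed basis), one has $\norm{\Dcal\vp}_\infty\le\norm{\Dcal}\sum_{X\in\mathcal{G}_d}\norm{X\vp}_\infty\ll\norm{\Dcal}\,\Scal_{\infty,1}(\vp)$ since each $X\in\mathcal{G}_d$ lies in $\Xfrak_1$.

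Part~(\ref{item:Sobolev convolution}) is where I expect the only real (though still routine) bookkeeping: here $\th\ast\vp$ means $(\th\ast\vp)(x)=\int_{\Gbf_\infty}\th(h)\,\vp(h^{-1}x)\,\der\mrm{m}_{\Gbf_\infty}(h)$, and one differentiates under the integral sign in the $x$-variable. Moving a left-invariant operator $\Dcal\in\Xfrak_\ell$ past the convolution turns it into differentiation of $\th$: $\Dcal(\th\ast\vp)=(\Dcal^{\vee}\th)\ast\vp$ for an appropriate operator $\Dcal^\vee$ of the same degree obtained from $\Dcal$ by passing to the opposite (right-invariant) realization, so that $\norm{\Dcal(\th\ast\vp)}_\infty\le\norm{\vp}_\infty\int_{\supp(\th)}|\Dcal^\vee\th|\,\der\mrm{m}_{\Gbf_\infty}\le\mrm{m}_{\Gbf_\infty}(\supp\th)\,\norm{\Dcal^\vee\th}_\infty\,\norm{\vp}_\infty$. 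Summing over $\Dcal\in\Xfrak_\ell$ and bounding $\sum_{\Dcal}\norm{\Dcal^\vee\th}_\infty\ll_\ell\Scal_{\infty,\ell}(\th)$ (the map $\Dcal\mapsto\Dcal^\vee$ is a fixed linear automorphism of the space of operators of degree $\le\ell$, hence distorts norms by an $\ell$-dependent constant) gives the stated inequality. The main point to be careful about throughout is that all constants are allowed to depend on $\ell$ (and implicitly on $d$ and the fixed basis $\mathcal{G}_d$), but on nothing else — in particular not on the lattice or congruence level — which holds because $\mathcal{G}_d$ and the combinatorics of $\Xfrak_\ell$ are fixed once and for all, and passing between left- and right-invariant operators and expanding $\Ad(g)$ only involves the structure constants of $\lieg$.
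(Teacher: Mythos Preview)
Your proof is correct and follows essentially the same approach as the paper's (very terse) proof: the adjoint expansion for item~(\ref{item:scaling of Sobolev norms}), Leibniz plus Cauchy--Schwarz for item~(\ref{item:Sobolev of product}), basis expansion for item~(\ref{item:Sobolev embedding diff}), and differentiation under the convolution for item~(\ref{item:Sobolev convolution}). The only minor difference is in item~(\ref{item:Sobolev convolution}): with the paper's conventions (functions on $\Gbf_\infty/\Gamma$, left action of $\Gbf_\infty$, hence right-invariant vector fields), one has directly $\Dcal(\th\ast\vp)=(\Dcal\th)\ast\vp$ by left-invariance of Haar measure, so your passage to an opposite operator $\Dcal^\vee$ is unnecessary---though your argument that such a map would be a fixed degree-preserving linear automorphism, and hence harmless for the estimate, is also valid.
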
  
    \begin{proof}
    The independence of the estimate in Item~\eqref{item:scaling of Sobolev norms} from the non-Archimedean component of $g$ follows from the fact that the differential operators in the definition of our Sobolev norms commute with $\Gbf_{\mrm{f}}$.
    Item~\eqref{item:Sobolev of product} follows by Cauchy-Schwarz; cf.~\cite[Lemma 8.1]{Venkatesh}.
    Item~\eqref{item:Sobolev embedding diff} follows from expressing $\Dcal$ in terms of the basis $\mc{G}_d$ with coefficients bounded by $\norm{\Dcal}$. 
    To show Item~\eqref{item:Sobolev convolution}, it suffices to note that invariance of the Haar measure implies that $\Dcal(\th\ast \vp) = (\Dcal \th) \ast \vp$ for any differential operator $\Dcal$ on $\Gbf_\infty$.
    \end{proof}

    \begin{remark}
      Throughout the remainder of the article, we omit the dependence of implicit constants in our estimates on the order $\ell$ of the Sobolev norms in question for the sake of readability. 
    \end{remark}
 
\subsection{Uniform bounds on matrix coefficients}\label{sec:GMO}
The main reference for the material in this section is~\cite[Section 3]{GMO}. Much of the general discussion there is a lot simpler in the setting where $\Gbf=\PGL_{d+1}$, and we restrict ourselves to this case.

We let $v$ denote a possibly infinite place of $\bQ$, $A_{v}$ the image of the diagonal subgroup of $\GL_{d+1}(\bQ_{v})$ in $\Gbf$, and $\Sigma^+_v$ the system of positive roots of $\Gbf(\Q_v)$ relative to $A^{+}_{v}\leq A_{v}$, where
\begin{equation*}
  A^+_\infty=\Phi\left\{\begin{pmatrix}
      a_{1} & & \\
      & \ddots & \\
      & & a_{d+1}
    \end{pmatrix}:a_{i}\in\bR,a_{1}\geq\cdots\geq a_{d+1}=1\right\}
\end{equation*}
and for any finite rational prime $p$
\begin{equation*}
  A^+_p=\Phi\left\{\begin{pmatrix}
      p^{-n_{1}} & & \\
      & \ddots & \\
      & & p^{-n_{d+1}}
    \end{pmatrix}:n_{i}\in\bN,n_{1}\geq\cdots\geq n_{d+1}=0\right\}.
\end{equation*}
In what follows we will usually identify elements in $A_{v}^{+}$ with their representative in the sets on the right hand side of the above equations.

Recall that $M_{v}=\PO_{d+1}(\bR)$ if $v=\infty$ and $M_{v}=\Gbf(\bZ_{v})$ if $v$ is finite. Using the polar decomposition for infinite $v$ or the $p$-adic analogue for finite $v$, we have $\Gbf(\bQ_{v})=M_{v}A_{v}^{+}M_{v}$; cf. Appendix~\ref{sec:KAK decomposition}.

Choose a maximal strongly orthogonal system $\Scal_v$ in $\Sigma^+_v$; cf.~\cite{Oh}. Define a bi-$M_v$ invariant function $\xi_v$ on $\Gbf(\Q_v)$ as follows: for every $g = m_1am_2 \in \Gbf(\Q_v)$,
    \begin{equation}\label{eq:Harish-Chandra}
        \xi_v(g) = \prod_{\a\in\Scal_v} \Xi_{\mrm{PGL}_2(\Q_v)}
        \begin{pmatrix} \a(a) & 0 \\ 0 & 1 \end{pmatrix},
    \end{equation}
    where $\Xi_{\mrm{PGL}_2(\Q_v)}$ is the Harish-Chandra function on $\mrm{PGL}_2(\Q_v)$. We then define $\xi_\Gbf$ on $\Gbf_S$ by
    \begin{equation} \label{def: xi_G}
        \xi_\Gbf (g) = \prod_{v\in S} \xi_v(g_v)\quad(g\in\Gbf_S).
    \end{equation}

    Denote by $\mrm{L}^2_{00}(X_S)$ the closed subspace of $\mrm{L}^2(X_S)$ which is orthogonal to the subspace spanned by $\Gbf(\Q_v)^+$-invariant functions for all $v$.
    We note that the normality of $\Gbf(\Q_v)^+$ implies that $\mrm{L}^2_{00}(X_S)$ is a $\Gbf_S$-invariant subspace. The following lemma will be of importance in Section~\ref{sec:equidist}.
    \begin{lem}\label{lem: character spectrum}
      Assume that $\vp\in \mrm{L}^{2}(X_{S})$ is $K_{\mrm{f}}$-invariant and has mean zero. Then,  $\vp$ belongs to $\mrm{L}_{00}^{2}(X_{S})^{K_{\mrm{f}}}$.
    \end{lem}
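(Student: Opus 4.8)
The goal is to show that a $K_{\mrm{f}}$-invariant mean-zero function $\vp\in\mrm{L}^2(X_S)$ lies in $\mrm{L}^2_{00}(X_S)^{K_{\mrm{f}}}$; equivalently, that $\vp$ is orthogonal to every $\Gbf(\Q_v)^+$-invariant function, for each $v\in S$. Since $\mrm{L}^2_{00}(X_S)$ is the orthogonal complement of the sum of the spaces of $\Gbf(\Q_v)^+$-invariant vectors, it suffices to understand the orthogonal complement of $\mrm{L}^2_{00}(X_S)$ inside $\mrm{L}^2(X_S)$: it is spanned by functions invariant under some $\Gbf(\Q_v)^+$. So the plan is to show that any $K_{\mrm{f}}$-invariant function $f$ that is $\Gbf(\Q_v)^+$-invariant for some $v$ is automatically a constant, whence it is orthogonal to the mean-zero $\vp$; then a density/projection argument finishes the job.

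\textbf{Key steps.} First I would reduce to the finite-place and infinite-place cases separately. Fix $v\in S$ and let $f\in\mrm{L}^2(X_S)$ be $\Gbf(\Q_v)^+$-invariant. Recall $\Gbf_S^+$ is the image of $\SL_{d+1}(\Q_S)$, which is generated (as the places vary) by the individual $\Gbf(\Q_w)^+$. The subgroup of $\Gbf_S$ generated by $\Gbf(\Q_v)^+$ together with $K_{\mrm{f}}$ contains $\Gbf(\Q_v)^+$, and, crucially, by the class number one property~\eqref{eq: class number one} and strong approximation for $\SL_{d+1}$, the group generated by $\Gbf(\Q_v)^+\Gamma_S$ and $K_{\mrm{f}}$ acts transitively on the relevant quotient. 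More precisely: if $f$ is both $K_{\mrm{f}}$-invariant and $\Gbf(\Q_v)^+$-invariant, then $f$ descends to a function on the double-coset space $\Gbf(\Q_v)^+\backslash\Gbf_S/(K_{\mrm{f}}\Gamma_S)$ (interpreting invariance appropriately on the quotient $X_S$). Using~\eqref{eq: class number one}, $\Gbf_S=(\Gbf_\infty\times K_{\mrm{f}})\Gamma_S$, and since $\Gbf(\Q_v)^+$ is normal of finite index, one checks that $\Gbf(\Q_v)^+ K_{\mrm{f}}\Gamma_S$ is already all of $\Gbf_S$ when $v$ is finite (as $\Gbf(\Q_v)^+$ contains a finite-index subgroup of $\Gbf(\Z_v)\subseteq K_{\mrm{f}}$-direction and, combined with strong approximation, generates everything modulo $\Gamma_S$ and $K_{\mrm{f}}$); similarly for $v=\infty$, $\Gbf(\R)^+ K_{\mrm{f}}\Gamma_S=\Gbf_S$ by~\eqref{eq: class number one} since $\Gbf_\infty/\Gbf(\R)^+$ is accounted for by components which $K_{\mrm{f}}\Gamma_S$ already meets (this is exactly the class-number-one statement that $r(\mathbf{1})$ is controlled by $[\Gbf_S:\Gbf_S^+]$, and here $K_{\mrm f}$-invariance collapses those components). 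Hence $f$ is constant on $X_S$.

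\textbf{Conclusion and main obstacle.} Once we know every $\Gbf(\Q_v)^+$-invariant $K_{\mrm{f}}$-invariant function is constant, we argue as follows. The orthogonal complement $\mrm{L}^2_{00}(X_S)^\perp$ is by definition the closed span of the subspaces $V_v$ of $\Gbf(\Q_v)^+$-invariant functions. Projecting onto $K_{\mrm{f}}$-invariants is an orthogonal projection commuting with these subspaces (since $K_{\mrm{f}}$ normalizes $\Gbf(\Q_v)^+$), so $\big(\mrm{L}^2_{00}(X_S)^\perp\big)^{K_{\mrm{f}}}$ is the closed span of $V_v^{K_{\mrm{f}}}$, each of which consists only of constants by the previous paragraph; thus $\big(\mrm{L}^2_{00}(X_S)^\perp\big)^{K_{\mrm{f}}}=\C\cdot\one$. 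Since $\vp$ is $K_{\mrm{f}}$-invariant and has mean zero, it is orthogonal to $\one$ and hence to $\big(\mrm{L}^2_{00}(X_S)^\perp\big)^{K_{\mrm{f}}}$; being itself $K_{\mrm{f}}$-invariant, it therefore lies in $\mrm{L}^2_{00}(X_S)^{K_{\mrm{f}}}$. The main obstacle I anticipate is the bookkeeping in the transitivity/connectedness step: one must carefully combine class number one~\eqref{eq: class number one}, strong approximation for the simply connected cover $\SL_{d+1}$, and the description of components of the congruence quotients (Section~3.1) to conclude that adding $\Gbf(\Q_v)^+$-invariance to $K_{\mrm{f}}$-invariance forces constancy — the subtlety being the failure of strong approximation for $\PGL_{d+1}$ itself, which is precisely why one passes through $\SL_{d+1}$ and why the $\Gbf(\Q_v)^+$ (rather than $\Gbf(\Q_v)$) appear in the definition of $\mrm{L}^2_{00}$.
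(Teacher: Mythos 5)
Your overall strategy is sound: reduce to showing $\bigl(\mrm{L}^2_{00}(X_S)^\perp\bigr)^{K_{\mrm{f}}}=\C\cdot\mathbf{1}$ by showing each $V_v^{K_{\mrm{f}}}=\C\cdot\mathbf{1}$, and the projection bookkeeping in your conclusion is correct. The genuine gap is in Step 1 at a finite place: the claim that ``$\Gbf(\Q_v)^+ K_{\mrm{f}}\Gamma_S=\Gbf_S$ when $v$ is finite'' is false. Both $\Gbf(\Q_v)^+$ (for $v$ finite) and $K_{\mrm{f}}$ have trivial Archimedean component, so the Archimedean component of any element of $\Gbf(\Q_v)^+ K_{\mrm{f}}\Gamma_S$ lies in the countable set $\{\gamma_\infty:\gamma\in\Gamma_S\}$. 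This set is dense in $\Gbf_\infty$ by strong approximation, but it is not all of $\Gbf_\infty$; so $\Gbf(\Q_v)^+ K_{\mrm{f}}$ does not act transitively on $X_S$, and the descent-to-a-point argument does not show constancy of a $K_{\mrm{f}}$- and $\Gbf(\Q_v)^+$-invariant $\mrm{L}^2$ function. Density of orbits does not imply constancy of an $\mrm{L}^2$-invariant without an ergodicity input, which is precisely what your argument is missing.

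What actually closes the gap at the finite places is an ergodic (not transitivity) statement: each $\Gbf_S^+$-orbit in $X_S$ is of the form $\Gbf_S^+/\Lambda$ for an irreducible lattice $\Lambda$, and by Moore's ergodicity theorem the noncompact almost-simple factor $\Gbf(\Q_v)^+$ acts ergodically there. Hence any $\Gbf(\Q_v)^+$-invariant $\mrm{L}^2$ function is already constant on each $\Gbf_S^+$-orbit, i.e.\ $V_v=V_{\Gbf_S^+}$; combined with $K_{\mrm{f}}$-invariance and the transitivity of $\Gbf_S^+K_{\mrm{f}}$ (equivalently of $\Gbf_\infty^+\times K_{\mrm{f}}$, which is where transitivity genuinely holds, via class number one and Lemma~\ref{lem:identification-space-of-lattices}), this yields $V_v^{K_{\mrm{f}}}=\C\cdot\mathbf{1}$. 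This is essentially the route the paper takes: it first reduces to vanishing of the average of $\vp$ over every $\Gbf_S^+$-orbit, then applies transitivity only through the Archimedean factor $\Gbf_\infty^+$ (where it is valid), using Proposition~\ref{prop:components-as-quotients} to identify $\Gbf_\infty^+\times K_{\mrm{f}}$-orbits with $\Gbf_S^+K_{\mrm{f}}$-orbits. Your instinct to prove the sharper ``$V_v^{K_{\mrm{f}}}$ is constants'' for each individual $v$ is a reasonable alternative framing, but the step that makes it work at finite $v$ is Moore ergodicity, not transitivity of $\Gbf(\Q_v)^+K_{\mrm{f}}$.
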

    \begin{proof}
      As $\vp$ is by assumption $K_{\mrm{f}}$-invariant, we only have to show that the average of $\vp$ over every $\Gbf_{S}^{+}$-orbit in $X_{S}$ vanishes. As $\Gbf$ has class number one, cf.~\eqref{eq: class number one}, we know that $K_{\mrm{f}}\backslash X_{S}\cong X_{\infty}(1)$. Hence transitivity of the action of $\Gbf_{\infty}^{+}$ on $X_{\infty}(1)$, cf.~Lemma~\ref{lem:identification-space-of-lattices}, implies that the average of $\vp$ over any $\Gbf_{\infty}^{+}\times K_{\mrm{f}}$-orbit vanishes. As shown in the proof of Proposition~\ref{prop:components-as-quotients}, the orbits of $\Gbf_{\infty}^{+}\times K_{\mrm{f}}$ in $X_{S}$ agree with the orbits of $\Gbf_{S}^{+}K_{\mrm{f}}$ in $X_{S}$. Hence $K_{\mrm{f}}$-invariance of $\vp$ implies that the average of $\vp$ on every $\Gbf_{S}^{+}$-orbit in $X_{S}$ vanishes.
    \end{proof}
    
    We recall the following bound on matrix coefficients given in~\cite{GMO}.
    \begin{theorem}[Theorems 3.10 and 3.20,~\cite{GMO}] 
    \label{thm: matrix coeffs}
    For any compact open subgroup $W_{\mrm{f}}$ of $\Gbf_{\mrm{f}}$, there exists $C_{W_{\mrm{f}}} > 0$ such that the following holds:
    for all $W_{\mathrm{f}}$-invariant $M_{\infty}$-finite vectors $v,w\in \mrm{L}_{00}^{2}(X_S)$ and for all $g\in\Gbf_S$, 
\begin{equation}\label{eq:GMO}
  \lvert\langle gv,w\rangle\rvert\leq C_{W_{\mathrm{f}}}\dim\langle M_{\infty}\cdot v\rangle \dim\langle M_{\infty}\cdot w\rangle\lVert v\rVert_{2}\lVert w\rVert_{2}\xi_{\Gbf}(g)^{\epsilon(d)},
\end{equation}
    where $\langle M_{\infty}\cdot v\rangle$ is the span of the orbit of $v$ under $M_\infty$
    and
    \begin{equation}\label{eq:epsilon}
        \epsilon(d) =\begin{cases} 1/2 & \text{if } d=1,\\
        1 & \text{otherwise}.
        \end{cases}
    \end{equation}
    
    \end{theorem}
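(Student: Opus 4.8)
This statement is the specialization to $\Gbf=\PGL_{d+1}$ of \cite[Theorems 3.10 and 3.20]{GMO}, so the plan is to recall the structure of that argument in the present setting. By construction $\mrm{L}^2_{00}(X_S)$ carries a unitary representation of $\Gbf_S$ which does not weakly contain the trivial representation of any local factor $\Gbf(\Q_v)^+$. The first step is to promote this to a quantitative statement: there is $\epsilon_0=\epsilon_0(d)>0$, \emph{uniform} over all the congruence quotients appearing in \eqref{eq:congruence}, such that $\mrm{L}^2_{00}(X_S)$ is $\epsilon_0$-tempered, i.e.\ every $M_v$-finite matrix coefficient is dominated by $\Xi_{\Gbf(\Q_v)}^{\epsilon_0}$. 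For $d\geq 2$ this follows at once from Kazhdan's property~(T) for $\SL_{d+1}(\Q_v)$ at every place $v$, and one may take $\epsilon_0$ equal to the exponent $\epsilon(d)=1$; for $d=1$ one uses instead property~$(\tau)$ for congruence subgroups of $\SL_2$ over $\Q$ (Selberg's eigenvalue bound together with its non-archimedean analogues from the theory of automorphic forms on $\GL_2$), which only yields $\epsilon_0=1/2$, accounting for the case distinction in \eqref{eq:epsilon}.

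The second step makes the bound explicit by replacing $\Xi_{\Gbf(\Q_v)}$ with the product $\xi_v$ of \eqref{eq:Harish-Chandra}. Following \cite{Oh}, one fixes a maximal strongly orthogonal system $\Scal_v\subset\Sigma^+_v$, uses the embeddings of the associated rank-one subgroups to bound $\Xi_{\Gbf(\Q_v)}(a)$ by $\xi_v(a)$ for $a\in A_v^+$, and then extends the estimate to all of $\Gbf(\Q_v)$ via bi-$M_v$-invariance of both sides and the Cartan decomposition $\Gbf(\Q_v)=M_vA_v^+M_v$. Multiplying over $v\in S$ yields the global comparison with $\xi_\Gbf$ defined in \eqref{def: xi_G}.

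The third step records the dependence on $M_\infty$-types and on the level. On an irreducible unitary representation weakly contained in the $\epsilon_0$-tempered dual, the standard bound of Cowling--Haagerup--Howe controls $|\langle gv,w\rangle|$ by $\dim\langle M_\infty\cdot v\rangle\dim\langle M_\infty\cdot w\rangle\,\lVert v\rVert_2\lVert w\rVert_2\,\Xi_\Gbf(g)^{\epsilon_0}$ for $M_\infty$-finite $v,w$; integrating over the direct-integral decomposition of $\mrm{L}^2_{00}(X_S)$ preserves this bound, and the resulting constant depends on the fixed compact-open subgroup $W_\mrm{f}$ only through the spectral-gap input of the first step, which is what $C_{W_\mrm{f}}$ encodes. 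Combining the three steps gives \eqref{eq:GMO}.

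The only genuinely hard point is the uniform spectral gap of the first step when $d=1$: this is the single arithmetic ingredient, resting on progress toward the Ramanujan--Petersson conjecture for $\GL_2$, whereas for $d\geq 2$ property~(T) makes it routine and the remaining steps are purely representation-theoretic bookkeeping insensitive to the arithmetic of the maps $f_i$.
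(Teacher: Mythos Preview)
The paper does not prove this statement; it is cited from \cite[Theorems~3.10 and~3.20]{GMO} and used as a black box, so there is nothing to compare against directly. Your sketch of the GMO argument is essentially correct in its architecture (Oh's strongly orthogonal systems to pass from $\Xi_{\Gbf(\Q_v)}$ to $\xi_v$, then Cowling--Haagerup--Howe for the $M_\infty$-type dependence), but one attribution is imprecise. For $d\geq 2$ the value $\epsilon(d)=1$ does \emph{not} follow from Kazhdan's property~(T) alone: property~(T) only guarantees \emph{some} $\epsilon_0>0$. The exact exponent $1$ comes from Oh's uniform pointwise bounds \cite{Oh}, which show that for higher-rank simple groups every $K$-finite matrix coefficient of a nontrivial irreducible unitary representation is already dominated by $\xi_v$ itself. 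This is a representation-theoretic fact about the full unitary dual, strictly sharper than property~(T), and requires no automorphic input in higher rank. Your account of the $d=1$ case (property~$(\tau)$ / Selberg-type bounds giving only $\epsilon_0=1/2$) and of the remaining bookkeeping is accurate.
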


    Theorem~\ref{thm: matrix coeffs} has the following corollary for smooth functions, which are not necessarily $M_\infty$-finite.
    
    \begin{corollary} \label{cor:matrix coeffs smooth}
    Let $W_{\mrm{f}}<\Gbf_f$ be a compact open subgroup.
    For all $g\in\Gbf_S$ and $W_{\mathrm{f}}$-invariant smooth functions $\psi_1,\psi_2\in \mrm{L}_{00}^{2}(X_S)$ such that $\Scal_{2,\ell}(\psi_i)<\infty$, $i=1,2$, we have
    \begin{equation*}
      \lvert\langle g\psi_1,\psi_2\rangle\rvert\ll_{W_{\mrm{f}}} \Scal_{2,\ell}(\psi_1)\Scal_{2,\ell}(\psi_2)\xi_{\Gbf}(g)^{\epsilon(d)},
    \end{equation*}
    where $\ell=\dim(M_\infty)$.
    \end{corollary}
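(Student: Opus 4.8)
The plan is to deduce Corollary~\ref{cor:matrix coeffs smooth} from Theorem~\ref{thm: matrix coeffs} by a standard smoothing (approximate identity) argument that trades $M_\infty$-finiteness for a controlled number of derivatives, with the key point being that $\ell = \dim(M_\infty)$ derivatives suffice to bound the dimensions of the $M_\infty$-subrepresentations appearing in a spectral decomposition.

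First I would decompose $\psi_1$ and $\psi_2$ according to the action of the compact group $M_\infty = \PO_{d+1}(\bR)$. Writing $\psi_i = \sum_{\sigma} \psi_{i,\sigma}$ for the decomposition into $M_\infty$-isotypic components, each $\psi_{i,\sigma}$ is $M_\infty$-finite with $\dim\langle M_\infty \cdot \psi_{i,\sigma}\rangle = \dim(\sigma)^2$ (or at most this), and each $\psi_{i,\sigma}$ remains $W_\mathrm{f}$-invariant and lies in $\mrm{L}^2_{00}(X_S)$ since the $M_\infty$-action commutes with $\Gbf_\mathrm{f}$ and preserves the subspace $\mrm{L}^2_{00}$. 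Applying Theorem~\ref{thm: matrix coeffs} to each pair and summing gives
\begin{equation*}
  |\langle g\psi_1,\psi_2\rangle| \leq C_{W_\mathrm{f}} \,\xi_\Gbf(g)^{\epsilon(d)} \left(\sum_\sigma \dim(\sigma)^2 \|\psi_{1,\sigma}\|_2\right)\left(\sum_\tau \dim(\tau)^2 \|\psi_{2,\tau}\|_2\right).
\end{equation*}
So the whole problem reduces to showing $\sum_\sigma \dim(\sigma)^2 \|\psi_{i,\sigma}\|_2 \ll \Scal_{2,\ell}(\psi_i)$ with $\ell = \dim(M_\infty)$.

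The main step, and the one I expect to be the real obstacle, is this last inequality. The strategy is: apply Cauchy–Schwarz to get $\sum_\sigma \dim(\sigma)^2 \|\psi_{i,\sigma}\|_2 \leq \big(\sum_\sigma \dim(\sigma)^{-2k}\big)^{1/2}\big(\sum_\sigma \dim(\sigma)^{2k+4}\|\psi_{i,\sigma}\|_2^2\big)^{1/2}$ for a suitable integer $k$. The first factor converges by Weyl's dimension formula once $k$ is large enough relative to the number of positive roots / rank of $M_\infty$ — this is where the dependence on $\dim(M_\infty)$ enters. The second factor is controlled by an elliptic-regularity / Sobolev estimate: applying the Casimir element $\Omega_{M_\infty}$ of $\mathfrak{m} = \Lie(M_\infty)$ multiplies $\psi_{i,\sigma}$ by the Casimir eigenvalue of $\sigma$, which grows like a positive power of $\dim(\sigma)$; hence $\sum_\sigma \dim(\sigma)^{2k+4}\|\psi_{i,\sigma}\|_2^2 \ll \|\Omega_{M_\infty}^m \psi_i\|_2^2 + \|\psi_i\|_2^2$ for $m$ proportional to $k$, and since $\Omega_{M_\infty}$ is a degree-$2$ element of the universal enveloping algebra of $\mathfrak{g}$, this is bounded by $\Scal_{2,2m}(\psi_i)^2$. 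Tracking the exponents, the requisite number of derivatives is some absolute function of the rank and number of roots of $M_\infty$; the statement packages this simply as $\ell = \dim(M_\infty)$, which certainly dominates. (Alternatively, one can invoke a known Sobolev-type lemma, e.g.\ the argument in~\cite[Lemma 8.1]{Venkatesh} or the Peter–Weyl estimates in~\cite[Section 3]{GMO}, to directly obtain $\sum_\sigma \dim\langle M_\infty\cdot\psi_{i,\sigma}\rangle \|\psi_{i,\sigma}\|_2 \ll \Scal_{2,\dim M_\infty}(\psi_i)$.)

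Finally I would assemble the pieces: combine the summed form of Theorem~\ref{thm: matrix coeffs} with the two Sobolev bounds to obtain
\begin{equation*}
  |\langle g\psi_1,\psi_2\rangle| \ll_{W_\mathrm{f}} \Scal_{2,\ell}(\psi_1)\,\Scal_{2,\ell}(\psi_2)\,\xi_\Gbf(g)^{\epsilon(d)},
\end{equation*}
with $\ell = \dim(M_\infty)$, absorbing $C_{W_\mathrm{f}}$ and the convergent dimension sums into the implied constant, which depends only on $W_\mathrm{f}$ (and $d$). One should double-check that the $W_\mathrm{f}$-invariance and membership in $\mrm{L}^2_{00}$ genuinely pass to all isotypic components — this is immediate from commutativity of the $M_\infty$ and $\Gbf_\mathrm{f}$ actions and the fact that $M_\infty \subseteq \Gbf_\infty$ normalizes nothing relevant beyond what is needed — and that the spectral decomposition converges in $\mrm{L}^2$, which is just Peter–Weyl.
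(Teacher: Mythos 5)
Your proposal matches the paper's proof in substance: the paper simply cites the argument of \cite[Section 6.2.1]{EMV}, which is precisely the $M_\infty$-isotypic decomposition followed by the Casimir/Sobolev estimate you outline, together with the observation (which the paper states explicitly) that the first-power dependence on $\dim\langle M_\infty\cdot v\rangle$ in Theorem~\ref{thm: matrix coeffs}, as opposed to the square-root dependence in \cite{EMV}, pushes the required Sobolev order from $\lceil\dim M_\infty/2\rceil$ up to $\ell=\dim M_\infty$. Your bookkeeping of the Cauchy--Schwarz exponents is left vague and you do not verify that $\ell=\dim M_\infty$ genuinely suffices, but these are exactly the bookkeeping details the paper delegates to \cite{EMV}, so the approach is the same.
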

    
    \begin{proof}
        As $\psi_1$ and $\psi_2$ admit an orthogonal decomposition in terms of $M_\infty$-finite vectors, the argument in~\cite[Section 6.2.1]{EMV} applies with minimal changes to deduce the corollary where one replaces Eq.~(6.7) in \textit{loc.~cit.}~with~\eqref{eq:GMO}. Note that Eq.~(6.7) in~\cite{EMV} involves $\dim\langle M_\infty\cdot\rangle^{1/2} $. The argument goes through however and the resulting Sobolev norm is of order $\ell=\dim M_\infty$ (instead of $\ell = \left\lceil \dim M_\infty/2\right\rceil$ as in~\cite{EMV}).
        
    \end{proof}


\section{Spectral Gap of S-Arithmetic Random Walks}
\label{sec:spectral gap}

The goal of this section is to introduce certain $S$-arithmetic operators which are naturally associated to a rational IFS and which leave fractal measures invariant.
Moreover, we prove that these operators possess a spectral gap and we provide an estimate on their operator norms, Proposition~\ref{prop: spectral gap}.
The results of this section are key ingredients in our equidistribution theorems.

\subsection{The S-arithmetic random walk}
\label{sec: QIFS}

    Given $(t,s)\in \R^\times \times \Q_{S_{\mrm{f}}}^\times$ and $(\xbf,\mathbf{y}) \in \R^d \times \Q_{S_{\mrm{f}}}^d$ regarded as a pair of column vectors, we define the following elements of $\Gbf_S$:
\begin{equation*}
     a(t,s) = \left(\begin{pmatrix} t\mrm{Id}_{d} & \mathbf{0} \\ \mathbf{0} & 1
        \end{pmatrix} , \begin{pmatrix} s\mrm{Id}_{d} & \mathbf{0} \\ \mathbf{0} & 1
        \end{pmatrix}\right), \qquad
        u(\xbf,\mathbf{y})  =\left( \begin{pmatrix}\mrm{Id}_{d} & \xbf \\ \mathbf{0} & 1
        \end{pmatrix}, \begin{pmatrix} \mrm{Id}_{d}& \mathbf{y} \\ \mathbf{0} & 1
        \end{pmatrix} \right),
\end{equation*}
where $\mrm{Id}_{d}$ is the identity matrix in dimension $d$.
    We also use the following notation:
    \begin{equation*}
        a(t) = \begin{pmatrix} t\mrm{Id}_{d} & \mathbf{0} \\ \mathbf{0} & 1
        \end{pmatrix}, \qquad 
         u(\xbf)  =\begin{pmatrix}\mrm{Id}_{d} & \xbf \\ \mathbf{0} & 1
        \end{pmatrix}.
    \end{equation*}
    The above matrices are regarded as elements of $\Gbf_\infty$ (resp. $\Gbf_f$) whenever their parameters belong to $\R^\times$ (resp. $\Q_{S_{\mrm{f}}}^\times$). We will denote by $\mrm{Id}$ the identity element in $\Gbf$.

    Throughout the remainder of this section, we fix a tuple $(\Fcal,\lambda)$ as in Section~\ref{sec: IFS} such that $\Fcal$ is rational.
    We will define an analogue of the operator $P_\l$ on the homogeneous space $\quotient{\GammaS}{\Gbf_S}$; cf. Definition~\ref{def: markov operator}. 
    Using the rationality of $\Fcal$, let $S(\Fcal)$ denote the smallest set of primes such that 
    \begin{equation*}
       \rho_i \in \Z[S(\Fcal)^{-1}]^{\times}, \quad b_i \in \Z[S(\Fcal)^{-1}]^d, \qquad  O_i \in \mrm{GL}(d,\Z[S(\Fcal)^{-1}]), 
    \end{equation*}
    for all $ i\in \L$. In addition to the prime factors of the numerators and denominators of $\rho_i$, $S(\Fcal)$ need only contain primes in the denominators of the components of $b_i$ and the entries in $O_i$, but not the numerators.
    We let $S=S(\Fcal)\cup\{\infty\}$, omitting the dependence on $\Fcal$.

    Given $\a\in \L^\ast$, let $k_{\a}\in\GammaS$ be given by
    \begin{equation}\label{eq:k_omega}
        k_\a = \begin{pmatrix} O_\a & \mathbf{0} \\ \mathbf{0} & 1\end{pmatrix},
    \end{equation}
    where $O_\a$ is defined as in~\eqref{eq: composition parameters}.
    The following elements of $\Gbf_S$ will be central to the analysis to follow: 
    \begin{equation} \label{def: gamma_w}
      \gamma_\a = u(\mathbf{0}, -b_\a) a(\rho_\a, \rho_\a)(k_\a,k_\a).
    \end{equation}
    The crucial property of $\gamma_{\a}$ is that for all $\mathbf{x}\in\bR^{d}$ we have
      \begin{equation}
        \label{eq:crucialpropertygamma}
        \gamma_{\a}u(\mathbf{x},\mathbf{0})\gamma_{\a}^{-1}u(b_{\a},\mathbf{0})=u(f_{\a}(\mathbf{x}),\mathbf{0}).
      \end{equation}
    We note that if $\a = (\a_i) \in \L^k$, then the following equality holds:
    \begin{equation}\label{eq:order of multiplication gamma}
        \gamma_\a = \gamma_{\a_1} \cdots \gamma_{\a_k}.
    \end{equation}
    Given a probability vector $\l$, we define an operator $\Pcal_\l$ by
    \begin{equation}\label{eq: padic MF}
        \Pcal_\l(\vp)(x) = \sum_{i \in \L} \l_i \vp( \gamma_i \cdot x),
    \end{equation}
    for all $\vp\in\mrm{C}\left(X_S\right)$, and all $x\in X_S$.
    In fact, we shall need a more general family of operators which we now define.
    Given $\a,\w\in\L^\ast$, we set
    \begin{equation}\label{eq: gamma_omega^alpha}
        \g^\a_\w := (k_\a^{-1},\mrm{Id})\g_\a\gamma_\w \g_\a^{-1}(k_\a,\mrm{Id}),
    \end{equation}
    and define $\a\cdot\Pcal_\l$ by
    \begin{equation}\label{eq: padic MF alpha}
        (\a\cdot\Pcal_\l)(\vp)(x) = \sum_{i \in \L} \l_i 
        \vp( \g_i^\a\cdot x),
    \end{equation}
    where $\a\cdot\Pcal_\l = \Pcal_\l$ if $\a$ is the empty word.
    For all $n\in\N$ we have
    \begin{equation*}
        (\a\cdot\Pcal_\l)^n(\vp)(x) = \sum_{\w \in \L^n} \l_\w 
        \vp\big( \g_\a\gamma_\w^\a\cdot x)\big).
    \end{equation*}
    These operators are among the main objects of study in this article.
    \begin{remark}
      In Appendix~\ref{sec:Cantor spectral gap}, we introduce a variant of the above operators which can be used to obtain sharper cutoffs in the case when the contraction ratios of the IFS are all equal.
      
    \end{remark}
    
    We need the following lemma.
   \begin{lem}\label{lem:index estimate}
     There exist positive constants $A$ and $L$ such that the following holds.
      Suppose $\vp$ is a function on $X_\infty(1)$. Then, for all $n\in\N$ and $\a\in\L^\ast$, there exists $N\geq 1$ such that $(\a\cdot\Pcal_\l)^n(\vp)$ is invariant under $K_{\mrm{f}}[N]\lhd K_{\mrm{f}}$ and
      \begin{equation*}
        [\Ga(1):\Ga(N)] \ll_{S,d} \rho_\a^{-A} \rho_{\min}^{-n L},
      \end{equation*}
      where $\rho_{\min}=\min\seti{\rho_i:i\in\L}$. When $\Fcal$ is a missing digit IFS, cf. Definition~\ref{def: p cantor}, we may take $A=6$, $L=3$.
    \end{lem}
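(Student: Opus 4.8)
The statement asks us to bound the level of the congruence subgroup under which $(\a\cdot\Pcal_\l)^n(\vp)$ is invariant, for $\vp$ a function on $X_\infty(1)$, in terms of $\rho_\a$ and $\rho_{\min}^{-nL}$. The basic idea is that $(\a\cdot\Pcal_\l)^n(\vp)$ is a finite sum of translates of $\vp$ by the elements $\g_\a\g_\w^\a$ with $\w\in\L^n$, and $\vp$ being a function on $X_\infty(1) = \Gbf_\infty/\Ga(1)$ means (after the identification $K_\mrm{f}\backslash X_S \cong X_\infty(1)$ coming from class number one, cf.~\eqref{eq: class number one}) that $\vp$ is $K_\mrm{f}$-invariant when viewed on $X_S$. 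Translating by $g\in\Gbf_S$ replaces the invariance group $K_\mrm{f} = K_\mrm{f}[1]$ by the conjugate $g K_\mrm{f} g^{-1}\cap K_\mrm{f}$ (intersected with $K_\mrm{f}$ to stay inside the maximal compact at the finite places), which is an open subgroup of $K_\mrm{f}$, hence contains some $K_\mrm{f}[N_g]$. The level $N_g$ is controlled by how far $g$ is from normalizing $K_\mrm{f}$: if $g$ (and $g^{-1}$) have finite-place entries with $p$-adic denominators bounded by $p^{m_p}$, then one may take $N_g = \prod_p p^{c\, m_p}$ for an explicit constant $c$ depending only on $d$ (e.g.~$c$ coming from the size of matrices in the representation $\Phi$, where conjugation on $(d+1)^2\times(d+1)^2$ matrices can introduce denominators of bounded multiplicity). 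Since a sum of functions each invariant under $K_\mrm{f}[N_i]$ is invariant under $K_\mrm{f}[\mathrm{lcm}(N_i)]$, it suffices to bound the denominators of the finitely many group elements $\g_\a\g_\w^\a$, $\w\in\L^n$, and take the lcm; then $[\Ga(1):\Ga(N)] \leq [\Ga(1):\Ga(\prod_p p^{c\max_p m_p})] \ll_{S,d} (\prod_p p^{\max_p m_p})^{c(d+1)^2}$ or so.

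**Key steps.** First I would record the reduction: a function on $X_\infty(1)$ lifts to a $K_\mrm{f}$-invariant function on $X_S$ via~\eqref{eq: class number one}, and then $\vp\circ g$ is invariant under $gK_\mrm{f}g^{-1}$; the group $gK_\mrm{f}g^{-1}\cap K_\mrm{f}$ is open in $K_\mrm{f}$, so contains $K_\mrm{f}[N]$ where $N$ is read off from the $p$-adic sizes of the entries of $\Phi(g)$ and $\Phi(g)^{-1} = \Phi(g^{-1})$ (working in the $(d+1)^2$-dimensional representation). Second, I would estimate these sizes for $g = \g_\a\g_\w^\a$. From~\eqref{def: gamma_w}, $\g_\w = u(\mathbf{0},-b_\w)a(\rho_\w,\rho_\w)(k_\w,k_\w)$; the finite-place part of $\g_\w$ has denominators controlled by $\rho_\w$ (for the $a$-part and, via $b_\w = f_\w(\mathbf 0)$, for the translation part) and by the entries of $O_\w$, all of which are $S$-integers with denominators polynomial in $\rho_\w^{-1}$ (from~\eqref{eq: composition parameters}, $O_\w$ is a product of $\le n$ matrices $O_{\w_i}\in\GL_d(\ZS)$, but crucially their denominators come only from the fixed finite set of primes $S$ with multiplicities that can be taken linear in $n$, hence bounded by $\rho_{\min}^{-O(n)}$ after absorbing into the $\rho$-power). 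For $\g_\w^\a = (k_\a^{-1},\mathrm{Id})\g_\a\g_\w\g_\a^{-1}(k_\a,\mathrm{Id})$, conjugating by $\g_\a$ and multiplying by $k_\a^{\pm1}$ multiplies denominators by at most a power of $\rho_\a^{-1}$ and the denominators of $O_\a$; collecting, the denominator of the finite-place part of $\g_\a\g_\w^\a$ is $\ll \rho_\a^{-A'}\rho_{\min}^{-n L'}$ for explicit $A', L'$. Third, take the lcm over $\w\in\L^n$ — this does not increase the exponents, only (possibly) fills in primes of $S$, so the bound is unchanged — pass through $\Phi$ (cost: a fixed power $(d+1)^2$ or so in the exponents), and conclude $[\Ga(1):\Ga(N)] \ll_{S,d} N^{(d+1)^2-1} \ll_{S,d} \rho_\a^{-A}\rho_{\min}^{-nL}$.

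**Main obstacle.** The delicate point is bookkeeping the denominators of $O_\w$: a priori $O_\w$ is a product of $n$ matrices in $\SO_d(\R)\cap\SL_d(\ZS)$, and one must argue the $p$-adic size of the product grows at most geometrically in $n$ (so that $\|O_\w\|_p \le \rho_{\min}^{-cn}$ for suitable $c$, after possibly enlarging $c$), and similarly for $b_\w = \sum \rho_{\w_1}\cdots\rho_{\w_{j-1}}O_{\w_1}\cdots O_{\w_{j-1}}b_{\w_j}$; this is where the bulk of the (routine but fiddly) estimation lives, and where the explicit constants $A, L$ — and the improved values $A=6$, $L=3$ in the missing-digit case — come from. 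In the missing-digit case the maps are $f_i(x) = (x+i)/p$ with $O_i = 1$ and $b_i\in\Z[1/p]$, so $O_\w = 1$, $\rho_\w = p^{-n}$, $b_\w$ has denominator dividing $p^n$, and $\g_\a$ contributes a bounded power of $p$; tracking constants through $\Phi$ (which for $\PGL_2$ acts on $3\times 3$ matrices) then yields $A=6$, $L=3$. I would present the general estimate first and then specialize, keeping the constant-chasing to a minimum in the writeup since only the existence of \emph{some} $A,L$ is needed in general.
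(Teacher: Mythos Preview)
Your proposal is correct and follows essentially the same approach as the paper. One point worth noting: what you flag as the ``main obstacle'' --- tracking the $p$-adic denominators of $O_\w$, $b_\w$, $\rho_\w$ separately --- the paper sidesteps entirely by packaging everything into the adjoint operator norm $\lVert\g_\w\rVert_p$ and invoking submultiplicativity together with the KAK decomposition (showing $gM_pg^{-1}\cap M_p\supseteq M_p[\lVert g\rVert_p]$); this reduces the bookkeeping to the single estimate $\lVert\g_\w\rVert_p\le c_p^n$ with $c_p=\max_{i\in\L}\lVert\g_i\rVert_p$, and similarly $\lVert\g_\a\g_\w\g_\a^{-1}\rVert_p\le\lVert\g_\a\rVert_p^2\lVert\g_\w\rVert_p$, from which the constants $A,L$ (and the sharper $A=6,L=3$ in the missing-digit case, via an explicit $2\times2$ KAK computation) fall out directly.
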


    \begin{proof}
    For $g\in \Gbf_S$, let $\mrm{con}(g)(K_\mrm{f} )= g^{-1}K_{\mrm{f}}g\cap K_{\mrm{f}}$.
    Note that $g^{-1}K_{\mrm{f}}g\subseteq\Gbf_{\mrm{f}}$ is a compact open subgroup for every $g=(g_{\infty},g_{\mrm{f}})\in\Gbf_{S}$, as $g^{-1}K_{\mrm{f}}g=g_{\mrm{f}}^{-1}K_{\mrm{f}}g_{\mrm{f}}$.
    Denote by $W \subseteq K_{\mrm{f}}$ the compact open subgroup given by
    \begin{equation*}
      W = \bigcap_{\w\in \L^n } \mrm{con}(\g_\w^\a)(K_{\mrm{f}}).
    \end{equation*}
    Since $\vp$ is $K_{\mrm{f}}$-invariant, the function $(\a\cdot \Pcal_\l)^n(\vp)$ is invariant under $W$.
    We will find $N$ so that $W$ contains $K_{\mrm f}[N]$. 
      In view of Proposition~\ref{prop:congruenceviacompactopen-S-arith}, to bound the index $[\Ga(1):\Ga(N)]$, it will suffice to bound the index $[K_{\mrm{f}}:K_{\mrm{f}}[N]]$. We will obtain bounds on the latter by reducing the question to bounds on the index of the kernel of reduction mod $p^{\nu}$ for $\SL_{d+1}(\bZ_{p})$ for appropriate $\nu$. By~\cite[Corollary 2.8]{GL-ring}, we have
    \begin{equation}\label{eq:index-congruence-SL}
        \lvert\SL_{d+1}(\bZ/p^{\nu}\bZ)\rvert=p^{(d^{2}+2d)\nu}\prod_{k=2}^{d+1}\left(1-\frac{1}{p^{k}}\right).
    \end{equation} 
    Since $\SL_{d+1}(\bZ/p^{\nu}\bZ)$ is generated by unipotents~\cite[Theorem 4.3.9]{Hahn1989}, the reduction mod $p^{\nu}$ is surjective and therefore the right hand side of~\eqref{eq:index-congruence-SL} gives the desired bound at place $p$.
    
    Let $p\in S$ be a finite prime and recall that $M_{p}=\Gbf(\bZ_{p})$.
       Let $g_{p}\in\Gbf(\bQ_{p})$. By the KAK-decomposition (cf. Appendix~\ref{sec:KAK decomposition}), we can write $g_{p}=k_{1}ak_{2}$ for $k_1,k_2\in M_p$ and $a\in A_p^+$. Then,
          \begin{equation*}
            g_{p}M_{p}g_{p}^{-1}\cap M_{p}=k_{1}(aM_{p}a^{-1}\cap M_{p})k_{1}^{-1}.
          \end{equation*}

          Let $\lVert g_{p}\rVert_{p}=p^{\nu}$ denote the operator norm of the adjoint action of $g_{p}$. We claim that $aM_{p}a^{-1}\cap M_{p}$ (and thus also $g_{p}M_{p}g_{p}^{-1}\cap M_{p}$) contains $M_{p}[\lVert g_{p}\rVert_{p}]$, i.e.~the kernel of reduction mod $p^{\nu}=\lVert g_{p}\rVert_{p}$. Denote $D=(d+1)^{2}$ and let $n_{1}\geq\cdots\geq n_{D}=0$ be so that $a=\mrm{diag}(p^{-n_1},\dots,p^{-n_{D}})$. Note that $n_1=\nu$ by Corollary~\ref{cor:trivialcorollary}. Let $h\in M_{p}[p^{\nu}]$.
          The $(i,j)$-coordinate of $a^{-1}ha$ is given by multiplying the corresponding coordinate of $h$ by $p^{n_{i}-n_{j}}\geq p^{-\nu}$. As the diagonal entries of $h$ are preserved and the off-diagonal entries of $h$ are multiples of $p^{\nu}$, we obtain that $a^{-1}ha\in M_{p}$, i.e.~$h\in aM_{p}a^{-1}\cap M_{p}$. It follows that
          \begin{equation*}
              W=\prod_{p}(g_{p}M_{p}g_{p}^{-1}\cap M_{p})
          \end{equation*}
          contains the subgroup $K_{\mrm{f}}[N]=\prod_{p}M_{p}[\lvert N\rvert_{p}]$, where
          \begin{equation*}
            N=\prod_{p}\max\{\lVert\g_\a \g_\w \g_\a^{-1}\rVert_{p}:\w\in\L^n\}
          \end{equation*}
          where $\lVert \g_\a\g_\w\g_\a^{-1}\rVert_{p}$ denotes the operator norm for the adjoint action of the $p$-adic component.
          It thus remains to bound $\big[M_{p}:M_{p}[p^{\nu}]\big]$.
     
      As shown in the proof of Lemma~\ref{lem:identification-space-of-lattices}, we have $M_{p}=\Phi(\GL_{d+1}(\bZ_{p}))$. Hence $M_{p}=FU_{p}$, where $U_{p}=\Phi(\SL_{d+1}(\bZ_{p}))$ and $F$ is the image of
          \begin{equation}\label{eq:F}
            T_{p}=\left\{\begin{pmatrix}
                x & 0 \\
                0 & \mrm{Id}_{d}
              \end{pmatrix}:x\in\bZ_{p}^{\times}\right\}<\GL_{d+1}(\bZ_{p}),
          \end{equation}
          under $\Phi$. Given $r\in\bN$, let $\bZ_{p}^{\times r}\subseteq \bZ_{p}^{\times}$ denote the set of elements admitting an $r$-th root in $\bZ_{p}$. We have that $F/(F\cap U_{p})\cong\bZ_{p}^{\times}/\bZ_{p}^{\times(d+1)}$ is finite with cardinality depending only on $d$ and $p$. Letting $U_{p}[p^{\nu}]=U_{p}\cap M_{p}[p^{\nu}]$, we obtain
          \begin{equation*}
            \big[M_{p}:M_{p}[p^{\nu}]\big]\ll_{p,d} \big[U_{p}:U_{p}[p^{\nu}]\big].
          \end{equation*}
          
    As the kernel of the reduction mod $p^{\nu}$ in $\SL_{d+1}(\bZ_{p})$ is mapped into $U_{p}[p^{\nu}]$ under $\Phi$,~\eqref{eq:index-congruence-SL} yields
    \begin{equation}\label{eq:index-congruence-sccover}
        \big[U_{p}:U_{p}[p^{\nu}]\big]\leq p^{(d^{2}+2d)\nu}\prod_{k=2}^{d+1}\left(1-\frac{1}{p^{k}}\right).
    \end{equation}

    Hence, we conclude that
          \begin{equation*}
            [K_{\mrm{f}}:W]\leq\big[K_{\mrm{f}}:K_{\mrm{f}}[N]\big]\ll_{S,d}\prod_{p}\max\{\lVert\g_\a \g_\w \g_\a^{-1}\rVert_{p}:\w\in\L^n\}^{d^{2}+2d}.
          \end{equation*}
    By Lemmas~\ref{lem:padic supnorm} and~\ref{lem:KAKvsOpNorm}, the norm $\norm{\cdot}_p$ is submultiplicative and satisfies $\norm{g_p}_p=\norm{g_p^{-1}}_p$.
    Hence, we find that
          \begin{equation*}
            \big[K_{\mrm{f}}:K_{\mrm{f}}[N]\big]\ll_{S,d}\prod_{p}\lVert\gamma_\a\rVert_{p}^{2(d^{2}+2d)}\max\{\lVert\g_\w\rVert_{p}:\w\in\L^n\}^{d^{2}+2d}.
          \end{equation*}
        \item Given $p\in\Sf$, we let $c_{p}=\max\{\lVert\gamma_{i}\rVert_{p}:i\in\L\}$. Note that $c_{p}\geq 1$, and hence there are $L_{p},A_{p}>0$ such that
          \begin{equation*}
            \rho_{\min}^{-L_{p}}=\rho_{\max}^{-A_{p}}=c_{p}.
          \end{equation*}
          We define
          \begin{equation*}
            L=(d^{2}+2d)\sum_{p\in\Sf}L_{p},\quad A=2(d^{2}+2d)\sum_{p\in\Sf}A_{p}.
          \end{equation*}
          Using submultiplicativity and~\eqref{eq:order of multiplication gamma} and denoting by $\lvert\a\rvert$ the length of $\a$, we get
          \begin{equation}\label{eq:compact index}
            \big[K_{\mrm{f}}:K_{\mrm{f}}[N]\big]\ll_{S,d}\prod_{p}c_{p}^{(d^{2}+2d)(2\lvert\a\rvert+n)}=\rho_{\max}^{-\lvert\a\rvert A}\rho_{\min}^{-nL}\leq\rho_{\a}^{-A}\rho_{\min}^{-nL}.
          \end{equation}
        \item Recall that $\Gamma(N)=\Gamma_{S}\cap K_{\mrm{f}}[N]$; cf.~Proposition~\ref{prop:congruenceviacompactopen-S-arith}. Applying the second isomorphism theorem with ambient group $K_{\mrm{f}}$ and subgroups $\GammaS\cap K_{\mrm{f}}$ and $K_{\mrm{f}}[N]\lhd K_{\mrm{f}}$, we get
          \begin{equation*}
            [\Gamma(1):\Gamma(N)]=\big[\GammaS\cap K_{\mrm{f}}:\GammaS\cap K_{\mrm{f}}[N]\big]\leq\big[K_{\mrm{f}}:K_{\mrm{f}}[N]\big].
          \end{equation*}
      Combining all of the above, one obtains
          \begin{equation*}
            [\Gamma(1):\Gamma(N)]\ll_{S,d}\rho_{\a}^{-2A}\rho_{\min}^{-nL}.
          \end{equation*}
      
      This completes the proof in the general case. For missing digit Cantor sets, assume that $q\in\bN$ is at least two and $\Lambda\subseteq\{0,\ldots,q-1\}$ such that $\#\Lambda \geq 2$. Then $\rho_{i}=1/q$, $k_{i}=\mrm{Id}$, and $b_{i}=i/q$ for all $i\in\Lambda$. 
      For $\a\in \L^\ast$, using~\eqref{eq:order of multiplication gamma} and denoting by $\gamma_{\a,p}$ the $p$-adic component of $\gamma_{\a}$, we have
      \begin{align*}
      \g_{\a,p} = \begin{pmatrix}
        \rho_\a & -b_{\a} \\
        0 & 1
      \end{pmatrix}.
      \end{align*}
      Let $x=\rho_\a$, $t=-b_{\a}$, and fix a prime divisor $p$ of $q$.
      If $\lvert t\rvert_{p}\leq\lvert x\rvert_{p}$, we have
      \begin{equation*}
        \g_{\a,p}=\begin{pmatrix}
          x & 0 \\
          0 & 1
        \end{pmatrix}\begin{pmatrix}
          1 & x^{-1}t \\
          0 & 1
        \end{pmatrix}
      \end{equation*}
      and hence $\lVert \g_{\a}\rVert_{p}=|x|_p$. If $\lvert x\rvert_{p}<\lvert t\rvert_{p}$, then we note that
      \begin{equation*}
        M_{p}\g_{\a,p} M_{p}=M_{p}\g_{\a,p} \begin{pmatrix} 1 & 0 \\ 1 & 1\end{pmatrix}M_{p}
      \end{equation*}
      and hence it suffices to calculate the norm of
      \begin{equation*}
        \g_{\a,p}\begin{pmatrix}
          1 & 0 \\
          1 & 1
        \end{pmatrix}
        =\begin{pmatrix}
          x+t & t \\
          1 & 1
        \end{pmatrix}.
      \end{equation*}
      As argued in the proof of Lemma~\ref{lem:KAK2x2}, there are unipotent elements $u_{1},u_{2}\in \GL_{2}(\bZ_{p})$ such that
      \begin{equation*}
        u_{1}\begin{pmatrix}
          x+t & t \\
          1 & 1
        \end{pmatrix}u_{2}=
        \begin{pmatrix}
          x+t & 0 \\
          0 & \frac{x}{x+t}
        \end{pmatrix}.
      \end{equation*}
     Therefore, we obtain that
      \begin{equation*}
        \g_{\a,p}\in M_{p}\begin{pmatrix}
          \lvert t\rvert_{p} & 0 \\
          0 & \lvert x\rvert_{p}/\lvert t\rvert_{p}
        \end{pmatrix}M_{p}=M_{p}\begin{pmatrix}
          \lvert t\rvert_{p}^{2}/\lvert x\rvert_{p} & 0 \\
          0 & 1
        \end{pmatrix}M_{p}.
      \end{equation*}
    Hence, $\norm{\g_\a}_p\leq \max\{|x|_p,|t|^2_p/|x|_p\}$.
    Now, note that $|x|_p = |q|_p^{|\a|}$ and $|t|_p \leqslant |q|_p^{|\a|}$. 
    It follows that
    \begin{align*}
        \prod_{p} \norm{\g_\a}_p \leqslant \prod_{p} |q|_p^{|\a|} = q^{|\a|}.
    \end{align*}
    Since $\rho_{\min} =\rho_{\max} = q^{-1}$ and $d=1$ in this case, it follows by~\eqref{eq:compact index} that we may take $A=6$ and $L=3$.

     \end{proof}
    

\subsection{Spectral gap for the averaging operator}

    We wish to estimate the operator norm of $\a\cdot \Pcal_\l$ on the subspace of $\mrm{L}^2_{00}(X_S)$ consisting of functions which are invariant by a compact open subgroup of $W_{\mrm{f}}<\Gbf_{\Sf}$.

    The main difficulty is that the subgroup generated by $\seti{\g_\w:\w\in\L^\ast}$ is not discrete or free in general (although the subsemigroup is).
    This causes difficulty in controlling the separation of the $\g_\w$'s.
    To explain the idea, let us focus on the case $\a=\emptyset$. Observe that for each word $\w$, 
    \begin{equation}
        \tilde{\g}_\w := u(-b_\w,\mathbf{0})\g_\w  \in \GammaS.
    \end{equation}
    In particular, the subgroup generated by $\seti{\tilde{\g}_\w:\w\in\L^\ast}$ is discrete since it is contained in the lattice.
    Moreover, all the elements of the form $u(-b_\w,\mathbf{0})$ belong to a compact neighborhood of identity (recall the first coordinate corresponds to the Archimedean place).
    This allows us to relate the spectral properties of $\Pcal_\l$ to an operator which is supported on the lattice $\Ga_S$.

    The following is one of the key results of this article. The reader is referred to Proposition~\ref{prop:cantor spectral gap} for sharper bounds for missing digit Cantor sets.
    \begin{proposition}\label{prop: separate group from rep}
    \label{prop: spectral gap}
    
    Assume that $\Fcal$ has no exact overlaps; cf.~\eqref{eq:exact overlaps}.
    Let $W_{\mrm{f}}<\Gbf_{\mrm{f}}$ be a compact open subgroup and $\ell =\dim(M_\infty)$.
    For every $\a\in \L^\ast$ and every $k\in\N$, there exists a finite set $\Delta_k\subset \GammaS$, which is determined by $\a$ and the IFS, such that the following holds.
    Suppose $p> 1$ is given and let $\th$ be the H\"older conjugate of $p$.
    Let $q = 2\th/(\th+1)$.
    Then, for every smooth $W_{\mrm{f}}$-invariant $\vp\in \mrm{L}^2_{00}(X_S)$,
    \begin{equation*}
        \big\lVert(\a\cdot\Pcal_\l)^{k}(\vp)\big\rVert^2_{L^2} \ll_{\Fcal,W_{\mrm{f}}} \Scal_{2,\ell}(\vp)^2 \left( \sum_{i\in\L}\l_i^q \right)^{2k/q} \left(\sum_{g\in\Delta_k} \xi_\Gbf^{p\epsilon(d)}( g)  \right)^{\frac{1}{p}},
    \end{equation*}
    where $\epsilon(d)$ is defined in~\eqref{eq:epsilon}.
    In particular, if $\xi_\Gbf^{\epsilon(d)}\in \ell^{p}(\GammaS)$, then
    \begin{equation*}
        \big\lVert(\a\cdot\Pcal_\l)^{k}(\vp)\big\rVert^2_{L^2} \ll_{\Fcal,W_{\mrm{f}}} \Scal_{2,\ell}(\vp)^2 \left( \sum_{i\in\L}\l_i^q \right)^{2k/q} \big\lVert \xi_\Gbf^{\epsilon(d)}\big\rVert_{\ell^{p}(\GammaS)}.
    \end{equation*}
    \end{proposition}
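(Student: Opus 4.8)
The plan is to expand the $L^{2}$ norm into a sum of matrix coefficients, to reduce each of them to a matrix coefficient attached to an honest element of the lattice $\GammaS$, and then to estimate the resulting sum with the bound on matrix coefficients of Corollary~\ref{cor:matrix coeffs smooth} together with the multiplicativity of the weight $\w\mapsto\l_\w$ provided by Lemma~\ref{lem: average of submul coc is subadditive}. Unwinding the definitions~\eqref{eq: gamma_omega^alpha}--\eqref{eq: padic MF alpha} (the conjugating factors $(k_\a^{\pm 1},\mrm{Id})$ telescope) gives $(\a\cdot\Pcal_\l)^{k}(\vp)=\sum_{\w\in\L^{k}}\l_\w\,\vp\circ\g_\w^\a$ with $\g_\w^\a=(k_\a^{-1},\mrm{Id})\g_\a\g_\w\g_\a^{-1}(k_\a,\mrm{Id})$, and therefore
\[
\bigl\lVert(\a\cdot\Pcal_\l)^{k}(\vp)\bigr\rVert_{L^{2}}^{2}=\sum_{\w,\w'\in\L^{k}}\l_\w\l_{\w'}\,\bigl\langle\vp\circ\bigl(\g_\w^\a(\g_{\w'}^\a)^{-1}\bigr),\vp\bigr\rangle .
\]

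The crucial structural input is that these random-walk elements stay at bounded distance from $\GammaS$. Since all maps $f_\w$ preserve a fixed ball containing $\mathbf 0$ and the attractor, the translation vectors $b_\w=f_\w(\mathbf 0)$ lie in a fixed compact subset of $\R^{d}$ depending only on $\Fcal$; and $\tilde\g_\w:=u(-b_\w,\mathbf 0)\g_\w$ has at every place the matrix with linear block $\rho_\w O_\w$ and last column $(-b_\w,1)$, whose entries lie in $\ZS$, so $\tilde\g_\w\in\GammaS$. Because $\Fcal$ has no exact overlaps~\eqref{eq:exact overlaps} the subsemigroup generated by the $\g_i$ is free, so $\w\mapsto\tilde\g_\w$ is an \emph{injective} monoid homomorphism $\L^{\ast}\to\GammaS$. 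Writing $\g_\w=u(b_\w,\mathbf 0)\tilde\g_\w$ one obtains $\g_\w^\a(\g_{\w'}^\a)^{-1}=C_\w\,(\tilde\g_\w\tilde\g_{\w'}^{-1})\,C'_{\w'}$, where $C_\w=(k_\a^{-1},\mrm{Id})\g_\a u(b_\w,\mathbf 0)$ and $C'_{\w'}=u(-b_{\w'},\mathbf 0)\g_\a^{-1}(k_\a,\mrm{Id})$ range over a compact subset of $\Gbf_{S}$ depending only on $\a$ and $\Fcal$. Applying Corollary~\ref{cor:matrix coeffs smooth} to the smooth, $W_{\mrm f}$-invariant vector $\vp\in L^{2}_{00}(X_S)$, and using that $\xi_\Gbf$ is comparable under left/right multiplication by a fixed compact set at each place (bi-$M_{v}$-invariance and continuity of the Harish-Chandra functions in~\eqref{eq:Harish-Chandra}--\eqref{def: xi_G}, noting that a compact set at a finite place meets only finitely many $M_{v}$-double cosets), I would conclude
\[
\bigl\lVert(\a\cdot\Pcal_\l)^{k}(\vp)\bigr\rVert_{L^{2}}^{2}\ll_{\Fcal,W_{\mrm f}}\Scal_{2,\ell}(\vp)^{2}\sum_{\w,\w'\in\L^{k}}\l_\w\l_{\w'}\,\xi_\Gbf\bigl(\tilde\g_\w\tilde\g_{\w'}^{-1}\bigr)^{\epsilon(d)},\qquad \ell=\dim M_{\infty}.
\]

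To estimate the double sum I would group pairs $(\w,\w')$ by their longest common suffix, $\w=\alpha\beta$, $\w'=\gamma\beta$ with $|\alpha|=|\gamma|=j$. Since $\tilde\g$ is a homomorphism, $\tilde\g_\w\tilde\g_{\w'}^{-1}=\tilde\g_\alpha\tilde\g_\gamma^{-1}$ depends only on $(\alpha,\gamma)$; the sum over $\beta$ contributes $\sum_{\beta\in\L^{k-j}}\l_\beta^{2}=(\sum_i\l_i^{2})^{k-j}$ by Lemma~\ref{lem: average of submul coc is subadditive}; and on reduced pairs the map $(\alpha,\gamma)\mapsto\tilde\g_\alpha\tilde\g_\gamma^{-1}$ is injective, again by freeness, so taking $\Delta_k\subset\GammaS$ to be the finite set of all $\tilde\g_\alpha\tilde\g_\gamma^{-1}$ arising this way for $j\le k$ is legitimate. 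Hölder's inequality in $(\alpha,\gamma)$ with exponents $\th$ (on $\l_\alpha\l_\gamma$) and $p$ (on $\xi_\Gbf^{\epsilon(d)}$), together with $\sum_{\alpha\in\L^{j}}\l_\alpha^{\th}=(\sum_i\l_i^{\th})^{j}$, bounds the level-$j$ contribution by a constant times $\Scal_{2,\ell}(\vp)^{2}(\sum_i\l_i^{2})^{k-j}(\sum_i\l_i^{\th})^{2j/\th}\bigl(\sum_{g\in\Delta_k}\xi_\Gbf^{p\epsilon(d)}(g)\bigr)^{1/p}$. Summing over $0\le j\le k$ is then routine: the sum is at most $k+1$ times its largest term $\lVert\l\rVert_{\ell^{r_0}}^{2k}$ with $r_0=\max(2,\th)$, and since $q=2\th/(\th+1)\le\min(2,\th)\le r_0$ and $|\L|\ge 2$ one has $\lVert\l\rVert_{\ell^{r_0}}<\lVert\l\rVert_{\ell^{q}}$, so the factor $k+1$ is absorbed and $\sum_{j}(\sum_i\l_i^{2})^{k-j}(\sum_i\l_i^{\th})^{2j/\th}\ll_{\Fcal}(\sum_i\l_i^{q})^{2k/q}$. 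This yields the first displayed inequality, and the ``in particular'' statement follows at once from $\sum_{g\in\Delta_k}\xi_\Gbf^{p\epsilon(d)}(g)\le\lVert\xi_\Gbf^{\epsilon(d)}\rVert_{\ell^{p}(\GammaS)}^{p}$.

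The main obstacle—and the point at which the construction of the $\g_\w$ is designed—is the passage to matrix coefficients of $\GammaS$: the subsemigroup generated by the $\g_i$ is neither discrete nor bounded in $\Gbf_S$, so Corollary~\ref{cor:matrix coeffs smooth} cannot be applied to the $\g_\w^\a$ directly. The device that makes it work is that correcting by the archimedean unipotent $u(-b_\w,\mathbf 0)$ moves $\g_\w$ into $\GammaS$ by only a uniformly bounded amount, because the $b_\w$ are uniformly bounded; one then needs that $\xi_\Gbf$ changes only by a bounded factor under such compact perturbations at \emph{every} place, including the non-archimedean ones. The remaining bookkeeping—pinning down $\Delta_k$ and ensuring the double sum is really controlled by a sum over a set rather than a multiset—is exactly where the absence of exact overlaps, equivalently freeness of the semigroup $\langle\g_i\rangle$, is used.
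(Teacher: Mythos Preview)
Your overall strategy---expand the $L^2$ norm as a sum of matrix coefficients, reduce to matrix coefficients indexed by elements of $\GammaS$, apply Corollary~\ref{cor:matrix coeffs smooth}, and then estimate the weighted sum over $\GammaS$---is the paper's strategy as well. However, there is a genuine gap concerning uniformity in $\a$.

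You write $\g_\w^\a(\g_{\w'}^\a)^{-1}=C_\w\,(\tilde\g_\w\tilde\g_{\w'}^{-1})\,C'_{\w'}$ with $C_\w=(k_\a^{-1},\mrm{Id})\g_\a u(b_\w,\mathbf 0)$, and claim that $C_\w,C'_{\w'}$ lie in a compact set so that $\xi_\Gbf$ changes only by a bounded factor. But the Archimedean component of $\g_\a$ is $a(\rho_\a)k_\a$, whose adjoint norm is $\asymp\rho_\a^{-1}\to\infty$ as $|\a|\to\infty$; hence that compact set, and with it your implied constant, depends on $\a$. The proposition asserts $\ll_{\Fcal,W_{\mrm f}}$, i.e.\ uniformity in $\a$, and this uniformity is used in the proof of Theorem~\ref{thm: effective single equidist}. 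The paper avoids this by correcting $\g_\w^\a$ directly rather than $\g_\w$: it sets $\tilde\g_\w^\a=(\t_\w^\a,\mrm{Id})\g_\w^\a\in\GammaS$ with $\t_\w^\a=u\bigl(-f_\a f_\w f_\a^{-1}(\mathbf 0)\bigr)\,k_\a k_\w k_\a^{-1}k_\w^{-1}$. The point is that the translation vector of the conjugated similarity $f_\a f_\w f_\a^{-1}$ equals $(\mrm{Id}-\rho_\w O_\a O_\w O_\a^{-1})b_\a+\rho_\a O_\a b_\w$, which is bounded uniformly in $\a$ and $\w$; hence $\{(\t_\w^\a,\mrm{Id}):\a,\w\in\L^\ast\}$ sits in one compact set $\Ocal\subset\Gbf_S$ depending only on $\Fcal$. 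One then moves the bounded corrections onto $\vp$ via Lemma~\ref{lem:Sobolev}\eqref{item:scaling of Sobolev norms} before applying Corollary~\ref{cor:matrix coeffs smooth}, obtaining the sum $\sum_{\w,\w'}\l_\w\l_{\w'}\,\xi_\Gbf^{\epsilon(d)}\bigl(\tilde\g_\w^\a(\tilde\g_{\w'}^\a)^{-1}\bigr)$ with a constant independent of $\a$. Since~\eqref{eq:gamma tilde multiplicative} gives $\tilde\g_\w^\a=\tilde\g_{\w_1}^\a\cdots\tilde\g_{\w_k}^\a$, your suffix-cancellation argument would then go through verbatim with $\tilde\g^\a$ in place of $\tilde\g$.

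On the combinatorics of the final sum, your longest-common-suffix decomposition followed by H\"older at each level is a legitimate alternative to the paper's route, which instead writes the sum as $\int\xi_\Gbf^{\epsilon(d)}\,\der(\nu_\a^{\ast k}\ast(\check\nu_\a)^{\ast k})$ for the probability measure $\nu_\a=\sum_i\l_i\delta_{\tilde\g_i^\a}$, applies H\"older once with exponents $(\th,p)$, and then Young's convolution inequality $\lVert\nu_\a^{\ast k}\ast(\check\nu_\a)^{\ast k}\rVert_{\ell^\th}\le\lVert\nu_\a^{\ast k}\rVert_{\ell^q}^2$ with $q=2\th/(\th+1)$; freeness then gives $\lVert\nu_\a^{\ast k}\rVert_{\ell^q}^q=\sum_{\w\in\L^k}\l_\w^q=(\sum_i\l_i^q)^k$ directly. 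The Young-inequality approach is cleaner: it avoids the sum over levels $j$ and produces a constant independent of $p$ and $k$, whereas absorbing your factor $k+1$ via $\lVert\l\rVert_{\ell^{r_0}}<\lVert\l\rVert_{\ell^q}$ introduces an implicit dependence on $p$ through $q$. (Also, the largest term in your level sum corresponds to $r_0=\min(2,\th)$, not $\max(2,\th)$; the argument survives since $q<\min(2,\th)$ strictly for $\th>1$.)
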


    \begin{proof}
    
    In order to simplify notation, let
    \begin{equation*}
  \eta = \xi_\Gbf^{\epsilon(d)},\qquad \Scal = \Scal_{2,\ell}.      
    \end{equation*}
    For $g\in \Gbf_S$, we write $g\vp$ to denote $\vp\circ g$.
    Fix $\a\in \L^\ast$. Given $\w\in \L^\ast$, let $\g_\w^\a$ be as in~\eqref{eq: gamma_omega^alpha}.
    A direct computation shows that the Archimedean component of $\g^\a_\w$ is $k_\w a(\rho_\w)$ while its non-Archimedean component is given by
    \begin{equation}\label{eq:Langlands decomp}
        u\big(-f_\a f_\w f_\a^{-1}(\mathbf{0})\big) \cdot 
    k_\a k_\w k_\a^{-1}  \cdot a(\rho_\w).
    \end{equation}
    Let $\t_\w^\a = u\big(-f_\a f_\w f_\a^{-1}(\mathbf{0})\big) 
    k_\a k_\w k_\a^{-1} k_\w^{-1}$, so that $\tilde{\g}^\a_\w : = (\t^\a_\w,\mrm{Id}) \g_\w^\a\in\Ga_{S}$.
    It follows by~\eqref{eq:order of multiplication gamma} that
    \begin{equation}\label{eq:gamma tilde multiplicative}
        \tilde{\g}^\a_\w = \tilde{\g}^\a_{\w_1}\cdots \tilde{\g}^\a_{\w_k}
    \end{equation}
    for all $\a,\w=(\w_1,\dots,\w_k)\in\L^\ast$.
    
    By calculating the translation vector of the similarities $f_\a f_\w f_\a^{-1}$, one sees that $u(-f_\a f_\w f_\a^{-1}(\mathbf{0}))$ is uniformly bounded in $\Gbf_\infty$, independently of $\a$ and $\w$.
    Moreover, at the Archimedean place the elements $k_\bullet$ are all contained in a compact subgroup. Hence, it follows that $\{(\t^\b_\sigma,\mrm{Id}):\b,\sigma\in \L^\ast\}$ is contained in a bounded set $\Ocal\subseteq\Gbf_S$, which depends only on the IFS $\Fcal$. 
    By Lemma~\ref{lem:Sobolev}, there exists $C_\Ocal \geq 1$ such that for any smooth function $\psi$ and for any $g\in\Ocal\cup\Ocal^{-1}$,
    \begin{equation}\label{eq:sobolev scaling}
         \Scal(g\psi) \leq C_\Ocal \Scal(\psi).
    \end{equation}

    Let $\overline{\tau}_{\w}^{\a}=(\tau_{\w}^{\a},\mrm{Id})^{-1}$. It follows from Corollary~\ref{cor:matrix coeffs smooth} that
    \begin{align*}
        \big\lVert(\a\cdot\Pcal_\l)^{k}(\vp)\big\rVert^2  &= 
        \sum_{u,\w\in \L^k} \l_u\l_\w \langle \g^\a_u \vp,\g^\a_\w \vp  \rangle
        = \sum_{u,\w\in \L^k} \l_u\l_\w \big\langle \tilde{\g}^\a_u (\overline{\t}^\a_u\vp),\tilde{\g}^\a_\w (\overline{\t}^\a_\w\vp)   \big\rangle\\
        &\ll_{W_{\mrm{f}}} \sum_{u,\w\in \L^k} \l_u\l_\w \eta\big(\tilde{\g}^\a_\w(\tilde{\g}^\a_u)^{-1}\big)
        \Scal\left(\overline{\t}_{u}^{\a}\vp\right)
        \Scal\left(\overline{\t}_{\w}^{\a}\vp\right).
    \end{align*}
    By~\eqref{eq:sobolev scaling}, we get
    \begin{equation}\label{eq:remove tau}
         \big\lVert(\a\cdot\Pcal_\l)^{k}(\vp)\big\rVert^2 \ll_{\Fcal,W_{\mrm{f}}} \Scal(\vp)^2 
         \sum_{u,\w\in \L^k} \l_u\l_\w \eta\big(\tilde{\g}^\a_\w(\tilde{\g}^\a_u)^{-1}\big).
    \end{equation}
    
    Denote by $\nu_\a$ the measure supported on $\seti{\tilde{\g}^\a_i:i\in\L}$ such that $\nu_\a(\tilde{\g}^\a_i) = \l_i$.
    In particular, $\nu_\a$ is supported on $\GammaS$.
    Moreover, in view of~\eqref{eq:gamma tilde multiplicative}, we have for every $g\in \Gbf_S$ and $k\in \N$ that
    \begin{equation}\label{eq:convolution}
        \nu_\a^{*k}(g) = \sum_{\w\in\L^k:\tilde{\g}^\a_\w = g} \l_\w,
    \end{equation}
    where $\nu_\a^{*k}$ denotes the $k^{\text{th}}$ convolution power of $\nu_\a$.
    Denote by $\check{\nu}_\a$ the adjoint of $\nu_{\a}$, defined as the push-forward of $\nu_\a$ under the map $g\mapsto g^{-1}$.
    For $k\in \N$, let $\Delta_k \subset \GammaS$ be the (finite) support of the measure $\nu_\a^{*k} * (\check{\nu}_\a)^{*k}$.
    With this notation, we can rewrite the upper bound in~\eqref{eq:remove tau} as
    \begin{equation*}
        \sum_{u,\w\in \L^k} \l_u\l_\w \eta(\tilde{\g}^\a_\w(\tilde{\g}^\a_u)^{-1} ) 
         =  \int_{\Gbf_S} \eta(g) \;\der \big(\nu_\a^{*k} * (\check{\nu}_\a)^{*k} \big)(g).
    \end{equation*}
    Recall that $\th$ denotes the H\"older conjugate of $p$. 
    By H\"older's inequality, we obtain:
    \begin{align*}
    \int_{\Gbf_S} \eta(g) \;\der \big(\nu_\a^{*k} * (\check{\nu}_\a)^{*k} \big)(g) &= 
        \sum_{g\in \GammaS} \big(\nu_\a^{*k} * (\check{\nu}_\a)^{*k}\big) (g)  \eta( g) \\
        &\leqslant
        \Bigg(\sum_{g\in \GammaS} \big(\big(\nu_\a^{*k} * (\check{\nu}_\a)^{*k} \big) (g)\big)^{\th} \Bigg)^{\frac{1}{\th}} \Bigg(\sum_{g\in\Delta_k} \eta^{p}( g)  \Bigg)^{\frac{1}{p}}.
    \end{align*}
    By Young's inequality, applied with $q = 2\th/(1+\th)$, 
    \begin{equation*}
    \big\lVert\nu_\a^{*k} * (\check{\nu}_\a)^{*k}\big\rVert_{\ell^\th(\GammaS)} \leqslant
    \big\lVert\nu_\a^{*k}\big\rVert_{\ell^q(\GammaS)} \big\lVert(\check{\nu}_\a)^{*k}\big\rVert_{\ell^q(\GammaS)} = \big\lVert\nu_\a^{*k}\big\rVert^2_{\ell^q(\GammaS)}.
    \end{equation*}
    
    Since $\Fcal$ has no exact overlaps, the sub-semigroup generated by $\seti{\tilde{\g}^\a_i:i\in\L}$ is free.
    Indeed, this can be seen directly from the decomposition in~\eqref{eq:Langlands decomp} of the elements $\tilde{\g}^\a_\w$.
    In particular, for all $u,\w\in \L^k$, 
    \begin{equation*}
    \tilde{\g}^\a_u = \tilde{\g}^\a_\w \Longleftrightarrow u =\w.    
    \end{equation*}
    Combined with~\eqref{eq:convolution}, it follows that
     \begin{equation*}
        \big\lVert\nu_\a^{*k}\big\rVert^2_{\ell^q(\GammaS)} =  
        \Bigg( \sum_{\w\in\L^k}\l_\w^q \Bigg)^{2/q} = \Bigg( \sum_{i\in\L}\l_i^q \Bigg)^{2k/q},
     \end{equation*}
     where the last equality follows by Lemma~\ref{lem: average of submul coc is subadditive}, applied with $\t_i = \l_i^{q-1}$.
     This completes the proof.
    \end{proof}

\subsection{Summability of Matrix Coefficients}
\label{sec:summability}
 We show that the matrix coefficients of $\Ga_S$ acting on $\mrm{L}^2_{00}(\Gbf_S/\Ga_S)$ belong to $\ell^p(\GammaS)$ for an explicit choice of $p$, Proposition~\ref{prop:summable}.
This verifies the hypothesis of the last assertion of Proposition~\ref{prop: spectral gap} for this value of $p$, thus completing the proof of the bound on the norm of the operators $\Pcal_\l$.
\begin{proposition}\label{prop:summable}
  Let
  \begin{equation*}
    v(d)=\left(\left\lfloor\frac{d}{2}\right\rfloor+1\right)\left\lceil\frac{d}{2}\right\rceil, \qquad p=2v(d).
  \end{equation*}
  Then, $\xi_{\Gbf}\in \ell^{p+\varepsilon}(\GammaS)$ for all $\varepsilon>0$.
\end{proposition}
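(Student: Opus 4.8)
Here is how I would approach the proof.

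The plan is to pass from $\sum_{\gamma\in\GammaS}\xi_{\Gbf}(\gamma)^{s}$ to a weighted sum over the Cartan projections of $\gamma$ at the places of $S$ and to balance the decay of the Harish--Chandra factors against the lattice point count in $\GammaS$; throughout $d$ is fixed and implied constants may depend on $d$ and $S$. First I would fix, for every $v\in S$, the maximal strongly orthogonal system $\Scal_{v}=\{e_{k}-e_{d+2-k}:1\le k\le r\}$ in the root system $A_{d}$ of $\Gbf=\PGL_{d+1}$, where $r=\lceil d/2\rceil=\lfloor(d+1)/2\rfloor$; since all such systems in $A_{d}$ are Weyl conjugate, this is consistent with the choice underlying Theorem~\ref{thm: matrix coeffs}. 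Writing the Cartan ($KAK$) decomposition $g_{v}=k\,a_{v}(g_{v})\,k'$ with $a_{v}(g_{v})\in A_{v}^{+}$, bi-$M_{v}$-invariance of $\xi_{v}$ gives $\xi_{v}(g_{v})=\prod_{\alpha\in\Scal_{v}}\Xi_{\PGL_{2}(\bQ_{v})}(\mrm{diag}(\alpha(a_{v}(g_{v})),1))$, and with our choice every factor satisfies $\lvert\alpha(a_{v}(g_{v}))\rvert_{v}\ge1$. The standard pointwise bounds for the Harish--Chandra function of $\PGL_{2}(\bQ_{v})$ (Macdonald's formula at finite places, the classical asymptotics at $\infty$) read $\Xi_{\PGL_{2}(\bQ_{v})}(\mrm{diag}(x,1))\ll_{\varepsilon}\max(\lvert x\rvert_{v},\lvert x\rvert_{v}^{-1})^{-\frac12+\varepsilon}$ for every $\varepsilon>0$, the logarithmic (resp.\ linear-in-the-Cartan-exponent) corrections being absorbed into $\varepsilon$. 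Hence, setting
\[
  N(g):=\prod_{v\in S}\ \prod_{\alpha\in\Scal_{v}}\lvert\alpha(a_{v}(g_{v}))\rvert_{v}\ \ (\ge1),\qquad g=(g_{v})_{v\in S}\in\Gbf_{S},
\]
I get $\xi_{\Gbf}(g)=\prod_{v}\xi_{v}(g_{v})\ll_{\varepsilon}N(g)^{-\frac12+\varepsilon}$.

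The second ingredient is the lattice point count for $\GammaS$ in $\Gbf_{S}$: for Cartan data consisting of a unit box $Q_{\infty}\subset A_{\infty}^{+}$ (in logarithmic coordinates) and elements $a_{v}\in A_{v}^{+}$ for $v\in\Sf$, I would use
\[
  \#\{\gamma\in\GammaS:\ a_{\infty}(\gamma_{\infty})\in Q_{\infty},\ a_{v}(\gamma_{v})=a_{v}\ \forall v\in\Sf\}\ \ll\ \delta_{\infty}(Q_{\infty})\prod_{v\in\Sf}\delta_{v}(a_{v}),
\]
where $\delta_{v}$ is the Jacobian of the Cartan decomposition of Haar measure on $\Gbf(\bQ_{v})$ (so that $\log\delta_{v}$, in logarithmic/valuation coordinates, is $2\varrho(\mathbf{t})=\sum_{i<j}(t_{i}-t_{j})$ evaluated at the data), the implied constant being independent of the data and $\delta_{\infty}(Q_{\infty})$ denoting the common order of $\delta_{\infty}$ on $Q_{\infty}$. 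This is a soft consequence of the Cartan decomposition of $\mrm{m}_{\Gbf_{S}}$ together with discreteness of $\GammaS$: there is an identity neighbourhood $\Omega_{0}\subset\Gbf_{S}$ with $\GammaS\cap\Omega_{0}\Omega_{0}^{-1}=\{\mrm{Id}\}$, and one packs the translates $\gamma\Omega_{0}$ inside an $\Omega_{0}$-thickening of the relevant bi-$M_{v}$-invariant set.

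It remains to sum. Writing the Cartan projection of $\gamma$ at $v$ in logarithmic/valuation coordinates as $\mathbf{t}^{(v)}(\gamma)=(t^{(v)}_{1}\ge\dots\ge t^{(v)}_{d+1}=0)$, one has $\log\delta_{v}=2\varrho(\mathbf{t}^{(v)})$ and $\log N(\gamma)=\varsigma(\mathbf{t}^{(\infty)})+\sum_{v\in\Sf}(\log p_{v})\varsigma(\mathbf{t}^{(v)})$ with $\varsigma(\mathbf{t})=\sum_{k=1}^{r}(t_{k}-t_{d+2-k})$. The elementary inequality that makes everything close is
\[
  2\varrho(\mathbf{t})\ \le\ d\cdot\varsigma(\mathbf{t})\qquad\text{on the positive chamber }\{t_{1}\ge\dots\ge t_{d+1}=0\},
\]
which in the gap coordinates $g_{i}=t_{i}-t_{i+1}\ge0$ reads $\sum_{i=1}^{d}i(d+1-i)g_{i}\le d\sum_{i=1}^{d}\min(i,d+1-i)g_{i}$ and holds termwise since $i(d+1-i)\le d\min(i,d+1-i)$ for $1\le i\le d$ (it is here that the \emph{symmetric} choice of $\Scal_{v}$ is essential; for a different maximal strongly orthogonal system $\varsigma$ need not control $2\varrho$ at all). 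Combined with $\varsigma(\mathbf{t})\ge t_{1}$, which confines all Cartan data with $\log N\le Y$ to boxes of size $\ll Y$, the counting bound yields $\#\{\gamma\in\GammaS:N(\gamma)\le X\}\ll_{\varepsilon}X^{d+\varepsilon}$, and a dyadic decomposition gives, for $s>2d$,
\[
  \sum_{\gamma\in\GammaS}\xi_{\Gbf}(\gamma)^{s}\ \ll_{\varepsilon}\ \sum_{\gamma\in\GammaS}N(\gamma)^{-\frac{s}{2}+\varepsilon}\ \ll\ \sum_{j\ge0}\#\{\gamma:N(\gamma)\asymp2^{j}\}\,2^{-j(\frac{s}{2}-\varepsilon)}\ \ll\ \sum_{j\ge0}2^{j(d-\frac{s}{2}+\varepsilon)}\ <\ \infty.
\]
Since $p=2v(d)\ge2d$ (with equality only when $d\le2$), the exponent $s=p+\varepsilon$ is admissible for every $\varepsilon>0$, which is the proposition — in fact this argument even gives $\xi_{\Gbf}\in\ell^{s}(\GammaS)$ for all $s>2d$, so the stated exponent is not optimal. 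The step I expect to need the most care is the uniform counting estimate in the second display — carrying an $e^{2\varrho}$-type bound past \emph{all} places of $S$ simultaneously with a constant independent of the Cartan data; the Harish--Chandra input and the linear-programming inequality are routine.
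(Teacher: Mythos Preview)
Your argument is correct and in fact establishes the sharper conclusion $\xi_{\Gbf}\in\ell^{s}(\GammaS)$ for every $s>2d$, which strictly improves the stated exponent $p=2v(d)$ once $d\ge 3$. The approach, however, is genuinely different from the paper's. The paper replaces $\xi_{v}$ by the cruder majorant $\lVert g_{v}\rVert_{v}^{-1/2+\varepsilon}$ (Proposition~\ref{prop:boundHarishChandra}), computes the growth of \emph{norm}-balls in $\Gbf(\bQ_{v})$ (Lemma~\ref{lem:padicshells}) to obtain $\eta_{S}:=\prod_{v}\lVert\cdot\rVert_{v}^{-1}\in \mrm{L}^{v(d)+\varepsilon}(\Gbf_{S})$, and then passes from $\mrm{L}^{q}(\Gbf_{S})$ to $\ell^{q}(\GammaS)$ by a short folding--unfolding argument together with submultiplicativity of $\eta_{S}$; no lattice-point count is ever performed. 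The exponent $v(d)$ arises there as the maximum of $2\varrho(\mathbf{n})/n_{1}$ over the closed chamber, attained on a wall. You instead keep the finer height $N(g)=\prod_{v}\prod_{\alpha\in\Scal_{v}}\lvert\alpha(a_{v}(g))\rvert_{v}$, do a direct packing count of $\GammaS$ against the Cartan Jacobian, and close with the linear inequality $2\varrho\le d\,\varsigma$ on the chamber. The trade-off: the paper's route is softer and avoids the uniform lattice-point estimate you flag as delicate, at the price of the weaker (but for the paper sufficient) exponent $2v(d)$; your route requires that uniform count but recovers the optimal-up-to-$\varepsilon$ threshold $2d$. Both the Harish--Chandra input and your chamber inequality $i(d+1-i)\le d\min(i,d+1-i)$ are unproblematic; the only place deserving extra care in a full write-up is exactly the one you identify, namely the uniformity of the packing bound across Cartan data, which follows from Lipschitz continuity of the Cartan projection under small right-translations.
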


As a first step, we bound the functions $\xi_{v}$ in terms of the operator norm of the adjoint action.
We recall that for any place $v$ of $\bQ$ the group $\Gbf(\bQ_{v})$ admits a so-called KAK-decomposition; cf. Appendix~\ref{sec:KAK decomposition}. More precisely, for any $g\in\Gbf(\bQ_{v})$ there are $k_{1},k_{2}\in M_{v}$ and $a\in A_{v}^{+}$ such that
\begin{equation}\label{eq:KAK}
  g=k_{1}a k_{2}.
\end{equation}

\begin{proposition}\label{prop:boundHarishChandra}
  Let $D=\lfloor \frac{d+1}{2}\rfloor$.
  For all sufficiently small $\varepsilon>0$ there exists a  constant $\delta(\varepsilon)>0$ such that for all $g\in\Gbf(\Q_{v})$,
  \begin{equation}\label{eq:boundHarish-Chandra}
    \delta_{1}\lVert g\rVert_{v}^{-\frac{1}{2}D}\leq\xi_{v}(g)\leq\delta_{2}(\varepsilon)\lVert g\rVert_{v}^{-\frac{1}{2}+\varepsilon}.
  \end{equation}
\end{proposition}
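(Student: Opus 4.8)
The plan is to reduce, via the Cartan ($KAK$) decomposition, to estimating \eqref{eq:Harish-Chandra} for a single element $g=a$ in the positive chamber $A^+_v$, and then to bound each factor of the resulting product using the classical two-sided asymptotics for the Harish--Chandra function of $\PGL_2(\Q_v)$.

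First I would record the reduction. The function $\xi_v$ is bi-$M_v$-invariant by its very definition \eqref{eq:Harish-Chandra}, and so is $\lVert\cdot\rVert_v$ by \eqref{eq:invariance of norm}; since $\Gbf(\Q_v)=M_vA^+_vM_v$ (Appendix~\ref{sec:KAK decomposition}), it suffices to prove \eqref{eq:boundHarish-Chandra} for $g=a\in A^+_v$. Identify $a$ with its representative in the set defining $A^+_v$, i.e.\ a diagonal matrix with $a_1\ge\cdots\ge a_{d+1}=1$ when $v=\infty$, resp.\ $\mathrm{diag}(p^{-n_1},\dots,p^{-n_{d+1}})$ with $n_1\ge\cdots\ge n_{d+1}=0$ when $v=p$. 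The adjoint action $\Ad_a=\Phi(a)$ on $\Mat_{d+1}$ is diagonal in the standard basis $\{E_{ij}\}$, with eigenvalue $a_i/a_j$ on $E_{ij}$ (and this basis is orthonormal for the inner product $\langle L,M\rangle=\tr({}^{t}LM)$ used at $v=\infty$), so
\[
  \lVert a\rVert_v=\max_{i\ne j}\lvert a_i/a_j\rvert_v=\lvert a_1/a_{d+1}\rvert_v .
\]
For $a\in A^+_v$ this quantity is $\ge 1$ and is attained at the highest root $\theta=e_1-e_{d+1}$; moreover every positive root $\alpha=e_i-e_j\in\Sigma^+_v$ satisfies $1\le\lvert\alpha(a)\rvert_v\le\lVert a\rVert_v$, and $\lVert\mathrm{diag}(\alpha(a),1)\rVert_v=\lvert\alpha(a)\rvert_v$ when $\mathrm{diag}(\alpha(a),1)$ is regarded in $\PGL_2(\Q_v)$.

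The second ingredient is the classical estimate for the Harish--Chandra function of $\PGL_2(\Q_v)$ (Harish--Chandra's asymptotics for $v=\infty$, Macdonald's formula for finite $v$; cf.~\cite{GMO}): there is a constant $c_1>0$ and, for each sufficiently small $\varepsilon>0$, a constant $c_2(\varepsilon)>0$ such that
\[
  c_1\lVert h\rVert_v^{-1/2}\le\Xi_{\PGL_2(\Q_v)}(h)\le c_2(\varepsilon)\lVert h\rVert_v^{-1/2+\varepsilon}
  \qquad(h\in\PGL_2(\Q_v)),
\]
the only subtlety being that the logarithmic factor $1+\log\lVert h\rVert_v$ present in the sharp asymptotics is absorbed into $\lVert h\rVert_v^{\varepsilon}$ (using $1+\log t\ll_\varepsilon t^\varepsilon$ for $t\ge 1$, uniformly in $v$). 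Then I would use that a maximal strongly orthogonal system in the root system $A_d$ of $\Gbf$ has exactly $D=\lfloor(d+1)/2\rfloor$ elements and may be taken to be $\Scal_v=\{e_i-e_{d+2-i}:1\le i\le D\}$, which contains $\theta$ (cf.~\cite{Oh}; any maximal strongly orthogonal system works here, since its root sum dominates $\theta$). Applying the displayed bound to each factor of \eqref{eq:Harish-Chandra} with $h=\mathrm{diag}(\alpha(a),1)$, $\alpha\in\Scal_v$, and using $1\le\lvert\alpha(a)\rvert_v\le\lVert a\rVert_v$ together with $\lvert\Scal_v\rvert=D$, I get on the one hand
\[
  \xi_v(a)\ \ge\ c_1^{\,D}\prod_{\alpha\in\Scal_v}\lvert\alpha(a)\rvert_v^{-1/2}\ \ge\ c_1^{\,D}\,\lVert a\rVert_v^{-D/2},
\]
and on the other hand, isolating the factor attached to $\theta$ (which is $\le c_2(\varepsilon)\lVert a\rVert_v^{-1/2+\varepsilon}$) and bounding the remaining $D-1$ factors by $c_2(\varepsilon)$ (legitimate since $\lvert\alpha(a)\rvert_v\ge 1$ and $\varepsilon<1/2$),
\[
  \xi_v(a)\ \le\ c_2(\varepsilon)^{\,D}\,\lVert a\rVert_v^{-1/2+\varepsilon}.
\]
Taking $\delta_1=c_1^{\,D}$ and $\delta_2(\varepsilon)=c_2(\varepsilon)^{\,D}$ yields \eqref{eq:boundHarish-Chandra}.

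I do not expect a serious obstacle: the proposition repackages standard structure theory, and the remaining work is bookkeeping. The two points needing care are (i) that $\lVert a\rVert_v$ is realised by the highest root --- at finite $v$ the absolute value $\lvert\cdot\rvert_p$ reverses the order of the exponents $n_i$, so one must check $e_1-e_{d+1}$ still gives the maximum --- and (ii) the exact cardinality $D$ of a maximal strongly orthogonal system in type $A_d$ together with the fact that it may be chosen to contain $\theta$; the $\PGL_2$ asymptotics and the elementary absorption of the logarithmic factor are routine.
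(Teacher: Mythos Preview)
Your proof is correct and mirrors the paper's: both reduce via the $KAK$ decomposition to $a\in A_v^+$ and then compare the product $\prod_{i=1}^{D}\lvert a_i/a_{d+2-i}\rvert_v$ with $\lVert a\rVert_v\asymp\lvert a_1/a_{d+1}\rvert_v$ through the elementary bounds $\lvert a_1/a_{d+1}\rvert_v\le\prod\le\lvert a_1/a_{d+1}\rvert_v^{D}$; the paper obtains this product form by quoting \cite{Oh,OhSystems}, whereas you unpack it by applying the $\PGL_2$ Harish--Chandra asymptotics factor by factor. One small caveat: your parenthetical that \emph{any} maximal strongly orthogonal system works because ``its root sum dominates $\theta$'' is false in type $A_d$ (take $\{e_1-e_2,\,e_3-e_4\}$ when $d=3$ and $a=\mathrm{diag}(t^{3},t^{2},t,1)$), and the upper bound genuinely needs $\theta\in\Scal_v$; since your main argument fixes $\Scal_v=\{e_i-e_{d+2-i}\}$, which is exactly the system the paper uses via Oh's bounds, the proof stands.
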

\begin{remark}
  Note that $D=1$ if $d=1$ or $d=2$.
\end{remark}
\begin{proof}
  We start with the case where $v=p$ is a finite place.
  Given $g\in\Gbf(\Qp)$, let $a(g)\in A_{p}^{+}$ denote the Cartan element defined by~\eqref{eq:KAK} normalized so that the bottom right entry equals $1$. Using~\cite[Thm.~5.9]{Oh} and~\cite[Prop.~2.3]{OhSystems}, cf.~\cite[\textsection6.1]{Oh}, we find $\delta_{1},\delta_{2}(\varepsilon)>0$ such that
  \begin{equation}\label{eq:boundsHarishChandrap-adic}
    \delta_{1}\left(\prod_{i=1}^{\lfloor\frac{d+1}{2}\rfloor}\frac{\lvert a(g)_{i}\rvert_{p}}{\lvert a(g)_{d+1-(i-1)}\rvert_{p}}\right)^{-\frac{1}{2}}\leq\xi_{p}(g)\leq\delta_{2}(\varepsilon)
    \left(\prod_{i=1}^{\lfloor\frac{d+1}{2}\rfloor}\frac{\lvert a(g)_{i}\rvert_{p}}{\lvert a(g)_{d+1-(i-1)}\rvert_{p}}\right)^{-\frac{1}{2}+\varepsilon}.
  \end{equation}
  It remains to bound the product appearing in~\eqref{eq:boundsHarishChandrap-adic} in terms of $\lVert\Ad_{g}\rVert_{p}$. Using \eqref{eq:KAK} assume without loss of generality that
  \begin{equation*}
    g=\mathrm{diag}(p^{-n_{1}},\ldots,p^{-n_{d}},1)
  \end{equation*}
  for integers $n_{1}\geq\cdots\geq n_{d}\geq 0$. Then,
  \begin{equation*}
    \prod_{i=1}^{\lfloor\frac{d+1}{2}\rfloor}\frac{\lvert a(g)_{i}\rvert_{p}}{\lvert a(g)_{d+1-(i-1)}\rvert_{p}}=p^{\eta(g)},
  \end{equation*}
  where
  \begin{equation*}
    \eta(g)=n_{1}-n_{d+1}+n_{2}-n_{d}+\cdots+n_{\lfloor\frac{d+1}{2}\rfloor}-n_{d+2-\lfloor\frac{d+1}{2}\rfloor}
  \end{equation*}
  and in particular
  \begin{equation}\label{eq:boundexponent}
    n_{1}\leq\eta(g)\leq Dn_{1}.
  \end{equation}
  Therefore, since $\lVert g\rVert_{p}=\lVert\Ad_{g}\rVert_{p}=p^{n_{1}}$, we obtain the claim for finite places of $\bQ$.
  
  If $v$ is the infinite place the argument is very similar. We again have 
  \begin{equation}\label{eq:boundsHarishChandraarchimedean}
    \delta_{1}\left(\prod_{i=1}^{\lfloor\frac{d+1}{2}\rfloor}\frac{a(g)_{i}}{a(g)_{d+1-(i-1)}}\right)^{-\frac{1}{2}}\leq\xi_{\infty}(g)\leq\delta_{2}(\varepsilon)\left(\prod_{i=1}^{\lfloor\frac{d+1}{2}\rfloor}\frac{a(g)_{i}}{a(g)_{d+1-(i-1)}}\right)^{-\frac{1}{2}+\varepsilon}
  \end{equation}
  and
  \begin{equation}\label{eq:archimedeanintermediatebound}
    a(g)_{1}\leq\prod_{i=1}^{\lfloor\frac{d+1}{2}\rfloor}\frac{a(g)_{i}}{a(g)_{d+1-(i-1)}}\leq a(g)_{1}^{D}.
  \end{equation}

  We recall that $\lVert\Ad_{g}\rVert_{\infty}=\lVert\Ad_{a(g)}\rVert_{\infty}$ and $\Ad_{a(g)}$ is diagonalizable with eigenvalues
  \begin{equation*}
    \sigma(\Ad_{a(g)})=\left\{\frac{a(g)_{i}}{a(g)_{j}}:1\leq i,j\leq d+1\right\}.
  \end{equation*}
By definition, we get
  
  \begin{equation*}
    \lVert\Ad_{g}\rVert_{\infty}^{2}=\sum_{1\leq i,j\leq d+1}\frac{a(g)_{i}^{2}}{a(g)_{j}^{2}}
  \end{equation*}
  and, in particular,
  \begin{equation}\label{eq:squeeze}
    a(g)_{1}\leq\lVert\Ad_{g}\rVert_{\infty}\leq(1+\dim\Gbf)^{\frac{1}{2}}a(g)_{1}.
  \end{equation}
  Combining this with~\eqref{eq:boundsHarishChandraarchimedean},~\eqref{eq:archimedeanintermediatebound} and $\lVert g\rVert_{\infty}=\lVert\Ad_{g}\rVert_{\infty}$, the claim follows.
\end{proof}
We record for later reference that~\eqref{eq:squeeze} implies that, given $t\in(0,\infty)$ and $a(t)$ as in Section~\ref{sec: QIFS}, we have
\begin{equation}\label{eq: norm flow}
    \lVert\Ad_{a(t)}\rVert_{\infty}=\lVert a(t)\rVert_{\infty}\asymp_{d}\max\{t,t^{-1}\}.
\end{equation}

\subsubsection{Volume growth for norm balls in $\Gbf(\Q_{p})$}\label{sec:volumegrowth}
In preparation of the proof of Proposition~\ref{prop:summable} we derive bounds on the volume of norm balls in $\Gbf(\Q_{p})$ for $p$ a finite place of $\Q$. We fix a choice of a Haar measure $m_{p}$ on $\Gbf(\Qp)$ such that $m_{p}(\Gbf(\Zp))=1$.
\begin{lem}\label{lem:padicshells}
  Let $n\in\N\cup\{0\}$ and set
  \begin{equation*}
    v(d)=\left(\left\lfloor\frac{d}{2}\right\rfloor+1\right)\left\lceil\frac{d}{2}\right\rceil.
  \end{equation*}
  Then for all $\varepsilon>0$ we have
  \begin{equation*}
    p^{v(d)n}\ll_{d}\vol{\{g\in\Gbf(\Qp):\lVert g\rVert_{p}=p^{n}\}}\ll_{d,\varepsilon}p^{(v(d)+\varepsilon)n}.
  \end{equation*}
  If $d=1$, the latter bound remains valid for $\varepsilon=0$.
\end{lem}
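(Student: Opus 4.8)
The plan is to pass to the KAK-decomposition of $\Gbf(\Qp)=\PGL_{d+1}(\Qp)$, reduce the volume to a sum of cardinalities of classical $p$-adic double cosets, and then perform an elementary optimization over the relevant coweights which produces the exponent $v(d)$. First I would write $\Gbf(\Qp)=M_{p}A_{p}^{+}M_{p}$ (cf.~Appendix~\ref{sec:KAK decomposition}), recalling that $M_{p}=\Gbf(\Zp)$ and that $A_{p}^{+}$ consists of the images under $\Phi$ of the diagonal matrices $\tilde a_{\lambda}=\mrm{diag}(p^{-n_{1}},\ldots,p^{-n_{d+1}})$ with $\lambda=(n_{1},\ldots,n_{d+1})$ a non-increasing tuple of non-negative integers satisfying $n_{d+1}=0$. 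Since $\norm{\cdot}_{p}$ is bi-$M_{p}$-invariant by~\eqref{eq:invariance of norm}, and since the eigenvalues of $\Ad_{\tilde a_{\lambda}}$ on $\Mat_{d+1}$ are the numbers $p^{n_{j}-n_{i}}$, whose $p$-adic absolute values are largest, namely $p^{n_{1}}$, for $(i,j)=(1,d+1)$, we get $\norm{a_{\lambda}}_{p}=p^{n_{1}}$ (this is Corollary~\ref{cor:trivialcorollary}). By the uniqueness statement in the Cartan decomposition, this yields the disjoint decomposition
\[
\{g\in\Gbf(\Qp):\norm{g}_{p}=p^{n}\}=\bigsqcup_{\lambda}\;M_{p}\,a_{\lambda}\,M_{p},
\]
the union being over all $\lambda$ with $n_{1}=n$ and $n_{1}\ge n_{2}\ge\cdots\ge n_{d+1}=0$.

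Next I would compute $\vol{M_{p}a_{\lambda}M_{p}}$. As $m_{p}(M_{p})=1$, this equals the number of left $M_{p}$-cosets in $M_{p}a_{\lambda}M_{p}$, i.e.~$[M_{p}:M_{p}\cap a_{\lambda}M_{p}a_{\lambda}^{-1}]$. Using $M_{p}=\Phi(\GL_{d+1}(\Zp))$ (see the proof of Lemma~\ref{lem:identification-space-of-lattices}) together with a short argument handling $\ker\Phi=\Qp^{\times}$ --- if an element of $\GL_{d+1}(\Zp)$ and an element of $\tilde a_{\lambda}\GL_{d+1}(\Zp)\tilde a_{\lambda}^{-1}$ coincide up to a scalar in $\Qp^{\times}$, then, comparing determinants, that scalar lies in $\Zp^{\times}$ --- one identifies this index with $[\GL_{d+1}(\Zp):\GL_{d+1}(\Zp)\cap\tilde a_{\lambda}\GL_{d+1}(\Zp)\tilde a_{\lambda}^{-1}]$, i.e.~the cardinality of the $\GL_{d+1}(\Zp)$-double coset of $\tilde a_{\lambda}$ in $\GL_{d+1}(\Qp)$, or equivalently the number of sublattices of $\Zp^{d+1}$ of elementary divisor type $\lambda$. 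By the classical formula for this count, the cardinality equals $p^{e(\lambda)}$ up to a factor $c_{\lambda}(p)$ which is a ratio of products of at most $d+1$ terms of the form $1-p^{-k}$ ($1\le k\le d+1$), and hence lies in a fixed interval $[c_{d},C_{d}]\subset(0,\infty)$ depending only on $d$ and not on $p$; here
\[
e(\lambda)=\sum_{1\le i<j\le d+1}(n_{i}-n_{j})=\sum_{i=1}^{d+1}(d+2-2i)\,n_{i}.
\]
Thus $\vol{M_{p}a_{\lambda}M_{p}}\asymp_{d}p^{e(\lambda)}$, uniformly in $p$.

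Finally I would optimize $e(\lambda)$ over the admissible $\lambda$ with $n_{1}=n$. Writing $\delta_{j}=n_{j}-n_{j+1}\ge 0$ for $1\le j\le d$ (so that $\sum_{j=1}^{d}\delta_{j}=n_{1}-n_{d+1}=n$) and resumming, one gets $e(\lambda)=\sum_{j=1}^{d}\delta_{j}\,j(d+1-j)$, whence $e(\lambda)\le n\cdot\max_{1\le j\le d}j(d+1-j)$, with equality when $\delta$ is concentrated at a maximizer $j^{\ast}$, i.e.~for $\lambda^{\ast}$ having its first $j^{\ast}$ entries equal to $n$ and the remaining ones equal to $0$. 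An elementary check shows $\max_{1\le j\le d}j(d+1-j)=(\lfloor d/2\rfloor+1)\lceil d/2\rceil=v(d)$ and that such a maximizer with $1\le j^{\ast}\le d$ exists, so $\lambda^{\ast}$ is admissible (its last entry is $0$). The lower bound of the lemma then follows from $\vol{\{\norm{g}_{p}=p^{n}\}}\ge\vol{M_{p}a_{\lambda^{\ast}}M_{p}}\asymp_{d}p^{v(d)n}$. For the upper bound, the admissible $\lambda$ are determined by the tuple $n\ge n_{2}\ge\cdots\ge n_{d}\ge 0$, so there are at most $(n+1)^{d-1}$ of them, and each contributes $\ll_{d}p^{e(\lambda)}\le p^{v(d)n}$; hence $\vol{\{\norm{g}_{p}=p^{n}\}}\ll_{d}(n+1)^{d-1}p^{v(d)n}\ll_{d,\varepsilon}p^{(v(d)+\varepsilon)n}$, the polynomial factor being absorbed into $p^{\varepsilon n}$. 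When $d=1$ there is exactly one admissible $\lambda$, namely $(n,0)$, and $v(1)=1$, so $\vol{\{\norm{g}_{p}=p^{n}\}}=\vol{M_{p}a_{(n,0)}M_{p}}\asymp_{1}p^{n}$ with no polynomial factor, which is the asserted statement with $\varepsilon=0$.

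The Cartan-decomposition bookkeeping and the center argument are routine. The point that requires care is that the multiplicative error $c_{\lambda}(p)$ in the double-coset count be bounded away from $0$ and $\infty$ uniformly in the prime $p$, so that the implied constants in the lemma depend only on $d$ as claimed; this is where I would want to quote the exact Gaussian-binomial shape of the count rather than a crude index bound (the latter would only give $\vol{M_{p}a_{\lambda}M_{p}}\le p^{(d+1)^{2}n}$, which is too lossy). The genuine content of the proof is the short optimization in the last paragraph, which is what turns the exponent $e(\lambda)$ into $v(d)\,n$.
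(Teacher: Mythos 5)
Your proof is correct, and the overall strategy matches the paper's: decompose the shell $\{g:\lVert g\rVert_{p}=p^{n}\}$ into Cartan double cosets $M_{p}a_{\lambda}M_{p}$, show each has volume $\asymp_{d}p^{e(\lambda)}$ with $e(\lambda)=\sum_{i}(d+2-2i)n_{i}$, maximize $e(\lambda)$ over admissible $\lambda$ to obtain $v(d)n$, and absorb the polynomially-many double cosets into $p^{\varepsilon n}$ (noting there is only one when $d=1$). The one place where you take a genuinely different route is the identification $\vol{M_{p}a_{\lambda}M_{p}}\asymp_{d}p^{e(\lambda)}$: the paper quotes Silberger's formula $\vol{Ka_{\mathbf{n}}K}\asymp\delta_{\Bbf}(a_{\mathbf{n}})$ and computes the modular character of the Borel (as in Appendix~\ref{sec:modular char}), whereas you compute the volume as the index $[M_{p}:M_{p}\cap a_{\lambda}M_{p}a_{\lambda}^{-1}]$, transfer it through $\Phi$ to $\GL_{d+1}(\bZ_{p})$ (taking care of $\ker\Phi=\bQ_{p}^{\times}$ by the determinant argument), and invoke the classical count of sublattices of $\bZ_{p}^{d+1}$ of a given elementary divisor type. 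The two exponents agree, of course — the Weyl-vector pairing $\langle\lambda,2\rho\rangle$ in the sublattice count is exactly $\log_{p}\delta_{\Bbf}(a_{\lambda})$ — but your version is more elementary and self-contained, at the small cost of handling the center. You are also right to insist on the uniformity in $p$ of the Gaussian-binomial correction factor; a crude index bound would indeed be too lossy, and this is the one point where the sublattice-counting route requires more care than simply citing the modular-character formula.
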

\begin{proof}
  Note that $\lVert g\rVert_{p}=1$ if and only if $g\in\Gbf(\bZ_{p})$. Hence we can assume that $n\in\bN$. Let
  \begin{equation*}
    I_{n}^{d}=\{(n_{1},\ldots,n_{d})\in(\N\cup\{0\})^{d}:n=n_{1}\geq\cdots\geq n_{d}\}.
  \end{equation*}
  Given $\mathbf{n}\in I_{n}^{d}$, we let $a_{\mathbf{n}}=\mathrm{diag}(p^{-n_{1}},\ldots,p^{-n_{d}},1)$. Then
  \begin{equation}\label{eq:decompositionshells}
    \{g\in\Gbf(\Qp):\lVert g\rVert_{p}=p^{n}\}=\bigsqcup_{\mathbf{n}\in I_{n}^{d}}Ka_{\mathbf{n}}K.
  \end{equation}
  It therefore remains to determine the cardinality of the set $I_{n}^{d}$ and the Haar measure of sets of the form $Ka_{\mathbf{n}}K$.
  It is known, cf.~\cite[Lem.~4.1.1]{Silberger}, that
  \begin{equation}\label{eq:volumebymodularfunction}
    \vol{Ka_{\mathbf{n}}K}\asymp\delta_{\Bbf}(a_{\mathbf{n}}),
  \end{equation}
  where $\delta_{\Bbf}$ is the modular character on the image $\Bbf\leq\Gbf$ of the upper triangular subgroup in $\GL_{d+1}$.
   Let $\mathbf{n}\in I_{n}^{d}$. One calculates (cf.~Appendix~\ref{sec:modular char})
  \begin{equation}\label{eq:determineshellexponent}
    \delta_{\Bbf}(a_{\mathbf{n}})=\exp\left(\sum_{i=1}^{d}n_{i}(d+2-2i)\log p\right).
  \end{equation}
  As $n_{i}\geq 0$, the right-hand side attains its maximum at $\mathbf{n}^{\ast}$ given by
  \begin{equation*}
    n_{i}^{\ast}=\begin{cases}
      n & \text{if }2i\leq d+2,\\
      0 & \text{else}.
    \end{cases}
  \end{equation*}
  One calculates
  \begin{align*}
    \sum_{i=1}^{d}n_{i}^{\ast}(d+2-2i)=n\left(\left\lfloor\frac{d}{2}\right\rfloor+1\right)\left\lceil\frac{d}{2}\right\rceil.
  \end{align*}
  In particular, combining~\eqref{eq:decompositionshells},~\eqref{eq:volumebymodularfunction} and~\eqref{eq:determineshellexponent}, we obtain
  \begin{equation*}
    p^{v(d)n}\ll_{d}\vol{\{g\in\Gbf(\Qp):\lVert g\rVert_{p}=p^{n}\}}\ll_{d}\lvert I_{n}^{d}\rvert p^{v(d)n}.
  \end{equation*}
  
  We next determine the cardinality of $I_{n}^{d}$. We first note that for $d=1$ we clearly have $\lvert I_{n}^{d}\rvert=1$ and therefore the last part of the lemma follows immediately. For general $d\in\bN$ we note that~$I_{n}^{d}$ is precisely the set of ordered $d-1$-tuples of non-negative integers at most equal to $n$ or, put differently,~$I_{n}^{d}$ identifies with the collection of multisets of cardinality $d-1$ with elements in $\{0,\ldots,n\}$. Therefore, we find
  \begin{equation*}
    \lvert I_{n}^{d}\rvert= \binom{n+d-1}{d-1}
    \asymp_{d} n^{d-1}.
  \end{equation*}
  It follows that
  \begin{equation*}
    \vol{\{g\in\Gbf(\Qp):\lVert g\rVert_{p}=p^{n}\}}\ll_{d}n^{d-1}p^{v(d)n}\ll_{d,\varepsilon}p^{(v(d)+\varepsilon)n}
  \end{equation*}
  and the lemma is proven.
\end{proof}
\begin{corollary}\label{cor:integrability-norm}
  Let $\eta_{S}:\Gbf_{S}\to(0,\infty)$ denote the function
  \begin{equation*}
    \eta_{S}(g)=\prod_{v\in S}\lVert g_{v}\rVert_{v}^{-1}\quad(g\in\Gbf_{S}).
  \end{equation*}
  Then, $\eta_{S}\in \mrm{L}^{v(d)+\varepsilon}(\Gbf_{S})$.
\end{corollary}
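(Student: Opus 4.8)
The plan is to turn the $S$-arithmetic integral into a finite product of local integrals and to control each factor using the shell-volume estimate of Lemma~\ref{lem:padicshells}. Since $\Gbf_{S}=\prod_{v\in S}\Gbf(\Q_{v})$ carries the product of the local Haar measures and $\eta_{S}^{q}(g)=\prod_{v\in S}\lVert g_{v}\rVert_{v}^{-q}$ is a product of non-negative functions of the individual coordinates, Tonelli's theorem gives $\int_{\Gbf_{S}}\eta_{S}^{q}\,\der m_{S}=\prod_{v\in S}\int_{\Gbf(\Q_{v})}\lVert g\rVert_{v}^{-q}\,\der m_{v}$. As $S$ is finite, it then suffices to show that each local integral is finite for $q=v(d)+\varepsilon$.

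For a finite place $p\in\Sf$ I would use that, by Lemma~\ref{lem:padicshells}, $\lVert g\rVert_{p}$ takes only the values $p^{n}$, $n\in\N\cup\{0\}$, and decompose $\Gbf(\Q_{p})$ into the norm shells $\{\lVert g\rVert_{p}=p^{n}\}$. Feeding the upper bound of that lemma, applied with $\varepsilon/2$ in place of $\varepsilon$, into
\[
\int_{\Gbf(\Q_{p})}\lVert g\rVert_{p}^{-q}\,\der m_{p}=\sum_{n\geq 0}p^{-qn}\,\vol{\{g\in\Gbf(\Q_{p}):\lVert g\rVert_{p}=p^{n}\}}\ll_{d,\varepsilon}\sum_{n\geq 0}p^{-(q-v(d)-\varepsilon/2)n},
\]
which converges precisely because $q=v(d)+\varepsilon>v(d)+\varepsilon/2$; this is the one point where the choice of exponent enters at the finite places.

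For the archimedean place the same strategy works with the Cartan decomposition $\Gbf(\R)=M_{\infty}A_{\infty}^{+}M_{\infty}$ (cf.~Appendix~\ref{sec:KAK decomposition}) in place of the $p$-adic one. Using the bi-$M_{\infty}$-invariance of $\lVert\cdot\rVert_{\infty}$ from \eqref{eq:invariance of norm} together with the integration formula for this decomposition, $\int_{\Gbf(\R)}\lVert g\rVert_{\infty}^{-q}\,\der m_{\infty}$ reduces, up to a constant, to $\int_{A_{\infty}^{+}}\lVert a\rVert_{\infty}^{-q}J(a)\,\der a$, where $J(a)=\prod_{\alpha\in\Sigma^{+}_{\infty}}\sinh\!\big(\alpha(\log a)\big)$ is the Harish--Chandra density. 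Writing $a=\mrm{diag}(e^{s_{1}},\dots,e^{s_{d}},1)$ with $s_{1}\geq\cdots\geq s_{d}\geq 0$ and using $\sinh x\leq e^{x}$ for $x\geq 0$ gives $J(a)\leq\exp\big(\sum_{i=1}^{d}(d+2-2i)s_{i}\big)$, the same expression as in \eqref{eq:determineshellexponent}; the maximization performed in the proof of Lemma~\ref{lem:padicshells} shows this exponent is at most $v(d)\,s_{1}$, while $\lVert a\rVert_{\infty}=\lVert\Ad_{a}\rVert_{\infty}\geq e^{s_{1}}$ by \eqref{eq:squeeze}. Hence the integral is bounded by a constant times $\int_{s_{1}\geq\cdots\geq s_{d}\geq 0}e^{(v(d)-q)s_{1}}\,\der s_{1}\cdots\der s_{d}\asymp\int_{0}^{\infty}s_{1}^{d-1}e^{-\varepsilon s_{1}}\,\der s_{1}<\infty$. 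Multiplying the finitely many local bounds yields $\eta_{S}\in \mrm{L}^{v(d)+\varepsilon}(\Gbf_{S})$.

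I do not anticipate a serious obstacle: the whole argument is essentially a transcription of the proof of Lemma~\ref{lem:padicshells}, the only mildly delicate point being the archimedean case, where one must check that the exponential factor in the Harish--Chandra density dominates the polynomial (``distance to the walls'') factor coming from $\lvert I_{n}^{d}\rvert$ in Lemma~\ref{lem:padicshells} and from the volume of the simplex $\{s_{1}\geq\cdots\geq s_{d}\geq 0\}$ here --- which is exactly why the strict inequality $q>v(d)$, i.e.\ $\varepsilon>0$, is needed rather than $\varepsilon=0$.
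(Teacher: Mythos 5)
Your proposal is correct and is essentially the proof given in Appendix~\ref{sec:integrability-eta}: the paper likewise factors the integral over the finitely many places, bounds the Archimedean factor by applying the KAK integration formula, the estimate $\prod_{\alpha}\sinh\alpha(\log a)\leq e^{\beta(\log a)}\leq a_{1}^{v(d)}$, and $\eta_\infty(k_1ak_2)\leq a_1^{-1}$, and bounds each $p$-adic factor by decomposing into norm shells and invoking Lemma~\ref{lem:padicshells}, with convergence in both cases exactly because $q>v(d)$.
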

\begin{proof}
  This follows relatively easily from the description of the Haar measure in terms of the KAK-decomposition for the infinite place, cf.~\cite[Prop.~5.28]{Knapp1986}, and from the bound in Lemma~\ref{lem:padicshells} for finite places; cf.~Appendix~\ref{sec:integrability-eta}.
\end{proof}

\subsection{Proof of Proposition~\ref{prop:summable}}
Note that for all $v\in S$ and for all $g_{v}\in\Gbf(\bQ_{v})$ we have $\lVert g_{v}^{-1}\rVert_{v}=\lVert g_{v}\rVert_{v}$. Hence submultiplicativity implies that for all $\gamma_{v}\in\Gbf(\bQ_{v})$ we have
\begin{equation*}
  \lVert g_{v}\rVert^{-1}_v\lVert\gamma_{v}\rVert_{v}\leq\lVert g_{v}\gamma_{v}\rVert_v.
\end{equation*}
Using the folding-unfolding technique and Corollary~\ref{cor:integrability-norm} we find that for $q>v(d)$ we have
\begin{equation*}
  \infty>\int_{\Gbf_{S}}\eta_{S}(g)^{q}\der g=\int_{X_{S}}\sum_{\gamma\in\GammaS}\eta_{S}(g\gamma)^{q}\der g\GammaS
\end{equation*}
and hence for almost all $g\in\Gbf_{S}$ we have
\begin{equation*}
  \sum_{\gamma\in\GammaS}\eta_{S}(g\gamma)^{q}<\infty
\end{equation*}
by Fubini's theorem. By submultiplicativity,
we have $\eta_{S}(g\gamma)^{q}\geq\eta_{S}(g)^{q}\eta_{S}(\gamma)^{q}$ and therefore
\begin{equation*}
  \sum_{\gamma\in\Gamma_S}\eta_{S}(\gamma)^{q}<\infty.
\end{equation*}
Using Proposition~\ref{prop:boundHarishChandra}, it follows that
\begin{equation*}
  \sum_{\gamma\in\GammaS}\xi_{\Gbf}(\gamma)^{\frac{2q}{1-2\varepsilon}}\ll_{\varepsilon,d}\sum_{\gamma\in\Gamma}\eta_{S}(\gamma)^{q}<\infty
\end{equation*}
and in particular $\xi_{\Gbf}\in\ell^{2v(d)+\varepsilon}(\GammaS)$.
\begin{remark}
  Note that the above argument works for any unimodular subgroup of $\Gbf_{S}$ in place of the integer lattice $\GammaS$.
\end{remark}


\section{Expanding Horospheres and Congruence Covers}
\label{sec:horospheres}

    The goal of this section is to show that $a(t)$-translates of absolutely continuous measures on the horospherical group of $a(t)$ become equidistributed, in a suitable sense, towards the Haar measure on quotients of $\Gbf_\infty$ by principal congruence subgroups of $\Gbf(\Z)$, with an emphasis on obtaining a uniform error rate and implied constants, independently of the congruence level.
    The main result of this section is Proposition~\ref{prop:banana}.

Recall the notation introduced in Section~\ref{sec: s-arith}. 
     We fix a right-invariant Riemannian metric on $\Gbf_{\infty}$.
    This metric induces a right invariant metric on the connected component $\Gbf_{\infty}^{+}$ of $\Gbf_{\infty}$. For any lattice $\Gamma\leq\Gbf_{\infty}$, this induces a Riemannian metric on $\Gbf_{\infty}/\Gamma$ such that the canonical projection is a local isometry. Given $x\in\Gbf_{\infty}/\Gamma$, we denote by $\mrm{inj}_{\Gamma}(x)$ the supremum over all radii $R$ such that, for all $g\in\Gbf_{\infty}^{+}$ contained in the ball of radius $R$ at the origin, the map $g\mapsto gx$ is injective.  
    
    For a Lipschitz function $\vp$, we write $\mrm{Lip}(\vp)$ for its Lipschitz constant and we denote the space of Lipschitz functions on $\Gbf_{\infty}/\Ga$ by $\mrm{Lip}(\Gbf_\infty/\Ga)=\{\vp\in \mrm{C}(\Gbf_\infty/\Ga):\mrm{Lip}(\vp)<\infty\}$.

Note that $X_\infty(N)$ is in general not connected for $N>1$ and hence some care is needed in formulating equidistribution statements.
    The connected components of $X_\infty(N)$ correspond to the distinct orbits of $\Gbf_\infty^+$.
    Each such component supports a unique $\Gbf_\infty^+$-invariant Haar probability measure.
    Moreover, a function is \textbf{orthogonal to $\Gbf^+_\infty$-invariant functions} in $\mrm{L}^2(X_\infty(N))$ if and only if it has integral $0$ on each connected component of $X_\infty(N)$.

    The following is the main result of this section. 
    
\begin{prop}\label{prop:banana}
    
  There exist $\k>0, \ell\in\N$ and $C\geq 1$ such that the following holds.
  Let $\psi\in \mrm{C}_c^\ell(\R^d)$ be a non-negative function of integral $1$.
  Then, for every $N\in\N$, $\vp\in \mrm{B}_{2,\ell}^\infty(X_\infty(N))\cap \mrm{Lip}(X_\infty(N))$, and for every $t\geq1$, the following holds for all $x\in X_\infty(N)$:
  \begin{align*}
    &\int \vp\big(a(t)u(\xbf) x \big) \psi(\xbf) \dx{\xbf}
     \\
    &=\int \vp \;\der m_{\Gbf_\infty^+ \cdot x}
    + 
    O_\psi\Big(\sqrt{[\Gamma(1):\Gamma(N)]} \big(\Scal_{2,\ell}(\vp)+\mrm{Lip}(\vp)\big) \max\{\mrm{inj}_{\Ga(N)}(x)^{-C},1\}  \cdot t^{-\k} \Big),
  \end{align*}
  where $m_{\Gbf_\infty^+ \cdot x}$ is the unique $\Gbf_\infty^+$-invariant probability measure on $\Gbf_\infty^+\cdot x$ and
  $\mrm{inj}_{\Ga(N)}(x)$ denotes the injectivity radius at $x$.
  
\end{prop}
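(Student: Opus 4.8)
The plan is to run a quantitative version of Margulis's thickening trick on the connected piece $Y=\Gbf_\infty^+\cdot x$, converting the horospherical average into a single matrix coefficient of $a(t)$ and closing the estimate with effective mixing at a rate \emph{uniform over the congruence tower}. Since $a(t),u(\xbf)\in\Gbf_\infty^+$, every point $a(t)u(\xbf)x$ lies in $Y$, which carries the unique $\Gbf_\infty^+$-invariant probability measure $m_{\Gbf_\infty^+\cdot x}$; I will also write $\mrm{vol}(Y)$ for the volume of $Y$ in a fixed Haar normalization, so that $\mrm{vol}(Y)\asymp[\Gamma(1):\Gamma(N)]$ up to the fixed factor $\mrm{vol}(X_\infty(1))$ and the boundedly many components. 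Replacing $\vp$ by $\vp|_Y-\int_Y\vp\,\der m_{\Gbf_\infty^+\cdot x}$ leaves $\mrm{Lip}(\vp)$ unchanged, at most doubles $\Scal_{2,\ell}(\vp)$, and produces the main term exactly; so it is enough to bound $\int\vp(a(t)u(\xbf)x)\psi(\xbf)\dx{\xbf}$ assuming in addition that $\int_Y\vp\,\der m_{\Gbf_\infty^+\cdot x}=0$.

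Next I would thicken in the \emph{stable} directions of $a(t)$. Let $H=\{u(\xbf)\}$ be the expanding horospherical subgroup and let $P^-$ be the block-lower-triangular subgroup (opposite parabolic modulo the center); conjugation by $a(t)$ contracts or fixes $P^-$, so $\|a(t)ha(t)^{-1}-\mrm{Id}\|\ll\|h-\mrm{Id}\|$ for $h\in P^-$ and $t\geq1$, while $P^-H$ is open in $\Gbf_\infty^+$. Fixing a smooth non-negative bump $\chi_\rho$ on $P^-$ of total mass $1$ supported in the ball of radius $\rho\in(0,1]$ about the identity, the identity $a(t)hu(\xbf)x=(a(t)ha(t)^{-1})\,a(t)u(\xbf)x$ together with the Lipschitz bound gives
\[
\int\vp(a(t)u(\xbf)x)\psi(\xbf)\dx{\xbf}=\int_{P^-}\int_{\R^d}\vp(a(t)hu(\xbf)x)\psi(\xbf)\chi_\rho(h)\dx{\xbf}\,\der h+O\!\big(\mrm{Lip}(\vp)\,\rho\big),
\]
the essential point being that here the displacement is $O(\rho)$ with \emph{no factor of $t$} — this is why one thickens along $P^-$ rather than transverse to $H$. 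Pushing $\chi_\rho(h)\psi(\xbf)\,\der h\dx{\xbf}$ forward under $(h,\xbf)\mapsto hu(\xbf)$ (a local diffeomorphism onto an open piece of $\Gbf_\infty^+$, with Jacobian bounded above and below on the relevant region) yields a smooth $F=F_{\psi,\rho}$ on $\Gbf_\infty^+$, supported in a bounded set depending only on $\psi$, with $\int_{\Gbf_\infty^+}F=1$ and $\Scal_{2,\ell}(F)\ll_\psi\rho^{-M}$ for $M=M(d,\ell)$. Rewriting the thickened integral as $\int_{\Gbf_\infty^+}\vp(a(t)gx)F(g)\,\der g$, substituting $g\mapsto a(t)^{-1}g$, folding over the stabilizer of $x$, and normalizing to $m_{\Gbf_\infty^+\cdot x}$, one obtains $\mrm{vol}(Y)\,\langle a(t)^{-1}\vp,\,G_N\rangle_{\mrm{L}^2(Y)}$, where $G_N$ is the mean-zero part of the $Y$-periodization of $F$; provided $\rho$ stays below the injectivity radius along $P^-$ over the fixed compact set $\{u(\xbf)x:\xbf\in\mrm{supp}\,\psi\}$, i.e. $\rho<c_0(\psi)\,\mrm{inj}_{\Gamma(N)}(x)$, the periodization has no self-overlap and $\Scal_{2,\ell}(G_N)\ll_\psi\mrm{vol}(Y)^{-1/2}\rho^{-M}$.

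It remains to feed in effective mixing on $Y$. The group $\Gbf_\infty^+$ acts on the mean-zero subspace of $\mrm{L}^2(Y)$ with a spectral gap uniform over all congruence levels $N$ — property $(T)$ of $\SL_{d+1}$ when $d\geq2$, Selberg's theorem when $d=1$; concretely, lifting to $X_S$ and applying Corollary~\ref{cor:matrix coeffs smooth} together with the bound $\xi_{\Gbf}(a(t)^{-1})\ll t^{-\beta}$ coming from Proposition~\ref{prop:boundHarishChandra} and $\|\mrm{Ad}(a(t))\|_\infty\asymp t$ in \eqref{eq: norm flow} — giving $|\langle a(t)^{-1}\vp,G_N\rangle|\ll\Scal_{2,\ell}(\vp)\Scal_{2,\ell}(G_N)\,t^{-\beta}$ for a fixed $\beta>0$, all sufficiently large $\ell$, and implied constant independent of $N$. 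Thus $\mrm{vol}(Y)\,|\langle a(t)^{-1}\vp,G_N\rangle|\ll_\psi\sqrt{[\Gamma(1):\Gamma(N)]}\,\Scal_{2,\ell}(\vp)\,\rho^{-M}\,t^{-\beta}$, so combining with the thickening error,
\[
\Big|\int\vp(a(t)u(\xbf)x)\psi(\xbf)\dx{\xbf}\Big|\ll_\psi\mrm{Lip}(\vp)\,\rho+\sqrt{[\Gamma(1):\Gamma(N)]}\,\Scal_{2,\ell}(\vp)\,\rho^{-M}\,t^{-\beta}
\]
for every $\rho\leq\min\{1,c_0(\psi)\,\mrm{inj}_{\Gamma(N)}(x)\}$. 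Finally I optimize: take $\rho=\min\{t^{-\beta/(2M)},\,c_0(\psi)\,\mrm{inj}_{\Gamma(N)}(x),\,1\}$ and $\kappa=\beta/(2M)$. If $t^{-\kappa}\leq c_0(\psi)\,\mrm{inj}_{\Gamma(N)}(x)$, both terms are $\ll(\Scal_{2,\ell}(\vp)+\mrm{Lip}(\vp))\sqrt{[\Gamma(1):\Gamma(N)]}\,t^{-\kappa}$; in the remaining small-$t$ range $\rho=c_0(\psi)\,\mrm{inj}_{\Gamma(N)}(x)$, and a direct check absorbs the resulting negative powers of $\mrm{inj}_{\Gamma(N)}(x)$ into the factor $\max\{\mrm{inj}_{\Gamma(N)}(x)^{-C},1\}$ for a suitable $C=C(d)$, keeping an overall $t^{-\kappa}$. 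This is the asserted bound, with $\kappa,\ell,C$ depending only on $d$.

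I expect the main obstacle to be the uniformity over the congruence level $N$: one must run every step — the spectral gap, the Sobolev estimates on the periodized bump, and the injectivity-radius constraint — so that the only surviving $N$-dependence is the single factor $\sqrt{[\Gamma(1):\Gamma(N)]}$ together with the explicit injectivity-radius factor, rather than an uncontrolled constant $C_{K_\mrm{f}[N]}$ or a higher power of the degree. The two maneuvers that make this possible are choosing to thicken in the $a(t)$-stable directions $P^-$ (so the thickening error is $O(\mrm{Lip}(\vp)\rho)$, free of $t$) and truncating $\rho=\min\{t^{-\kappa},c_0\,\mrm{inj}_{\Gamma(N)}(x),1\}$ (so no global Sobolev embedding on $Y$, whose constant would depend badly on the cusp geometry, is ever needed).
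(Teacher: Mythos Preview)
Your approach is essentially the paper's: reduce to mean zero on the component, thicken along the non-expanding subgroup $P^-$, unfold to a matrix coefficient on $Y$, and invoke effective mixing uniform over the congruence tower, then optimize in the thickening parameter. The paper packages the thickening step as Proposition~\ref{prop:thickening} (quoting Kleinbock--Margulis) and uses Proposition~\ref{prop:uniformspectralgap} for the uniform mixing.

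There is one genuine gap. Your claim that ``the periodization has no self-overlap'' once $\rho<c_0(\psi)\,\mrm{inj}_{\Gamma(N)}(x)$ is not correct as stated: the support of $F$ lives in $B_\rho^{P^-}\cdot u(\mrm{supp}\,\psi)$, which has diameter $\asymp R+\rho$ with $R=\mrm{diam}(\mrm{supp}\,\psi)$, so injectivity of the folding requires $\mrm{inj}_{\Gamma(N)}(x)\gtrsim R$, not merely $\gtrsim\rho$. When $x$ is high in the cusp and $\mrm{inj}_{\Gamma(N)}(x)\ll R$, distinct $u(\xbf)$-slices of $\mrm{supp}(F)$ can coincide in $Y$ via horospherical lattice elements, and your bound $\Scal_{2,\ell}(G_N)\ll_\psi\mrm{vol}(Y)^{-1/2}\rho^{-M}$ is no longer justified --- the overlap multiplicity depends on $\mrm{inj}_{\Gamma(N)}(x)$ and cannot be absorbed into $O_\psi$. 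The paper avoids this by first running a partition of unity that breaks $\psi$ into $O_\psi\big((R/t^{-\kappa}r)^d\big)$ pieces, each supported in a ball of radius $t^{-\kappa}r$ with $r\asymp\min\{1,\mrm{inj}_{\Gamma(N)}(x)\}$, and applying the thickening lemma to each piece at a shifted basepoint; the number of pieces is what ultimately produces the power $\mrm{inj}_{\Gamma(N)}(x)^{-C}$.

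A smaller point: invoking Corollary~\ref{cor:matrix coeffs smooth} by ``lifting to $X_S$'' does not give an $N$-independent constant, since the implied constant there depends on the compact-open subgroup $W_{\mrm{f}}=K_{\mrm{f}}[N]$. The statement you actually need is Proposition~\ref{prop:uniformspectralgap}, whose proof uses property $(T)$ for $d\geq 2$ and Kim--Sarnak for $d=1$, exactly as you indicate parenthetically.
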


    Without any attempt to optimize the exponents, we show Proposition~\ref{prop:banana} holds for any integer $\ell>d(d+1)/4$ and that $\k$ can be chosen as follows:
        \begin{equation}\label{eq:explicit kappa and ell}
            \k =  \frac{\k'-\e}{2+ 2d+6\ell+d^2 },
        \end{equation}
        for any $\e>0$ (the implicit constant depends on $\e$ and $\ell$), 
        where
        \begin{equation}\label{eq:kappa-prime}
          \k'=\begin{cases}
            25/64 & \text{if }d=1,\\
            1/2 & \text{otherwise}.
          \end{cases}
        \end{equation}
        The value of $\k'$ comes from known bounds towards Selberg's eigenvalue conjecture~\cite{KimSarnak}; cf.~Proposition~\ref{prop:uniformspectralgap} below.
        It is possible to obtain much better values for $\k$ (possibly at the cost of worse values of $\ell$) via more analytic techniques similar to those in~\cites{Sarnak1981,Burger-effectiveHorocycles,FlaminioForni,Strombergsson,Strombergsson-JMD,Edwards}.

    \begin{remark}\label{remark:value of kappa and ell}
     The main point of Proposition~\ref{prop:banana} is the explicit dependence of the implied constant on $N$. This statement is well-known but we include a proof as we could not locate it in the literature.
        We note also that the implied constant depends on $\norm{\psi}_{\mrm{C}^\ell}$ and the radius of the smallest ball around the origin containing its support.
      
    \end{remark}


\subsection{Uniform Spectral Gap}
We start with a standard result which is a crucial ingredient to the proof of Proposition~\ref{prop:banana}.
    
\begin{proposition}[Uniform spectral gap]\label{prop:uniformspectralgap}
  Let $\varepsilon>0$. For all $N\in\N$, for all $t\geq1$, and for all $\varphi,\psi\in \mrm{B}_{2,\ell}^\infty(X_\infty(N))$ which are orthogonal to the $\Gbf^+_\infty$-invariant functions, we have 
  \begin{equation*}
    \lvert\langle a(t)\varphi,\psi\rangle_{2}\rvert\leq C\Scal_{2,\ell}(\varphi)\Scal_{2,\ell}(\psi)t^{-\kappa^{\prime}+\e}.
  \end{equation*}
  where $\k'$ is as in~\eqref{eq:kappa-prime}, $\ell$ is any integer larger than half the dimension of the maximal compact subgroup $M_{\infty}\leq\Gbf_{\infty}$ and $C\geq 1$ depending only on $d,\e$ and $\ell$.
\end{proposition}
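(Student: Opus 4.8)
The plan is to reduce to a matrix-coefficient estimate on a single connected component of $X_\infty(N)$, which is a congruence cover of $\SL_{d+1}(\Z)\backslash\SL_{d+1}(\R)$, and then deduce that estimate from the spectral gap of such covers, uniform in the level, converting it from $M_\infty$-finite to smooth vectors by a Cowling--Haagerup--Howe amplification and a Peter--Weyl summation over $M_\infty$, exactly as in the proof of Corollary~\ref{cor:matrix coeffs smooth}. For the reduction: since $a(t)$ equals $t^{d/(d+1)}\,\mrm{diag}(t^{1/(d+1)}\mrm{Id}_d,t^{-d/(d+1)})$ with the second factor in $\SL_{d+1}(\R)$, it lies in $\Gbf_\infty^+$, so left translation by $a(t)$ preserves each connected component of $X_\infty(N)$. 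Hence $\langle a(t)\varphi,\psi\rangle=\sum_i\langle a(t)\varphi_i,\psi_i\rangle$, where $\varphi_i,\psi_i$ are the restrictions to the $i$-th component $Y_i$; by Cauchy--Schwarz and additivity of the $\Scal_{2,\ell}$-data across components (only harmless bookkeeping from the normalization of the probability measure) it suffices to treat $\varphi,\psi$ supported on a single $Y=Y_i$ and of integral zero there. By the discussion preceding Proposition~\ref{prop:banana} this is precisely the hypothesis of orthogonality to $\Gbf_\infty^+$-invariant functions, and by Lemma~\ref{lem:identification-space-of-lattices} together with Section~\ref{sec: s-arith}, $Y\cong\Gamma'\backslash\SL_{d+1}(\R)$ as $\SL_{d+1}(\R)$-spaces for a congruence subgroup $\Gamma'\leq\SL_{d+1}(\Z)$, with $a(t)$ acting by left translation; the unitary representation of $\SL_{d+1}(\R)$ on $\mrm{L}^2_0(Y)$ has no nonzero invariant vectors.

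For $d\geq 2$ the group $\SL_{d+1}(\R)$ has higher rank, and the uniform pointwise bound of~\cite{Oh} for its unitary representations without invariant vectors (the archimedean input to Theorem~\ref{thm: matrix coeffs}) gives, for $M_\infty$-finite $v,w\in\mrm{L}^2_0(Y)$ and all $g\in\SL_{d+1}(\R)$, an estimate $|\langle gv,w\rangle|\ll_d\dim\langle M_\infty v\rangle\,\dim\langle M_\infty w\rangle\,\lVert v\rVert_2\lVert w\rVert_2\,\xi_\infty(g)$ with an \emph{absolute} implied constant. Taking $g=a(t)$ and using $\xi_\infty(a(t))\ll_\e\lVert a(t)\rVert_\infty^{-1/2+\e}\ll_\e t^{-1/2+\e}$ from Proposition~\ref{prop:boundHarishChandra} and~\eqref{eq: norm flow} yields decay $t^{-1/2+\e}$, i.e.\ $\kappa'=1/2$. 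For $d=1$ no such universal bound exists (complementary series of $\SL_2(\R)$ approach the trivial representation), so one uses instead the arithmetic input that, uniformly over congruence subgroups $\Gamma'\leq\SL_2(\Z)$, the representation on $\mrm{L}^2_0(\Gamma'\backslash\SL_2(\R))$ is bounded by $\Xi_{\SL_2(\R)}^{\,1-2\theta}$ with $\theta\leq\tfrac{7}{64}$ — Selberg's theorem as improved by Kim--Sarnak~\cite{KimSarnak} — so that its $M_\infty$-finite matrix coefficients decay along $a(t)$ like $\Xi_{\SL_2(\R)}(a(t))^{1-2\theta}\ll_\e t^{-(1/2-\theta)+\e}=t^{-25/64+\e}$, the tempered constituents being absorbed into $\e$. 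In both cases the implied constant is independent of $N$.

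To pass from $M_\infty$-finite to general smooth $\varphi,\psi$, decompose them into $M_\infty$-isotypic components and apply the bound of the previous step termwise; summing over $M_\infty$-types by the Sobolev/Peter--Weyl argument of~\cite[Section 6.2.1]{EMV}, which converges for every integer $\ell>\tfrac12\dim M_\infty=\tfrac{d(d+1)}{4}$, replaces the dimension factors by $\Scal_{2,\ell}(\varphi)$ and $\Scal_{2,\ell}(\psi)$. Together with the reduction to a single component this gives the statement with $C=C(d,\e,\ell)$ not depending on $N$.

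The only non-formal ingredient is the $d=1$ case: the claim that $C$ is \emph{independent of the level $N$} fails for the naive $S$-arithmetic bound, whose constant $C_{W_{\mrm f}}$ in Theorem~\ref{thm: matrix coeffs} degrades as $[\Gbf(\bZ_{\mrm f}):W_{\mrm f}]$, hence as $N$, grows; obtaining it requires the level-uniform spectral gap for congruence covers of $\SL_2(\Z)$ with the explicit Kim--Sarnak exponent rather than a soft argument. For $d\geq 2$ the difficulty disappears since the relevant matrix-coefficient bound is a property of the real group alone, and the remaining points — identifying the connected components of $X_\infty(N)$ with $\SL_{d+1}$-congruence covers and passing between $\PGL_{d+1}$ and $\SL_{d+1}$ — are routine given Section~\ref{sec: s-arith}.
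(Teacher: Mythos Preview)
Your proposal is correct and follows essentially the same route as the paper: reduce to a single connected component (a congruence quotient of $\SL_{d+1}(\R)$), invoke the higher-rank uniform bound for $d\geq 2$ and Kim--Sarnak for $d=1$, then pass from $M_\infty$-finite to smooth vectors via the Peter--Weyl/EMV argument. The paper's sketch differs only in the precise citations: for $d\geq 2$ it uses Vogan's classification (via~\cite[Cor.~C]{OhSystems}) to get $\mrm{L}^{2d+\e}$-integrability and then the Cowling--Haagerup--Howe inequality, which yields $\dim^{1/2}$ factors, whereas you quote Oh's pointwise bound directly, which carries $\dim$ factors (cf.\ Theorem~\ref{thm: matrix coeffs}). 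This creates a small inconsistency in your write-up: with $\dim$ factors the EMV summation over $M_\infty$-types converges only for $\ell\geq\dim M_\infty$, not for $\ell>\tfrac12\dim M_\infty$ as you assert (this is exactly the point made in the proof of Corollary~\ref{cor:matrix coeffs smooth}). To recover the sharper range of $\ell$ stated in the proposition you should, as the paper does, go through the $\mrm{L}^{2d+\e}$-integrability and CHH rather than through the $\xi_\infty$-bound.
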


\begin{proof}[Sketch of the Proof] 
    This result is well-known and we only emphasize the fact that $C$ is independent of $N$.
    We recall that $\mrm{L}_{00}^{2}(X_\infty(N))$ denotes the orthogonal complement to the subspace of $\Gbf_{\infty}^{+}$-invariant vectors in $\mrm{L}^{2}(X_\infty(N))$ and that $\Gbf_\infty^+$ is the image of $G=\mrm{SL}_{d+1}(\R)$ in $\Gbf_\infty$. As $\Gbf_\infty^+$ has index $2$ inside $\Gbf_\infty$, $X_\infty(N)$ consists of at most two connected components, each of which is isomorphic to $G/\Delta$, where $\Delta$ is a congruence lattice in $\mrm{SL}_{d+1}(\Z)$. 
    Moreover, when $d\geq 2$, Vogan's classification of the unitary dual of $\mrm{GL}_{d+1}$ implies that matrix coefficients of a dense subset of vectors of any non-trivial, irreducible, unitary $G$-representation belong to $\mrm{L}^{2d+\e}$ for every $\e>0$; cf.~\cite[Corollary C]{OhSystems}.
    This in particular applies to the $G$-representations $\mrm{L}^{2}_{00}(X_\infty(N))$.
    It then follows by~\cite[Corollary on pg. 108]{CowlingHaagerupHowe}
    that for any two $M_{\infty}$-finite vectors $v,w\in\mrm{L}_{00}^{2}(X_{\infty}(N))$, we have
  \begin{equation}\label{eq:K-finite-coeffs}
    \lvert\langle a(t)v,w\rangle\rvert\leq (\dim\langle
    M_{\infty}v\rangle\dim\langle M_{\infty}w\rangle)^{\frac{1}{2}}\lVert
    v\rVert\lVert w\rVert \Xi\big(a(t)\big)^{\frac{1}{d}},
  \end{equation}
  where $\Xi$ denotes the Harish-Chandra spherical function on $G$.
  Using~\cite[Thm.~4.5.3]{Wallach} and denoting by $\mf{u}$ the Lie algebra of
  the subgroup $U=\{u(\xbf)\colon \xbf\in\bR^{d}\}$, we know that there is some
  $\delta>0$ depending on $d$ such that for $\lVert \cdot\rVert_\infty$ as
  in~\ref{def:adjoint norm}, we have
  \begin{equation*}
    \Xi\big(a(t)\big) \ll_{d} e^{-\frac{1}{2}\operatorname{tr}\ad \log
    a(t)|_{\mf{u}}}(1 + \log \lVert
    a(t) \rVert_{\infty} )^{\delta} = t^{-\frac{d}{2}}(1+\log t)^{\delta}.
  \end{equation*}
  In particular, we obtain that 
  \begin{equation*}
    \lvert\langle a(t)v,w\rangle\rvert\ll_{\varepsilon,d} (\dim\langle
    M_{\infty}v\rangle\dim\langle M_{\infty}w\rangle)^{\frac{1}{2}}\lVert
    v\rVert\lVert w\rVert t^{-\frac{1}{2}+\varepsilon}.
  \end{equation*}
    The statement for smooth vectors follows by the argument in~\cite[Section 6.2.1]{EMV} (with $C$ depending only on $\ell$).
    
    In the case $d=1$, it is shown in~\cite[Proposition 2]{KimSarnak} that the smallest non-zero eigenvalue of the Laplacian on $\mathbb{H}^2/\Delta$ is $\geq 975/4096$ for any congruence lattice $\Delta$ in $\mrm{SL}_2(\R)$.
    Using~\cite[Theorem 2]{Ratner-mixing} and the formula for the Haar measure, this implies that smooth matrix coefficients of $G$ belong to $\mrm{L}^{64/25+\e}$ for all $\e>0$.
    Hence,~\eqref{eq:K-finite-coeffs} follows in this case by~\cite[Theorem 2.1]{Shalom}.
    The statement for smooth vectors follows upon combining~\eqref{eq:K-finite-coeffs} with~\cite{EMV} as above.
    
\end{proof}


\subsection{Proof of Proposition~\ref{prop:banana}}

    Without loss of generality, we will assume that $\vp$ is identically $0$ on all connected components of $X_\infty(N)$, except the one containing $x$.
    By further replacing $\vp$ with $\vp-\big(\int \vp \;\der m_{\Gbf_\infty^+\cdot x}\big) \chi_{\Gbf_{\infty}^{+}\cdot x}$, we may assume it is orthogonal to $\Gbf^+_\infty$-invariant functions in $\mrm{L}^2(X_\infty(N))$.

We use the standard thickening technique to deduce Proposition~\ref{prop:banana} from Proposition~\ref{prop:uniformspectralgap}. More precisely, using Proposition~\ref{prop:uniformspectralgap}, one can deduce the following Proposition~\ref{prop:thickening} which was originally obtained in~\cite{KleinbockMargulis-EffectiveHorospheres}.

We abuse notation and denote by $\Scal_{2,\ell}$ the $(2,\ell)$-Sobolev norm on $\R^d$.

\begin{prop}[Theorem 2.3,~\cite{KleinbockMargulis-EffectiveHorospheres}]
  \label{prop:thickening}
  There exists a constant $r_0>0$, depending only on $\Gbf_{\infty}$, such that the following holds. Let $\psi\in \mrm{C}_c^\infty(\R^d)$, $0<r<r_0/2$, and $x\in X_\infty(N)$ for some $N\geq 1$.
  Suppose that $\psi$ is supported in the ball of radius $r$ around the origin in $\R^d$ and that the injectivity radius at $x$ is at least $2r$.
  Let $\ell$ and $\k'$ be as in Proposition~\ref{prop:uniformspectralgap}.
  Then, for every $\vp\in \mrm{B}_{2,\ell}^\infty(X_\infty(N))\cap \mrm{Lip}(X_\infty(N))$ such that $\vp$ is orthogonal to $\Gbf^+_\infty$-invariant functions and for all $t\geq 1$ and $\e>0$,
  \begin{align*}
    \bigg|\int \vp(a(t) u(\xbf)x) \psi(\xbf)& \;d\xbf\bigg| 
    \\
    &\ll_\e
    V_N
\big(\Scal_{2,\ell}(\vp) + \mrm{Lip}(\vp)\big)
    \left( r \int_{\R^d} |\psi| + r^{-(2\ell+k/2)} \Scal_{2,\ell}(\psi) t^{-\k'+\e} \right),
  \end{align*}
  where $V_N=\sqrt{[\Gamma(1):\Gamma(N)]}$ and $k=\dim\Gbf_{\infty}-d = d^2+d$.
\end{prop}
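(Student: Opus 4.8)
The plan is to deduce Proposition~\ref{prop:thickening} from the uniform spectral gap of Proposition~\ref{prop:uniformspectralgap} by the classical thickening argument of Margulis (cf.~\cite{KleinbockMargulis-EffectiveHorospheres}). Set $U=\{u(\xbf):\xbf\in\R^{d}\}$, the expanding horospherical subgroup of $a(t)$ for $t\geq 1$, and let $H\leq\Gbf_{\infty}$ be the connected subgroup whose Lie algebra is the sum of the eigenspaces of $\Ad(a(t))$, $t>1$, for eigenvalues $\leq 1$, i.e.~the centralizer of $a(t)$ together with the contracting horospherical subgroup; then $\dim H=\dim\Gbf_{\infty}-d=k$, the map $H\times U\to\Gbf_{\infty}$ is a diffeomorphism onto a neighbourhood of $e$ with Jacobian $1+O(r)$ on the $r$-ball, and $\Ad(a(t))$ has eigenvalues in $(0,1]$ on $\Lie(H)$, so $d(a(t)ha(t)^{-1},e)\ll d(h,e)$ uniformly in $t\geq 1$. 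First I would fix, for $0<r<r_{0}/2$, a non-negative smooth bump $\theta_{r}$ on $H$ supported in the $r$-ball about $e$ with $\int_{H}\theta_{r}=1$, and define $\Phi_{r}$ on $\Gbf_{\infty}$ by $\Phi_{r}(hu)=\theta_{r}(h)\psi(u)$ in the above coordinates (and $0$ otherwise). Then $\supp\Phi_{r}$ lies in the $Cr$-ball about $e$ ($C$ depending only on $\Gbf_{\infty}$), $\int_{\Gbf_{\infty}}\Phi_{r}=1+O(r)$, and an explicit computation with the scaling of $\theta_{r}$ (as in~\cite{KleinbockMargulis-EffectiveHorospheres}) gives $\Scal_{2,\ell}(\Phi_{r})\ll r^{-(2\ell+k/2)}\Scal_{2,\ell}(\psi)$ with respect to a fixed Haar measure on $\Gbf_{\infty}$.

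The first key step is the comparison of the horospherical integral with an integral over the group. For $g=hu\in\supp\Phi_{r}$ one has $a(t)hu\cdot x=(a(t)ha(t)^{-1})\cdot(a(t)u\cdot x)$, so, with $q=a(t)ha(t)^{-1}$ and $d(q,e)\ll r$, $\vp(a(t)gx)=\vp\big(q\cdot(a(t)ux)\big)$. Since the metric on $\Gbf_{\infty}$ is right-invariant and the covering $\Gbf_{\infty}\to X_{\infty}(N)$ is distance non-increasing, $d_{X_{\infty}(N)}\big(q\cdot(a(t)ux),\,a(t)ux\big)\leq d_{\Gbf_{\infty}}(q,e)\ll r$, and hence $|\vp(a(t)hux)-\vp(a(t)ux)|\ll\mrm{Lip}(\vp)\,r$. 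Integrating against $\psi(u)\theta_{r}(h)$ over $H\times U$, absorbing the $1+O(r)$ Jacobian (using a Sobolev-type bound $\lVert\vp\rVert_{\infty}\ll\Scal_{2,\ell}(\vp)+\mrm{Lip}(\vp)$ with $N$-independent constant) and $\int_{H}\theta_{r}=1$, I obtain
\[
\int_{\R^{d}}\vp\big(a(t)u(\xbf)x\big)\psi(\xbf)\dx{\xbf}=\int_{\Gbf_{\infty}}\vp\big(a(t)gx\big)\Phi_{r}(g)\dx{g}+O\!\Big(\big(\Scal_{2,\ell}(\vp)+\mrm{Lip}(\vp)\big)\,r\textstyle\int_{\R^{d}}|\psi|\Big).
\]

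Next I would unfold the second integral and invoke the spectral gap. Fixing a lift $\tilde x\in\Gbf_{\infty}$ of $x$, since $\supp\Phi_{r}$ lies in the $Cr$-ball about $e$, the hypotheses $\mrm{inj}_{\Ga(N)}(x)\geq 2r$ and $r<r_{0}/2$ guarantee that the right-translate $g\mapsto\Phi_{r}(g\tilde x^{-1})$ descends to a function $\Theta$ on $X_{\infty}(N)$ supported in the connected component containing $x$; because the differential operators defining $\Scal_{2,\ell}$ are right-invariant and the covering is a local isometry, $\Scal_{2,\ell}(\Theta)=[\Ga(1):\Ga(N)]^{-1/2}\Scal_{2,\ell}(\Phi_{r})$, the factor being exactly the ratio between the $\mrm{L}^{2}$-norms for the probability measure $m_{N}$ and for the fixed Haar measure on $\Gbf_{\infty}$. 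Unfolding gives $\int_{\Gbf_{\infty}}\vp(a(t)gx)\Phi_{r}(g)\dx{g}=[\Ga(1):\Ga(N)]\langle\vp\circ a(t),\Theta\rangle_{\mrm{L}^{2}(m_{N})}$. Writing $\Theta=\Theta_{\mrm{inv}}+\Theta_{0}$ with $\Theta_{\mrm{inv}}$ the projection onto the $\Gbf_{\infty}^{+}$-invariant functions, a Cauchy--Schwarz estimate (using that $\Theta$ is supported in one component) shows $\Scal_{2,\ell}(\Theta_{0})\ll\Scal_{2,\ell}(\Theta)$; since $\vp$ is orthogonal to the $\Gbf_{\infty}^{+}$-invariant functions and $a(t)$ preserves this subspace, $\langle\vp\circ a(t),\Theta_{\mrm{inv}}\rangle=0$, so by Proposition~\ref{prop:uniformspectralgap} (with $t\geq 1$), $|\langle\vp\circ a(t),\Theta\rangle|=|\langle\vp\circ a(t),\Theta_{0}\rangle|\ll_{\e}\Scal_{2,\ell}(\vp)\Scal_{2,\ell}(\Theta_{0})\,t^{-\k'+\e}$. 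Putting everything together, the main contribution is
\[
\ll_{\e}[\Ga(1):\Ga(N)]\cdot[\Ga(1):\Ga(N)]^{-1/2}\Scal_{2,\ell}(\vp)\,r^{-(2\ell+k/2)}\Scal_{2,\ell}(\psi)\,t^{-\k'+\e}=V_{N}\,\Scal_{2,\ell}(\vp)\,r^{-(2\ell+k/2)}\Scal_{2,\ell}(\psi)\,t^{-\k'+\e},
\]
and adding the error from the comparison step (which, as $V_{N}\geq 1$, is at most $V_{N}$ times the bound in the statement) completes the proof.

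The hard part, and the only thing that is not routine bookkeeping, is to make sure that \emph{every} implied constant above is independent of the congruence level $N$. Three facts make this work: (i) $X_{\infty}(N)$ is a cover of $X_{\infty}(1)$, hence locally isometric to $\Gbf_{\infty}$, so the chart $H\times U\to\Gbf_{\infty}$, the bump $\theta_{r}$, the Jacobian estimate, and the Sobolev bound for $\lVert\vp\rVert_{\infty}$ all have $N$-independent constants; (ii) the one $N$-dependent quantity, $[\Ga(1):\Ga(N)]$, appears once through unfolding and once through $\Scal_{2,\ell}(\Theta)\asymp[\Ga(1):\Ga(N)]^{-1/2}\Scal_{2,\ell}(\Phi_{r})$, and these combine to leave precisely a single factor $V_{N}=[\Ga(1):\Ga(N)]^{1/2}$; and (iii) Proposition~\ref{prop:uniformspectralgap} is itself uniform in $N$. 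A minor point is the exponent $2\ell+k/2$ in the Sobolev bound for $\Phi_{r}$, which I would pin down from the explicit form of the thickening function in~\cite{KleinbockMargulis-EffectiveHorospheres}; it affects only the numerical value of $\k$ in Proposition~\ref{prop:banana}, not the structure of the argument.
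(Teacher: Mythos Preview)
Your proposal is correct and follows the same thickening argument of \cite{KleinbockMargulis-EffectiveHorospheres} that the paper invokes; the paper's proof is simply a terser sketch pointing to that reference and explaining only the $N$-dependent modification. Your bookkeeping for the factor $V_N$ (unfolding contributes $[\Ga(1):\Ga(N)]$, the $\mrm{L}^2$-Sobolev norm of the descended bump contributes $[\Ga(1):\Ga(N)]^{-1/2}$) is equivalent to the paper's phrasing in terms of rescaling the bump on the non-expanding subgroup by the index.
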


    \begin{proof}
    This result was obtained in~\cite[Theorem 2.3]{KleinbockMargulis-EffectiveHorospheres} in the case $N=1$. We give a sketch of the required modifications.
    For general $N$, one replaces Theorem 2.1 in \textit{loc.~cit.~}with Proposition~\ref{prop:uniformspectralgap} above.
    
    The factor $ V_N$ arises as follows. Let $\Ga^+(N) = \Gbf_\infty^+\cap \Ga(N)$ and $I_N^+=[\Ga^+(1):\Ga^+(N)]$.
    We assume that the Haar measure on $\Gbf_\infty^+$ is normalized so that it projects to a probability measure on $\Gbf_{\infty}^{+}/\Ga^+(1)$.
    Each connected component of $X_{\infty}(N)$ is isomorphic to $\Gbf_{\infty}^{+}/\Ga^+(N)$. Hence, in order to locally decompose the Haar measure on $\Gbf_{\infty}^{+}$ into the product of the Lebesgue measure on $\bR^{d}$ and the non-expanding subgroup in a compatible manner, cf.~\cite[Eq.~(2.3)]{KleinbockMargulis-EffectiveHorospheres}, the measure on the non-expanding subgroup needs to be scaled by $1/I_N^+$.
    This scaling implies that the Sobolev norm of
    the bump function on the non-expanding subgroup is scaled by the square root of $I_N^+$. Using that  $[\Gbf_{\infty}:\Gbf_{\infty}^{+}]=2$, one shows 
    \begin{equation*}
      I_N^+\ll[\Gamma(1):\Gamma(N)].
    \end{equation*}
    We leave the details to the reader.
    Finally, one uses that $X_{\infty}(N)$ has at most two connected components to bound the Sobolev norms of the restriction of $\vp$ to $\Gbf_{\infty}^{+}\cdot x$, as they occur in the proof of~\cite[Thm.~2.3]{KleinbockMargulis-EffectiveHorospheres}, by $\Scal_{2,\ell}(\vp)$.
  \end{proof}

    Let $\k>0$ be a parameter to be chosen later.
Let $\psi$ be as in the statement of Proposition~\ref{prop:banana}.
    Let $r_0>0$ be the number provided by Proposition~\ref{prop:thickening}.
    Let $R>0$ denote the infimum over all $\tilde{R}>0$ such that $\psi$ is supported in the ball $B(\mathbf{0},\tilde{R})$
    of radius $\tilde{R}$ around the origin.
    Define $r$ by
    \begin{equation*}
        r := \frac{1}{2}\min\seti{1,r_0, \inf_{\mathbf{y}\in B(\mathbf{0},4R)} \mrm{inj}_{\Ga(N)}(u(\mathbf{y})x) }.
    \end{equation*}
    Note that for all $\mathbf{y}\in B(\mathbf{0},4R)$, $\mrm{inj}_{\Ga(N)}(x) \ll_R \mrm{inj}_{\Ga(N)}(u(\mathbf{y})x)$, where the implicit constant is independent of $N$.
    It follows that
      \begin{equation}\label{eq:r bounds inj}
          r^{-1} \ll_R \max\seti{1, \mathrm{inj}_{\Ga(N)}(x)^{-1}}.
      \end{equation}

        Let $V_N=\sqrt{[\Gamma(1):\Gamma(N)]}$.
      If $2R\leq t^{-\k} r$, using $t\geq 1$ we apply Proposition~\ref{prop:thickening} with $t^{-\k}r$ in place of $r$ to get 
      \begin{align}\label{eq:small R}
        \int\vp(a(t)u(\mathbf{x})x)\psi(\mathbf{x})\der\mathbf{x}&\ll V_N(\Scal_{2,\ell}(\vp)+\lVert\vp\rVert_{\mathrm{Lip}})
        \left(t^{-\k}r\int\psi+ (t^{-\k}
        r)^{-(2\ell+\frac{k}{2})}\Scal_{2,\ell}(\psi)t^{-\k'+\e}\right)
        \nonumber\\
        &\leq V_N (\Scal_{2,\ell}(\vp)+\lVert\vp\rVert_{\mathrm{Lip}})
        \left(\max\seti{1,\Scal_{2,\ell}(\psi)} \right)
        \left(t^{-\k}r+ (t^{-\k}r)^{-(2\ell+\frac{k}{2})}t^{-\k'+\e}
        \right),
      \end{align}
      where we used our assumption that the integral of $\psi$ is $1$.
      By equating the terms involving $t$ in the error above, we choose $\k$ to satisfy
      \begin{equation}\label{eq:gamma for small R}
          \k \leq  \frac{\k'-\e}{1+2\ell+\frac{k}{2} }.
      \end{equation}
      Since $r<1$ and, in view of~\eqref{eq:r bounds inj} and letting $c=2\ell + k/2$, we obtain
      \begin{equation}\label{eq:bound for small R}
          \int\vp(a(t)u(\mathbf{x})x)\psi(\mathbf{x})\der\mathbf{x}
          \ll V_N(\Scal_{2,\ell}(\vp)+\lVert\vp\rVert_{\mathrm{Lip}})
        \left(\max\seti{1,\Scal_{2,\ell}(\psi)} \right) 
        \max\seti{1,\mathrm{inj}_{\Ga(N)}(x)^{-c}} t^{-\k}.
      \end{equation}

    If $2R> t^{-\k} r$, we choose a smooth partition of unity $\seti{\phi_i}$ of $B(\mathbf{0},2R)$ with the following properties:
    \begin{enumerate}
        \item Each $\phi_i$ is supported in a ball of radius $t^{-\k}r$ and satisfies $0\leq \phi_i \leq 1$.
        \item For every $i$, $\Scal_{2,\ell}(\phi_i)\ll t^{\k(\ell-d/2)}r^{-(\ell-d/2)}$ .
        \item The cardinality of the set of indices $j\neq i$ such that the supports of $\phi_i$ and $\phi_j$ intersect non-trivially is bounded above by a constant $C_d$, depending only on $d$.
        \item  $\#\seti{\phi_i} = O((R/t^{-\k} r)^{d})$.
    \end{enumerate}
    
    Denote by $\mathbf{y}_i\in B(\mathbf{0},4R)$ the center of a  $t^{-\k}r$-ball containing the support of $\phi_i$ and by $\psi_i$ the function $\phi_i\psi$.
    Let $x_i=u(\mathbf{y}_i)x$.
    It follows from the properties of the norm $\Scal_{2,\ell}(\cdot)$ that (cf.~\cite[Lemma 2.2]{KleinbockMargulis-EffectiveHorospheres})

    \begin{equation*}
        \Scal_{2,\ell}(\psi_i) \ll \Scal_{2,\ell}(\phi_i) \norm{\psi}_{\mrm{C}^\ell} 
        \ll t^{\k(\ell-d/2)}r^{-(\ell-d/2)} \norm{\psi}_{\mrm{C}^\ell}.
    \end{equation*}

    Fix $i$ and apply Proposition~\ref{prop:thickening} with $t^{-\k}r$, $\psi_i(\cdot+\mathbf{y}_i)$ and $x_i$ in place of $r$, $\psi$ and $x$ respectively to get
    \begin{align*}
          \int \vp(a(t) &u(\xbf)x) \psi_i(\xbf) \;d\xbf
          =\int \vp(a(t) u(\xbf-\mathbf{y_i})x_i) \psi_i(\xbf) \;d\xbf
          =\int \vp(a(t) u(\xbf)x_i) \psi_i(\xbf+\mathbf{y}_i) \;d\xbf 
          \\
        &=O\left( V_N
          \big(\Scal_{2,\ell}(\vp) + \mrm{Lip}(\vp)\big)
          \left( t^{-\k} r \int_{\R^d} \psi_i +  r^{-(2\ell-k/2)} \Scal_{2,\ell}(\psi_i)_\ell t^{\k(2\ell-k/2)-\k'+\e} \right)
        \right)
        \\
     &=O\left(V_N
       \big(\Scal_{2,\ell}(\vp) + \mrm{Lip}(\vp)\big)
       \left( t^{-\k} r \int_{\R^d} \psi_i +  r^{-(3\ell+k/2-d/2)} \lVert\psi\rVert_{\mrm{C}^{\ell}} t^{\k(3\ell+k/2-d/2)-\k'+\e} \right)
     \right),
        \end{align*} 
      where $k$ and $\k$ are as in the proposition.
      Here we used the non-negativity of $\psi_i$.
      Using the fact that ${\phi_i}$ is a partition of unity and that $\int_{\R^d}\psi=1$, we thus obtain
  \begin{align}\label{eq:eq1}
    \int \vp(a(t) &u(\xbf)x) \psi(\xbf) \;d\xbf
    = 
    \sum_i \int \vp(a(t) u(\xbf)x) \psi_i(\xbf) \;d\xbf
    \\
    &= \sum_i O\left(V_N
      \big(\Scal_{2,\ell}(\vp) + \mrm{Lip}(\vp)\big)
      \left(t^{-\k} r \int_{\R^d} \psi_{i} +   
        r^{-(3\ell+k/2-d/2)} \lVert \psi\rVert_{\mrm{C}^{\ell}} t^{\k(3\ell+k/2-d/2)-\k'+\e}
      \right) \right)\label{eq:eq2}\\
    &=O\left(V_N
      \big(\Scal_{2,\ell}(\vp) + \mrm{Lip}(\vp)\big)
      \left(t^{-\k} r +   
        R^{d}r^{-(3\ell+k/2+d/2)} \lVert\psi \rVert_{\mrm{C}^{\ell}} t^{\k(3\ell+k/2+d/2)-\k'+\e}
      \right) \right).\label{eq:eq3}
  \end{align}
Equating the powers of $t$ in the two terms above, we obtain
\begin{equation*}
  \k= \frac{\k'-\e}{1+ d/2+3\ell+k/2 },
\end{equation*}
which also satisfies~\eqref{eq:gamma for small R}.
Using~\eqref{eq:r bounds inj} it follows that 
\begin{align*}
  \int \vp(a(t) u(\xbf)x) \psi(\xbf) \;d\xbf
  =   O_R\left(V_N (\Scal_{2,\ell}(\vp)+\mrm{Lip}(\vp))
    \max\seti{1,\norm{\psi}_{\mrm{C}^\ell} }t^{-\k}
    \max\seti{\mrm{inj}_{\Ga(N)}(x)^{-c},1}
  \right),
\end{align*}
where $c=3\ell+k/2+d/2$. As $\min\{\mrm{inj}_{\Gamma(N)},1\}\leq 1$, letting $C=3\ell+k/2+d/2$, the claim follows.

\section{Effective Equidistribution of Fractal Measures}
\label{sec:equidist}

The goal of this section is to prove that translates of certain self-similar measures become effectively equidistributed on the space of unimodular lattices, Theorem~\ref{thm: effective single equidist}.
This result is one of the main contributions of this article and constitutes the main ingredient in our proof of the analogue of Khintchine's theorem for fractal measures.

We fix a tuple $(\Fcal,\lambda)$ as in Section~\ref{sec: IFS} and we assume that $\Fcal$ is rational (cf.~\eqref{def:rational IFS}) and satisfies the open set condition. We denote by $\mu=\mu_{(\Fcal,\lambda)}$ the associated self-similar probability measure.
Throughout this section, we use $r$ to denote the average contraction ratio
\begin{equation}\label{eq: r}
  r := \sum_{i\in \L} \l_i \rho_i.
\end{equation}

Recall the definition of the spaces $\mrm{B}_{\infty,\ell}^\infty$ in~\eqref{eq:smooth vectors} and the definitions of $O_\a, \rho_\a$ and $b_\a$ in~\eqref{eq: composition parameters} for $\a\in\L^\ast$.
We will also use the notation introduced in Sections~\ref{sec: s-arith} and~\ref{sec: QIFS} pertaining to the algebraic group $\Gbf=\mrm{PGL}_{d+1}$.

    Let $\k>0,$ and $\ell \in \N$ be constants satisfying Proposition~\ref{prop:banana}.
    Recall that $\ell$ can be chosen to be any integer with $\ell>d(d+1)/4$ (cf.~discussion following Proposition~\ref{prop:banana}).
    In particular, we choose 
    \begin{equation}\label{eq:value of ell}
         \ell=\dim M_\infty=\frac{d(d+1)}{2}
    \end{equation}
    so that the orders of the Sobolev norms in Proposition~\ref{prop: spectral gap} and~\ref{prop:banana} match.
    We assume without loss of generality that
    \begin{equation}\label{eq:kappa is not big}
        \k \leq d(d+1).
    \end{equation}
  Let $p$ be the constant provided by Proposition~\ref{prop:summable}.
  For $\e>0$, denote by $\th_\e$ the H\"older conjugate of $(p+\e)/\epsilon(d)$; cf.~\eqref{eq:epsilon}, i.e.~$\th_\e$ is the constant satisfying
  \begin{equation*}
      \frac{1}{\th_\e} + \frac{\epsilon(d)}{p+\e} = 1.
  \end{equation*}
  Let $q_\e= 2\th_\e/(\th_\e+1)>1$. Note that $p/\epsilon(d)>1$ and in particular $1<q_\e<\infty$.
  Let $A$ and $L$ be the constants satisfying Lemma~\ref{lem:index estimate}. 

  Define positive constants $\sigma,\omicron_\e,\upsilon$ by the following equations:
  \begin{align}\label{def: a, b,c}
    r^{-\sigma} = \left( \sum_{i\in\L}\l_i^2 \rho_i^{-d}\right)^{1/2}, \quad
    r^{\omicron_\e} = \left(\sum_{i\in\L}\l_i^{q_\e}\right)^{1/q_\e}, \quad
    r^{-\upsilon} = \rho_{\min}^{-L/4} \sum_{i\in \L} \l_i 
    \rho_i^{-\ell}.
  \end{align}
  The positivity of $\upsilon$ follows from the open set condition; cf.~Remark~\ref{remark:positivity of a}.
  Given $\a\in\L^\ast$, we use the following notation:
    \begin{equation*}
        h_\a := k_\a^{-1} a(1/\rho_\a)u(b_\a), \qquad
        x_\a := h_\a \Gamma(1).
    \end{equation*}

\begin{theorem}\label{thm: effective single equidist}
  Let $\mu$ be as above and suppose that 
  \begin{equation}\label{eq:hypothesis on a,b,c}
       \frac{2\sigma(\omicron_\e+\upsilon)}{\omicron_\e+\sigma} <\k,
  \end{equation}
  for some $\e>0$. Then, there exist $\d>0$ and $A_\ast\geqslant 1$ such that for every word $\a\in  \L^\ast$,
  $\vp\in \mrm{B}_{\infty,\ell}^{\infty}(X_\infty(1) )$,
  and $t>1$,
  the following holds:
  \begin{equation*}
    \int \vp\left(a(t)u(\xbf)x_\a \right)\;d\mu(\xbf) = \int_{X_\infty(1)} \vp 
    + O\left( \rho_\a^{-A_\ast} \Scal_{\infty,\ell}(\vp) \cdot t^{-\d} \right), 
  \end{equation*}
  where $\ell$ is as in~\eqref{eq:value of ell}. The implicit constant depends on $\Fcal, \mu$ and $\e$.
\end{theorem}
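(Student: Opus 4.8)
The plan is to transfer the problem to the $S$-arithmetic quotient $X_S=\Gbf_S/\GammaS$, where the operators $\a\cdot\Pcal_\l$ act, exploit that they intertwine the diagonal flow $a(t)$ with the maps of the IFS, and then play the spectral gap of Proposition~\ref{prop: spectral gap} (made quantitative by Proposition~\ref{prop:summable}) against the effective equidistribution of expanding horospheres on congruence covers (Proposition~\ref{prop:banana}). Concretely, given $\vp\in\mrm{B}_{\infty,\ell}^{\infty}(X_\infty(1))$, I would let $\tilde\vp$ be its $K_{\mrm{f}}$-invariant lift to $X_S$ and write $\tilde\vp=\bar\vp+\tilde\vp_{0}$ with $\bar\vp=\int\vp$ and $\int\tilde\vp_{0}=0$, so that $\tilde\vp_{0}\in\mrm{L}^{2}_{00}(X_S)^{K_{\mrm{f}}}$ by Lemma~\ref{lem: character spectrum}. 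The defining feature of $h_\a$ and of the operators $\a\cdot\Pcal_\l$ is the identity $\g_\a\g_\w^{\a}\cdot\big(a(t)u(\xbf)x_\a\big)=a(t)u(f_\w(\xbf))x_\a$ for all $\w\in\L^{\ast}$, $t>0$, $\xbf\in\R^{d}$, where $x_\a$ is regarded in $X_S$ through~\eqref{eq: class number one}; this is a direct consequence of~\eqref{eq:crucialpropertygamma}. Combined with self-similarity $\mu=P_\l^{n}\mu$, it gives, for every $n\in\N$,
\[
\int\vp(a(t)u(\xbf)x_\a)\,d\mu(\xbf)=\bar\vp+\int(\a\cdot\Pcal_\l)^{n}(\tilde\vp_{0})\big(a(t)u(\xbf)x_\a\big)\,d\mu(\xbf).
\]

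Next I would approximate $\mu$ by $P_\l^{m}\nu$ for a fixed smooth compactly supported probability measure $\nu$ whose support lies inside the open set furnished by the open set condition. Because left translation by $a(t)$ distorts the metric by $\lVert\Ad(a(t))\rVert_{\infty}\asymp t$ (cf.~\eqref{eq: norm flow}), a short estimate gives that $\xbf\mapsto(\a\cdot\Pcal_\l)^{n}(\tilde\vp_{0})(a(t)u(\xbf)x_\a)$ has Lipschitz constant $\ll_{\Fcal}\rho_\a^{-O(1)}r^{n}t\,\Scal_{\infty,\ell}(\vp)$, so by Theorem~\ref{thm: convergence in L-metric} this substitution costs $\ll_{\Fcal,\nu}\rho_\a^{-O(1)}r^{m+n}t\,\Scal_{\infty,\ell}(\vp)$. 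Then I would expand $P_\l^{m}\nu=\sum_{\w\in\L^{m}}\l_\w(f_\w)_\ast\nu$ and apply Cauchy--Schwarz in the word $\w$ against the weights $\rho_\w^{\pm d}$ — which produces the factor $\big(\sum_{\w\in\L^{m}}\l_\w^{2}\rho_\w^{-d}\big)^{1/2}=r^{-m\sigma}$ by Lemma~\ref{lem: average of submul coc is subadditive} and~\eqref{def: a, b,c} — followed by the change of variables $\ybf=f_\w(\xbf)$ and the disjointness of the $f_\w$-images. This yields
\[
\Big|\int(\a\cdot\Pcal_\l)^{n}(\tilde\vp_{0})(a(t)u(\xbf)x_\a)\,dP_\l^{m}\nu(\xbf)\Big|\ll_{\Fcal,\nu}r^{-m\sigma}\Big(\int_{U}\big|\Psi_\a\big(a(t)u(\ybf)h_\a\Gamma(M)\big)\big|^{2}\,d\ybf\Big)^{1/2},
\]
where, by Lemma~\ref{lem:index estimate}, $\Psi_\a:=(\a\cdot\Pcal_\l)^{n}(\tilde\vp_{0})$ is viewed as a function on a cover $X_\infty(M)$ with $[\Gamma(1):\Gamma(M)]\ll_{S,d}\rho_\a^{-2A}\rho_{\min}^{-nL}$.

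I would then apply Proposition~\ref{prop:banana} to the function $|\Psi_\a|^{2}$ on $X_\infty(M)$ at the basepoint $h_\a\Gamma(M)$. Its main term is the integral of $|\Psi_\a|^{2}$ over a connected component, which is $\ll\lVert(\a\cdot\Pcal_\l)^{n}(\tilde\vp_{0})\rVert_{\mrm{L}^{2}(X_S)}^{2}\ll\Scal_{\infty,\ell}(\vp)^{2}\,r^{2n\omicron_{\e}}$ by Propositions~\ref{prop: spectral gap} and~\ref{prop:summable} — here one uses that $\tilde\vp_{0}\in\mrm{L}^{2}_{00}$ is preserved by the operators and that the number of connected components is bounded independently of $M$ — and this is the step that imports the spectral gap and the quantity $\omicron_{\e}$. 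The error term of Proposition~\ref{prop:banana} is controlled via $\sqrt{[\Gamma(1):\Gamma(M)]}$, the bound $\mrm{inj}_{\Gamma(M)}(h_\a\Gamma(M))\gg\rho_\a^{O(1)}$ from reduction theory, and $\Scal_{2,\ell}(|\Psi_\a|^{2})+\mrm{Lip}(|\Psi_\a|^{2})\ll\Scal_{\infty,\ell}(\Psi_\a)^{2}\ll\rho_\a^{-O(1)}\big(\sum_i\l_i\rho_i^{-\ell}\big)^{2n}\Scal_{\infty,\ell}(\vp)^{2}$, the last estimate coming from Lemma~\ref{lem:Sobolev} and the growth $\lVert\Ad(\g_\a\g_\w^{\a})\rVert_{\infty}\ll\rho_\a^{-1}\rho_\w^{-1}$; the factor $\rho_{\min}^{-L/4}$ in the definition of $\upsilon$ in~\eqref{def: a, b,c} is chosen precisely so that $\sqrt{[\Gamma(1):\Gamma(M)]}\cdot\big(\sum_i\l_i\rho_i^{-\ell}\big)^{2n}\ll\rho_\a^{-O(1)}r^{-2n\upsilon}$. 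Combining all three steps,
\[
\Big|\int\vp(a(t)u(\xbf)x_\a)\,d\mu(\xbf)-\bar\vp\Big|\ll\rho_\a^{-O(1)}\Scal_{\infty,\ell}(\vp)\Big(r^{m+n}t+r^{-m\sigma+n\omicron_{\e}}+r^{-m\sigma-n\upsilon}\,t^{-\k/2}\Big).
\]

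Finally I would optimize: choosing $n$ with $r^{n(\omicron_{\e}+\upsilon)}\asymp t^{-\k/2}$ balances the last two terms, and then choosing $m$ — which the first term forces to be of size $\asymp\log t$ — to balance against the approximation error, an explicit computation of the exponents gives the bound $\ll\rho_\a^{-A_{\ast}}\Scal_{\infty,\ell}(\vp)\,t^{-\d}$ with $\d=\big(\tfrac{\k(\omicron_{\e}+\sigma)}{2(\omicron_{\e}+\upsilon)}-\sigma\big)/(1+\sigma)$, which is positive exactly when~\eqref{eq:hypothesis on a,b,c} holds; all the powers of $\rho_\a$ collect into a single exponent $A_{\ast}$. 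The main obstacle is exactly this last balancing act. Making the fractal approximation accurate enough along the expanding horosphere of $a(t)$, where $a(t)$ inflates errors by a factor $\asymp t$, forces many self-similar subdivisions and hence a loss $r^{-m\sigma}$; exploiting the spectral gap to damp the $\mrm{L}^{2}$ mass of $(\a\cdot\Pcal_\l)^{n}(\tilde\vp_{0})$ requires pushing the operator to large depth $n$, which inflates the congruence level and the Sobolev norms by $r^{-n\upsilon}$; the thickness hypothesis~\eqref{eq:hypothesis on a,b,c} is precisely the statement that the spectral-gap rate $\omicron_{\e}$ together with the equidistribution exponent $\k$ are strong enough to overcome both of these losses simultaneously.
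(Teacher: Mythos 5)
Your proposal is correct and follows essentially the same route as the paper's proof: lift to the $S$-arithmetic quotient, use the key operator identity and the fractal-to-smooth approximation of Theorem~\ref{thm: convergence in L-metric}, apply Cauchy--Schwarz with the open set condition to split off a mass term $r^{-2\sigma m}$ and a horospherical term, control the latter via Proposition~\ref{prop:banana} together with the spectral gap of Propositions~\ref{prop: spectral gap} and~\ref{prop:summable}, and optimize $m$ and $n$. Apart from cosmetic reorderings (applying the operator identity before rather than after the fractal-to-smooth approximation, decomposing $\vp=\bar\vp+\vp_0$ instead of assuming mean zero) and a couple of spurious $\rho_\a$-factors that only feed into $A_\ast$, your steps, intermediate estimates, and resulting rate $\d$ all match the paper's.
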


\begin{remark}
    We refer the reader to Appendix~\ref{sec:Cantor spectral gap}, where we show that the above result holds under the hypothesis $\dim_H(\Kcal)>\cutoff$, in place of~\eqref{eq:hypothesis on a,b,c}
    in the special case of missing digit Cantor sets $\Kcal$.
    In particular, in that case, we remove the dependence of the cutoffs on the number of derivatives in the Sobolev norm in the parameter $\upsilon$ above. That part of the arguments in Appendix~\ref{sec:Cantor spectral gap} is in fact valid for any IFS with equal contraction ratios. 
\end{remark}

    \begin{remark}
        Recall that for all $\a,\w\in\L^\ast$, we have $\l_{\a\w}=\l_\a\l_\w$ and $\rho_{\a\w}=\rho_\a\rho_\w$. This implies that for any $k\in\bN$ the IFS $\Fcal_{k}=\{f_{\a}:\a\in\L^k\}$, given by considering the $k$-iterates of the IFS $\Fcal$ with the probability vector $\l^{\ast k}$ on $\L^{k}$, has average contraction ratio $r^k$; cf.~Lemma~\ref{lem: average of submul coc is subadditive}. The same lemma implies that the constants $\sigma,o_{\e},\upsilon$ in~\eqref{def: a, b,c}  for $(\Fcal_{k},\lambda^{\ast k})$ are the same as the constants determined by $(\Fcal,\lambda)$, i.e., Hypothesis~\eqref{eq:hypothesis on a,b,c} is invariant under iteration of the IFS.
    
    \end{remark}
 
    Theorem~\ref{thm: effective single equidist} yields the following: 
    \begin{corollary}\label{cor:SL equidist}
    Let $G=\mrm{SL}_{d+1}(\R)$, $\Ga=\mrm{SL}_{d+1}(\Z)$.
    For $\a\in\L^\ast$, let $k_{\alpha}\in\SL_{d+1}(\bR)$ be defined as in~\eqref{eq:k_omega} and let $t_{\a}=\frac{d}{d+1}\log\rho_{\a}$. Define 
      \begin{equation}\label{eq:unimodular-rep}
        x_{\a}^{\mrm{u}}=k_{\alpha}^{-1}g_{-t_{\a}}u(b_{\alpha})\Ga
        \in G/\Ga.    
      \end{equation}
    Suppose that~\eqref{eq:hypothesis on a,b,c} holds and let $A_\ast$ for some $\e>0$ and $\d$ be as in~Theorem~\ref{thm: effective single equidist}. Let $\k_\ast=\frac{d+1}{d}\d$. Then,
    
    \begin{equation}\label{eq:eff equidist}
      \int \vp\left(g_t u(\xbf)x_\a^{\mrm{u}} \right)\;d\mu(\xbf) = 
      \int \vp \;\der m_{G/\Ga}
      + O_{\e}\left(\rho_\a^{-A_\ast} \Scal_{\infty,\ell}(\vp) \cdot e^{-\k_\ast t} \right), 
    \end{equation}
    for all $\vp\in  \mrm{B}_{\infty,\ell}^{\infty}(G/\Ga)$, $t>0$, and $\a\in \L^\ast$.
  \end{corollary}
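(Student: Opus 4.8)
\textbf{Proof plan for Corollary~\ref{cor:SL equidist}.} The plan is to transfer the conclusion of Theorem~\ref{thm: effective single equidist}, which lives on the $\PGL_{d+1}$-quotient $X_\infty(1) = \Gbf_\infty/\Gbf(\Z)$, to the $\SL_{d+1}$-quotient $G/\Ga$ via the identification of Lemma~\ref{lem:identification-space-of-lattices}. Recall that lemma gives an $\SL_{d+1}(\R)$-equivariant isomorphism $\quotient{\Ga}{G} \cong \quotient{\Gbf(\Z)}{\Gbf_\infty}$; equivalently, writing things with the quotient on the right, $G/\Ga \cong \Gbf_\infty/\Gbf(\Z) = X_\infty(1)$ as $G$-spaces, where $G$ acts through $\Phi$. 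Under this identification the $G$-invariant probability measures correspond, and a function $\vp \in \mrm{B}_{\infty,\ell}^\infty(G/\Ga)$ pulls back to a function on $X_\infty(1)$ with the same Sobolev norm (the chosen Riemannian structure on $\Gbf_\infty^+$ is the one induced from $G$, so the differential operators match up to the fixed basis change).

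\textbf{Matching the flow and the basepoint.} The key computation is to check that the one-parameter group $g_t$ of~\eqref{eq: simultaneous parametrization} on $G$ corresponds, under $\Phi$, to the element $a(t')$ of Section~\ref{sec: QIFS} for an appropriate reparametrization $t' = t'(t)$, and that $u(\xbf) \in G$ maps to $u(\xbf) \in \Gbf_\infty$. Indeed, $\Phi(g_t)$ acts on $\Mat_{d+1}$ by conjugation; since $g_t = e^{t/d}\,\mathrm{diag}(\mrm{Id}_d, e^{-t-t/d})$ and scalars lie in $\ker\Phi$, we have $\Phi(g_t) = \Phi\big(\mathrm{diag}(\mrm{Id}_d, e^{-t(d+1)/d})\big) = a(e^{-t(d+1)/d})$ in the notation $a(s)$ of Section~\ref{sec: QIFS} (up to the harmless convention of which diagonal entry is normalized to $1$, which is absorbed by an element of $\ker\Phi$). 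Thus the flow $g_t$ corresponds to $a(\tau)$ with $\tau = e^{-t(d+1)/d}$, but it is cleaner to record it as: $g_t$ in $G$ equals, modulo center, the element whose image is $a$ evaluated at the scalar producing time $\frac{d+1}{d}t$ in the logarithmic parametrization of Theorem~\ref{thm: effective single equidist}. Next, the basepoint: in~\eqref{eq:unimodular-rep} we have $x_\a^{\mrm u} = k_\a^{-1} g_{-t_\a} u(b_\a)\Ga$ with $t_\a = \frac{d}{d+1}\log\rho_\a$, so $g_{-t_\a} = \mathrm{diag}(e^{-t_\a/d}\mrm{Id}_d, e^{t_\a})$, i.e. modulo center $g_{-t_\a}$ has image $\Phi\big(\mathrm{diag}(\mrm{Id}_d, e^{t_\a(d+1)/d})\big) = \Phi\big(\mathrm{diag}(\mrm{Id}_d, \rho_\a)\big) = a(1/\rho_\a)$ in the convention of $h_\a$. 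Therefore $\Phi(k_\a^{-1} g_{-t_\a} u(b_\a)) = k_\a^{-1} a(1/\rho_\a) u(b_\a) = h_\a$, so $x_\a^{\mrm u}$ corresponds exactly to $x_\a = h_\a\Gamma(1)$ under the isomorphism. (Here I use $u(b_\a)$ for the $\Gbf_\infty$-element, noting that $\Phi$ intertwines the two upper-unipotent copies.)

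\textbf{Assembling the estimate.} With these identifications, $\int \vp\big(g_t u(\xbf) x_\a^{\mrm u}\big)\,d\mu(\xbf)$ equals $\int \tilde\vp\big(a(\tau(t)) u(\xbf) x_\a\big)\,d\mu(\xbf)$ where $\tilde\vp$ is the transferred function and, in the logarithmic time variable used by Theorem~\ref{thm: effective single equidist}, $\tau(t)$ corresponds to time $\frac{d+1}{d}t$. Applying Theorem~\ref{thm: effective single equidist} with this word $\a$, this function $\tilde\vp$, and this time, and using $\Scal_{\infty,\ell}(\tilde\vp) \asymp \Scal_{\infty,\ell}(\vp)$ together with $\int_{X_\infty(1)}\tilde\vp = \int_{G/\Ga}\vp\,dm_{G/\Ga}$, yields the main term and an error of size $O_\e\big(\rho_\a^{-A_\ast}\Scal_{\infty,\ell}(\vp)\, e^{-\d \cdot \frac{d+1}{d}t}\big) = O_\e\big(\rho_\a^{-A_\ast}\Scal_{\infty,\ell}(\vp)\, e^{-\k_\ast t}\big)$ with $\k_\ast = \frac{d+1}{d}\d$, exactly as claimed. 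One minor point requiring care: Theorem~\ref{thm: effective single equidist} is stated for $t>1$, whereas the corollary asserts~\eqref{eq:eff equidist} for all $t>0$; for $t$ in a bounded range one absorbs the statement into the implied constant by trivially bounding the left side by $\norm{\tilde\vp}_\infty \leq \Scal_{\infty,\ell}(\tilde\vp)$ and the main term similarly, adjusting the implied constant.

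\textbf{Main obstacle.} The content here is almost entirely bookkeeping; the one genuinely delicate point is verifying that the class-number-one/connectedness subtleties do not interfere — specifically, that $\Gbf(\Z) = \Phi(\SL_{d+1}(\Z))$ (this is exactly the nontrivial input proved inside Lemma~\ref{lem:identification-space-of-lattices}), so that the stabilizer computation is correct and $X_\infty(1)$ is genuinely the space of unimodular lattices rather than some proper cover. Once that is in hand, the Sobolev-norm comparison and the reparametrization of the flow are routine, and the exponent $\k_\ast = \frac{d+1}{d}\d$ falls out of the scaling $g_t \leftrightarrow a(\text{time } \tfrac{d+1}{d}t)$.
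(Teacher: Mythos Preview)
Your proposal is correct and follows essentially the same approach as the paper: identify $G/\Ga$ with $X_\infty(1)$ via Lemma~\ref{lem:identification-space-of-lattices}, check that $x_\a^{\mrm u}$ corresponds to $x_\a=h_\a\Gamma(1)$ and that $g_t$ corresponds to $a(e^{(d+1)t/d})$, then apply Theorem~\ref{thm: effective single equidist} and read off $\k_\ast=\tfrac{d+1}{d}\d$. One small slip: in your flow computation you write $\Phi(g_t)=a(e^{-t(d+1)/d})$, but normalizing the bottom-right entry to $1$ gives $a(e^{+t(d+1)/d})$; you recover the right answer informally a line later, so this does not affect the argument.
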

  \begin{proof}
    
    Recalling the notation in~\eqref{eq: simultaneous parametrization}, we have that $x_\a^{\mrm{u}}$ is the image of $ k_\a^{-1} a(1/\rho_\a)u(b_\a)\Ga(1)\in X_\infty(1)$ under the identification $X_\infty(1) \cong G/\Ga$ from Lemma~\ref{lem:identification-space-of-lattices}.
    Then, the corollary follows by Theorem~\ref{thm: effective single equidist}. The explicit value for $\k_\ast$ is obtained by noting that the canonical projection maps $g_{t}\in G$ to $a(e^{(d+1)t/d})$ in $\Gbf_{\infty}$.

  \end{proof}

    \subsection{Proof of Theorem~\ref{intro-thm:equidist} using Theorem~\ref{thm: effective single equidist}}
    Let $\k,\ell,L$ and $p$ be as above.
  Assume that~\eqref{eq:thickness hypothesis} holds with
    \begin{equation}\label{eq:epsilon0}
      \epsilon_0 = \min\seti{1, \frac{\k\epsilon(d)}{d\epsilon(d) + (4\ell+L) p }}.
    \end{equation}
    We show that this implies that~\eqref{eq:hypothesis on a,b,c} holds.
    The proof is purely formal, and relies on the fact that $(\rho_i^s)_{i\in\L}$ form a probability vector. This follows from the open set condition; cf.~\cite{Moran}.
    
    Let $\e>0$ be sufficiently small so that
      \begin{equation}\label{eq:delta0}
        \left(\frac{d\log\rho_{\min}}{\log \l_{\max}}-1\right) \frac{\log\l_{\min}}{s\log\rho_{\max}}
        \leqslant 
        \d_0:=\min\seti{1, \frac{\k\epsilon(d)}{d\epsilon(d) + (4\ell+L) (p+\e) }}.
      \end{equation}
      Such $\e$ exists since the left-hand side is assumed to be strictly less than $\epsilon_0$.

    First, we find bounds on $\sigma,\omicron_\e$ and $\upsilon$.
    Let $p_\e=(p+\e)/\epsilon(d)$.
    We note that $q_\e/(q_\e-1)=2p_\e$. Hence, since $\l_i$ form a probability vector, we get $\l_{\min}^{1/2p_\e}\leqslant r^{\omicron_\e}\leqslant \l_{\max}^{1/2p_\e}$.
    Similarly, we have $r^{-\upsilon}\leqslant \rho_{\min}^{-\ell-L/4}$ and $r^{-\sigma} \leq (\l_{\max}\rho_{\min}^{-d})^{1/2}$.
    It follows that
    \begin{equation*}
      \frac{ \log \l_{\max}}{2p_\e\log r}\leqslant
      \omicron_\e \leqslant  \frac{ \log \l_{\min}}{2p_\e\log r}, \qquad \upsilon\leqslant \frac{(\ell+L/4)\log\rho_{\min}}{\log r}, \qquad
      \sigma\leqslant \frac{\log\l_{\max} - d\log\rho_{\min}}{-2\log r}.
    \end{equation*}
    Put together, and noting that $\log r \leq \log \rho_{\max} <0$, we obtain
    \begin{align}\label{eq:estimate a,b,c}
      \frac{\sigma(\omicron_\e+\upsilon)}{\omicron_\e+\sigma} < \frac{\sigma(\omicron_\e+\upsilon)}{\omicron_\e} &\leqslant 
      \left(\frac{d\log\rho_{\min}}{\log \l_{\max}}-1\right) 
      \left( \frac{\log\l_{\min} +2p_\e(\ell+L/4) \log \rho_{\min}}{2\log r}\right)
      \nonumber\\
      &\leqslant 
      \left(\frac{d\log\rho_{\min}}{\log \l_{\max}}-1\right) 
      \left( \frac{\log\l_{\min} +2p_\e(\ell+L/4) \log \rho_{\min}}{2\log \rho_{\max}}\right).
    \end{align}
    Since $(\rho_i^s)_{i\in\L}$ is a probability vector, we have $\l_{\min}\leq \rho_{\max}^s$. Hence, $\log\l_{\min}/ s\log\rho_{\max} \geq 1$.
    By~\eqref{eq:delta0}, we obtain
    \begin{equation*}
        \frac{d\log \rho_{\min}} {\log \l_{\max}} -1\leqslant\d_0,
    \end{equation*}
    which implies that $\rho_{\min}^d\geq\l_{\max}^{1+\d_0} \geq \l_{\min}^{1+\d_0}$.
    This shows that
    \begin{equation*}
        \frac{\log\rho_{\min}}{\log\rho_{\max}}\leq\frac{1+\d_0}{d}\frac{\log\l_{\min}}{\log\rho_{\max}}.
    \end{equation*}
    Combined with~\eqref{eq:delta0} and~\eqref{eq:estimate a,b,c}, and using that $\d_0\leq 1$ and $s\leq d$, this shows that
    \begin{align*}
        \frac{\sigma(\omicron_\e+\upsilon)}{\omicron_\e+\sigma} &< \left(\frac{d\log\rho_{\min}}{\log \l_{\max}}-1\right) 
        \left( \frac{1}{2}+\frac{p_\e(\ell+L/4)(1+\d_0)}{d} \right) \frac{\log\l_{\min}}{\log\rho_{\max}}\\ &\leqslant s\d_0  \left( \frac{1}{2}+\frac{p_\e(\ell+L/4)(1+\d_0)}{d} \right)
        \\
        &\leqslant \d_0 \left( \frac{d}{2} + 2p_\e(\ell+L/4) \right) \leqslant \frac{\k}{2},
      \end{align*}
    where the last inequality follows by definition of $\d_0$.
    This proves that~\eqref{eq:thickness hypothesis}  $\Longrightarrow$~\eqref{eq:hypothesis on a,b,c}.


\subsection{Set up}     
    The remainder of the section is dedicated to the proof of Theorem~\ref{thm: effective single equidist}.
    
    Let $U$ be an open set satisfying the open set condition for $\Fcal$.
    We fix a smooth non-negative compactly supported function $\tilde{\psi} \leq 1$ on $\R^d$ such that $\tilde{\psi}$ is not identically zero and its support is contained in $U$.
    Let $\mrm{d}\xbf$ denote the Lebesgue measure on $\R^d$ and define a probability measure $\nu$ by
\begin{align}\label{eq: nu}
  \mrm{d}\nu(\xbf) =  \psi(\xbf)\mrm{d}\xbf,\qquad
  \text{where} \qquad
  \psi= \frac{\tilde{\psi}}{\int \tilde{\psi}\mrm{d}\xbf}.
\end{align}
For convenience, we also set
\begin{equation*}
    C_{\nu} = \frac{1}{\int \tilde{\psi}\;d\xbf}.
\end{equation*}

Let $P_\l$ be the operator introduced in Definition~\ref{def: markov operator}. Note that ${f_{\omega}}_{\ast}\nu$ is supported on $f_{\omega}(\mrm{supp}(\psi))$ for all $\omega\in\Lambda^{\ast}$ and therefore there exists $R$ depending on $\Fcal$ and $\psi$ such that $P_{\lambda}^{m}(\nu)$ is supported in the ball $B(\mathbf{0},R)\subset\bR^{d}$ of radius $R$ around the origin; cf.~\cite[Thm.~1]{Hutchinson}. We fix a smooth non-negative function $\psi_{2}$ such that $\mrm{supp}(\psi_{2})\subset B(\mathbf{0},2R)$ and $\psi_{2}|_{B(\mathbf{0},R)} = 1$.

    The implicit constants in our error terms will depend on $\nu$ through the $\mrm{C}^\ell$-norm of its density and the size of its support and the choice of $\psi_{2}$. These in turn can be made to depend only on the set $U$ (and hence only on $\Fcal$) by choosing $\psi$ suitably.
    
    By Theorem~\ref{thm: convergence in L-metric} and Lemma~\ref{lem: rate of convergence in fixed point theorem}, there exists a constant $C \geq 1$, depending only on $\mu, \nu$ and $r$ such that for all $m\in \N$ and all bounded Lipschitz functions $\vp$ on $\R^d$, we have
    \begin{equation}\label{eq: thicken}
         \lvert\mu(\vp)- P_\l^m(\nu)(\vp)\rvert\leqslant C r^{m}\mrm{Lip}(\vp).
    \end{equation}
    We also record the following identity which follows from the definition of $P_\l$:
    \begin{equation} \label{eq: natural MF}
        P_\l^{m+n}(\nu)(\vp) = P_\l^m(\nu)(P_\l^n(\vp)). 
    \end{equation}

    \subsection{Invariance by the S-arithmetic random walk and Cauchy-Schwarz}
    Denote by $S:=S(\Fcal)$ the minimal set of primes, along with $\infty$, such that 
    \[ a(\rho_i), k_i, u(b_i) \in \GammaS=\Gbf\left(\Z[S^{-1}]\right),
    \quad i\in \L.
    \]
    We denote by $\Sf\subset S$ the set of finite places.
    We view $\GammaS$ as being diagonally embedded as a lattice in $\Gbf_S$.
    We denote by $x_0 $ and $x_S$ the identity cosets in $X_{\infty}(1)$ and $X_S$ respectively.

    Before we turn to the proof of Theorem~\ref{thm: effective single equidist}, we recall that by~\eqref{eq: class number one} the functions in $\mrm{B}_{\infty,\ell}^{\infty}(X_{\infty}(1))$ are in one-to-one correspondence with the $K_{\mrm{f}}$-invariant functions in $\mrm{B}_{\infty,\ell}^{\infty}(X_S)$, where $K_{\mrm{f}}$ denotes the maximal compact-open subgroup of $\Gbf_{\mrm{f}}$; cf.~Section~\ref{sec: s-arith}. From now on, we identify $\vp$ with the corresponding $K_{\mrm{f}}$-invariant function in $\mrm{B}_{\infty,\ell}^{\infty}(X_{S})$.

    For the proof of Theorem~\ref{thm: effective single equidist}, we fix a word $\a\in \Lambda^{\ast}$, 
    $t>1$ and $\vp\in \mrm{B}_{\infty,\ell}^\infty(X_\infty(1))$, where $\ell$ is as in Proposition~\ref{prop:banana}.
    We assume without loss of generality that $\vp$ is real-valued and
    \begin{equation}\label{eq:vp has mean 0}
        \int_{X_{\infty}(1)} \vp = 0.
    \end{equation}
    
    For simplicity, we also use $h_\a$ and $x_\a$ to denote $(h_\a, \mrm{Id})\in \Gbf_S$ and $(h_\a,\mrm{Id})\GammaS$ respectively, where $\mrm{Id}$ denotes the identity element in $\Gbf$.
    Recall the maps $\g_\w$ defined in~\eqref{def: gamma_w} for $\w\in \L^\ast$ and the operators $\Pcal_\l$.
    Note that $\gamma_{\w}$ and $a(t,1)$ commute. Therefore one obtains from~\eqref{eq:crucialpropertygamma} the following equality, which is key to our proof:
    \begin{equation*}
        \g_\w a(t,1)u(\xbf,\mathbf{0}) \g_\w^{-1} u(b_\w,\mathbf{0}) = a(t,1)u(f_\w(\xbf),\mathbf{0}).
    \end{equation*}
    Since $\g_\w^{-1}u(b_\w,\mathbf{0}) \in \GammaS$, this implies the following key identity:
    \begin{equation}\label{eq: key identity}
        \gamma_\w a(t,1)u(\xbf,\mathbf{0}) x_S 
        = a(t,1) u(f_\w(\xbf),\mathbf{0})x_S.
    \end{equation}

    Given a word $\w$, we denote by $\a\w$ the word obtained by concatenating $\w$ to the end of $\a$.
    We claim that
     \begin{equation}\label{eq: relating identity coset to basepoint}
        a(t,1)u\big(f_{\w}(\xbf),\mathbf{0}\big)x_{\a}=(k_\a^{-1},\mrm{Id})a(t/\rho_\a,1)u\big(f_{\a\w}(\xbf),\mathbf{0}\big)x_S.
    \end{equation}
    Indeed, since $f_\a(\ybf) = \rho_\a O_\a \ybf + b_\a$, we see that
    \begin{equation*}
        u(\ybf) h_\a = k_\a^{-1}a(1/\rho_\a) u(f_\a(\ybf)).
    \end{equation*}
    Applying this identity with $\ybf = f_\w(\xbf)$, we obtain \begin{equation}\label{eq:SimpleRelIdentityCosBasepoint}
      u(f_\w(\xbf),\mathbf{\mathbf{0}}) x_\a = (k_\a^{-1},\mrm{Id})a(1/\rho_\a,1) u(f_{\a\w}(\xbf),\mathbf{0}) x_S
    \end{equation}
    and hence~\eqref{eq: relating identity coset to basepoint} follows from the fact that $a(t,1)$ and $(k_\a,\mrm{Id})$ commute.

    Hence, we obtain the following generalization of~\eqref{eq: key identity}:
    \begin{align*}
        a(t,1)u\big(f_{\w}(\xbf),\mathbf{0}\big)x_{\a}&\overset{\eqref{eq: relating identity coset to basepoint}}{=}(k_\a^{-1},\mrm{Id})a(t/\rho_\a,1)u\big(f_{\a\w}(\xbf),\mathbf{0}\big)x_S\\
        &\overset{\eqref{eq: key identity}}{=}(k_\a^{-1},\mrm{Id})\g_{\a\w}a(t/\rho_\a,1)u\big(\xbf,\mathbf{0}\big)x_S\\
        &\overset{\eqref{eq: key identity}}{=}(k_\a^{-1},\mrm{Id})\g_{\a\w}\g_{\a}^{ -1}a(t/\rho_\a,1)u\big(f_\a(\xbf),\mathbf{0}\big)x_S\\
        &\overset{\eqref{eq:SimpleRelIdentityCosBasepoint}}{=}(k_\a^{-1},\mrm{Id})\g_{\a\w}\g_{\a}^{ -1}(k_\a,\mrm{Id})a(t,1)u(\xbf,\mathbf{0})x_\a.
    \end{align*}
    As $\g_{\a\w}=\g_\a\g_\w$, we therefore get
    \begin{equation}\label{eq: identity with basepoint}
        \left((k_\a^{-1},\mrm{Id})\g_\a  \cdot\g_\w \cdot\g_\a^{-1}(k_\a,\mrm{Id}) \right) \cdot a(t,1)u(\xbf,\mathbf{0})x_\a= a(t,1)u(f_\w(\xbf),\mathbf{0})x_\a.
    \end{equation}

    Recall the operators $\a\cdot\Pcal_\l$ in~\eqref{eq: padic MF alpha}. Define $\Phi:\bR^{d}\to\bC$ by $\Phi(\xbf)=\vp(a(t,1)u(\xbf,\mathbf{0})x_{\a})$. Using the above identification and~\eqref{eq: identity with basepoint}, for all $n\in\bN$ we have
      \begin{align*}
        P_{\lambda}^{n}(\Phi)(\xbf)&=\sum_{\omega\in\Lambda^{n}}\lambda_{\omega}(\Phi\circ f_{\omega})(\xbf)=\sum_{\omega\in\Lambda^{n}}\lambda_{\omega}\vp(a(t,1)u(f_{\omega}(\xbf),\mathbf{0})x_{\a})\\
        &=\sum_{\omega\in\Lambda^{n}}\lambda_{\omega}\vp\left((k_\a^{-1},\mrm{Id})\g_\a\g_\w\g_\a^{-1}(k_\a,\mrm{Id})a(t,1)u(\xbf,\mathbf{0}) x_{\a} \right)\\
        &=(\alpha\cdot\Pcal_{\lambda})^{n}(\vp)(a(t,1)u(\xbf,\mathbf{0}) x_{\a}).
      \end{align*}
    In particular, for every probability measure $\nu$ on $\R^d$, $m,n\in\N$, and $t>0$, one has
    \begin{equation}\label{eq: lift identity}
      \int \vp\left( a(t) u(\xbf)x_\a\right) \;\mrm{d} P_\l^{m+n}(\nu)(\xbf)=\int (\a\cdot\Pcal_\l)^n(\vp)\left( a(t,1) u(\xbf,\mathbf{0}) x_\a \right) \;\mrm{d} P_\l^{m}(\nu)(\xbf).
    \end{equation}
    We fix some $m,n\in\N$ to be chosen towards the end of the proof.

    Note that we may regard $\vp\circ a(t)$ as a function on $\R^d$ by restriction to the closed orbit $U(\R)x_\a$. Moreover, since conjugation by $a(t)$ expands $U(\R)$ by a factor of $t$, we have
    \begin{equation}\label{eq: scaling lipschitz}
      \mrm{Lip}\left(\vp\circ a(t) \vert_{U(\R)x_\a}\right) \leqslant t \cdot \mrm{Lip}\left(\vp \vert_{U(\R)a(t)x_\a } \right) \ll t\cdot \Scal_{\infty,\ell}(\vp).
    \end{equation}

    Combined with the estimate in~\eqref{eq: thicken},
    ~\eqref{eq: scaling lipschitz} implies
    \begin{align*}
              \int \vp\left( a(t) u(\xbf)x_\a\right)\;\mrm{d}\mu 
            =\int \vp\left( a(t)u(\xbf)x_\a \right) \;\mrm{d} P_\l^{m+n}(\nu)
            +O\left(  r^{m+n}t \cdot \Scal_{\infty,\ell}(\vp)\right).
        \end{align*}
        It then follows from Equation~\eqref{eq: lift identity} that
        \begin{align}\label{eq: from mu to nu}
           \int \vp\left( a(t) u(\xbf)x_\a\right)\;\mrm{d}\mu = \int \Theta(\xbf) \;\mrm{d} P_\l^{m}(\nu)(\xbf)
            + O\left(  r^{m+n}t \cdot \Scal_{\infty,\ell}(\vp)\right),
        \end{align}
        where for simplicity, we write
        \begin{equation*}
            \Theta(\xbf) := (\a\cdot\Pcal_\l)^n(\vp)\left(a(t,1)u(\xbf,\mathbf{0})x_\a\right).
        \end{equation*}

    Using the definition of $P_\l^m$ in~\eqref{eq: MF on functions} and the measure $\mrm{d}\nu(\xbf) = \psi(\xbf)\mrm{d}\xbf$, we obtain
    \begin{align*}
        \int \Theta(\xbf) \;\mrm{d} P_\l^{m}(\nu)(\xbf) = \int
        \sum_{\w\in \L^m} \l_\w  \Theta(f_\w(\xbf))\psi(\xbf)\;\mrm{d}\xbf.
    \end{align*}

    For each $m\in \N$ define
    \begin{equation*}
        B_m := \mrm{supp} \left(P_\l^m(\nu)\right).
    \end{equation*}
    Applying a change of variable $\xbf\mapsto f_{\w}^{-1}(\xbf)$ and noting that $\mrm{d}f_\w^{-1}(\xbf) = \rho_{\w}^{-d}\mrm{d}\xbf$ by invariance of the Lebesgue measure under rotations and translations, we obtain
     \begin{align*}
        \int\Theta(\xbf)\;\mrm{d}P_{\lambda}^{m}(\nu)(\xbf)&=\int\Theta(\xbf)\mathbf{1}_{B_{m}}(\xbf)\;\mrm{d}P_{\lambda}^{m}(\nu)(\xbf)\\
        &=\int\sum_{\w\in \L^m} \l_\w  \Theta(f_\w(\xbf))\mathbf{1}_{B_{m}}(f_{\omega}(\xbf))\psi(\xbf)\;\mrm{d}\xbf\\
        &=\int \Theta(\xbf)\mathbf{1}_{B_{m}}(\xbf)\sum_{\w\in \L^m} \l_\w \rho_\w^{-d} \psi(f_\w^{-1}(\xbf))\;\mrm{d}\xbf.
      \end{align*}
      Hence, Cauchy-Schwarz gives
      \begin{equation}\label{eq: Cauchy Schwarz}
        \left(\int\Theta(\xbf)\;\mrm{d}P_{\lambda}^{m}(\xbf)\right)^{2}\leqslant \underbrace{
        \left(\int_{B_{m}}\Theta(\xbf)^{2}\;\mrm{d}\xbf\right)}_{\text{Horospherical term}}
        \underbrace{
        \left(\int\Big(\sum_{\omega\in\Lambda^{m}}\lambda_{\omega}\rho_{\omega}^{-d}(\psi\circ f_{\omega}^{-1})(\ybf)\Big)^{2}\;\mrm{d}\ybf\right)}_{\text{Mass term}}.
      \end{equation}

\subsection{The Open Set Condition and the Mass Term}

    The open set condition and the assumption that $\supp(\psi)\subseteq U$ imply that whenever $\a\neq \w\in \L^m$, $\psi\circ f_\w^{-1}$ and $\psi\circ f_\a^{-1}$ have disjoint supports.
    Thus, expanding the squared sum in the mass term yields
    \begin{align*}
        \left( \sum_{\w\in \L^m} \l_\w \rho_\w^{-d} \psi(f_{\w}^{-1}(\xbf)) \right)^2
        =  \sum_{\w\in \L^m} \l_\w^2 \rho_\w^{-2d} \psi(f_{\w}^{-1}(\xbf))^{2}.
    \end{align*}
    Applying the change of variable $\xbf\mapsto f_\w(\xbf)$, the mass term becomes
    \begin{align}\label{eq: sqs to ssq}
        \int \left( \sum_{\w\in \L^m} \l_\w \rho_\w^{-d} \psi(f_{\w}^{-1}(\xbf)) \right)^2\;\mrm{d}\xbf
        = \sum_{\w\in \L^m} \l_\w^2 \rho_\w^{-d} \int \psi(\xbf)^{2}\;\mrm{d}\xbf.
    \end{align}
    Recall that $\psi = C_\nu \tilde{\psi}$, where $\tilde{\psi}\leq 1$.
    Hence, we have $ \psi^2 \leq C_\nu^2 \tilde{\psi}$.
    It follows that 
    \begin{equation}
        \int \psi^2\;\mrm{d}\xbf \leq C_\nu.
    \end{equation}
    Applying Lemma~\ref{lem: average of submul coc is subadditive} with $\t_i = \l_i\rho_i^{-d}$ yields
    \begin{equation*}
         \sum_{\w\in \L^m} \l_\w^2 \rho_\w^{-d}
        = \bigg(  \sum_{i\in\L} \l_i^2\rho_i^{-d}   \bigg)^m.
    \end{equation*}
   Hence, we obtain the following bound on the mass term
   \begin{equation}\label{eq: bound on mass term}
       \int \left( \sum_{\w\in \L^m} \l_\w \rho_\w^{-d} \psi(f_{\w}^{-1}(\xbf)) \right)^2\;\mrm{d}\xbf \leqslant C_\nu 
       \bigg( \sum_{i\in\L} \l_i^2\rho_i^{-d}  \bigg)^m=C_\nu r^{-2\sigma m}.
   \end{equation}
   
   \begin{remark}\label{remark:positivity of a}
   It is worth noting that the sum on the right side of~\eqref{eq: bound on mass term} is always $\geq 1$. Indeed, if $s$ is the Hausdorff dimension of $\Kcal$, then the open set condition implies that $\sum_{i\in\L}\rho_i^s=1$; cf.~\cite{Moran}.
   Moreover, since $s\leq d$ and $\rho_i\leq 1$, we have $\rho_i^d \leq \rho_i^s$ for each $i\in \L$. Note further that, since $(\lambda_{i})_{i\in\Lambda}$ is a probability vector, the sum on the right hand side of~\eqref{eq: bound on mass term} can be interpreted as an expected value with respect to the probability measure determined by $(\lambda_{i})_{i\in\Lambda}$. Jensen's inequality thus implies
   
     \begin{equation*}
       \sum_{i\in\Lambda}\l_i^{2} \rho_i^{-d} \geqslant \frac{1}{\sum_{i\in\Lambda}\l_{i}\l_i^{-1}\rho_i^d }
       \geqslant \frac{1}{\sum_{i\in\L} \rho_i^s } = 1.
     \end{equation*}
   Moreover, the inequality is strict unless $s=d$ and $\l_i = \rho_i^d$ for each $i\in \L$.
   \end{remark}
   

\subsection{Uniform Spectral Gaps and the Horospherical Term}

    Recall that $B_k$ denotes the support of the measure $P_\l^k(\nu)$ for $k\in\N$.
    To estimate the horospherical term in~\eqref{eq: Cauchy Schwarz}, recall that for all $k\in \N$,
    \begin{equation*}
        \mathbf{1}_{B_k}(\xbf)\leqslant \psi_{2}(\xbf).
    \end{equation*}
    Let $\mrm{d}\nu_2(\xbf) := \psi_{2}(\xbf)\mrm{d}\xbf$. By positivity of $\Theta^{2}$ it follows that
    \begin{equation}\label{eq:enlarge support}
        \int_{B_m} \Theta(\xbf)^{2} \dx{\xbf}
        \leqslant \int \Theta(\xbf)^{2} \dx{\nu_2(\xbf)}.
        \end{equation}

    By Lemma~\ref{lem:index estimate}, since $\vp$ is $K_{\mrm{f}}$-invariant, the function $(\a\cdot \Pcal_\l)^n(\vp)$ is invariant under a compact open subgroup $K_{\mrm{f}}[N]$ of $K_{\mrm{f}}$.

    We wish to apply Proposition~\ref{prop:banana}.
    By the double coset decomposition~\eqref{eq:congruence}, $(\a\cdot \Pcal_\l)^n(\vp)$ can be regarded as a function on a finite, disjoint union of copies of $X_\infty(N)$, where $\Ga(N)$ denotes the congruence lattice of level $N$.
    We note that if $g = a(t)u(\xbf)h_\a $, then
    \begin{equation*}
      K_{\mrm{f}}[N]g\GammaS=K_{\mrm{f}}[N](g,e_{\mrm{f}})\GammaS=g.K_{\mrm{f}}[N](e_{\infty},e_{\mrm{f}})\GammaS
    \end{equation*}
    is identified with the point $g\Gamma(N)\in X_{\infty}(N)$ in the copy of $X_{\infty}(N)$ containing the identity double coset in $K_{\mrm{f}}[N]\backslash\Gbf_{S}/\GammaS$.
    Moreover, since $g\in \Gbf_\infty^+$, $g\Ga(N)$ is contained in the connected component containing the identity coset inside this copy of $X_\infty(N)$.
    In particular, the integral on the right side of~\eqref{eq:enlarge support} can be seen as an integral over this
    component in this copy of $X_\infty(N)$.
    Let $\Ga^+(N) = \Gbf_\infty^+\cap \Ga(N)$ and $V_N=\sqrt{[\Ga(1):\Ga(N)]}$. Letting $D$ denote the total mass of $\nu_{2}$, Proposition~\ref{prop:banana} implies
    \begin{align}\label{eq:applying banana}
         \frac{1}{D}\int (\a\cdot \Pcal_\l)^n(\vp)^2\big( a(t) u(\xbf)&h_\a \Ga(N)\big) \dx{\nu_2(\xbf)}
         \nonumber=  
         \int (\a\cdot \Pcal_\l)^n(\vp)^2 \;\der m_{\Gbf_\infty^+/\Ga^+(N)} 
         \nonumber\\
         &+   
         O\left(V_N \Scal_{\infty,\ell}\big( (\a\cdot \Pcal_\l)^n(\vp)^2\big) t^{-\k}\max\big\{1, \mrm{inj}_{\Ga(N)}(h_\a\Ga(N))^{-C}\big\}\right),
    \end{align}
    where $m_{\Gbf_\infty^+/\Ga^+(N)}$ is the $\Gbf_\infty^+$-invariant Haar probability measure.
    Here, we used the fact that $\mrm{Lip}(\cdot)+ \Scal_{2,\ell}(\cdot) \ll \Scal_{\infty,\ell}(\cdot)$. 
    
      First, we bound the error terms in~\eqref{eq:applying banana}.
      Since $\Ga(N)$ is a subgroup of $\Gamma(1)$, the injectivity radius at $h_\a\Ga(N)$ in $X_{\infty}(N)$ is bounded from below by the injectivity radius at $x_{\a}=h_\a \Ga(1)$ in $X_{\infty}(1)$.
      It follows that $\mrm{inj}_{\Ga(N)}(h_\a\Ga(N))^{-C} \leq \mrm{inj}_{\Ga(1)}(x_{\a})^{-C} $. 
      Moreover, since $k_\a^{-1}u(b_\a)$ is uniformly bounded in $\Gbf_\infty$, we can find $C_0\geq 1$ such that $\mrm{inj}_{\Ga(1)}(x_{\a})^{-C}\ll \rho_\a^{-C_0}$. Noting that $\mrm{inj}^{-C}_{\Ga(1)} \gg 1$, we get
      \begin{equation}\label{eq:height-trivial-estimate}
        \max\big\{1, \mrm{inj}_{\Ga(N)}(h_\a\Ga(N))^{-C}\big\}
        \ll \mrm{inj}_{\Ga(1)}(x_{\a})^{-C} \ll \rho_\a^{-C_0}.
      \end{equation}

    In order to bound $\Scal_{\infty,\ell}( (\a\cdot \Pcal_\l)^n(\vp)^2)$, note that the Archimedean component of \[(k_\a^{-1},\mrm{Id})\g_\a\g_\w\g_\a^{-1}(k_\a,\mrm{Id})\] is equal to $ k_\w  a(\rho_\w)$; cf.~\eqref{def: gamma_w} for the definition of $\gamma_{\w}$.
    Moreover, by $M_\infty$-invariance of our chosen norm on the Lie algebra of $\Gbf_\infty$, we have $\lVert\mrm{Ad}( k_\w a(\rho_\w) )\rVert_{\infty}= \rho_\w^{-1}$.
    Hence, by Lemma~\ref{lem:Sobolev}, we have
    \begin{align}\label{eq:estimate Sobolev norm of operator}
        \Scal_{\infty,\ell}( (\a\cdot \Pcal_\l)^n(\vp)^2) &\ll \Scal_{\infty,\ell}( (\a\cdot \Pcal_\l)^n(\vp))^2
        \leqslant \left(\sum_{\w\in \L^n} \l_\w 
        \norm{\mrm{Ad}( k_\w  a(\rho_\w))}_{\infty}^\ell  \Scal_{\infty,\ell}(\vp)\right)^2 \nonumber\\
        &= \Scal_{\infty,\ell}(\vp)^2 \left(\sum_{\w\in \L^n} \l_\w 
        \rho_\w^{-\ell}  \right)^2.
    \end{align}
    By Lemma~\ref{lem:index estimate}, we have 
    \begin{equation}\label{eq:volume of cover}
        V_N \ll \rho_\a^{-A/2}\rho_{\min}^{-nL/2}.
    \end{equation}

    The next step is to bound the main term in~\eqref{eq:applying banana} by applying the estimate on the spectral gap of $\a\cdot\Pcal_\l$, Proposition~\ref{prop: spectral gap}.
    To this end, we wish to lift the integral in the main term from $\Gbf_\infty^+/\Ga^+(N)$ to $X_S$. We do so via a second application of the double coset decomposition~\eqref{eq:congruence}.
    Let
    \begin{equation*}
         C(S,d) = [\Gbf_S:\Gbf_S^+].
    \end{equation*} 
    Then, $C(S,d)$ is an upper bound on the number of $\Gbf_\infty^+$ orbits on the double quotient  $K_{\mrm{f}}[N] \backslash X_S$ by Proposition~\ref{prop:components-as-quotients}.
    The Haar measure on $K_{\mrm{f}}[N] \backslash X_S$ is the convex combination (with equal weights) of the $\Gbf_\infty^+$-invariant probability measures on each of these orbits.
    Hence, it follows by positivity that
    \begin{equation}\label{eq:apply bound on conn compo}
        \int
        (\a\cdot \Pcal_\l)^n(\vp)^2 \;\der m_{\Gbf_\infty^+/\Ga^+(N)}
        \leqslant C(S,d)
        \int_{K_{\mrm{f}}[N] \backslash X_S} (\a\cdot \Pcal_\l)^n(\vp)^2 = 
         C(S,d) \int_{X_S} (\a\cdot \Pcal_\l)^n(\vp)^2.
    \end{equation}
    In the second equality, we used the invariance of $(\a\cdot \Pcal_\l)^n(\vp)^2$ by $K_{\mrm{f}}[N]$.

    Recall the constant $p$ provided by Proposition~\ref{prop:summable} so that
    $\norm{\xi_\Gbf}_{\ell^{p+\e}(\GammaS)} <\infty$, for all $\e>0$.
    Fix some $\e>0$ so that our hypothesis~\eqref{eq:hypothesis on a,b,c} holds.
    Let $\th_\e$ denote the H\"older conjugate of $(p+\e)/\epsilon(d)$, where $\epsilon(d)$ is given by~\eqref{eq:epsilon}.
    
    Note that the $\Gbf_{\infty}$-invariant probability measure on $X_\infty(1)$ agrees with the push-forward of the $\Gbf_{S}$-invariant probability measure under the canonical projection with respect to $K_{\mrm{f}}$. It thus follows from Lemma~\ref{lem: character spectrum} that $\vp\in \mrm{L}_{00}^{2}(X_S)^{K_{\mrm{f}}}$.
    Hence, Proposition~\ref{prop: spectral gap} implies that
    \begin{equation}\label{eq: application spectral gap}
      \begin{aligned}
        \int_{X_S} (\a\cdot \Pcal_\l)^n(\vp)^2
        &\ll  \Scal_{\infty,\ell}(\vp)^2 \left( \sum_{i\in\L}\l_i^{q_\e} \right)^{2n/{q_\e}} \lVert\xi_\Gbf^{\epsilon(d)}\rVert_{\ell^{(p+\e)/\epsilon(d)}} \\
        &\ll_{\e,S,d} \Scal_{\infty,\ell}(\vp)^2 \left( \sum_{i\in\L}\l_i^{q_\e} \right)^{2n/{q_\e}},
      \end{aligned}
    \end{equation}
    where $q_\e=2\th_\e/(\th_\e+1) >1$. 
    
    Combining all the above estimates, we obtain the following bound on the horospherical term:
    \begin{align*}
        \text{Horospherical term }
        \ll_{\e,\nu,S,d} \Scal_{\infty,\ell}(\vp)^2 \rho_\a^{-A/2-C_0}\left(
         \left( \sum_{i\in\L}\l_i^{q_\e} \right)^{2n/q_\e} + t^{-\k} \rho_{\min}^{-nL/2}
         \left(\sum_{\w\in \L^n} \l_\w 
        \rho_\w^{-\ell}  \right)^2
        \right).
    \end{align*}
    
    By Lemma~\ref{lem: average of submul coc is subadditive}, applied with $\t_i=\rho_i^{-\ell}$, we have $\sum_{\w\in \L^n} \l_\w \rho_\w^{-\ell} = (\sum_{i\in \L} \l_i 
        \rho_i^{-\ell})^n$.
    Hence, recalling the constants $\sigma,\omicron_\e$, and $\upsilon$ defined in~\eqref{def: a, b,c}, we see that the last factor in the above estimate becomes $r^{2n\omicron_\epsilon}+t^{-\k}r^{-2n\upsilon}$.
    For convenience, let $\omicron=\omicron_\e$ and define $\t$ by the equation
    \begin{equation*} 
        t = r^{-\t}.
    \end{equation*}
    Let $A_\ast=(C_0+A/2)/2$.
    By combining the above estimate on the horospherical term with~\eqref{eq: Cauchy Schwarz} and~\eqref{eq: bound on mass term}, the estimate in~\eqref{eq: from mu to nu} becomes
    \begin{align}\label{eq:combined rates}
        \int \vp(a(t)u(\xbf)x_\a)\;d\mu \ll_{\nu,\e,S,d} 
        \Scal_{\infty,\ell}(\vp)
        \rho_\a^{-A_\ast}\left(
        r^{m+n-\t}  + 
        r^{-\sigma m}
        \sqrt{r^{2n\omicron} + r^{\k \t-2n\upsilon}}
        \right).
    \end{align}
    To balance these rates, we choose $n$ to be the largest natural number so that $2n\omicron\leq \k\t-2n\upsilon$.
    We then choose $m$ to be the largest integer satisfying $m+n-\t \leq -\sigma m +\omicron n$. 
    Hence, $n$ and $m$ are given by
    \begin{equation}\label{eq:choice of m,n}
        n = \left\lfloor \frac{\k \t }{2(\omicron+\upsilon)}\right\rfloor, \qquad m =  \left\lfloor \frac{1}{1+\sigma}\left(\tau+(\omicron-1)\left\lfloor\frac{\k\t}{2(\omicron+\upsilon)}\right\rfloor\right)\right\rfloor. 
      \end{equation}

    Note that $n\r\infty$ as $t\r\infty$.
    In Lemma~\ref{lem:m positive, m,n go to infinity} below, we show that $m\to \infty $ as $t\r\infty$ under our hypotheses. In particular, $m\geq 0$ for all large enough $t$. 
    
    With these choices, the right-hand side of~\eqref{eq:combined rates} tends to $0$ as $t\r\infty$ whenever
    \begin{equation}
      \d:= \frac{ \k  }{2(\omicron+\upsilon)}+   \frac{1}{1+\sigma}
      \left(1+ \frac{(\omicron-1)\k}{2(\omicron+\upsilon)} \right) -1 > 0.
    \end{equation}
    Rearranging, we obtain that $\d>0$ if and only if
      \begin{equation*}
        \frac{2 \sigma(\omicron+\upsilon)}{\omicron+\sigma} < \k,
      \end{equation*}
    which is satisfied by our hypothesis.
    Thus, the constant $\d$ satisfies the conclusion of the theorem.

    \begin{lem}\label{lem:m positive, m,n go to infinity}
      Our choice of $m$ in~\eqref{eq:choice of m,n} implies that $m\r\infty$ as $t\r\infty$.
    \end{lem}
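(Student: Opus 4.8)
The plan is to reduce the statement to a couple of elementary inequalities among the constants $\sigma$, $\omicron_\e$, $\upsilon$, $\k$ and $\ell$. Since $t=r^{-\t}$ with $0<r<1$, the correspondence $\t\mapsto t$ is a strictly increasing bijection of $[0,\infty)$ onto $[1,\infty)$, so it is equivalent to show that $m\to\infty$ as $\t\to\infty$. Set $c:=\k/(2(\omicron_\e+\upsilon))>0$; then~\eqref{eq:choice of m,n} reads $n=\lfloor c\t\rfloor$ and $m=\big\lfloor\frac{\t+(\omicron_\e-1)n}{1+\sigma}\big\rfloor$. As $\sigma>0$ is fixed, it suffices to produce a constant $\eta>0$ with $\t+(\omicron_\e-1)n\geq\eta\,\t$ for all large $\t$, since then $m\geq\frac{\eta}{1+\sigma}\t-1\to\infty$.

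The key point, and the step I expect to be worth checking with care, is the bound $c\leq1$, i.e.\ $\k\leq2(\omicron_\e+\upsilon)$. By the normalization~\eqref{eq:kappa is not big} and the choice~\eqref{eq:value of ell} we have $\k\leq d(d+1)=2\ell$, so it is enough to see $\omicron_\e+\upsilon\geq\ell$, and since $\omicron_\e>0$ it is enough to see $\upsilon\geq\ell$. This I would extract from the defining relation $r^{-\upsilon}=\rho_{\min}^{-L/4}\sum_{i\in\L}\l_i\rho_i^{-\ell}$: applying Jensen's inequality to the convex function $x\mapsto x^{-\ell}$ on $(0,\infty)$ with weights $\l$ gives $\sum_{i\in\L}\l_i\rho_i^{-\ell}\geq\big(\sum_{i\in\L}\l_i\rho_i\big)^{-\ell}=r^{-\ell}$, and since $\rho_{\min}^{-L/4}\geq1$ this yields $r^{-\upsilon}\geq r^{-\ell}$; as $x\mapsto r^{-x}$ is increasing, $\upsilon\geq\ell$, hence $c\leq1$.

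Granting $c\leq1$, the remainder is a short case split on the sign of $\omicron_\e-1$. If $\omicron_\e\geq1$ then $(\omicron_\e-1)n\geq0$ for $\t\geq0$, so $\t+(\omicron_\e-1)n\geq\t$. If $\omicron_\e<1$ then $\omicron_\e-1<0$ together with $n\leq c\t$ gives $(\omicron_\e-1)n\geq(\omicron_\e-1)c\t$, so $\t+(\omicron_\e-1)n\geq\t\big(1+(\omicron_\e-1)c\big)\geq\t\,\omicron_\e$, where the last inequality is just $(\omicron_\e-1)(c-1)\geq0$, valid since both factors are $\leq0$. In either case $\t+(\omicron_\e-1)n\geq\min\{1,\omicron_\e\}\,\t\to\infty$, so the argument concludes with $\eta=\min\{1,\omicron_\e\}>0$. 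I would also remark that this proof uses none of the main hypothesis~\eqref{eq:hypothesis on a,b,c}, only the harmless normalization $\k\leq d(d+1)$ and the inequality $\upsilon\geq\ell$; everything else is bookkeeping with the floor functions and signs.
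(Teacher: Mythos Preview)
Your proof is correct and follows essentially the same route as the paper's: both hinge on Jensen's inequality to obtain $\upsilon\geq\ell$ and then combine this with the normalization $\k\leq d(d+1)=2\ell$. The only difference is organizational: the paper argues by contradiction (if $1+(\omicron_\e-1)c\leq0$ then $\k>2\upsilon\geq2\ell$, contradicting $\k\leq2\ell$), whereas you prove $c\leq1$ directly and then observe $1+(\omicron_\e-1)c\geq\min\{1,\omicron_\e\}$; your version is slightly cleaner and makes the floor-function bookkeeping more explicit.
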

    \begin{proof}
    It suffices to show that $1+ \frac{(\omicron-1)\k}{2(\omicron+\upsilon)}>0$.
    Suppose not and note that this implies that $2\omicron+\omicron\k\leq\k-2\upsilon$.
    In particular, this implies that $\k>2\upsilon$, since $(\k+2)\omicron>0$. On the other hand, by Jensen's inequality, we have
    \begin{equation*}
        r^{-\upsilon} = \rho_{\min}^{-L/4}\sum_{i\in\L}\l_i \rho_i^{-\ell} \geqslant 
        \rho_{\min}^{-L/4}\left( \sum_{i\in\L} \l_i\rho_i\right)^{-\ell} = \rho_{\min}^{-L/4} r^{-\ell}\geqslant r^{-\ell}. 
    \end{equation*}
    Since $\ell = d(d+1)/2$, cf.~\eqref{eq:value of ell},
    it follows that $\upsilon\geq \frac{d(d+1)}{2} $. We thus get that $\k>d(d+1)$, contrary to our assumption that $\k\leq d(d+1)$ in~\eqref{eq:kappa is not big}. Thus, we conclude that $1+ \frac{\k(\omicron-1)}{2(\omicron+\upsilon)}>0$ as desired.
    \end{proof}

\section{Equidistribution of Random Walks}
\label{sec:BQ}

The goal of this section is to prove Theorem~\ref{cor:BQ}. 
The argument is similar to the proof of Theorem~\ref{thm: effective single equidist} and hence we omit some of the details.
The main difference is that we appeal to the equidistribution of translates of rational points on pieces of horocycles in place of the equidistribution of absolutely continuous measures.

The following is the precise substitute for Proposition~\ref{prop:banana} needed for the proof.
\begin{prop}\label{prop:rational-points}
There exist $\s>0, \ell\in\N$ and $c\geq 1$ such that the following holds. 
  For every $p,m,N\in\N$, and $\vp\in \mrm{B}_{\infty,\ell}^\infty(X_\infty(N))$, the following holds for all $x\in X_\infty(N)$:

  \begin{align*}
   p^{-m} \sum_{0\leq k<p^m} &\vp\left(a(p^m)u(kp^{-m}) x \right) \\
   &= \int \vp \;\der m_{\Gbf_\infty^+ \cdot x}
   + 
    O\Big( \sqrt{[\Ga(1):\Ga(N)]}
    \Scal_{\infty,\ell}(\vp) \max\{1,\mrm{inj}_{\Ga(N)}(x)^{-c}\}  \cdot p^{-\s m} \Big).
  \end{align*}
  
\end{prop}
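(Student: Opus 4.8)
The proof of Proposition~\ref{prop:rational-points} should follow the same thickening-plus-spectral-gap strategy used for Proposition~\ref{prop:banana}, with the only structural change being that the absolutely continuous measure $\psi(\xbf)\,\der\xbf$ along the expanding horosphere is replaced by a finitely-supported average over the rational points $\{kp^{-m}:0\le k<p^m\}$. Concretely, I would start from the observation that translating these rational points by $a(p^m)$ spreads them out to an almost-uniform set of spacing $\asymp 1$ along the full-dimensional expanding horospherical orbit; this is exactly the regime in which an effective equidistribution statement à la Kleinbock--Margulis~\cite{KleinbockMargulis-EffectiveHorospheres}, but for \emph{discrete} averages on expanding translates of rational horospheres, is available. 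The plan is therefore: (i) reduce to the case $N=1$ and to $\vp$ of mean zero on the relevant connected component, exactly as in the proof of Proposition~\ref{prop:banana} (replace $\vp$ by $\vp - (\int\vp\,\der m_{\Gbf_\infty^+\cdot x})\chi_{\Gbf_\infty^+\cdot x}$ and use that $X_\infty(N)$ has at most two components); (ii) establish the $N=1$ version of the displayed estimate; and (iii) promote it to general $N$, picking up the factor $\sqrt{[\Ga(1):\Ga(N)]}$ and the injectivity-radius factor $\max\{1,\mrm{inj}_{\Ga(N)}(x)^{-c}\}$ by the identical measure-decomposition bookkeeping as in the proof of Proposition~\ref{prop:thickening} (the Haar measure on the non-expanding subgroup gets scaled by $1/I_N^+$, so its Sobolev norm is scaled by $\sqrt{I_N^+}\ll\sqrt{[\Ga(1):\Ga(N)]}$).

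For step (ii), the cleanest route is to compare the discrete average $p^{-m}\sum_{0\le k<p^m}\vp(a(p^m)u(kp^{-m})x)$ with a smooth average $\int\vp(a(p^m)u(\xbf)x)\psi(\xbf)\,\der\xbf$, where $\psi$ is a fixed bump of integral $1$ supported near the origin. Write $kp^{-m} = $ a lattice point and note that $a(p^m)u(kp^{-m}) = u(k\,\mathrm{Id}_d\text{-shifted})\,a(p^m)$ only up to conjugation; more usefully, $a(p^m)u(kp^{-m})\Ga(1)$ is the $a(p^m)$-translate of a $p^m$-torsion point on the horocycle, and the set of such points equidistributes on $X_\infty(1)$ with a polynomial rate in $p^m$ coming from Proposition~\ref{prop:uniformspectralgap} (equivalently, from the spectral gap / bound on matrix coefficients), via the same Fourier-type or thickening argument: thicken each rational point to a bump of width $\asymp p^{-2m}$ on the unexpanded horosphere, apply Proposition~\ref{prop:thickening} (or rather its $N=1$ case) to control the smoothed average, and bound the discretization error by $\mrm{Lip}(\vp\circ a(p^m))$ times the width, which is $\ll p^m\cdot p^{-2m}\Scal_{\infty,\ell}(\vp) = p^{-m}\Scal_{\infty,\ell}(\vp)$ — harmless. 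Balancing the thickening error ($\asymp r\Scal$ with $r\asymp$ width) against the spectral-gap error ($\asymp r^{-(2\ell+k/2)}\Scal\cdot p^{-\k' m}$ with $k=\dim\Gbf_\infty-d$) gives an exponent $\s$ of the same shape as $\k$ in~\eqref{eq:explicit kappa and ell}; since the statement only asserts \emph{existence} of such $\s,\ell,c$, no optimization is needed.

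The main obstacle — and the only genuinely new input relative to Proposition~\ref{prop:banana} — is the discretization / equidistribution of $p^m$-torsion points of the horocycle under $a(p^m)$ with a \emph{uniform} rate and with the correct dependence on the congruence level and injectivity radius. For $d=1$ this is classical (it is essentially the statement that Hecke points, or $p$-power-torsion points on expanding horocycles, equidistribute effectively, which reduces to bounds towards Ramanujan/Selberg as encoded in Proposition~\ref{prop:uniformspectralgap}); for general $d$ one uses the $\mathrm{SL}_{d+1}$ analogue, again a consequence of the matrix-coefficient bounds in Proposition~\ref{prop:uniformspectralgap} (via~\cite[Corollary C]{OhSystems} and~\cite[Corollary on pg.~108]{CowlingHaagerupHowe}). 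One must be slightly careful that the relevant horospherical average is \emph{full-dimensional} (all $d$ expanding coordinates), so that the rational points $kp^{-m}$ with the single parameter $k$ do \emph{not} fill the whole horosphere when $d\ge 2$; but in the setting of Theorem~\ref{cor:BQ} we only need $d=1$ (missing digit IFS on $\R$), and Remark~\ref{rem:random walks} already flags that the general case requires the rational-points input rather than absolutely continuous measures. Thus for the proof of Theorem~\ref{cor:BQ} it suffices to run the above argument with $d=1$, where the needed equidistribution of $(\Z/p^m\Z)$-points on the expanding horocycle in $\mathrm{SL}_2(\R)/\mathrm{SL}_2(\Z)$ (and its congruence covers) with uniform polynomial rate is standard; carrying the injectivity-radius and $[\Ga(1):\Ga(N)]$ factors through is then the same bookkeeping as in Section~\ref{sec:horospheres}, and I would simply indicate the modifications rather than repeat it.
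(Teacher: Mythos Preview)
Your outline for steps (i) and (iii) is correct and matches the paper: it cites \cite[Prop.~5.3]{sltwo} for the identity coset and remarks that the explicit $N$-dependence follows by substituting Proposition~\ref{prop:banana} for \cite[Eq.~(16)]{sltwo}, with the general-basepoint extension handled as for pieces of (not necessarily periodic) horocycle orbits. Your reduction to mean zero and your congruence-level bookkeeping are exactly this.

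The gap is in your step~(ii). Thickening each $\delta_{kp^{-m}}$ to a bump of width $w$ and applying Proposition~\ref{prop:thickening} cannot produce decay. The discretization error is $\asymp p^{m}w\,\Scal_{\infty,\ell}(\vp)$, so one needs $w\ll p^{-m}$; but a bump of width $w$ and integral $1$ has $\Scal_{2,\ell}$-norm $\asymp w^{-\ell-1/2}$ (for $d=1$), and the second error term in Proposition~\ref{prop:thickening} (with $t=p^{m}$, $k=d^{2}+d=2$) contributes $\asymp w^{-(3\ell+3/2)}p^{-\k'm}$ per bump. With your choice $w=p^{-2m}$ this is $p^{m(6\ell+3-\k')}$, and even optimizing over $w$ (and over a separate thickening radius $r$ in Proposition~\ref{prop:thickening}) the combined error is $\gg p^{m(\ell+1/2-\k')/(\ell+3/2)}$, which diverges since $\k'<1\le\ell$. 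The obstruction is structural: the spacing $p^{-m}$ of the rational points exactly matches the inverse Lipschitz scale of $\vp\circ a(p^{m})$, so comparison with \emph{any} smooth horospherical density costs at least $O(1)$.

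Your last paragraph correctly names the missing input---the points $\{k/p^{m}\}$ form a Hecke orbit, and their effective equidistribution comes from Hecke-eigenvalue (equivalently, cusp Fourier-coefficient) bounds---but this is not a consequence of Proposition~\ref{prop:thickening}; it is precisely the content of \cite[Prop.~5.3]{sltwo}, which the paper invokes rather than re-proves. Once the thickening paragraph is removed, your proposal reduces to the paper's: cite the external rational-point result and carry the congruence bookkeeping through.
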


\begin{proof}
This statement is obtained in~\cite[Prop.~5.3]{sltwo} for $x$ being the identity coset. We outline the needed adjustments to the proof and omit the details. The generalization to points with non-periodic horocycle orbits follows the same reasoning needed when considering pieces of periodic orbits instead of the full orbit in \textit{loc.~cit}.
The explicit dependence on $N$ follows by using Proposition~\ref{prop:banana} in place of~\cite[Eq.~(16)]{sltwo}.
\end{proof}

Let $\Fcal$ be a missing digit IFS, cf.~Definition~\ref{def: p cantor}, with digit set $\L\subseteq \Delta:= \seti{0,\dots,p-1}$ for some $p\in\N$. We will assume that $p$ is an odd prime to obtain the best convergence rate available through our methods. The same argument works for general $p\in\N$ at the cost of worsening the equidistribution rate via the use of Proposition~\ref{prop: spectral gap} instead of Proposition~\ref{prop:cantor spectral gap}.

Denote by $s$ the Hausdorff dimension of the attractor $\Kcal$ of $\Fcal$. 
Let $0<\varrho_0<1$ be a small parameter whose value is determined at the end of the proof and assume that $s>1-\varrho_0$.
 
Let $\l$ be the uniform probability vector on $\L$. Then, $\l_i=|\L|^{-1} = p^{-s}$ for all $i\in \L$~\cite{Moran}.
Let $S$ be the set of places for $\Q$ consisting of $\infty$ along with the prime divisors of $p$. 
Fix $\vp\in \mrm{B}^{\infty}_{\infty,\ell}(X_\infty(1))$ with integral $0$.

Fix some $\a\in \L^\ast$ and set $h_\a=a(\rho_\a^{-1})u(b_\a)$.
As before, we will abuse notation and denote by $x_\a$ both the coset of $h_\a$ in $X_\infty(N)$ and of $(h_\a,e_{\mrm{f}})$ in $X_S$.
Recall the measure $\nu$ defined in~\eqref{eq:BQ notation}.
We shall show that for some $\k_0>0$, we have
\begin{equation}\label{eq:BQ general basepoints}
    \int \vp \;\der(\nu^{\ast n}\ast \d_{x_\a}) = O_{\vp,\a,p}( p^{-\k_0 n}).
\end{equation}

The image of $\nu$ in $X_\infty(1)$, also denoted $\nu$, under the isomorphism $X_\infty(1)\cong \mrm{SL}_2(\R)/\mrm{SL}_2(\Z)$, cf.~Lemma~\ref{lem:identification-space-of-lattices}, satisfies
\begin{align*}
    \int \vp \;\der(\nu^{\ast n}\ast \d_{x_\a})
    = \sum_{\w\in \L^n} \l_\w \vp( a(t_n) u(b_\w)x_\a),
\end{align*}
where $t_n=p^{n}$.
Recall that $b_\w = f_\w(\mathbf{0})$. Then, denoting by $\d_{\mathbf{0}}$ the Dirac mass at $\mathbf{0}$, we observe that
\begin{equation*}
    \sum_{\w\in\L^n} \l_\w \d_{b_\w}  = P_\l^n(\d_{\mathbf{0}}).
\end{equation*}
Let $0<\e<1/2$ be a parameter to be chosen later and define
\begin{equation}\label{eq:choice-scale}
    k=\lfloor\e n\rfloor, \qquad m=n-k. 
\end{equation}
Applying the key identity~\eqref{eq: lift identity} with $\nu=\d_{\mathbf{0}}$, we obtain

\begin{align}\label{eq:applying key identity}
    \sum_{\w\in \L^n} \l_\w \vp( a(t_n) u(b_\w)x_\a) &= 
    \int \vp(a(t_n) u(\xbf) x_\a)\;\der P_\l^n(\d_{\mathbf{0}})(\xbf)
    \nonumber\\
    &= \int (\a\cdot\Pcal_\l)^k(\vp)(a(t_n) u(\xbf)x_\a)\;\der P_\l^{m}(\d_{\mathbf{0}})(\xbf).
\end{align}

Denote by $\chi_{\Kcal}$ the indicator function of $\Kcal$. For $\w\in \Delta^m$, we denote by $b_\w$ those rationals in $[0,1)$ with denominator $p^m$ and a numerator whose digit expansion mod $p$ is given by $\w$.
In particular, we have $\seti{b_\w:\w\in\Delta^m} = \seti{k/p^m: 0\leq k<p^m}$ and $\Kcal\cap\seti{b_\w:\w\in\Delta^m}=\seti{b_\w:\w\in\Lambda^m}$.

Hence, the last integral in~\eqref{eq:applying key identity} can be rewritten as follows: 

\begin{align*}
    \int (\a\cdot\Pcal_\l)^k(\vp)(a(t_n) u(\xbf)x_\a)\;\der P_\l^{m}(\d_{\mathbf{0}})
    =p^{-sm} \sum_{\w\in \Delta^{m}}  (\a\cdot\Pcal_\l)^k(\vp)(a(t_n) u(b_\w)x_\a) \chi_{\Kcal}(b_\w).
\end{align*}

By Cauchy-Schwarz, we get
\begin{align*}
    \bigg(\sum_{\w\in \Delta^{m}}  (\a\cdot\Pcal_\l)^k(\vp)(a(t_n) u(b_\w)x_\a) &\chi_{\Kcal}(b_\w)\bigg)^2 \\
    &\leqslant p^{sm}
    \sum_{\w\in \Delta^{m}}  (\a\cdot\Pcal_\l)^k(\vp)^2(a(t_n) u(b_\w)x_\a)  \\
    &=p^{sm}
    \sum_{\w\in \Delta^{m}}\big((\a\cdot\Pcal_\l)^k(\vp)^2\circ a(t_k)\big)(a(t_m) u(b_\w)x_\a).
\end{align*}

Arguing as in the proof of Theorem~\ref{thm: effective single equidist} using Lemma~\ref{lem:index estimate}, we can regard the last sum above as taking place in $X_\infty(N)$, for a suitable $N$.
Let $\Phi =(\a\cdot \Pcal_\l)^k(\vp)^2\circ a(t_k)$. Then the last sum is obtained by summing the values of $\Phi$ over the rational points with denominator $p^{m}$ on the horocycle orbit through the basepoint $x_\a$ after expansion by $a(p^m)$.

By effective equidistribution of those points on $X_\infty(N)$, cf.~Proposition~\ref{prop:rational-points}, and arguing as for~\eqref{eq:applying banana} and~\eqref{eq:apply bound on conn compo}, we obtain 
\begin{align*}
    p^{-m} &\sum_{\w\in \Delta^{m}} (\a\cdot \Pcal_\l)^k(\vp)^2  \circ a(t_k) (a(t_m) u(b_\w)x_\a) \\
    &\ll_p \int_{X_S}   (\a\cdot\Pcal_\l)^k(\vp)^2 
    + O\left( V_N
    \Scal_{\infty,\ell}\big((\a\cdot\Pcal_\l)^k(\vp)^2\circ a(t_k)\big)\; t_m^{-\s}\; \max\seti{1,\mrm{inj}_{\Ga(N)}(x_\a)^{-c}} \right),
\end{align*}
where $ V_N=\sqrt{\Ga(1):\Ga(N)}$, and we used the invariance of the Haar measure on $X_S$ under $a(t_k)$.
The dependence on $p$ in the implied constant is through the index $[\Gbf_S:\Gbf_S^+]$.

Using Lemma~\ref{lem:Sobolev}, we can estimate the error term as follows, cf.~\eqref{eq:estimate Sobolev norm of operator}:
\begin{align*}
    \Scal_{\infty,\ell}\big((\a\cdot\Pcal_\l)^k(\vp)^2\circ a(t_k)\big) \ll t_k^{\ell}\cdot \Scal_{\infty,\ell}((\a\cdot\Pcal_\l)^k(\vp)^2)
    \ll t_k^{3\ell}\cdot \Scal_{\infty,\ell}(\vp)^2.
\end{align*}

Let $\th=\min\{s,25/32\}$.
By taking $\varrho_0<1/2$, we have $s>1/2$.
Hence, applying the spectral gap estimate for the operators $\a\cdot\Pcal_\l$, Proposition~\ref{prop:cantor spectral gap}, we obtain 
\begin{align*}
    \int_{X_S}   (\a\cdot\Pcal_\l)^k(\vp)^2 \ll_\e p^{-(\th-\e)k}\Scal_{\infty,\ell}(\vp)^2 \leqslant p^{-(1/2-\e)k}\Scal_{\infty,\ell}(\vp)^2.  
\end{align*}

By Lemma~\ref{lem:index estimate} and arguing as in~\eqref{eq:volume of cover}, we have $V_N\ll p^{3|\a| +3k/2} $, where $|\a|$ is the integer satisfying $\a\in \L^{|\a|}$.
Combining all the estimates, along with~\eqref{eq:choice-scale}, and the facts $\mrm{inj}_{\Ga(1)}\leq\mrm{inj}_{\Ga(N)}$ and $\mrm{inj}^{-1}_{\Ga(1)} \gg 1$, we obtain
\begin{align*}
    \int \vp \;\der(\nu^{\ast n}\ast \d_{x_\a}) &\ll_{p,\e} p^{3|\a|/2}
    p^{\frac{(1-s)m}{2}}
    \sqrt{p^{-(1/2-\e)k} + p^{(3\ell+3/2) k - \s m}}
    \;\Scal_{\infty,\ell}(\vp)\; \mrm{inj}_{\Ga(1)}(x_\a)^{-c/2}\\
    &\ll_{p} 
     p^{3|\a|/2}
    p^{\frac{(1-s)(1-\e)n}{2}} 
    \sqrt{p^{-(1/2-\e)\e n+1} + p^{((3\ell+3/2+\s) \e-\s) n  } }
    \;\Scal_{\infty,\ell}(\vp) \;\mrm{inj}_{\Ga(1)}(x_\a)^{-c/2}.
  \end{align*}
  Setting $-(1/2-\e)\e  =(3\ell+3/2+\s) \e-\s$, yields a quadratic equation in $\e$ with one positive root given as follows: letting $\omicron=3\ell+2+\s$, then
  \begin{equation*}
      \e = \frac{\omicron-\sqrt{\omicron^2-4\s}}{2}.
  \end{equation*}
  Note that we may assume that $\s$ is small enough so that $\e<1/2$.
  Hence, the above estimate becomes
  \begin{align*}
       \int \vp \;\der(\nu^{\ast n}\ast \d_{x_\a}) 
       \ll_{p,\e}
     p^{3|\a|/2}
    p^{\frac{((1-s)(1-\e) - (1/2-\e)\e)n }{2}} \;\Scal_{\infty,\ell}(\vp) \;\mrm{inj}_{\Ga(1)}(x_\a)^{-c/2}.
  \end{align*}
Then, noting that $\a$ is fixed, the estimate above tends to $0$ when
\begin{align*} 
    1-s < \varrho_0:=  \frac{(1/2-\e)\e}{1-\e}.
\end{align*}
This concludes the proof.

\begin{remark}\label{rem:random walks}
The above proof extends readily to IFS in higher dimensions which generate products of copies of missing digit sets.
Moreover, with some additional effort, one can handle non-uniform probability vectors. The method is limited however to such special types of IFS, compared to the ones addressed by Theorem~\ref{thm: effective single equidist}.
First, by Theorem~\ref{thm: convergence in L-metric}, the measures $P_\l^n(\d_{\mathbf{0}})$ converge exponentially fast, with speed $p^{-n}$, towards the Hausdorff measure on $\Kcal$. However, the Lipschitz constant of the functions $\vp(a(t_n)u(\cdot)x_\a)$ is roughly $p^n$, which prevents us from deducing Theorem~\ref{cor:BQ} from Theorem~\ref{thm: effective single equidist}. This is also the reason we appeal to the equidistribution of rational points, Proposition~\ref{prop:rational-points}, instead. For a general IFS, there is no natural analog of Proposition~\ref{prop:rational-points} for a ``completed set'' of the translation vectors $\seti{b_\w:\w\in \L^n}$, i.e., an analog of the full set of rational points of denominator $p^n$.
\end{remark}

\section{Reduction To Dynamics}\label{sec: reduction}
In this section, which largely follows~\cite{KleinbockMargulis-Loglaws}, we set up some notation and background which allows us to connect $\psi$-approximability to cusp excursions, i.e., to homogeneous dynamics. 
The connection between Khintchine's Theorem on $\R$ and the geodesic flow on the modular surface was first observed in~\cite{Sullivan}, where it was attributed to David Kazhdan.
Throughout the remainder of the article, we let
    \begin{equation*}
        G=\mrm{SL}_{d+1}(\R),\qquad \Gamma = \mrm{SL}_{d+1}(\Z).
    \end{equation*}

    \begin{lem}[Lemma 8.3,~\cite{KleinbockMargulis-Loglaws}]
    \label{lem: psi to r}
    Suppose $\psi:[1,\infty)\r\R_+$ is a continuous non-increasing function.
    Then, there exists a unique continuous function $r=r_\psi: [t_0,\infty) \r \R$, where $t_0 =  - \frac{d}{d+1}\log\psi(1)$,
    such that
    \begin{enumerate}[(i)]
        \item The function
        \begin{equation}\label{eq: lamda(t)}
            \l(t) = t-r(t)
        \end{equation}
        is strictly increasing and tends to $\infty$ as $t\r\infty$. Moreover, $\l(t_0) = 0$.
    
        \item The function
        \begin{equation}\label{eq: L(t)}
            L(t) = t+ dr(t)
        \end{equation}
        is non-decreasing.
        
        \item The functions $\l(t)$ and $L(t)$ are related by the following identity:
            \begin{equation}\label{eq: lamda to L}
                \psi^d\left(e^{\l(t)}\right) = e^{-L(t)}, \qquad t\geq t_0.
            \end{equation}

        \item The function $r$ is weakly monotone in the following sense: for every $t_2 \geq t_1 \geq t_0$, we have
        \begin{equation}\label{eq: weak monotone}
            r(t_2) - r(t_1) \geqslant \frac{-1}{d}(t_2-t_1).
        \end{equation}
        Moreover, the function $\l$ satisfies the following growth property for all $t_2\geq t_1 \geq t_0$:
        \begin{equation}\label{eq: lamda growth}
             \l(t_2) - \l(t_1) \leqslant \frac{d+1}{d}(t_2- t_1).
        \end{equation}
    
    \end{enumerate}
     
    \end{lem}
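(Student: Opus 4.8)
The plan is to construct $r_\psi$ explicitly from $\psi$ via the relation forced by (iii) and then verify properties (i)--(iv) by direct computation. First I would unpack what (iii) demands: taking logarithms, $d\log\psi(e^{\l(t)}) = -L(t) = -(t + d\, r(t))$, and since $\l(t) = t - r(t)$, this gives a relation between $t$, $r(t)$, and the value of $\log\psi$ at the point $e^{t - r(t)}$. The natural approach is a change of variables: set $u = \l(t)$ as the new parameter, so that $t = u + r(t)$, and rewrite everything in terms of $u$. Define $\phi(u) := -\frac{1}{d}\log\psi^d(e^u) = -\log\psi(e^u)$, which is non-decreasing (since $\psi$ is non-increasing) and continuous, with $\phi(0) = -\log\psi(1)$. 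Then (iii) becomes $L(t) = d\,\phi(u)$, i.e.\ $t + d\,r(t) = d\,\phi(u)$; combined with $t = u + r(t)$ we can solve the linear system in $t$ and $r$: subtracting, $ (d-1) r(t) + u = d\phi(u) - t$... more cleanly, from $t - r = u$ and $t + dr = d\phi(u)$ we get $(d+1) r = d\phi(u) - u$, so $r = \frac{d\phi(u) - u}{d+1}$ and $t = u + r = \frac{d\phi(u) + du}{d+1} = \frac{d(\phi(u)+u)}{d+1}$.

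So the construction is: let $u \mapsto t(u) := \frac{d}{d+1}(u + \phi(u))$ and $u \mapsto \tilde r(u) := \frac{d\phi(u) - u}{d+1}$; since $\phi$ is non-decreasing and continuous, $t(u)$ is continuous and strictly increasing in $u$ (strictly, because of the $\frac{d}{d+1}u$ term), it maps $[0,\infty)$ onto $[t_0,\infty)$ where $t_0 = t(0) = \frac{d}{d+1}\phi(0) = -\frac{d}{d+1}\log\psi(1)$, and it tends to $\infty$. Hence $t(u)$ has a continuous strictly increasing inverse $u(t)$, and we define $r(t) := \tilde r(u(t))$. Then $\l(t) = t - r(t) = u(t)$ by construction, which is strictly increasing, tends to $\infty$, and satisfies $\l(t_0) = 0$: this gives (i). For (ii), $L(t) = t + d\,r(t) = d\,\phi(u(t))$, and since $\phi$ is non-decreasing and $u(t)$ is increasing, $L$ is non-decreasing. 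Property (iii) holds by the very definition: $\psi^d(e^{\l(t)}) = \psi^d(e^{u(t)}) = e^{-d\phi(u(t))} = e^{-L(t)}$.

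For (iv), I would argue as follows. Both inequalities are equivalent to a single monotonicity statement. We have $r(t) = \frac{d\phi(u(t)) - u(t)}{d+1}$ and $t = \frac{d(\phi(u(t)) + u(t))}{d+1}$. Eliminating $\phi$: from the second equation $d\phi(u) = \frac{(d+1)t}{1} - d u$, wait — more directly, note $t + dr = d\phi(u)$ and we want to bound increments. Given $t_2 \geq t_1$, write $u_i = u(t_i)$, so $u_2 \geq u_1$ and $\phi(u_2) \geq \phi(u_1)$. Then
\begin{align*}
r(t_2) - r(t_1) &= \frac{d(\phi(u_2) - \phi(u_1)) - (u_2 - u_1)}{d+1} \geq \frac{-(u_2 - u_1)}{d+1},
\end{align*}
while $t_2 - t_1 = \frac{d((\phi(u_2)-\phi(u_1)) + (u_2 - u_1))}{d+1} \geq \frac{d(u_2 - u_1)}{d+1}$, hence $u_2 - u_1 \leq \frac{d+1}{d}(t_2 - t_1)$. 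Combining, $r(t_2) - r(t_1) \geq -\frac{u_2 - u_1}{d+1} \geq -\frac{1}{d}(t_2-t_1)$, which is (iv) first part. For the growth of $\l$: $\l(t_2) - \l(t_1) = u_2 - u_1 \leq \frac{d+1}{d}(t_2 - t_1)$, which is exactly \eqref{eq: lamda growth}. Uniqueness follows because any $r$ satisfying (i) and (iii) must have $\l(t)$ determined by inverting \eqref{eq: lamda to L} (as $\psi$ is continuous and, restricted to the range where it is eventually strictly positive, $\psi^d$ composed with $\exp$ determines $L$ hence $\l$ via the strictly increasing structure) — more carefully, (iii) plus strict monotonicity of $\l$ forces $L(t) = -d\log\psi(e^{\l(t)})$, and then $t = \frac{1}{d+1}(\l(t) \cdot ? )$... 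I would make uniqueness rigorous by noting $(t, r(t))$ must solve the same linear system $t - r = \l$, $t + dr = L$ with $L = -d\log\psi(e^\l)$, so $\frac{d(d+1)}{?}$; since $t \mapsto \l(t)$ is forced to be the inverse of $u \mapsto \frac{d}{d+1}(u + \phi(u))$, everything is pinned down. The main subtlety — the only place requiring care — is handling the case where $\psi(q) = 0$ for large $q$ or where $\phi$ fails to be finite/strictly increasing; if $\psi$ is non-increasing and continuous but vanishes beyond some point, one restricts attention to $q$ with $\psi(q) > 0$, and the statement should be read on the corresponding domain, or one assumes $\psi > 0$ throughout as is implicit. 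I expect this domain/positivity bookkeeping to be the only real obstacle; the algebra of (i)--(iv) is a routine linear-system manipulation once the change of variable $u = \l(t)$ is in place.
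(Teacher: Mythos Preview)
Your construction is correct and is essentially the argument behind \cite[Lemma 8.3]{KleinbockMargulis-Loglaws}, which the paper simply cites for items (i)--(iii). The paper does not reprove the existence of $r$; it only supplies the deduction of (iv), so your explicit parametrization via $u\mapsto t(u)=\frac{d}{d+1}(u+\phi(u))$ gives a self-contained proof where the paper defers to the reference. This buys transparency at the cost of a little extra algebra.

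The one place where the paper's route is cleaner is (iv). You go back to the explicit formulas in the $u$-variable, bound $r(t_2)-r(t_1)$ and $u_2-u_1$ separately, and then combine. The paper instead observes that (iv) is an immediate consequence of (ii) alone: since $L(t)=t+dr(t)$ is non-decreasing, $t_2-t_1+d\bigl(r(t_2)-r(t_1)\bigr)\geq 0$, which is exactly \eqref{eq: weak monotone}; then \eqref{eq: lamda growth} follows by writing $\l(t_2)-\l(t_1)=(t_2-t_1)-(r(t_2)-r(t_1))$ and plugging in the bound just obtained. This avoids the parametrization entirely and makes clear that (iv) needs nothing beyond (ii). Your uniqueness sketch is a bit loose; the clean statement is that (i) forces $\l$ to be a strictly increasing bijection $[t_0,\infty)\to[0,\infty)$, and then (iii) pins down $L$ (hence $r=\frac{L-t}{d}$) as a function of $\l$, so the pair $(\l,L)$, and with it $r$, is determined by the inverse of $u\mapsto\frac{d}{d+1}(u+\phi(u))$ exactly as you constructed it.
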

    
    \begin{proof}
    Items (i)-(iii) follow by~\cite[Lemma 8.3]{KleinbockMargulis-Loglaws} with $x_0=1$ in the notation in \textit{loc.~cit}. 
    The assertion $\l(t_0)=\log x_0=0$ follows from their proof.
     The last item now follows immediately from properties~\eqref{eq: lamda(t)}-\eqref{eq: lamda to L} via elementary manipulation. Let $t_{2}\geq t_{1}\geq t_{0}$. As $\psi$ is non-increasing and strictly positive,~\eqref{eq: L(t)} and~\eqref{eq: lamda to L} yield
    \begin{equation*}
        1\geq e^{-(L(t_{2})-L(t_{1}))}=e^{-(t_{2}+dr(t_{2})-t_{1}-dr(t_{1}))}
    \end{equation*}
    and thus~\eqref{eq: weak monotone} follows from monotonicity of the exponential function. For~\eqref{eq: lamda growth} one calculates
    \begin{align*}
        \l(t_{2})-\l(t_{1})&=(t_{2}-t_{1})-\big(r(t_{2})-r(t_{1})\big)\leq (t_{2}-t_{1})+\frac{1}{d}(t_{2}-t_{1})\\
        &=\frac{d+1}{d}(t_{2}-t_{1}).
    \end{align*} 
    \end{proof}
    
    We record a corollary of the above lemma which we use frequently throughout our arguments.
    \begin{corollary}\label{cor:growth of tn}
    Let $\psi, t_0$ and $\l$ be as in Lemma~\ref{lem: psi to r}. For each $n\in \N$, let $t_n$ be such that $e^{\l(t_n)} = 2^n$. Then, 
    \[ t_n \geqslant t_0
    + n \frac{  d \log 2}{d+1}. \]
    \end{corollary}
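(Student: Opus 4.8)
The plan is to read off the claim directly from the properties of $\l$ established in Lemma~\ref{lem: psi to r}, namely that $\l(t_0)=0$, that $\l$ is strictly increasing with $\l(t)\to\infty$, and the Lipschitz-type growth bound~\eqref{eq: lamda growth}. First I would observe that $t_n$ is well-defined: since $\l:[t_0,\infty)\to\R$ is continuous, strictly increasing, satisfies $\l(t_0)=0$, and tends to $\infty$, the value $n\log 2\geq 0$ lies in its range for every $n\in\N$, so there is a unique $t_n\geq t_0$ with $e^{\l(t_n)}=2^n$; equivalently $\l(t_n)=n\log 2$. Monotonicity of $\l$ also gives $t_n\geq t_0$ (since $\l(t_n)=n\log 2\geq 0=\l(t_0)$), which is what is needed to invoke~\eqref{eq: lamda growth}.

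Next I would apply the growth property~\eqref{eq: lamda growth} with $t_1=t_0$ and $t_2=t_n$:
\begin{equation*}
  n\log 2 = \l(t_n)-\l(t_0) \leqslant \frac{d+1}{d}\,(t_n-t_0).
\end{equation*}
Rearranging yields $t_n - t_0 \geqslant \frac{d}{d+1}\, n\log 2$, i.e.\ $t_n \geqslant t_0 + n\frac{d\log 2}{d+1}$, which is exactly the asserted inequality.

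There is no real obstacle here; the corollary is a one-line consequence of~\eqref{eq: lamda growth} together with the normalization $\l(t_0)=0$. The only point warranting a sentence of care is the well-definedness of $t_n$ and the fact that $t_n\geq t_0$, so that the hypotheses $t_2\geq t_1\geq t_0$ of~\eqref{eq: lamda growth} are met; both follow immediately from the strict monotonicity and unboundedness of $\l$ recorded in Lemma~\ref{lem: psi to r}(i).
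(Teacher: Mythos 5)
Your proof is correct and relies on the same key ingredient as the paper, namely the growth bound~\eqref{eq: lamda growth} from Lemma~\ref{lem: psi to r}(iv). The only (immaterial) organizational difference is that the paper first derives $t_{n+1}-t_n \geq d\log 2/(d+1)$ for each consecutive pair and then inducts, whereas you apply the Lipschitz-type bound once directly to the pair $(t_0,t_n)$; both are one-line consequences of the same estimate.
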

    
    \begin{proof}
      Lemma~\ref{lem: psi to r}(iv) implies that 
    $t_{n+1} - t_n \geqslant d\log 2/ (d+1)$. 
    The corollary follows by induction.
    \end{proof}

    Recall that the map $g\Ga \mapsto g\Z^{d+1}$ provides an identification of $X=G/\Ga$ with the space of unimodular lattices in $\R^{d+1}$.
    A subgroup $L$ of a lattice $\Delta$ in $\R^{d+1}$ is primitive if $L = \Delta\cap \R\cdot L $, where $\R\cdot L$ is the $\R$-span of $L$.
    We also recall the norms defined in Section~\ref{sec: IFS}.
    We define a function $d_1:X\r[1,\infty)$ as follows:
    \begin{align} \label{defn: phi}
    d_1(g\Ga) =  \max \seti{\norm{v}^{-1}: v\in g\Z^{d+1}-\seti{\mathbf{0}} }.
    \end{align}
    As of Mahler's compactness criterion the function $d_1$ is proper.
    For $\e>0$, define
    \begin{equation}\label{eq: cpt exhaustion}
        \mathscr{C}(\e) := \seti{x\in X: d_1(x) > 1/\e }.
    \end{equation}
    Then, the sets $X\setminus\mathscr{C}(\e)$ form a compact exhaustion of $X$.

    Denote by $m_{G/\Ga}$ the normalized $G$-invariant Haar probability measure on $X$.
    The next ingredient is an estimate on the measure of the sets $\mathscr{C}(\e)$ for the purpose of applying Borel-Cantelli arguments.
    \begin{proposition}[Proposition 7.1,~\cite{KleinbockMargulis-Loglaws}]
    \label{prop: cusp measure}
    There exist constants $C_d, C'_d \geqslant 1$, depending only on $d$ and the choice of norm on $\R^{d+1}$, so that
    \begin{equation*}
        C_d \e^{d+1} - C'_d \e^{2(d+1)} \leqslant m_{G/\Ga}\left(\mathscr{C}(\e)\right) \leqslant C_d \e^{d+1},
    \end{equation*}
    for all $0<\e<1$. In fact, we may take $C_d = \mathfrak{c}_{d+1}/2\zeta(d+1)$, where $\mathfrak{c}_{d+1}$ is the volume of the unit ball in $\R^{d+1}$ in our fixed norm.
    \end{proposition}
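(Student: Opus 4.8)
Write $n=d+1$ and use the identification of $X=G/\Ga$ with the space of unimodular lattices in $\R^{n}$, so that by the definitions of $d_1$ in~\eqref{defn: phi} and of $\mathscr{C}(\e)$ in~\eqref{eq: cpt exhaustion} one has $\mathscr{C}(\e)=\{\Delta\in X:\Delta\text{ contains a nonzero }v\text{ with }\norm{v}<\e\}$. The plan is to estimate $m_{G/\Ga}(\mathscr{C}(\e))$ through the first two moments of the \emph{primitive} Siegel transform of the indicator of the ball $B=\{v\in\R^{n}:\norm{v}<\e\}$. First I would record the elementary reduction: if $\Delta\in\mathscr{C}(\e)$, then a nonzero vector of minimal norm in $\Delta$ is primitive and has norm $<\e$, and so does its negative; since primitive vectors occur in $\pm$ pairs, the quantity $\widetilde{f}(\Delta):=\#\{v\in\Delta:v\text{ primitive},\ \norm{v}<\e\}$ is even, and $G(\Delta):=\tfrac12\widetilde{f}(\Delta)\in\N_{0}$ satisfies $\mathbf{1}_{\mathscr{C}(\e)}=\mathbf{1}_{\{G\geq 1\}}$, in particular $\mathbf{1}_{\mathscr{C}(\e)}\leq G$.

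For the upper bound I would invoke Siegel's mean value theorem for primitive lattice vectors, which gives $\int_{X}\widetilde{f}\,dm_{G/\Ga}=\zeta(n)^{-1}\operatorname{vol}(B)=\zeta(n)^{-1}\mathfrak{c}_{n}\e^{n}$; hence $m_{G/\Ga}(\mathscr{C}(\e))\leq\int_{X}G\,dm_{G/\Ga}=\tfrac{\mathfrak{c}_{n}}{2\zeta(n)}\e^{n}=C_d\e^{d+1}$, with the stated value of $C_d$: the factor $\tfrac12$ comes from the $\pm$ symmetry and the factor $\zeta(n)^{-1}$ from restricting to primitive (rather than all nonzero) vectors. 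For the lower bound I would use the second Bonferroni inequality $\mathbf{1}_{\{G\geq 1\}}\geq G-\binom{G}{2}$ (it is an equality for $G\in\{0,1,2\}$ and the right side is $\leq 0$ for $G\geq 3$), integrate it, and reduce matters to proving $\int_{X}\binom{G}{2}\,dm_{G/\Ga}\ll_{d}\e^{2(d+1)}$. Here one observes that two primitive vectors of a lattice that are not negatives of one another are linearly independent, so $\binom{G}{2}$ is exactly $\tfrac18$ times the number of ordered pairs $(v,w)\in\Delta^{2}$ of linearly independent primitive vectors with $\norm{v},\norm{w}<\e$. When $n=2$ this count is identically $0$ once $\e<1$, because two linearly independent vectors of norm $<\e$ would span a sublattice of covolume $<\e^{2}<1$ inside a unimodular lattice. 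When $n\geq 3$ I would apply the Rogers second-moment formula on $\mathrm{SL}_{n}(\R)/\mathrm{SL}_{n}(\Z)$ to this count: restricting to linearly independent pairs excises precisely the diagonal/rationally dependent contributions (the only way two primitive vectors can be rationally dependent is $w=\pm v$), so the formula outputs only terms of order $\operatorname{vol}(B)^{2}\asymp\e^{2n}$, yielding $\int_{X}\binom{G}{2}\,dm_{G/\Ga}=C'_d\e^{2(d+1)}$ for some $C'_d=C'_d(d)$ and all $0<\e<1$. Combining this with the Bonferroni bound gives the lower inequality.

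The routine pieces (the change of variables in Siegel's formula, the Bonferroni case-check, and the combinatorial bookkeeping behind the factor $\tfrac18$) are standard. The main obstacle I expect is the use of the Rogers second-moment formula in the $n\geq 3$ case, specifically the point that once one restricts the pair sum to linearly independent vectors the resulting integral is genuinely of order $\e^{2n}$ rather than $\e^{n}$ — i.e., that no "diagonal enhancement" survives. If one prefers to avoid citing Rogers' formula in full generality, an acceptable substitute is to bound $\int_{X}\binom{G}{2}\,dm_{G/\Ga}$ directly by the measure of the set of unimodular lattices containing two linearly independent primitive vectors of norm $<\e$ and to estimate that measure by a first-moment argument applied to a suitable two-variable Siegel transform supported on linearly independent pairs; either route gives the $O(\e^{2n})$ needed.
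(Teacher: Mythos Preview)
The paper does not supply its own proof of this proposition; it is quoted verbatim from \cite{KleinbockMargulis-Loglaws}. Your sketch via the primitive Siegel transform for the upper bound and the Bonferroni inequality $\mathbf{1}_{\{G\ge 1\}}\ge G-\binom{G}{2}$ together with a Rogers-type second moment estimate for the lower bound is exactly the standard argument (and is essentially what Kleinbock--Margulis do), so there is nothing substantive to compare.

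One small caveat: your assertion that for $n=2$ the count of linearly independent primitive pairs in the $\e$-ball vanishes for all $\e<1$ relies on Hadamard's inequality and is literally true only for the Euclidean norm; for a general norm one gets vanishing only for $\e$ below some norm-dependent threshold $\e_0\in(0,1]$. This is harmless for the stated conclusion: for $\e_0\le\e<1$ one simply enlarges $C_d'$ so that $C_d\e^{2}-C_d'\e^{4}\le 0$, making the lower bound trivially valid in that range. With that adjustment your argument is complete.
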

    
    The following proposition, due to Kleinbock and Margulis, allows us to approximate the characteristic functions of the sets $\mathscr{C}(\e)$ by smooth functions.
    In the following statement, we identify $\mrm{SO}_d(\R)$ with a subgroup of $\mrm{SL}_{d+1}(\R)$ via the map
    \begin{equation}\label{eq:embedding O_d in SL_d+1}
        O \mapsto \begin{pmatrix}
          O & \mathbf{0} \\ \mathbf{0} & 1
        \end{pmatrix}. 
    \end{equation}

    \begin{proposition}
    \label{prop: approximate cusp}
    For every $\eta>0$ and $\ell\in\N$, there exists a constant $S_\eta\geq 1$ such that for every $\e>0$, there are non-negative functions $\vp_\e,\vp^+_\e\in \mrm{B}_{\infty,\ell}^\infty(G/\Ga)$ such that
    \begin{enumerate}
        \item\label{item:K-invariance} If the norm defining $d_1$ (cf.~\eqref{defn: phi}) is invariant under $\mrm{SO}_{d}(\R)$, then so are $\vp_\e$ and $\vp_\e^+$. 
        \item\label{item:upper and lower estimate} $\vp_\e\leqslant \chi_{\mathscr{C}(\e)} \leqslant \vp^+_\e$.
        \item \label{item:norm of vp_epsilon}$\max\seti{\Scal_{\infty,\ell}(\vp_\e) ,\Scal_{\infty,\ell}(\vp^+_\e)} \leq S_\eta$.
        \item \label{item:bound measure of vp_epsilon}
        $m_{G/\Ga}(\vp^+_\e) \leqslant (1+\eta) C_d \e^{d+1}$ and 
         \begin{equation*}
        \frac{1}{(1+\eta)}\big(C_d \e^{d+1} - C'_d \e^{2(d+1)}\big) \leqslant m_{G/\Ga}(\vp_\e),
        \end{equation*}
    where $C_d$ and $C'_d$ are the constants in Proposition~\ref{prop: cusp measure}.
    \end{enumerate}

    \end{proposition}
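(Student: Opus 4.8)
The plan is to realize $\vp_\e$ and $\vp_\e^+$ as one‑sided mollifications of $\chi_{\mathscr{C}(\e)}$, exploiting that the sets $\mathscr{C}(\e)$ of~\eqref{eq: cpt exhaustion} are nested in $\e$ and that $d_1$ (see~\eqref{defn: phi}) is log‑Lipschitz along $G$. Writing $\norm{g}_{\mrm{op}}$ for the operator norm of $g\in G$ acting on $(\R^{d+1},\norm{\cdot})$, one checks at once from the definition of $d_1$ that $d_1(g^{-1}x)\leqslant\norm{g}_{\mrm{op}}\,d_1(x)$ and $d_1(g^{-1}x)\geqslant\norm{g^{-1}}_{\mrm{op}}^{-1}\,d_1(x)$ for all $g\in G$, $x\in G/\Ga$. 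Given $\eta>0$ and $\ell\in\N$, fix $\delta>0$ with $e^{(d+1)\delta}\leqslant1+\eta$. When $\norm{\cdot}$ is invariant under $\mrm{SO}_{d}(\R)$ (embedded via~\eqref{eq:embedding O_d in SL_d+1}), the group $K:=\mrm{SO}_{d}(\R)$ acts by isometries on $(\R^{d+1},\norm{\cdot})$, so $\norm{k}_{\mrm{op}}=1$ for $k\in K$ and $\norm{k_1 g k_2}_{\mrm{op}}=\norm{g}_{\mrm{op}}$; I would then choose a symmetric neighbourhood $\Omega_0$ of the identity with $\norm{g^{\pm1}}_{\mrm{op}}\leqslant e^{\delta}$ on $\Omega_0$, set $\Omega=K\Omega_0 K$ (so the same estimate persists on $\Omega$), and pick a non‑negative, bi‑$K$‑invariant $\theta\in\mrm{C}_c^\infty(G)$ with $\mrm{supp}(\theta)\subseteq\Omega$ and $\int_G\theta\,\mrm{d}m_G=1$. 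In the general case one simply drops the $K$‑invariance and takes $\Omega$ a small symmetric neighbourhood of the identity.

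Next I would define, for $(\theta\ast f)(x):=\int_G\theta(g)f(g^{-1}x)\,\mrm{d}m_G(g)$,
\[
  \vp_\e:=\theta\ast\chi_{\mathscr{C}(e^{-\delta}\e)},\qquad\vp_\e^+:=\theta\ast\chi_{\mathscr{C}(e^{\delta}\e)}.
\]
Both functions are non‑negative and smooth (differentiate under the integral sign, using $\theta\in\mrm{C}_c^\infty$), and the substitution $g\mapsto kg$ in the Haar integral shows that, when $\theta$ is left‑$K$‑invariant, so are $\vp_\e$ and $\vp_\e^+$; this is~\eqref{item:K-invariance}. For~\eqref{item:upper and lower estimate}: if $\vp_\e(x)>0$ then $g^{-1}x\in\mathscr{C}(e^{-\delta}\e)$ for some $g\in\Omega$, hence $d_1(x)\geqslant e^{-\delta}d_1(g^{-1}x)>e^{-\delta}\cdot e^{\delta}/\e=1/\e$, so $x\in\mathscr{C}(\e)$; together with $0\leqslant\vp_\e\leqslant1$ this gives $\vp_\e\leqslant\chi_{\mathscr{C}(\e)}$. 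Conversely, if $x\in\mathscr{C}(\e)$ then $d_1(g^{-1}x)\geqslant e^{-\delta}d_1(x)>e^{-\delta}/\e=1/(e^{\delta}\e)$ for every $g\in\Omega$, so $g^{-1}x\in\mathscr{C}(e^{\delta}\e)$ on all of $\mrm{supp}(\theta)$ and therefore $\vp_\e^+(x)=\int_G\theta\,\mrm{d}m_G=1$; as $\vp_\e^+\geqslant0$ this yields $\chi_{\mathscr{C}(\e)}\leqslant\vp_\e^+$.

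For~\eqref{item:norm of vp_epsilon} I would invoke the convolution estimate of Lemma~\ref{lem:Sobolev}\,(\ref{item:Sobolev convolution}), applied after transferring to $X_\infty(1)\cong G/\Ga$ via Lemma~\ref{lem:identification-space-of-lattices}: since $\norm{\chi_{\mathscr{C}(\cdot)}}_\infty=1$, it gives $\Scal_{\infty,\ell}(\vp_\e),\Scal_{\infty,\ell}(\vp_\e^+)\ll m_G(\Omega)\,\Scal_{\infty,\ell}(\theta)$, a quantity depending only on $\theta$ (hence on $\eta$) and on $\ell$, not on $\e$; one sets $S_\eta$ equal to the maximum of this bound and $1$. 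For~\eqref{item:bound measure of vp_epsilon} I would use that $G$‑invariance of $m_{G/\Ga}$ and Tonelli give $m_{G/\Ga}(\theta\ast\chi_A)=\bigl(\int_G\theta\,\mrm{d}m_G\bigr)m_{G/\Ga}(A)=m_{G/\Ga}(A)$ for every Borel $A\subseteq G/\Ga$; hence, for $\e$ small enough that Proposition~\ref{prop: cusp measure} applies to the cusp sets at hand, the choice of $\delta$ yields
\[
  m_{G/\Ga}(\vp_\e^+)=m_{G/\Ga}\bigl(\mathscr{C}(e^{\delta}\e)\bigr)\leqslant C_d(e^{\delta}\e)^{d+1}\leqslant(1+\eta)C_d\e^{d+1},
\]
\begin{align*}
  m_{G/\Ga}(\vp_\e)&=m_{G/\Ga}\bigl(\mathscr{C}(e^{-\delta}\e)\bigr)\geqslant C_d(e^{-\delta}\e)^{d+1}-C'_d(e^{-\delta}\e)^{2(d+1)}\\
  &\geqslant e^{-(d+1)\delta}\bigl(C_d\e^{d+1}-C'_d\e^{2(d+1)}\bigr)\geqslant\frac{C_d\e^{d+1}-C'_d\e^{2(d+1)}}{1+\eta},
\end{align*}
where the middle inequality on the last line uses $e^{-2(d+1)\delta}\leqslant e^{-(d+1)\delta}$.

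The one point requiring care is reconciling~\eqref{item:K-invariance} with the requirement that $\mrm{supp}(\theta)$ distort $d_1$ by no more than the factor $e^{\pm\delta}$: symmetrizing $\theta$ over $K$ enlarges its support from a neighbourhood of the identity to a neighbourhood of all of $K$, which a priori could smear $d_1$ uncontrollably — this is precisely neutralized by the hypothesis that $\norm{\cdot}$ is $\mrm{SO}_{d}(\R)$‑invariant, so that $K$ acts isometrically and contributes nothing to the distortion of $d_1$. Everything else is routine bookkeeping with the nested family $\{\mathscr{C}(\e)\}$ and Proposition~\ref{prop: cusp measure}.
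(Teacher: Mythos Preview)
Your argument is correct and is essentially the paper's own proof: both mollify $\chi_{\mathscr{C}(\e)}$ by convolving the indicator of a slightly dilated/contracted cusp set with a bump function supported where the operator norm is close to $1$, and then read off items~\eqref{item:K-invariance}--\eqref{item:bound measure of vp_epsilon} from the nesting $\mathscr{C}(e^{-\delta}\e)\subset\mathscr{C}(\e)\subset\mathscr{C}(e^{\delta}\e)$, Haar invariance, Lemma~\ref{lem:Sobolev}\eqref{item:Sobolev convolution}, and Proposition~\ref{prop: cusp measure}. The only cosmetic differences are your parameter $e^{\delta}$ versus the paper's $a=(1+\eta)^{1/(d+1)}$, and your use of a bi-$K$-invariant mollifier supported in $K\Omega_0K$ where the paper takes a right-$K$-invariant mollifier supported in the $K$-invariant set $\{g:\norm{g}_{\mrm{op}}\leq a\}$.
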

    
    \begin{proof}
    The statement is standard, so we only sketch the proof.
    Let $U_\eta$ denote the set of $g\in G$ whose operator norm induced from the norm on $\R^{d+1}$ is at most $a:=(1+\eta)^{1/(d+1)}$.
    In particular, for $x\in \mathscr{C}(\d)$ and $g\in U_\eta$, we have $gx\in \mathscr{C}(a\d)$.
    The construction proceeds by choosing a $\mrm{C}^\infty$-bump function $\psi_\eta$ supported in $U_\eta$, which is right $\mrm{SO}_d(\R)$-invariant and has integral $1$.
    This is possible because the norm is $\mrm{SO}_d(\R)$-invariant.
    The desired functions $\vp_\e$ and $\vp_\e^+$ are then given by convolving respectively the indicator functions of $\mathscr{C}(\e/a)$ and $\mathscr{C}(a\e)$ with $\psi_\eta$.
    The invariance of $\vp_\e$ and $\vp^+_\e$ follows by right invariance of $\psi_\eta$. 
    Note that $m_{G/\Ga}(\vp_\e^{+})=m_{G/\Ga}(\mathscr{C}(a\e))$ and similarly for $\vp_\e$. Therefore, the measure estimates follow by Proposition~\ref{prop: cusp measure}.
    The Sobolev norms of the resulting functions can be bounded in terms of those of $\psi_\eta$ as follows. Let $\a$ be a multi-index and denote by $D^\a$ a differential operator of order $\ell=|\a|$ on $G$, defined using $\a$ in terms of a basis of the Lie algebra. Then, using standard properties of the convolution, we have $D^{\alpha}(\psi_{\eta}\ast\chi_{\mathscr{C}(\e)})=D^{\alpha}(\psi_{\eta})\ast\chi_{\mathscr{C}(\e)}$.
    Hence, for $x\in G/\Ga$,
    \begin{equation*}
    \left\lvert\int_{G}D^{\alpha}(\psi_{\eta})(g^{-1})\chi_{\e}(gx)\der g\right\rvert\leq\lVert\chi_{\mathscr{C}(\e)}\rVert_{\infty}\lVert D^{\alpha}(\psi_{\eta})\rVert_{1}=\lVert D^{\alpha}(\psi_{\eta})\rVert_{1}.
    \end{equation*}
    This shows that $\Scal_{\infty,\ell}(\vp_{\e})\ll \Scal_{1,\ell}(\psi_{\eta})$.
    \end{proof}


\section{The Convergence Theorem}\label{sec: convergence}

    The goal of this section is to obtain an analogue of the convergence part of Khintchine's theorem for measures whose translates become effectively equidistributed, Theorem~\ref{thm: convergence thm}.
    We note that we do not require that the measure in question is self-similar. We use the notation introduced in Section~\ref{sec: reduction}. We also use the subgroups $g_t$ and $u(\xbf)$ defined in~\eqref{eq: simultaneous parametrization}.

    \begin{theorem}[A Convergence Theorem]
    \label{thm: convergence thm}
    Let $\psi:\N \r \R_+$ be a non-increasing function.
     Suppose $\mu$ is a Borel probability measure on $\R^d$ such that $\mu$ satisfies the conclusion of Corollary~\ref{cor:SL equidist} for $\a=\emptyset$ (i.e. for $x_\a^u =\Ga\in G/\Ga$). 
     Then,
     \begin{equation*}
         \sum_{q\in \N} \psi^d(q) < \infty \Longrightarrow      \mu\left( W(\psi) \right) = 0.
     \end{equation*}
    
    \end{theorem}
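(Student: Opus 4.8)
The plan is to run a Borel–Cantelli argument in the homogeneous dynamical picture established in Section~\ref{sec: reduction}. Recall that $\xbf$ is $\psi$-approximable if and only if the orbit $g_t u(\xbf)\Ga$ makes infinitely many deep excursions into the cusp; more precisely, writing $\l(t)$ and $L(t)$ as in Lemma~\ref{lem: psi to r}, the condition~\eqref{eq: psi-approx} at integers $q$ with $2^{n}\leqslant q< 2^{n+1}$ translates into a cusp excursion of $g_{t_n} u(\xbf)\Ga$ into a neighborhood $\mathscr{C}(\e_n)$ of the cusp, where $t_n$ is chosen so that $e^{\l(t_n)} = 2^n$ and $\e_n$ is essentially $e^{-L(t_n)/(d+1)} \asymp \psi(2^n)^{d/(d+1)}\cdot 2^{-n/(d+1)}$. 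The dyadic blocking together with the monotonicity of $\psi$ shows that $\sum_q \psi^d(q) <\infty$ is equivalent to $\sum_n 2^n\psi^d(2^n) <\infty$, which in turn is equivalent to $\sum_n m_{G/\Ga}(\mathscr{C}(\e_n)) <\infty$ by Proposition~\ref{prop: cusp measure}, since $m_{G/\Ga}(\mathscr{C}(\e_n)) \asymp \e_n^{d+1} \asymp 2^n \psi^d(2^n)$. Hence it suffices to show that for $\mu$-a.e.~$\xbf$, the orbit $g_{t_n}u(\xbf)\Ga$ lies in $\mathscr{C}(\e_n)$ for only finitely many $n$.

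The core step is to estimate $\mu\{\xbf : g_{t_n}u(\xbf)\Ga \in \mathscr{C}(\e_n)\}$. First I would replace the rough indicator $\chi_{\mathscr{C}(\e_n)}$ with the smooth majorant $\vp^+_{\e_n}$ provided by Proposition~\ref{prop: approximate cusp} (with $\eta = 1$, say, and $\ell$ the integer from Corollary~\ref{cor:SL equidist}); note the Sobolev norm bound $\Scal_{\infty,\ell}(\vp^+_{\e_n}) \leqslant S_1$ is \emph{uniform} in $n$, which is the crucial feature. Applying Corollary~\ref{cor:SL equidist} with $\a=\emptyset$, $\vp = \vp^+_{\e_n}$ and $t = t_n$ gives
\begin{equation*}
  \mu\{\xbf : g_{t_n}u(\xbf)\Ga \in \mathscr{C}(\e_n)\}
  \leqslant \int \vp^+_{\e_n}(g_{t_n}u(\xbf)\Ga)\,d\mu(\xbf)
  = m_{G/\Ga}(\vp^+_{\e_n}) + O\big(S_1\, e^{-\k_\ast t_n}\big)
  \ll \e_n^{d+1} + e^{-\k_\ast t_n}.
\end{equation*}
By Corollary~\ref{cor:growth of tn}, $t_n \geqslant t_0 + n\,\tfrac{d\log 2}{d+1}$, so the error term is summable: $\sum_n e^{-\k_\ast t_n} < \infty$. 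Combined with $\sum_n \e_n^{d+1} \asymp \sum_n 2^n\psi^d(2^n) < \infty$, we conclude $\sum_n \mu\{\xbf : g_{t_n}u(\xbf)\Ga \in \mathscr{C}(\e_n)\} < \infty$, whence the Borel–Cantelli lemma finishes the argument, modulo the bookkeeping that ties $\psi$-approximability at \emph{all} integers $q$ to the events at the dyadic times $t_n$.

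The step I expect to be the main obstacle — really the only non-routine part, since the equidistribution input is precisely tailored for this — is the first reduction: carefully verifying that $\psi$-approximability of $\xbf$ is captured, up to null sets and up to adjusting $\e_n$ by bounded factors, by the events $\{g_{t_n}u(\xbf)\Ga \in \mathscr{C}(\e_n)\}$. This requires (i) translating the Diophantine inequality $\max_i|qx_i - p_i| < \psi(q)$ into the statement that the lattice $g_t u(\xbf)\Z^{d+1}$ contains a short nonzero vector, using the parametrization~\eqref{eq: simultaneous parametrization} and the definition~\eqref{defn: phi} of $d_1$; (ii) handling the fact that $q$ ranges over an interval rather than a single value, which is where monotonicity of $\psi$ and the functions $\l, L$ from Lemma~\ref{lem: psi to r} are used to show a single excursion at time $t_n$ controls the whole dyadic block $2^n \leqslant q < 2^{n+1}$; and (iii) choosing the compact-exhaustion parameter $\e_n$ correctly so that the primitivity/Minkowski considerations work out. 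This is the content of~\cite[Section 8]{KleinbockMargulis-Loglaws} adapted to our normalization, and I would follow that treatment closely, citing Lemma~\ref{lem: psi to r}, Corollary~\ref{cor:growth of tn}, and Proposition~\ref{prop: cusp measure} as needed; since we only need the convergence (easy) direction here, no quasi-independence estimates are required.
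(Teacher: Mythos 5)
Your proposal is correct and follows essentially the same route as the paper: reduce to the dyadic events, pass to a smooth majorant of a cusp indicator with a uniformly bounded Sobolev norm, apply Corollary~\ref{cor:SL equidist} at the identity coset, and sum up using Corollary~\ref{cor:growth of tn}. The only packaging difference is that the paper defines the events via the Siegel transform $v_n$ of a box $V_n$ of side $\asymp e^{-r(t_n)}$ (so $A_n = \{v_n(g_{t_n}u(\xbf)\Ga)\ge 1\}$ is an exact identity), smooths by convolving with a fixed bump $\th$, and then bounds the main term by Siegel's summation formula; you instead absorb $A_n$ into the cusp set $\mathscr{C}(\e_n)$, take the majorant $\vp^+_{\e_n}$ from Proposition~\ref{prop: approximate cusp}, and bound the main term by Proposition~\ref{prop: cusp measure}. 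These are interchangeable: $\mathscr{C}(\e)$ is exactly the superlevel set of the Siegel transform of the ball of radius $\e$, and Proposition~\ref{prop: cusp measure} is itself a Siegel-type count. Your version leans slightly more on the packaged propositions of Section~\ref{sec: reduction}, while the paper's version gives a clean exact description of $A_n$ that is reused later; for the convergence direction either works.

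One small correction to the formula you give for $\e_n$: the short-vector scale is $\e_n \asymp e^{-r(t_n)}$, not $e^{-L(t_n)/(d+1)}$. Since $(d+1)r(t_n) = L(t_n) - \l(t_n)$, one has
\begin{equation*}
  e^{-r(t_n)} = e^{-L(t_n)/(d+1)}\,e^{\l(t_n)/(d+1)} = \psi(2^n)^{d/(d+1)}\cdot 2^{+n/(d+1)},
\end{equation*}
so the sign of the exponent of $2$ in your displayed expansion is reversed. This matters at the level of the formula — $e^{-L(t_n)/(d+1)}$ alone gives $\e_n^{d+1} = \psi^d(2^n)$, dropping the crucial factor $2^n$ — but your final assertion $\e_n^{d+1}\asymp 2^n\psi^d(2^n)$ is the correct one and is what the rest of your argument actually uses, so this reads as a slip rather than a structural gap.
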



\begin{proof}
 For a lattice $\Delta$ in $\R^{d+1}$, we denote by $P(\Delta)$ the set of primitive vectors in $\Delta$.
In particular, $P(\Z^{d+1})$ consists of $v=(v_1,\dots,v_{d+1}) \in \Z^{d+1}$ such that the greatest common divisor of $v_1,\dots,v_{d+1}$ is 1.
Let $\norm{\cdot}$ denote the sup-norm on $\R^d$ and define the following sets:
\begin{equation}\label{def: A_n}
  A_n(\psi):= \left\lbrace 
    \xbf\in \R^d: \exists (\bfm{p},q)\in P(\Z^{d+1})\text{ s.t.~}
    0<q < 2^{n+1}\text{ and }
    \lVert q\xbf-\bfm{p}\rVert < \psi(2^n)
  \right\rbrace.
\end{equation}
We use $A_n$ to denote $A_n(\psi)$ for simplicity.
Then, we note that if $\xbf\in\R^d$ satisfies~\eqref{eq: psi-approx} for some $q\in [2^n, 2^{n+1})$, then monotonicity of $\psi$ implies that $\xbf\in A_n$.
Hence, we have that
\[W(\psi)\subseteq \limsup_{n\r\infty} A_n.\]
By the Borel-Cantelli Lemma, it suffices to show $\sum_{n\geq 1} \mu(A_n)<\infty$.

We shall view $\psi$ as a continuous function on $[1,\infty)$ by linearly interpolating its values at $\N$.
Let $r(t)$ and $\l(t)$ denote the functions provided by Lemma~\ref{lem: psi to r}.
For each sufficiently large $n\in \N$, we let $t_n$ be such that $e^{\l(t_n)} = 2^n$.
Note that~\eqref{eq: lamda to L} yields
\begin{equation}\label{eq: tn condensates}
  e^{-(d+1)r(t_n)} = 2^n\psi^d(2^n).
\end{equation}

For each $n\in \N$,
define $V_n \subset \R^{d+1}$ by\footnote{The extra factor of $2$ in the bound on $w_{d+1}$ ensures~\eqref{eq:A_n equal V_n}.}
\begin{equation*}
  V_n=\seti{w=(w_1,\ldots,w_{d+1})\in \R^{d+1}:  \norm{(w_1,\dots,w_d)} < e^{-r(t_n)},\quad\lvert w_{d+1}\rvert<2e^{-r(t_{n})}}. 
\end{equation*}
Denote by $\tilde{v}_n$ the indicator function of $V_n$.
Denote by $v_n$ the Siegel transform of $\tilde{v}_n$.
More precisely, $v_n$ is the function on $G/\Ga$ defined by
\begin{equation}\label{eq:Siegel transform}
  v_n(g\Z^{d+1}) = \sum_{w\in P(g\Z^{d+1})} \tilde{v}_n(w).
\end{equation}
It follows from the definitions that for sufficiently large $n\in\bN$, we have
\begin{equation}\label{eq:A_n equal V_n}
  A_n = \seti{\xbf: v_n(g_{t_n}u(\xbf)\Ga) \geq 1}.
\end{equation}

Next, we estimate the measure of $A_n$.
Denote by $2V_n$ the box obtained by scaling the side lengths of $V_n$ by $2$.
We let $v^+_n$ denote the Siegel transform of the indicator function of $2 V_n$.

As the natural representation of $G$ on $\bR^{d+1}$ is continuous and using precompactness of $V_{n}$, there is a neighbourhood $\Theta\subseteq G$ of the identity such that $\Theta V_{n}\subseteq 2V_{n}$.
We take $\th$ to be a non-negative smooth function supported in the interior of $\Theta$ and having integral $1$ with respect to the Haar measure on $G$, where the latter is normalized so that the induced measure on $G/\Ga$ is a probability measure.

Denote by $\tilde{\chi}_n$ and $\tilde{\chi}_n^+$ the indicator functions of the set of $x\in X$ such that $v_n(x) \geq 1$ and $v_n^+(x)\geq 1$ respectively.
We let $\tilde{\vp}^+_n = \th * \tilde{\chi}^+_n$ denote the convolution of $\th$ with $\tilde{\chi}^+_n$.
It follows that $\tilde{\chi}_n \leq \tilde{\vp}^+_n$.
Hence, using Corollary~\ref{cor:SL equidist}, we obtain
\begin{align*}
  \mu(A_n)&= \int \tilde{\chi}_n(g_{t_n}u(\xbf)\Ga)\;d\mu(\xbf)
  \leqslant \int \tilde{\vp}^+_n(g_{t_n}u(\xbf)\Ga)\;d\mu\\
  &\leqslant \int \tilde{\vp}^+_n \;\der m_{G/\Ga} + O(\Scal(\tilde{\vp}^+_n) e^{-\k_\ast t_n}).
\end{align*}

Note that $\Scal(\tilde{\vp}^+_n)\ll\Scal(\th)$ and the implied constant is independent of $n$; cf.~Lemma~\ref{lem:Sobolev}.
Moreover, using Fubini's theorem, the $G$-invariance of the Haar measure on $G/\Ga$, and the fact that $\th$ has integral $1$, we get
\begin{equation*}
    \int \tilde{\vp}_n^+\;\der m_{G/\Ga} =\int\tilde{\chi}_n\;\der m_{G/\Ga}.
\end{equation*}
Hence, since $\tilde{\chi}_{n}^{+}\leq\frac{1}{2}v_{n}^{+}$ by symmetry of $2V_{n}$, Siegel's summation formula~\cite[Equation 25]{Siegel-MVT} implies that
\begin{equation*}
  \int \tilde{\vp}^+_n \;\der m_{G/\Ga} \leqslant \frac{1}{2}\int v^+_n \;\der m_{G/\Ga}
  = \frac{1}{2\zeta(d+1)} \mrm{Vol} (2 V_n) = \frac{2^{d+2}\mathfrak{c}_d}{\zeta(d+1)} e^{-(d+1)r(t_n)},
\end{equation*}
where $\mathfrak{c}_d$ is the volume of the unit ball in $\R^d$.

    Let $n\in\bN$. 
    By Corollary~\ref{cor:growth of tn}, we get that $t_n \geqslant t_0+  \frac{n d \log 2}{d+1}$.
    Combined with~\eqref{eq: tn condensates}, we obtain
    \begin{equation*}
        \sum_{n\geq 1} \mu(A_n) \ll \sum_{n\geq 1} 2^n\psi^d(2^n) + e^{-\s n},
    \end{equation*}
    for some $\s>0$ and where the implied constant depends only on $d$, $\eta$, and $\psi$.
    Summability and monotonicity of $\psi$ then shows that the measures of the sets $A_n$ are summable concluding the proof.
  \end{proof}


\section{Effective Double Equidistribution}
\label{sec:double}

    In our application to Diophantine approximation, we need to apply a certain converse of the classical Borel-Cantelli (Prop.~\ref{lem: BC divergence} below).
    As a replacement for the assumption on the independence of the events, we need a decay of correlation estimate, which we deduce from our equidistribution statement. 
    The idea behind this deduction follows similar lines to~\cite[Theorem 1.2]{KleinbockShiWeiss}, where a similar deduction is carried out for translates of absolutely continuous measures.
    
For the remainder of this section, we fix a tuple $(\Fcal,\lambda)$ with self-similar measure $\mu$ and we let $\norm{\cdot}$ denote the associated norms on $\R^d$ and $\R^{d+1}$; cf.~Section~\ref{sec: IFS}.
    We will use the notation from Corollary~\ref{cor:SL equidist} and we denote by $m_{G/\Ga}$ the $G$-invariant Haar probability measure on $G/\Ga$.

    The following is the main result of this section.
    \begin{prop}\label{prop: double equidist}
    Suppose that $\mu$ satisfies the conclusion of Corollary~\ref{cor:SL equidist}.
    Assume further that $\mu$ has null overlaps (cf.~\eqref{def:null overlaps}).
    Then, there exist constants $\d,\e_\ast>0$ and $C_\ast\geq 1$ such that the following holds.
    For all non-negative functions $\vp,\psi\in \mrm{B}_{\infty,\ell}^{\infty}(G/\Ga)$ which are invariant under $\seti{k_i: i\in \L}$, and for all $t\geq s >0$, satisfying
    \begin{equation*}
        t\geqslant C_\ast s \qquad \text{or} \qquad s\leqslant t\leqslant (1+\e_\ast)s,
    \end{equation*}
    we have
    \begin{equation*}
        \int \vp\left(g_tu(\xbf)\Ga \right)\psi\left(g_s u(\xbf)\Ga \right)\;d\mu \leqslant
       \int \vp \;\mrm{d}m_{G/\Ga} \int \psi\left(g_s u(\xbf)\Ga \right)\;d\mu
       +O\big( \Scal_{\infty,\ell}(\vp)\Scal_{\infty,\ell}(\psi) e^{- \d |t-s|}\big).
    \end{equation*}
    \end{prop}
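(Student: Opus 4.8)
The idea is to exploit the self-similarity of $\mu$ to ``factor out'' the behaviour at scale $s$ from the behaviour at scale $t$. Fix $t\geq s>0$. Using the relation~\eqref{eq: iterated P_l} (equivalently Lemma~\ref{lem: transformation of self-similar measures}, which applies since $\mu$ has null overlaps), decompose
\begin{equation*}
    \int \vp(g_tu(\xbf)\Ga)\psi(g_su(\xbf)\Ga)\;d\mu(\xbf)
    = \sum_{\w\in\L^k}\l_\w \int \vp\big(g_tu(f_\w(\xbf))\Ga\big)\psi\big(g_su(f_\w(\xbf))\Ga\big)\;d\mu(\xbf),
\end{equation*}
where $k=k(\w)$ will be chosen, for each $\w$, so that the diameter of $f_\w(\mrm{supp}\,\mu)$ after expansion by $g_s$ is of order $1$; concretely $k$ should satisfy $\rho_\w \asymp e^{-(d+1)s/d}$, which is possible by choosing the truncation of the coding depending on the word, using that $\rho_{\w i}/\rho_\w\in[\rho_{\min},\rho_{\max}]$. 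Since $\psi$ is Lipschitz and $g_s$ expands the horospherical direction by $e^{(d+1)s/d}$, on each piece $f_\w(\mrm{supp}\,\mu)$ the factor $\psi(g_su(f_\w(\xbf))\Ga)$ is within $O(\rho_\w e^{(d+1)s/d}\mrm{Lip}(\psi))=O(\mrm{Lip}(\psi))$ of a constant, namely its value at the ``center'' $b_\w=f_\w(\mathbf 0)$; but we will in fact not need pointwise constancy — instead we keep $\psi(g_su(f_\w(\xbf))\Ga)$ as a weight and estimate the $\vp$-average.

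The main step is to apply Corollary~\ref{cor:SL equidist} to each inner integral after a change of variable. Write $f_\w(\xbf)=\rho_\w O_\w\xbf+b_\w$, so that $u(f_\w(\xbf))=u(b_\w)u(\rho_\w O_\w\xbf)$ and, conjugating, $g_tu(f_\w(\xbf))=g_tu(b_\w)k_\w^{-1}\cdot k_\w u(\rho_\w O_\w\xbf)k_\w^{-1}\cdot k_\w$. A direct computation (as in~\eqref{eq:SimpleRelIdentityCosBasepoint}) shows
\begin{equation*}
    g_tu(f_\w(\xbf))\Ga = g_{t-t_\w}'\,u(\xbf)\,x_\w^{\mrm{u}}\cdot(\text{bounded }k_\w\text{ terms}),
\end{equation*}
where $t_\w=\frac{d}{d+1}\log\rho_\w<0$ and $x_\w^{\mrm{u}}=k_\w^{-1}g_{-t_\w}u(b_\w)\Ga$ is exactly the basepoint in~\eqref{eq:unimodular-rep}. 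Thus the inner $\vp$-integral is an integral of $\vp$ against the $g_{t-t_\w}$-translate of $\mu$ anchored at $x_\w^{\mrm{u}}$ — precisely what~\eqref{eq:eff equidist} controls. Since $t_\w<0$ with $|t_\w|\asymp s$, the effective time is $t-t_\w\asymp t+s\gtrsim t$, and the error in Corollary~\ref{cor:SL equidist} is $O(\rho_\w^{-A_\ast}\Scal_{\infty,\ell}(\vp)e^{-\k_\ast(t-t_\w)})=O(\rho_\w^{-A_\ast}\Scal_{\infty,\ell}(\vp)e^{-\k_\ast t}\rho_\w^{-\k_\ast d/(d+1)})$; since $\rho_\w^{-1}\asymp e^{(d+1)s/d}$, this is $O(\Scal_{\infty,\ell}(\vp)e^{-\k_\ast t}e^{Bs})$ for an explicit $B=B(A_\ast,d,\rho_{\min},\rho_{\max})$. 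Hence, summing over $\w\in\L^k$ and using that the $\psi$-weights are bounded, we obtain
\begin{equation*}
    \int\vp(g_tu(\xbf)\Ga)\psi(g_su(\xbf)\Ga)\;d\mu
    = \Big(\int\vp\;dm_{G/\Ga}\Big)\sum_{\w}\l_\w\!\int\!\psi(g_su(f_\w(\xbf))\Ga)\;d\mu
    + O\big(\Scal_{\infty,\ell}(\vp)\,e^{-\k_\ast t}e^{Bs}\big),
\end{equation*}
and the main term's $\w$-sum reassembles, by~\eqref{eq: iterated P_l} again, to $\int\psi(g_su(\xbf)\Ga)\;d\mu$ exactly. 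This produces the claimed inequality with $\d$ chosen so that $\k_\ast t-Bs\geq \d(t-s)$; a short computation shows this holds (for suitable $\d,\e_\ast,C_\ast$) precisely in the two regimes $t\geq C_\ast s$ (where $t$ dominates) and $s\leq t\leq(1+\e_\ast)s$ (where $t-s$ is tiny so the constraint is weak, using also $e^{-\k_\ast t}e^{Bs}\leq e^{-(\k_\ast-B\e_\ast) s}\cdot e^{(B+\k_\ast)\e_\ast s}$-type bookkeeping together with $e^{-\d|t-s|}\geq e^{-\d\e_\ast s}$ — here one needs $|t-s|$ comparable to, or controlled by, the available decay). The factor $\Scal_{\infty,\ell}(\psi)$ enters through the bound $\|\psi\|_\infty\ll\Scal_{\infty,\ell}(\psi)$ on the weights and through replacing $\mrm{Lip}(\psi)$-type errors if one needs to also handle a genuinely varying $\psi$ via a further thickening.

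The main obstacle is the bookkeeping of the two error exponents in the intermediate regime $s\leq t\leq(1+\e_\ast)s$: there the ``effective time'' $t-t_\w\asymp s$ gives decay $e^{-\k_\ast s}$, while the blow-up factor $\rho_\w^{-A_\ast}\asymp e^{(d+1)A_\ast s/d}$ from the non-uniformity of Corollary~\ref{cor:SL equidist} over basepoints is of the \emph{same} order in $s$, so naively the error is $O(1)$ and useless. The resolution — and the reason the hypothesis is split into exactly these two regimes (as explained in the paper's outline) — is that one cannot take $k$ as large as $s$; instead one truncates at a \emph{bounded} depth $k$ independent of $s$ in the close-by regime, so that $\rho_\w\gg 1$ is bounded below and the blow-up factor $\rho_\w^{-A_\ast}$ is $O(1)$, at the cost of only gaining decay $e^{-\k_\ast(t-s)}\asymp e^{-\k_\ast|t-s|}$ rather than $e^{-\k_\ast t}$; this is exactly the form of the claimed estimate. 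In the far regime $t\geq C_\ast s$ one can afford $k\asymp s$ because then $\k_\ast t$ beats $Bs$ with room to spare. Making the choice of $k=k(\w)$ (and the resulting partition of the coding tree into a ``stopping set'') precise, and checking the Lipschitz/thickening estimates for $\psi$ survive on each piece, is the remaining technical work; everything else is an application of Corollary~\ref{cor:SL equidist}, Lemma~\ref{lem: transformation of self-similar measures}, and Lemma~\ref{lem:Sobolev}.
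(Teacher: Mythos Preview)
Your long-range strategy is on the right track but has two concrete problems. First, a sign issue: with $t_\w=\tfrac{d}{d+1}\log\rho_\w<0$, the change of basepoint gives $g_tu(f_\w(\xbf))\Ga=k_\w g_{t+t_\w}u(\xbf)x_\w^{\mrm{u}}$, so the effective time is $t+t_\w=t-|t_\w|$, \emph{smaller} than $t$, not $t-t_\w\asymp t+s$ as you wrote. Second, and more seriously, your choice of depth $\rho_\w\asymp e^{-(d+1)s/d}$ only makes $\psi(g_su(f_\w(\xbf))\Ga)$ constant on each piece up to $O(\Scal_{\infty,\ell}(\psi))$ --- a bounded error, not a decaying one. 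The suggestion to ``keep $\psi$ as a weight'' does not help: Corollary~\ref{cor:SL equidist} applies to $\int\vp(\cdots)\,d\mu$, not to $\int\vp(\cdots)\psi(\cdots)\,d\mu$, so you cannot extract the factor $\int\vp\,dm_{G/\Ga}$ without first replacing $\psi$ by a constant, and that replacement costs $O(\Scal_{\infty,\ell}(\vp)\Scal_{\infty,\ell}(\psi))$ with no decay. The paper fixes this by choosing the stopping scale at an \emph{intermediate} level $w=\tfrac{\k_\sharp t+s}{1+\k_\sharp+A_\ast}$ strictly between $s$ and $t$; then the $\psi$-constancy error is $O(e^{s-w})$ and the equidistribution error is $O(e^{A_\ast w-\k_\sharp(t-w)})$, and the balance yields the required $e^{-\d(t-s)}$ precisely when $t\geq C_\ast s$.

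Your intermediate-range argument (bounded depth $k$) does not produce the claimed $e^{-\k_\ast(t-s)}$ decay: with $k$ bounded you simply have the original integral, and Corollary~\ref{cor:SL equidist} applied to $\vp$ alone gives decay in $t$, not in $t-s$, while the $\psi$-weight issue is even worse than before. The paper takes a completely different route here, \emph{not} using self-similarity at all: one writes the integrand as $\Phi(g_su(\xbf)\Ga)$ with $\Phi(x)=\psi(x)\big(\vp(g_{t-s}x)-\int\vp\,dm_{G/\Ga}\big)$, applies Corollary~\ref{cor:SL equidist} at time $s$ (paying $\Scal_{\infty,\ell}(\Phi)\ll e^{\ell_1(t-s)}\Scal_{\infty,\ell}(\vp)\Scal_{\infty,\ell}(\psi)$), and then bounds $\int\Phi\,dm_{G/\Ga}$ using quantitative mixing (Proposition~\ref{prop:uniformspectralgap}), which gives $O(e^{-\gamma(t-s)})$. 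Both ingredients --- the Sobolev blow-up and the mixing estimate --- are essential and absent from your sketch.
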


    \subsection{Proof of Proposition~\ref{prop: double equidist} for long range correlations}
    
     This subsection is dedicated to the proof under the assumption $t\geq C_\ast s$ for a suitable $C_\ast\geq 1$. The other case is handled in the next subsection and its proof is much simpler. We remark however that both cases require the effective equidistribution hypothesis.

    To handle the case where the contraction ratios of the IFS are not all the same, we need the notion of complete prefix sets.
    We say $\a\in \L^k$ is a \textbf{prefix} of $\w =(\w_i)_i\in\L^\N$, if $\w = (\a,T^{k}\w)$, where $T:\L^{\bN}\to\L^{\bN}$ is the shift-map given by $(T\w)_{i}=\w_{i+1}$ for $i\in\bN$.
    We say $\a$ is a prefix of a finite word $\w$ if $\w$ is strictly longer than $\a$ and $\w$ is obtained from $\a$ by concatenating a finite word to the end of $\a$.
    Following~\cite{KleinbockLindenstraussWeiss}, we make the following definition.

	\begin{definition}\label{def: prefix}
	We say a finite set $P\subset \L^\ast$ is a \textbf{complete prefix set} if for every $\w\in \L^\N$, there is a unique word $\a\in P$ which occurs as a prefix for $\w$.
	\end{definition}

    Given $0<\e<1$, one can find a complete prefix set $P(\e)$ such that every word $\a\in P(\e)$ satisfies
    \begin{equation}\label{eq:tight ratios}
        \e\rho_{\min} \leq \rho_\a < \e.
    \end{equation}
    For example, $P(\e)$ can be chosen as follows:
    \begin{equation}\label{eq: complete prefix for epsilon}
         P(\e) = \seti{\a\in\L^{\ast}: \a \text{ satisfies~\eqref{eq:tight ratios} and no prefix of } \a \text{ satisfies~\eqref{eq:tight ratios}}}.
    \end{equation}
     One then checks that the sets $P(\e)$ chosen as above are complete prefix sets. We use those sets through the following lemma.
    
    \begin{lem}\label{lem: comp prefix}
    Assume $\mu$ has null overlaps and let $P$ be a complete prefix set. Then, for every continuous function $f$ on $\R^d$,
    \begin{equation*}
        \int f d\mu = \sum_{\a\in P} \int_{\Kcal_\a} f d\mu.
    \end{equation*}
    \end{lem}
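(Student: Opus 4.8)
The plan is to decompose the attractor $\Kcal$ according to the complete prefix set $P$ and use the null overlaps hypothesis to see that the pieces $\Kcal_\a$, $\a\in P$, tile $\Kcal$ up to a $\mu$-null set. First I would observe that the defining property of a complete prefix set, together with the coding map $\pi\colon \L^\N\to\Kcal$ from Lemma~\ref{lem:Lebesgue-density}, gives a partition of $\L^\N$ into cylinders: for every $\w\in\L^\N$ there is a unique $\a\in P$ that is a prefix of $\w$, so $\L^\N = \bigsqcup_{\a\in P}\Sigma_\a$ (disjoint union of cylinder sets). Pushing this forward under $\pi$ and using $\mu=\pi_\ast\l^\N$, one gets $\Kcal = \bigcup_{\a\in P}\Kcal_\a$, but the union is only essentially disjoint: distinct $\Kcal_\a,\Kcal_\b$ (with $\a,\b\in P$) can overlap. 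However, by self-similarity and the null overlaps property recorded in Section~\ref{sec:osc} (the remark following~\eqref{def:null overlaps}), $\mu(\Kcal_\a\cap\Kcal_\b)=0$ whenever $\a\neq\b$ are incomparable words; and since $P$ is a prefix set no element is a prefix of another, so any two distinct $\a,\b\in P$ are incomparable.

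Concretely, I would argue as follows. Since $\pi$ is Borel and $\mu=\pi_\ast\l^\N$, for any continuous (hence bounded Borel) $f$ we have $\int_{\Kcal_\a} f\,d\mu = \int_{\pi^{-1}(\Kcal_\a)} f\circ\pi\,d\l^\N$. By the null overlaps hypothesis and Lemma~\ref{lem: transformation of self-similar measures}, $\pi^{-1}(\Kcal_\a)$ differs from the cylinder $\Sigma_\a$ by a $\l^\N$-null set (this is exactly the symmetric-difference statement used in the proof of Lemma~\ref{lem:Lebesgue-density}). Therefore $\int_{\Kcal_\a} f\,d\mu = \int_{\Sigma_\a} f\circ\pi\,d\l^\N$. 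Summing over $\a\in P$ and using $\L^\N=\bigsqcup_{\a\in P}\Sigma_\a$ together with countable (here finite) additivity of $\l^\N$ gives
\[
\sum_{\a\in P}\int_{\Kcal_\a} f\,d\mu = \sum_{\a\in P}\int_{\Sigma_\a} f\circ\pi\,d\l^\N = \int_{\L^\N} f\circ\pi\,d\l^\N = \int_{\Kcal} f\,d\mu = \int f\,d\mu,
\]
which is the claim. Since $P$ is finite by definition of a complete prefix set, there are no convergence subtleties.

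The only point requiring a little care — and the one I would flag as the "main obstacle," though it is quite mild — is making the reduction $\pi^{-1}(\Kcal_\a)\triangle\Sigma_\a$ is $\l^\N$-null precise for words $\a$ of varying lengths, i.e.\ checking that the argument given in the proof of Lemma~\ref{lem:Lebesgue-density} for single symbols extends to arbitrary finite words in $P$. This follows because $\pi^{-1}(\Kcal_\a)\supseteq\Sigma_\a$ always, and a point of $\pi^{-1}(\Kcal_\a)\setminus\Sigma_\a$ lies in some cylinder $\Sigma_\b$ with $\b\in\L^{|\a|}$, $\b\neq\a$, and $\pi(\Sigma_\b)\cap\Kcal_\a\neq\emptyset$; the null overlaps property (in the form $\mu(\Kcal_\a\cap\Kcal_\b)=0$ for $\a\neq\b\in\L^{|\a|}$) then forces $\l^\N(\Sigma_\b\cap\pi^{-1}(\Kcal_\a))=\mu(\Kcal_\b\cap\Kcal_\a)=0$, and summing over the finitely many such $\b$ finishes it. Alternatively one can bypass the coding map entirely and induct on $\max_{\a\in P}|\a|$, using~\eqref{eq: iterated P_l} and Lemma~\ref{lem: transformation of self-similar measures} directly, which may be the cleaner write-up.
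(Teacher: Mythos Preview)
Your proof is correct and follows essentially the same idea as the paper: the collection $\{\Kcal_\a:\a\in P\}$ forms a measurable partition of $\Kcal$ under the null overlaps hypothesis, so the integral decomposes. The paper dispatches this in a single sentence, whereas you unpack the measurable-partition claim in detail via the coding map $\pi$; your careful check that $\mu(\Kcal_\a\cap\Kcal_\b)=0$ for incomparable $\a,\b$ of possibly different lengths is exactly the point the paper's terse proof leaves implicit.
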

    \begin{proof}
    Since $\mu$ has null overlaps, the collection $\seti{\Kcal_\a: \a\in P}$ forms a measurable partition of the support of $\mu$. The lemma follows readily.
    \end{proof}

    Finally, we need the following version of the mean value theorem.
    \begin{lem}\label{lem:first derivative}
    Let $\psi \in \mrm{C}^1(G/\Ga)$. Then, for all $x\in G/\Ga$ and $\vbf\in\R^d$,
    \begin{equation*}
        \left|\psi(u(\vbf)x) - \psi(x)  \right| \ll \Scal_{\infty,1}(\psi)\lVert\vbf\rVert.
    \end{equation*}
    \end{lem}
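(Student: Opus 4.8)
The plan is to reduce the bound to the fundamental theorem of calculus applied along the curve $s\mapsto u(s\vbf)x$, together with the Sobolev embedding estimate in Lemma~\ref{lem:Sobolev}(\ref{item:Sobolev embedding diff}). First I would write $\vbf = (v_1,\dots,v_d)$ and recall that $u(\vbf) = \exp(Z_\vbf)$, where $Z_\vbf = \sum_{j=1}^d v_j E_{j,d+1}$ is the element of the Lie algebra of the horospherical subgroup $U = \{u(\xbf):\xbf\in\R^d\}$ corresponding to $\vbf$ (here $E_{j,d+1}$ is the standard elementary matrix). Then the function $h(s) = \psi(\exp(sZ_\vbf)x)$ is $\mrm{C}^1$ on $[0,1]$ with $h(0)=\psi(x)$ and $h(1)=\psi(u(\vbf)x)$, and its derivative is $h'(s) = (Z_\vbf\psi)(\exp(sZ_\vbf)x)$, where $Z_\vbf$ is viewed as a left-invariant differential operator on $G/\Ga$. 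By the mean value theorem (or just integrating $h'$), $|\psi(u(\vbf)x)-\psi(x)| \le \sup_{0\le s\le 1}|(Z_\vbf\psi)(\exp(sZ_\vbf)x)| \le \norm{Z_\vbf\psi}_\infty$.

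Next I would bound $\norm{Z_\vbf\psi}_\infty$. Since $Z_\vbf$ depends linearly on $\vbf$, we have $\norm{Z_\vbf} \ll \norm{\vbf}$ for any fixed choice of norm on the Lie algebra (the implied constant absorbing the change of basis between the $E_{j,d+1}$ and the fixed basis $\mathcal{G}_d$). Applying Lemma~\ref{lem:Sobolev}(\ref{item:Sobolev embedding diff}) with $\mathcal{D} = Z_\vbf$ gives $\norm{Z_\vbf\psi}_\infty \ll \norm{Z_\vbf}\,\Scal_{\infty,1}(\psi) \ll \Scal_{\infty,1}(\psi)\norm{\vbf}$, which is exactly the claimed estimate. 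Strictly speaking, if $\vbf = 0$ the statement is trivial, so one may assume $\vbf\ne 0$; and one should note the estimate is uniform in $x\in G/\Ga$ because the bound $\norm{Z_\vbf\psi}_\infty$ is a sup over all of $G/\Ga$.

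I do not expect a genuine obstacle here; the only point requiring a word of care is the identification of the directional derivative of $\psi$ along the one-parameter subgroup $s\mapsto\exp(sZ_\vbf)$ with the action of $Z_\vbf$ as one of the invariant differential operators controlled by $\Scal_{\infty,1}$ — this is immediate from the definition of the $\Scal_{p,\ell}$ norms in~\eqref{eq:SobolevNorms} once we observe that the monomials of degree $\le 1$ in $\mathcal{G}_d$ span the Lie algebra, so $Z_\vbf$ is a linear combination of elements of $\Xfrak_1$ with coefficients of size $O(\norm{\vbf})$. Everything else is the one-variable fundamental theorem of calculus.
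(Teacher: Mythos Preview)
Your proof is correct and essentially identical to the paper's: both exponentiate $\vbf$ to a Lie algebra element $Z_\vbf$ (the paper calls it $X_\vbf$), integrate the derivative along $s\mapsto u(s\vbf)x$ to bound $|\psi(u(\vbf)x)-\psi(x)|$ by $\norm{Z_\vbf\psi}_\infty$, and then invoke Lemma~\ref{lem:Sobolev}(\ref{item:Sobolev embedding diff}) together with $\norm{Z_\vbf}\ll\norm{\vbf}$.
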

    \begin{proof}
    Let $X_\vbf \in \mrm{Lie}(G)$ be such that $u(\vbf)=\exp(X_\vbf)$. Then, $\lVert X_\vbf\rVert\ll \lVert\vbf\rVert$. Viewing $X_\vbf$ as a differential operator, we have $X_\vbf\psi(x) = \lim_{t\to 0} (\psi(u(t\vbf)x)-\psi(x))/t$. It follows that
    \begin{equation*}
       \left|\psi(u(\vbf)x) - \psi(x)  \right| = \left|\int_0^1 X_\vbf\psi(u(t\vbf)x)\;dt\right| \leq \norm{X_\vbf\psi}_\infty \ll \Scal_{\infty,1}(\psi)\lVert\vbf\rVert,
    \end{equation*}
    where the last inequality follows by Lemma~\ref{lem:Sobolev}. 
    \end{proof}

    Let $\k_\ast,\ell$ and $A_\ast$ be the constants provided by Corollary~\ref{cor:SL equidist}.
    Let $\k_\sharp = d\k_\ast/(d+1)$.
    Define 
    \begin{equation*}
         w = \frac{\k_\sharp t+ s}{1+ \k_\sharp+A_\ast}.
    \end{equation*}
     To simplify notation, for $g\in G$, we use $\vp(g)$ to denote $\vp(g\Ga)$. We further let
    \begin{equation*}
        \xi_t := g_{dt/(d+1)} = \begin{pmatrix} e^{t/(d+1)} \mrm{Id}_d & \bfm{0}\\ \bfm{0} &  e^{-dt/(d+1)}\end{pmatrix}, 
        \qquad \Scal:=\Scal_{\infty,\ell}.
    \end{equation*}
    Let $\rho_{\min} = \min\seti{\rho_i: i\in\L}$. Let $P:=P(e^{-w})$ be the  complete prefix set defined in~\eqref{eq: complete prefix for epsilon} with $\e=e^{-w}$.
    In what follows, we make repeated use of the fact that the norm $\Scal$ dominates the supremum norms and the Lipschitz constants of the functions $\vp$ and $\psi$.

    Recall that $b_\a = f_\a(\mathbf{0})$. 
    Note further that $\norm{f_\a(\xbf)-f_\a(\ybf)} =\rho_\a\norm{\xbf-\ybf} $ for all $\xbf,\ybf\in\R^d$ and $\a\in\L^\ast$.
    Let $K=\sup_{\xbf\in\Kcal} \norm{\xbf}$.
    By Lemma~\ref{lem:first derivative}, for each $\a \in P$ and for every $\xbf\in \Kcal_\a$,
    \begin{equation}\label{eq: psi almost constant}
      \begin{aligned}
        \left| \psi(\xi_su(\xbf))- \psi(\xi_su(b_\a)) \right|&\ll
        \Scal(\psi) \norm{e^s(\xbf-b_\a) }
        =
        \Scal(\psi) e^s\rho_\a \norm{f_\a^{-1}(\xbf)-\mathbf{0}}\\
        &\leqslant K\Scal(\psi) e^s \rho_\a \ll \Scal(\psi) e^{s-w},
      \end{aligned}
    \end{equation}
    where we used that $f_\a^{-1}(\xbf)\in\Kcal$ since $\xbf\in \Kcal_\a$.
    It follows that
    \begin{align} \label{eq: remove psi}
      \left|   \int_{\Kcal_\a} \vp\left(\xi_tu(\xbf) \right)\psi\left(\xi_s u(\xbf) \right)d\mu(\xbf) -
        \int_{\Kcal_\a} \vp\left(\xi_tu(\xbf) \right)\psi\left(\xi_s u(b_\a) \right)d\mu(\xbf)   \right| \ll \Scal(\vp) \Scal(\psi)e^{s-w}\mu(\Kcal_\a).
    \end{align}
    
    Next, we note that the definition of $f_\a$ and $O_\a$ in~\eqref{eq: composition parameters} implies that $f_\a^{-1}(\xbf) = \rho_\a^{-1}O_\a^{-1}(\xbf-b_\a)$.
    Moreover,
    for every $\mathbf{y}\in \R^d$, we have
    $ k_\a^{-1} u(\mathbf{y}) k_{\a} = u(O_\a^{-1} \mathbf{y})$.
    It follows that
    \begin{align*}
        k_\a^{-1} \xi_t u(\xbf) = \xi_{t+\log \rho_\a} \xi_{-\log\rho_\a} k_\a^{-1} u(\xbf-b_\a)u(b_\a)
        = \xi_{t+\log \rho_\a} u(f_\a^{-1}(\xbf))
        \underbrace{k_\a^{-1} \xi_{-\log\rho_\a} u(b_\a)}_{h_\a}.
    \end{align*}
    We let $h_\a = k_\a^{-1} \xi_{-\log\rho_\a} u(b_\a)$. Lemma~\ref{lem: transformation of self-similar measures} and the invariance of $\vp$ by $k_\a$ imply
    \begin{align}\label{eq: transform measure}
        \int_{\Kcal_\a} \vp(\xi_tu(\xbf))d\mu
        = \int_{\Kcal_\a} \vp(\xi_{t+\log \rho_\a} u(f_\a^{-1}(\xbf)) h_\a )d\mu
        = \mu(\Kcal_\a) \int \vp(\xi_{t+\log \rho_\a}u(\xbf)h_\a)d\mu.
    \end{align}
    Recall we are assuming $\mu$ satisfies the conclusion of Corollary~\ref{cor:SL equidist}.
    Hence, we obtain
    \begin{align}\label{eq: phi equidistributed}
       \int \vp(\xi_{t+\log \rho_\a}u(\xbf)h_\a)d\mu 
        &= \int \vp\;dm_{G/\Ga}+
        O\left(\rho_\a^{-A_\ast} \Scal(\vp) e^{-\k_\sharp(t+\log \rho_\a)} \right) \nonumber\\
        &= \int \vp\;dm_{G/\Ga}+  O\left(\Scal(\vp) e^{A_\ast w-\k_\sharp(t-w)} \right).
    \end{align}
   To combine the above estimates, we note that
    Lemma~\ref{lem: comp prefix} implies 
    \[
    \int \vp\left(\xi_tu(\xbf) \right)\psi\left(\xi_s u(\xbf) \right)\;d\mu
        = \sum_{\a\in P} \int_{\Kcal_\a} \vp\left(\xi_tu(\xbf) \right)\psi\left(\xi_s u(\xbf) \right)\;d\mu
    \]
    Hence, using that $\psi\geq 0$, we obtain
    \begin{align*}
         \int \vp&\left(\xi_tu(\xbf) \right)\psi\left(\xi_s u(\xbf) \right)\;d\mu
         =
         \sum_{\a\in P} \int_{\Kcal_\a} \vp\left(\xi_tu(\xbf) \right)\psi\left(\xi_s u(\xbf) \right)\;d\mu
         \\
         & \overset{\eqref{eq: remove psi}}{\leqslant}
         \sum_{\a\in P}\psi\left(\xi_s u(b_\a) \right)
         \int_{\Kcal_\a} \vp\left(\xi_tu(\xbf) \right)d\mu + O\left(\Scal(\vp) \Scal(\psi)\left(
        e^{s-w}\right)  \right) \\
        &\overset{\eqref{eq: phi equidistributed}}{\leqslant }
        \left( 
        \int \vp\;dm_{G/\Ga}+  O\left(\Scal(\vp) e^{A_\ast w-\k_\sharp(t-w)}   \right)        \right)
        \sum_{\a\in P}\psi\left(\xi_s u(b_\a) \right) \mu(\Kcal_\a)
         + O\left(\Scal(\vp) \Scal(\psi)\left(
        e^{s-w}\right)  \right)
        \\
        &\leqslant
        \int \vp\;dm_{G/\Ga} 
        \sum_{\a\in P}\psi\left(\xi_s u(b_\a) \right) \mu(\Kcal_\a)
       + O\left(\Scal(\vp) \Scal(\psi)\left(
        e^{s-w} +e^{A_\ast w-\k_\sharp(t-w)} \right)  \right).
  \end{align*}
    Define $\d$ and $C_\ast$ by
    \begin{equation}\label{eq: rate of double} \d=\frac{\k_\sharp}{2(1+\k_\sharp+A_\ast)}, \qquad C_\ast = \frac{2A_\ast+\k_\sharp}{\k_\sharp}.
    \end{equation}
    Suppose $t\geq C_\ast s$.
    Then, our choices of $\d$ and $w$ imply that
    \begin{equation*}
        s-w 
        \leq -\d(t-s).
    \end{equation*}
    It follows that
    \begin{equation*}
       \int \vp\left(\xi_tu(\xbf) \right)\psi\left(\xi_s u(\xbf) \right)\;d\mu
       \leqslant
        \int \vp\;dm_{G/\Ga} 
        \sum_{\a\in P}\psi\left(\xi_s u(b_\a) \right) \mu(\Kcal_\a)
       + O\left(\Scal(\vp) \Scal(\psi)
        e^{-\d (t-s)} \right) . 
    \end{equation*}
    To conclude the proof, we note that~\eqref{eq: psi almost constant} implies that
    \begin{equation*}
        \psi(\xi_s u(b_\a)) \leqslant  \frac{1}{\mu(\Kcal_\a)}\int_{\Kcal_\a} \psi(\xi_s u(\xbf))\;d\mu(\xbf) + O\left(\Scal(\psi) e^{s-w} \right),
    \end{equation*}
    for all $\a\in P$.
    Combined with the fact that $s-w\leq -\d(t-s)$ and $\vp\geq 0$, this implies that
    \begin{equation*}
        \int \vp\;dm_{G/\Ga} 
        \sum_{\a\in P}\psi\left(\xi_s u(b_\a) \right) \mu(\Kcal_\a)
        \leqslant
        \int \vp\;dm_{G/\Ga} 
       \int \psi\left(\xi_s u(\xbf) \right) \;d\mu 
       + O\left(\Scal(\vp) \Scal(\psi) e^{-\d (t-s)} \right).
    \end{equation*}
    This concludes the proof.
    
    \subsection{Decay of intermediate range correlations}

   We retain the notation of the previous subsection and
    let 
    \begin{equation*}
        \g = \frac{(d+1)\k'}{2d}, \qquad \ell_{1}=\ell(d+1)/d,
    \end{equation*}
    where $\k'$ is given in~\eqref{eq:kappa-prime}.

    Define a function $\Phi$ on $G/\Ga$ by
    \begin{equation*}
        \Phi(x) =\psi(x)\bigg( \vp(g_{t-s} x) - \int \vp\;dm_{G/\Ga}\bigg).
    \end{equation*}
    Then, we have
    \begin{equation*}
        \int \vp\left(g_tu(\xbf) \right)\psi\left(g_s u(\xbf) \right)\;d\mu(\xbf) = \int \vp\;dm_{G/\Ga}\int\psi(g_su(\xbf))\;d\mu(\xbf)
        + \int\Phi(g_s u(\xbf)\Ga)\;d\mu(\xbf).
    \end{equation*}
    Since $\mu$ satisfies Corollary~\ref{cor:SL equidist}, we get
    \begin{equation*}
         \int\Phi(g_s u(\xbf)\Ga)\;d\mu(\xbf) = 
         \int \Phi\;dm_{G/\Ga}+
         O\left( \Scal(\Phi) e^{-\k_\ast s}\right).
    \end{equation*}
    By properties of the Sobolev norm, Lemma~\ref{lem:Sobolev}, we get
    \begin{equation*}
        \Scal(\Phi) \ll \Scal(\psi) \Scal(\vp) e^{\ell_{1}(t-s)}.
      \end{equation*}
    Recall that $G/\Ga$ is isomorphic to $X_\infty(1)$ by Lemma~\ref{lem:identification-space-of-lattices}. 
    Hence, we may apply bounds on matrix coefficients provided by Proposition~\ref{prop:uniformspectralgap} to get
    \begin{align*}
        \int \Phi\;dm_{G/\Ga} &= \int \psi\cdot \bigg( \vp\circ g_{t-s}  - \int \vp\;dm_{G/\Ga}\bigg) \;dm_{G/\Ga}\\
        &= \int  \psi \cdot \left(\vp\circ g_{t-s} \right) \;dm_{G/\Ga} -
        \int \vp\;dm_{G/\Ga} \int  \psi\;dm_{G/\Ga}= 
         O\left( \Scal (\vp)\Scal(\psi) e^{-\g (t-s)} \right),
    \end{align*}
    where we applied the proposition with $\e=\k'/2$.
    Letting $\e_\ast$ be given by
    \begin{equation*}
      \e_\ast = \frac{\k_\sharp}{\ell_1 +\g},
    \end{equation*}
    we obtain a decay rate of $e^{-\g(t-s)}$ whenever $s\leq t\leq (1+\e_\ast)s$ as desired.


\section{A Converse to Borel-Cantelli's Lemma}
\label{sec:BorelCantelli}

    In this section, we obtain a generalization of the Borel-Cantelli Lemma; Proposition~\ref{lem: BC divergence}.
    This result allows us to overcome the lack of strong independence estimates for all pairs of times $t$ and $s$ in Proposition~\ref{prop: double equidist}.

    \begin{prop}
    \label{lem: BC divergence}
     Suppose $E_n$ is a sequence of measurable sets in a probability space $(X,\mu)$.
     Assume that there are constants $D\geq 1$, $0<\s<1$ and $0<a\leq 1/\s$ such that
    \begin{enumerate}
        
        \item \label{item:divergent sum} $\mu(E_n)>0$ for all $n \gg 1$ and $\sum_{n\in \N} \mu(E_n)=\infty$.
        \item \label{item:long range independence} There exist constants $C_\ast,C_\# \geq 1$ and $\e_\ast>0$, such that for all $m,n\in \N$, with $ m \gg 1$ and satisfying 
        \begin{equation*}
            n\geqslant C_\ast m \qquad \text{or}\qquad  m\leqslant n\leqslant (1+\e_\ast)m,
        \end{equation*}
        we have 
        \begin{equation*}
            \mu(E_m\cap E_n)\leqslant C_\# \mu(E_m)\mu(E_n)+ D\left( e^{-\s m}\mu(E_n)+ e^{-\s (n-m)}\right).
        \end{equation*}
        \item \label{item:short range independence} For all $m,n\in\N$ with $1\ll m\leq  n$,
        \[\mu(E_m\cap E_n) \leq D \mu(E_m) \max\seti{\mu(E_n)^\s, 2^{-\s(n-m)}}.\]
        \item \label{item:weak monotone} For all $m ,n\in \N$ with $1\ll m\leq  n\leq m+\lceil-a\log \mu(E_m)\rceil$,
        \begin{equation*}
            \mu(E_n)\leq  D\mu(E_m)^\s.
        \end{equation*}

    \end{enumerate}
    Then, $ \mu(\limsup E_n) \geqslant 1/C_\#$. 
    \end{prop}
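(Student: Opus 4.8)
The plan is to follow the classical second-moment (Paley–Zygmund / Chung–Erdős) strategy, but carefully splitting the double sum $\sum_{m,n\le N}\mu(E_m\cap E_n)$ according to the ranges where our correlation hypotheses apply, and using hypothesis~\eqref{item:weak monotone} to control the diagonal-ish band that none of the correlation estimates cover. First I would fix a large $N$ and set $S_N=\sum_{n\le N}\mu(E_n)$; the goal is the standard bound
\begin{equation*}
    \mu\Big(\bigcup_{n\le N} E_n\Big)\ \geq\ \frac{S_N^2}{\sum_{m,n\le N}\mu(E_m\cap E_n)},
\end{equation*}
which follows from Cauchy–Schwarz applied to $\mathbbm{1}_{\bigcup E_n}\cdot\sum \mathbbm{1}_{E_n}$. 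Then $\mu(\limsup E_n)=\lim_N \mu\big(\bigcup_{n\ge N}E_n\big)$ (applying the argument to the tail, which still has divergent sum by~\eqref{item:divergent sum}), so it suffices to show $\sum_{m,n\le N}\mu(E_m\cap E_n)\le (C_\#+o(1))\,S_N^2$ as $N\to\infty$, with the $o(1)$ and lower-order terms absorbed using $S_N\to\infty$.

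Next I would organize the off-diagonal sum $2\sum_{m<n\le N}\mu(E_m\cap E_n)$ into three regions. \textbf{Region I} (long range): $n\ge C_\ast m$. Here hypothesis~\eqref{item:long range independence} gives $\mu(E_m\cap E_n)\le C_\#\mu(E_m)\mu(E_n)+D(e^{-\sigma m}\mu(E_n)+e^{-\sigma(n-m)})$; summing, the main term contributes $\le C_\# S_N^2$, the term $\sum_m e^{-\sigma m}\sum_n\mu(E_n)\le (\sum_m e^{-\sigma m})S_N = O(S_N)$, and $\sum_{m<n}e^{-\sigma(n-m)}=O(N)=o(S_N^2)$ once we also note $S_N\le N$ forces... actually we only need $S_N\to\infty$, and $N$ can be much larger than $S_N$; but $\sum_{m<n}e^{-\sigma(n-m)}\le N\sum_{k\ge1}e^{-\sigma k}=O(N)$, and since $O(N)$ need not be $o(S_N^2)$ in general, I would instead bound this contribution against $S_N$ by a dyadic argument or simply note $\sum_{m<n}e^{-\sigma(n-m)}\le \sigma^{-1}N$ and that the final Chung–Erdős bound only needs the ratio; here the cleanest route is to observe $\sum_{m<n\le N}e^{-\sigma(n-m)} = O(N)$ and that combined with $\mu(\limsup)\ge \limsup S_N^2/(\text{sum})$ we must ensure the $O(N)$ term is controlled — this is exactly why hypothesis~\eqref{item:weak monotone} is phrased to force $\mu(E_m)$ not to drop too fast, ensuring $S_N\gg \sqrt N$ up to constants, or rather we handle it inside Region III. \textbf{Region II} (close range): $m\le n\le(1+\e_\ast)m$. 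Same estimate from~\eqref{item:long range independence} applies verbatim, with the analogous bounds. \textbf{Region III} (intermediate): $(1+\e_\ast)m < n < C_\ast m$. This is the range not covered by~\eqref{item:long range independence}; here I split further by whether $n-m$ is large or small relative to $-\log\mu(E_m)$.

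For Region III I would use hypothesis~\eqref{item:short range independence}: $\mu(E_m\cap E_n)\le D\mu(E_m)\max\{\mu(E_n)^\sigma, 2^{-\sigma(n-m)}\}$. Split at the threshold $n-m \lessgtr \lceil -a\log\mu(E_m)\rceil$ (with $a\le 1/\sigma$). When $n-m > -a\log\mu(E_m)$ (i.e.\ $n$ is far from $m$), we have $2^{-\sigma(n-m)}$ small; more usefully, in this subrange $\mu(E_n)^\sigma$ is the relevant quantity and we sum $\sum_n \mu(E_n)^\sigma$ — but this needn't be comparable to $S_N$. The correct move is: for each fixed $m$, $\sum_{n>m}\mu(E_m\cap E_n)$ over Region III with $n-m$ large is $\le D\mu(E_m)\sum_n \max\{\mu(E_n)^\sigma,2^{-\sigma(n-m)}\}$; since $C_\ast m$ bounds the range of $n$, and using that $\mu(E_n)\le 1$ so $\mu(E_n)^\sigma\ge\mu(E_n)$, actually we want an \emph{upper} bound, so I'd instead use $2^{-\sigma(n-m)}$ whenever it is the max (geometric sum, contributes $O(\mu(E_m))$ per $m$, total $O(S_N)$), and when $\mu(E_n)^\sigma$ is the max, that happens only for $n$ with $n-m\le -a'\log\mu(E_n)$ for suitable $a'$, a bounded-width band in a logarithmic sense, controlled by hypothesis~\eqref{item:weak monotone}: in that band $\mu(E_n)\le D\mu(E_m)^\sigma$ wait — I need to match the band in~\eqref{item:weak monotone} which is $n\le m+\lceil-a\log\mu(E_m)\rceil$. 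So the genuinely delicate subregion is $(1+\e_\ast)m<n\le m+\lceil -a\log\mu(E_m)\rceil$: here~\eqref{item:short range independence} gives $\mu(E_m\cap E_n)\le D\mu(E_m)\mu(E_n)^\sigma$ and~\eqref{item:weak monotone} gives $\mu(E_n)\le D\mu(E_m)^\sigma$, so $\mu(E_m\cap E_n)\le D^{1+\sigma}\mu(E_m)^{1+\sigma^2}$; summing over the $O(-\log\mu(E_m))$ values of $n$ in the band gives $\ll \mu(E_m)^{1+\sigma^2}(-\log\mu(E_m))\ll \mu(E_m)$ (as $x^{\sigma^2}\log(1/x)$ is bounded for $x\in(0,1]$), hence total $O(S_N)=o(S_N^2)$. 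Finally, for $n-m$ between $-a\log\mu(E_m)$ and the end of Region III, we fall in the $2^{-\sigma(n-m)}\le\mu(E_m)^{a\sigma}\le\mu(E_m)$ regime (using $a\sigma\le1$), contributing $O(\mu(E_m)^2)$ summed, i.e.\ $O(S_N)$ again — or we use the $\mu(E_n)^\sigma$ branch with a crude geometric-type summation. Collecting all regions: $\sum_{m,n\le N}\mu(E_m\cap E_n)\le C_\# S_N^2 + O(S_N)$, so $\mu\big(\bigcup_{n\le N}E_n\big)\ge S_N^2/(C_\# S_N^2+O(S_N))\to 1/C_\#$, and the same for every tail, giving $\mu(\limsup E_n)\ge 1/C_\#$. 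The main obstacle is precisely the bookkeeping in Region III and the verification that hypothesis~\eqref{item:weak monotone}, with $a\le 1/\sigma$, exactly suffices to make the uncovered logarithmic-width band contribute $o(S_N^2)$; getting the exponents $\sigma^2$ versus $a\sigma$ to line up so that every leftover sum is $O(S_N)$ rather than something larger is where care is needed.
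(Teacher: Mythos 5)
Your overall strategy (Chung--Erd\H{o}s plus a case split by the size of $n-m$) is the right one, but the treatment of what you call Region III --- the intermediate range $(1+\e_\ast)m < n < C_\ast m$ --- does not close, and this is precisely the range the paper's construction is designed to eliminate.

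The difficulty is the sub-subregion $m + \lceil -a\log\mu(E_m)\rceil < n < C_\ast m$ in which $\mu(E_n)^\s$ dominates $2^{-\s(n-m)}$. There hypothesis~(3) only gives $\mu(E_m\cap E_n)\le D\mu(E_m)\mu(E_n)^\s$, and neither~(2) (the separation condition fails) nor~(4) (the band $n\le m+\lceil -a\log\mu(E_m)\rceil$ has already been exhausted) applies, so nothing relates $\mu(E_n)^\s$ back to quantities summing to $O(S_N)$. A concrete test: take $\mu(E_n)\asymp 1/n$, so $S_N\asymp\log N$. For each $m$ there are $\Theta(m)$ indices $n$ in Region III, and for most of them $\mu(E_n)^\s\asymp m^{-\s}$ is the max; the inner sum is then $\gg m^{-1}\cdot m^{-\s}\cdot m = m^{-\s}$, and summing over $m\le N$ gives $\Theta(N^{1-\s})$, which is vastly larger than $S_N^2\asymp(\log N)^2$. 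So the double sum is not $(C_\#+o(1))S_N^2$ and the Chung--Erd\H{o}s bound degenerates. Your remark that the $\mu(E_n)^\s$-dominant set is a ``bounded-width band controlled by hypothesis~(4)'' is where the argument goes wrong: that band is centred around $n$ and scales with $-\log\mu(E_n)$, not around $m$, so~(4) does not reach it.

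The paper sidesteps Region III altogether by first passing to a carefully chosen subcollection $\Jcal\subset\N$ along which the sum of measures still diverges. After enlarging $C_\ast$ to a power of $(1+\e_\ast)$, each $C_\ast$-adic interval is subdivided into $(1+\e_\ast)$-adic blocks, and by pigeonhole a single such block carrying at least a $1/\ell_\ast$ fraction of the mass is retained; then every other $C_\ast$-adic block is dropped so that distinct retained blocks are separated by a factor $\ge C_\ast$. The net effect is that any two indices $m\le n$ in $\Jcal$ satisfy either $n\le(1+\e_\ast)m$ (same block) or $n\ge C_\ast m$ (different blocks), so hypothesis~(2) applies to every long-range pair with no uncovered intermediate zone. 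Inside a block, the short-range sum over $m\le n<m+Kf(m)$ with $f(m)=\lceil -a\log\mu(E_m)\rceil$ and $K=\lceil 1/(a\s)\rceil$ is handled exactly as you propose --- iterating~(4) to get $\mu(E_n)\ll\mu(E_m)^{\s^K}$ and then using~(3), with $x^{\s^{K+1}}\log(1/x)$ bounded. The block selection step is the missing idea in your proposal, and without it the proposition is false to prove by this route (as the $1/n$ example shows).
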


    Let $C_\ast\geq 1$ and $\e_\ast>0$ be the constants in the statement.
    The idea is to choose a subset $\Jcal\subset \N$ so that its elements are separated in such a way that we can apply our hypotheses on the decay of correlations while retaining the divergence of the sum of the measures. The main point in the construction below is that a $C_\ast$-adic interval (i.e.~one of the form $[C^k_\ast,C^{k+1}_\ast]$) consists of $O(\log C_\ast/\log (1+\e_\ast))$ many $(1+\e_\ast)$-adic sub-intervals. The pigeonhole principle then allows us to choose only one such $(1+\e_\ast)$-adic sub-interval from within each $C_\ast$-adic interval. This ensures that we only encounter long and intermediate range correlations so that we may apply Hypothesis~\ref{item:long range independence}. For the short range correlations (encountered only within our chosen $(1+\e_\ast)$-adic intervals), we will apply Hypothesis~\ref{item:short range independence}.
    We now carry out the details.
    
    \begin{step}[Choosing a sub-collection]
    By enlarging $C_\ast$, we may assume without loss of generality that 
    \begin{equation*}
          C_\ast = (1+\e_\ast)^{\ell_\ast},
    \end{equation*}
    for some $\ell_\ast \in \N$. For each integer $k\geq 0$, define $S_k$ by
    \begin{equation*}
        S_k = \max \seti{ \sum_{\substack{C_\ast^k(1+\e_\ast)^\ell \leqslant n <  \lfloor C_\ast^k(1+\e_\ast)^{\ell+1}\rfloor } } \mu(E_n) :  \ell\in\N, 0\leqslant \ell < \ell_\ast
         }.
    \end{equation*}
    We define $q_k=C_\ast^k(1+\e_\ast)^\ell$, for some integer $0\leq \ell <\ell_\ast$, to be the starting point of an interval that realizes the maximum $S_k$.
    More precisely, let $q_k $ be chosen so that
    \begin{equation*} 
        S_k = \sum_{\substack{q_k \leqslant n < \lfloor(1+\e_\ast)q_k\rfloor } } \mu(E_n), \qquad q_k = C_\ast^k(1+\e_\ast)^\ell, \qquad 0\leqslant \ell <\ell_\ast.
    \end{equation*}
    We define $\Jcal_k$ by
    \begin{equation*}
        \Jcal_k = \seti{n\in \N: q_k\leqslant n < \lfloor(1+\e_\ast)q_k\rfloor }.
    \end{equation*}
    We note that, since the measures of $E_n$ are not summable, we have
    \begin{align*}
    \infty=\sum_{n\in\N} \mu(E_n)&=  
    \sum_{k\geq 1} \sum_{0\leq \ell <\ell_\ast}
    \sum_{\substack{C_\ast^k(1+\e_\ast)^\ell \leqslant n <  \lfloor C_\ast^k(1+\e_\ast)^{\ell+1}\rfloor }} 
    \mu(E_n)
    \\
       &\leqslant \ell_\ast \sum_{k\geq 1} S_k = 
      \ell_\ast \sum_{n\in \Jcal_k, k\geq 1 } \mu(E_n).
    \end{align*}
    In particular, the measures of the sets indexed by $\cup_k \Jcal_k$ are not summable.
    Moreover, note that
   \begin{equation*}
       \sum_{n\in \Jcal_k, k\geq 1 } \mu(E_n) = \sum_{n\in \Jcal_{2k}, k\geq 1 } \mu(E_n) + \sum_{n\in \Jcal_{2k-1}, k\geq 1 } \mu(E_n).
   \end{equation*}
   Hence, at least one of the above two sums is infinite.
   We assume that
   \begin{equation}\label{eq:divergence along Jcal}
       \sum_{n\in \Jcal } \mu(E_n) =\infty, \qquad  \Jcal := \bigcup_{k\geq 1} \Jcal_{2k}.
   \end{equation}
   The proof in the case where the sum over the sets with odd index diverges is identical.
   Observe that if $n,q\in \Jcal_{2k}$ and $m\in \Jcal_{2j}$ for some $j< k$, so that $q<n$, then
   \begin{equation}\label{eq:separated in Jcal}
       n\geqslant C_\ast m, \qquad q< n<(1+\e_\ast)q.
   \end{equation}

  \end{step}
    

    \begin{step}[Reduction to independence estimates]
    
    We recall an inequality due to Chung-Erd\H{o}s~\cite{ChungErdos}: for all $M\leq N$ and positive measure sets $F_r$ in a probability space $(\Omega,\mu)$,
    \begin{equation}\label{eq: chung erdos}
        \mu\left (\bigcup_{r=M}^N F_r\right) \geqslant
        \left(\sum_{r=M}^N \mu(F_r) \right)^2/\sum_{M\leq r,s\leq N} \mu(F_r\cap F_s).
    \end{equation}
    
    In what follows, we use the notation $\sum^\star$ to indicate that the sum is restricted to members of the set $\Jcal$.
    We claim that for any fixed sufficiently large $M$
    \begin{equation}\label{eq: average decay}
        \sideset{}{^\star}\sum_{\substack{M\leq m,n\leq N}} \mu(E_m\cap E_n) \leqslant (C_\# +o(1)) 
        \left(\sideset{}{^\star}\sum_{M\leq n\leq N} \mu(E_n) \right)^2, \qquad \text{as } N\to\infty.
    \end{equation}
    To see that this claim implies the Proposition, note that this estimate combined with~\eqref{eq: chung erdos} implies that $\mu(\bigcup_{n\geq M, n\in\Jcal} E_n) \geqslant 1/C_\#$, for all large enough $M\in\N$.
    Since $\limsup_{n\in\Jcal} E_n$ is a decreasing intersection of sets of this form in a probability space, we obtain the desired result.
    
    Also note that the conclusion of Proposition~\ref{lem: BC divergence} follows trivially if $ \mu(E_n) =1$ for infinitely many $n$.
    Hence, we may assume for the remainder of the proof that $\mu(E_n)<1$ for all $n$ sufficiently large.
    
    Recall the constants $\s,a>0$ in the statement of the proposition. 
    Fix natural numbers $M < N$ with $M$ large enough so that all the hypotheses hold for $n\geq m \geq M$ and so that $\mu(E_m)<1$ for all $m\geq M$.
    Let $f(m) = \lceil -a \log \mu(E_m)\rceil \geq 1$.
    Recall that by assumption, we have $\s<1$ and $a\leq 1/\s$.
    Let $K = \lceil 1/a\s \rceil \in \N$.
    We will need the following elementary fact
    \begin{equation}
        \sum_{n\geq m} 2^{-\s(n-m)} = \frac{1}{1-2^{-\s}}, \qquad \forall m\in\N.
    \end{equation}
    \end{step}

    \begin{step}[Short range independence]

    Applying Hypothesis~\ref{item:weak monotone} iteratively and using that $\sigma<1$, we find that, for all $m\geq M$ and $k\geq 0$,
    \begin{equation*}
         \mu(E_{m+ k f(m) }) \leqslant D^{k} \mu(E_m)^{\s^k}.
    \end{equation*}
    Hence, for all $m\leq n < m+Kf(m)$, if $0\leq k \leq K-1$ satisfies $kf(m)\leq n-m< (k+1)f(m)$, then
    \begin{equation*}
        \mu(E_n) \leqslant D\mu(E_{m+kf(m)})^\s \leqslant D^{k\s +1} \mu(E_m)^{\s^{k+1}}\leqslant D^{(K-1)\s+1}\mu(E_m)^{\s^{K}}.
    \end{equation*}
    Combined with Hypothesis~\ref{item:short range independence} and using that $\s<1$, we obtain
    \begin{align*}
      \sideset{}{^\star}\sum_{\substack{M\leq m\leq N,\\ m\leq n < m+Kf(m)}}  
      & \mu(E_m \cap E_n)
      \leqslant D 
      \sideset{}{^\star}\sum_{\substack{ M\leq m \leq N}} \mu(E_m) 
      \sideset{}{^\star}\sum_{\substack{m\leq n<  m+Kf(m)}}  
      \big(\mu(E_n)^\s + 2^{-\s(n-m)}\big)
      \\
     &\leqslant D^{K+1} \sideset{}{^\star}\sum_{\substack{ M\leq m \leq N}} \mu(E_m)^{1+\s^{K+1}}Kf(m)
     + \frac{D}{1-2^{-\s}}
      \sideset{}{^\star}\sum_{\substack{ M\leq m \leq N}} \mu(E_m). 
    \end{align*}
    Note that $f(m) \leq 1- a\log \mu(E_m)$.
    Let $\e_0 = \s^{K+1}$ and $C_0 \geq 1$ so that $\log x \leq C_0 x^{\e_0}$ for all $x\geq 1$. Then,
     \begin{align}
      \sideset{}{^\star}\sum_{\substack{M\leq m\leq N,\\ m\leq n < m+Kf(m)}} &
       \mu(E_m \cap E_n)\nonumber\\
      &\leqslant
      D^{K+1}  Ka
      \sideset{}{^\star}\sum_{\substack{ M\leq m \leq N}} \mu(E_m)^{1+\e_0}  \left(\log \frac{1}{\mu(E_m)} +\frac{1}{a}\right)
      + \frac{D}{1-2^{-\s}} 
      \sideset{}{^\star}\sum_{\substack{ M\leq m \leq N}} \mu(E_m) 
      \nonumber\\
      \label{eq: short range}&\ll_{D,a,\s}
     \sideset{}{^\star} \sum_{\substack{ M\leq m \leq N}} \mu(E_m).
   \end{align}

    \end{step}

    \begin{step}[Long range independence]

    To estimate the sum over pairs of events which are separated by more than $Kf(m)$, we use Hypothesis~\ref{item:long range independence}. We first bound the contributions of the error terms. Note that
    \begin{align}\label{eq: error 2}
        \sideset{}{^\star}\sum_{m=M}^{N} \sideset{}{^\star} \sum_{n= m+K f(m)}^N e^{-\s(n-m)}
        \leqslant 
        \sideset{}{^\star}\sum_{m=M}^{N}  \sideset{}{^\star}\sum_{k=Kf(m)}^\infty
        e^{-\s k} \ll \sideset{}{^\star}\sum_{m=M}^{N} e^{\log\mu(E_m)} = 
        \sideset{}{^\star}\sum_{m=M}^{N} \mu(E_m),
    \end{align}
    where the implied constant depends only on $\s$.
    For the other error term in Hypothesis~\ref{item:long range independence}, we have
    \begin{align}\label{eq: error 1}
        \sideset{}{^\star}\sum_{m=M}^{N}  
        \sideset{}{^\star}\sum_{\substack{ n=m+Kf(m)}}^{N} e^{-\s m} \mu(E_n) \leqslant \frac{1}{1-e^{-\s}} \sideset{}{^\star}\sum_{\substack{M\leq m\leq N}} \mu(E_m).
    \end{align} 

    Recall that our choice of $\Jcal$ implies that the pairs $n,m\in \Jcal$ with $m\leq n \leq N$ satisfy the separation inequalities~\eqref{eq:separated in Jcal}. 
    In particular, we may apply Hypothesis~\ref{item:long range independence}, combined with~\eqref{eq: error 2} and~\eqref{eq: error 1}, to obtain
    \begin{equation}\label{eq: long range}
       \sideset{}{^\star}\sum_{m=M}^{N}  
       \sideset{}{^\star}\sum_{n=m+Kf(m)}^{N}
        \mu(E_m\cap E_n) \leqslant C_\# 
        \sideset{}{^\star}\sum_{\substack{M\leq m,n \leq N}}  \mu(E_m) \mu(E_n) + C_1 
        \sideset{}{^\star}\sum_{\substack{M\leq m\leq N}} \mu(E_m),
    \end{equation}
    for some constant $C_1 \geq 1$, depending only on $D$ and $\s$. 
    Finally, by~\eqref{eq:divergence along Jcal}, we have that the sum of the measures of $E_m$ diverges when restricting to $m\in \Jcal$. Therefore, for every fixed $M$,
    \begin{equation}\label{eq: x less x2}
        \sideset{}{^\star}\sum_{M\leq m\leq N} \mu(E_m) = o\left( \left(\sideset{}{^\star}\sum_{M\leq m\leq N} \mu(E_m)\right)^2\right), \qquad \text{as } N\r\infty.
    \end{equation}
    Hence,~\eqref{eq: average decay} follows from combining~\eqref{eq: short range} and~\eqref{eq: long range} with~\eqref{eq: x less x2}, thus concluding the proof.
    \end{step}

\section{The Divergence Theorem}
\label{sec:divergence}

    The goal of this section is to obtain an analogue of the divergence part of Khintchine's Theorem for self-similar measures whose translates become effectively equidistributed, Theorem~\ref{thm: divergence thm}.
    Together with Theorem~\ref{thm: convergence thm}, this provides a complete analogue of Khintchine's theorem for this class of measures and completes the proof of Theorem~\ref{thm:main simplified}.

    Throughout this section, we fix $(\Fcal,\lambda)$ and the associated norms on $\R^d$ and $\R^{d+1}$, denoted $\norm{\cdot}$, as in Section~\ref{sec: IFS}. We let $\mu=\mu_{(\Fcal,\l)}$ denote the associated self-similar probability measure.

    \begin{theorem}[A Divergence Theorem]
    \label{thm: divergence thm}
    Let $\psi:\N \r \R_+$ be a non-increasing function and let $\mu$ be as above.
    Assume that $\Fcal$ is irreducible and satisfies the open set condition.
     Assume further that $\mu$ satisfies the conclusion of Corollary~\ref{cor:SL equidist} for functions $\vp$ which are invariant under $\seti{k_{i}:i\in\L}$.
     Then,
    \begin{equation*}
        \sum_{q\in \N} \psi^d(q) = \infty \Longrightarrow      \mu\left( W(\psi) \right) = 1.
    \end{equation*}
    \end{theorem}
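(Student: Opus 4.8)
\textbf{Proof plan for Theorem~\ref{thm: divergence thm}.}
The plan is to reduce the divergence statement to the combinatorial Borel--Cantelli criterion of Proposition~\ref{lem: BC divergence}, applied to a suitable family of ``approximation events'' $E_n$ on $(\R^d,\mu)$, and then to upgrade the resulting lower bound $\mu(\limsup E_n)\geq 1/C_\#$ to full measure via the zero--full law, Lemma~\ref{lem:Lebesgue-density}. First I would set up the events: exactly as in the proof of Theorem~\ref{thm: convergence thm}, use Lemma~\ref{lem: psi to r} to pass from $\psi$ to the functions $r(t),\l(t),L(t)$, choose $t_n$ with $e^{\l(t_n)}=2^n$, and let $E_n$ be the set of $\xbf$ whose orbit $g_{t_n}u(\xbf)\Ga$ enters a shrinking cusp neighborhood $\mathscr{C}(\e_n)$ of $X$, where $\e_n$ is calibrated so that $m_{G/\Ga}(\mathscr{C}(\e_n)) \asymp 2^n\psi^d(2^n)$; concretely $\e_n^{d+1} \asymp e^{-(d+1)r(t_n)}$. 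By Corollary~\ref{cor:growth of tn}, $t_n\geq t_0 + nd\log 2/(d+1)$, so divergence of $\sum\psi^d(q)$ is equivalent (after the standard dyadic blocking and using monotonicity of $\psi$) to $\sum_n m_{G/\Ga}(\mathscr{C}(\e_n))=\infty$. Using the smooth approximants $\vp_{\e_n}^{\pm}$ from Proposition~\ref{prop: approximate cusp} together with the single-time equidistribution (Corollary~\ref{cor:SL equidist} for $\a=\emptyset$) one gets $\mu(E_n)\asymp m_{G/\Ga}(\mathscr{C}(\e_n))$ up to the equidistribution error $e^{-\k_\ast t_n}$, which is summable, so Hypothesis~\eqref{item:divergent sum} of Proposition~\ref{lem: BC divergence} holds, and also $\mu(E_n)$ is eventually positive.

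Next I would verify the two quasi-independence hypotheses. For Hypothesis~\eqref{item:long range independence} (the $n\geq C_\ast m$ or $m\leq n\leq(1+\e_\ast)m$ regime), sandwich $\chi_{E_m}\chi_{E_n}$ between products $\vp_{\e_m}^{\mp}(g_{t_m}u(\xbf)\Ga)\,\vp_{\e_n}^{\mp}(g_{t_n}u(\xbf)\Ga)$ and apply the effective double equidistribution, Proposition~\ref{prop: double equidist}, with $(t,s)=(t_n,t_m)$. Crucially the hypotheses of that proposition --- $t\geq C_\ast s$ or $s\leq t\leq(1+\e_\ast)s$ --- match, after translating via $t_n - t_m \gg n-m$ and $t_n \gg n$, precisely the two separation regimes allowed in Proposition~\ref{lem: BC divergence}; the Sobolev norms $\Scal_{\infty,\ell}(\vp_{\e_n}^{\pm})$ are bounded uniformly in $n$ by Proposition~\ref{prop: approximate cusp}(\ref{item:norm of vp_epsilon}), so the error $\Scal(\vp)\Scal(\psi)e^{-\d|t_n-t_m|}$ becomes $O(e^{-\s(n-m)})$ for a suitable $\s>0$, and the main term $\int\vp_{\e_n}\,dm\cdot\int\psi_{\e_m}(\ldots)\,d\mu$ is at most $C_\#\mu(E_m)\mu(E_n) + O(e^{-\s m}\mu(E_n))$ after replacing $\int\vp_{\e_n}^{+}\,dm$ by $m(\mathscr{C}(\e_n))\asymp\mu(E_n)$ and $\int\vp_{\e_m}^{-}(g_{t_m}u(\xbf)\Ga)\,d\mu$ by $\mu(E_m) + O(e^{-\s m})$. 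For Hypothesis~\eqref{item:short range independence} (arbitrary $m\leq n$) I would invoke the self-similarity / simplex-lemma argument --- this is Proposition~\ref{prop: weak quasi-independence} referenced in the outline --- which gives $\mu(E_m\cap E_n)\ll\mu(E_m)\mu(E_n)^\s$ using absolute decay (Proposition~\ref{prop: friendly}, which needs irreducibility and the open set condition) to control how often the orbit at time $t_n$ can be near the cusp given that it was near the cusp at time $t_m$; the $2^{-\s(n-m)}$ term covers the range where $\psi$ has very slowly diverging partial sums. Finally, Hypothesis~\eqref{item:weak monotone} (weak monotonicity of $\mu(E_n)$ over windows of length $\asymp-\log\mu(E_m)$) follows from Lemma~\ref{lem: psi to r}(iv): the weak monotonicity~\eqref{eq: weak monotone} of $r$ bounds how fast $m_{G/\Ga}(\mathscr{C}(\e_n))$, hence $\mu(E_n)$, can grow in $n$, and the comparison $\mu(E_n)\asymp m_{G/\Ga}(\mathscr{C}(\e_n))$ transfers this.

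Applying Proposition~\ref{lem: BC divergence} yields $\mu(\limsup_n E_n)\geq 1/C_\# > 0$, hence $\mu(W(\psi))>0$ since $\limsup E_n \subseteq W(\psi)$ up to $\mu$-null adjustments (as in Theorem~\ref{thm: convergence thm}, the dyadic events $E_n$ are built so that $\xbf\in E_n$ infinitely often forces $\xbf\in W(\psi)$; here one needs the lower-bound direction, which is where the Siegel-transform lower bound and the exact calibration of $\e_n$ enter). To get full measure, observe that $W(\psi)$ has a self-similar structure: for any $\a\in\L^\ast$, $\xbf\in W(\psi)$ relates to $f_\a^{-1}(\xbf)\in W(\psi_\a)$ for a comparable function $\psi_\a$, or more directly, one reruns the whole argument with the basepoint $x_\a^{\mrm{u}}$ from Corollary~\ref{cor:SL equidist} in place of $\Ga$ --- this is exactly why Corollary~\ref{cor:SL equidist} is stated for general basepoints $x_\a^{\mrm{u}}$ and why Proposition~\ref{prop: double equidist} allows the $\seti{k_i}$-invariant functions --- to conclude $\mu(W(\psi)\cap\Kcal_\a)\geq c\,\mu(\Kcal_\a)$ for a uniform $c>0$ and all $\a\in\L^\ast$. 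Then Lemma~\ref{lem:Lebesgue-density}, whose hypotheses (null overlaps, which holds by Lemma~\ref{lem: null overlap} under the open set condition) are satisfied, gives $\mu(W(\psi))=1$.

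I expect the main obstacle to be the bookkeeping around non-uniformity of the error terms over basepoints. As the introduction's outline stresses, the implied constant $\rho_\a^{-A_\ast}$ in Corollary~\ref{cor:SL equidist} blows up with $\a$, and Proposition~\ref{prop: double equidist} correspondingly fails for all pairs $(t,s)$ --- only the two separated regimes survive. So the delicate point is to check that the $E_n$ can genuinely be fed into Proposition~\ref{lem: BC divergence}: that the times $t_n$ obtained from Lemma~\ref{lem: psi to r} have the property that the dyadic index separations $n\geq C_\ast m$ / $m\leq n\leq(1+\e_\ast)m$ translate into the $t$-separations $t_n\geq C'_\ast t_m$ / $t_m\leq t_n\leq(1+\e'_\ast)t_m$ required by Proposition~\ref{prop: double equidist}, and that the constants can be matched. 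This uses the two-sided control~\eqref{eq: weak monotone}, \eqref{eq: lamda growth} on the growth of $t_n$ from Lemma~\ref{lem: psi to r}(iv). A secondary obstacle is establishing the short-range estimate~\eqref{eq:indep short} with a genuine power saving $\mu(E_n)^\s$; this requires the simplex lemma argument combined with absolute decay and does not follow from equidistribution alone at the relevant scales --- it is the one place where the self-similar (rather than merely equidistributing) structure of $\mu$ is essential.
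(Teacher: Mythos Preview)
Your proposal is correct and follows essentially the same architecture as the paper: reduce to Proposition~\ref{lem: BC divergence} applied to dyadically-indexed approximation events, verify its hypotheses via single/double equidistribution (Corollary~\ref{cor:SL equidist}, Proposition~\ref{prop: double equidist}) for the long-range and weak-monotonicity conditions and via the simplex lemma plus absolute decay (Proposition~\ref{prop: friendly}) for the short-range condition, and then upgrade positivity to full measure via Lemma~\ref{lem:Lebesgue-density} by rerunning at general basepoints. The only differences are cosmetic: the paper works with the arithmetically defined sets $A_n^\ast$ of~\eqref{def: A_n^ast} (with the restriction $2^{n-1}\le|q|<2^n$) rather than raw cusp-excursion events, makes preliminary reductions on $\psi$ (Lemmas~\ref{lem: psi_0 small},~\ref{lem:psi large},~\ref{lem:rUnbounded}) before verifying the hypotheses, and phrases the full-measure upgrade as showing each pushforward $\mu_\theta=(f_\theta)_\ast\mu$ satisfies the theorem's hypothesis for the conjugated IFS $\Fcal_\theta$ rather than as changing basepoint---but this is exactly your change of variables.
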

    
    \begin{remark}
      It can be shown that if the dimension of the attractor of $\Fcal$ is $>d-1$, then $\Fcal$ is automatically irreducible. In particular, irreducibility holds for any IFS satisfying Hypothesis~\ref{eq:thickness hypothesis} in the introduction in view of our choice of $\epsilon_0\in (0,1)$ in~\eqref{eq:epsilon0}.
    \end{remark}

    Restricting to functions which are invariant under $\seti{k_{i}:i\in\L}$ in the statement of Theorem~\ref{thm: divergence thm} allows us to deduce it as a direct consequence of the following apriori weaker statement.

    \begin{prop}[Main Proposition]\label{prop: divergence thm}
     Let $\psi:\N \r \R_+$ be a non-increasing function.
        There exists a constant $\delta_{\psi,d}>0$ such that for any measure $\mu$ satisfying the hypotheses of Theorem~\ref{thm: divergence thm}, we have
        \begin{equation*}
        \sum_{q\in \N} \psi^d(q) = \infty \Longrightarrow      \mu\left( W(\psi)
        \right) \geq \delta_{\psi,d}.
    \end{equation*}
    \end{prop}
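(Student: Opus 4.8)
The goal is to show $\mu(W(\psi))\geq\delta_{\psi,d}$ under the divergence hypothesis $\sum_q\psi^d(q)=\infty$. As in the convergence theorem, I would pass to the geodesic-flow picture: using the function $r=r_\psi$ and the times $t_n$ with $e^{\l(t_n)}=2^n$ from Lemma~\ref{lem: psi to r} and Corollary~\ref{cor:growth of tn}, define events $E_n\subseteq\R^d$ (with respect to $\mu$) which detect that $g_{t_n}u(\xbf)\Ga$ lies in a suitable shrinking cusp neighborhood $\mathscr{C}(\e_n)$, and which are arranged so that $\limsup_n E_n\subseteq W(\psi)$ up to a null set. Concretely $E_n$ should be (a Siegel-transform version of) the set where $v_n(g_{t_n}u(\xbf)\Ga)\geq 1$ for the box $V_n$ associated to the scale $e^{-r(t_n)}$, exactly as in the proof of Theorem~\ref{thm: convergence thm}, so that membership in $E_n$ forces the existence of $(\bfm{p},q)$ with $q<2^{n+1}$ and $\|q\xbf-\bfm{p}\|<\psi(2^n)$. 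Then $\mu(\limsup E_n)\leq\mu(W(\psi))$, so it suffices to produce a uniform lower bound on $\mu(\limsup E_n)$, and this is precisely what Proposition~\ref{lem: BC divergence} (the converse Borel--Cantelli) delivers once its four hypotheses are verified.

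\textbf{Verifying the hypotheses of Proposition~\ref{lem: BC divergence}.} The divergence $\sum_n\mu(E_n)=\infty$ (Hypothesis~\ref{item:divergent sum}) should follow by relating $\mu(E_n)$ to $m_{G/\Ga}(\mathscr{C}(\e_n))\asymp\e_n^{d+1}$ (Proposition~\ref{prop: cusp measure}) via the single-time equidistribution statement Corollary~\ref{cor:SL equidist} applied to the smooth approximants $\vp_{\e_n}$ of Proposition~\ref{prop: approximate cusp}: one gets $\mu(E_n)=m_{G/\Ga}(\vp_{\e_n})+O(S_\eta e^{-\k_\ast t_n})$, and since $\e_n^{d+1}\asymp e^{-(d+1)r(t_n)}=2^n\psi^d(2^n)$ (using~\eqref{eq: tn condensates}) one sees $\sum_n\mu(E_n)\gtrsim\sum_n 2^n\psi^d(2^n)-O(1)$, which diverges by the standard condensation argument from $\sum_q\psi^d(q)=\infty$ together with the monotonicity of $\psi$; one must also note the error term is summable because $t_n\geq t_0+nd\log2/(d+1)$ grows linearly. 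The key technical subtlety is that the functions $\vp_{\e_n}$ must be taken $\mrm{SO}_d(\R)$-invariant (possible by Proposition~\ref{prop: approximate cusp}\eqref{item:K-invariance}, choosing the norm defining $d_1$ to be the $\Fcal$-invariant one), so that they are invariant under $\{k_i:i\in\L\}$ and hence Corollary~\ref{cor:SL equidist} applies in the form assumed in Theorem~\ref{thm: divergence thm}. Hypothesis~\ref{item:weak monotone} (controlled failure of monotonicity of $\mu(E_n)$) should follow from the weak-monotonicity property~\eqref{eq: weak monotone} of $r$: since $r(t_n)-r(t_m)\geq-\tfrac1d(t_n-t_m)$ and $t_n-t_m\leq\tfrac{d+1}d(\l(t_n)-\l(t_m))=\tfrac{d+1}d(n-m)\log2$, the ratio $\e_n/\e_m=e^{-(r(t_n)-r(t_m))}$ is bounded polynomially, and translating through the measure estimate and the equidistribution error gives $\mu(E_n)\lesssim\mu(E_m)^\sigma$ in the required range. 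Hypotheses~\ref{item:long range independence} and~\ref{item:short range independence} are the decay-of-correlation estimates: for the long/intermediate range ($n\geq C_\ast m$ or $m\leq n\leq(1+\e_\ast)m$) they follow from Proposition~\ref{prop: double equidist} applied to $\vp=\vp^+_{\e_n}$, $\psi=\vp^+_{\e_m}$ at times $t_n\geq t_m$, after re-deriving $e^{-\d|t_n-t_m|}$ as a function of $n-m$ using $t_n-t_m\asymp(n-m)$ again, and after inserting the upper/lower sandwich $\vp_{\e}\leq\chi_{\mathscr{C}(\e)}\leq\vp^+_\e$ to pass between the smooth functions and the actual events. For the short range, Hypothesis~\ref{item:short range independence} is where I expect to invoke Proposition~\ref{prop: weak quasi-independence} (the estimate proved via the simplex lemma and self-similarity), which gives $\mu(E_m\cap E_n)\ll\mu(E_m)\mu(E_n)^\e$ for all $n>m$; combined with a crude bound $2^{-\s(n-m)}$ for the very-close pairs this yields the stated form.

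\textbf{Main obstacle.} The genuinely delicate point is bookkeeping the \emph{two} different independence estimates so that the ranges they cover overlap and together span all pairs $(m,n)$, while simultaneously tracking how the Sobolev norms $\Scal_{\infty,\ell}(\vp^+_{\e_n})$ — which are \emph{uniformly bounded} in $n$ by Proposition~\ref{prop: approximate cusp}\eqref{item:norm of vp_epsilon}, a crucial feature — interact with the equidistribution error terms, which decay in $t_n\sim n$ but whose implied constants in Corollary~\ref{cor:SL equidist} are \emph{not} uniform over basepoints $x_\a^{\mrm u}$. Here, however, since Proposition~\ref{prop: double equidist} is stated only for the identity-coset case ($\a=\emptyset$), this non-uniformity does not enter, and the real work is: (i) the transfer lemma from the smooth functions $\vp_\e,\vp_\e^+$ to the sets $E_n$ and their pairwise intersections, controlling the discrepancy by the equidistribution error; and (ii) checking that the exponent $\sigma$ and the parameter $a$ required in Proposition~\ref{lem: BC divergence} (namely $0<\sigma<1$ and $0<a\leq1/\sigma$) can be chosen consistently with the polynomial growth rates coming from $r_\psi$. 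Once all four hypotheses are in place, Proposition~\ref{lem: BC divergence} gives $\mu(\limsup E_n)\geq1/C_\#$, and setting $\delta_{\psi,d}=1/C_\#$ (which depends only on $d$ and, through the constants in Proposition~\ref{prop: double equidist} and~\ref{prop: weak quasi-independence}, on the IFS but ultimately can be made to depend only on $\psi$ and $d$ since $\Fcal$ is fixed) completes the proof. Finally, Theorem~\ref{thm: divergence thm} follows from Proposition~\ref{prop: divergence thm} by the zero–full law Lemma~\ref{lem:Lebesgue-density}: self-similarity lets one run the same argument inside each $\Kcal_\a$ to conclude $\mu(W(\psi)\cap\Kcal_\a)\geq\delta_{\psi,d}\mu(\Kcal_\a)$ uniformly, which upgrades $\mu(W(\psi))\geq\delta_{\psi,d}$ to $\mu(W(\psi))=1$.
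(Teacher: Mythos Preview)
Your overall strategy—verify the four hypotheses of Proposition~\ref{lem: BC divergence} via single and double equidistribution together with the simplex-lemma estimate—is the paper's strategy, but your choice of events is wrong and this breaks the short-range step. You take $E_n$ to be the cusp-excursion sets from the convergence proof (where some primitive $(\bfm{p},q)$ with $|q|<2^{n+1}$ satisfies $\lVert q\xbf-\bfm{p}\rVert<\psi(2^n)$); in the paper's notation these are essentially the sets $I_n$. The paper works instead with the sets $A_n^\ast$ of~\eqref{def: A_n^ast}, which carry the additional dyadic restriction $2^{n-1}\leq|q|<2^n$. This restriction is what makes Proposition~\ref{prop: weak quasi-independence} go through: in its proof one applies absolute decay with $\varepsilon$ equal to the \emph{maximal} radius of a ball appearing in $A_n^\ast$, namely $\psi(2^n)/2^{n-1}$, and then compares $\varepsilon/\rho_\a$ with the quantities in~\eqref{eq: magic identity}--\eqref{eq: apply effective equidistribution}. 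For your $I_n$ the maximal radius is $\psi(2^n)$ (coming from $q=1$), and the resulting ratio $\psi(2^n)/\delta$ is not small—in the first case of~\eqref{eq: case 1 n close} it is of order $2^m\psi(2^n)/\psi(2^m)$, which typically blows up—so Hypothesis~\ref{item:short range independence} cannot be verified for your events. The paper recovers the needed lower bound on $\mu(A_n^\ast)$ not by direct equidistribution but by the difference argument $\mu(A_n^\ast)\geq\mu(I_n)-\mu(E_n)$ in Lemma~\ref{lem: measure estimate}, where a second Siegel-transform set captures the forbidden range $|q|<2^{n-1}$.

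You also omit two reductions that are load-bearing. First, Lemma~\ref{lem:psi large} (using the convergence theorem) allows one to assume $\psi^d(q)\geq 1/(q\log^{1.1}q)$; this lower bound is what lets Lemma~\ref{lem: quasi-independence} convert $(n,m)$-separation into the $(t_n,t_m)$-separation demanded by Proposition~\ref{prop: double equidist}, and what makes the cofinite ``good'' set $\Gcal_0$ of~\eqref{eq: Gcal_0} nonempty (Lemma~\ref{lem:G_0 cofinite}). Second, Lemma~\ref{lem:rUnbounded} disposes of the case $\liminf r(t_n)<\infty$ directly. Only on $\Gcal_0$ does one have the clean two-sided estimate~\eqref{eq: measure comparable to r}, which is used throughout the verification of Hypotheses~\ref{item:long range independence} and~\ref{item:weak monotone} (Lemmas~\ref{lem: quasi-independence} and~\ref{lem:weak monotone}). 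With these ingredients the paper obtains $C_\#=576$ and hence $\delta_{\psi,d}=1/576$, a constant depending only on $d$, not on the IFS.
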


\subsection{Proof of Theorem~\ref{thm: divergence thm} assuming Proposition~\ref{prop: divergence thm}}

Let $\d_\psi>0$ be the constant provided by Proposition~\ref{prop: divergence thm} and set
\begin{equation*}
    \mu_\th := (f_\th)_\ast\mu.
\end{equation*}
By Lemma~\ref{lem:Lebesgue-density}, to show that $\mu(W(\psi))=1$, it suffices
to prove that $\mu_\th(W(\psi))\geq \d_\psi$.
This lower bound in turn follows by Proposition~\ref{prop: divergence thm} upon verifying that the measures $\mu_\th := (f_\th)_\ast\mu$ satisfy its hypotheses for all $\th\in  \L^\ast$.

Fix $\th\in\L^{\ast}$. 
Then, $\mu_\th  $ is fully supported on $\Kcal_\th$.
Moreover, $\mu_\th$ is self-similar with respect to the IFS $\Fcal_\th =\seti{f^\th_i:=f_\th f_i f_\th^{-1}:i\in\L}$ and the same probability vector $(\l_i)$. One also checks that $\mu_\th$ has null overlaps.
We claim that $\mu_\th$ satisfies the conclusion of Corollary~\ref{cor:SL equidist} for functions which are invariant under $\seti{k_i:i\in\L}$

Indeed, let $\a\in \L^n$ for some $n$ and let $\vp\in \mrm{B}^\infty_{\infty,\ell}(G/\Ga)$ be a function that is invariant under $\seti{k_i:i\in\L}$.
The similarity $f^\th_\a $ takes the form $\rho^\th_\a O^\th_\a + b^\th_\a$, where
\begin{equation*}
  \rho^\th_\a = \rho_\a, \quad 
  O^\th_\a = O_\th O_\a O^{-1}_\th, \quad
  b^\th_\a = f_\th f_\a f_\th^{-1}(\mathbf{0}).
\end{equation*}
We let $k_{\a}^{\th}=k_{\th}k_{\a}k_{\th}^{-1}$. If $\th=\emptyset$, we will usually omit the superscript.

We let $t_{\a}=\frac{d}{d+1}\log\rho_{\a}$ and we set $h_\a^\th =(k_{\a}^{\th})^{-1} g_{-t_{\a}} u(b_\a^{\th})$. By~\eqref{eq:unimodular-rep}, we need to verify the conclusion of Corollary~\ref{cor:SL equidist} holds for $\mu=\mu_\th$ and for basepoints of the form $h_{\a}^{\th}\Gamma$. We recall the following key identity, related to~\eqref{eq:crucialpropertygamma}:
\begin{equation}\label{eq:identity SLn}
  u(f_{\a}^{\th}\xbf)(h_{\a}^{\th})^{-1}=k_{\a}^{\th}g_{t_{\a}}u(\xbf).
\end{equation}
This implies that $h_{\a}^{\th}=h_{\th}^{-1}h_{\a}h_{\th}$. Indeed, the key identity implies that for all $\xbf\in\bR^{d}$
\begin{equation*}
  u(f_{\a}^{\th}\xbf)h_{\th}^{-1}h_{\a}^{-1}h_{\th}=k_{\a}^{\th}g_{t_{\a}}u(\xbf)=u(f_{\a}^{\th}\xbf)(h_{\a}^{\th})^{-1}.
\end{equation*}
In particular, the basepoints in Corollary~\ref{cor:SL equidist} that we need to examine for the IFS $\Fcal_\th$ take the form
\begin{equation*}
  x^{\th,\mrm{u}}_\a= (k^\th_\a)^{-1} g_{-t_{\a}} u(b^\th_\a)\Gamma=h_\th^{-1} h_\a h_\th\Gamma.
\end{equation*}
By another application of~\eqref{eq:identity SLn}, we have, for all $t\in\bR$,
\begin{equation}\label{eq:applied identity}
  g_t u(f_\th\xbf) h_\a^\th
  = k_\th g_{t+t_{\th}} u(\xbf)
  h_\a h_\th.
\end{equation}

   Denote by $\th\a$ the word obtained by concatenating $\a$ to the end of $\th$.
   It follows from the definitions that $h_\a h_\th = h_{\th\a}$.
   Hence, since $\mu$ satisfies Corollary~\ref{cor:SL equidist} by hypothesis and since $\vp$ is $k_\th$-invariant, we obtain
   \begin{align*}
       \int \vp \big( g_t u(\xbf) x_{\a}^{\th,\mrm{u}} \big) \;d\mu_\th(\xbf)
       &= \int \vp \big( g_t u(f_\th\xbf)x_{\a}^{\th,\mrm{u}}  \big) \;d\mu 
       = \int \vp(g_{t+t_\th} u(\xbf) h_{\th\a}\Ga) \;d\mu \\
       &= \int \vp \;dm_{G/\Ga} +  O(\rho_{\th\a}^{-A_\ast} \Scal_{\infty,\ell}(\vp) e^{-\k_\ast (t+t_\th)} ).
   \end{align*}
  Finally, we note that $\rho_{\th\a}=\rho_\th\rho_\a$.
   This shows that $\mu_\th$ satisfies the conclusion of Corollary~\ref{cor:SL equidist} for functions invariant by $\seti{k_i:i\in\L}$ and concludes the proof of Theorem~\ref{thm: divergence thm}.
   
   \begin{remark}\label{rem:ergodicity}
   In the case where the IFS is \textit{rational}, one of the referees suggested
   an alternative argument for upgrading from $\mu(W(\psi))\geq \delta_{\psi}$ to $\mu(W(\psi))=1$, which we now sketch.
   Let $\bar{\psi}:\N\to\R_+$ be a non-increasing function satisfying
   \begin{enumerate}
       \item $\sum_{q\geq 1} \bar{\psi}^d(q)\geq 1$.
       \item For all $T>0$, we have $\lim_{q \to \infty} \bar{\psi}(q)/\psi(Tq)=0$.
   \end{enumerate}
   In particular, our arguments show that $\mu(W(\bar{\psi}))\geq \delta_{\psi}$.
    Note further that rationality of the IFS implies that if $x$ is in $W(\bar{\psi})$, then $h(x)$ is in $W(\psi)$ for any $h$ in the set of maps generated by the IFS. Ergodicity of the associated operator $P_\l$ along with positivity of the measures of $W(\psi)$ and $W(\bar{\psi})$ imply that $W(\psi)$ has full measure.
    We thank the referee for this suggestion.
   \end{remark}


    \subsection{Preliminary reductions}\label{sec: prelim reductions div}
    
    The remainder of the section is dedicated to the proof of Proposition~\ref{prop: divergence thm}.
    We retain the notation of Section~\ref{sec: convergence} pertaining to the homogeneous space $G/\Ga$.
    
    
    Recall that $P(\Z^{d+1})$ is the set of primitive vectors in $\Z^{d+1}$.
    Let a non-increasing approximation function $\psi_0$ be given so that $\sum_q \psi^d_0(q)=\infty$.
    Let $C_0\geq 1$ be a constant satisfying
    \begin{equation*}
        C_0^{-1} \norm{\cdot}\leq \norm{\cdot}_0 \leq C_0 \norm{\cdot},
    \end{equation*}
    where $\norm{\cdot}_0$ is the sup-norm on $\R^d$.

    \begin{lem}\label{lem: psi_0 small}
     In the proof of Proposition~\ref{prop: divergence thm}, we may assume that
    \begin{equation}\label{eq: psi_0 small}
        \psi_0^d(2^n) \leqslant 1/2^n \qquad(n\in\bN).
    \end{equation}
    \end{lem}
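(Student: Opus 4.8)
The plan is to reduce to the asserted case by replacing the given non-increasing function $\psi_0$ (with $\sum_q\psi_0^d(q)=\infty$) by the truncation
\begin{equation*}
  \psi(q) := \min\{\psi_0(q),\, q^{-1/d}\}, \qquad q\in\bN.
\end{equation*}
Both $\psi_0$ and $q\mapsto q^{-1/d}$ are positive and non-increasing on $\bN$, so $\psi$ is again a non-increasing function $\bN\to\R_+$; and $\psi(2^n)\leq 2^{-n/d}$ gives $\psi^d(2^n)\leq 2^{-n}$ for every $n$, which is precisely~\eqref{eq: psi_0 small}. Since $\psi\leq\psi_0$ pointwise, any $\xbf$ for which $\max_{1\leq i\leq d}|qx_i-p_i|<\psi(q)$ holds for infinitely many $q\in\bN$ with suitable $\bfm{p}\in\Z^d$ also satisfies the analogous inequality with $\psi_0(q)$ in place of $\psi(q)$; hence $W(\psi)\subseteq W(\psi_0)$.

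Granting for a moment that $\sum_q\psi^d(q)=\infty$ (see below), the reduction is immediate: $\psi$ is a non-increasing function with divergent sum of $d$-th powers which in addition obeys~\eqref{eq: psi_0 small}, so the version of Proposition~\ref{prop: divergence thm} restricted to such functions applies to $\psi$ and yields $\mu(W(\psi_0))\geq\mu(W(\psi))\geq\delta_{\psi,d}$ for every $\mu$ as in the proposition. Setting $\delta_{\psi_0,d}:=\delta_{\psi,d}$ then recovers the full statement for $\psi_0$. Thus it suffices to prove Proposition~\ref{prop: divergence thm} under the extra hypothesis~\eqref{eq: psi_0 small}, and from now on we may rename $\psi$ as $\psi_0$.

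The only substantive point — and the main obstacle — is that truncating at $q^{-1/d}$ does not destroy the divergence $\sum_q\psi^d(q)=\infty$. I would argue this via Cauchy's condensation test: since $\psi_0^d$ and $\psi^d$ are non-increasing, $\sum_q\psi_0^d(q)=\infty$ is equivalent to $\sum_{k\geq 0}2^k\psi_0^d(2^k)=\infty$, and similarly $\sum_q\psi^d(q)=\infty$ is equivalent to $\sum_{k\geq 0}2^k\psi^d(2^k)=\infty$. Writing $a_k:=2^k\psi_0^d(2^k)\geq 0$, the definition of $\psi$ gives $2^k\psi^d(2^k)=\min\{a_k,1\}$, so it remains only to note the elementary fact that $\sum_k a_k=\infty$ forces $\sum_k\min\{a_k,1\}=\infty$. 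For this, split the index set into $K_1=\{k:a_k\geq 1\}$ and $K_2=\{k:a_k<1\}$: on $K_2$ one has $\min\{a_k,1\}=a_k$, so if $\sum_{K_2}a_k=\infty$ we are done; otherwise $\sum_{K_1}a_k=\infty$, which forces $K_1$ to be infinite, whence $\sum_{K_1}\min\{a_k,1\}=\#K_1=\infty$. Either way the total sum diverges, so $\sum_q\psi^d(q)=\infty$. This completes the verification that $\psi$ satisfies all hypotheses of Proposition~\ref{prop: divergence thm}, and hence proves the lemma.
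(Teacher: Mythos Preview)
Your proof is correct, but it takes a different route from the paper. The paper argues by dichotomy via Dirichlet's theorem: if $\psi_0^d(2^n)\geq 2^{-n}$ holds for infinitely many $n$, then for each such $n$ \emph{every} $\xbf\in\R^d$ admits $q\in[1,2^n]$ and $\mathbf p\in\Z^d$ with $\|q\xbf-\mathbf p\|_0\leq 2^{-n/d}\leq\psi_0(2^n)$, so $W(\psi_0)=\R^d$ and the conclusion is trivial; otherwise the inequality~\eqref{eq: psi_0 small} fails only finitely often, and since $W(\psi_0)$ is a limsup set one may alter $\psi_0$ on an initial segment without changing it. Your truncation $\psi=\min\{\psi_0,q^{-1/d}\}$ together with Cauchy condensation and the elementary $\sum a_k=\infty\Rightarrow\sum\min\{a_k,1\}=\infty$ accomplishes the same reduction more uniformly, without invoking Dirichlet or splitting into cases. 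The paper's argument has the mild advantage of explaining \emph{why} the bound~\eqref{eq: psi_0 small} is the natural threshold (it is exactly Dirichlet's exponent), while your argument is cleaner and entirely self-contained.
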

    \begin{proof}
        Suppose that $\psi_0^d(2^n)\geq 2^{-n}$ for some $n\in\N$.
        Then, Dirichlet's Theorem implies that for \emph{every} $\xbf\in\R^d$, there exists $q\in [1,2^n]$ and $\mathbf{p}\in\Z^d$, so that
        \begin{equation*}\label{eq:Dirichlet}
            \norm{q\xbf - \mathbf{p}}_0 \leq 2^{-n/d} \leq \psi_0(2^n).
        \end{equation*}

        In particular, if $\psi_0^d(2^n)\geq 2^{-n}$ for infinitely many $n\in \N$, then $W(\psi_0)=\R^d$, and the conclusion of Proposition~\ref{prop: divergence thm} follows. Hence, we may assume~\eqref{eq: psi_0 small} fails only for finitely many $n$. Since changing finitely many values of $\psi_0$ does not change the set $W(\psi_0)$, we may assume~\eqref{eq: psi_0 small} holds for all $n$.
    \end{proof}
    
    Let $\psi = \psi_0/C_0$ and note that $\sum_q \psi^d(q)=\infty$.
    For $n\in \N$, define
     \begin{equation}\label{def: A_n^ast}
        A_n^\ast(\psi):= \left\lbrace 
             \xbf\in \R^d: \exists (\bfm{p},q)\in P(\Z^{d+1})\text{ s.t.~}
             2^{n-1} \leq |q| < 2^n\text{ and }
             \lVert q\xbf-\bfm{p}\rVert < \psi(2^n)
        \right\rbrace.
    \end{equation}
    We then define $W^\ast(\psi)$ to be
    \begin{equation*}
        W^\ast(\psi) = \limsup_{n\r\infty} A_n^\ast(\psi).
    \end{equation*}
    By monotonicity of $\psi$, we have $W^\ast(\psi) \subseteq W(\psi_0)$,\footnote{Recall that $W(\psi_0)$ was defined in~\eqref{eq: psi-approx} using the sup-norm on $\R^d$.} and hence it suffices to show that $W^\ast(\psi)$ has full measure. 
    By Lemma~\ref{lem: psi_0 small} and using $C_{0}\geq 1$, we can assume
    without loss of generality that 
    \begin{equation}\label{eq: psi small}
        \psi^d(2^n) \leqslant 1/2^n \qquad (n\in\N).
    \end{equation}

    The remainder of this section is dedicated to verifying the hypotheses of Proposition~\ref{lem: BC divergence} 
    for the sets $A_n^\ast(\psi)$, which we denote $A_n^\ast$ for simplicity.

    It will be convenient for our arguments to also have a lower estimate on $\psi$; cf.~Lemma~\ref{lem: quasi-independence}. This is done in the following lemma.
    \begin{lem}\label{lem:psi large}
      In the proof of Proposition~\ref{prop: divergence thm}, we may assume that 
      \begin{equation}\label{eq: psi large}
         \psi^d(q) \geqslant \frac{1}{q \log^{1.1}q}.
    \end{equation}
    
    \end{lem}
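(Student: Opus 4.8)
The plan is to enlarge $\psi$, without spoiling any of the properties already arranged, to the non-increasing function
\[
\tilde\psi(q):=\max\bigl\{\psi(q),\,(q\log^{1.1}q)^{-1/d}\bigr\}\qquad(q\ge 2),\qquad \tilde\psi(1):=\tilde\psi(2).
\]
Since $\psi$ and $q\mapsto (q\log^{1.1}q)^{-1/d}$ are both non-increasing, so is $\tilde\psi$; moreover $\tilde\psi\ge\psi$ pointwise, $\sum_q\tilde\psi^d(q)\ge\sum_q\psi^d(q)=\infty$, one has $\tilde\psi^d(q)\ge (q\log^{1.1}q)^{-1}$ for all $q\ge 2$, and for $n\ge 2$ one still has $\tilde\psi^d(2^n)\le 2^{-n}$, so the normalization~\eqref{eq: psi small} survives (after modifying finitely many values, which changes nothing). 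First I would show that $W^\ast(\tilde\psi)$ and $W^\ast(\psi)$ differ by a $\mu$-null set; granting this, $\mu(W^\ast(\psi))=\mu(W^\ast(\tilde\psi))$, so it suffices to bound $\mu(W^\ast(\tilde\psi))$ from below in the remainder of the proof. Renaming $\tilde\psi$ as $\psi$, we may then assume~\eqref{eq: psi large}, which is the content of the lemma.

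To compare the two limsup sets, set $\mathcal B=\{n\in\N:\psi^d(2^n)<(2^n\log^{1.1}2^n)^{-1}\}$, the dyadic scales on which $\psi$ was genuinely enlarged. For $n\notin\mathcal B$ we have $A_n^\ast(\tilde\psi)=A_n^\ast(\psi)$, so if $\xbf\in W^\ast(\tilde\psi)\setminus W^\ast(\psi)$ then $\xbf\in A_n^\ast(\tilde\psi)$ for infinitely many $n$ but $\xbf\in A_n^\ast(\psi)$ for only finitely many $n$; hence $\xbf\in A_n^\ast(\tilde\psi)$ for infinitely many $n\in\mathcal B$, that is,
\[
W^\ast(\tilde\psi)\setminus W^\ast(\psi)\ \subseteq\ \limsup_{n\in\mathcal B}A_n^\ast(\tilde\psi).
\]
By the Borel--Cantelli lemma it then suffices to prove $\sum_{n\in\mathcal B}\mu(A_n^\ast(\tilde\psi))<\infty$.

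For this I would invoke the estimate underlying the convergence theorem. Working with the $\mrm{SO}_d(\R)$-invariant norm $\norm{\cdot}$ of Section~\ref{sec: IFS} — so that the cusp test functions in the proof of Theorem~\ref{thm: convergence thm} may be taken invariant under $\{k_i:i\in\L\}$ and the standing hypothesis on $\mu$ applies — the argument of that proof gives, term by term and for any non-increasing function, $\mu\bigl(A_n(\tilde\psi)\bigr)\ll 2^n\tilde\psi^d(2^n)+e^{-\sigma n}$ for some $\sigma>0$, where $A_n(\tilde\psi)$ is the set in~\eqref{def: A_n}; since $A_n^\ast(\tilde\psi)\subseteq A_n(\tilde\psi)$ (up to the fixed constant relating the sup-norm and $\norm{\cdot}$) and $\tilde\psi^d(2^n)=(2^n\log^{1.1}2^n)^{-1}$ for $n\in\mathcal B$, we obtain $\mu(A_n^\ast(\tilde\psi))\ll n^{-1.1}+e^{-\sigma n}$ on $\mathcal B$, and $\sum_{n\ge 1}(n^{-1.1}+e^{-\sigma n})<\infty$ because $1.1>1$. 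This finishes the argument. The only genuinely substantive input is the sharp bound $\mu(A_n^\ast(\tilde\psi))\ll 2^n\tilde\psi^d(2^n)+e^{-\sigma n}$, which cannot be obtained elementarily for the singular measure $\mu$ and relies on the equidistribution (Siegel transform) estimate already established in the proof of Theorem~\ref{thm: convergence thm}; the rest — keeping $\tilde\psi$ non-increasing, preserving the earlier normalizations, and the null-set bookkeeping — is routine. Alternatively one can avoid referencing the proof of Theorem~\ref{thm: convergence thm} by observing directly that $A_n(\tilde\psi)$ is contained in a sublevel set of a Siegel transform and applying Corollary~\ref{cor:SL equidist} at the identity coset to a $\{k_i\}$-invariant smoothing of its indicator.
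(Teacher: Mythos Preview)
Your argument is correct and rests on the same mechanism as the paper's: replace $\psi$ by $\tilde\psi=\max\{\psi,\psi_2\}$ with $\psi_2^d(q)=(q\log^{1.1}q)^{-1}$, and use the convergence half of Khintchine's theorem to discard the contribution of $\psi_2$. The difference is purely in packaging. The paper observes in one line that $W(\tilde\psi)=W(\psi)\cup W(\psi_2)$ and invokes the Convergence Theorem (Theorem~\ref{thm: convergence thm}) to get $\mu(W(\psi_2))=0$, so any lower bound proved for $\mu(W(\tilde\psi))$ transfers to $\mu(W(\psi))$. You instead work at the level of the dyadic sets $A_n^\ast$, isolate the scales $\mathcal B$ where $\psi$ was enlarged, and apply Borel--Cantelli with the termwise estimate $\mu(A_n^\ast(\tilde\psi))\ll 2^n\tilde\psi^d(2^n)+e^{-\sigma n}$; this essentially reproves the relevant piece of the Convergence Theorem. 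Your route is more laborious but has the virtue of explicitly checking that the cusp test functions can be taken $\{k_i\}$-invariant (so that the weaker equidistribution hypothesis in Theorem~\ref{thm: divergence thm} suffices), and that the normalization~\eqref{eq: psi small} survives. Incidentally, the upper bound you need is also available directly from Lemma~\ref{lem: measure estimate} once one notes $e^{-(d+1)r(t_n)}=2^n\tilde\psi^d(2^n)$, so you could shorten your argument by citing that instead of revisiting the proof of Theorem~\ref{thm: convergence thm}.
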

    
    \begin{proof}
      Let $\psi_2^d(q) = 1/(q\log^{1.1} q)$, $\psi_3=\max \seti{\psi,\psi_2}$. Suppose that $\mu(W(\psi_3))=1$.
      By summability of $\psi_2^{d}$ and the Convergence Theorem  (Theorem~\ref{thm: convergence thm}), we have $\mu(W(\psi_2))=0$. Using $W(\psi_3)=W(\psi)\cup W(\psi_2)$, it follows that $\mu(W(\psi))=1$.
     
    \end{proof}
    
    Hence, throughout the remainder of the proof, we assume that~\eqref{eq: psi large} holds.

    Similarly to Section~\ref{sec: convergence}, we view $\psi$ as a continuous function on $[1,\infty)$ by linearly interpolating its values at $\N$.
    Denote by $r(t)$ the function obtained from $\psi$ by Lemma~\ref{lem: psi to r}. Let $\l(t)$ be the strictly increasing function provided by the same lemma.
    Define an increasing sequence of times $t_n$ by
    \begin{equation}\label{eq: t_n}
        e^{\l(t_n)} = 2^n.
    \end{equation}

    Let $\ell \in \N$ and $\k_\ast>0$ be the constants provided by Corollary~\ref{cor:SL equidist}.
    In order to simplify notation, we let $\Scal=\Scal_{\infty,\ell}$.
        
    Define the function $d_1$ in~\eqref{defn: phi} using our fixed norm on $\R^{d+1}$ and set
    \begin{equation}\label{eq:eta}
        \eta = \left(\frac{4}{3}\right)^{1/(d+2)}-1.
    \end{equation}
    Applying Proposition~\ref{prop: approximate cusp} with this $\eta$, we obtain, for each $\e>0$, functions $\vp_\e$ and $\vp_\e^+$ with uniformly bounded Sobolev norms.
    For each $n\in \N$, we let
    \begin{equation}\label{eq: cusp approximations}
        \vp_n := \vp_{e^{-r(t_n)}}, \qquad \vp^+_n := \vp^+_{e^{-r(t_n)}}, \qquad \chi_n := \chi_{\mathscr{C}(e^{-r(t_n)})},
    \end{equation}
    where $\chi_{\mathscr{C}(e^{-r(t_n)})}$ denotes the indicator function of $\mathscr{C}(e^{-r(t_n)})$.

      Viewing $\mrm{SO}_d(\R)$ as a subgroup of $G$ via the embedding in~\eqref{eq:embedding O_d in SL_d+1}, we see it leaves the norm on $\R^{d+1}$ invariant; cf.~Section~\ref{sec: IFS}. In particular, the functions $\vp_n$ and $\vp_n^+$ are invariant by $\mrm{SO}_d(\R)$ by Proposition~\ref{prop: approximate cusp}. Hence, we may apply our hypothesis that $\mu$ satisfies Corollary~\ref{cor:SL equidist} for functions invariant by $\seti{k_i:i\in\L}\subset \mrm{SO}_d(\R)$ to $\vp_n$ and $\vp_n^+$.

    \subsection{Divergence of the sum of measures}\label{sec:divergence-of-sum}
    Our first lemma allows us to verify the first hypothesis of Proposition~\ref{lem: BC divergence}.
    
    \begin{lem}\label{lem: measure estimate}
      There exists $C_\Fcal \geq 1$ such that for all $n\in \N$,
      \begin{equation*}
        C_de^{-(d+1)r(t_n)}/6 
        - C'_d e^{-2(d+1)r(t_n)} - C_\Fcal e^{-\k_\ast t_n} \leqslant \mu(A_n^\ast)
        \leqslant
        2C_d e^{-(d+1)r(t_n)} + C_\Fcal e^{-\k_\ast t_n}
        ,
      \end{equation*}
      where $C_d$ and $C'_d$ are the constants in Proposition~\ref{prop: cusp measure}.  
    \end{lem}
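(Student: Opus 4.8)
The strategy is to relate $A_n^\ast$ to a cusp excursion for the flow $g_{t_n}$ and then apply the equidistribution statement (Corollary~\ref{cor:SL equidist} for $\a=\emptyset$) together with the cusp measure estimates (Propositions~\ref{prop: cusp measure} and~\ref{prop: approximate cusp}). First I would recall the Dani-type correspondence: using the parametrization $g_t, u(\xbf)$ from~\eqref{eq: simultaneous parametrization} and the definition~\eqref{eq: t_n} of $t_n$ (so that $e^{\l(t_n)}=2^n$, equivalently $e^{-(d+1)r(t_n)}=2^n\psi^d(2^n)$ by~\eqref{eq: lamda to L}--\eqref{eq: L(t)}--\eqref{eq: lamda(t)}, cf.~\eqref{eq: tn condensates}), one checks that $\xbf\in A_n^\ast$ forces the lattice $g_{t_n}u(\xbf)\Z^{d+1}$ to contain a nonzero primitive vector of norm $< e^{-r(t_n)}$ in an appropriate box, hence $g_{t_n}u(\xbf)\Ga\in\mathscr{C}(c\,e^{-r(t_n)})$ for an explicit $c$ depending only on $d$ and the norm; conversely a point in a slightly smaller cusp neighborhood lies in $A_n^\ast$. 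Concretely, because $A_n^\ast$ uses the two-sided restriction $2^{n-1}\le|q|<2^n$ (not $0<q<2^{n+1}$ as in $A_n$ of Section~\ref{sec: convergence}), the matching is not exact, which is the source of the harmless constant $1/6$ below; one sandwiches $\chi_{A_n^\ast}(\xbf)$ between $\chi_n(g_{t_n}u(\xbf)\Ga)$-type and $\chi^+$-type indicators after rescaling the box by a bounded factor, exactly as in the proof of Theorem~\ref{thm: convergence thm}.

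Second, I would smooth: by Proposition~\ref{prop: approximate cusp}, applied with $\eta$ as in~\eqref{eq:eta} and $\ell$ as fixed in the section, we have non-negative $\vp_n\le\chi_n\le\vp_n^+$ (using the notation~\eqref{eq: cusp approximations}) with $\max\{\Scal(\vp_n),\Scal(\vp_n^+)\}\le S_\eta$ uniformly in $n$, and with $m_{G/\Ga}(\vp_n^+)\le(1+\eta)C_d e^{-(d+1)r(t_n)}$ and $m_{G/\Ga}(\vp_n)\ge (1+\eta)^{-1}(C_d e^{-(d+1)r(t_n)}-C'_d e^{-2(d+1)r(t_n)})$ by Proposition~\ref{prop: cusp measure}. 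These functions are $\mrm{SO}_d(\R)$-invariant (Proposition~\ref{prop: approximate cusp}\eqref{item:K-invariance}), in particular invariant under $\seti{k_i:i\in\L}$, so the hypothesis of the lemma (that $\mu$ satisfies Corollary~\ref{cor:SL equidist}) applies to them. Then
\begin{equation*}
  \mu(A_n^\ast) \le \int \vp_n^{+}\big(g_{t_n}u(\xbf)\Ga\big)\,d\mu \le \int \vp_n^{+}\,dm_{G/\Ga} + O\big(S_\eta\, e^{-\k_\ast t_n}\big),
\end{equation*}
and similarly $\mu(A_n^\ast)\ge \int\vp_n\,dm_{G/\Ga} - O(S_\eta e^{-\k_\ast t_n})$, where after the bounded rescaling of the box the relevant cusp radius is a bounded multiple of $e^{-r(t_n)}$; absorbing the $(1+\eta)^{\pm1}$ factors and the rescaling constant into the numerical constants $C_d, C_d'$ (which is where $1/6$ vs $2$ come from — the factor $1/6$ being generous enough to cover $(1+\eta)^{-1}$, the $2^{n-1}\le|q|$ truncation, and the norm-equivalence constant, while $2$ dominates $(1+\eta)$ times the rescaling) yields the claimed two-sided bound with $C_\Fcal := O(S_\eta)$, a constant depending only on $\Fcal$ (through $\mu$ and the implied constant in Corollary~\ref{cor:SL equidist} for $\a=\emptyset$) and on $d$.

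The only genuine subtlety — and the step I expect to require the most care — is the bookkeeping in the Dani correspondence for the \emph{two-sided} window $2^{n-1}\le|q|<2^n$: one must check that restricting to this dyadic annulus (rather than the full ball $|q|<2^n$) still produces, after applying $g_{t_n}$, a cusp excursion into $\mathscr{C}(c\,e^{-r(t_n)})$ from below, i.e.\ that the primitive vector responsible for the excursion can be taken with last coordinate in the prescribed range. This is where the assumption~\eqref{eq: psi small} that $\psi^d(2^n)\le 2^{-n}$ (equivalently $r(t_n)\ge 0$, so $e^{-r(t_n)}\le 1$) is used: it guarantees the box defining the excursion is genuinely contained in the unit cube region where the correspondence with the annulus $2^{n-1}\le |q|<2^n$ is faithful, so no ``spurious'' small vectors with $|q|<2^{n-1}$ interfere; the constant $1/6$ is precisely the slack one loses in matching the annulus to a box. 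Everything else is a routine combination of the two cited propositions with the equidistribution input, and I would present it at that level of detail, deferring the explicit box computations to a brief remark.
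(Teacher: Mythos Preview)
Your upper bound is correct and matches the paper: $A_n^\ast\subseteq\{\xbf:g_{t_n}u(\xbf)\Ga\in\mathscr{C}(e^{-r(t_n)})\}$, so $\mu(A_n^\ast)\le\int\vp_n^+(g_{t_n}u(\xbf)\Ga)\,d\mu$, and equidistribution plus Proposition~\ref{prop: approximate cusp} give $2C_de^{-(d+1)r(t_n)}+C_\Fcal e^{-\k_\ast t_n}$.

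The lower bound, however, has a genuine gap. Your claim that ``a point in a slightly smaller cusp neighborhood lies in $A_n^\ast$'' is false, and the assumption $\psi^d(2^n)\le 2^{-n}$ does not rescue it. Membership in $\mathscr{C}(c\,e^{-r(t_n)})$ only says the lattice $g_{t_n}u(\xbf)\Z^{d+1}$ has \emph{some} short primitive vector; that vector may well have last coordinate $e^{-t_n}q$ with $|q|<2^{n-1}$ (for instance whenever $\xbf$ is very close to a rational with small denominator), in which case $\xbf$ need not lie in $A_n^\ast$. The condition $e^{-r(t_n)}\le 1$ only bounds the size of the box, it does nothing to exclude such ``spurious'' short vectors with small $q$. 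No rescaling of the cusp radius fixes this.

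The paper handles this by an explicit subtraction. With $U_n=\{\|(w_1,\dots,w_d)\|<e^{-r(t_n)},\,|w_{d+1}|<e^{-r(t_n)}\}$ and $V_n=\{w\in U_n:|w_{d+1}|<\tfrac12 e^{-r(t_n)}\}$, set $I_n=\{\xbf:\exists\text{ primitive }v\in g_{t_n}u(\xbf)\Z^{d+1}\cap U_n\}$ and $E_n$ the analogous set for $V_n$. Then $I_n\setminus E_n\subseteq A_n^\ast$ (any primitive vector in $U_n\setminus V_n$ has $2^{n-1}\le|q|<2^n$), so $\mu(A_n^\ast)\ge\mu(I_n)-\mu(E_n)$. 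One bounds $\mu(I_n)$ below via $\vp_n$ and equidistribution exactly as you describe, but $\mu(E_n)$ must be bounded \emph{above}; this is done by smoothing the indicator of $\{v_n\ge1\}$ (where $v_n$ is the Siegel transform of $\tilde v_n=\mathbf 1_{V_n}$), applying equidistribution, and then Siegel's summation formula, which gives $\mu(E_n)\le \tfrac{(1+\eta)^{d+1}}{2}C_de^{-(d+1)r(t_n)}+C_\Fcal e^{-\k_\ast t_n}$. With $\eta$ chosen as in~\eqref{eq:eta} one gets $\tfrac{1}{1+\eta}-\tfrac{(1+\eta)^{d+1}}{2}\ge\tfrac16$, whence the stated lower bound. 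The factor $1/6$ is thus not ``slack from matching the annulus to a box'' but the net of two competing main terms.
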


    \begin{proof}
      Fix $n\in \N$.
      We begin by proving the lower bound.
      For all $n\in\bN$, we define $U_{n}\subset \R^{d+1}$ by (cf.~Section~\ref{sec: IFS})
      \begin{align*}
        U_n &= \seti{w=(w_1,\ldots,w_{d+1})\in\R^{d+1}:\lVert (w_1,\dots,w_d)\rVert < e^{-r(t_n)}, |w_{d+1}|<e^{-r(t_n)}}.
      \end{align*}
      Similarly define 
      \begin{equation*}
        V_n=\seti{w=(w_1,\ldots,w_{d+1})\in U_n: \lvert w_{d+1}\rvert<2^{-1}e^{-r(t_{n})}}.
      \end{equation*}
      Denote by $\tilde{u}_n$ and $\tilde{v}_n$ the indicator functions of $U_n$ and $V_n$ and by $u_{n}$ and $v_{n}$ the Siegel transforms of $\tilde{u}_n$ and $\tilde{v}_n$ respectively; cf.~\eqref{eq:Siegel transform}.
      
      Consider the following sets:
      \begin{equation*}
        I_n :=\seti{\xbf\in\R^d:\chi_n(g_{t_n}u(\xbf)\Ga)=1}
        = \seti{\xbf\in\R^d:u_n(g_{t_n}u(\xbf)\Ga)\geq 1}.
      \end{equation*}
      The second equality follows from the fact that a lattice admits a non-trivial short vector if and only if it admits a short primitive vector.
      We also define sets $E_n$ by
      \begin{equation*}
        E_n:=\seti{\xbf\in\R^d:v_n(g_{t_n}u(\xbf)\Ga)\geq1}.
      \end{equation*}
      Note that $I_{n}\setminus E_{n}\subseteq A_{n}^{\ast}$
      and therefore
      \begin{equation}\label{eq: a_n is i_n minus e_n}
        \mu(A_n^\ast) \geqslant \mu(I_n) - \mu(E_n).
      \end{equation}
      
      We first bound the measure of $I_n$ from below. By definition we have $\vp_n \leq \chi_n$ and hence $\mu(I_n) \geqslant \int \vp_n(g_{t_n}u(\xbf) \Ga)\;d\mu(\xbf)$.
      Let $C_{\Fcal}\geq 1$ be chosen such that it bounds the implicit constant in Corollary~\ref{cor:SL equidist} from above and such that $\Scal(\vp_n) \leq C_{\Fcal}$ for all $n\in\bN$. Since $\mu$ satisfies Corollary~\ref{cor:SL equidist} by our hypothesis, 
      \begin{equation}\label{eq:equidist_I_n}
       \mu(I_n)\geqslant \int \vp_n(g_{t_n}u(\xbf) \Ga)\;d\mu(\xbf) \geqslant
       \int \vp_n \;\der m_{G/\Ga} - C_\Fcal e^{-\k_\ast t_n}.
      \end{equation}
     Since $\vp_n$ was chosen using Proposition~\ref{prop: approximate cusp}, we obtain
      \begin{equation}\label{eq: bound I_n}
        \mu(I_n) \geqslant \frac{C_d}{1+\eta} e^{-(d+1)r(t_n)} -
        \frac{C'_d}{1+\eta} e^{-2(d+1)r(t_n)} - C_\Fcal e^{-\k_\ast t_n},
      \end{equation}
      where $C_d=\mathfrak{c}_{d+1}/2\zeta(d+1)$ and $C'_d$ are the constants provided by Proposition~\ref{prop: approximate cusp}. Here, $\mathfrak{c}_{d+1}$ is the volume of the unit ball in $\R^{d+1}$ with respect to our norm.
      
      Next, we bound the measure of $E_n$ from above. The idea is similar to the proof of Theorem~\ref{thm: convergence thm}. Using continuity of the natural representation of $G$ on $\bR^{d}$ and the fact that $V_{n}$ is open with compact closure, we can choose a neighbourhood $\Theta_{\eta}\subseteq G$ of the identity such that $\Theta_{\eta}V_{n}\subseteq(1+\eta)V_{n}$. In what follows, $\theta_{\eta}$ is a non-negative smooth function on $G$ with support contained in $\Theta_{\eta}$ and of integral $1$ with respect to the Haar measure on $G$, which we normalize so that the induced measure on $G/\Ga$ is a probability measure.

    We let $v_{n}^{+}$ denote the Siegel transform of the indicator function on $(1+\eta)V_{n}$, and we let $\tilde{\chi}_{n}$ and $\tilde{\chi}_{n}^{+}$ denote the indicator functions on the set of $x\in X$ such that $v_{n}(x)\geq 1$ and $v_{n}^{+}(x)\geq 1$ respectively. Let $\tilde{\varphi}_{n}^{+}=\theta_{\eta}\ast\tilde{\chi}_{n}^{+}$. The argument above then implies that $\tilde{\chi}_{n}\leq\tilde{\vp}_{n}^{+}$. 

 Using Corollary~\ref{cor:SL equidist} it follows that
    \begin{align*}
      \mu(E_{n})&=\int\tilde{\chi}_{n}(g_{t_{n}}u(\xbf)\Ga)\der\mu(\xbf)\leqslant\int\tilde{\vp}_{n}^{+}(g_{t_{n}}u(\xbf)\Ga)\der\mu(\xbf)\\
      &\leqslant \int \tilde{\vp}_{n}^{+} \;\der m_{G/\Ga}
      +O(\Scal(\tilde{\vp}_{n}^{+})e^{-\k_\ast t_{n}}).
    \end{align*}

    By Lemma~\ref{lem:Sobolev}\eqref{item:Sobolev convolution}, we have that $\Scal(\tilde{\vp}^+_n)$ depends only on $\Scal(\th_\eta)$. In particular, by enlarging $C_\Fcal$ if necessary, we may assume that $\Scal(\tilde{\vp}^+_n)\leq C_\Fcal$ for all $n$. Note that by symmetry of norm balls we have $\tilde{\chi}_{n}^{+}\leq\frac{1}{2}v_{n}^{+}$. Using Fubini's and Siegel's theorems, cf.~\cite[Equation 25]{Siegel-MVT}, we find
    \begin{align*}
      \int\tilde{\vp}_{n}^{+} \;\der m_{G/\Ga}
      \leqslant \frac{1}{2}\int v_{n}^{+} \;\der m_{G/\Ga}
      =\frac{1}{2\zeta(d+1)}\mrm{Vol}((1+\eta)V_{n}) = \frac{(1+\eta)^{d+1}\mathfrak{c}_{d+1}}{4\zeta(d+1)}e^{-(d+1)r(t_n)},
    \end{align*}
    where $\mathfrak{c}_{d+1}$ is the volume of the unit ball in $\R^{d+1}$.
    Hence,
    \begin{equation}\label{eq: bound E_n}
      \mu(E_n) \leqslant \frac{C_d(1+\eta)^{d+1}}{2}e^{-(d+1)r(t_n)} + C_\Fcal e^{-\k_\ast t_n},
    \end{equation}
    where $C_d $ is the same constant as in~\eqref{eq: bound I_n}.
  Using the choice of $\eta$, the lower bound in the lemma now follows by combining~\eqref{eq: a_n is i_n minus e_n},~\eqref{eq: bound I_n}, and~\eqref{eq: bound E_n}.
     
  The upper bound follows upon observing that $\mu(A_n^\ast) \leq \int \vp_n^+(g_{t_n} u(\xbf)\Ga)d\mu$, where $\vp_n^+$ was chosen in~\eqref{eq: cusp approximations}. Then one applies Corollary~\ref{cor:SL equidist}, Proposition~\ref{prop: cusp measure}, and the properties of $\vp_{n}^{+}$ as in Proposition~\ref{prop: approximate cusp} to conclude.
\end{proof}

    We note that the lower bound in Lemma~\ref{lem: measure estimate} may not be positive for all $n$, which causes issues for the independence arguments. To this end we would like to restrict ourselves to a set of indices which avoids this problem. In order to do that, we do however need that $r(t_{n}) \to \infty$ as $n\to\infty$.
    
    \begin{lem}\label{lem:rUnbounded}
        We can assume without loss of generality that 
        \begin{equation}\label{eq:rUnbounded}
        \liminf_{n\to\infty} r(t_{n}) = \infty.
        \end{equation}
    \end{lem}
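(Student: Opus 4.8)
The plan is to show that if~\eqref{eq:rUnbounded} fails, then Proposition~\ref{prop: divergence thm} already holds for a pointwise smaller approximation function, so that no generality is lost in assuming~\eqref{eq:rUnbounded}. First I would record the relation between $r(t_n)$ and $\psi$: by Lemma~\ref{lem: psi to r} and~\eqref{eq: t_n} one has $\lambda(t_n)=t_n-r(t_n)=n\log 2$ and $L(t_n)=t_n+dr(t_n)$, whence $\psi^d(2^n)=e^{-L(t_n)}=2^{-n}e^{-(d+1)r(t_n)}$. Writing $a_n:=2^n\psi^d(2^n)=e^{-(d+1)r(t_n)}$, the reduction~\eqref{eq: psi small} gives $a_n\in(0,1]$, Cauchy's condensation test (valid since $\psi$ is non-increasing) gives $\sum_n a_n=\infty$, and $\liminf_n r(t_n)=\infty$ is equivalent to $a_n\to 0$. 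So I may assume $a_n\not\to 0$ while $\sum_n a_n=\infty$, and I must produce a non-increasing $\psi'\le\psi$ with $\sum_q(\psi')^d(q)=\infty$ but $2^n(\psi')^d(2^n)\to 0$.

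For this, set $S_n=\sum_{m=1}^n a_m$ (with $S_0=0$), so $S_n\uparrow\infty$, and let $c_n:=\bigl(1+\log(1+S_{n-1})\bigr)^{-1}$, a non-increasing sequence tending to $0$. With $f(x)=\bigl(1+\log(1+x)\bigr)^{-1}$, which is decreasing with $\int_0^\infty f=\infty$, and $a_n=S_n-S_{n-1}$, one has $a_nc_n=(S_n-S_{n-1})f(S_{n-1})\ge\int_{S_{n-1}}^{S_n}f(x)\,\der x$, hence $\sum_n a_nc_n\ge\int_0^\infty f=\infty$; since $a_n\le 1$ this forces $\sum_n\min(a_n,c_n)=\infty$. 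I would then define, for $q\ge 2$,
\[
(\psi')^d(q):=\max\Bigl\{\min\bigl(\psi^d(q),\,c_{\lceil\log_2 q\rceil}\,q^{-1}\bigr),\ \tfrac{1}{q\log^{1.1}q}\Bigr\},
\]
and $(\psi')^d(1):=(\psi')^d(2)$. Being a maximum of two non-increasing functions, $(\psi')^d$ is non-increasing; both arguments of the outer maximum are $\le\psi^d(q)$ (for the second this is~\eqref{eq: psi large}), so $\psi'\le\psi$, and $\psi'$ visibly satisfies the lower bound~\eqref{eq: psi large}. Since $(\psi')^d$ is non-increasing and $(\psi')^d(2^n)\ge 2^{-n}\min(a_n,c_n)$, summing over dyadic blocks gives $\sum_q(\psi')^d(q)\ge\sum_n 2^{n-1}(\psi')^d(2^n)\ge\tfrac12\sum_n\min(a_n,c_n)=\infty$; while $2^n(\psi')^d(2^n)=\max\{\min(a_n,c_n),\,(n\log 2)^{-1.1}\}\to 0$. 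Running the computation of the first paragraph for $\psi'$ and for $t'_n$ defined by $e^{\lambda_{\psi'}(t'_n)}=2^n$, this last limit says precisely that $r_{\psi'}(t'_n)\to\infty$, i.e.~$\psi'$ satisfies~\eqref{eq:rUnbounded}. Since $\psi'\le\psi\le\psi_0$ we have $W(\psi')\subseteq W(\psi_0)$, so $\mu(W(\psi_0))\ge\mu(W(\psi'))$; as $\psi'$ — after, if necessary, altering its finitely many values at which $(\psi')^d(2^n)\le 2^{-n}$ could fail, which changes neither $W(\psi')$ nor any asymptotic statement — satisfies all the reductions made so far together with~\eqref{eq:rUnbounded}, applying Proposition~\ref{prop: divergence thm} to $\psi'$ in place of $\psi_0$ yields $\mu(W(\psi_0))\ge\delta_{\psi',d}$, and we set $\delta_{\psi_0,d}:=\delta_{\psi',d}$. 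Hence we may assume~\eqref{eq:rUnbounded}.

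The one genuinely non-formal point is the verification that $\sum_q(\psi')^d(q)=\infty$ survives the truncation; this is exactly why the cutoff level $c_n$ must be tied to the partial sums $S_n$ of $\psi^d$, and the divergence ultimately reduces to that of $\int^\infty\der x/\log x$. Everything else — monotonicity of $\psi'$, the inequality $\psi'\le\psi_0$, preservation of~\eqref{eq: psi large}, and the equivalence $r_{\psi'}(t'_n)\to\infty\Longleftrightarrow 2^n(\psi')^d(2^n)\to 0$ — is bookkeeping that follows directly from the identity $\psi^d(2^n)=2^{-n}e^{-(d+1)r(t_n)}$ and the definitions.
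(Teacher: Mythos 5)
Your proof is correct, but it takes a genuinely different route from the paper's. The paper argues directly: if $\liminf_n r(t_n) = R < \infty$, it applies the equidistribution hypothesis (\ref{eq:equidist_I_n}) to the sets
\[
I_n = \big\{\xbf\in \R^d: \exists (\bfm{p},q)\in P(\Z^{d+1})\ \text{s.t.}\ 0< |q| < 2^n,\ \lVert q\xbf-\bfm{p}\rVert < \psi(2^n)\big\}
\]
(which contain all $A^\ast_m$ for $m\le n$), noting that $\vp_n \to \vp_{e^{-R}}$ pointwise along a subsequence; dominated convergence then gives $\mu(W(\psi)) \geq \limsup_n \mu(I_n) \geq \int \vp_{e^{-R}}\,\der m_{G/\Ga} > 0$, so Proposition~\ref{prop: divergence thm} already holds in that case. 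Your argument instead never touches the dynamics: you observe that the condition is equivalent to $a_n = 2^n\psi^d(2^n) \to 0$, and when that fails you explicitly truncate $\psi$ to a smaller non-increasing $\psi'$ with $\sum_q (\psi')^d(q) = \infty$ and $a'_n\to 0$, keeping the floor~\eqref{eq: psi large}; since $\psi' \le \psi$ gives $A^\ast_n(\psi') \subseteq A^\ast_n(\psi)$, the conclusion for $\psi'$ transfers. Your Abel-summation estimate $\sum a_n c_n \ge \int_0^\infty (1+\log(1+x))^{-1}\,\der x = \infty$ with $c_n = f(S_{n-1})$, and the step $\min(a_n,c_n)\ge a_n c_n$ using $a_n,c_n\le 1$, are both correct; so is the dyadic condensation and the translation back to $r_{\psi'}(t'_n)\to\infty$ via $2^n(\psi')^d(2^n) = e^{-(d+1)r_{\psi'}(t'_n)}$. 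The tradeoff: the paper's version is shorter and shows something stronger — that when the liminf is finite the equidistribution alone already yields $\mu(W(\psi))>0$, with an explicit lower bound — whereas yours is longer but self-contained and purely elementary, showing the hypothesis is a harmless normalization of $\psi$ without invoking any homogeneous dynamics. One cosmetic remark: the safety clause about ``altering finitely many values where $(\psi')^d(2^n)\le 2^{-n}$ could fail'' is unnecessary, since $\psi'\le\psi$ and~\eqref{eq: psi small} already guarantee $(\psi')^d(2^n)\le 2^{-n}$ for all $n$.
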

    \begin{proof}
        Suppose that $\liminf_{n\to\infty}r(t_{n}) = R$ for some $0\leq R<\infty$.
        Let $I_{n}$ as in the proof of Lemma~\ref{lem: measure estimate}. One checks that\footnote{ The sets $I_n$ differ from $A_n^\ast$ in removing the lower bound restriction on the denominators $q$.
       }
        \begin{align*}
            I_n = \left\lbrace 
             \xbf\in \R^d: \exists (\bfm{p},q)\in P(\Z^{d+1})\text{ s.t.~}
             0< |q| < 2^n\text{ and }
             \lVert q\xbf-\bfm{p}\rVert < \psi(2^n)
        \right\rbrace.
        \end{align*}
        Thus, $W(\psi)\supseteq \limsup I_n$.
        It is then elementary to check that
        \begin{align*}
            \mu(W(\psi)) \geqslant \limsup_n \mu(I_n).
        \end{align*}
        Let $\vp_{e^{-R}}$ be the smooth function obtained by applying Proposition~\ref{prop: approximate cusp} with $\eta$ as in~\eqref{eq:eta}.
        In particular, recalling the definition of $\vp_n$ in~\eqref{eq: cusp approximations}, the functions $\vp_n$ converge pointwise to $\vp_{e^{-R}}$ along a subsequence.
        Recall by Corollary~\ref{cor:growth of tn} that $t_n\to\infty$.
        Thus the bound~\eqref{eq:equidist_I_n} and the dominated convergence
        theorem yield
        \begin{align*}
            \limsup_n \mu(I_n) \geqslant \int \vp_{e^{-R}}\;\der m_{G/\Ga}>0,
        \end{align*}
        and the conclusion of Proposition~\ref{prop: divergence thm} follows.
    \end{proof}
    
    We will from now on assume~\eqref{eq:rUnbounded}.
    Define $\Gcal_0 \subseteq \N$ by
    \begin{equation}\label{eq: Gcal_0}
        \Gcal_0 = \seti{n\in \N:  C_d e^{-(d+1)r(t_n)}/12 \geqslant
        C'_d e^{-2(d+1)r(t_n)} +C_\Fcal e^{-\k_\ast t_n} + C_\Fcal e^{-\d n}
        },
    \end{equation}
    where $\d= \k_\ast d \log 2/(d+1)$ and $C_d$ and $C'_d$ are the constants in Lemma~\ref{lem: measure estimate}.
    Using Lemma~\ref{lem: measure estimate}, one obtains that for all $n\in\Gcal_{0}$
    \begin{equation}\label{eq: measure comparable to r}
    C_d e^{-(d+1)r(t_n)} /12\leqslant   \mu(A_n^\ast) \leqslant 4C_d e^{-(d+1)r(t_n)}.
    \end{equation}

    \begin{lem}\label{lem:G_0 cofinite}
        The set $\N\setminus \Gcal_0$ is finite.
    \end{lem}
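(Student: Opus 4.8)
The plan is to verify that the inequality defining $\Gcal_0$ in~\eqref{eq: Gcal_0} holds for all sufficiently large $n$, so that $\N\setminus\Gcal_0$ is finite. Write $R_n := r(t_n)$. The argument rests on four facts already at our disposal. First, the identity $e^{-(d+1)R_n} = 2^n\psi^d(2^n)$: by Lemma~\ref{lem: psi to r} one has $(d+1)r(t) = L(t) - \lambda(t)$, and $e^{\lambda(t_n)} = 2^n$ by~\eqref{eq: t_n}, so $e^{-(d+1)R_n} = e^{\lambda(t_n)}\psi^d(e^{\lambda(t_n)}) = 2^n\psi^d(2^n)$ (this is the computation in~\eqref{eq: tn condensates}). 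Second, by the lower bound~\eqref{eq: psi large}, $e^{-(d+1)R_n} = 2^n\psi^d(2^n) \geq (n\log 2)^{-1.1}$. Third, by Corollary~\ref{cor:growth of tn}, $t_n \geq t_0 + n\,\frac{d\log 2}{d+1}$, hence $e^{-\kappa_\ast t_n} \leq e^{-\kappa_\ast t_0}\,e^{-\delta n}$ with $\delta = \kappa_\ast d\log 2/(d+1)$ as in~\eqref{eq: Gcal_0}. Fourth, $\liminf_n R_n = \infty$ by Lemma~\ref{lem:rUnbounded}.

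First I would bound the right-hand side of~\eqref{eq: Gcal_0}. By the third fact, $C_\Fcal e^{-\kappa_\ast t_n} + C_\Fcal e^{-\delta n} \leq C_\Fcal(1 + e^{-\kappa_\ast t_0})\,e^{-\delta n}$, so that right-hand side is at most $C'_d e^{-2(d+1)R_n} + C_\Fcal(1 + e^{-\kappa_\ast t_0})\,e^{-\delta n}$. It therefore suffices to show that each of these two terms is at most $C_d e^{-(d+1)R_n}/24$ once $n$ is large. For the first term, the fourth fact gives $\frac{C'_d e^{-2(d+1)R_n}}{C_d e^{-(d+1)R_n}/24} = \frac{24C'_d}{C_d}\,e^{-(d+1)R_n} \to 0$ as $n\to\infty$, so it is eventually dominated. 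For the second term, the second fact shows $C_d e^{-(d+1)R_n}/24 \geq \frac{C_d}{24(n\log 2)^{1.1}}$, which decays only polynomially in $n$, whereas $C_\Fcal(1+e^{-\kappa_\ast t_0})e^{-\delta n}$ decays exponentially (note $\delta>0$ since $\kappa_\ast>0$), so it too is eventually dominated. Adding the two estimates shows that the right-hand side of~\eqref{eq: Gcal_0} is at most $C_d e^{-(d+1)R_n}/12$ for all large $n$, i.e.\ $n\in\Gcal_0$, completing the proof.

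I do not anticipate a real obstacle. The one place that requires care is that, a priori, $r(t_n)$ could grow faster than linearly in $n$, in which case $e^{-(d+1)r(t_n)}$ might decay faster than $e^{-\delta n}$ and the comparison for the second term above would break down; but this scenario is precisely excluded by the lower bound~\eqref{eq: psi large} on $\psi$, which is why the reduction in Lemma~\ref{lem:psi large} was made. With~\eqref{eq: psi large} in force, each step reduces to an elementary comparison of a polynomially decaying quantity against exponentially decaying ones.
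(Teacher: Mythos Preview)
Your proof is correct and follows essentially the same approach as the paper: both use the identity $e^{-(d+1)r(t_n)}=2^n\psi^d(2^n)$ together with~\eqref{eq: psi large} to get a polynomial lower bound $\gg n^{-1.1}$, compare this against the exponentially decaying terms via Corollary~\ref{cor:growth of tn}, and invoke Lemma~\ref{lem:rUnbounded} to absorb the $C'_d e^{-2(d+1)r(t_n)}$ contribution. The only cosmetic difference is that you split the right-hand side into two pieces each bounded by $C_d e^{-(d+1)R_n}/24$, whereas the paper groups the two exponential terms together first; the content is the same.
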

    \begin{proof}
      As of Corollary~\ref{cor:growth of tn}, the last two terms in the defining
      inequality in~\eqref{eq: Gcal_0} are $O_{\Fcal,\psi,d}(e^{-\delta n})$. On
      the other hand, by definition of $t_{n}$, by~\eqref{eq: psi large}, and by
      Lemma~\ref{lem: psi to r} we have 
      \begin{equation}\label{eq: r condensation}
        e^{-(d+1)r(t_{n})} = 2^{n}\psi^{d}(2^{n})
      \end{equation}
      and therefore $e^{-(d+1)r(t_{n})}\gg n^{-1.1}$ for all $n\in\bN$. Thus we obtain
      \begin{equation*}
        C_{d}e^{-(d+1)r(t_{n})}/24 \geq C_{\Fcal}(e^{-\k_\ast t_n} + e^{-\d n})
      \end{equation*}
      for sufficiently large $n$. The claim now follows from combining this
      with~Lemma~\ref{lem:rUnbounded}.
    \end{proof}

     \begin{corollary}\label{cor: sum mu An star div}
  $\sum_{n\in\Gcal_{0}} \mu(A_n^\ast) =\infty$.
\end{corollary}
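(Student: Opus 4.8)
The plan is to deduce the divergence of $\sum_{n\in\Gcal_0}\mu(A_n^\ast)$ from the Cauchy condensation test applied to the standing hypothesis $\sum_{q\geqslant 1}\psi^d(q)=\infty$, feeding in the lower bound on $\mu(A_n^\ast)$ recorded in~\eqref{eq: measure comparable to r} together with the identity~\eqref{eq: r condensation}.

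First I would use Lemma~\ref{lem:G_0 cofinite} to reduce matters to showing $\sum_{n\geqslant 1}2^n\psi^d(2^n)=\infty$. Indeed, for $n\in\Gcal_0$ the inequality~\eqref{eq: measure comparable to r} combined with $e^{-(d+1)r(t_n)}=2^n\psi^d(2^n)$ from~\eqref{eq: r condensation} gives $\mu(A_n^\ast)\geqslant \frac{C_d}{12}\,2^n\psi^d(2^n)$, so that
\begin{equation*}
  \sum_{n\in\Gcal_0}\mu(A_n^\ast)\;\geqslant\;\frac{C_d}{12}\sum_{n\in\Gcal_0}2^n\psi^d(2^n)\;=\;\frac{C_d}{12}\Bigg(\sum_{n\geqslant 1}2^n\psi^d(2^n)\;-\;\sum_{n\in\N\setminus\Gcal_0}2^n\psi^d(2^n)\Bigg),
\end{equation*}
where the subtracted sum is finite by Lemma~\ref{lem:G_0 cofinite} (each term $2^n\psi^d(2^n)$ being $\leqslant 1$ by~\eqref{eq: psi small}, and there are finitely many of them).

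For the condensation step I would exploit monotonicity of $\psi$: for each $n\geqslant 1$ and each integer $q$ with $2^{n-1}\leqslant q<2^n$ one has $\psi^d(q)\leqslant \psi^d(2^{n-1})$, and there are exactly $2^{n-1}$ such integers, whence $\sum_{2^{n-1}\leqslant q<2^n}\psi^d(q)\leqslant 2^{n-1}\psi^d(2^{n-1})$. Summing over $n\geqslant 1$, and noting that the dyadic blocks $[2^{n-1},2^n)$ partition $\{q\in\N:q\geqslant 1\}$, yields $\infty=\sum_{q\geqslant 1}\psi^d(q)\leqslant\sum_{n\geqslant 1}2^{n-1}\psi^d(2^{n-1})$, i.e. $\sum_{m\geqslant 0}2^m\psi^d(2^m)=\infty$, and in particular $\sum_{n\geqslant 1}2^n\psi^d(2^n)=\infty$. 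Plugging this into the display above concludes the proof.

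There is no genuine difficulty here: the statement is a bookkeeping corollary of the preceding lemmas. The only points needing a little care are that the reduction to $\sum_n 2^n\psi^d(2^n)$ is legitimate precisely because $\N\setminus\Gcal_0$ is \emph{finite} (so the excluded indices contribute a bona fide finite quantity, not merely a convergent tail), and that the condensation inequality must be invoked in the direction that uses monotonicity of $\psi$ to bound a dyadic block of terms by its largest member.
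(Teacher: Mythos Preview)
Your proof is correct and follows essentially the same approach as the paper: both use the identity~\eqref{eq: r condensation} and the lower bound~\eqref{eq: measure comparable to r} together with Lemma~\ref{lem:G_0 cofinite}, with Cauchy condensation supplying the link between $\sum_q\psi^d(q)=\infty$ and $\sum_n 2^n\psi^d(2^n)=\infty$. The paper's proof is simply more terse, leaving the condensation step implicit, whereas you have spelled it out carefully.
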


\begin{proof}
  Since by hypothesis $\psi$ is monotone and satisfies $\sum_{q\geq 1}
  \psi^{d}(q) =\infty$, it follows from~\eqref{eq: r condensation} that
  \begin{equation*}
    \sum_{n\in \N} e^{-(d+1)r(t_n)} = \infty.
  \end{equation*}
  The claim thus follows from Lemma~\ref{lem:G_0 cofinite} and~\eqref{eq:
    measure comparable to r}.
\end{proof}


\subsection{Weak Quasi-independence}

    The goal of this subsection is to verify the Hypothesis~\ref{item:short range independence} of Proposition~\ref{lem: BC divergence} regarding the weak quasi-independence hypothesis of the sets $A_n^\ast$ in the short range.

     \begin{prop}\label{prop: weak quasi-independence}
     
  There exist constants $C\geq 1$ and $\g>0$ such that for all $m,n\in \Gcal_0$ with $m\leq  n$,
  \begin{equation*}
      \mu(A_m^\ast \cap A_n^\ast) \leqslant C \rho_{\min}^{-\g} \mu(A_m^\ast) \max\seti{\mu(A_n^\ast)^{\g/d}, 2^{-\g(n-m)}},
  \end{equation*}
  where $\rho_{\min}$ is the smallest contraction ratio of the IFS $\Fcal$. 
  \end{prop}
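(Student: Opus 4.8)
The strategy is the one sketched in the introduction: use the simplex lemma together with self-similarity to reduce the estimation of $\mu(A_m^\ast\cap A_n^\ast)$ to counting rational points, and then invoke effective equidistribution (via Corollary~\ref{cor:SL equidist}) and absolute decay (Proposition~\ref{prop: friendly}) to control the count. I would proceed as follows.

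\textbf{Step 1: reduce to a single excursion.} First observe that $A_n^\ast$ records the event that the lattice $g_{t_n}u(\xbf)\Ga$ has a primitive vector of a controlled shape; by Lemma~\ref{lem: measure estimate} (and the notation in its proof) we may compare $\mu(A_n^\ast)$ to $e^{-(d+1)r(t_n)}\asymp \mu(A_n^\ast)$ for $n\in\Gcal_0$. Fix $m\le n$ in $\Gcal_0$. The goal is to show that on the set $A_m^\ast$, the conditional $\mu$-measure of $A_n^\ast$ is bounded by $\mu(A_n^\ast)^{\g/d}$ up to the stated factors, or by $2^{-\g(n-m)}$ if $n$ and $m$ are close. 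Partition the support of $\mu$ using a complete prefix set $P=P(e^{-w})$ adapted to a scale $w$ chosen so that $\rho_\a\asymp e^{-w}$ comfortably separates $t_m$ from $t_n$ (concretely $w$ should be a small fixed multiple of $t_n-t_m$, so that after rescaling by $f_\a$ the event $A_m^\ast$ becomes trivial or the event $A_n^\ast$ becomes a single fresh excursion). By Lemma~\ref{lem: comp prefix} and Lemma~\ref{lem: transformation of self-similar measures},
\begin{equation*}
    \mu(A_m^\ast\cap A_n^\ast) = \sum_{\a\in P}\mu(\Kcal_\a)\,\mu\big(f_\a^{-1}(A_m^\ast\cap A_n^\ast)\big).
\end{equation*}

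\textbf{Step 2: the simplex lemma on each cylinder.} On each cylinder $\Kcal_\a$, rescaling by $f_\a$ turns the condition $\xbf\in A_m^\ast$ into a condition at scale $t_m+\log\rho_\a$; since $w$ is chosen so that $t_m+\log\rho_\a<0$ (or bounded), the corresponding excursion is either forced or vacuous, so $f_\a^{-1}(A_m^\ast)$ is either all of $\Kcal$ or covered by a single neighborhood of a rational hyperplane of bounded denominator. Similarly $f_\a^{-1}(A_n^\ast)$ corresponds to an excursion at scale $t_n+\log\rho_\a\approx t_n-t_m$; by Dirichlet/the simplex lemma, inside a cylinder the set of $\xbf$ with two linearly independent good rational approximations of denominator $\le 2^n$ forces those approximants to lie on a common rational affine subspace, so $f_\a^{-1}(A_n^\ast)$ is contained in an $O(\text{something})$-neighborhood of a proper rational affine subspace $\Lcal_\a$. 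This is exactly the input to which Proposition~\ref{prop: friendly} applies: $\mu(\Kcal\cap\Lcal_\a^{(\e)})\ll \e^\g$. One then has to bound the width $\e$ in terms of $\psi(2^n)$ and the scale; the crude bound $\e\ll 2^{-(n-m)/?}$ handles close pairs, and the refined bound using~\eqref{eq: psi small}/\eqref{eq: psi large} gives $\mu(A_n^\ast)^{\g/d}$ (using $\mu(A_n^\ast)\asymp 2^n\psi^d(2^n)$). The factor $\rho_{\min}^{-\g}$ arises because $\rho_\a$ is only pinned down up to $\rho_{\min}$ in the prefix set, exactly as in~\eqref{eq:tight ratios}.

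\textbf{Step 3: reassemble.} Summing over $\a\in P$ and using $\sum_{\a\in P}\mu(\Kcal_\a)=1$ gives
\begin{equation*}
  \mu(A_m^\ast\cap A_n^\ast)\ll \rho_{\min}^{-\g}\,\mu(A_m^\ast)\,\max\{\mu(A_n^\ast)^{\g/d},2^{-\g(n-m)}\},
\end{equation*}
where the factor $\mu(A_m^\ast)$ is recovered because on the cylinders where $f_\a^{-1}(A_m^\ast)$ is not all of $\Kcal$ the total $\mu$-mass of such cylinders is $\asymp\mu(A_m^\ast)$ (by the single-scale estimate of Lemma~\ref{lem: measure estimate}), and on the remaining cylinders one pays the full $\mu(A_n^\ast)$-type bound while the cylinder masses sum to at most $1$; a short bookkeeping argument merges the two contributions. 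I expect the measure estimate of Lemma~\ref{lem: measure estimate} (to control the $A_m^\ast$-cylinders), Proposition~\ref{prop: friendly} (to control the $A_n^\ast$-neighborhoods), and Lemma~\ref{lem: transformation of self-similar measures} (to transport between cylinders) to be the three workhorses.

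\textbf{Main obstacle.} The delicate point is the bookkeeping in Step 2--3: one must choose the scale $w$ (equivalently the prefix set $P$) so that simultaneously (a) the $A_m^\ast$ condition becomes a single, already-resolved excursion on each cylinder, and (b) the $A_n^\ast$ condition, after rescaling, is genuinely a ``fresh'' excursion to which the simplex lemma and absolute decay apply with an exponent $\g$ independent of $m,n$. Getting the dependence on $\psi(2^n)$ right — so that the output is $\mu(A_n^\ast)^{\g/d}$ rather than a weaker power — requires carefully tracking how the width of the rational-hyperplane neighborhood scales with $\psi$, using the normalization $\psi^d(2^n)\le 2^{-n}$ from~\eqref{eq: psi small} and the lower bound~\eqref{eq: psi large}; this is where the case split $\max\{\mu(A_n^\ast)^{\g/d},2^{-\g(n-m)}\}$ comes from, the second term dominating precisely when $n-m$ is too small for the absolute-decay gain to beat the trivial bound. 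I would handle this by treating $n-m\le C\log(1/\psi(2^n))$ and $n-m > C\log(1/\psi(2^n))$ separately, mirroring the structure of Hypothesis~\ref{item:short range independence} in Proposition~\ref{lem: BC divergence}.
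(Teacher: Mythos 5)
Your high-level strategy (partition by a complete prefix set, apply the simplex lemma to get each cylinder's intersection with $A_n^\ast$ inside a neighborhood of a proper affine subspace, apply the absolutely-decaying property, then recover the $\mu(A_m^\ast)$ factor by summing cylinder masses) is the right one and matches the paper's. But two concrete points in the middle are off.

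First, your choice of scale $w$ is wrong. You propose $w$ to be ``a small fixed multiple of $t_n-t_m$''. In the paper the prefix set is $P(\delta)$ with
\begin{equation*}
  \delta = \min\Big\{\frac{\psi(2^m)}{2^m},\ 2^{-\frac{d+1}{d}n}\Big\},
\end{equation*}
i.e.\ the scale is tied to $m$ alone or to $n$ alone, not to the gap $n-m$. The first option makes $\rho_\a$ comparable to the radius of the boxes $B(\bfm{p}/q,\psi(2^m)/|q|)$ that make up $A_m^\ast$, which is exactly what lets you say that a cylinder touching $A_m^\ast$ is actually contained in a thickened copy $A_m^\ast((K_\ast+1)\psi)$; the second option guarantees $\delta\geq\psi(2^n)/2^n$ so that the simplex/absolute-decay step produces a power of $\mu(A_n^\ast)^{1/d}$. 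If you take $w\asymp t_n-t_m$, then whenever $n-m$ is small you get $\rho_\a\approx 1$ and the cylinders are too coarse to localize the $A_m^\ast$ boxes at all, and whenever $n,m$ are both large with moderate gap you again lose this matching; the rest of the bookkeeping does not go through. The case split in the statement comes from which term in the $\min$ dominates, not from a $\log(1/\psi(2^n))$ threshold in $n-m$.

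Second, recovering the $\mu(A_m^\ast)$ factor is more than ``a short bookkeeping argument'' from Lemma~\ref{lem: measure estimate}. The chain is: if $\Kcal_\a\cap A_m^\ast\neq\emptyset$ and $\rho_\a\lesssim\psi(2^m)/2^m$, then $\Kcal_\a\subseteq A_m^\ast((K_\ast+1)\psi)$, and hence
\begin{equation*}
  \sum_{\substack{\a\in P(\delta)\\ \Kcal_\a\cap A_m^\ast\neq\emptyset}}\mu(\Kcal_\a)\ \leqslant\ \mu\big(A_m^\ast((K_\ast+1)\psi)\big)\ \leqslant\ A'\,\mu(A_m^\ast).
\end{equation*}
The last inequality is a genuine doubling estimate (Proposition~\ref{prop: doubling}), which you need to prove by re-running the measure estimate for the dilated approximation function $(K_\ast+1)\psi$ and comparing its induced function $\tilde r(\tilde t_m)$ to $r(t_m)$. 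This is a separate input that uses the effective-equidistribution hypothesis once more; it is not immediate from the two-sided bound of Lemma~\ref{lem: measure estimate} applied at a single $\psi$. You should make this explicit rather than folding it into a bookkeeping remark.

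One smaller remark: the paper does not rescale the events by $f_\a^{-1}$. It works directly on $\Kcal_\a\cap A_n^\ast$ in the original coordinates, covers a small ball around $\Kcal_\a$ by $O_d(1)$ balls of volume $<2^{-(d+1)n}/d!$, applies the simplex lemma on each, and then uses the $(C,\g)$-absolute decay in the form $\mu(\Kcal_\a\cap\Lcal^{(\e)})\leqslant C(\e/\rho_\a)^\g\mu(\Kcal_\a)$ with $\e=\psi(2^n)/2^{n-1}$. Your rescaled-coordinates version would be equivalent once the scale is fixed correctly, but the direct version avoids having to transport the event $A_n^\ast$ to a different basepoint; I'd recommend following it.
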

  
  \begin{remark}
  The constant $\rho_{\min}$ is not an intrinsic parameter to $\mu$ and hence is distinguished from the constant $C$ in the statement. 
  \end{remark}

    We remark that the proof of Proposition~\ref{prop: weak quasi-independence} relies on a doubling estimate for the measures of the sets $A_n^\ast$; cf.~Proposition~\ref{prop: doubling}.
    This step in turn relies on our effective equidistribution hypothesis. Additionally, a key ingredient in the proof is the following simplex lemma, whose idea is attributed to Davenport~\cite{Davenport-simplex}. This observation has found numerous applications in problems in Diophantine approximation.

\begin{prop}\label{prop: simplex}
Suppose $B \subset \R^d$ is a ball of radius $r>0$ in our fixed norm and let $N\geq 1$. Denote by $\Qcal(N)$ the set of all the rational points $\bfm{p}/q$ satisfying $0<|q| < N$ and $\bfm{p}\in \Z^d$. Assume that $\mrm{Vol}(B) < \frac{1}{d! N^{d+1}}$.
Then, there exists an affine hyperplane $\Lcal$ such that $B\cap \Qcal(N) \subset \Lcal$.
\end{prop}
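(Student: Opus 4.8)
The standard argument (going back to Davenport) proceeds by contradiction. Suppose that the points of $B\cap \Qcal(N)$ do \emph{not} all lie on a single affine hyperplane. Then among them we can find $d+1$ affinely independent points, say $\bfm{p}_0/q_0, \bfm{p}_1/q_1,\dots,\bfm{p}_d/q_d$, with each $0<|q_i|<N$ and each $\bfm{p}_i\in\Z^d$. These $d+1$ points span a nondegenerate simplex $\Delta$ in $\R^d$ whose $d$-dimensional volume is, on the one hand, bounded above by the volume of a simplex with vertices in the ball $B$ of radius $r$, and on the other hand bounded below by a quantity controlled by the common denominator $q_0q_1\cdots q_d$.

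The key computation is the lower bound on $\mathrm{Vol}(\Delta)$. The volume of the simplex with vertices $v_0,\dots,v_d\in\R^d$ equals $\frac{1}{d!}\lvert \det(v_1-v_0,\dots,v_d-v_0)\rvert$. Clearing denominators, write $v_i=\bfm{p}_i/q_i$; then $q_0q_1\cdots q_d\cdot \det(v_1-v_0,\dots,v_d-v_0)$ is a nonzero integer (nonzero precisely because the points are affinely independent), hence has absolute value at least $1$. Therefore
\begin{equation*}
    \mathrm{Vol}(\Delta) \geq \frac{1}{d!\,\lvert q_0q_1\cdots q_d\rvert} > \frac{1}{d!\,N^{d+1}},
\end{equation*}
using $\lvert q_i\rvert < N$ for all $i$. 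For the upper bound, all $d+1$ vertices lie in the ball $B$ of radius $r$ (in our fixed norm), and a simplex inscribed in a set of diameter at most $2r$ — more simply, a simplex all of whose vertices lie in a given convex body $B$ — has volume at most $\mathrm{Vol}(B)$, since the simplex is contained in the convex hull of its vertices, which is contained in $B$. Combining the two bounds gives $\mathrm{Vol}(B) > \frac{1}{d!\,N^{d+1}}$, contradicting the hypothesis $\mathrm{Vol}(B) < \frac{1}{d!\,N^{d+1}}$. Hence all points of $B\cap\Qcal(N)$ lie on a common affine hyperplane $\Lcal$, as claimed.

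The only point requiring a little care — and the one I would expect a referee to scrutinize — is the passage ``clearing denominators yields a nonzero integer.'' One should check that $q_0\cdots q_d \det(v_1-v_0,\dots,v_d-v_0)$ is genuinely an integer: expanding the determinant by multilinearity in the columns $v_i-v_0 = \bfm{p}_i/q_i - \bfm{p}_0/q_0$, every resulting term is a ratio of an integer by a product of a subset of the $q_j$'s, so multiplying through by $q_0\cdots q_d$ lands in $\Z$. Its nonvanishing is exactly the affine independence of the $d+1$ chosen points, which is guaranteed by the assumption that $B\cap\Qcal(N)$ is not contained in any hyperplane (if it were not possible to pick $d+1$ affinely independent points, the whole set would lie in an affine subspace of dimension $<d$, hence in some hyperplane). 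I would also remark that the statement is vacuous unless $B\cap\Qcal(N)$ is nonempty, and that the bound on volumes is scale-invariant in the natural way, so no normalization of the norm is needed beyond what is fixed in Section~\ref{sec: IFS}.
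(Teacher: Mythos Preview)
Your proof is correct and follows essentially the same argument as the paper: both proceed by contradiction, pick $d+1$ affinely independent rational points in $B\cap\Qcal(N)$, bound the volume of the simplex they span from below by $\frac{1}{d!\,|q_0\cdots q_d|}>\frac{1}{d!\,N^{d+1}}$ via the integrality of the cleared determinant, and from above by $\mrm{Vol}(B)$ via containment. The paper phrases the determinant as that of the $(d+1)\times(d+1)$ matrix with rows $(1,p_{i,1}/q_i,\dots,p_{i,d}/q_i)$ rather than your $\det(v_1-v_0,\dots,v_d-v_0)$, but these are of course equal, and your added justification that clearing denominators lands in $\Z$ is a welcome detail.
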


\begin{proof}
The lemma is well-known and we include a proof for completeness. If $B\cap \Qcal(N)$ contains $d$ or fewer points, then the lemma follows in this case.
Otherwise, suppose that $\bfm{p}_i/q_i \in B\cap \Qcal(N)$, $1\leq i\leq d+1$, are distinct points which are not contained in any proper affine hyperplane. Denote by $\Delta$ the $d$-dimensional simplex with vertices given by the points $\bfm{p}_i/q_i$.
Then, $\Delta$ is contained in $B$ and hence
\begin{equation}\label{eq: simplex upper}
    |\Delta| \leqslant \mrm{Vol}(B),
\end{equation}
where $|\Delta|$ denotes the volume of $\Delta$.

For each $i$, write $\bfm{p}_i = (p_{i,1},\dots,p_{i,d})$.
The product of $d!$ and the volume of $\Delta$ equals the absolute value of the determinant of the matrix whose $i^{th}$ row is $(1,p_{i,1}/q_i,\dots, p_{i,d}/q_i)$. It follows that
\begin{equation}\label{eq: simplex lower}
     d!\lvert\Delta\rvert\geqslant \frac{1}{\lvert q_1 \cdots q_{d+1}\rvert}\geqslant\frac{1}{N^{d+1}}.
\end{equation}
We obtain a contradiction upon combining~\eqref{eq: simplex upper} and~\eqref{eq: simplex lower} with our hypothesis that $\mrm{Vol}(B) < \frac{1}{d! N^{d+1}}$.
\end{proof}

The next ingredient is the following doubling property of the measures of the sets $A_k^\ast$.

\begin{prop}\label{prop: doubling}
For every $A\geq 1$ there is $A^{\prime}\geq 1$ such that
\begin{equation}\label{eq: general doubling}
    \mu(A_m^\ast(A\psi)) \leqslant A^{\prime}\mu(A_m^\ast(\psi)),
\end{equation}
whenever $m\in \Gcal_0$.
\end{prop}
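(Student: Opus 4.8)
The plan is to reduce the doubling statement for the sets $A_m^\ast(A\psi)$ to an equidistribution estimate via Siegel transforms, exactly as in the proof of Lemma~\ref{lem: measure estimate}. First I would fix $A\geq 1$ and $m\in\Gcal_0$, and recall from~\eqref{eq: r condensation} and the definition of $t_m$ that $e^{-(d+1)r(t_m)} = 2^m\psi^d(2^m)$, so that multiplying $\psi$ by $A$ has the effect of shifting $r(t_m)$ by an additive constant: if $r_A$ denotes the function attached to $A\psi$ by Lemma~\ref{lem: psi to r} and $t_m^A$ the corresponding times (with $e^{\l_A(t_m^A)}=2^m$), then $e^{-(d+1)r_A(t_m^A)} = A^d e^{-(d+1)r(t_m)}$, i.e.\ the relevant cusp parameter is scaled by $A^{d/(d+1)}$. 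Thus $A_m^\ast(A\psi)$ is captured (up to the lower bound $2^{m-1}\le |q|<2^m$ on denominators, which only shrinks the set) by a Siegel transform of the indicator of a box $U_m^A$ whose side lengths are $A^{d/(d+1)}$ times larger than those of the box $U_m$ used for $\psi$.

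The key steps, in order, would be: (1) write $\mu(A_m^\ast(A\psi)) \le \int \vp^{+}_{m,A}(g_{t_m^A}u(\xbf)\Ga)\,d\mu(\xbf)$ where $\vp^{+}_{m,A}$ is a smooth majorant of the indicator of the corresponding cusp neighborhood $\mathscr{C}(A^{d/(d+1)}e^{-r(t_m)})$, produced by Proposition~\ref{prop: approximate cusp} with the $\mrm{SO}_d(\R)$-invariance so that our equidistribution hypothesis (Corollary~\ref{cor:SL equidist} for functions invariant under $\seti{k_i:i\in\L}$) applies; (2) apply Corollary~\ref{cor:SL equidist} to obtain $\int \vp^{+}_{m,A}\,d\mu = \int\vp^+_{m,A}\,dm_{G/\Ga} + O(\Scal_{\infty,\ell}(\vp^+_{m,A})\,e^{-\k_\ast t_m^A})$, noting that the Sobolev norms $\Scal_{\infty,\ell}(\vp^+_{m,A})$ are bounded by a constant $S_\eta$ depending only on $\eta$ and $A$ (via Proposition~\ref{prop: approximate cusp}, since rescaling the cusp parameter by a fixed $A$ does not degrade the Sobolev norms of the mollified indicators), and that $t_m^A \ge t_m - O_A(1) \gg m$ by Corollary~\ref{cor:growth of tn}; (3) estimate the main term using Proposition~\ref{prop: cusp measure} (or the Siegel summation bound as in Lemma~\ref{lem: measure estimate}), giving $\int\vp^+_{m,A}\,dm_{G/\Ga} \ll_d A^d\, e^{-(d+1)r(t_m)}$; (4) invoke that $m\in\Gcal_0$, so by~\eqref{eq: measure comparable to r} we have $\mu(A_m^\ast(\psi)) \ge C_d e^{-(d+1)r(t_m)}/12$, while the error term $e^{-\k_\ast t_m^A} \le e^{-\d m} \ll e^{-(d+1)r(t_m)}$ again by membership in $\Gcal_0$ (cf.~\eqref{eq: Gcal_0}); combining these, $\mu(A_m^\ast(A\psi)) \ll_{d,A} e^{-(d+1)r(t_m)} \ll_{d} \mu(A_m^\ast(\psi))$, which gives the claim with $A' = A'(A,d)$.

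The main obstacle I anticipate is bookkeeping rather than conceptual: one must be careful that replacing $\psi$ by $A\psi$ changes both the cusp parameter \emph{and} the relabeling $t\mapsto \l(t)$ of times, and check that the error term in Corollary~\ref{cor:SL equidist}, evaluated at the shifted time $t_m^A$, is still controlled by the quantities appearing in the definition of $\Gcal_0$. Since $t_m^A$ and $t_m$ differ by at most a constant depending on $A$ (because $\l_A$ and $\l$ differ by an additive constant, as $\psi$ and $A\psi$ differ multiplicatively), the factor $e^{-\k_\ast t_m^A}$ is within a constant multiple of $e^{-\k_\ast t_m}$, and the argument closes. A secondary point requiring a line of care is that $A_m^\ast(A\psi)$ uses the restriction $2^{m-1}\le|q|<2^m$ whereas the Siegel-transform majorant naturally controls all $|q|<2^m$; but this only makes the majorant larger, so the upper bound is unaffected, exactly as in the treatment of $I_n \supseteq A_n^\ast$ in Lemma~\ref{lem: measure estimate}. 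I would also remark that, as in Lemma~\ref{lem: measure estimate}, one uses that a unimodular lattice has a short vector iff it has a short primitive vector, so that the Siegel transform of a box indicator detects membership in $A_m^\ast$.
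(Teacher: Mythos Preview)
Your proposal is correct and follows essentially the same route as the paper. The paper streamlines steps (1)--(3) by directly invoking the upper bound of Lemma~\ref{lem: measure estimate} applied to $\tilde\psi=A\psi$ (giving $\mu(A_m^\ast(A\psi))\le 2C_d e^{-(d+1)\tilde r(\tilde t_m)}+C_\Fcal e^{-\k_\ast\tilde t_m}$) rather than rebuilding the Siegel-transform majorant, and then uses $e^{-(d+1)\tilde r(\tilde t_m)}=A^d e^{-(d+1)r(t_m)}$ together with~\eqref{eq: measure comparable to r} and the linear lower bound $\tilde t_m\ge \tilde\tau_0+md\log 2/(d+1)$ exactly as you outline. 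One small correction: your claim that ``$\l_A$ and $\l$ differ by an additive constant'' is not quite right --- rather $\tilde\l(s)=\l(s+c)$ with $c=\tfrac{d\log A}{d+1}$, so the functions are related by a shift of the \emph{argument}; but your conclusion $t_m^A=t_m-c$ (hence $e^{-\k_\ast t_m^A}\asymp_A e^{-\k_\ast t_m}$) is correct and is what the argument actually needs.
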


\begin{proof}
 Let $\tilde{\psi} = A\psi$ and let $\tilde{r}$ and $\tilde{\l}$ be the corresponding functions provided by Lemma~\ref{lem: psi to r}.
    Let $\tilde{t}_m$ be chosen so that $e^{\tilde{\l}(\tilde{t}_m)} = 2^m $.
    By Lemma~\ref{lem: measure estimate}, we have
    \begin{equation*}
        \mu(A_m^\ast(\tilde{\psi})) \leqslant
        2C_d  e^{-(d+1)\tilde{r}(\tilde{t}_m)}
        + C_\Fcal e^{-\k_\ast \tilde{t}_m}. 
    \end{equation*}
    It then follows from the relation $e^{-(d+1)r(t)} = e^{\l(t)}\psi^{d}(e^{\l(t)})$ (which also holds for $\tilde{r}$ and $\tilde{\l}$ in place of $r$ and $\l$ respectively) that 
    \begin{equation*}
        e^{-(d+1)\tilde{r}(\tilde{t}_m)} = 2^m A\psi^{d}(2^m) = Ae^{-(d+1)r(t_m)}
        \leqslant A\frac{12}{C_d} \mu(A_m^\ast(\psi)),
        \end{equation*}
    where we used~\eqref{eq: measure comparable to r} and the fact that $m\in \Gcal_0$ for the last inequality.
    Moreover, again using Lemma~\ref{lem: psi to r}(iv) and an induction argument, there is a constant $\tilde{\tau}_{0}$ depending solely on $A$ and $\psi$ such that $\tilde{t}_m \geqslant\tilde{\tau}_{0}+m d\log 2 /(d+1)$.
    Since $m\in \Gcal_0$, it follows that
    \begin{equation*}
         C_\Fcal e^{-\k_\ast \tilde{t}_m} \leqslant\frac{e^{-\k_\ast\tilde{\tau}_{0}}C_d}{12} e^{-(d+1)r(t_m)} \leqslant
         e^{-\k_\ast\tilde{\tau}_{0}}\mu(A_m^\ast(\psi)).
    \end{equation*}
   We, thus, obtain
   \begin{equation}\label{eq: doubling}
       \mu(A_m^\ast(\tilde{\psi})) \leqslant \left[24 A + e^{-\k_\ast\tilde{\tau}_{0}} \right] \mu(A_m^\ast(\psi)).
    \end{equation}
\end{proof}

\begin{proof}[Proof of Proposition~\ref{prop: weak quasi-independence}]
 For $y\in \R^d$ and $r\geq 0$, we write $B(y,r)$ for the ball around $y$ of radius $r$ in our fixed norm $\norm{\cdot}$ on $\R^d$.
   For all $k\in \N$, we can write $A_k^\ast$ as a union of boxes as follows:
   \begin{equation}\label{eq: union of boxes}
       A_k^\ast = \bigcup_{2^{k-1}\leq |q| < 2^k} 
       \bigcup_{\bfm{p}\in \Z^d: (\bfm{p},q)\in P(\Z^{d+1})} B\left(\frac{\bfm{p}}{q}, \frac{\psi(2^k)}{\lvert q\rvert}\right),
   \end{equation}
   where for $\bfm{p}=(p_1,\dots,p_d)$, we use $(\bfm{p},q)$ to denote the vector $(p_1,\dots,p_d,q)$.
   We denote by $\Zcal(k)$ the set of centers $\bfm{p}/q$ of the boxes in the union in~\eqref{eq: union of boxes}.
   Denote by $\Lambda$ the finite index set of the maps in the IFS $\Fcal$.
   We let $K$ denote the diameter of the fractal $\Kcal$ (in the metric induced by $\norm{\cdot}$) and denote by $K_{\ast}$ the maximum of $1$ and $K$.

   Denote by $\mathfrak{c}_d$ the volume of the unit ball in $\R^d$ in the norm $\norm{\cdot}$.
   Fix $m\leq n$, with $m,n\in \Gcal_0$, and define
   \begin{equation}\label{eq: case 1 n close}
       \d = \min\seti{\frac{\psi(2^m)}{2^m} , 2^{-\frac{d+1}{d} n}}.
   \end{equation}
   The monotonicity of $\psi$ implies that $\psi(2^m)/2^m \geq \psi(2^n)/2^n$.
   Moreover, in view of~\eqref{eq: psi small}, we have that
   $ 2^{-\frac{d+1}{d} n} \geq \psi(2^n)/2^n$
   and thus we have
   \begin{equation}\label{eq: delta big}
       \frac{\psi(2^n)}{2^n} \leqslant \d.
   \end{equation}
   Let $P(\d)$ be the complete prefix set defined in~\eqref{eq: complete prefix for epsilon}, with $\e=\d$.
   By Lemma~\ref{lem: comp prefix}, we have
   \begin{align}\label{eq: sum over alpha}
       \mu(A_m^\ast \cap A_n^\ast) &= \sum_{\a\in P(\d)} 
       \mu(\Kcal_\a \cap A_m^\ast \cap A_n^\ast) 
       \leqslant \sum_{\substack{\a\in P(\d),\\
       \Kcal_\a \cap A_m^\ast \neq\emptyset}}
       \mu(\Kcal_\a \cap A_n^\ast).
   \end{align}
   Fix $\a\in P(\d)$. Note that the diameter of $\Kcal_\a$ is $\leq K_{\ast}\rho_\a$.
   Hence, using~\eqref{eq: delta big}, for any ball $B_{\a}$ of radius $5K_{\ast}\d$
   and center in $\Kcal_\a$, we have
   \begin{equation*}
       \seti{\xbf \in \R^d: d(\xbf,\Kcal_\a) \leqslant \frac{2\psi(2^n)}{2^{n-1}}} \subseteq B_{\a},
   \end{equation*}
   where $d$ denotes the distance in the metric induced by $\norm{\cdot}$.
   Let $B(\a,n)=\Zcal(n) \cap B_{\a}$.
   We note that if $\Kcal_\a$ meets a box $B(\bfm{p}/q,\psi(2^n)/\lvert q\rvert)$ for some $\bfm{p}/q\in \Zcal(n)$, then $\bfm{p}/q \in B(\a,n)$.
   Recall that there is $M_{d}\in\bN$ depending only on $d$ such that $B_\a$ can
   be covered using at most $M_{d}$ balls of volume strictly smaller than
   $2^{-(d+1)n}/d!$.
   It follows from Proposition~\ref{prop: simplex} that there is a collection $\mf{L}_\a =
   \seti{\Lcal_i}$ consisting of at most $M_{d}$ hyperplanes so that
   \begin{equation*}
       B(\a,n) \subset \bigcup_{\Lcal_i\in\mf{L}_\a} \Lcal_i.
   \end{equation*}
   In particular, letting $\varepsilon=\psi(2^n)/2^{n-1}$, this shows that for all $\pbf/q\in\Zcal(n)$, we have
   \begin{equation}\label{eq: neighbourhood hyperplane}
     \Kcal_{\alpha}\cap B\left(\frac{\pbf}{q},\frac{\psi(2^{n})}{\lvert q\rvert}\right)\subseteq \bigcup_{\Lcal_i\in\mf{L}_\a} \Kcal_{\alpha}\cap\Lcal_{i}^{(\varepsilon)},
   \end{equation}
   where $\Lcal_i^{(\e)}$ is the open $\e$-neighborhood of $\Lcal_i$.
   
   Since $\Fcal$ is irreducible and satisfies the open set condition, Proposition~\ref{prop: friendly} shows that $\mu$ is $(C,\g)$-absolutely decaying for some $C\geq 1$ and $\g>0$.
   Combined with~\eqref{eq: sum over alpha} and~\eqref{eq: neighbourhood hyperplane}, this yields
   \begin{align}\label{eq: apply friendly}
       \mu(A_m^\ast \cap A_n^\ast) &\leqslant
       \sum_{\substack{\a\in P(\d),\\
       \Kcal_\a \cap A_m^\ast \neq\emptyset}} 
       \sum_{\Lcal_i\in\mf{L}_\a} \mu(\Kcal_\a \cap \Lcal_i^{(\e)})
       \leqslant C 
       \sum_{\substack{\a\in P(\d),\\
        \Kcal_\a \cap A_m^\ast \neq\emptyset}}
       \lvert \mf{L}_\a \rvert\left(\frac{\e}{\rho_\a}\right)^\g \mu(\Kcal_\a)\nonumber \\
       &\leqslant \frac{C_1}{\rho_{\min}^\g } \left(\frac{\psi(2^n)/2^n}{\d}\right)^\g 
       \sum_{\substack{\a\in P(\d),\\
       \Kcal_\a \cap A_m^\ast \neq\emptyset}}
       \mu(\Kcal_\a),
   \end{align}
   where we set $C_1 = 2^\g C M_{d}$.

   The next ingredient is to observe that if $\a\in P(\d)$ and $\Kcal_\a$ intersects $A_m^\ast$ non-trivially, then we have that $\Kcal_\a$ is contained in $A_m^\ast((K_\ast+1)\psi)$ by choice of $\delta$, where $A_m^\ast((K_\ast+1)\psi)$ is as in~\eqref{def: A_n^ast} with the function $(K_\ast+1)\psi$ in place of $\psi$.
   Hence, by Proposition~\ref{prop: doubling}, there exists $A^{\prime}\geq 1$, depending only on $\psi$ and $K_\ast$, such that
   \begin{equation}\label{eq: apply doubling}
       \sum_{\substack{\a\in P(\d),\\
       \Kcal_\a \cap A_m^\ast \neq\emptyset}}
       \mu(\Kcal_\a) \leqslant A^{\prime}
       \mu(A_m^\ast).
   \end{equation}
   
   We can now conclude the proof of Proposition~\ref{prop: weak quasi-independence}.
   First we note that as $\psi$ is by assumption non-increasing and as $m\leq n$, we have
   \begin{equation}\label{eq: magic identity}
     \frac{\psi(2^n)/2^n}{\psi(2^m)/2^m}
      \leqslant 2^{-(n-m)}.  
   \end{equation}
   This estimate takes care of the case $\d = \psi(2^m)/2^m$.
   For the second case, we recall that $e^{\l(t_{n})}=2^{n}$. Hence~\eqref{eq: lamda(t)}-\eqref{eq: lamda to L} imply that
   \begin{equation*}
     \frac{\psi(2^n)/2^{n}}{2^{-(d+1)n/d}}  = 2^{n/d}\psi(2^n) = e^{-\frac{d+1}{d}r(t_n)}.
   \end{equation*}
   Hence, it follows by~\eqref{eq: measure comparable to r}, since $n\in \Gcal_0$, that
   \begin{equation}\label{eq: apply effective equidistribution}
       \left(\frac{\psi(2^n)/2^n}{2^{-(d+1)n/d}}\right)^\g \leqslant\left( \frac{12}{C_d} \mu(A_n^\ast)\right)^{\g/d}.
   \end{equation}
   The lemma follows upon combining~\eqref{eq: apply friendly},~\eqref{eq: apply doubling},~\eqref{eq: magic identity}, and~\eqref{eq: apply effective equidistribution}.

\end{proof}


\subsection{Quasi-independence and Weak Monotonicity}\label{sec:QI-and-WM}

    The goal of this subsection is to verify Hypotheses~\ref{item:long range independence} and~\ref{item:weak monotone} of Proposition~\ref{lem: BC divergence}.
    
    \begin{lem}\label{lem: quasi-independence}
    There exist constants $C'_\ast\geq 1$ and $\s,\e'_\ast>0$ such that the following holds.
    For all sufficiently large $m,n\in \Gcal_0$ satisfying
    \begin{equation}\label{eq:separation for n and m}
        n\geqslant C'_\ast m, \qquad \text{or} \qquad m\leqslant n \leqslant (1+\e'_\ast) m,
    \end{equation}
    we have
    \begin{equation*}
        \mu(A_m^\ast \cap A_n^\ast) \leqslant 576
        \mu(A_m^\ast) \mu(A_n^\ast) + C_\Fcal^{\prime} \left( e^{-\s m} \mu(A_n^\ast) + e^{-\s (n-m)}\right)
    \end{equation*}
    for some constant $C_\Fcal^{\prime} \geq 1$.
    \end{lem}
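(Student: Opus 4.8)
## Proof proposal for Lemma~\ref{lem: quasi-independence}

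The plan is to derive the quasi-independence estimate from the effective double equidistribution theorem, Proposition~\ref{prop: double equidist}, in much the same spirit as the measure estimate in Lemma~\ref{lem: measure estimate}, but now with two cusp neighborhoods entering simultaneously. First I would record the setup: as in the proof of Lemma~\ref{lem: measure estimate}, the events $A_m^\ast$ and $A_n^\ast$ are sandwiched between the sets $I_k=\{\xbf:\chi_k(g_{t_k}u(\xbf)\Ga)=1\}$ (where one drops the lower bound on the denominator $q$) and the sets $E_k$ measuring short vectors in the ``bottom'' direction. Using $A_n^\ast\subseteq I_n$, $\chi_m\le\vp_m^+$ from Proposition~\ref{prop: approximate cusp}, and the analogous smoothed majorants $\tilde\vp_n^+$ for the Siegel transforms, I would bound
\[
\mu(A_m^\ast\cap A_n^\ast)\le \int \vp_m^+\big(g_{t_m}u(\xbf)\Ga\big)\,\tilde\vp_n^+\big(g_{t_n}u(\xbf)\Ga\big)\,d\mu(\xbf),
\]
both factors being non-negative, $\mrm{SO}_d(\R)$-invariant (hence $\seti{k_i}$-invariant) functions with Sobolev norms bounded by $C_\Fcal$ uniformly in $m,n$. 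Now apply Proposition~\ref{prop: double equidist} with $\vp=\vp_m^+\circ g_{-(t_n-t_m)}$ (note $t_n\ge t_m$ since $t_k$ is increasing), $\psi=\tilde\vp_n^+$, $t=t_n$, $s=t_m$: the separation hypothesis $t_n\ge C_\ast t_m$ or $t_m\le t_n\le(1+\e_\ast)t_m$ translates, via Corollary~\ref{cor:growth of tn} (which gives $t_n\gtrsim n$) and the Lipschitz growth bounds~\eqref{eq: lamda growth}–\eqref{eq: weak monotone} for $\l$, into the stated conditions on $m$ and $n$ in~\eqref{eq:separation for n and m}, for suitable $C'_\ast$ and $\e'_\ast$ depending on $C_\ast,\e_\ast,d$. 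Here one uses that $\Scal_{\infty,\ell}(\vp_m^+\circ g_{-(t_n-t_m)})\ll e^{\ell(t_n-t_m)}\Scal_{\infty,\ell}(\vp_m^+)$ by Lemma~\ref{lem:Sobolev}\eqref{item:scaling of Sobolev norms}, and one arranges the exponent $\delta$ in Proposition~\ref{prop: double equidist} to beat this polynomial-in-$e^{t_n-t_m}$ loss by choosing $\e'_\ast$ small and $C'_\ast$ large.

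The output of Proposition~\ref{prop: double equidist} is
\[
\mu(A_m^\ast\cap A_n^\ast)\le \Big(\int\vp_m^+\;dm_{G/\Ga}\Big)\int\tilde\vp_n^+\big(g_{t_n}u(\xbf)\Ga\big)\,d\mu(\xbf)+O\big(\Scal(\vp_m^+)\Scal(\tilde\vp_n^+)e^{\ell(t_n-t_m)}e^{-\delta|t_n-t_m|}\big),
\]
and after absorbing the $e^{\ell(t_n-t_m)}$ factor into the exponential (legitimate under our separation hypotheses, at the cost of shrinking $\delta$ to some $\s>0$) this reads
\[
\mu(A_m^\ast\cap A_n^\ast)\le \Big(\int\vp_m^+\;dm_{G/\Ga}\Big)\mu(A_n^\ast)+O\big(C_\Fcal^2\,e^{-\s(n-m)}\big),
\]
where I also used $\int\tilde\vp_n^+(g_{t_n}u(\xbf))d\mu\ll\mu(I_n)+C_\Fcal e^{-\k_\ast t_n}\ll\mu(A_n^\ast)+C_\Fcal e^{-\s n}$ (via Corollary~\ref{cor:SL equidist} and Corollary~\ref{cor:growth of tn}, as in Lemma~\ref{lem: measure estimate}), which contributes the $e^{-\s m}\mu(A_n^\ast)$ and a further $e^{-\s n}\le e^{-\s(n-m)}$ term. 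It remains to replace $\int\vp_m^+\,dm_{G/\Ga}$ by a multiple of $\mu(A_m^\ast)$. By Proposition~\ref{prop: approximate cusp}\eqref{item:bound measure of vp_epsilon} we have $\int\vp_m^+\,dm_{G/\Ga}\le(1+\eta)C_d e^{-(d+1)r(t_m)}$, and by~\eqref{eq: measure comparable to r} (valid since $m\in\Gcal_0$) we have $e^{-(d+1)r(t_m)}\le 12\,\mu(A_m^\ast)/C_d$; together with the choice of $\eta$ in~\eqref{eq:eta}, which gives $(1+\eta)^{d+2}=4/3$ and in particular $(1+\eta)\le 2$, this yields $\int\vp_m^+\,dm_{G/\Ga}\le 24\,\mu(A_m^\ast)$. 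Multiplying the $24$ from this step by the $24$ implicit in passing from $\tilde\vp_n^+$ back to $\mu(A_n^\ast)$ (same constant, same argument applied at level $n$) produces the constant $576$ in the statement; the remaining $O(\cdot)$ terms collect into $C_\Fcal'(e^{-\s m}\mu(A_n^\ast)+e^{-\s(n-m)})$ after enlarging the constant $C_\Fcal'\ge 1$ to dominate all implied constants.

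The main obstacle, and the step requiring the most care, is the bookkeeping around the Sobolev-norm blow-up: Proposition~\ref{prop: double equidist} forces us to conjugate $\vp_m^+$ by $g_{-(t_n-t_m)}$, which inflates its Sobolev norm by $e^{\ell(t_n-t_m)}$, and this must be reconciled with the decay rate $e^{-\delta|t_n-t_m|}$. In the long-range regime $t_n\ge C_\ast t_m$ this is harmless once $C_\ast$ is large enough that $t_n-t_m\ge \tfrac12 t_n\gtrsim n$, so the error is genuinely $e^{-\s n}\le e^{-\s(n-m)}$; in the intermediate regime $t_m\le t_n\le(1+\e_\ast)t_m$ one has $t_n-t_m\le\e_\ast t_m$, which is small relative to $t_m$, and one simply needs $\delta-\ell\e_\ast/(\text{something})>0$, achievable by choosing $\e_\ast$ (hence $\e'_\ast$) small at the outset. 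A secondary technical point is checking that the separation conditions for $t_n,t_m$ really do follow from those for $n,m$: this is where~\eqref{eq: lamda growth} (giving $t_{n}-t_{m}\le \tfrac{d+1}{d}(\text{gap in }\l)$ is reversed — one needs $\l(t_n)-\l(t_m)=(n-m)\log 2$ and the two-sided comparison of $t_k$ with $n$) and~\eqref{eq: weak monotone} are invoked, together with Corollary~\ref{cor:growth of tn}. None of this is deep, but it is the place where an error in constants would propagate.
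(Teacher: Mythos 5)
Your overall architecture --- sandwich $A_k^\ast$ by the cusp approximants $\vp_k^+$ of Proposition~\ref{prop: approximate cusp}, feed the resulting product of two smoothed indicators into Proposition~\ref{prop: double equidist}, then convert both main-term factors back into $\mu(A_m^\ast)$ and $\mu(A_n^\ast)$ via~\eqref{eq: measure comparable to r} and Corollary~\ref{cor:SL equidist} to land on the constant $24\times 24 = 576$ --- is the paper's. The translation of the separation hypothesis from $(m,n)$ to $(t_m,t_n)$ via $\l(t_k)=k\log 2$,~\eqref{eq: psi small},~\eqref{eq: psi large} and Corollary~\ref{cor:growth of tn} is also the paper's route.

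However, your application of Proposition~\ref{prop: double equidist} contains a genuine error. You feed in $\vp=\vp_m^+\circ g_{-(t_n-t_m)}$, $\psi=\tilde\vp_n^+$ at times $t=t_n$, $s=t_m$. But then the left-hand side of that proposition becomes
\[
\int \vp(g_{t_n}u(\xbf)\Ga)\psi(g_{t_m}u(\xbf)\Ga)\,d\mu
= \int \vp_m^+(g_{t_m}u(\xbf)\Ga)\,\tilde\vp_n^+(g_{t_m}u(\xbf)\Ga)\,d\mu,
\]
which places \emph{both} factors at time $t_m$ rather than producing the mixed-time integral you actually need to bound. So the substitution does not even yield the quantity you are trying to estimate; the displayed output in your writeup does not follow from it. Moreover, the conjugation is not an innocent reparametrization: it inflates the Sobolev norm by $\norm{\mrm{Ad}(g_{-(t_n-t_m)})}_\infty^\ell\asymp e^{(d+1)\ell(t_n-t_m)/d}$ (Lemma~\ref{lem:Sobolev}\eqref{item:scaling of Sobolev norms}), and since $\ell=d(d+1)/2$ is large while $\delta$ in~\eqref{eq: rate of double} is small, the "absorption" you invoke --- shrinking $\delta$ to some $\s>0$ to cancel the $e^{\ell(t_n-t_m)}$ factor --- fails outright: the exponent $\ell-\delta$ is positive, so the error \emph{grows} with $t_n-t_m$ rather than decaying.

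The fix, and what the paper does, is that no conjugation is needed at all. Proposition~\ref{prop: double equidist} is designed to be applied directly with the two functions at two different times. Take $\vp=\vp_n^+$, $\psi=\vp_m^+$, $t=t_n$, $s=t_m$ (the later-time function is the one that gets averaged against $m_{G/\Ga}$; the earlier-time one stays as a $\mu$-integral). This gives
\[
\int\vp_m^+(g_{t_m}u(\xbf)\Ga)\vp_n^+(g_{t_n}u(\xbf)\Ga)\,d\mu
\le \int\vp_n^+\,dm_{G/\Ga}\cdot\int\vp_m^+(g_{t_m}u(\xbf)\Ga)\,d\mu+O(e^{-\delta|t_n-t_m|}),
\]
with Sobolev norms uniformly bounded by Proposition~\ref{prop: approximate cusp}\eqref{item:norm of vp_epsilon}, no inflation. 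From here your bookkeeping with the two factors of $24$, the extra $\tilde C_\Fcal e^{-\k_\ast t_m}\mu(A_n^\ast)$ term from Corollary~\ref{cor:SL equidist}, and the conversion $e^{-\delta|t_n-t_m|}\le e^{-\s(n-m)}$ and $e^{-\k_\ast t_m}\le A e^{-\s m}$ via Corollary~\ref{cor:growth of tn} is correct. A small secondary point: in this lemma one only needs the smoothed majorant $\vp_n^+$ of $\chi_{\mathscr{C}(e^{-r(t_n)})}$, not the Siegel-transform majorant $\tilde\vp_n^+$ used in the two-sided bound of Lemma~\ref{lem: measure estimate}; conflating them would force you to re-derive the $24$-type constant for a different function.
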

    
    \begin{proof}
    We start with an outline of the argument. Similarly to the convergence case,
    we approximate $\mu(A_{k}^{\ast})$ by the average with respect to $\mu$ of a
    smooth function on $G/\Gamma$ over pushed periodic horocycle at time $t_{k}$
    and then apply Proposition~\ref{prop: double equidist} in order to
    derive the desired inequality. In order to do this, we first need to find
    $C'_\ast$ and $\e'_\ast$ so that the separation of $m$ and $n$ implies the
    separation for $t_{m}$ and $t_{n}$ as required by Proposition~\ref{prop:
    double equidist}. The rest of the proof is then concerned with deriving the
    upper bound in Lemma~\ref{lem: quasi-independence} using the assumed
    equidistribution of the fractal measure on expanding horospheres. The main terms 
    coming from the equidistribution give rise to the constant $576$
    and -- via Proposition~\ref{lem: BC divergence} -- thus to the uniform lower
    bound in Proposition~\ref{prop: divergence thm}.

    Let $m\leq n$ be in $\Gcal_0$. Using our hypotheses on $\mu$, we choose
    constants $C_\ast$ and $\e_\ast$ as provided by the conclusion of
    Proposition~\ref{prop: double equidist}. 
    
    First, we choose parameters $C_\ast'\geq 1$ and $\e_\ast'>0$ so that the separation assumption~\eqref{eq:separation for n and m} implies the corresponding inequalities for $t_n$ and $t_m$.

    By Lemma~\ref{lem: psi to r}, we have
    \begin{equation*}
      t_n- L(t_n) = -dr(t_n) = -d(t_n-\l(t_n)).
    \end{equation*}
    Since $\l(t_n)=n\log 2$ and $L(t_n) = -\log \psi^d(2^n)$, we get
    \begin{equation*}
      t_n = \frac{d\log 2}{d+1} n - \frac{1}{d+1} \log\psi^d(2^n).
    \end{equation*}
    It follows that
    \begin{equation}\label{eq:formula for tn-tm}
      t_n - t_m = \frac{d\log 2}{d+1} (n -m) + \frac{1}{d+1} \log \frac{\psi^d(2^m)}{\psi^d(2^n)}.
    \end{equation}
    Recall that $\psi^d(2^m)\leq 1/2^m $ and $\psi^d(q) \geq 1/q\log^{1.1}q$ by~\eqref{eq: psi small} and~\eqref{eq: psi large} respectively.
    Moreover, by Corollary~\ref{cor:growth of tn}, we have that
    \begin{equation}\label{eq:t_m grows linearly}
    t_m \geqslant t_0 +m d\log 2/(d+1) \qquad (m\in\N).    
    \end{equation}

    Let $\e_\ast' = d\e_\ast/2(d+1)$ and suppose that $n-m\leq \e'_\ast m$.
    Then,
    \begin{align*}
      t_n - t_m &\leqslant (n-m)\log 2+ 
      \frac{1.1}{d+1}\log n\nonumber\\
      &\leqslant m \e'_\ast\log 2+
      \frac{1.1}{d+1}\log ((1+\e'_\ast)m)\nonumber\\
      &\leqslant \e_\ast(t_m-t_0)/2 +
      \frac{1.1}{d+1}\log ((1+\e'_\ast)m).
    \end{align*}
    In view of~\eqref{eq:t_m grows linearly}, we have that
    \begin{equation*}
        \frac{1.1}{d+1}\log ((1+\e'_\ast)m)
      - \e_\ast t_0/2 \leqslant \e_\ast t_m/2,
    \end{equation*}
    for all $m$ large enough.
    Hence, it follows that $t_n-t_m\leq \e_\ast t_m$ for $m$ sufficiently large.

    Now, suppose $n\geq C'_\ast m$ with $C'_\ast$ still to be determined. Using that $t_n\geq t_0 + nd\log 2/(d+1)$, it follows that for large enough $m$,
    \begin{equation*}
        t_n \geqslant \frac{dC'_\ast \log 2  }{2(d+1)} m. 
    \end{equation*}
    Arguing as above using~\eqref{eq:formula for tn-tm} to estimate $t_m-t_0$, we find that $t_m \leqslant m$ whenever $m$ is large enough.
    Choosing $C'_\ast$ to be large enough, depending on $C_\ast$, we see that $t_n\geqslant C_\ast t_m$ when $m\gg 1$.

    We now proceed to applying Proposition~\ref{prop: double equidist}.
    Let $k\in\bN$. Using the notation in~\eqref{eq: cusp approximations}, we note that if $\th_k$ is the indicator function of $A_k^\ast$, then $\th_k(\xbf) \leqslant \chi_k(g_{t_k}u(\xbf)\Ga)$ for all $\xbf\in\bR^{d}$.
    Moreover, since $\chi_k \leq \vp_k^+$ for all $k\in\N$, we obtain
    \begin{align}\label{eq:bound intersection}
        \mu(A_m^\ast \cap A_n^\ast) \leqslant 
        \int \vp_m^+(g_{t_m}u(\xbf)\Ga) \vp_n^+(g_{t_n}u(\xbf)\Ga) \;d\mu(\xbf).
    \end{align}
    Recall that Proposition~\ref{prop: approximate cusp}\eqref{item:norm of vp_epsilon} implies $\Scal(\vp_k^+)\ll 1$, uniformly over $k\in\N$.
    Hence, by Proposition~\ref{prop: double equidist}, there exist constants $\d>0$ and $\tilde{C}_\Fcal\geq1$ such that
    \begin{equation}\label{eq: apply double equidist}
      \begin{aligned}
        \int \vp_m^+(g_{t_m}u(\xbf)\Ga) &\vp_n^+(g_{t_n}u(\xbf)\Ga) \;d\mu(\xbf)\\
        &\leqslant \int\vp_m^+(g_{t_m}u(\xbf)\Ga)\;d\mu 
        \int \vp_n^+ \;\der m_{G/\Ga}
        + \tilde{C}_\Fcal e^{-\d |t_n - t_m|}.
      \end{aligned}
    \end{equation}

    Next, we find an upper bound for the right side of~\eqref{eq: apply double equidist} as in the conclusion of the lemma.
    By~\eqref{eq:formula for tn-tm} and monotonicity of $\psi$, we have that
    \begin{equation*}
        t_n-t_m \geqslant \frac{d\log 2}{d+1}(n-m).
    \end{equation*}
    Hence, for $\sigma\leq\frac{d}{d+1}\d\log 2$, we obtain
    \begin{equation}\label{eq: spacing of t_i's}
      e^{-\d|t_n - t_m|} \leqslant e^{-\sigma|n-m|}.
    \end{equation}
   Moreover, by definition of $\vp_n^+$ in~\eqref{eq: cusp approximations} and Proposition~\ref{prop: approximate cusp}\eqref{item:bound measure of vp_epsilon}, we have that
   \begin{equation*}
        \int \vp_n^+ \;\der m_{G/\Ga}
        \leqslant (1+\eta)C_d e^{-(d+1)r(t_n)}\leqslant 2C_d e^{-(d+1)r(t_n)}.
   \end{equation*}
    Hence, since $n\in\Gcal_0$, we may apply~\eqref{eq: measure comparable to r} to get that
    \begin{equation}\label{eq:bound vp_n by A_n}
      \int \vp_n^+ \;\der m_{G/\Ga} \leqslant 24\mu(A_{n}^{\ast}).
    \end{equation}

    To bound the term $\int\vp_m^+(g_{t_m}u(\xbf)\Ga)\;d\mu $, we use the
    effective equidistribution hypothesis on $\mu$. After possibly enlarging the
    constant $\tilde{C}_\Fcal$ in order to subsume the implicit constant in
    Corollary~\ref{cor:SL equidist}, we get
    \begin{align*}
        \int\vp_m^+(g_{t_m}u(\xbf)\Ga)\;d\mu 
        &\leqslant \int \vp_m^+ \;dm_{G/\Ga} + 
        \tilde{C}_\Fcal e^{-\k_\ast t_m}. 
    \end{align*}
    Arguing as above, since $m\in \Gcal_0$, we see that
    \begin{equation}\label{eq: non-equidist factor}
        \int\vp_m^+(g_{t_m}u(\xbf)\Ga)\;d\mu
        \leqslant 24\mu(A_m^\ast)+\tilde{C}_\Fcal e^{-\k_\ast t_m},
    \end{equation}
    
    Finally, using~\eqref{eq:t_m grows linearly} once again, there is a constant $A\geq 1$, depending only on $\psi$ and $d$, such that
    \begin{equation}\label{eq: usual bound t_m}
      e^{-\k_\ast t_m}\leq Ae^{-\sigma m}
    \end{equation}
    for any $\sigma\leq\k_\ast d\log 2/(d+1)$.

    Let $\s = \min\seti{\d,\k_\ast}d\log 2/(d+1)$ and $C_\Fcal' = 24A\tilde{C}_\Fcal$.
    Combining the estimates~\eqref{eq:bound intersection}--\eqref{eq: usual bound t_m}, we obtain
    \begin{align*}
        \mu(A_m^\ast \cap A_n^\ast)
        &\leqslant
        \left(24\mu(A_m^\ast)+
        A\tilde{C}_\Fcal  e^{-\s m}
        \right)
        24\mu(A_n^\ast)
        + \tilde{C}_\Fcal e^{-\s(n-m)}
        \\
        &\leqslant 576
        \mu(A_m^\ast)\mu(A_n^\ast)
        + C'_\Fcal
        \left(e^{-\s m} + e^{-\s(n-m)} \right),
    \end{align*}
    where we used the trivial bound $\mu(A_n^\ast)\leq 1$.
    The lemma follows.
    \end{proof}
    
    
    Since the function $r(t)$ may fail to be monotone, the measures of the sets $A_n^\ast$ may also fail to decrease monotonically to $0$.
    The next lemma allows us to control this failure of monotonicity in short intervals of natural numbers.

  \begin{lem}\label{lem:weak monotone}
    Let $\gamma\geq 0$ be arbitrary.
    For every $0<\e<0.1$, there exists a constant $C_{d,\gamma}\geq 1$, so that for all $m\leq n\in\Gcal_{0}$,
    \begin{equation*}
      n - m \leqslant \gamma+\e r(t_m) \Longrightarrow \mu(A_n^\ast) \leqslant C_{d,\gamma} \mu(A_m^\ast)^{\s},
    \end{equation*}
    where $\s = 1- 0.1/(d+1)$.
  \end{lem}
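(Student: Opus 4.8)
The plan is to estimate $\mu(A_n^\ast)$ from above and $\mu(A_m^\ast)$ from below using Lemma~\ref{lem: measure estimate}, and then to relate $r(t_n)$ and $r(t_m)$ using the weak monotonicity of $r$ from Lemma~\ref{lem: psi to r}(iv). First I would invoke~\eqref{eq: measure comparable to r}: since $m,n\in\Gcal_0$, we have $\mu(A_m^\ast)\geq C_d e^{-(d+1)r(t_m)}/12$ and $\mu(A_n^\ast)\leq 4C_d e^{-(d+1)r(t_n)}$. Hence it suffices to compare $e^{-(d+1)r(t_n)}$ with $e^{-(d+1)\s r(t_m)}$, i.e.\ to show $r(t_n)\geq \s\, r(t_m) - O(1)$ for $n,m$ in the stated range.

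The key step is the following chain. By~\eqref{eq: weak monotone} applied with $t_1 = t_m$, $t_2 = t_n$, we have $r(t_n) - r(t_m) \geq -\tfrac1d (t_n - t_m)$. Combined with the linear-growth estimate~\eqref{eq: lamda growth} for $\l$ — or more directly with the explicit formula $t_n - t_m = \tfrac{d\log 2}{d+1}(n-m) + \tfrac{1}{d+1}\log\frac{\psi^d(2^m)}{\psi^d(2^n)}$ derived as~\eqref{eq:formula for tn-tm} — I would bound $t_n - t_m$ in terms of $n-m$ and the sizes of $r(t_m), r(t_n)$. Using~\eqref{eq: r condensation}, namely $e^{-(d+1)r(t_k)} = 2^k\psi^d(2^k)$, one gets $r(t_n) = \tfrac{1}{d+1}\big(L(t_n) - \l(t_n)\big)$-type identities; concretely $-(d+1)r(t_k) = k\log 2 + \log\psi^d(2^k)$. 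Together with the a priori bounds $\psi^d(2^k)\leq 2^{-k}$ (from~\eqref{eq: psi small}) and $\psi^d(q)\geq 1/(q\log^{1.1}q)$ (from~\eqref{eq: psi large}), this forces $0 \leq r(t_k) \leq \tfrac{1.1}{d+1}\log k + O(1)$, so $r(t_m)$ is bounded by a slowly growing function of $m$, and $r(t_n)\geq 0$. I would then feed the hypothesis $n - m \leq \gamma + \e r(t_m)$ into~\eqref{eq:formula for tn-tm} to obtain $t_n - t_m \leq \tfrac{d\log 2}{d+1}(\gamma + \e r(t_m)) + \tfrac{1}{d+1}\log\frac{\psi^d(2^m)}{\psi^d(2^n)}$; controlling the last log term via the two-sided bounds on $\psi^d$ and the already-established bound $r(t_m) \ll \log m$, one sees the log term is $O(r(t_m)) + O(\log\log\,\cdot\,)$ and in fact dominated by a small multiple of $r(t_m)$ for $m$ large. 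Plugging back into $r(t_n) \geq r(t_m) - \tfrac1d(t_n - t_m)$ yields $r(t_n) \geq (1 - \tfrac{\e(d+1)\log 2/d + o(1)}{d+1})\, r(t_m) - O_\gamma(1)$; choosing constants so the coefficient is at least $\s = 1 - 0.1/(d+1)$ (which is comfortably satisfied for $\e<0.1$ since $\log 2/d < 1$), we conclude $e^{-(d+1)r(t_n)} \leq C_{d,\gamma}\, e^{-(d+1)\s r(t_m)}$, and the lemma follows from the measure comparisons.

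The main obstacle I expect is bookkeeping the $\log\frac{\psi^d(2^m)}{\psi^d(2^n)}$ term cleanly: a priori $\psi$ can oscillate, so $r(t_n)$ need not be comparable to $r(t_m)$ without using that $n-m$ is small \emph{relative to} $r(t_m)$. The honest way to handle this is to keep everything in terms of $r$: write $-(d+1)r(t_k) = k\log2 + \log\psi^d(2^k)$ for $k=m,n$, subtract, and substitute into~\eqref{eq:formula for tn-tm} to get the exact relation $t_n - t_m = \tfrac{d}{d+1}(t_n-t_m) + \big(r(t_n)-r(t_m)\big)$... — better, just solve directly: from the two identities, $r(t_n) - r(t_m) = r(t_m)\cdot 0 + \ldots$; in fact the cleanest route is to observe $t_n - t_m = \tfrac{d\log2}{d+1}(n-m) - \big(r(t_n) - r(t_m)\big)\cdot\tfrac{?}{?}$, so I would instead eliminate $t$ entirely: since $\l(t_k) = k\log2$ and $\l = t - r$, $L = t + dr$, we get $t_k = k\log 2 + r(t_k)$ and $L(t_k) = k\log2 + (d+1)r(t_k)$, whence $t_n - t_m = (n-m)\log2 + r(t_n) - r(t_m)$. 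Combined with~\eqref{eq: weak monotone} this gives $(n-m)\log 2 + r(t_n) - r(t_m) \geq -\tfrac1d(t_n-t_m)$, i.e.\ $(1+\tfrac1d)(t_n - t_m) \geq 0$, which is vacuous — so I must instead use~\eqref{eq: weak monotone} as $r(t_n) \geq r(t_m) - \tfrac1d(t_n - t_m) = r(t_m) - \tfrac1d((n-m)\log 2 + r(t_n) - r(t_m))$, giving $(1+\tfrac1d)r(t_n) \geq (1+\tfrac1d)r(t_m) - \tfrac{\log 2}{d}(n-m)$, hence $r(t_n) \geq r(t_m) - \tfrac{\log 2}{d+1}(n-m)$. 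Now the hypothesis $n-m \leq \gamma + \e r(t_m)$ yields $r(t_n) \geq \big(1 - \tfrac{\e\log 2}{d+1}\big) r(t_m) - \tfrac{\gamma\log2}{d+1} \geq \s\, r(t_m) - O_\gamma(1)$ since $\tfrac{\e\log2}{d+1} < \tfrac{0.1}{d+1}$. This is the whole argument and it sidesteps the $\psi$-oscillation issue entirely; then $\mu(A_n^\ast) \leq 4C_d e^{-(d+1)r(t_n)} \leq 4C_d e^{(d+1)O_\gamma(1)} e^{-(d+1)\s r(t_m)} \leq C_{d,\gamma}\big(\tfrac{12}{C_d}\mu(A_m^\ast)\big)^{\s}$, absorbing constants, which is the claim.
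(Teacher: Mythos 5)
Your final argument is correct and follows essentially the same route as the paper: combine the weak monotonicity bound~\eqref{eq: weak monotone} with the hypothesis $n-m\leq\gamma+\e r(t_m)$ to obtain $r(t_n)\geq\s r(t_m)-O_\gamma(1)$, then convert via the two-sided measure comparison~\eqref{eq: measure comparable to r}. The paper routes through~\eqref{eq:formula for tn-tm} and the $\psi$-quotient, whereas you derive the exact identity $t_n-t_m=(n-m)\log 2+r(t_n)-r(t_m)$ from $t_k=k\log 2+r(t_k)$ and eliminate $t$ directly, but this is only a bookkeeping difference and the underlying mechanism is identical.
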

  \begin{proof}
    
    Fix $\e \in (0,0.1)$ and assume that
    $n-m \leqslant \g+\e r(t_{m})$.
    We would like to bound $t_n-t_m$ from above.
    Arguing similarly to the proof of Lemma~\ref{lem: quasi-independence}, using~\eqref{eq:formula for tn-tm}, we find
    \begin{align*}
      t_n - t_m &\leqslant \frac{(\g+\e r(t_m))d\log 2}{d+1}+ 
      \frac{1}{d+1}\log \frac{\psi^d(2^m)}{\psi^d(2^n)}.
    \end{align*}
    Since $2^k\psi^d(2^k)=e^{-(d+1)r(t_k)}$ for all $k\in\N$ and $\e\leq 0.1$, we obtain
    \begin{align*}
        t_n - t_m &\leqslant \g+0.1 r(t_m)+ r(t_n)-r(t_m).
    \end{align*}   
    Recall that $r(t_n) -r(t_m) \geq -(t_n-t_m)/d$ by~\eqref{eq: weak monotone}.
    Hence, we obtain 
    \begin{equation}\label{eq:growth of r}
      r(t_n) -r(t_m) \geqslant   -(\g+0.1 r(t_m))/(d+1).
    \end{equation}
    
    Moreover, since $m,n\in\Gcal_0$,~\eqref{eq: measure comparable to r} implies $C_d e^{-(d+1)r(t_m)} \leq 8\mu(A_m^\ast)$ and $\mu(A_n^\ast) \leq 4C_d e^{-(d+1)r(t_n)}$.
    Combined with~\eqref{eq:growth of r}, we obtain
    \begin{equation*}
      \mu(A_n^\ast) \leqslant C_{d,\gamma} \mu(A_m^\ast)^\s ,
    \end{equation*}
    where $C_{d,\g} = 32 C_d^{1-\s}  e^\g$ and $\s = 1 -0.1/(d+1)$.
    This completes the proof.    
  \end{proof}

  \begin{corollary}\label{cor:weak monotone}
    The collection $\{A_{n}^{\ast}:n\in\bN\}$ satisfies Hypothesis~\ref{item:weak monotone} of Proposition~\ref{lem: BC divergence}.
  \end{corollary}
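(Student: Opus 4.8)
The statement is a bookkeeping reconciliation between the conclusion of Lemma~\ref{lem:weak monotone} and the precise shape of Hypothesis~\ref{item:weak monotone} in Proposition~\ref{lem: BC divergence}, so I do not expect a genuine obstacle; the only point requiring care is that the constants $\sigma\in(0,1)$, $a\in(0,1/\sigma]$ and the auxiliary $\e\in(0,0.1)$ entering Lemma~\ref{lem:weak monotone} can be chosen compatibly. First I would set $\sigma=1-0.1/(d+1)$, the exponent produced by Lemma~\ref{lem:weak monotone}, and note $0<\sigma<1$. Since $\N\setminus\Gcal_{0}$ is finite by Lemma~\ref{lem:G_0 cofinite}, there is $m_{0}$ so that every integer $\geq m_{0}$ lies in $\Gcal_{0}$; in particular, for $m\geq m_{0}$ both $m$ and every $n\geq m$ lie in $\Gcal_{0}$, so the measure comparison~\eqref{eq: measure comparable to r} and Lemma~\ref{lem:weak monotone} are both available, and $\mu(A_{m}^{\ast})>0$.

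Next I would convert the admissible window $m\leq n\leq m+\lceil-a\log\mu(A_{m}^{\ast})\rceil$ into a window of the form handled by Lemma~\ref{lem:weak monotone}. By the lower bound in~\eqref{eq: measure comparable to r}, for $m\in\Gcal_{0}$ one has $-\log\mu(A_{m}^{\ast})\leq(d+1)r(t_{m})-\log(C_{d}/12)$, hence
\[
\left\lceil -a\log\mu(A_{m}^{\ast})\right\rceil\leq 1-a\log(C_{d}/12)+a(d+1)\,r(t_{m}).
\]
I would therefore pick $a>0$ small enough that $\e:=a(d+1)<0.1$ (for instance $a=1/(20(d+1))$); then automatically $a<0.1/(d+1)<1<1/\sigma$, so the constraint $0<a\leq 1/\sigma$ of Proposition~\ref{lem: BC divergence} holds. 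Setting $\gamma:=1+a\lvert\log(C_{d}/12)\rvert\geq 0$, every pair $m\leq n\leq m+\lceil-a\log\mu(A_{m}^{\ast})\rceil$ with $m\geq m_{0}$ satisfies $n-m\leq\gamma+\e\,r(t_{m})$.

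Finally I would invoke Lemma~\ref{lem:weak monotone} with this $\e$ and $\gamma$: it provides a constant $C_{d,\gamma}\geq 1$ with $\mu(A_{n}^{\ast})\leq C_{d,\gamma}\,\mu(A_{m}^{\ast})^{\sigma}$ for all such $m,n$. Taking $D:=C_{d,\gamma}$ then verifies Hypothesis~\ref{item:weak monotone} for all sufficiently large $m$, which is exactly what ``$1\ll m$'' in Proposition~\ref{lem: BC divergence} requires. The only thing worth spelling out carefully in the final write-up is the simultaneous admissibility of the three constants displayed above, which the choices $\sigma=1-0.1/(d+1)$, $a=1/(20(d+1))$, $\e=a(d+1)$ make transparent.
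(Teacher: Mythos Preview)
Your proof is correct and follows essentially the same route as the paper: both use the lower bound in~\eqref{eq: measure comparable to r} to convert the window $n-m\leq\lceil -a\log\mu(A_m^\ast)\rceil$ into one of the form $n-m\leq\gamma+\e r(t_m)$, then invoke Lemma~\ref{lem:weak monotone}. Your version is in fact slightly more explicit than the paper's in spelling out the choice of $a$ and checking $a\leq 1/\sigma$.
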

  \begin{proof}
    Let $m\in \N$. It suffices to verify the hypothesis holds when $m$ is large enough.
    As $\Gcal_{0}$ is cofinite, we can assume without loss of generality that $m\in\Gcal_{0}$. Hence by~\eqref{eq: measure comparable to r} we have
    \begin{equation*}
      -\frac{1}{d+1}\log\mu(A_{m}^{\ast})\leqslant r(t_{m})-\frac{1}{d+1}\log\Big(\frac{C_{d}}{12}\Big).
    \end{equation*}
    Given $\varepsilon\in(0,1)$, let $a_{\varepsilon}=\frac{\varepsilon}{d+1}$ and 
    \begin{equation*}
        \gamma=\max\left\{0,-\frac{1}{d+1}\log\Big(\frac{C_{d}}{12}\Big)\right\}.
    \end{equation*}
    Then, for all sufficiently large $m\in\Gcal_{0}$ and for all $n\geq m$ we have
    \begin{equation*}
      n-m\leq \lceil -a_{\varepsilon}\log\mu(A_{m}^{\ast})\rceil\implies n-m\leq\gamma+1+ \varepsilon r(t_{m}).
    \end{equation*}
    Hence, the Corollary follows from Lemma~\ref{lem:weak monotone}.
  \end{proof}

 
\subsection{Proof of Proposition~\ref{prop: divergence thm}}\label{sec:pf-divergence}
The results of this section verify Hypotheses~\ref{item:divergent sum}-\ref{item:weak monotone} of Proposition~\ref{lem: BC divergence} for the sequence of events $A_n^\ast$.
In particular, Lemma~ \ref{lem: quasi-independence} shows we may take $C_\#=576$ in the notation of Proposition~\ref{lem: BC divergence}.
Hence, we get that $\mu\left( W(\psi)\right) \geqslant 1/576$ as
claimed.


 \appendix
 
 \section{Spectral Gap for Missing Digit Cantor sets}
  \label{sec:Cantor spectral gap}
  
  The goal of this section is to prove Theorem~\ref{thm:intro Cantor} providing a stronger version of our Khintchine and equidistribution theorems in the special case of missing digit Cantor sets. This is done by weakening the hypothesis~\eqref{eq:hypothesis on a,b,c} in Theorem~\ref{thm: effective single equidist}.
  A key input is a sharper estimate on the spectral gap of the operators $\Pcal_\l$, Proposition~\ref{prop:cantor spectral gap}.
 Additionally, we take advantage of the equal contraction ratios to show that, in fact, the Sobolev norm (not just the $\mrm{L}^2$-norm) of a suitable variant of the operators $\Pcal_\l^n$ decays in $n$. 
  Finally, we require a sharper form of Proposition~\ref{prop:banana} due to Str\"ombergsson as well as bounds towards Selberg's eigenvalue conjecture by Kim-Sarnak.

  First, we recall the definition of a missing digit Cantor set.
  
    \begin{definition}\label{def: p cantor}
    A set $\Kcal \subset [0,1]$ is a \textbf{missing digit Cantor set} if there exists a prime number $p\geq 3$ and $\emptyset\neq \L \subseteq \seti{0,\dots,p-1}$ such that $\Kcal$ consists of those $x\in [0,1]$ whose digits in their base $p$ expansion all belong to $\L$.
    A \textbf{missing digit IFS} (with attractor $\Kcal$) is defined as follows: 
    \begin{equation}
        \mc{F} = \seti{f_i(x) = \frac{x+i}{p} : i\in \L}.  
    \end{equation}
    \end{definition}

    Throughout the remainder of this section, we fix a missing digit Cantor set $\Kcal$ in base $p$ and digit set $\L$ along with its associated missing digit IFS $\Fcal$.

    In particular, in our notation, $\rho=\rho_i = 1/p$ and $b_i = i/p$. 
    One checks that this IFS satisfies the open set condition.
    In particular, we have
    \begin{equation*}
        s:=\dim_H(\Kcal) = \log |\L|/\log p.
    \end{equation*}
    By~\cite{Moran}, the $s$-dimensional Hausdorff measure of $\Kcal$ is positive and finite. We denote by $\mu$ the restriction of this measure to $\Kcal$, normalized to be a probability measure. 
    By~\cite{Hutchinson}, $\mu$ is the self-similar measure associated to the probability vector $\l_i=\rho^s, i\in\L$.

    The following is the precise form of Theorem~\ref{thm:intro Cantor}.
    \begin{theorem}\label{thm:cantor equidist}
    The conclusions of Theorem~\ref{thm:main simplified} and Theorem~\ref{thm: effective single equidist} hold for $\mu$ as above whenever 
    \begin{equation}\label{eq:thickness for p cantor sets}
        s > \cutoff.
    \end{equation}
    \end{theorem}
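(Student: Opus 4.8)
The plan is to reduce Theorem~\ref{thm:cantor equidist} to the main theorems already established, by verifying that the missing digit case permits a better cutoff in place of Hypothesis~\eqref{eq:hypothesis on a,b,c}. Recall from~\eqref{def: a, b,c} that the relevant quantities are $\sigma,\omicron_\e,\upsilon$, and that Theorem~\ref{thm: effective single equidist} requires $\tfrac{2\sigma(\omicron_\e+\upsilon)}{\omicron_\e+\sigma}<\k$. In the missing digit setting we have $\rho_i=1/p=\rho$, $k_i=\mrm{Id}$, $b_i=i/p$, and $\l_i=\rho^s$ with $s=\log|\L|/\log p$, so $r=\sum\l_i\rho_i=\rho$ and every geometric sum collapses to an elementary expression: $r^{-\sigma}=(\sum\l_i^2\rho^{-d})^{1/2}=(|\L|\rho^{2s}\rho^{-1})^{1/2}$ with $d=1$, giving $\sigma=\tfrac12(1-s)$ after taking $\log_\rho$; similarly $r^{\omicron_\e}=(\sum\l_i^{q_\e})^{1/q_\e}=\rho^{s}|\L|^{1/q_\e}\rho^{-s}=\rho^{s(1-1/q_\e)}$... more precisely $\omicron_\e=s(q_\e-1)/q_\e$. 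The key improvement is in $\upsilon$: the definition~\eqref{def: a, b,c} has $r^{-\upsilon}=\rho_{\min}^{-L/4}\sum\l_i\rho_i^{-\ell}$, which forces a dependence on $\ell=\dim M_\infty$ and on $L$ from Lemma~\ref{lem:index estimate}. In the equal contraction ratio case, however, the Sobolev norm (not merely the $\mrm{L}^2$ norm) of a suitable variant of $(\a\cdot\Pcal_\l)^n$ decays in $n$, because conjugation by $a(\rho_\w)=a(\rho^n)$ acts by a \emph{fixed} scalar $\rho^{-n}$ independent of $\w$, so the crude bound $\Scal_{\infty,\ell}((\a\cdot\Pcal_\l)^n(\vp))\leq \Scal_{\infty,\ell}(\vp)(\sum\l_i\rho_i^{-\ell})^n$ used in~\eqref{eq:estimate Sobolev norm of operator} can be replaced by one where the $\rho_i^{-\ell}$ factor is essentially absorbed; this eliminates the $\ell$-dependence and the $L/4$ term from $\upsilon$, leaving effectively $\upsilon=0$ (or a negligible contribution). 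That part of the argument is valid for any IFS with equal contraction ratios.

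The second ingredient is the sharper spectral gap, Proposition~\ref{prop:cantor spectral gap}, which I will establish using more elementary techniques than the $\ell^p$-summability of matrix coefficients invoked for Proposition~\ref{prop: spectral gap}. For $d=1$ the relevant space is $\mrm{L}^2_{00}(X_S)$ with $\Gbf=\PGL_2$, and one can obtain a bound on $\|(\a\cdot\Pcal_\l)^k(\vp)\|_2$ of the shape $\Scal_{2,\ell}(\vp)^2\cdot p^{-(\th-\e)k}$ where $\th=\min\{s,25/32\}$ — the first term reflecting the trivial $\ell^2$-normalization $(\sum\l_i^2)^{k}=|\L|^{-k}=p^{-sk}$ coming from the free sub-semigroup structure (no exact overlaps), and the $25/32$ coming from the Kim--Sarnak bound towards Selberg's eigenvalue conjecture applied through the $S$-arithmetic bound on matrix coefficients of $\PGL_2(\Q_p)$, where $\Scal_p$ has a single element and $\xi_p$ is governed by a single $\mrm{PGL}_2$ Harish-Chandra function. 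Thus $-\log_p$ of the operator norm squared is $\min\{s,25/32\}$ up to $\e$, i.e.\ the decay exponent in the horospherical term's spectral-gap step can be taken to be $\th=\min\{s,25/32\}$ rather than the weaker exponent from Proposition~\ref{prop:summable}.

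With these two improvements, I plug the revised constants into the balancing computation at the end of Section~\ref{sec:equidist}. Writing $t=r^{-\t}=p^\t$ (since $r=1/p$), the combined rate~\eqref{eq:combined rates} becomes, with $\sigma=\tfrac12(1-s)$, $\upsilon\approx 0$, $\omicron_\e\to s$ as $\e\to0$ (using $q_\e\to\infty$ is wrong — rather $q_\e$ is bounded; I should track $\omicron_\e$ via the actual H\"older exponent, but since $\l_i$ are all equal, $\omicron_\e = s$ exactly because $(\sum\l_i^{q})^{1/q}=|\L|^{1/q-1}=\rho^{s(1-1/q)}\cdot$ wait — recompute: $r^{\omicron}=(\sum_i \l_i^q)^{1/q}=(|\L|\rho^{sq})^{1/q}=|\L|^{1/q}\rho^s = \rho^{s-s/q}\cdot\rho^{s/q}\cdot$ hmm, $|\L|=\rho^{-s}$ so $|\L|^{1/q}=\rho^{-s/q}$ and $r^\omicron=\rho^{-s/q+s}=\rho^{s(1-1/q)}$, hence $\omicron=s(1-1/q_\e)$, which tends to $s$ only as $q_\e\to\infty$; but $q_\e$ is determined by the summability exponent, so I keep it as is and take $\e$ small), and $\k$ replaced by the spectral exponent $\th=\min\{s,25/32\}$ together with $\k'=25/64$ from~\eqref{eq:kappa-prime} entering through Proposition~\ref{prop:banana} — here I will invoke the sharper form of Proposition~\ref{prop:banana} due to Str\"ombergsson, which gives a much better $\k$ than~\eqref{eq:explicit kappa and ell}, so that the binding constraint is genuinely the spectral gap and the cusp geometry, not the loss of derivatives. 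Carrying out the resulting one-variable optimization, the condition $\d>0$ reduces to an inequality of the form $\tfrac{2\sigma(\omicron_\e+\upsilon)}{\omicron_\e+\sigma}<\k_{\mathrm{eff}}$ which, after substituting $\sigma=\tfrac12(1-s)$, $\upsilon\approx0$, and the improved exponents and sending $\e\to 0$, becomes $1-s<$ (explicit constant), and one checks numerically that this holds precisely when $s>\cutoff$; in particular $s=\log 4/\log 5\approx 0.861>\cutoff$, covering the middle-$1/5$ Cantor set. The deduction of the Khintchine statement (Theorem~\ref{thm:main simplified}) and of Corollary~\ref{cor:SL equidist} from this sharpened equidistribution is then \emph{verbatim} as in Sections~\ref{sec: convergence}--\ref{sec:divergence}, since those arguments only use the conclusion of Corollary~\ref{cor:SL equidist} and the friendliness/open-set-condition properties, all of which hold here. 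The main obstacle is the careful bookkeeping in the spectral gap proof for $\PGL_2$ over $\Q_p$: one must check that the elementary KAK computations in Lemma~\ref{lem:index estimate} for the missing digit case (where $A=6$, $L=3$) combine correctly with the Kim--Sarnak bound and the free-semigroup count to yield the clean exponent $\min\{s,25/32\}$, and that the Str\"ombergsson-type input is applicable uniformly over the congruence covers $X_\infty(N)$ arising from the operators $\a\cdot\Pcal_\l$ — this uniformity over $N$ is exactly the content that Proposition~\ref{prop:banana} was designed to provide, and the sharper version must retain it.
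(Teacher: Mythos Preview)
Your architecture is right---sharper spectral gap via Kim--Sarnak giving exponent $\min\{s,25/32\}$, exploiting equal contraction ratios to tame the Sobolev blowup in~\eqref{eq:estimate Sobolev norm of operator}, Str\"ombergsson in place of Proposition~\ref{prop:banana}---but the claim ``$\upsilon\approx 0$'' is wrong and conceals the decisive step. Factoring out the common $a(\rho^n,1)$ (the paper does this by introducing operators $\a\cdot\Qcal_\l$ built from $\t_\w=u(\mathbf{0},-b_\w)a(1,\rho^n)$ with \emph{trivial Archimedean component}, so that $(\a\cdot\Pcal_\l)^n(\vp)(x)=(\a\cdot\Qcal_\l)^n(\vp)(a(p^{-n},1)x)$) only yields \emph{boundedness} $\Scal_{\infty,\ell}((\a\cdot\Qcal_\l)^n\vp)\le\Scal_{\infty,\ell}(\vp)$. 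You still carry the congruence-index factor $V_N\ll p^{3n/2}$ from Lemma~\ref{lem:index estimate} and the time-shift factor $p^{\k n}$ from applying Str\"ombergsson at time $tp^{-n}$, so your outline gives $2\upsilon\ge 3/2+\k$, which produces a cutoff strictly worse than $\cutoff$. If $\upsilon$ were truly $0$ the condition would collapse to $s>7/32$, far too strong.

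What you are missing is that the \emph{Sobolev norm} of $(\a\cdot\Qcal_\l)^n(\vp)$ itself decays in $n$: because the $\t_\w$ have trivial Archimedean part, differential operators commute with $\Qcal_\l$, so $\Dcal((\a\cdot\Qcal_\l)^n\vp)=(\a\cdot\Qcal_\l)^n(\Dcal\vp)$ and the spectral gap applies to each derivative separately. Str\"ombergsson's error is in $\Scal_{2,\ell}$, and the product estimate $\Scal_{2,\ell}((\a\cdot\Qcal_\l)^n(\vp)^2)\ll\Scal_{4,\ell}((\a\cdot\Qcal_\l)^n(\vp))^2$ forces one to prove an $L^4$ spectral-gap bound (Corollary~\ref{cor:spectral gap in Lq}, obtained from the $L^2$ one by Chebyshev and interpolation with $L^\infty$). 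This gives $\Scal_{2,\ell}((\a\cdot\Qcal_\l)^n(\vp)^2)\ll\Scal_{\infty,\ell+1}(\vp)^2\,p^{-\omicron_\e n}$ with $2\omicron_\e:=\min\{25/32,s\}-\e$; note this is a \emph{redefinition} of $\omicron_\e$, not the H\"older-conjugate quantity from~\eqref{def: a, b,c} you were struggling to compute. The net effect is $2\upsilon=3/2+\k-\omicron_\e$, and plugging $2\sigma=1-s$, $\k=25/64$ into $2\sigma(\omicron_\e+\upsilon)<\k(\sigma+\omicron_\e)$ yields $(1-s)(\omicron_\e+3/2)<2\k\omicron_\e$, which one checks holds precisely when $s>\cutoff$.
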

    
    Since we showed that Theorem~\ref{thm: effective single equidist} implies Theorem~\ref{thm:main simplified}, we only need to verify that in this special case the former holds under the condition~\eqref{eq:thickness for p cantor sets}.
    
    A wasteful step in the proof of Theorem~\ref{thm: effective single equidist} is~\eqref{eq:estimate Sobolev norm of operator}.
    To improve this estimate, we introduce slightly different operators than $\a\cdot\Pcal_\l$ which take advantage of the equal contraction ratios.
    For $\w\in\L^n$, we define
    \begin{equation}
      \t_\w = u(\mathbf{0},-b_\w) a(1,\rho_\w) =  u(\mathbf{0},-b_\w) a(1,p^{-n}).
    \end{equation}
    Note that $\t_\w$ has trivial Archimedean component. For $\a\in\L^\ast$, let $\a\cdot\Qcal_\l$ denote the averaging operator defined analogously to $\a\cdot\Pcal_\l$ in~\eqref{eq: padic MF alpha} with $\t_\w$ in place of $\g_\w$. 
    Note that $a(\rho_\w,1)\t_\w=\g_\w$; cf.~\eqref{def: gamma_w}. 
    In particular, for any function $\vp$ on $X_S$ and every $n\in\N$, we have
    \begin{equation}\label{eq:Qcal vs Pcal}
        (\a\cdot\Pcal_\l)^n(\vp)(x) = (\a\cdot\Qcal_\l)^n(\vp)\big(a(p^{-n},1)x\big).
    \end{equation}
    
\subsection{Sharper version of Proposition~\ref{prop: spectral gap}}  
The following result provides a sharper rate of decay of the operator norm of $\Pcal_\l^n$. It holds without restrictions on the dimension of the Cantor set.
    \begin{proposition}\label{prop:cantor spectral gap}
    Let $\e>0$ and $\delta_{\e}=\frac{25}{32}-2\e$.    
    For all $n\in \N$, $\a\in \L^\ast$, and for every smooth $K_{\mrm{f}}$-invariant function $\vp\in \mrm{L}^2_{00}(X_S)$, we have
    
    \begin{equation*}
        \big\lVert(\a\cdot \Pcal_\l)^n (\vp)\big\rVert^2_{\mrm{L}^2} \ll_{\e,p,s} \Scal_{2,1}(\vp)^2 (p^{-(s-\e)n}+p^{-\delta_{\e} (n+\lvert\alpha\rvert)}).
    \end{equation*}
    
    The same estimate holds for $\Qcal_\l$ in place of $\Pcal_\l$.
    \end{proposition}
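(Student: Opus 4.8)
The plan is to follow the structure of the proof of Proposition~\ref{prop: spectral gap} but to replace the application of the general matrix-coefficient bound (Theorem~\ref{thm: matrix coeffs}, with exponent $\epsilon(d)$) by the sharper bound available for $\mrm{PGL}_2$ at the finite place (spherical $\ell^p$-summability for $p$ close to the Ramanujan exponent) together with Kim--Sarnak's bound towards Selberg at the Archimedean place. Concretely, I would first recall, exactly as in the proof of Proposition~\ref{prop: spectral gap}, that writing $\tilde{\g}^\a_\w = (\t^\a_\w,\mrm{Id})\g^\a_\w \in \GammaS$ with $\{(\t^\a_\w,\mrm{Id})\}$ contained in a bounded set $\Ocal$ depending only on $\Fcal$, and using Corollary~\ref{cor:matrix coeffs smooth} and Lemma~\ref{lem:Sobolev}, one obtains
\begin{equation*}
  \big\lVert(\a\cdot\Pcal_\l)^n(\vp)\big\rVert^2_{\mrm{L}^2}
  \ll_{\Fcal} \Scal_{2,1}(\vp)^2 \sum_{u,\w\in\L^n}\l_u\l_\w\,\xi_\Gbf\big(\tilde{\g}^\a_\w(\tilde{\g}^\a_u)^{-1}\big)^{\epsilon(d)},
\end{equation*}
which for $d=1$ becomes $\sum \l_u\l_\w\,\xi_\Gbf(\tilde\g^\a_\w(\tilde\g^\a_u)^{-1})^{1/2}$. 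Since the sub-semigroup generated by $\{\tilde\g^\a_i\}$ is free (no exact overlaps, automatic here), the convolution identity~\eqref{eq:convolution} holds and the sum equals $\int_{\Gbf_S}\xi_\Gbf^{1/2}\,\mrm d(\nu_\a^{*n}*(\check\nu_\a)^{*n})$; with $\l_i=p^{-s}$ uniform this is $p^{-2sn}$ times the count of words, i.e. after collapsing one gets a sum over the (free) support of the $2n$-fold convolution weighted by $p^{-2sn}$ (using $\#\L^n=p^{sn}$).

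Next, rather than invoking $\ell^p(\GammaS)$-summability of $\xi_\Gbf$ via Hölder as in Proposition~\ref{prop: spectral gap}, I would exploit the explicit form of $\xi_\Gbf$ in the missing-digit case. Here $S=\{\infty\}\cup\{p\}$, so $\xi_\Gbf(g)=\xi_\infty(g_\infty)\xi_p(g_p)$ and, by Proposition~\ref{prop:boundHarishChandra} (with $d=1$, so $D=1$), $\xi_v(g_v)\asymp_\e \lVert g_v\rVert_v^{-1/2+\e}$ and also $\gg \lVert g_v\rVert_v^{-1/2}$. The Archimedean components of the $\tilde\g^\a_\w$ are $k_\w a(\rho_\w)$, of norm $\rho_\w^{-1}=p^{|\w|}$ up to bounded factors from $\Ocal$; the $p$-adic components, by the computation in the proof of Lemma~\ref{lem:index estimate} for missing digit IFS, have $\lVert\cdot\rVert_p$ controlled by $p^{|\w|}$ as well (indeed $\lVert\g_\w\rVert_p\le p^{|\w|}$ and the adjustment by $k_\a,\g_\a$ at the fixed word $\a$ costs only $p^{O(|\a|)}$). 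The point is that for $\tilde\g^\a_\w(\tilde\g^\a_u)^{-1}$ with $u,\w\in\L^n$, submultiplicativity and $\lVert g\rVert_v=\lVert g^{-1}\rVert_v$ give $\lVert\cdot\rVert_v \le p^{O(n+|\a|)}$ at each place, but more importantly, freeness forces the cancellation in a product $\tilde\g^\a_\w(\tilde\g^\a_u)^{-1}$ to be governed by the length of the longest common prefix, so that the Archimedean norm of $\tilde\g^\a_\w(\tilde\g^\a_u)^{-1}$ is $\asymp p^{|\w|+|u|-2(\text{common prefix length})}$ up to $p^{O(|\a|)}$, and similarly $p$-adically.

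From here the estimate is a geometric-series computation: grouping the double sum $\sum_{u,\w\in\L^n}p^{-2sn}\xi_\Gbf(\cdot)^{1/2}$ by the length $j$ of the common prefix of $u$ and $\w$, there are $\le p^{sj}\cdot(p^{s(n-j)})^2$ pairs with common prefix of length $j$ (choose the prefix, then the two tails), and $\xi_\Gbf(\tilde\g^\a_\w(\tilde\g^\a_u)^{-1})^{1/2}\ll_{\e,p} p^{O(|\a|)}\cdot p^{-(1/2-\e)(n-j)}\cdot p^{-(1/2-\e)(n-j)}$ coming from the two places. Multiplying and summing over $0\le j\le n$ yields, after bookkeeping, a bound of the shape $p^{O(|\a|)}\big(p^{-(s-\e')n} + p^{-\delta_\e n}\big)$ for a suitable $\delta_\e$; to get the sharper $25/32$ and the $+|\a|$ in the exponent I would instead, at the Archimedean place, replace the na\"ive $\xi_\infty^{1/2}\asymp\lVert\cdot\rVert_\infty^{-1/4+\e}$ by the Kim--Sarnak bound, which for $\mrm{PGL}_2(\R)$ gives matrix-coefficient decay $\lVert\cdot\rVert_\infty^{-25/64+\e}$ (cf.\ the proof of Proposition~\ref{prop:uniformspectralgap} and~\eqref{eq:kappa-prime}), hence $\xi_\infty^{\epsilon(1)}=\xi_\infty^{1/2}$ should be replaced throughout by the sharper pointwise bound $\ll \lVert\cdot\rVert_\infty^{-25/64+\e}$ on the genuine matrix coefficient; combining the Archimedean $25/64$ with the $p$-adic $1/2$ inside the Cauchy--Schwarz / Young step of Proposition~\ref{prop: spectral gap} (equivalently, tracking both places in the geometric series above) produces the exponent $\delta_\e=\tfrac{25}{32}-2\e$, and carrying the word $\a$ through the common-prefix bookkeeping (the flow $a(p^{-n},1)$ relating $\Pcal_\l$ and $\Qcal_\l$, and the extra $|\a|$ contributions from $\g_\a,\g_\a^{-1}$) yields the $p^{-\delta_\e(n+|\a|)}$ term; the $p^{-(s-\e)n}$ term is precisely the diagonal ($j=n$) contribution where the two words agree. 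The statement for $\Qcal_\l$ then follows from~\eqref{eq:Qcal vs Pcal} together with the $\Gbf_S$-invariance of the $\mrm{L}^2$ norm (left multiplication by $a(p^{-n},1)$ is measure preserving), so the two operators have the same $\mrm{L}^2$ operator norm on $\mrm{L}^2_{00}(X_S)$.

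The main obstacle I anticipate is making the ``common prefix controls cancellation'' step rigorous and uniform in $\a$: one must verify, from the Langlands-type decomposition~\eqref{eq:Langlands decomp} of $\tilde\g^\a_\w$ (unipotent times bounded compact times diagonal $a(\rho_\w)$), that $\lVert\tilde\g^\a_\w(\tilde\g^\a_u)^{-1}\rVert_v$ at each $v\in S$ genuinely decays like $p^{-(\text{const})\cdot(\text{non-common suffix lengths})}$ rather than merely being bounded by $p^{O(n)}$ — this is where freeness is used quantitatively, via the KAK/norm computations of Lemmas~\ref{lem:KAK2x2}, \ref{lem:padic supnorm}, \ref{lem:KAKvsOpNorm} at the finite place and the analogous elementary $2\times2$ computation at the real place — and that all the error factors one accumulates along the way depend on $\a$ only polynomially in $\rho_\a^{-1}=p^{|\a|}$, consistently with the clean $p^{-\delta_\e(n+|\a|)}$ in the statement.
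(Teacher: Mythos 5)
Your overall strategy (improved Kim--Sarnak exponent, then a geometric-series/combinatorial count) is in the right spirit, but there are several concrete errors that would prevent the argument from going through as described.

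The decisive structural observation you miss is that for words $\eta,\w\in\L^n$ of the \emph{same} length, the element $\g_\w^\a(\g_\eta^\a)^{-1}$ is not merely ``controlled'' in norm --- it is \emph{exactly} the unipotent element $u\big(\mathbf{0},p^{-|\a|}(b_\eta-b_\w)\big)$, with \emph{trivial Archimedean component}. Indeed, the Archimedean part of $\g_\w^\a$ is $a(\rho_\w)=a(p^{-n})$ (as $k_\w=\mrm{Id}$ for a missing digit IFS), and since $\rho_\w=\rho_\eta$ the diagonal parts cancel identically. Consequently the bound from Corollary~\ref{cor:matrix coeffs smooth} is applied with $\xi_\Gbf=\xi_\infty\cdot\xi_p$ evaluated at an element whose $\infty$-component is the identity, so $\xi_\infty$ contributes a constant and \emph{all} of the decay comes from the finite place. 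This makes your proposal to ``combine the Archimedean $25/64$ with the $p$-adic $1/2$'' incorrect: the exponent $\delta_\e=\tfrac{25}{32}-2\e$ arises purely from the chain $\xi_\Gbf^{25/32}\ll_\e\|g\|_p^{-25/64+\e}$ (Kim--Sarnak improves the exponent on $\xi_\Gbf$ to $25/32$ for $K_{\mrm{f}}$-invariant vectors, and Proposition~\ref{prop:boundHarishChandra} gives $\xi_p\ll\|\cdot\|_p^{-1/2+\e}$), followed by the $2\times2$ KAK computation $\|u(x)\|_p=|x|_p^2$, which doubles the exponent. Your proposed route would also pick up Archimedean norms $\asymp p^{|\w|+|u|}$ that simply are not present.

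The combinatorial bookkeeping is also not what you describe. The decay parameter in the geometric series is governed by $d(\eta,\w)=\max\{i:\eta_i\neq\w_i\}$, i.e., the length $n-d$ of the \emph{common suffix}, not the common prefix: one has $|b_\eta-b_\w|_p=p^{d}$ by the ultrametric inequality, and $\#\{\w: d(\eta,\w)=j\}=|\L|^{j-1}(|\L|-1)$. Since the $\g_i^\a$ are multiplied on the \emph{left} in $\g^\a_\w=\g^\a_{\w_1}\cdots\g^\a_{\w_n}$, cancellation in $\g^\a_\w(\g^\a_\eta)^{-1}$ occurs from the trailing letters, which matches the suffix structure; tracking a common prefix would give the wrong count. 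Finally, the detour through the lattice elements $\tilde\g_\w^\a$ and freeness is unnecessary here: the paper works directly with the $\g^\a_\w$, and the needed injectivity (that $\eta\neq\w$ implies $b_\eta\neq b_\w$) is elementary in the missing digit case. The $\Qcal_\l$ statement does follow as you say, from~\eqref{eq:Qcal vs Pcal} together with the fact that the relevant group elements differ by $a(p^{-n},1)$, which indeed preserves the $\mrm{L}^2$ norm (in fact one can also observe that $\t_\w^\a(\t_\eta^\a)^{-1}=\g_\w^\a(\g_\eta^\a)^{-1}$).
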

  
    \begin{proof}
      Note that in view of bounds towards the Generalized Ramanujan Conjectures (GRC) for $\mrm{SL}_2$ in~\cite[Proposition 2]{KimSarnak}, for $K_{\mrm{f}}$-invariant functions, one can take the bound in Corollary~\ref{cor:matrix coeffs smooth} to be $\xi_{\Gbf}^{25/32}$ instead of $\xi_{\Gbf}^{1/2}$ (GRC predicts the exponent should be $1$); cf.~\cite[Lem.~9.1]{Venkatesh}. In what follows, we let $m:=\lvert\a\rvert$. Given $\w\in\Lambda^{n}$, in analogy to the proof of Proposition~\ref{prop: spectral gap}, we denote $\gamma_\w^\a=\gamma_\a\gamma_\w\gamma_\a^{-1}$ and similarly $\tau_\w^\a=\gamma_\a\tau_\w\gamma_\a^{-1}$. Expanding $(\a\cdot\Pcal_\l)^n$ according to \eqref{eq: padic MF alpha}, it follows from Corollary~\ref{cor:matrix coeffs smooth} and Proposition~\ref{prop:boundHarishChandra} that
      \begin{align*}
        \big\lVert(\a\cdot \Pcal_\l)^n(\vp)\big\rVert^2  &= 
        \sum_{\eta,\w\in \L^n} \l_\eta\l_\w \langle \g_\eta^\a \vp,\g_\w^\a \vp  \rangle
        \ll \Scal_{2,1}(\vp)^2 \sum_{\eta,\w\in \L^n} \l_\eta\l_\w \xi_\Gbf^{25/32}\left(\g_\w^\a(\g_\eta^\a)^{-1}\right ) \\
        &= \Scal_{2,1}(\vp)^2 \sum_{\eta,\w\in \L^n} \l_\eta\l_\w \xi_\Gbf^{25/32}\big(u(p^{-m}(b_\eta-b_\w))\big)\\
        &\ll_\e \Scal_{2,1}(\vp)^2 \sum_{\eta,\w\in \L^n} \l_\eta\l_\w
        \big\lVert u(p^{-m}(b_\eta-b_\w))\big\rVert_p^{-\frac{25}{64}+\e},
      \end{align*}
      where $\lVert{u(p^{-m}(b_\eta-b_\w))\rVert}_p$ denotes the norm of the adjoint action of $u(p^{-m}(b_\eta-b_\w))$ on the Lie algebra of $\Gbf(\Q_p)$. 
      Note further that the above estimate holds for $\Qcal_\l$ since $\g_\w^\a(\g_\eta^\a)^{-1}=\t_\w^\a(\t_\eta^\a)^{-1}$.
      Hence, it suffices to bound the above average.
      
      To calculate the adjoint norm, we find a polar decomposition of $u(p^{-m}(b_\eta-b_\w))$.
      Note that for all $\xbf\in \Q_p$ with $|\xbf|_p> 1$, we have 
      \begin{equation*}
        \begin{pmatrix}
          1 & 0 \\ \frac{-1}{\xbf +1} &1
        \end{pmatrix}
        u(\xbf)
        \begin{pmatrix}
          1 & 0 \\ 1 & 1
        \end{pmatrix} 
        \begin{pmatrix}
          1 & \frac{-\xbf}{\xbf+1}\\ 0 & 1
        \end{pmatrix}
        = \begin{pmatrix}
          1+\xbf & 0\\ 0& \frac{1}{\xbf+1}
        \end{pmatrix}.
      \end{equation*}
      Then, since $|1/(\xbf+1)|_p<1$ and $|\xbf/(\xbf+1)|_p = 1$, we obtain
      \begin{equation*}
        u(\xbf)=m_1 a\big((1+\xbf)^2\big) m_2
      \end{equation*}
      for some $m_1,m_2\in \Gbf(\Z_p)$ (recall that $\Gbf=\mrm{PGL}_2$).
      Using~\eqref{eq:invariance of norm}, we get
      \begin{equation*}
        \norm{u(\xbf)}_p = \big\lVert a\big((\xbf+1)^2\big)\big\rVert_p = |\xbf|_p^2.
      \end{equation*}
      It follows that
      \begin{equation*}
        \big\lVert(\a\cdot \Pcal_\l)^n(\vp)\big\rVert^2  \ll_{\e} 
        \Scal_{2,1}(\vp)^2 \left(\sum_{\eta=\w\in \L^n} \l_\eta\l_\w 
          + p^{-(\frac{25}{32}-2\e)m}\sum_{\eta\neq \w\in \L^n} \l_\eta\l_\w |b_\eta-b_\w|_p^{-\frac{25}{32}+2\e} 
        \right).
      \end{equation*}
      
      Fix some $\e>0$ and let $\d = 25/32-2\e$.
      For each $\eta\neq \w \in \L^n$, define
      \begin{equation*}
        d(\eta,\w) = \max\seti{1\leq i\leq n: \eta_i \neq \w_i}.
      \end{equation*}
      Recall that $b_j = j/p$ for all $j\in \L$.
      Let $\eta\neq\w\in\L^n$ and let $d=d(\eta,\w)$.
      A simple calculation then shows that
      \begin{align*}
        b_{\eta}-b_\w = \sum_{i=1}^n (b_{\eta_i}-b_{\w_i}) p^{-i+1}
        = \sum_{i=1}^{d} (\eta_i-\w_i) p^{-i}
        = p^{-d} \sum_{i=1}^{d} (\eta_i-\w_i) p^{d-i}.
      \end{align*}
      By definition, we have $\eta_d\neq \w_d$. This implies that the integer $\sum_{i=1}^{d} (\eta_i-\w_i) p^{d-i}$ is coprime to $p$, i.e., a unit in $\bZ_{p}=\{\xbf\in\bQ_{p}:\lvert\xbf\rvert_{p}\leq 1\}$. Thus, it follows that
      \begin{align*}
        \sum_{\eta\neq\w\in \L^n} \l_\eta \l_\w \left|b_\eta-b_\w \right|_p^{-\d}
        = \sum_{\eta\neq\w\in \L^n} \l_\eta \l_\w p^{-\d d(\eta,\w)}
        = \sum_{\eta\in \L^n} \l_\eta \sum_{j=1}^n p^{-\d j} 
        \sum_{\substack{\w\in \L^n\\ d(\eta,\w)=j}} \l_\w.
      \end{align*}
      
      We now specialize to the case where $\l$ is the uniform probability vector with weight $1/|\L|$. Then, for each $\eta\in \L^n$, we have
      \[\l_\eta = |\L|^{-n} = p^{-sn}, \]
      where $s = \log |\L|/\log p$.
      Hence, we obtain
      \begin{align*}
        \sum_{\eta\neq\w\in \L^n} \l_\eta \l_\w \left|b_\eta-b_\w \right|_p^{-\d}
        &= p^{-2sn} \sum_{j=1}^n p^{-\d j} \sum_{\eta\in \L^n}
        \underbrace{\lvert\{\w\in \L^n: d(\eta,\w)=j\}\rvert}_{=\lvert\Lambda\rvert^{j-1}(\lvert\Lambda\rvert-1)}\\
        &\leqslant  p^{-2sn} \sum_{j=1}^n p^{-\d j} \sum_{\eta\in \L^n} |\L|^{j}=  p^{-2sn} \sum_{j=1}^n p^{-\d j} |\L|^{n+j}.
      \end{align*}
      If $s\neq \d$, then, using that $|\L| = p^s$, we obtain
      \begin{align*}
        \sum_{\eta\neq\w\in \L^n} \l_\eta \l_\w \left|b_\eta-b_\w \right|_p^{-\d}
        &\leqslant p^{-sn+s-\d} \sum_{j=0}^{n-1} p^{(s-\d) j} 
        = p^{-sn+s-\d} \frac{p^{(s-\d)n}- 1}{ p^{(s-\d)} -1}
        \leqslant  \frac{p^{s-\d}}{\lvert p^{s-\d}-1\rvert}\big(p^{-\d n}+p^{-sn}\big).
      \end{align*}
      Otherwise, if $s=\d$, we get a bound of the form $n p^{-s n}$.
      Finally,
      we note that
      \begin{equation*}
        \sum_{\eta=\w\in \L^n} \l_\eta \l_\w = p^{-sn}.
      \end{equation*}

    \end{proof}

   In the proof of Theorem~\ref{thm:cantor equidist}, we will need an estimate on the decay of the $\mrm{L}^4$-norm of the operators $\a\cdot\Qcal_\l$. We deduce this estimate in the following corollary. 
     \begin{corollary}\label{cor:spectral gap in Lq}
   For all $q\geq 2$, $n\in \N$, $\a\in \L^\ast$, and for every bounded smooth $K_{\mrm{f}}$-invariant function $\vp\in \mrm{L}^2_{00}(X_S)$, we have
    \begin{equation*}
          \big\lVert(\a\cdot \Qcal_\l)^n (\vp)\big\rVert^q_{\mrm{L}^q} \ll_{\e,p,s,q} \Scal_{2,1}(\vp)^2 \norm{\vp}_\infty^{q-2}\cdot 
          p^{-2\omicron_\e n },
    \end{equation*}
    for every $\e>0$, where $2\omicron_\e=\min\seti{25/32,s}-\e$.
   \end{corollary}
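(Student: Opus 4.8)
The plan is to deduce Corollary~\ref{cor:spectral gap in Lq} from Proposition~\ref{prop:cantor spectral gap} (for the operator $\Qcal_\l$) by an elementary $\mrm{L}^p$-interpolation argument, using the fact that $\a\cdot\Qcal_\l$ contracts the sup-norm. First I would observe that since $\a\cdot\Qcal_\l$ is an averaging operator over the finite set $\{\t^\a_i : i\in\L\}$ with weights $\l_i$ summing to $1$, it is a contraction on $\mrm{L}^\infty(X_S)$: for any bounded $\vp$,
\begin{equation*}
    \big\lVert (\a\cdot\Qcal_\l)^n(\vp) \big\rVert_\infty \leqslant \norm{\vp}_\infty.
\end{equation*}
Next, for $q\geqslant 2$, I would write $q = 2 + (q-2)$ and bound pointwise
\begin{equation*}
    \big| (\a\cdot\Qcal_\l)^n(\vp)(x) \big|^q \leqslant \big\lVert (\a\cdot\Qcal_\l)^n(\vp)\big\rVert_\infty^{q-2} \cdot \big| (\a\cdot\Qcal_\l)^n(\vp)(x)\big|^2,
\end{equation*}
and integrate over $X_S$ against the invariant probability measure. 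This gives
\begin{equation*}
    \big\lVert (\a\cdot\Qcal_\l)^n(\vp)\big\rVert_{\mrm{L}^q}^q \leqslant \norm{\vp}_\infty^{q-2} \cdot \big\lVert (\a\cdot\Qcal_\l)^n(\vp)\big\rVert_{\mrm{L}^2}^2 .
\end{equation*}

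Then I would invoke Proposition~\ref{prop:cantor spectral gap} (stated for both $\Pcal_\l$ and $\Qcal_\l$) applied to the function $\vp$, noting that since $\vp$ is $K_{\mrm{f}}$-invariant, smooth and lies in $\mrm{L}^2_{00}(X_S)$, so does each iterate, so the hypotheses of the Proposition apply. This yields, for every $\e>0$,
\begin{equation*}
    \big\lVert (\a\cdot\Qcal_\l)^n(\vp)\big\rVert_{\mrm{L}^2}^2 \ll_{\e,p,s} \Scal_{2,1}(\vp)^2 \big( p^{-(s-\e)n} + p^{-\d_\e(n+|\a|)} \big) \leqslant \Scal_{2,1}(\vp)^2 \big( p^{-(s-\e)n} + p^{-\d_\e n} \big),
\end{equation*}
where $\d_\e = 25/32 - 2\e$, and the last inequality uses $p^{-\d_\e|\a|}\leqslant 1$. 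Replacing $2\e$ by $\e$ in the exponent of the second term and absorbing constants, the bracket is $\ll p^{-2\omicron_\e n}$ with $2\omicron_\e = \min\{25/32, s\} - \e$ (choosing the worse of the two exponents; note the factor of $2$ matches the definition of $\omicron_\e$ in~\eqref{def: a, b,c} since there $r^{\omicron_\e} = (\sum \l_i^{q_\e})^{1/q_\e}$ and here $r = \rho = 1/p$). Combining the two displays gives exactly the claimed estimate
\begin{equation*}
    \big\lVert (\a\cdot\Qcal_\l)^n(\vp)\big\rVert_{\mrm{L}^q}^q \ll_{\e,p,s,q} \Scal_{2,1}(\vp)^2 \norm{\vp}_\infty^{q-2} \cdot p^{-2\omicron_\e n}.
\end{equation*}

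This argument is short and presents no real obstacle; the only point requiring a little care is bookkeeping the relabelling of $\e$ and the precise form of $\omicron_\e$ so that it is consistent with the notation established in~\eqref{def: a, b,c}, and making sure that the implied constant's dependence on $q$ (which enters only through the trivial step of splitting the exponent, hence is in fact harmless) is recorded. One should also note explicitly that the statement of Proposition~\ref{prop:cantor spectral gap} already contains the assertion "the same estimate holds for $\Qcal_\l$ in place of $\Pcal_\l$", so no separate verification for $\Qcal_\l$ is needed. The sup-norm contraction property of $\a\cdot\Qcal_\l$ is the only genuinely new ingredient, and it is immediate from the definition of the operator as a convex combination of composition operators.
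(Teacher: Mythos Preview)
Your proof is correct and in fact slightly more direct than the paper's. Both arguments use the sup-norm contraction $\lVert(\a\cdot\Qcal_\l)^n(\vp)\rVert_\infty\leq\lVert\vp\rVert_\infty$ and feed in Proposition~\ref{prop:cantor spectral gap} for the $\mrm{L}^2$ estimate, but the paper reaches the $\mrm{L}^q$ bound via the layer-cake formula and Chebyshev's inequality: it writes $\lVert F\rVert_q^q=\int_0^{\lVert\vp\rVert_\infty^q}\mu_S(\lvert F\rvert^q>t)\,dt$ for $F=(\a\cdot\Qcal_\l)^n(\vp)$, bounds the tail by $\lVert F\rVert_2^2\,t^{-2/q}$, and integrates, picking up an explicit factor $q/(q-2)$. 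Your pointwise splitting $\lvert F\rvert^q\leq\lVert F\rVert_\infty^{q-2}\lvert F\rvert^2$ achieves the same inequality in one line without that extra factor. The difference is purely cosmetic once constants depending on $q$ are absorbed.

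One minor remark: your parenthetical about the $\omicron_\e$ here matching the constant in~\eqref{def: a, b,c} is not quite right; in this appendix $2\omicron_\e=\min\{25/32,s\}-\e$ is a local definition specific to the missing-digit setting, not the general $\omicron_\e$ from~\eqref{def: a, b,c}. This does not affect the argument.
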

   \begin{proof}
   The case when $q=2$ is exactly Proposition~\ref{prop:cantor spectral gap}.
   Hence, we may assume $q>2$.
   Let $\mu_S$ denote the $\Gbf_S$-invariant probability measure on $X_S$. Using Fubini's Theorem one checks that
   \begin{align*}
       \big\lVert(\a\cdot \Qcal_\l)^n (\vp)\big\rVert^q_{\mrm{L}^q}
       &=\int_0^\infty \mu_S(x: |(\a\cdot \Qcal_\l)^n (\vp)(x)|^q>t)\;dt
       \\
       &= \int_0^{\norm{\vp}^q_\infty} \mu_S(x: |(\a\cdot \Qcal_\l)^n (\vp)(x)|^q>t)\;dt, 
   \end{align*}
   where we used that $\norm{\a\cdot\Qcal_\l^n(\vp)}_\infty\leq \norm{\vp}_\infty$.
   Hence, by Proposition~\ref{prop:cantor spectral gap} and Chebychev's inequality, we have for all $t>0$,
   \begin{align*}
       \mu_S(x: |(\a\cdot \Qcal_\l)^n (\vp)(x)|^q>t)
       \ll_{\e,p,s} \Scal_{2,1}(\vp)^2
          p^{-2\omicron_\e n } t^{-2/q}.
   \end{align*}
   Hence, since $q>2$, we obtain
   \begin{align*}
       \big\lVert(\a\cdot \Qcal_\l)^n (\vp)\big\rVert^q_{\mrm{L}^q}
       \ll_{\e,p,s} \Scal_{2,1}(\vp)^2    p^{-2\omicron_\e n }
       \norm{\vp}_\infty^{q-2} \frac{q}{q-2}.
   \end{align*}
   \end{proof}

   \subsection{Sharper version of Proposition~\ref{prop:banana}}  
    The following result provides a sharper value of $\k$ constituting the rate of equidistribution of horospherical measures on congruence covers.
    
    \begin{prop}[Prop.~3.1,~\cite{Strombergsson-JMD}]
    \label{prop:Strombergsson}
    
    Let $\Delta\leq \Ga(1)$ be a congruence lattice and $X_\Delta= \Gbf_\infty/\Delta$.
    Then, for every $\vp\in \mrm{B}_{2,\lval}^{\infty}(X_\Delta)$, $x\in X_\Delta$ and $t\geq 1$,
    \begin{align*}
        \int_0^1 \vp(a(t) u(\xbf)x)\;\der \xbf = \int \vp \;\der m_{\Gbf_\infty^+ \cdot x}
        + O\big(V_\Delta\cdot \Scal_{2,\lval}(\vp) \cdot t^{-\k}\cdot \mathscr{Y}^{1/2}_{\Delta}(x)\big),
    \end{align*}
    where $V_\Delta = \sqrt{[\Ga(1):\Delta]}$, $\mathscr{Y}_{\Delta}$ is a positive proper function on $X_\Delta$ and if $\l_1\in (0,1/4)$ is a uniform lower bound on the non-zero eigenvalues of the Laplacian on $X_\Delta$ for all $\Delta$, then
    \begin{equation*}
         \k = \frac{1-\sqrt{1-4\l_1}}{2}. 
    \end{equation*}
    The implied constant is independent of $\Delta$.
    \end{prop}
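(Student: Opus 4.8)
\textbf{Proof proposal for Proposition~\ref{prop:Strombergsson}.}
The statement is essentially a citation to~\cite[Prop.~3.1]{Strombergsson-JMD}, but to make the excerpt self-contained I would organize the argument along the following lines. The plan is to reduce the claim on $X_\Delta = \Gbf_\infty/\Delta$ to the classical effective equidistribution of closed horocycles on a (possibly disconnected) quotient of $\mathbb{H}^2$, and then track the dependence on $\Delta$ through the index $[\Ga(1):\Delta]$. First I would note that $\Gbf_\infty = \mathrm{PGL}_2(\R)$ has $\Gbf_\infty^+ = \mathrm{PSL}_2(\R)$ as its identity component of index $2$, so $X_\Delta$ has at most two connected components, each of the form $\Gbf_\infty^+/\Delta'$ for a congruence lattice $\Delta'$ in $\mathrm{PSL}_2(\R)$; by the reduction at the start of the proof of Proposition~\ref{prop:banana}, it suffices to prove the estimate on a single such component, and by the same argument as there the factor $V_\Delta = \sqrt{[\Ga(1):\Delta]}$ absorbs the discrepancy between the Haar normalization on $\Gbf_\infty^+/\Delta'$ and on $\Gbf_\infty^+/\Ga^+(1)$.

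Next I would spectrally expand $\vp$ (restricted to the relevant component and with its $\Gbf_\infty^+$-invariant part subtracted off, which produces the main term $\int \vp\,\der m_{\Gbf_\infty^+\cdot x}$). The key analytic input is that closed horocycle integrals $\int_0^1 \vp(a(t)u(\xbf)x)\,\der\xbf$ decay like $t^{-\k}$, where the exponent $\k$ is governed by the smallest nonzero Laplace eigenvalue: writing $\l_1 = s_1(1-s_1)$ with $s_1 \in (1/2,1)$, the contribution of the corresponding spectral component to the horocycle average is of size $t^{-(1-s_1)} = t^{-\frac{1-\sqrt{1-4\l_1}}{2}}$; the continuous spectrum and higher discrete eigenvalues contribute faster decay. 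This is exactly the content of the classical unfolding/Fourier expansion argument for horocycle averages (as in~\cites{Sarnak1981,Strombergsson,Strombergsson-JMD,Edwards}), and since $\l_1 \in (0,1/4)$ is assumed to be a \emph{uniform} lower bound over all congruence $\Delta$ (which exists by bounds towards Selberg's conjecture), the implied constant in the $t^{-\k}$ bound is independent of $\Delta$. The function $\mathscr{Y}_\Delta$ is the invariant height function on $X_\Delta$ (a smoothed version of $\sup$ over cusps of the imaginary part in a fundamental domain), and it enters because the Sobolev-to-sup comparison and the truncation of the Eisenstein/cuspidal expansion near the cusps cost a power of the height of the basepoint $x$; carrying the bookkeeping carefully gives the stated $\mathscr{Y}_\Delta^{1/2}(x)$ factor with an absolute power.

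Finally, I would upgrade from $M_\infty$-finite vectors (where the spectral expansion is cleanest) to general smooth vectors $\vp \in \mrm{B}_{2,\lval}^\infty(X_\Delta)$ via the standard argument of~\cite[Section 6.2.1]{EMV}, summing over the $K$-types with the Sobolev norm $\Scal_{2,\lval}$ controlling the tail; the value $\lval = 3$ is what this summation requires in the rank-one setting. The main obstacle — or rather, the main point requiring care rather than difficulty — is the uniformity of \emph{all} implied constants in $\Delta$: one must check that the spectral gap $\l_1$ can be taken uniform (Kim--Sarnak), that the volume growth and the height function behave uniformly across the tower of congruence covers, and that the number of connected components stays bounded (here, $\leq 2$). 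Since all of this is carried out in~\cite{Strombergsson-JMD}, in the write-up I would state the result as cited and indicate only these reduction steps, deferring the analytic core to \emph{loc.\ cit}.
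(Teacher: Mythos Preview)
Your proposal is essentially correct and follows the same reduction strategy as the paper: pass to connected components (at most two, each a congruence quotient of $\mathrm{PSL}_2(\R)$), absorb the Haar-normalization discrepancy into the factor $V_\Delta$, and defer the analytic core to~\cite{Strombergsson-JMD}. Two points of your sketch diverge from the paper's account and are worth correcting. First, your justification for the Sobolev order $\lval=3$ via an EMV-style $K$-type summation is off---that argument would give $\ell=\dim M_\infty=1$ here; the paper instead observes that in Str\"ombergsson's proof only the bounds of~\cite[Lem.~2.1]{Strombergsson-JMD} (not Lemmas~2.2--2.3) are needed for the coarser error term $t^{-\k}\mathscr{Y}_\Delta^{1/2}(x)$, and that lemma is what requires order-$3$ Sobolev control. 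Second, the paper is more specific than your ``volume growth and height function behave uniformly'' about where the $\Delta$-dependence in~\cite{Strombergsson-JMD} actually enters: it is through the choice of injectivity radius in the thick-thin decomposition underlying~\cite[Lem.~2.1]{Strombergsson-JMD} (via~\cite[Lem.~5.3]{FlaminioForni}), and since every $X_\Delta$ covers $X_\infty(1)$, a single injectivity radius chosen on the base works uniformly across the tower. With these two adjustments your write-up would match the paper's.
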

    
    \begin{proof}
      The statement in~\cite[Prop.~3.1]{Strombergsson-JMD} is stated in a slightly different form, we outline the needed modifications.
      First, the results in \textit{loc.~cit.}~are stated for quotients of $\mrm{SL}_2(\R)$. Recall that $\Gbf_\infty^+$ is the image of $\mrm{SL}_2(\R)$ inside $\Gbf_\infty$ and is a normal subgroup of index $2$. In particular, for each $\Delta$, $X_\Delta$ consists of at most two connected components, each of which is isomorphic to $\mrm{SL}_2(\R)/\Delta'$, where $\Delta'\leq\SL_{2}(\bZ)$ is a congruence lattice.
      We define $\mathscr{Y}_{\Delta}$ to be $\mathscr{Y}_{\Delta'}$ (in the notation of~\cite[Eq.~(11)]{Strombergsson-JMD}) on each of the connected components of $X_\Delta$.
      
      The measure on $X_\Delta$ defining the $\mrm{L}^2$-Sobolev norms $\norm{\cdot}_{W_k}$ in \textit{loc.~cit.~}has total mass $\asymp V_\Delta^2$. In particular, this norm is equivalent to $V_\Delta \cdot\Scal_{2,k}$ for all $k\in\N$; cf.~discussion following~\cite[Eq.~(9)]{Strombergsson-JMD}.
      Note further that the statement is made for long horocycle orbits starting from a point $p$. The above statement is obtained from this result with $T=t$ and with $pa(T^{-1})$ in place of $p$ in the notation in \textit{loc.~cit} using standard conjugation relations of $a(t)$ and $u(\xbf)$.

      Next, we note that the implied constant in~\cite[Prop.~3.1]{Strombergsson-JMD} can be made independent of $\Delta$.
      The dependence on the lattice comes from~\cite[Lem.~2.1]{Strombergsson-JMD}. Note that the bounds in~\cite[Lem.~2.2, 2.3]{Strombergsson-JMD} are not needed for our weaker error term $t^{-\k}\mathscr{Y}_{\Delta}^{1/2}(x)$.
      
      The dependence in~\cite[Lem.~2.1]{Strombergsson-JMD} arises from a choice of an injectivity radius to allow for a thick-thin decomposition of $\mathbb{H}^2/\Delta'$ in order to apply the Sobolev embedding theorem (cf.~the choice of $\epsilon$ in the proof of~\cite[Lem.~2.1]{Strombergsson-JMD} given in~\cite[Lem.~5.3]{FlaminioForni}). As $X_\Delta$ are all covers of $X_\infty(1)\cong\mrm{SL}_2(\R)/\mrm{SL}_2(\Z)$, a choice of an injectivity radius in $X_\infty(1)$ works for all of $X_\Delta$.
      
    Hence, the error term can be obtained by applying~\cite[Lem.~2.1]{Strombergsson-JMD} to Burger's integral formula in~\cite[Eq.~(23)]{Strombergsson-JMD} combined with the estimates on the height function in~\cite[pg 303]{Strombergsson-JMD} and the estimates on the intertwining operators given in~\cite[Eq.~(22)]{Strombergsson-JMD} (or~\cite[pg.~791]{Burger-effectiveHorocycles} with $\a=\k$ in the notation of~\cite{Burger-effectiveHorocycles}) as is done in~\cite{Strombergsson-JMD}. 
    One uses~\cite[Lem.~2.2]{Strombergsson-JMD} to ensure pointwise convergence of the last integral in~\cite[Eq.~(23)]{Strombergsson-JMD} to a bounded continuous function as is done towards the end of the proof so that the above bounds apply.
    
    Finally, we note that the order $\lval$ Sobolev norm in the statement (as opposed to $\Scal_{2,4}$ in \textit{loc.~cit.}) arises from only applying the bounds of~\cite[Lem.~2.1]{Strombergsson-JMD} in the proof of~\cite[Prop.~3.1]{Strombergsson-JMD}.

    \end{proof}

 \subsection{Proof of Theorem~\ref{thm:cantor equidist}}
    We outline the needed modifications of the proof of Theorem~\ref{thm: effective single equidist} in this setting. We retain the notation in that proof, in particular the constants $a,b_\e$ and $c$ in the statement of Theorem~\ref{thm: effective single equidist}.
    We begin by noting that the average contraction ratio $r$ is $p^{-1}$ in the case at hand.

    Since missing digit Cantor sets satisfy the open set condition with the open set $(0,1)$, the proof of Theorem~\ref{thm: effective single equidist} shows that we can take the absolutely continuous measure $\nu$ to be the Lebesgue measure on the unit interval.
    In this case, the Mass Term in~\eqref{eq: Cauchy Schwarz} takes the form
    \begin{equation*}
        \text{Mass Term} = p^{2\sigma m}, \qquad 2\sigma=1-s.
    \end{equation*}

    By Lemma~\ref{lem:index estimate} and using~\eqref{eq:Qcal vs Pcal}, $(\a\cdot \Qcal_\l)^n(\vp)$ can be regarded as a function on $\Gbf_\infty/\Delta$, for some congruence lattice $\Delta$.
    Hence, we may apply Proposition~\ref{prop:Strombergsson} in place of Proposition~\ref{prop:banana} to obtain the following replacement of~\eqref{eq:applying banana}:
    \begin{align}\label{eq:applying Strombergsson}
         \int_0^1 (\a\cdot \Pcal_\l)^n(\vp)^2\big( a(t) u(\xbf)h_\a \Delta\big) \dx{\xbf}
         &=  \int_0^1 (\a\cdot \Qcal_\l)^n(\vp)^2\big( a(tp^{-n}) u(\xbf)h_\a \Delta\big) \dx{\xbf}
         \nonumber\\
         &=\int (\a\cdot \Qcal_\l)^n(\vp)^2  \;\der m_{\Gbf_\infty^+/\Delta^+}
         \nonumber\\
         &\qquad+   
         O\left(V_{\Delta} \Scal_{2,\ell}\big( (\a\cdot \Qcal_\l)^n(\vp)^2\big) p^{\k n}t^{-\k} \mathscr{Y}^{1/2}_{\Delta}(h_\a\Delta)\right),
    \end{align}
    where $\ell=3$, $\Delta^+=\Gbf_\infty^+\cap \Delta$. Here, we use the fact that $h_\a\in \Gbf_\infty^+$ so that $\Gbf_\infty^+\cdot h_\a\Delta \cong \Gbf_\infty^+/\Delta^+$.

    Note further that, by~\cite[Eq.~(11)-(13)]{Strombergsson-JMD}, $\mathscr{Y}_{\Delta}(x)\leq \mathscr{Y}_{\Ga(1)}(x) \ll \norm{\mrm{Ad}_g} $, where $g\in \Gbf_\infty$ is any representative of $x$ and $\norm{\mrm{Ad}_g}$ denotes the norm of its adjoint action.
    In particular, the estimate $\mathscr{Y}^{1/2}_{\Delta}(x_\a)\ll \rho_\a^{-C_0}$ holds for a suitable $C_0\geq 1$ in place of the estimate~\eqref{eq:height-trivial-estimate}.

    The key point in introducing the operators $\Qcal_\l$ is as follows. Since multiplication by elements of $\Gbf_\mrm{f}$ commutes with differential operators on $\Gbf_\infty$, one checks using Lemma~\ref{lem:Sobolev} that
    \begin{equation*}
        \Scal_{2,\ell}\big( (\a\cdot \Qcal_\l)^n(\vp)^2\big)
        \ll \Scal_{4,\ell}\big( (\a\cdot \Qcal_\l)^n(\vp)\big)^2.
    \end{equation*}
    Moreover, note that $\Dcal\vp$ has mean $0$ for any differential operator $\Dcal$.
    This can be checked by induction on the degree of the operator using the dominated convergence theorem, invariance of the Haar measure, and the limit definition of Lie derivatives; cf.~proof of Lemma~\ref{lem:first derivative}.
    In particular, Lemma~\ref{lem: character spectrum} implies that the lift of $\Dcal\vp$ to $X_S$ belongs to $\mrm{L}^2_{00}(X_S)$.
    
    Hence, by Corollary~\ref{cor:spectral gap in Lq}, applied with $q=4$, for any $\Dcal$ of degree $\leq \ell$, we have
    \begin{align*}
        \norm{(\a\cdot \Qcal_\l)^n(\Dcal \vp)}_{\mrm{L^4}}^4 \ll_{\e,p,s}
        \Scal_{\infty,1}(\Dcal\vp)^4 p^{-2\omicron_\e n}
        \leqslant \Scal_{\infty,\ell+1}(\vp)^4 p^{-2\omicron_\e n},
    \end{align*}
    where
    \begin{equation*}
        2\omicron_\e := \min\seti{25/32,s }-\e.
    \end{equation*}
    It follows that
    \begin{align*}
        \Scal_{2,\ell}\big( (\a\cdot \Qcal_\l)^n(\vp)^2\big)
        \ll
        \Scal_{4,\ell}\big( (\a\cdot \Qcal_\l)^n(\vp)\big)^2 
        \ll_{\e,p,s} \Scal_{\infty,\ell+1}(\vp)^2 p^{-\omicron_\e n}.
    \end{align*}

    Additionally, by Lemma~\ref{lem:index estimate}, the congruence lattice $\Delta $ can be chosen so that $V_\Delta \ll p^{3|\a|+3n/2}$.
    Finally, estimating the main term in~\eqref{eq:applying Strombergsson} using Proposition~\ref{prop:cantor spectral gap}, we obtain the following sharper bound on the horospherical term:
    \begin{align*}
        \text{Horospherical term }
        \ll_{\e,p} \Scal_{\infty,\ell+1}(\vp)^2 p^{(3+C_0)|\a|}\left(
         p^{-2\omicron_\e n} +  p^{2n\upsilon-\k\t}
        \right),
    \end{align*}
    where
    \begin{equation*}
        2\upsilon= 3/2+\k-\omicron_\e, \qquad p^\t = t.
    \end{equation*}
    By known bounds towards Selberg's eigenvalue conjecture due to~\cite[Proposition 2]{KimSarnak}, we can take $\l_1\geq 975/4096$ in Proposition~\ref{prop:Strombergsson}.
    In particular, we may take $\k = 25/64$.
    
By combining the above estimates and balancing the rates as is done in the proof of Theorem~\ref{thm: effective single equidist} (cf.~discussion following~\eqref{eq:combined rates}), we see that the conclusion of that theorem holds in our setting if
     \begin{equation*}
        2\sigma(\omicron_\e+\upsilon) < \k(\sigma+\omicron_\e)
        \Longleftrightarrow
        (1-s)(\omicron_\e+3/2)<2\k \omicron_\e,
    \end{equation*}
    for some $\e>0$ and with our choices of $\sigma,\omicron_\e,\upsilon$ and $\k$ as above.
    This condition is in turn satisfied under our hypothesis~\eqref{eq:thickness for p cantor sets} as can be shown by a direct calculation.

  \subsection{A version of Lebesgue density}
  
  In this subsection, we verify the version of Lebesgue density theorem for Bernoulli measures on symbolic spaces used in the proof of Lemma~\ref{lem:Lebesgue-density}. 
  
  Let $\L$ be a finite set and $\l$ be a probability vector on $\L$.
  For $\a\in \Sigma := \L^\N$ and $k\in \N$, denote by $\Sigma(\a,k)$ the cylinder set given by the prefix of $\a$ of length $k$ and denote this prefix by $\a|_k$.
  We endow $\L$ with the discrete topology and $\Sigma$ with the associated product topology.
  
  \begin{lem}\label{lem:Leb density for cylinders}
    Suppose $B\subseteq \Sigma$ is a Borel set. Then, for $\l^\N$-almost every $x\in B$, 
    \begin{equation*}
        \lim_{k\to\infty} \frac{\l^\N(B\cap \Sigma(x,k))}{\l^\N(\Sigma(x,k))} = 1.
    \end{equation*}
  \end{lem}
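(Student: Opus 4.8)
The plan is to mimic the classical proof of the Lebesgue density theorem via a Vitali-type covering argument, using the fact that the cylinder sets $\{\Sigma(x,k)\}$ form a filtration of $\Sigma$ by a countable nested family of clopen sets on which $\l^{\N}$ behaves like a doubling measure in the appropriate combinatorial sense. First I would fix $\e>0$ and, for a Borel set $B$, consider the ``bad set''
\begin{equation*}
  B_\e = \Big\{ x\in B : \limsup_{k\to\infty} \frac{\l^\N(B\cap\Sigma(x,k))}{\l^\N(\Sigma(x,k))} < 1-\e \Big\}.
\end{equation*}
It suffices to show $\l^\N(B_\e)=0$ for every $\e>0$, since then the $\limsup$ (hence the $\liminf$, which is trivially $\le 1$) equals $1$ almost everywhere on $B$; taking a countable sequence $\e\to 0$ finishes the claim.

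The key step is an outer-regularity plus covering argument. Given $\eta>0$, use outer regularity of the Borel measure $\l^\N$ on the compact metrizable space $\Sigma$ to choose an open set $V\supseteq B_\e$ with $\l^\N(V) \le \l^\N(B_\e)+\eta$. Every open set in $\Sigma$ is a countable disjoint union of cylinder sets, so in particular for each $x\in B_\e$ there are arbitrarily long cylinders $\Sigma(x,k)\subseteq V$ with $\l^\N(B\cap\Sigma(x,k)) < (1-\e)\l^\N(\Sigma(x,k))$; here I use the definition of $B_\e$ to extract such $k$. Now the crucial combinatorial feature of cylinders in $\Sigma$: any two cylinders are either nested or disjoint. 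Hence from the collection of all such ``selected'' cylinders one can extract, for each $x$, the \emph{maximal} one containing it, and these maximal selected cylinders are pairwise disjoint; call them $\{C_j\}_{j}$. They cover $B_\e$, are contained in $V$, and each satisfies $\l^\N(B\cap C_j) < (1-\e)\l^\N(C_j)$. Therefore
\begin{equation*}
  \l^\N(B_\e) \le \sum_j \l^\N(B\cap C_j) < (1-\e)\sum_j \l^\N(C_j) \le (1-\e)\,\l^\N(V) \le (1-\e)\big(\l^\N(B_\e)+\eta\big).
\end{equation*}
Letting $\eta\to 0$ gives $\l^\N(B_\e) \le (1-\e)\l^\N(B_\e)$, and since $0<\e<1$ and $\l^\N(B_\e)<\infty$ this forces $\l^\N(B_\e)=0$, as desired.

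I expect the only genuine subtlety — the ``hard part,'' though it is really just a bookkeeping point — to be the extraction of the disjoint maximal subfamily: one must check that every $x\in B_\e$ is contained in \emph{some} selected cylinder that is maximal among selected cylinders containing points of $B_\e$, which follows because selected cylinders containing a fixed $x$ form a chain (by the nesting dichotomy) that is bounded below in length by $0$, so a maximal element exists; and distinct maximal ones are automatically disjoint, again by the dichotomy. A secondary point to state carefully is that a selected cylinder $\Sigma(x,k)\subseteq V$ exists for arbitrarily large $k$: this uses that $V$ is open, so some $\Sigma(x,k_0)\subseteq V$, and that the defining inequality for $B_\e$ holds along an infinite sequence of $k$'s, so one can take $k\ge k_0$ along that sequence — all sub-cylinders of $\Sigma(x,k_0)$ remain inside $V$. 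Everything else is the standard telescoping estimate above; no metric or doubling hypothesis on $\l$ is needed because the tree structure of $\Sigma$ replaces it.
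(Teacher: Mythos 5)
Your approach is correct in outline but takes a genuinely different route than the paper, and there is one small (easily repaired) logical slip worth flagging.

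\textbf{Comparison with the paper.} The paper does \emph{not} give a direct covering argument. Instead it reduces the claim to the classical Lebesgue density theorem on $\R$: it constructs an auxiliary missing-digit IFS in base $p=2\lvert\Lambda\rvert$ with well-separated images, uses the coding map $\pi:\Sigma\to\Kcal\subset\R$ (a homeomorphism onto the attractor), verifies that $\pi(\Sigma(\alpha,k))=B(\pi(\alpha),p^{-k})\cap\Kcal$, and then applies the Lebesgue density theorem for Radon measures on the line. Your argument is self-contained: you exploit the nesting dichotomy for cylinders to run a Vitali-type covering argument directly on $\Sigma$. This is more elementary (no auxiliary embedding, no appeal to a black-box density theorem) and slightly more general in flavor, at the cost of having to verify outer regularity of $\lambda^{\N}$ (automatic on a compact metric space) and the tree-structure facts you rightly highlight. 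Both routes are legitimate; yours replaces the doubling/covering machinery of the Euclidean density theorem with the trivial nesting dichotomy of cylinders, which is an elegant observation.

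\textbf{The slip.} Your bad set is defined with a $\limsup$:
\begin{equation*}
  B_\e = \Big\{ x\in B : \limsup_{k\to\infty} \tfrac{\l^\N(B\cap\Sigma(x,k))}{\l^\N(\Sigma(x,k))} < 1-\e \Big\},
\end{equation*}
and you then assert that $\l^\N(B_\e)=0$ for all $\e$ forces the \emph{limit} to equal $1$ a.e. This deduction does not hold: $\l^\N(B_\e)=0$ for all $\e$ only yields $\limsup_k a_k = 1$ a.e., which says nothing about the $\liminf$. (Your parenthetical ``hence the $\liminf$, which is trivially $\le 1$'' conflates the two directions of the inequality $\liminf\le\limsup\le 1$.) The correct bad set uses $\liminf$:
\begin{equation*}
  B_\e = \Big\{ x\in B : \liminf_{k\to\infty} \tfrac{\l^\N(B\cap\Sigma(x,k))}{\l^\N(\Sigma(x,k))} < 1-\e \Big\}.
\end{equation*}
With this change your covering argument goes through verbatim: for $x\in B_\e$ the inequality $\l^\N(B\cap\Sigma(x,k))<(1-\e)\l^\N(\Sigma(x,k))$ holds for infinitely many $k$, which is all the Vitali selection needs (you only required arbitrarily long good cylinders inside $V$, not a cofinal tail of them). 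And then $\l^\N(B_\e)=0$ for all $\e$ indeed gives $\liminf_k a_k\ge 1$ a.e., so $\lim_k a_k=1$ a.e., as desired. So: right idea, right combinatorics, wrong choice of $\limsup$ vs $\liminf$ in the setup.
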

    
    \begin{proof}
    We deduce this result from the corresponding well-known Lebesgue density theorem for Radon measures on the real line.
    Let $p=2|\L|$ and consider the auxiliary IFS given by
    \begin{equation*}
        \Fcal = \seti{ f_i(x) = (x+i)/p: 0\leq i\leq p-1, i \text{ is even} }.
    \end{equation*}
    Let $\Kcal$ be its attractor and note that the images of $\Kcal$ under distinct maps in $\Fcal$ are disjoint.
    Let $\pi:\Sigma\to \R$ be the coding map defined by $\pi(\a) = \lim_{k\to\infty} f_{\a|_k}(0)$ and $\mu =\pi_\ast\l^\N$ be the self-similar measure. 
    Then, $\pi$ is a homeomorphism onto its image $\Kcal$; cf.~\cite[Thm. 3.1.(3) and Thm. 4.4.(4)]{Hutchinson}.
    Hence, it suffices to show that
    \begin{equation*}
        \lim_{k\to\infty} \frac{\mu(\pi(B)\cap\pi( \Sigma(\a,k)))}{\mu(\pi(\Sigma(\a,k)))} = 1
    \end{equation*}
    for $\l^\N$-almost every $\a\in B$. Let $\a\in B$. 
    To relate the images of cylinder sets under $\pi$ to intervals in $\R$, one first checks that $\pi(\Sigma(\a,k))$ is contained in the image of $[0,1]$ under $f_{\a|_k}$.
    Hence, by definition of $\Fcal$, given any $\b\in\Sigma$ such that $\Sigma(\b,k)\neq \Sigma(\a,k)$, the distance between $\pi(\Sigma(\b,k))$ and $\pi(\Sigma(\a,k))$ is at least $p^{-k}$. It follows that 
    \begin{equation*}
        \pi(\Sigma(\a,k))  = B(\pi(\a), p^{-k})\cap\Kcal,
    \end{equation*}
    where $B(\pi(\a), p^{-k})$ denotes the open interval around $\pi(\a)$ of radius $p^{-k}$. 
    It follows by Lebesgue's density theorem for Radon measures on $\R$ that 
        \begin{equation*} 
        \lim_{k\to\infty} \frac{\mu(\pi(B)\cap\pi( \Sigma(\a,k)))}{\mu(\pi(\Sigma(\a,k)))} = 
        \lim_{k\to\infty} \frac{\mu(\pi(B)\cap B(\pi(\a),p^{-k}))}{\mu(B(\pi(\a),p^{-k}))} = 1,
    \end{equation*}
    for $\l^\N$-almost every $\a\in B$. Note that we are allowed to use open balls in this application of Lebesgue density since $\mu$ is non-atomic. Indeed, it suffices to note that $\l^\N(\Sigma(\beta,k))\leq \l_{\max}^k\to 0$ for any $\beta\in \Sigma$, where $\l_{\max}$ denotes the largest component of $\l$.
    \end{proof}

\section{Congruence quotients}
 
The goal of this appendix is to give proofs of several facts presented in Section~\ref{sec: s-arith} and used in the proof of Theorem~\ref{thm: effective single equidist}. 
In Corollary~\ref{cor:S-arith-correspondence}, we establish the correspondence between compact-open subgroups of $\Gbf(\bA_{\mrm{f}})$ and principal congruence subgroups of $\Gbf(\bZ)$ which underlies the double coset decomposition~\eqref{eq: class number one}. In Proposition~\ref{prop:components-as-quotients}, we prove the uniform bound on the number of connected components of $\biquotient{\GammaS}{\Gbf_{S}}{K_{\Sf}[N]}$ used in~\eqref{eq:apply bound on conn compo}. At the end of this section, we will define general congruence subgroups; this extension is immediate but we include it for completess.

\subsection{Integral structures}
We begin by making an explicit choice of the integral structure on $\Gbf$ used to define congruence groups.
Given a ring $R$, we let
\begin{equation*}
  V_{R}=\Mat_{d+1}(\bZ)\otimes_{\bZ}R.
\end{equation*}
Then $V_{R}$ is an $R$-algebra which is a free $R$-module of rank $(d+1)^{2}$. The algebra $V_{R}$ allows us to realize $\Gbf(k)$ as a linear group whenever $k$ is a field.
More explicitly, we fix a faithful $k$-representation of $\Gbf$ by choosing the standard basis $\Ecal_{d+1}$ of $V_{k}$ which gives rise to an isomorphism $\GL(V_{k})\cong\GL_{(d+1)^{2}}(k)$ and we define
\begin{equation*}
  \Gbf=\{g\in\GL_{(d+1)^{2}}:\forall u,v\in\Ecal_{d+1}\quad g(uv)=(gu)(gv)\}.
\end{equation*}
In what follows, we let $\Phi:\GL_{d+1}\to\Gbf$ denote the $k$-representation given by
\begin{equation}\label{eq:representation}
\Phi(x)(v)=xvx^{-1}\quad(x\in\GL_{d+1},v\in\Mat_{d+1}).    
\end{equation}
By the Skolem-Noether theorem we have $\Phi(\GL_{d+1}(k))=\Gbf(k)$ for any field $k$ and in particular $\Gbf(k)\cong\GL_{d+1}(k)/k^{\times}$, where we identify $k^{\times}$ with the scalar diagonal matrices in $\GL_{d+1}(k)$.
We record the following consequence of the above discussion which is used to apply the results of~\cite{GMO}.
\begin{lem}\label{lem:connectedness}
  The group $\Gbf$ is a connected group over $k$.
\end{lem}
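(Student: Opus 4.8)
The claim is that $\Gbf = \PGL_{d+1}$, with the integral/linear structure fixed above via the faithful representation on $V_k = \Mat_{d+1}(k)$, is a connected algebraic group over $k$. My plan is to deduce connectedness of $\Gbf$ from connectedness of $\GL_{d+1}$ together with the surjectivity of the quotient map realized by the conjugation representation $\Phi$ of~\eqref{eq:representation}. Concretely, I would first recall that $\GL_{d+1}$ is a connected algebraic group (it is an open subvariety of affine space $\Mat_{d+1}$ cut out by $\det \neq 0$, hence irreducible and in particular connected). Next I would observe that $\Phi : \GL_{d+1} \to \GL(V_k) \cong \GL_{(d+1)^2}$ is a morphism of algebraic groups defined over $k$, its kernel being the central subgroup $\mathbb{G}_m$ of scalar matrices (as noted in the excerpt, $\ker\Phi = k^\times$ on $k$-points, and scheme-theoretically the kernel is the central torus $Z \cong \mathbb{G}_m$).

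The second step is to identify $\Gbf$ as the scheme-theoretic image of $\Phi$. The variety $\Gbf$ was defined as the subgroup of $\GL(V_k)$ preserving the multiplicative structure of the algebra $\Mat_{d+1}$; the Skolem--Noether theorem gives $\Phi(\GL_{d+1}(\overline{k})) = \Gbf(\overline{k})$ over an algebraic closure, so $\Phi$ is dominant onto $\Gbf$, and since the image of a morphism of algebraic groups is a closed subgroup, $\Phi(\GL_{d+1}) = \Gbf$ as algebraic groups over $k$. (One should be slightly careful about the base field: Skolem--Noether is valid over any field, so in fact $\Phi(\GL_{d+1}(k)) = \Gbf(k)$ already over $k$, as the excerpt states, and the closed-image statement holds over $k$ as well.)

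The final step is purely formal: the image of a connected algebraic group under a morphism of algebraic groups is connected. Indeed, $\GL_{d+1}$ is irreducible, so its image under the morphism $\Phi$ is an irreducible constructible set; its Zariski closure $\Gbf$ is then irreducible, hence connected. Putting the three steps together yields that $\Gbf$ is connected over $k$, which is the assertion.

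I do not anticipate a genuine obstacle here; this is a structural lemma and all the ingredients (irreducibility of $\GL_{d+1}$, Skolem--Noether, the fact that images of algebraic groups are closed subgroups, and that continuous/morphic images of connected—or irreducible—sets are connected—or irreducible) are standard. The only point requiring minor care is making sure the identification $\Gbf = \Phi(\GL_{d+1})$ is made as algebraic groups over the given field $k$ and not merely on geometric points, which is exactly what the appeal to Skolem--Noether over $k$ and the closed-image theorem provide; alternatively one can pass to $\overline{k}$, conclude geometric connectedness, and descend. Either route is short.
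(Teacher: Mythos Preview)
Your proposal is correct and follows essentially the same route as the paper: both argue that $\GL_{d+1}$ is irreducible (as a nonempty open subset of affine space), invoke Skolem--Noether to identify $\Gbf$ with the image of $\GL_{d+1}$ under the morphism $\Phi$, and conclude by the fact that morphisms of varieties preserve irreducibility/connectedness. Your write-up is slightly more explicit about the closed-image theorem and base-field issues, but the argument is the same.
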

\begin{proof}
  As $\Gbf$ is an affine $k$-group, we only have to prove that it is connected. Recall that $\GL_{d+1}$ is an irreducible affine $\bQ$-group. To this end, note that $\GL_{d+1}$ is the principal open set defined by the polynomial $\det(x_{ij})$, i.e.,
  \begin{equation*}
    \GL_{d+1}(k)=\big\{(x_{ij})\in k^{(d+1)^{2}}:\det(x_{ij})\neq 0\big\}.
  \end{equation*}
  This is a Zariski-open subset of affine space. As affine space is irreducible, every open subset of affine space is irreducible and hence $\GL_{d+1}(k)$ is irreducible.
  In particular, it follows that $\GL_{d+1}$ is a connected group over $k$; cf.~\cite[Prop.~I.1.2]{Borel1991}

  By the Skolem-Noether theorem, $\Gbf$ is therefore  the image of a connected group under the morphism~\eqref{eq:representation} and as morphisms map Zariski-connected sets to Zariski-connected sets, the claim follows.
\end{proof}
In what follows, we let $D=(d+1)^{2}$. We identify $\Gbf(k)$ with its image in $\GL_{D}(k)$ given by the basis $\Ecal_{d+1}$.
\begin{definition}
  Let $k$ be a field and let $R\hookrightarrow k$ a subring. Then
  \begin{equation*}
    \Gbf(R)=\Gbf(k)\cap\GL_{D}(R).
  \end{equation*}
\end{definition}
We denote by $\Vcal_{\mrm{f}}\subseteq\bN$ the set of finite rational primes and we let $\Vcal=\Vcal_{\mrm{f}}\cup\{\infty\}$. The following definition of adelic points and integral adelic points of a $\bQ$-group is formulated for a general algebraic $\bQ$-subgroup $\Hbf$ of $\GL_{D}$. It encompasses in particular the cases $\Hbf=\Gbf$ and $\Hbf=\GL_{D}$.
\begin{definition}
  Let $S\subseteq\Vcal$, $\Sf=S\setminus\{\infty\}$, and $\Hbf\leq\GL_{D}$ be a $\bQ$-subgroup. We set
  \begin{align*}
    \Hbf(\bZ_{\Sf})&=\prod_{p\in\Sf}\Hbf(\bZ_{p}),\\
    \Hbf(\bQ_{\Sf})&=\left\{(g_{p})_{p\in\Sf}\in\prod_{p\in\Sf}\Hbf(\bQ_{p}):g_{p}\in\Hbf(\bZ_{p})\text{ for all but finitely many }p\in\Sf\right\}.
  \end{align*}
  If $\infty\in S$, then $\Hbf(\bQ_{S})=\Hbf(\bR)\times\Hbf(\bQ_{\Sf})$.
  If $\Sf=\Vcal_{\mrm{f}}$, we set $\Hbf(\widehat{\bZ})=\Hbf(\bZ_{\Sf})$, $\Hbf(\bA_{\mrm{f}})=\Hbf(\bQ_{\Sf})$, $\Hbf(\bA)=\Hbf(\bR)\times\Hbf(\bA_{\mrm{f}})$, and $\Hbf(\bR\times\widehat{\bZ})=\Hbf(\bR)\times\Hbf(\widehat{\bZ})$.
\end{definition}
\subsection{The fundamental compact-open subgroups}\label{sec:fundamentalcompactopen}
Given $p\in\Vcal_{\mrm{f}}$ and $v\in\bN$ we define a map
\begin{equation*}
  \pi_{p,v}:\Mat_{D}(\bZ_{p})\to\Mat_{D}(\bZ/p^{v}\bZ)
\end{equation*}
by coordinate-wise reduction mod $p^{v}$. This map clearly defines a ring homomorphism and for any $x\in\Mat_{D}(\bZ_{p})$ we have
\begin{equation*}
  (\det\circ\pi_{p,v})(x)\equiv\det(x)\mod p^{v}.
\end{equation*}

As $\bZ_{p}^{\times}$ and $(\bZ/p^{v}\bZ)^{\times}$ consist precisely of the elements whose projections mod $p^{v}$ do not vanish, this induces a group homomorphism $\pi_{p,v}:\GL_{D}(\bZ_{p})\to\GL_{D}(\bZ/p^{v}\bZ)$. For the sake of completeness, we argue that it is surjective. To this end one notes that $\bZ/p^{v}\bZ$ is a semi-local ring, so that $\SL_{D}(\bZ/p^{v}\bZ)$ is generated by elementary matrices; cf.~\cite[Thm.~4.3.9]{Hahn1989}. Therefore $\pi_{p,v}$ restricts to an epimorphism from $\SL_{D}(\bZ_{p})$ to $\SL_{D}(\bZ/p^{v}\bZ)$. Now one uses that
\begin{equation*}
  \GL_{D}(\bZ/p^{v}\bZ)\cong(\bZ/p^{v}\bZ)^{\times}\ltimes\SL_{D}(\bZ/p^{v}\bZ),
\end{equation*}
where $(\bZ/p^{v}\bZ)^{\times}$ identifies with the set of matrices of the form
\begin{equation*}
  \left\{\begin{pmatrix}
      a & 0 \\ 0 & \mrm{Id}_{D-1}
    \end{pmatrix}: a\in(\bZ/p^{v}\bZ)^{\times}\right\}.
\end{equation*}
In what follows, we will denote $L_{p}[p^{v}]=\ker\pi_{p,v}$ and $L_{p}[1]=\GL_{D}(\bZ_{p})$. We set \[K_{p}[p^{v}]=L_{p}[p^{v}]\cap\Gbf(\bZ_{p}).\]

\begin{lem}
  The family $\{L_{p}[p^{v}]\}$ is a basis of open neighbourhoods of the identity in $\GL_{D}(\bZ_p)$. In particular, the family $\{K_{p}[p^{v}]:v\in\bN\}$ is a basis of open neighbourhoods of the identity in $\Gbf(\bZ_{p})$.
\end{lem}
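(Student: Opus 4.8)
The statement asserts that $\{L_p[p^v] : v \in \bN\}$ is a neighbourhood basis of the identity in $\GL_D(\bZ_p)$ with respect to its $p$-adic topology, and correspondingly that $\{K_p[p^v] : v\in\bN\}$ is one in $\Gbf(\bZ_p)$. The plan is to argue directly from the definition of the topology. First I would recall that $\Mat_D(\bZ_p)$ carries the product topology from $\bZ_p^{D^2}$, that $\GL_D(\bZ_p)$ is an open subset (the matrices whose determinant lies in $\bZ_p^\times$), and that $\GL_D(\bZ_p)$ inherits the subspace topology, which is the same as the topology defined by the metric $\mathrm{dist}(g,h) = \|g-h\|_p$ where $\|\cdot\|_p$ is the sup-norm on entries. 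In this topology a basis of neighbourhoods of the identity matrix $\mathrm{Id}_D$ is given by the sets $\{g \in \GL_D(\bZ_p) : g \equiv \mathrm{Id}_D \bmod p^v\}$, since $g \equiv \mathrm{Id}_D \bmod p^v$ is equivalent to $\|g - \mathrm{Id}_D\|_p \leq p^{-v}$, and these radii are cofinal.

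Next I would identify this last set with $L_p[p^v]$. By the discussion immediately preceding the lemma, $\pi_{p,v}\colon \GL_D(\bZ_p)\to \GL_D(\bZ/p^v\bZ)$ is a group homomorphism and $L_p[p^v] = \ker \pi_{p,v}$ by definition; concretely $g \in L_p[p^v]$ iff every entry of $g - \mathrm{Id}_D$ is divisible by $p^v$, i.e. iff $g \equiv \mathrm{Id}_D \bmod p^v$. Thus $L_p[p^v] = \{g \in \GL_D(\bZ_p) : g\equiv \mathrm{Id}_D \bmod p^v\}$, which by the previous paragraph ranges over a neighbourhood basis of the identity as $v$ varies over $\bN$. (One should also note each $L_p[p^v]$ is open, being the preimage of a point under the continuous map $\pi_{p,v}$, equivalently a ball; and it is a subgroup, being a kernel.) This proves the first assertion.

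For the second assertion, $\Gbf(\bZ_p) = \Gbf(\bQ_p)\cap \GL_D(\bZ_p)$ is a closed — hence, since it is also a $\bZ_p$-subscheme cut out by polynomial equations, a profinite compact — subgroup of $\GL_D(\bZ_p)$, and it carries the subspace topology. A neighbourhood basis of the identity in a subspace is obtained by intersecting a neighbourhood basis of the ambient space with the subspace, so $\{L_p[p^v]\cap \Gbf(\bZ_p) : v\in\bN\} = \{K_p[p^v] : v\in\bN\}$ is a neighbourhood basis of the identity in $\Gbf(\bZ_p)$, using $K_p[p^v] = L_p[p^v]\cap \Gbf(\bZ_p)$ as defined above. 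Each $K_p[p^v]$ is an open (hence also closed, hence compact) subgroup of $\Gbf(\bZ_p)$.

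I do not anticipate a genuine obstacle here: the statement is essentially a restatement of how the $p$-adic topology is defined, and the only thing to be careful about is the bookkeeping between the ``mod $p^v$'' description of the kernels $L_p[p^v]$, $K_p[p^v]$ and the metric/product-topology description of basic open neighbourhoods. The mildly nontrivial background facts invoked — that $\GL_D(\bZ_p)$ is open in $\Mat_D(\bZ_p)$ and that $\Gbf(\bZ_p)$ is a closed subgroup — are standard and were already used implicitly in the surrounding text, so they may be cited rather than proved.
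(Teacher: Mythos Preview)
Your proposal is correct and follows essentially the same approach as the paper: identify $L_p[p^v]$ with the metric ball $\{g:\lVert g-\mrm{Id}_D\rVert_p\leq p^{-v}\}=\mrm{Id}_D+p^v\Mat_D(\bZ_p)$, observe these balls form a neighbourhood basis of the identity, and then pass to the subspace $\Gbf(\bZ_p)$ by intersection. The only cosmetic difference is that the paper establishes openness of $L_p[p^v]$ via ``closed of finite index'' rather than ``preimage of a point under a continuous map'', but the core argument is the same.
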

\begin{proof}
  The group $L_{p}[p^{v}]\leq\GL_{D}(\bZ_{p})$ is closed and has finite index, therefore it is open. The topology on $\GL_{D}(\bZ_{p})$ is induced by the metric $\lVert\cdot\rVert_{p}$ on $\Mat_{D}(\bZ_{p})$ given by
  \begin{equation*}
    \lVert x\rVert_{p}=\max\{\lvert x_{i,j}\rvert_{p}:1\leq i,j\leq D\}\quad(x\in\Mat_{D}(\bZ_{p})).
  \end{equation*}
  Let $v\in\bN$ and $x\in\Mat_{D}(\bZ_{p})$, then
  \begin{equation*}
    \lVert\mrm{Id}_{D}-x\rVert_{p}\leq p^{-v}\iff x\in\mrm{Id}_{D}+p^{v}\Mat_{D}(\bZ_{p}).
  \end{equation*}
  In particular, the collection
  \begin{equation*}
    \{\mrm{Id}_{D}+p^{v}\Mat_{D}(\bZ_{p}):v\in\bN\}
  \end{equation*}
  is a basis of open neighbourhoods of the identity in $\GL_{D}(\bZ_{p})$.
\end{proof}

Given $\Sf\subseteq\Vcal_{\mrm{f}}$, we denote by $\Ical_{\Sf}$ the set of natural numbers whose prime factorization involves only primes contained in $\Sf$.
Given $N\in\Ical_{\Sf}$, we define $(v_{p}(N))_{p\in\Ical_{\Sf}}$ by $N=\prod_{p\in\Sf}p^{v_{p}(N)}$. We set
\begin{equation*}
  L_{\Sf}[N]=\prod_{p\in\Sf}L_{p}[p^{v_{p}(N)}]
\end{equation*}
and $K_{\Sf}[N]=L_{\Sf}[N]\cap\Gbf(\bZ_{\Sf})$. If $\Sf=\Vcal_{\mrm{f}}$, we write $L_{\mrm{f}}[N]$ and $K_{\mrm{f}}[N]$ for $L_{\Sf}[N]$ and $K_{\Sf}[N]$ respectively.

\begin{corollary}\label{cor:compact open nbhd basis}
  The family $\{L_{\Sf}[N]:N\in\Ical_{\Sf}\}$ forms a basis of compact open neighbourhoods of the identity in $\GL_{D}(\bZ_{\Sf})$. In particular, the family $\{K_{\Sf}[N]:N\in\Ical_{\Sf}\}$ forms a basis of compact open neighbourhoods of the identity in $\Gbf(\bZ_{\Sf})$.
\end{corollary}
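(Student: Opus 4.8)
The plan is to deduce the corollary from the preceding Lemma by a routine product argument, treating $\GL_{D}(\bZ_{\Sf})=\prod_{p\in\Sf}\GL_{D}(\bZ_{p})$ as a (possibly infinite) product of compact topological groups equipped with the product topology.

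First I would record that each factor $\GL_{D}(\bZ_{p})$ is compact, being the preimage of the closed set $\bZ_{p}^{\times}\subseteq\bZ_{p}$ under the continuous determinant map on the compact ring $\Mat_{D}(\bZ_{p})$; hence so is the full product $\GL_{D}(\bZ_{\Sf})$. Next, for $N\in\Ical_{\Sf}$ one has $v_{p}(N)=0$ for all but finitely many $p\in\Sf$, and $L_{p}[1]=\GL_{D}(\bZ_{p})$, so $L_{\Sf}[N]=\prod_{p\in\Sf}L_{p}[p^{v_{p}(N)}]$ differs from the full group in only finitely many coordinates; since each $L_{p}[p^{v_{p}(N)}]$ is open and of finite index in $\GL_{D}(\bZ_{p})$ (hence closed, hence compact), $L_{\Sf}[N]$ is a compact open subgroup.

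It remains to check that these subgroups are cofinal among neighbourhoods of the identity. Given any open $U\ni\mrm{Id}$ in $\GL_{D}(\bZ_{\Sf})$, the definition of the product topology furnishes a basic open set $\prod_{p}U_{p}\subseteq U$ with $U_{p}=\GL_{D}(\bZ_{p})$ for all $p$ outside a finite set $T$ and $U_{p}$ an open neighbourhood of the identity for $p\in T$. For each $p\in T$ the Lemma provides $v_{p}\in\bN$ with $L_{p}[p^{v_{p}}]\subseteq U_{p}$; setting $N=\prod_{p\in T}p^{v_{p}}\in\Ical_{\Sf}$ (and $N=1$ if $T=\emptyset$) we obtain $L_{\Sf}[N]\subseteq\prod_{p}U_{p}\subseteq U$. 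This establishes the first assertion.

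For the ``in particular'' clause, note that $\Gbf(\bZ_{\Sf})$ is a closed topological subgroup of $\GL_{D}(\bZ_{\Sf})$, so a neighbourhood basis of the identity in $\Gbf(\bZ_{\Sf})$ is obtained by intersecting one in $\GL_{D}(\bZ_{\Sf})$ with $\Gbf(\bZ_{\Sf})$; by definition $K_{\Sf}[N]=L_{\Sf}[N]\cap\Gbf(\bZ_{\Sf})$, and each such intersection is compact (closed in a compact group) and open in $\Gbf(\bZ_{\Sf})$, so we conclude. There is no serious obstacle here; the only point requiring a moment's care is that the indexing set $\Ical_{\Sf}$ consists of genuine integers, which is exactly what guarantees that $L_{\Sf}[N]$ agrees with the full group in all but finitely many coordinates even when $\Sf$ is infinite, so that $L_{\Sf}[N]$ is indeed open in the product topology.
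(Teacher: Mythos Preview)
Your proof is correct and follows essentially the same approach as the paper: both use the product topology to reduce to the single-prime case handled by the preceding Lemma, picking a finite set $T$ of primes and exponents $v_p$ with $L_p[p^{v_p}]\subseteq U_p$ and setting $N=\prod_{p\in T}p^{v_p}$. Your version is slightly more detailed (explicitly verifying compactness of the factors and spelling out the ``in particular'' clause via intersection with the closed subgroup $\Gbf(\bZ_{\Sf})$), but the argument is the same.
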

\begin{proof}
  The groups $L_{\Sf}[N]$ are open by definition of the product topology. Compactness follows from Tychonov's theorem. In order to prove that they form a neighbourhood basis, let $V\subseteq\GL_{D}(\bZ_{\Sf})$ be an open neighbourhood of the identity. Then there is a finite set $T_{\mrm{f}}\subseteq\Sf$ and for all $p\in T_{\mrm{f}}$ an open neighbourhood $V_{p}$ of the identity in $\GL_{D}(\bZ_{p})$ such that
  \begin{equation*}
    \prod_{p\in T_{\mrm{f}}}V_{p}\times\prod_{p\in\Sf\setminus T_{\mrm{f}}}\GL_{D}(\bZ_{p})\subseteq V.
  \end{equation*}
  Given $p\in T_{\mrm{f}}$, let $v_{p}\in\bN$ be such that $L_{p}[p^{v_{p}}]\subseteq V_{p}$ and define $N=\prod_{p\in T_{\mrm{f}}}p^{v_{p}}$. Then, $L_{\Sf}[N]\subseteq V$ by definition.
\end{proof}

\subsection{Principal congruence subgroups}
Similar to what was done in Section~\ref{sec:fundamentalcompactopen}, we can define for any $N\in\bN$, with $N\geq 2$, the group homomorphism $\varpi_{N}:\GL_{D}(\bZ)\to\GL_{D}(\bZ/N\bZ)$ given by projection mod $N$. Note that $\varpi_{N}$ is \emph{not} surjective. We let $\Lambda(N)=\ker\varpi_{N}$ and $\Gamma(N)=\Lambda(N)\cap\Gbf(\bZ)$. We also define $\Lambda(1)=\GL_{D}(\bZ)$ and $\Gamma(1)=\Gbf(\bZ)$.
\begin{definition}
  Let $N\in\bN$. The family $\{\Gamma(N):N\in\bN\}$ is called the family of principal congruence subgroups. A subgroup $\Delta\leq\Gamma(1)$ is a congruence subgroup if it contains a principal congruence subgroup.
\end{definition}

Let $\Sf\subseteq\Vcal_{\mrm{f}}$. In what follows, we will view
\begin{equation*}
  \bZ[\Sf^{-1}]=\bZ[\tfrac{1}{p}:p\in\Sf]
\end{equation*}
as a subring of $\bQ_{\Sf}$ by embedding it diagonally. Similarly, $\GL_{D}(\bZ[\Sf^{-1}])$ and $\Gbf(\bZ[\Sf^{-1}])$ become subgroups of $\GL_{D}(\bQ_{\Sf})$ and $\Gbf(\bQ_{\Sf})$.

We are now ready to prove the first main result of this section.
\begin{proposition}\label{prop:congruenceviacompactopen-S-arith}
  Let $\Sf\subseteq\Vcal_{\mrm{f}}$ and $N\in\Ical_{\Sf}$. Then, $\Gamma(N)=K_{\Sf}[N]\cap\Gbf(\bZ[\Sf^{-1}])$.
  In particular, for all $N$, we have
  \begin{equation*}
    \Gamma(N)=K_{\mrm{f}}[N]\cap\Gbf(\bQ).
  \end{equation*}
\end{proposition}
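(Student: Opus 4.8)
The statement to prove is the identity $\Gamma(N) = K_{\Sf}[N] \cap \Gbf(\bZ[\Sf^{-1}])$ for $N \in \Ical_{\Sf}$, together with the special case $\Gamma(N) = K_{\mrm{f}}[N] \cap \Gbf(\bQ)$ obtained by taking $\Sf = \Vcal_{\mrm{f}}$. The natural strategy is to prove a chain of inclusions in both directions, testing membership prime by prime. First I would recall that $\Gbf(\bZ[\Sf^{-1}]) = \Gbf(\bQ_{\Sf}) \cap \GL_D(\bZ[\Sf^{-1}])$ by definition of the integral structure, and that $\bZ[\Sf^{-1}] = \bigcap_{p \notin \Sf} (\bQ \cap \bZ_p)$ inside $\bQ$ — the usual local-global description of $\Sf$-integers. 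Combined with $\bZ = \bigcap_{p} (\bQ \cap \bZ_p)$, this already gives $\Gbf(\bZ) = \Gbf(\bZ[\Sf^{-1}]) \cap \prod_{p \in \Sf}\Gbf(\bZ_p)$, which reduces the problem to matching the congruence conditions at the primes dividing $N$.

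\textbf{Key steps.} Step one: unwind the definitions. An element $g \in \Gbf(\bZ[\Sf^{-1}])$, diagonally embedded in $\Gbf(\bQ_{\Sf}) = \prod_{p \in \Sf}\Gbf(\bQ_p)$, lies in $K_{\Sf}[N] = \prod_{p \in \Sf} K_p[p^{v_p(N)}]$ if and only if for each $p \in \Sf$ we have $g \in \Gbf(\bZ_p)$ and $g \equiv \mrm{Id} \pmod{p^{v_p(N)}}$, where the latter is the condition defining $K_p[p^{v_p(N)}] = L_p[p^{v_p(N)}] \cap \Gbf(\bZ_p) = \ker(\pi_{p, v_p(N)}) \cap \Gbf(\bZ_p)$. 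Step two ($\subseteq$): if $g \in \Gamma(N) = \Lambda(N) \cap \Gbf(\bZ)$, then $g \in \GL_D(\bZ) \subseteq \GL_D(\bZ[\Sf^{-1}])$ and $g \in \Gbf(\bQ)$, so $g \in \Gbf(\bZ[\Sf^{-1}])$; moreover $g \equiv \mrm{Id} \pmod N$ in $\GL_D(\bZ)$, hence, reducing further, $g \equiv \mrm{Id} \pmod{p^{v_p(N)}}$ in $\GL_D(\bZ_p)$ for each $p \in \Sf$, and since $g \in \Gbf(\bZ_p)$ (as $g \in \Gbf(\bZ) \subseteq \Gbf(\bZ_p)$) we get $g \in K_p[p^{v_p(N)}]$ for all $p \in \Sf$, i.e.\ $g \in K_{\Sf}[N]$. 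Step three ($\supseteq$): conversely, suppose $g \in K_{\Sf}[N] \cap \Gbf(\bZ[\Sf^{-1}])$. For $p \in \Sf$ we have $g \in \Gbf(\bZ_p)$ by the $K_p[\cdot]$ condition; for $p \notin \Sf$ we have $g \in \GL_D(\bZ[\Sf^{-1}]) \subseteq \GL_D(\bZ_p)$ and $g \in \Gbf(\bQ_p)$, hence $g \in \Gbf(\bZ_p)$. Therefore $g \in \Gbf(\bQ) \cap \prod_p \Gbf(\bZ_p) = \Gbf(\bZ)$, using that $g$ already lies in $\Gbf(\bQ)$ (as a $\bZ[\Sf^{-1}]$-point) and the local description of $\bZ$. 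Finally, the congruence: for $p \in \Sf$, $g \equiv \mrm{Id} \pmod{p^{v_p(N)}}$; for $p \notin \Sf$, $v_p(N) = 0$ so there is nothing to check. By the Chinese Remainder Theorem these local congruences at $p \mid N$ assemble to $g \equiv \mrm{Id} \pmod N$ in $\GL_D(\bZ)$, so $g \in \Lambda(N) \cap \Gbf(\bZ) = \Gamma(N)$. Step four: the special case $\Sf = \Vcal_{\mrm{f}}$ gives $\bZ[\Sf^{-1}] = \bQ$, $\GL_D(\bZ[\Sf^{-1}]) = \GL_D(\bQ)$, and $\Gbf(\bZ[\Sf^{-1}]) = \Gbf(\bQ)$; since $K_{\mrm{f}}[N] \subseteq \prod_p \Gbf(\bZ_p) \subseteq \Gbf(\bA_{\mrm{f}})$ automatically forces $\bZ_p$-integrality everywhere, one has $K_{\mrm{f}}[N] \cap \Gbf(\bQ) = K_{\mrm{f}}[N] \cap \Gbf(\bQ) \cap \GL_D(\bQ)$, and the general identity specializes directly to $\Gamma(N) = K_{\mrm{f}}[N] \cap \Gbf(\bQ)$.

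\textbf{Main obstacle.} The only genuinely delicate point is making the prime-by-prime bookkeeping airtight: one must be careful that the diagonal embedding $\Gbf(\bZ[\Sf^{-1}]) \hookrightarrow \Gbf(\bQ_{\Sf})$ is the one implicitly used in the definition of $K_{\Sf}[N] \cap \Gbf(\bZ[\Sf^{-1}])$, and that the reduction maps $\varpi_N$ on $\GL_D(\bZ)$ and $\pi_{p,v}$ on $\GL_D(\bZ_p)$ are compatible under $\bZ \hookrightarrow \bZ_p$ and under $\bZ/N\bZ \cong \prod_{p \mid N} \bZ/p^{v_p(N)}\bZ$. Both are routine, but worth stating cleanly; the non-surjectivity of $\varpi_N$ noted in the text is irrelevant here since we only use that $\Gamma(N)$ is its kernel on $\Gbf(\bZ)$. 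I expect no essential difficulty beyond this, as all the required local structure (the basis of compact-open neighbourhoods from Corollary~\ref{cor:compact open nbhd basis}, the description of $\Gbf(\bZ_p)$, and the local characterization of $\bZ$ and $\bZ[\Sf^{-1}]$) is already available in the excerpt.
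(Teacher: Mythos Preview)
Your proposal is correct and follows essentially the same approach as the paper's proof: both establish the two inclusions prime by prime, with the key step being that $K_{\Sf}[N] \cap \Gbf(\bZ[\Sf^{-1}]) \subseteq \Gbf(\bZ)$, which the paper phrases as $\GL_D(\bZ_{\Sf}) \cap \GL_D(\bZ[\Sf^{-1}]) = \GL_D(\bZ)$ via an explicit determinant argument, while you phrase it via the local characterization $\bZ = \bQ \cap \bigcap_p \bZ_p$. The only cosmetic difference is that the paper reduces first to the ambient $\GL_D$ (proving $\Lambda(N) = L_{\Sf}[N] \cap \GL_D(\bZ[\Sf^{-1}])$ and then intersecting with $\Gbf$), whereas you work directly inside $\Gbf$; both are equivalent and your bookkeeping is sound.
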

\begin{proof}
  It suffices to show that 
  \begin{equation*}
    \Lambda(N)=L_{\Sf}[N]\cap\GL_{D}(\bZ[\Sf^{-1}]).
  \end{equation*}
  If $g\in\Lambda(N)$, i.e., $g\in\GL_{D}(\bZ)$ and $g\equiv\mrm{Id}_{D}\mod N$, then clearly for all $p|N$ we have $g\equiv\mrm{Id}_{D}\mod p^{v_{p}(N)}$. As $\det g\in\{\pm1\}\subseteq\bZ_{p}^{\times}$ for all $p\in\Sf$, we have $g\in\GL_{D}(\bZ_{p})$ for all $p\in\Sf$. Combining these two facts, we obtain that $\Lambda(N)\subseteq L_{\Sf}[N]\cap\GL_{D}(\bZ[\Sf^{-1}])$.

  Before we turn to the opposite inclusion, we note that
  \begin{equation*}
    \GL_{D}(\bZ_{\Sf})\cap\GL_{D}(\bZ[\Sf^{-1}])=\GL_{D}(\bZ).
  \end{equation*}
  This, in particular, implies the result in the special case $N=1$. The inclusion $\GL_{D}(\bZ)\subseteq\GL_{D}(\bZ_{\Sf})\cap\GL_{D}(\bZ[\Sf^{-1}])$ is clear. For the opposite inclusion, one first notes that $\bZ[\Sf^{-1}]\cap\bZ_{\Sf}=\bZ$ and hence $\GL_{D}(\bZ_{\Sf})\cap\GL_{D}(\bZ[\Sf^{-1}])\subseteq\Mat_{D}(\bZ)$.
  Let $g\in \GL_{D}(\bZ_{\Sf})\cap\GL_{D}(\bZ[\Sf^{-1}])$.
  Then, $\det g\in\bZ$. On the other hand $\det g\in\bZ_{\Sf}^{\times}$, i.e., we have $\det g\in\bZ_{p}^{\times}$ for all $p\in\Sf$. This means that $\det g$ is coprime to $p$ for all $p\in\Sf$.
  But, since $\det g\in \bZ[\Sf^{-1}]^\times$, we get $\det g\in\{\pm1\}$.
  It follows that $\GL_{D}(\bZ_{\Sf})\cap\GL_{D}(\bZ[\Sf^{-1}])\subseteq\GL_{D}(\bZ)$.

  Let now $g\in L_{\Sf}[N]\cap\GL_{D}(\bZ[\Sf^{-1}])$. In particular
  \begin{equation*}
    g\in\GL_{D}(\bZ_{\Sf})\cap\GL_{D}(\bZ[\Sf^{-1}])=\GL_{D}(\bZ).
  \end{equation*}
  Therefore reduction mod $N$ is just the standard reduction. By assumption we have for all $p\in\Sf$ that $g\equiv\mrm{Id}_{D}\mod p^{v_{p}(N)}$ and in particular $g\in \L(N)$.
\end{proof}

\subsection{Finiteness of class number and principal congruence subgroups}
Similarly to what we did earlier, we will now regard $\bQ$ as a subfield of the ring $\bA=\bR\times\bA_{\mrm{f}}$ by diagonal embedding. Notice that this embedding differs from the composition of embeddings $\bQ\hookrightarrow\bA_{\mrm{f}}\hookrightarrow\bA$. Similarly, we can view $\Gbf(\bQ)$ as a subgroup of $\Gbf(\bA)$. It was proven by Borel and Harish-Chandra that $\Gbf(\bQ)$ is a lattice in $\Gbf(\bA)$.
\begin{proposition}\label{prop:classnumber}
  Let
  \begin{equation*}
    X_{\bA,1}=\biquotient{\Gbf(\bQ)}{\Gbf(\bA)}{\Gbf(\widehat{\bZ})}.
  \end{equation*}
  Then, $\Gbf_{\infty}$ acts transitively on $X_{\bA,1}$, i.e., $\Gbf_{\infty}\backslash X_{\bA,1}$ is a singleton.
\end{proposition}
\begin{proof}
  We first claim that $\Gbf(\bA)=\Phi(\GL_{d+1}(\bA))$. To this end, let $g\in\Gbf(\bA)$ and using Skolem-Noether choose $x_{p}\in\GL_{d+1}(\bQ_{p})$, $p\in\Vcal_{\mrm{f}}$, such that $g_{p}=\Phi(x_{p})$. By definition we have that $g_{p}\in\Gbf(\bZ_{p})$ for all but finitely many $p\in\Vcal_{\mrm{f}}$. Recall that $g_{p}\in\Gbf(\bZ_{p})$ implies that we can assume $x_{p}\in\GL_{d+1}(\bZ_{p})$; cf.~the proof of Lemma~\ref{lem:identification-space-of-lattices}. It follows in particular that $g=\Phi(x)$ for some $x\in\GL_{d+1}(\bA)$.
  
  Let $g\in\Gbf(\bA)$ arbitrary and choose $x\in\GL_{d+1}(\bA)$ such that $g=\Phi(x)$. By~\cite[Prop.~8.1]{Rapinchuk1994}, we know that $\GL_{d+1}$ has class number one, i.e., $x=k\gamma$, where $k\in\GL_{d+1}(\bR\times\widehat{\bZ})$ and $\gamma\in\GL_{d+1}(\bQ)$. In particular, we have $g=\Phi(k)\Phi(\gamma)$. Note that $\Phi(\gamma)\in\GL_{D}(\bQ)$ by rationality of the representation $\Phi$. Note that
  \begin{equation*}
    k_{p}\Mat_{d+1}(\bZ_{p})k_{p}^{-1}=\Mat_{d+1}(\bZ_{p})
  \end{equation*}
  and hence $\Phi(k_{p})\in\GL_{D}(\bZ_{p})$ for all $p\in\Vcal_{\mrm{f}}$, i.e., $\Phi(k)\in\GL_{D}(\bR\times\widehat{\bZ})$. Therefore
  \begin{equation*}
    g\in\Gbf(\bR\times\widehat{\bZ})\Gbf(\bQ).
  \end{equation*}
\end{proof}
\begin{proposition}\label{prop:correspondence}
  Let $N\in\bN$. Then, the double quotient
  \begin{equation*}
    X_{\bA,N}=\biquotient{\Gbf(\bQ)}{\Gbf(\bA)}{K_{\mrm{f}}[N]}
  \end{equation*}
  is a finite union of $\Gbf_{\infty}$-orbits. Let $x\in X_{\bA,N}$, then
  \begin{equation*}
    \Gbf_{\infty}.x\cong\quotient{\Gamma(N)}{\Gbf_{\infty}}
  \end{equation*}
  as $\Gbf_{\infty}$-spaces, i.e., $X_{\bA,N}$ is a disjoint union of finitely many copies of $\Gbf_{\infty}/\Gamma(N)$.
\end{proposition}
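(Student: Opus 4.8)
The plan is to deduce Proposition~\ref{prop:correspondence} from Proposition~\ref{prop:classnumber} by a standard fibration argument over the component set. First I would observe that $K_{\mrm{f}}[N]$ is a compact open subgroup of $\Gbf(\widehat{\bZ})$ of finite index, so that $\Gbf(\widehat{\bZ})/K_{\mrm{f}}[N]$ is finite; combined with Proposition~\ref{prop:classnumber}, which writes $\Gbf(\bA)=\Gbf_\infty \Gbf(\widehat{\bZ})\Gbf(\bQ)$, this already shows $X_{\bA,N}=\biquotient{\Gbf(\bQ)}{\Gbf(\bA)}{K_{\mrm{f}}[N]}$ is covered by finitely many $\Gbf_\infty$-orbits: choosing coset representatives $k_1,\dots,k_r$ for $\Gbf(\widehat{\bZ})/K_{\mrm{f}}[N]$, every double coset $\Gbf(\bQ)g K_{\mrm{f}}[N]$ meets $\Gbf(\bQ)\Gbf_\infty k_i K_{\mrm{f}}[N]$ for some $i$, hence lies in the $\Gbf_\infty$-orbit of the image of $k_i$. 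So the orbit count is at most $[\Gbf(\widehat{\bZ}):K_{\mrm{f}}[N]]$, which is finite.

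Next I would identify each orbit as a copy of $\Gbf_\infty/\Gamma(N)$. Fix $x\in X_{\bA,N}$ and pick a representative $g_0\in\Gbf(\bA)$; the point is to compute the stabilizer of $x$ in $\Gbf_\infty$ under the left $\Gbf_\infty$-action, where $\Gbf_\infty$ is embedded in $\Gbf(\bA)$ via the archimedean factor. Here I would use conjugation to reduce to the case $g_0\in\Gbf(\widehat{\bZ})$ (possible by Proposition~\ref{prop:classnumber}, absorbing the archimedean and rational parts), or more directly work with the orbit through the base point: the stabilizer of $\Gbf(\bQ)\cdot(\mathrm{id})\cdot K_{\mrm{f}}[N]$ consists of those $h\in\Gbf_\infty$ such that $h\in\Gbf(\bQ)\,K_{\mrm{f}}[N]$ in $\Gbf(\bA)$, i.e.\ $h=\gamma u$ with $\gamma\in\Gbf(\bQ)$, $u\in K_{\mrm{f}}[N]$; since $h$ has trivial finite components, $\gamma=u^{-1}$ has finite components in $K_{\mrm{f}}[N]$ and rational entries, so $\gamma\in\Gbf(\bQ)\cap K_{\mrm{f}}[N]=\Gamma(N)$ by Proposition~\ref{prop:congruenceviacompactopen-S-arith}. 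Conversely every $\gamma\in\Gamma(N)$ stabilizes the base point, viewing $\gamma$ diagonally. Hence the stabilizer is exactly $\Gamma(N)$ (embedded in $\Gbf_\infty$ via its archimedean coordinate), giving the $\Gbf_\infty$-equivariant identification $\Gbf_\infty\cdot x\cong \Gbf_\infty/\Gamma(N)$ for the base orbit, and for a general orbit one conjugates the representative into this form using Proposition~\ref{prop:classnumber} so that the stabilizer is a $\Gbf_\infty$-conjugate of $\Gamma(N)$ — but since we only need the $\Gbf_\infty$-space structure, and $\Gamma(N)$ is normal in $\Gbf(\bQ)$ while any two representatives differ by an element of $\Gbf(\bQ)K_{\mrm{f}}[N]$, the stabilizers are all conjugate to $\Gamma(N)$, which suffices (in fact one checks they equal $\Gamma(N)$ since $\Gamma(N)\lhd\Gamma(1)$ and the relevant conjugating elements normalize it).

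The main obstacle I anticipate is bookkeeping the embeddings carefully: the paper explicitly warns that the diagonal embedding $\bQ\hookrightarrow\bA$ differs from $\bQ\hookrightarrow\bA_{\mrm{f}}\hookrightarrow\bA$, so one must be vigilant about which $\Gbf(\bQ)$-translate is being used and about the fact that $\Gbf_\infty$ is embedded via the \emph{archimedean} component only. The cleanest route is to phrase everything as: the map $\Gbf_\infty \to X_{\bA,N}$, $h\mapsto \Gbf(\bQ)\,h\,K_{\mrm{f}}[N]$ is $\Gbf_\infty$-equivariant with image the base orbit and fibers the right cosets of $\Gamma(N)$, then deduce the general orbit by translation. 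A secondary point is ensuring the finite union is genuinely a \emph{disjoint} union of these copies, which is automatic since distinct $\Gbf_\infty$-orbits are disjoint. I would also remark that this proposition, together with the fact (proved in Proposition~\ref{prop:components-as-quotients}) bounding the number of orbits by $[\Gbf_S:\Gbf_S^+]$, yields the $S$-arithmetic reformulation~\eqref{eq:congruence} after passing from the adelic quotient to $X_S$ via strong approximation at the places outside $S$; that passage is Corollary~\ref{cor:adelic-to-S-arithmetic} and I would simply cite it rather than reprove it here.
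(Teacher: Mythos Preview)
Your approach is essentially the paper's: finiteness via finite index of $K_{\mrm{f}}[N]$ in $\Gbf(\widehat{\bZ})$ together with Proposition~\ref{prop:classnumber}, then a stabilizer computation using Proposition~\ref{prop:congruenceviacompactopen-S-arith}. Your treatment of the base orbit is correct and matches the paper.

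The handling of a general orbit, however, is muddled in a way that matters. The stabilizer of $x=K_{\mrm{f}}[N]\,\eta\,\Gbf(\bQ)$ with $\eta\in\Gbf(\widehat{\bZ})$ is $\Gbf(\bQ)\cap\eta^{-1}K_{\mrm{f}}[N]\eta$; this is \emph{not} a $\Gbf_\infty$-conjugate of $\Gamma(N)$ (the element $\eta$ has trivial archimedean part), and your assertion that ``$\Gamma(N)$ is normal in $\Gbf(\bQ)$'' is false (conjugation by a general rational element does not preserve integrality). The correct reason the stabilizer equals $\Gamma(N)$ for every $\eta\in\Gbf(\widehat{\bZ})$ is that $K_{\mrm{f}}[N]$ is \emph{normal in $\Gbf(\widehat{\bZ})$}, being the kernel of the reduction homomorphism $\Gbf(\widehat{\bZ})\to\prod_{p\mid N}\Gbf(\bZ/p^{v_p(N)}\bZ)$; hence $\eta^{-1}K_{\mrm{f}}[N]\eta=K_{\mrm{f}}[N]$ and the stabilizer is $\Gbf(\bQ)\cap K_{\mrm{f}}[N]=\Gamma(N)$ by Proposition~\ref{prop:congruenceviacompactopen-S-arith}. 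This is exactly what the paper does, and it replaces the incorrect normality claim in your parenthetical. With that one correction your argument goes through.
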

\begin{proof}
  Recall that $K_{\mrm{f}}[1]=\Gbf(\widehat{\bZ})$ by definition. By Proposition~\ref{prop:classnumber}, we know that $\Gbf_{\infty}\backslash X_{\bA,1}$ is a singleton. As $K_{\mrm{f}}[N]\leq K_{\mrm{f}}[1]$ is a finite index subgroup, the finiteness of $\Gbf_{\infty}\backslash X_{\bA,N}$ follows immediately.

  For the second part, using Proposition~\ref{prop:classnumber}, let $\Rcal_{D,N}\subseteq\Gbf(\widehat{\bZ})$ be a set of representatives for the double quotient $(\Gbf_{\infty}\times K_{\mrm{f}}[N])\backslash\Gbf(\bA)/\Gbf(\bQ)$. After possibly multiplying $x$ by an element in $\Gbf_{\infty}$, we can assume that $x=K_{\mrm{f}}[N]\eta\Gbf(\bQ)$ for some $\eta\in \Rcal_{D,N}$. We will show that in this case
  \begin{equation*}
    \Stab_{\Gbf_{\infty}}(x)=\Gamma(N).
  \end{equation*}

  Let $g_{\infty}\in\Gbf_{\infty}$, then
  \begin{align*}
    g_{\infty}\cdot K_{\mrm{f}}[N]\eta\Gbf(\bQ)=K_{\mrm{f}}[N]\eta\Gbf(\bQ)&\iff\exists\gamma\in\Gbf(\bQ)\exists k\in K_{\mrm{f}}[N],\;(g_{\infty},\eta)=(\gamma,k\eta\gamma)\\
    &\iff g_{\infty}\in\Gbf(\bQ)\cap\eta^{-1}K_{\mrm{f}}[N]\eta.
  \end{align*}
  Now note that $K_{\mrm{f}}[N]$ is the kernel of the group homomorphism
  \begin{equation*}
    \Psi:\Gbf(\widehat{\bZ})\to\prod_{p|N}\Gbf(\bZ/p^{v_{p}(N)}\bZ), \qquad \Psi((k_{p})_{p\in\Vcal_{\mrm{f}}})=(k_{p}\mod p^{v_{p}(N)}\bZ)_{p|N}.
  \end{equation*}
  In particular, $K_{\mrm{f}}[N]$ is a normal subgroup and thus $\eta^{-1}K_{\mrm{f}}[N]\eta=K_{\mrm{f}}[N]$. Hence, if $x\in X_{\bA,N}$ is arbitrary, letting $\eta\in\Rcal_{D,N}$ be such that $x=K_{\mrm{f}}[N]\eta\Gbf(\bQ)$, Proposition~\ref{prop:congruenceviacompactopen-S-arith} implies that
  \begin{equation*}
    g_{\infty}\in\Stab_{\Gbf_{\infty}}(x)\iff g_{\infty}\in\Gbf(\bQ)\cap\eta^{-1}K_{\mrm{f}}[N]\eta=\Gamma(N).
  \end{equation*}
\end{proof}

\subsection{Correspondence in the S-arithmetic setup}
We deduce analogous results to those obtained in the previous section for quotients of $\Gbf_S$. In particular, the decomposition in~\eqref{eq:congruence} follows by Corollary~\ref{cor:S-arith-correspondence}. First, we need the following.

\begin{corollary}\label{cor:adelic-to-S-arithmetic}
  Let $\Sf\subseteq\Vcal_{\mrm{f}}$ finite and let $S=\{\infty\}\cup\Sf$. Then,
  \begin{equation*}
    \Gbf(\bR\times\bZ_{\Sf})\GammaS=\Gbf(\bQ_{S}).
  \end{equation*}
\end{corollary}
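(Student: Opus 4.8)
The plan is to deduce Corollary~\ref{cor:adelic-to-S-arithmetic} from the finiteness of class number for the full adelic quotient, namely Proposition~\ref{prop:classnumber}, via the standard projection argument from the adeles to the $S$-arithmetic setting. First I would recall the adelic class-number-one statement in the form $\Gbf(\bA)=\Gbf(\bR\times\widehat{\bZ})\,\Gbf(\bQ)$, which is exactly what is established in the proof of Proposition~\ref{prop:classnumber} (and which rests on the corresponding fact for $\GL_{d+1}$ from~\cite[Prop.~8.1]{Rapinchuk1994} together with the representation $\Phi$). The idea is then to use the natural projection map $\Gbf(\bA)\to\Gbf(\bQ_S)$ that forgets all the coordinates at places outside $S$; under this map $\Gbf(\bR\times\widehat{\bZ})$ surjects onto $\Gbf(\bR\times\bZ_{\Sf})$ (since $\widehat{\bZ}=\bZ_{\Sf}\times\prod_{p\notin\Sf}\bZ_p$ and the factors outside $\Sf$ are simply discarded), while the diagonal copy of $\Gbf(\bQ)$ maps onto the diagonal copy of $\Gbf(\bQ)$ sitting inside $\Gbf(\bQ_S)$.

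The key steps, in order, would be: (1) state the projection $\pi_S\colon\Gbf(\bA)\to\Gbf(\bQ_S)$ and observe it is a surjective group homomorphism (surjectivity because given any $g\in\Gbf(\bQ_S)$ one can complete it by the identity, or by an integral point, at the remaining places, recalling that $g_p\in\Gbf(\bZ_p)$ for all but finitely many $p$ when $g$ comes from $\Gbf(\bQ_\Sf)$; but in fact we only need surjectivity of $\pi_S$ restricted to the subsets below). (2) Apply $\pi_S$ to the equality $\Gbf(\bA)=\Gbf(\bR\times\widehat{\bZ})\,\Gbf(\bQ)$. (3) Identify $\pi_S\big(\Gbf(\bR\times\widehat{\bZ})\big)=\Gbf(\bR\times\bZ_{\Sf})$ and $\pi_S(\Gbf(\bQ))=\Gbf(\bQ)$ (diagonally embedded in $\Gbf(\bQ_S)$), using that $\bQ\hookrightarrow\bQ_S$ is the restriction of $\bQ\hookrightarrow\bA$. (4) Conclude $\Gbf(\bQ_S)=\pi_S(\Gbf(\bA))=\Gbf(\bR\times\bZ_{\Sf})\,\Gbf(\bQ)$, which upon recalling that $\GammaS=\Gbf(\bZ[\Sf^{-1}])$ is precisely the diagonal image of $\Gbf(\bQ)$ lying in $\Gbf(\bQ_S)\cap\GL_D(\bZ[\Sf^{-1}])$ — combined with the fact that any $\gamma\in\Gbf(\bQ)$ occurring in the decomposition of an element of $\Gbf(\bR\times\bZ_{\Sf})\Gbf(\bQ)$ can be taken in $\Gbf(\bZ[\Sf^{-1}])$ after absorbing $p$-integral parts for $p\notin\Sf$ into the $\Gbf(\bR\times\bZ_{\Sf})$ factor — yields $\Gbf(\bR\times\bZ_{\Sf})\GammaS=\Gbf(\bQ_S)$.

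The one technical point requiring care in step (4) is replacing a general rational $\gamma\in\Gbf(\bQ)$ by an $\Sf$-integral one: writing $\Gbf(\bQ_S)=\Gbf(\bR\times\bZ_{\Sf})\Gbf(\bQ)$, given $g\in\Gbf(\bQ_S)$ and $g=k\gamma$ with $k\in\Gbf(\bR\times\bZ_{\Sf})$, $\gamma\in\Gbf(\bQ)$, I would note that $\gamma=k^{-1}g$ lies in $\Gbf(\bQ)\cap\Gbf(\bQ_S)\cdot(\text{bounded part})$, and more precisely that $\gamma_p\in\Gbf(\bZ_p)$ for $p\in\Sf$ is not automatic but the denominators of $\gamma$ at primes outside $\Sf$ are irrelevant since those places are not in $S$; hence $\gamma\in\Gbf(\bQ)\cap\bigcap_{p\notin\Sf}\Gbf(\bZ_p)=\Gbf(\bZ[\Sf^{-1}])=\GammaS$ follows from the analogue of the identity $\GL_D(\bZ_{\Sf})\cap\GL_D(\bZ[\Sf^{-1}])=\GL_D(\bZ)$ established inside the proof of Proposition~\ref{prop:congruenceviacompactopen-S-arith}, applied with $\Sf$ replaced by $\Vcal_\mrm{f}\setminus\Sf$ relative to the completion at the places in $S$. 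This bookkeeping with the two complementary sets of primes is the main obstacle; once it is handled, the corollary is immediate. I expect this to be short, roughly a half page, since all the genuine content is already packaged in Propositions~\ref{prop:classnumber} and~\ref{prop:congruenceviacompactopen-S-arith}.
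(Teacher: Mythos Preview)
Your approach is essentially the same as the paper's: both deduce the statement from the adelic class-number-one fact (Proposition~\ref{prop:classnumber}) together with the identification $\Gbf(\bQ)\cap\prod_{p\notin\Sf}\Gbf(\bZ_p)=\GammaS$. The paper packages this as an isomorphism of $\Gbf(\bQ_S)$-spaces $M_{\Sf}\backslash\Gbf(\bA)/\Gbf(\bQ)\cong\Gbf(\bQ_S)/\GammaS$ (established via a stabilizer computation), which is slightly more than is needed here but is reused in Corollary~\ref{cor:S-arith-correspondence}; your direct projection argument is equivalent and perfectly adequate for the corollary as stated.

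However, your exposition of step (4) is muddled and, as written, does not actually justify the key claim. The sentence ``the denominators of $\gamma$ at primes outside $\Sf$ are irrelevant since those places are not in $S$'' is the wrong way round: those are exactly the primes at which you must establish integrality of $\gamma$ in order to land in $\GammaS=\Gbf(\bZ[\Sf^{-1}])$. Working only with the equation $g=k\gamma$ inside $\Gbf(\bQ_S)$ gives you no information whatsoever about $\gamma_p$ for $p\notin\Sf$, so you cannot conclude $\gamma\in\bigcap_{p\notin\Sf}\Gbf(\bZ_p)$ from that equation alone. The fix is already implicit in your step~(1): do not merely project, but choose the specific lift $h=(g,e)\in\Gbf(\bA)$ with identity components at all $p\notin S$. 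Applying class number one to this $h$ gives $h=\tilde k\gamma$ with $\tilde k\in\Gbf(\bR\times\widehat{\bZ})$, $\gamma\in\Gbf(\bQ)$; now reading off the coordinate at any $p\notin\Sf$ yields $e=\tilde k_p\gamma_p$, whence $\gamma_p=\tilde k_p^{-1}\in\Gbf(\bZ_p)$. This is precisely the stabilizer computation the paper performs. Once you make this explicit, your proof is complete; the appeal to ``the analogue of the identity from Proposition~\ref{prop:congruenceviacompactopen-S-arith} with $\Sf$ replaced by $\Vcal_{\mrm{f}}\setminus\Sf$'' then correctly furnishes the equality $\Gbf(\bQ)\cap\bigcap_{p\notin\Sf}\Gbf(\bZ_p)=\GammaS$.
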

\begin{proof}
  Let
  \begin{equation*}
    M_{\Sf}=\prod_{p\in\Vcal_{\mrm{f}}\setminus\Sf}\Gbf(\bZ_{p}).
  \end{equation*}
  We first claim that
  \begin{equation*}
    \biquotient{\Gbf(\bQ)}{\Gbf(\bA)}{M_{\Sf}}\cong\quotient{\GammaS}{\Gbf(\bQ_{S})}
  \end{equation*}
  as $\Gbf(\bQ_{S})$-spaces. To this end, we note that $\Gbf(\bQ_{S})$ acts transitively on the left hand side by Proposition~\ref{prop:classnumber}. Denote by $x_{0}$ the identity coset in the left-hand side double quotient. In particular, it remains to show that
  \begin{equation*}
    \Stab_{\Gbf(\bQ_S)}(x_{0})=\GammaS.
  \end{equation*}
  Let $e$ denote the identity in $\prod_{p\not\in S}\Gbf(\bZ_p)$ and $g\in \Stab_{\Gbf(\bQ_S)}(x_{0})$.
  Arguing as in the proof of Proposition~\ref{prop:congruenceviacompactopen-S-arith}, we get that $g\in M_{\Sf}\cap\Gbf(\bQ)=\GammaS$ as desired.
 
\end{proof}

The following is the analogue of Proposition~\ref{prop:correspondence} and implies~\eqref{eq: class number one}.
\begin{corollary}\label{cor:S-arith-correspondence}
  Let $N\in\bN$ and $\Sf\subseteq\Vcal_{\mrm{f}}$ such that $v_{p}(N)\neq0\implies p\in\Sf$. Let $\Rcal_{D,N}$ be as in the proof of Proposition~\ref{prop:correspondence}. Then, the projection $\Rcal_{D,N,\Sf}$ of $\Rcal_{D,N}$ to $K_{\Sf}[1]$ is a set of representatives of the $\Gbf_{\infty}$-orbits in
  \begin{equation*}
    X_{\bQ_{S},N}=\biquotient{\GammaS}{\Gbf_{S}}{K_{\Sf}[N]}.
  \end{equation*}
  Moreover, the map $\Rcal_{D,N}\to\Rcal_{D,N,\Sf}$ is a bijection and $X_{\bQ_{S},N}$ is a disjoint union of $\lvert\Rcal_{D,N}\rvert$-many copies of $\Gbf_{\infty}/\Gamma(N)$.
\end{corollary}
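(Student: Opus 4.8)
The plan is to reduce the $S$-arithmetic statement to the adelic statement of Proposition~\ref{prop:correspondence} together with the finiteness-of-class-number result in Corollary~\ref{cor:adelic-to-S-arithmetic}. First I would fix $N$ and $\Sf$ with the stated compatibility $v_p(N)\neq 0\Rightarrow p\in\Sf$, so that $K_{\mrm{f}}[N]=K_{\Sf}[N]\times M_{\Sf}$ where $M_{\Sf}=\prod_{p\notin S}\Gbf(\bZ_p)$; this is precisely the point where the hypothesis on $N$ is used. Then, using that $\Gbf_\infty\times M_{\Sf}$ commutes past $K_{\Sf}[N]$ and that $M_{\Sf}$ is a compact open subgroup of $\prod_{p\notin S}\Gbf(\bQ_p)$, I would exhibit a natural identification
\begin{equation*}
  \biquotient{\Gbf(\bQ)}{\Gbf(\bA)}{K_{\mrm{f}}[N]}
  \;\cong\;
  \biquotient{\GammaS}{\Gbf_{S}}{K_{\Sf}[N]}
\end{equation*}
as $\Gbf_\infty$-spaces. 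The map is induced by the inclusion $\Gbf_S\hookrightarrow\Gbf(\bA)$ followed by the quotient maps; surjectivity follows from Corollary~\ref{cor:adelic-to-S-arithmetic} (every adelic class has a representative in $\Gbf(\bR\times\bZ_{\Sf})\GammaS\subseteq\Gbf_S\cdot M_{\Sf}$), and injectivity follows from the same folding argument used in the proof of Corollary~\ref{cor:adelic-to-S-arithmetic}: if two elements of $\Gbf_S$ become equivalent adelically, the coordinates outside $S$ force the connecting rational element to lie in $M_{\Sf}\cap\Gbf(\bQ)=\GammaS$.

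Once this identification is in place, the structure of $X_{\bQ_S,N}$ as a $\Gbf_\infty$-space is transported directly from Proposition~\ref{prop:correspondence}: $X_{\bA,N}$ is a disjoint union of $|\Rcal_{D,N}|$ copies of $\Gbf_\infty/\Gamma(N)$, hence so is $X_{\bQ_S,N}$. For the statement about representatives, I would take $\eta\in\Rcal_{D,N}\subseteq\Gbf(\widehat{\bZ})$ and let $\eta_{\Sf}\in K_{\Sf}[1]$ be its projection; the claim is that $\{\eta_{\Sf}:\eta\in\Rcal_{D,N}\}$ is a complete, irredundant set of $\Gbf_\infty$-orbit representatives in $X_{\bQ_S,N}$. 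Completeness is immediate from the identification. For irredundancy (and hence bijectivity of $\Rcal_{D,N}\to\Rcal_{D,N,\Sf}$), I would argue that two representatives $\eta,\eta'$ give the same $\Gbf_\infty$-orbit in $X_{\bQ_S,N}$ if and only if there exist $k\in K_{\Sf}[N]$, $\gamma\in\GammaS$, $g_\infty\in\Gbf_\infty$ with $(g_\infty,\eta_{\Sf})=(\gamma, k\eta'_{\Sf}\gamma)$; completing $\gamma$ to an adelic element (it automatically lies in $\Gbf(\bZ_p)$ for $p\notin S$ since $\eta,\eta'\in\Gbf(\widehat{\bZ})$), and using normality of $K_{\mrm{f}}[N]$ in $\Gbf(\widehat{\bZ})$ exactly as in Proposition~\ref{prop:correspondence}, forces $\eta,\eta'$ to represent the same class in $X_{\bA,N}$, i.e.\ $\eta=\eta'$. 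The stabilizer computation $\Stab_{\Gbf_\infty}(K_{\Sf}[N]\eta_{\Sf}\GammaS)=\Gamma(N)$ is then the same normal-subgroup argument: $g_\infty\in\Gbf(\bQ)\cap\eta_{\Sf}^{-1}K_{\Sf}[N]\eta_{\Sf}=\Gbf(\bQ)\cap K_{\Sf}[N]=\Gamma(N)$ by Proposition~\ref{prop:congruenceviacompactopen-S-arith}.

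The main obstacle I anticipate is purely bookkeeping rather than conceptual: one must be careful that the compatibility condition $v_p(N)\neq0\Rightarrow p\in\Sf$ is exactly what makes $K_{\mrm{f}}[N]$ split as $K_{\Sf}[N]\times M_{\Sf}$ with $M_{\Sf}\subseteq\prod_{p\notin S}\Gbf(\bZ_p)$ the \emph{full} compact open subgroup at places outside $S$ — otherwise the folding-unfolding step at places outside $S$ would not collapse the rational connecting element into $\GammaS$. I would also take care that the projection $\Rcal_{D,N}\to K_{\Sf}[1]$ is well-defined and injective; injectivity is not a priori obvious and is really a consequence of the irredundancy argument above, so I would present the bijectivity claim as a corollary of that argument rather than trying to verify it directly. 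Everything else is a direct transcription of the adelic results proved earlier in this appendix.
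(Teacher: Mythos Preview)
Your proposal is correct and follows essentially the same approach as the paper: the key observation $K_{\mrm{f}}[N]=K_{\Sf}[N]\times M_{\Sf}$ (using the hypothesis on $N$) together with the folding argument from Corollary~\ref{cor:adelic-to-S-arithmetic} gives the identification $X_{\bQ_S,N}\cong X_{\bA,N}$ as $\Gbf_\infty$-spaces, after which everything is transported from Proposition~\ref{prop:correspondence}. The paper's proof is in fact much terser than yours---it records the chain of isomorphisms and leaves the details on representatives to the reader---so your expanded treatment of irredundancy and the stabilizer computation is a welcome elaboration rather than a departure.
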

\begin{proof}
  Note that $K_{\mrm{f}}[N]=K_{\Sf}[N]\times M_{\Sf}$. Therefore, as in the proof of Corollary~\ref{cor:adelic-to-S-arithmetic}, we obtain
  \begin{equation*}
    X_{\bQ_{S},N}\cong\rquotient{K_{\Sf}[N]}{(\biquotient{\Gbf(\bQ)}{\Gbf(\bA)}{M_{\Sf}})}\cong\biquotient{\Gbf(\bQ)}{\Gbf(\bA)}{K_{\mrm{f}}[N]}.
  \end{equation*}
  Looking at these isomorphisms more explicitly, it is easy to check that $\Rcal_{D,N,\Sf}$ is a set of representatives which is in one-to-one correspondence with $\Rcal_{D,N}$. We leave the rest of the proofs to the reader.
\end{proof}

\subsection{Counting connected components}
In this section, we aim at finding a uniform bound on the number of connected components of
\begin{equation*}
  X_{\bQ_{S},N}=\biquotient{\GammaS}{\Gbf_{S}}{K_{\Sf}[N]},
\end{equation*}
independently of $N$, where we assume that $\Sf\subseteq\Vcal_{\mrm{f}}$ is finite, $S=\Sf\cup\{\infty\}$, and $N\in\Ical_{\Sf}$. The main result is Proposition~\ref{prop:components-as-quotients}.

We first need a lemma about $\Gbf_{S}^{+}$-orbits. 
Given $p\in\Vcal_{\mrm{f}}$ and $n\in\bN$, let $S_{p}[p^{n}]\leq\SL_{d+1}(\bZ_{p})$ be the kernel of the homomorphism $\SL_{d+1}(\bZ_{p})\to\SL_{d+1}(\bZ/p^{n}\bZ)$ given by reduction mod $p^{n}$. For $n=0$, we let $S_{p}[p^{n}]=\SL_{d+1}(\bZ_{p})$. Recall the representation $\Phi:\GL_{d+1}\to\Gbf$ defined in~\eqref{eq:representation}.
A calculation shows that $\Phi(S_{p}[p^{n}])\subseteq K_{p}[p^{n}]$. In what follows, we let $K_{p}[p^{n}]^{+}=\Phi(S_{p}[p^{n}])$ and, given $N\in\Ical_{\Sf}$, we define
\begin{equation*}
  K_{\Sf}[N]^{+}=\prod_{p\in\Sf}K_{p}[p^{v_{p}(N)}]^{+}\subseteq K_{\Sf}[N].
\end{equation*}
In fact, $K_{\Sf}[N]^{+}$ is normal in $\Gbf(\bZ_{\Sf})$. To this end, one checks that $S_{p}[p^{n}]\lhd\GL_{d+1}(\bZ_{p})$ is a normal subgroup and then uses that
\begin{equation*}
  \Gbf(\bZ_{p})=\Phi(\GL_{d+1}(\bZ_{p}))
\end{equation*}
as already argued in the proof of Lemma~\ref{lem:identification-space-of-lattices}.

Similarly, we note that $\Phi(\SL_{d+1}(\bZ[\Sf^{-1}]))\subseteq\GammaS$, and we will denote
\begin{equation*}
  \Gamma_{S}^{+}=\Phi(\SL_{d+1}(\bZ[\Sf^{-1}])).
\end{equation*}
As $\SL_{d+1}$ has the strong approximation property, we find
\begin{equation}\label{eq: strong-approximation-SL}
  \Gbf_{S}^{+}=(\Gbf_{\infty}^{+}\times K_{\Sf}[N]^{+})\Gamma_{S}^{+}
\end{equation}
for all $N\in\Ical_{\Sf}$; cf.~\cite[Thm.~7.12]{Rapinchuk1994}.

\begin{lem}\label{lem:Gplus-orbits}
  Let $\Sf\subseteq\Vcal_{\mrm{f}}$, $S=\Sf\cup\{\infty\}$, $N\in\Ical_{\Sf}$,
  and $x\in \Gbf_{S}/\Gamma_{S}$. Then,
  \begin{equation*}
    \Gbf_{S}^{+}.x=(\Gbf_{\infty}^{+}\times K_{\Sf}[N]^{+}).x.
  \end{equation*}
\end{lem}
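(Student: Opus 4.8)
The statement is that the $\Gbf_{S}^{+}$-orbit of any point $x\in\Gbf_{S}/\Gamma_{S}$ equals the $(\Gbf_{\infty}^{+}\times K_{\Sf}[N]^{+})$-orbit. The inclusion $\supseteq$ is immediate since $\Gbf_{\infty}^{+}\times K_{\Sf}[N]^{+}$ is contained in $\Gbf_{S}^{+}$ (both factors are images of $\SL$ over the respective local fields, hence land in $\Gbf_{v}^{+}$). So the whole content is the inclusion $\subseteq$. The plan is to use the strong approximation decomposition~\eqref{eq: strong-approximation-SL}, namely $\Gbf_{S}^{+}=(\Gbf_{\infty}^{+}\times K_{\Sf}[N]^{+})\Gamma_{S}^{+}$, together with the fact that $\Gamma_{S}^{+}=\Phi(\SL_{d+1}(\bZ[\Sf^{-1}]))\subseteq\Gamma_{S}$.

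First I would write $x=g\Gamma_{S}$ for some $g\in\Gbf_{S}$ and take an arbitrary element $h\in\Gbf_{S}^{+}$; the goal is to show $hg\Gamma_{S}\in(\Gbf_{\infty}^{+}\times K_{\Sf}[N]^{+})g\Gamma_{S}$. By~\eqref{eq: strong-approximation-SL} applied with this particular $N$, we may write $h=k\gamma$ with $k\in\Gbf_{\infty}^{+}\times K_{\Sf}[N]^{+}$ and $\gamma\in\Gamma_{S}^{+}$. The issue is that $\gamma$ sits to the \emph{left} of $g$, not to the right, so it cannot be absorbed into the lattice coset directly. The standard trick is to instead decompose a conjugate: since $\Gamma_{S}^{+}\subseteq\Gamma_{S}$, one has $hg\Gamma_{S}=k\gamma g\Gamma_{S}=k(\gamma g\gamma^{-1})\gamma\Gamma_{S}=k(\gamma g\gamma^{-1})\Gamma_{S}$ — but this replaces $g$ by a conjugate, which changes the basepoint. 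The cleaner route is to observe that the orbit equality we want is equivalent to the statement that the two groups have the same image in the quotient $\Gbf_{S}/(\text{stabilizer of }x)$, and to exploit that $K_{\Sf}[N]^{+}$ is \emph{normal} in $\Gbf(\bZ_{\Sf})$ (as noted in the excerpt just before the lemma), hence in particular normalized by any element of $\Gamma_{S}$ whose finite components lie in $\Gbf(\bZ_{\Sf})$. More robustly: it suffices to prove that every double coset $(\Gbf_{\infty}^{+}\times K_{\Sf}[N]^{+})\backslash\Gbf_{S}^{+}/\langle\text{stab}\rangle$ is trivial, and here one uses that $\Gamma_{S}^{+}$ acts transitively on the relevant set of cosets by~\eqref{eq: strong-approximation-SL}.

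The sharpest formulation I would actually write: fix $g$, let $H=\Gbf_{\infty}^{+}\times K_{\Sf}[N]^{+}$, and consider the natural map $H\backslash\Gbf_{S}^{+}\to\Gbf_{S}^{+}.x$, $Hh\mapsto hgx_{0}$ where $x_{0}$ is the identity coset. By~\eqref{eq: strong-approximation-SL}, $H\backslash\Gbf_{S}^{+}$ is the image of $\Gamma_{S}^{+}$, i.e.\ every $H$-coset in $\Gbf_{S}^{+}$ meets $\Gamma_{S}^{+}$. So it is enough to show that for every $\gamma\in\Gamma_{S}^{+}$, the point $\gamma g\Gamma_{S}$ lies in $Hg\Gamma_{S}$. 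Now write $\gamma g=g\cdot(g^{-1}\gamma g)$; this is not in $g\Gamma_{S}$ since $g^{-1}\gamma g$ need not be integral. Instead I would argue the other way around, conjugating $H$: one has $\gamma g\Gamma_{S}=\gamma g\Gamma_{S}$, and we want this in $Hg\Gamma_{S}$, i.e.\ $g^{-1}\gamma g\in g^{-1}Hg\cdot\Gamma_{S}$ — equivalently $\gamma\in H\cdot g\Gamma_{S}g^{-1}$. The key point is that $g\Gamma_{S}g^{-1}$ is a lattice in $\Gbf_{S}$ commensurable with $\Gamma_{S}$, and one reduces to checking that $\gamma\in\Gamma_{S}^{+}$ can be written as $k\delta$ with $k\in H$ and $\delta\in g\Gamma_{S}g^{-1}$; this is exactly the content of~\eqref{eq: strong-approximation-SL} once one notes $g\Gamma_{S}g^{-1}\cap\Gbf_{S}^{+}$ still surjects onto $H\backslash\Gbf_{S}^{+}$ because it is a finite-index sublattice and $H$ is open.

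\textbf{Main obstacle.} The delicate bookkeeping is the left-versus-right placement of the $\Gamma_{S}^{+}$ factor in~\eqref{eq: strong-approximation-SL} relative to the coset structure of $\Gbf_{S}/\Gamma_{S}$. The honest way around it is: (i) prove the lemma first for $x=x_{0}$ the identity coset, where~\eqref{eq: strong-approximation-SL} applies verbatim — $\Gbf_{S}^{+}x_{0}=(\Gbf_{\infty}^{+}\times K_{\Sf}[N]^{+})\Gamma_{S}^{+}x_{0}=(\Gbf_{\infty}^{+}\times K_{\Sf}[N]^{+})x_{0}$ since $\Gamma_{S}^{+}\subseteq\Gamma_{S}$ fixes $x_{0}$; (ii) for general $x=gx_{0}$, note $\Gbf_{S}^{+}$ is normal in $\Gbf_{S}$, so $\Gbf_{S}^{+}x=\Gbf_{S}^{+}gx_{0}=g\Gbf_{S}^{+}x_{0}=g(\Gbf_{\infty}^{+}\times K_{\Sf}[N]^{+})x_{0}$; (iii) now I would \emph{not} be able to pull $g$ back out past $H$ in general — so the final step is to replace $N$ by a multiple $N'$ (still in $\Ical_{\Sf}$) such that $g_{\Sf}^{-1}K_{\Sf}[N']^{+}g_{\Sf}\subseteq K_{\Sf}[1]=\Gbf(\bZ_{\Sf})$ and, combined with normality of $K_{\Sf}[N']^{+}$ in $\Gbf(\bZ_{\Sf})$, deduce $g(\Gbf_{\infty}^{+}\times K_{\Sf}[N']^{+})x_{0}=(\Gbf_{\infty}^{+}\times K_{\Sf}[N']^{+})gx_{0}$, then use $K_{\Sf}[N']^{+}\subseteq K_{\Sf}[N]^{+}$ for one inclusion and run the symmetric argument for the reverse. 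This conjugation-and-refinement step is where the care is needed; everything else is formal manipulation of~\eqref{eq: strong-approximation-SL} and the normality of $K_{\Sf}[N]^{+}$ in $\Gbf(\bZ_{\Sf})$ established in the paragraph preceding the lemma.
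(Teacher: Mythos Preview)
Your steps (i) and (ii) are correct and coincide with the paper's approach: reduce to the identity coset via normality of $\Gbf_S^+$ and strong approximation~\eqref{eq: strong-approximation-SL}. The gap is in step (iii). Your stated condition $g_{\Sf}^{-1}K_{\Sf}[N']^{+}g_{\Sf}\subseteq\Gbf(\bZ_{\Sf})$ combined with normality of $K_{\Sf}[N']^{+}$ in $\Gbf(\bZ_{\Sf})$ does \emph{not} yield the equality $g(\Gbf_{\infty}^{+}\times K_{\Sf}[N']^{+})x_{0}=(\Gbf_{\infty}^{+}\times K_{\Sf}[N']^{+})gx_{0}$: normality in $\Gbf(\bZ_{\Sf})$ only controls conjugation by elements \emph{of} $\Gbf(\bZ_{\Sf})$, not conjugates that happen to land there. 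What would actually work is to choose $N'$ with $g_{\Sf}K_{\Sf}[N']^{+}g_{\Sf}^{-1}\subseteq K_{\Sf}[N]^{+}$, giving the inclusion $\Gbf_S^+.x=g(\Gbf_\infty^+\times K_{\Sf}[N']^+)x_0\subseteq(\Gbf_\infty^+\times K_{\Sf}[N]^+).x$ directly; normality plays no role there.

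The paper avoids the refinement altogether by a cleaner choice of representative. By Corollary~\ref{cor:adelic-to-S-arithmetic} (class number one), every $x$ has a representative $g\eta$ with $g\in\Gbf_\infty\times\{e\}$ and $\eta\in\{e\}\times\Gbf(\bZ_{\Sf})$. Now $g$ normalizes $\Gbf_\infty^+$ and commutes with $K_{\Sf}[N]^+$, while $\eta$ commutes with $\Gbf_\infty^+$ and normalizes $K_{\Sf}[N]^+$ (this is exactly the normality in $\Gbf(\bZ_{\Sf})$ established before the lemma). Hence $g\eta$ normalizes $\Gbf_\infty^+\times K_{\Sf}[N]^+$, and
\[
\Gbf_S^+.x=g\eta\,\Gbf_S^+\Gamma_S=g\eta\,(\Gbf_\infty^+\times K_{\Sf}[N]^+)\Gamma_S=(\Gbf_\infty^+\times K_{\Sf}[N]^+)\,g\eta\,\Gamma_S=(\Gbf_\infty^+\times K_{\Sf}[N]^+).x
\]
in one line. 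The missing idea in your attempt is precisely this use of class number one to force the finite part of the representative into $\Gbf(\bZ_{\Sf})$, where the normality statement applies verbatim and no level refinement is needed.
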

\begin{proof}
  This follows immediately from Corollary~\ref{cor:adelic-to-S-arithmetic} and~\eqref{eq: strong-approximation-SL}. To this end let $g\in\Gbf_{\infty}\times K_{\Sf}[N]$ and $\eta\in\Gbf(\bZ_{\Sf})$ such that $x=g\eta\GammaS$. As $K_{\Sf}[N]^{+}$ is normal in $\Gbf(\bZ_{\Sf})$, it follows that
  \begin{equation}\label{eq:orbitGplus}
    \Gbf_{S}^{+}.x=g\eta\Gbf_{S}^{+}\GammaS=g\eta(\Gbf_{\infty}^{+}\times K_{\Sf}[N]^{+})\GammaS=(\Gbf_{\infty}^{+}\times K_{\Sf}[N]^{+}).x.
  \end{equation}
\end{proof}

In what follows, we let
\begin{equation*}
  \Gbf_{\mrm{char},S}=\biquotient{\GammaS}{\Gbf_{S}}{\Gbf_{S}^{+}}
\end{equation*}
and we note that $\Gbf_{\mrm{char}}$ is a finite set whose cardinality is bounded by the index $[\Gbf_S:\Gbf_S^+]$.
\begin{proposition}\label{prop:components-as-quotients}
  We have
  \begin{equation*}
    \rquotient{\Gbf_{\infty}^{+}}{X_{\bQ_{S},N}}\cong\biquotient{\GammaS}{\Gbf_{S}}{K_{\Sf}[N]\Gbf_{S}^{+}}\cong\rquotient{K_{\Sf}[N]}{\Gbf_{\mrm{char},S}}.
  \end{equation*}
  In particular, given a finite set $ S\subseteq\Vcal$ containing $\infty$, the number of connected components of $X_{\bQ_{S},N}$ is at most $[\Gbf_S:\Gbf_S^+]$.
\end{proposition}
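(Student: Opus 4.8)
The plan is to establish the chain of isomorphisms of $\Gbf_\infty^+$-spaces
\[
\rquotient{\Gbf_{\infty}^{+}}{X_{\bQ_{S},N}}\cong\biquotient{\GammaS}{\Gbf_{S}}{K_{\Sf}[N]\Gbf_{S}^{+}}\cong\rquotient{K_{\Sf}[N]}{\Gbf_{\mrm{char},S}},
\]
and then deduce the component count. The first isomorphism should be purely formal: by Lemma~\ref{lem:Gplus-orbits}, for every $x\in \Gbf_S/\GammaS$ one has $\Gbf_S^+\cdot x=(\Gbf_\infty^+\times K_{\Sf}[N]^+)\cdot x$, and since $K_{\Sf}[N]^+\leq K_{\Sf}[N]$, the $\Gbf_\infty^+$-orbits on $K_{\Sf}[N]\backslash \Gbf_S/\GammaS$ are exactly the images of the $\Gbf_S^+$-orbits; this gives a bijection between $\Gbf_\infty^+\backslash X_{\bQ_S,N}$ and $\GammaS\backslash \Gbf_S/(K_{\Sf}[N]\Gbf_S^+)$, and one checks it is $\Gbf_\infty$-equivariant in the appropriate sense. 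For the second isomorphism, I would use that $\Gbf_S^+$ is normal in $\Gbf_S$ (stated in the excerpt), so $K_{\Sf}[N]\Gbf_S^+$ is a subgroup, and the natural map factors through $\Gbf_{\mrm{char},S}=\GammaS\backslash \Gbf_S/\Gbf_S^+$, on which the remaining $K_{\Sf}[N]$-action descends; the double quotient $\GammaS\backslash\Gbf_S/(K_{\Sf}[N]\Gbf_S^+)$ is then visibly $K_{\Sf}[N]\backslash(\GammaS\backslash\Gbf_S/\Gbf_S^+)$.

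From the identification $\rquotient{\Gbf_\infty^+}{X_{\bQ_S,N}}\cong\rquotient{K_{\Sf}[N]}{\Gbf_{\mrm{char},S}}$, the number of $\Gbf_\infty^+$-orbits on $X_{\bQ_S,N}$ is the cardinality of a quotient of the finite set $\Gbf_{\mrm{char},S}$, hence is at most $|\Gbf_{\mrm{char},S}|$, which in turn is bounded by $[\Gbf_S:\Gbf_S^+]$ (this bound is recorded just before the proposition statement, since $\Gbf_{\mrm{char},S}$ is a quotient of $\Gbf_S/\Gbf_S^+$). Finally, I would invoke the fact recalled in Section~\ref{sec:horospheres} that the connected components of $X_\infty(N)$, and hence of the finite disjoint union $X_{\bQ_S,N}\cong\bigsqcup X_\infty(\Sf^{\mathbf{m}})$ (cf.~\eqref{eq:congruence} and Corollary~\ref{cor:S-arith-correspondence}), are precisely the $\Gbf_\infty^+$-orbits. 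Combining this with the orbit count gives that $X_{\bQ_S,N}$ has at most $[\Gbf_S:\Gbf_S^+]$ connected components, uniformly in $N$, which is the assertion.

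The main obstacle I anticipate is bookkeeping rather than genuine difficulty: one must be careful that Lemma~\ref{lem:Gplus-orbits} is applied on the correct side (the excerpt writes $\Gbf_S/\GammaS$ whereas $X_{\bQ_S,N}=\GammaS\backslash\Gbf_S/K_{\Sf}[N]$, so a transposition via $g\mapsto g^{-1}$ may be needed, and the normality of $K_{\Sf}[N]^+$ in $\Gbf(\bZ_{\Sf})$ — also recorded in the excerpt — is what makes this harmless), and that $K_{\Sf}[N]\Gbf_S^+$ is genuinely a subgroup so that the triple-coset manipulations are legitimate. A secondary point to handle cleanly is the passage from ``number of $\Gbf_\infty^+$-orbits'' to ``number of connected components'': this is exactly the content of the sentence in Section~\ref{sec:horospheres} that connected components of $X_\infty(N)$ correspond to distinct $\Gbf_\infty^+$-orbits, applied componentwise across the decomposition~\eqref{eq:congruence}, so no new input is required.
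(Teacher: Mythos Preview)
Your proposal is correct and follows essentially the same route as the paper: the paper obtains the first bijection by writing $(\Gbf_{\infty}^{+}\times K_{\Sf}[N])g\GammaS=K_{\Sf}[N](\Gbf_{\infty}^{+}\times K_{\Sf}[N]^{+})g\GammaS=K_{\Sf}[N]\Gbf_{S}^{+}g\GammaS$ via Lemma~\ref{lem:Gplus-orbits} and the inclusion $K_{\Sf}[N]^{+}\leq K_{\Sf}[N]$, and disposes of the second bijection and the component bound in one line ``by definition of $\Gbf_{\mrm{char},S}$''. Your anticipated left/right bookkeeping issue does not actually arise, since $X_{\bQ_S,N}=K_{\Sf}[N]\backslash\Gbf_S/\GammaS$ and Lemma~\ref{lem:Gplus-orbits} already works on $\Gbf_S/\GammaS$.
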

\begin{proof}
  For the first bijection, we recall from the proof of Lemma~\ref{lem:Gplus-orbits} that $K_{\Sf}[N]^{+}\leq K_{\Sf}[N]$ and therefore \eqref{eq:orbitGplus} yields that for all $g\in\Gbf_{S}$
  \begin{align*}
    (\Gbf_{\infty}^{+}\times K_{\Sf}[N])g\GammaS&=K_{\Sf}[N](\Gbf_{\infty}^{+}\times K_{\Sf}[N]^{+})g\GammaS\\
    &=K_{\Sf}[N]\Gbf_{S}^{+}g\GammaS.
  \end{align*}
  The second bijection follows by definition of $\Gbf_{\mrm{char},S}$.
  The last part of the statement follows by definition of $\Gbf_{\mrm{char},S}$.
\end{proof}

The following corollary follows immediately.
\begin{corollary}\label{cor:components-general}
  Let $W\subseteq K_{\Sf}$ be an open subgroup. Then the quotient $W\backslash\Gbf_{S}/\GammaS$ is a union of at most $[\Gbf_{S}:\Gbf_{S}^{+}]$-many $\Gbf_{\infty}^{+}$-orbits.
\end{corollary}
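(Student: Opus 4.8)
The statement to be proved is: for an open subgroup $W \subseteq K_{\Sf}$, the double quotient $W\backslash\Gbf_{S}/\GammaS$ is a union of at most $[\Gbf_{S}:\Gbf_{S}^{+}]$-many $\Gbf_{\infty}^{+}$-orbits. The plan is to reduce this to Proposition~\ref{prop:components-as-quotients}, which is exactly the same statement but for the special open subgroups $W = K_{\Sf}[N]$. The first step is to observe that, by Corollary~\ref{cor:compact open nbhd basis}, the family $\{K_{\Sf}[N] : N\in\Ical_{\Sf}\}$ forms a basis of compact open neighbourhoods of the identity in $\Gbf(\bZ_{\Sf})$; hence, since $W$ is open (and we may intersect with $K_{\Sf}$ if necessary to assume $W \le K_{\Sf}$, which is given), there exists $N \in \Ical_{\Sf}$ with $K_{\Sf}[N] \subseteq W$.

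The second step is a general topological/combinatorial observation about coarsening double cosets: if $K_{\Sf}[N] \subseteq W \subseteq K_{\Sf}$, then there is a natural surjection
\begin{equation*}
  K_{\Sf}[N]\backslash\Gbf_{S}/\GammaS \longrightarrow W\backslash\Gbf_{S}/\GammaS,
\end{equation*}
given by $K_{\Sf}[N]g\GammaS \mapsto Wg\GammaS$. This map is $\Gbf_{\infty}^{+}$-equivariant for the left $\Gbf_{\infty}^{+}$-action (which commutes with the left $W$- and $K_{\Sf}[N]$-actions and with the right $\GammaS$-action, since $\Gbf_{\infty}^{+}\subseteq\Gbf_{\infty}$ and $W, K_{\Sf}[N]\subseteq K_{\Sf}\subseteq\Gbf_{\mrm{f}}$ sit in commuting factors of $\Gbf_{S}=\Gbf_{\infty}\times\Gbf_{\mrm{f}}$). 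A surjective equivariant map sends orbits onto orbits, so the number of $\Gbf_{\infty}^{+}$-orbits in the target is at most the number in the source. By Proposition~\ref{prop:components-as-quotients} applied with this $N$, the source $K_{\Sf}[N]\backslash\Gbf_{S}/\GammaS = X_{\bQ_{S},N}$ is a union of at most $[\Gbf_{S}:\Gbf_{S}^{+}]$-many $\Gbf_{\infty}^{+}$-orbits. Therefore so is $W\backslash\Gbf_{S}/\GammaS$, which is the claim.

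The only point requiring mild care — and the closest thing to an obstacle, though it is routine — is verifying that the left $\Gbf_{\infty}^{+}$-action genuinely descends to and is well-defined on all three quotients and that the coarsening map is equivariant; this is immediate from the product structure $\Gbf_{S}=\Gbf_{\infty}\times\Gbf_{\mrm{f}}$ and the fact that $K_{\Sf}[N], W$ lie in the $\Gbf_{\mrm{f}}$-factor while $\GammaS$ acts on the right. One should also record why $K_{\Sf}[N]\subseteq W$ can be arranged: $W$ open in $K_{\Sf}$ means $W$ contains a basic open neighbourhood of the identity, and Corollary~\ref{cor:compact open nbhd basis} provides such neighbourhoods of the form $K_{\Sf}[N]$. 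No new ideas beyond Proposition~\ref{prop:components-as-quotients} are needed; the corollary is purely a formal consequence.
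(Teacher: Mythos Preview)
Your proof is correct and follows exactly the approach the paper intends: the paper states that the corollary ``follows immediately'' from Proposition~\ref{prop:components-as-quotients} without giving further details, and your argument via the equivariant surjection $K_{\Sf}[N]\backslash\Gbf_{S}/\GammaS \to W\backslash\Gbf_{S}/\GammaS$ (after choosing $N$ with $K_{\Sf}[N]\subseteq W$ via Corollary~\ref{cor:compact open nbhd basis}) is precisely how one makes this immediate deduction rigorous.
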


\section{KAK-decomposition and norms}
\label{sec:KAK decomposition}
In this section we will introduce a function on $\Gbf(\bQ_{v})$ which measures the size of an element. These functions are used in Section~\ref{sec:summability} to define norm-balls on $\Gbf(\bQ_{S})$. The main input is the KAK-decomposition, which is well-known for $v=\infty$. We only discuss the case where $v$ is a finite place of $\bQ$. In what follows, $p$ is a natural prime.

\begin{lem}\label{lem:padic supnorm}
  Let $D\in\bN$ and denote by $\lVert\cdot\rVert_{p}:\Mat_{D}(\Qp)\to[0,\infty)$ the norm defined for $x=(x_{ij})$ by
  \begin{equation*}
      \lVert x\rVert_{p}=
      \max\{\lvert x_{ij}\rvert_{p}:1\leq i,j\leq D\}.
  \end{equation*}
   Then, $\lVert\cdot\rVert_{p}$ is an operator norm. Moreover, for all $k\in\GL_{D}(\Zp)$ we have $\lVert k\rVert_{p}=1$.
\end{lem}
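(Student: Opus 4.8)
The statement claims two things: that $\|\cdot\|_p$ is an operator norm on $\Mat_D(\Qp)$, and that it takes the value $1$ on $\GL_D(\Zp)$. I would set up the norm on the module $\Qp^D$ first, by declaring $\|v\|_p = \max_i |v_i|_p$ for a column vector $v$, and observe this is an ultrametric norm making $\Qp^D$ a normed $\Qp$-vector space whose unit ball is exactly $\Zp^D$. Then I would verify that the given matrix norm is the induced operator norm, i.e. $\|x\|_p = \sup_{v\neq 0} \|xv\|_p/\|v\|_p$, and only afterwards deduce the normalization on $\GL_D(\Zp)$.

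\begin{proof}
For a column vector $v=(v_1,\dots,v_D)\in\Qp^D$ set $\lVert v\rVert_p=\max_i\lvert v_i\rvert_p$. This is a norm on $\Qp^D$ as a $\Qp$-vector space: positivity and homogeneity are immediate from the corresponding properties of $\lvert\cdot\rvert_p$, and the (ultrametric) triangle inequality follows coordinate-wise. Its closed unit ball is $\{v:\lVert v\rVert_p\le 1\}=\Zp^D$.

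Let $x=(x_{ij})\in\Mat_D(\Qp)$. For $v\in\Qp^D$, the $i$-th coordinate of $xv$ is $\sum_j x_{ij}v_j$, so by the ultrametric inequality
\begin{equation*}
  \lvert (xv)_i\rvert_p\le\max_j\lvert x_{ij}\rvert_p\lvert v_j\rvert_p\le\Big(\max_{i,j}\lvert x_{ij}\rvert_p\Big)\lVert v\rVert_p,
\end{equation*}
whence $\lVert xv\rVert_p\le\lVert x\rVert_p\lVert v\rVert_p$, i.e. $\sup_{v\neq 0}\lVert xv\rVert_p/\lVert v\rVert_p\le\lVert x\rVert_p$. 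Conversely, fix indices $i_0,j_0$ realizing $\lVert x\rVert_p=\lvert x_{i_0 j_0}\rvert_p$ and take $v=e_{j_0}$, the $j_0$-th standard basis vector; then $\lVert v\rVert_p=1$ and $\lVert xv\rVert_p=\max_i\lvert x_{i j_0}\rvert_p\ge\lvert x_{i_0 j_0}\rvert_p=\lVert x\rVert_p$. Hence $\lVert x\rVert_p=\sup_{v\neq 0}\lVert xv\rVert_p/\lVert v\rVert_p$, so $\lVert\cdot\rVert_p$ is the operator norm attached to $\lVert\cdot\rVert_p$ on $\Qp^D$; in particular it is submultiplicative and satisfies $\lVert\mrm{Id}_D\rVert_p=1$.

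Finally, let $k\in\GL_D(\Zp)$. Since $k$ has entries in $\Zp$, every entry has $p$-adic absolute value at most $1$, so $\lVert k\rVert_p\le 1$. Since $k^{-1}\in\GL_D(\Zp)$ as well, the same argument gives $\lVert k^{-1}\rVert_p\le 1$. By submultiplicativity, $1=\lVert\mrm{Id}_D\rVert_p=\lVert k k^{-1}\rVert_p\le\lVert k\rVert_p\lVert k^{-1}\rVert_p\le 1$, forcing $\lVert k\rVert_p=1$ (and $\lVert k^{-1}\rVert_p=1$).
\end{proof}

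\textbf{On the difficulty.} There is no real obstacle here: the only point requiring a moment's thought is the choice of the auxiliary vector norm on $\Qp^D$ and the verification that the stated matrix norm is exactly the induced operator norm, for which the ultrametric triangle inequality is what makes the bound tight on a single basis vector. The normalization statement $\lVert k\rVert_p=1$ for $k\in\GL_D(\Zp)$ is then a one-line consequence of submultiplicativity applied to $k$ and $k^{-1}$, both of which lie in $\Mat_D(\Zp)$.
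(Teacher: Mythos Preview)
Your proof is correct. For the first claim you fill in exactly the details the paper leaves to the reader (showing the max norm on matrices is the operator norm attached to the max norm on $\Qp^D$ via the ultrametric inequality). For the second claim your argument via $\lVert k\rVert_p,\lVert k^{-1}\rVert_p\le 1$ and submultiplicativity is a minor variant of the paper's, which instead uses $\lvert\det k\rvert_p=1$ together with $\lvert\det k\rvert_p\le\lVert k\rVert_p^{D}$ to force $\lVert k\rVert_p\ge 1$; both are one-line arguments.
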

\begin{proof}
  The fact that $\lVert\cdot\rVert_{p}$ is an operator norm follows from the ultrametric property of the $p$-adic absolute value. We leave the details to the reader. For the second part we note that for any $x\in\GL_{D}(\Zp)$ we have $\lvert\det x\rvert_p=1$ and $\lVert x\rVert_{p}\leq 1$. On the other hand we have $\lvert\det x\rvert_{p}\leq\lVert x\rVert_{p}^{D}$ by the ultrametric property. Hence $\lVert k\rVert_{p}=1$ for all $k\in\GL_{D}(\Zp)$.
\end{proof}
    In the following discussion, Lemma~\ref{lem:padic supnorm} is used in the form of the following corollary.
\begin{corollary}\label{cor:trivialcorollary}
  Let $D\in\bN$, $x\in\Mat_{D}(\Qp)$ and $k_{1},k_{2}\in\GL_{D}(\Zp)$. Then
  \begin{equation*}
    \lVert k_{1}xk_{2}\rVert_{p}=\lVert x\rVert_{p}.
  \end{equation*}
\end{corollary}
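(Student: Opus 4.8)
The plan is to deduce this immediately from Lemma~\ref{lem:padic supnorm}. First I would record that, being an operator norm on $\Mat_{D}(\Qp)$, the norm $\normp{\cdot}$ is submultiplicative: $\normp{AB}\leq\normp{A}\normp{B}$ for all $A,B\in\Mat_{D}(\Qp)$. Applying this twice to the product $k_{1}xk_{2}$ gives $\normp{k_{1}xk_{2}}\leq\normp{k_{1}}\,\normp{x}\,\normp{k_{2}}$, and since $k_{1},k_{2}\in\GL_{D}(\Zp)$, the second assertion of Lemma~\ref{lem:padic supnorm} yields $\normp{k_{1}}=\normp{k_{2}}=1$, whence $\normp{k_{1}xk_{2}}\leq\normp{x}$.

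For the reverse inequality I would use that $\GL_{D}(\Zp)$ is a group, so $k_{1}^{-1},k_{2}^{-1}\in\GL_{D}(\Zp)$ as well; indeed, the determinant of an element of $\GL_{D}(\Zp)$ is a $p$-adic unit, so Cramer's rule keeps all entries of the inverse in $\Zp$. Again by Lemma~\ref{lem:padic supnorm} these inverses have norm $1$, so writing $x=k_{1}^{-1}(k_{1}xk_{2})k_{2}^{-1}$ and applying submultiplicativity once more gives $\normp{x}\leq\normp{k_{1}xk_{2}}$. Combining the two inequalities proves the corollary.

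I do not anticipate any genuine obstacle: the statement is a formal consequence of submultiplicativity together with the normalization $\normp{k}=1$ on $\GL_{D}(\Zp)$. The only point requiring a word of care is the closure of $\GL_{D}(\Zp)$ under inversion, which is exactly what lets one run the argument symmetrically to obtain equality rather than just an inequality.
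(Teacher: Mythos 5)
Your proof is correct and is essentially the same as the paper's: both apply submultiplicativity of the operator norm together with the normalization $\lVert k\rVert_{p}=1$ for $k\in\GL_{D}(\Zp)$ from Lemma~\ref{lem:padic supnorm}, in one direction directly and in the other via $x=k_{1}^{-1}(k_{1}xk_{2})k_{2}^{-1}$. The paper simply compresses the two inequalities into a single chain.
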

\begin{proof}
  Since operator norms are submultiplicative, we have
  \begin{equation*}
    \lVert x\rVert_{p}\leq\lVert k_{1}^{-1}\rVert_{p}\lVert k_{1}xk_{2}\rVert_{p}\lVert k_{2}^{-1}\rVert_{p}=\lVert k_{1}xk_{2}\rVert_{p}\leq\lVert k_{1}\rVert_{p}\lVert x\rVert_{p}\lVert k_{2}\rVert_{p}=\lVert x\rVert_{p}.
  \end{equation*}
\end{proof}

\begin{lem}\label{lem:KAK2x2}
  Let $x\in\GL_{2}(\bQ_{p})$. Then there are $m_{1},m_{2}\in\GL_{2}(\bZ_{p})$ and $n,k\in\bZ$ such that
  \begin{equation*}
    x=m_{1}\begin{pmatrix}
      p^{n} & 0 \\ 0 & p^{k}
    \end{pmatrix}m_{2}.
  \end{equation*}
\end{lem}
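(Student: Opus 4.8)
The plan is to prove this by the standard Smith normal form (elementary divisor) argument over the discrete valuation ring $\bZ_{p}$; the only wrinkle compared to the usual integral statement is that $x$ need not lie in $\Mat_{2}(\bZ_{p})$, which is handled by clearing denominators first. Concretely, I would choose $N\in\bZ$ with $p^{N}x\in\Mat_{2}(\bZ_{p})$ and set $y=p^{N}x$. It then suffices to find $m_{1},m_{2}\in\GL_{2}(\bZ_{p})$ and $a,b\in\bZ_{\geq0}$ with $m_{1}ym_{2}=\mrm{diag}(p^{a},p^{b})$: writing $x=m_{1}^{-1}\mrm{diag}(p^{a-N},p^{b-N})m_{2}^{-1}$ and noting $m_{i}^{-1}\in\GL_{2}(\bZ_{p})$, the lemma follows with $n=a-N$ and $k=b-N$ after renaming $m_{i}^{-1}$ as $m_{i}$.

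For the diagonalization of $y$, I would run the pivot step. Since $\bZ_{p}$ is a DVR with uniformizer $p$, every nonzero entry of $y$ has the form $up^{j}$ with $u\in\bZ_{p}^{\times}$; let $a\geq0$ be the minimal valuation among the (finitely many) entries of $y$, which is finite because $y\neq0$ as $x\in\GL_{2}(\Qp)$. Multiplying $y$ on the left and right by permutation matrices, both in $\GL_{2}(\bZ_{p})$, move an entry of valuation $a$ into the $(1,1)$ slot, and then left-multiply by $\mrm{diag}(u^{-1},1)\in\GL_{2}(\bZ_{p})$ to make that entry exactly $p^{a}$. By minimality of $a$ the $(2,1)$ and $(1,2)$ entries are divisible by $p^{a}$, so subtracting a suitable $\bZ_{p}$-multiple of the first row from the second — left multiplication by a lower unitriangular matrix in $\GL_{2}(\bZ_{p})$ — clears the $(2,1)$ entry while leaving the first row (hence the $(1,1)$ entry $p^{a}$) unchanged, and an analogous column operation (right multiplication by an upper unitriangular matrix in $\GL_{2}(\bZ_{p})$) clears the $(1,2)$ entry without disturbing the first column. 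One checks that the $(2,2)$ entry retains valuation $\geq a$ throughout.

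Finally I would clean up: after these operations $y$ has been transformed to $\mrm{diag}(p^{a},z)$ with $z\in\bZ_{p}$, and $z\neq0$ since the transformed matrix is invertible over $\Qp$; writing $z=vp^{b}$ with $v\in\bZ_{p}^{\times}$ and $b\geq a\geq0$, right multiplication by $\mrm{diag}(1,v^{-1})\in\GL_{2}(\bZ_{p})$ produces $\mrm{diag}(p^{a},p^{b})$. Collecting the elementary factors into $m_{1},m_{2}\in\GL_{2}(\bZ_{p})$ finishes the argument. I do not anticipate a genuine obstacle: the proof is entirely elementary, and the only points that warrant explicit mention are that $a$ is finite (invertibility of $x$), that the off-diagonal entries are divisible by the pivot (minimality of $a$), that $z\neq0$ for the same reason, and the routine bookkeeping that permutation, diagonal-unit, and unitriangular matrices over $\bZ_{p}$ all lie in $\GL_{2}(\bZ_{p})$.
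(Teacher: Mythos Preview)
Your proof is correct and follows essentially the same elementary-divisor argument as the paper: pivot on an entry of extremal valuation, then clear the remaining entries in its row and column using unitriangular matrices whose off-diagonal entries lie in $\bZ_{p}$ by the extremality of the pivot. The only cosmetic differences are that you first scale into $\Mat_{2}(\bZ_{p})$ and move the pivot by permutation matrices, whereas the paper works directly in $\GL_{2}(\bQ_{p})$ and, when the extremal entry is off-diagonal, uses a shear by $1$ to bring it to the $(1,1)$ slot.
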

\begin{proof}
  Let $x=(\begin{smallmatrix}a & b \\ c & d\end{smallmatrix})$. Assume that $\lVert x\rVert_{p}=\lvert a\rvert_{p}$ and write $a=p^{m}u$, $b=p^{n}v$ and $c=p^{\ell}w$ with $u,v,w\in\Zp^{\times}$. Then
  \begin{equation*}
    \begin{pmatrix}
      1 & 0 \\ -p^{\ell-m}\frac{w}{u} & 1 
    \end{pmatrix}
    \begin{pmatrix}
      a & b \\ c & d 
    \end{pmatrix}
    \begin{pmatrix}
      1 & -p^{n-m}\frac{v}{u} \\ 0 & 1 
    \end{pmatrix}=
    \begin{pmatrix}
      a & 0 \\ 0 & d^{\prime}
    \end{pmatrix}.
  \end{equation*}
  As we assumed that $a$ was maximal, it follows that the two unipotent matrices lie in $\GL_{2}(\Zp)$. Denote them by $m_{1},m_{2}$ respectively. The resulting diagonal matrix is of the form
  \begin{equation*}
    \begin{pmatrix}
      a & 0 \\ 0 & d^{\prime}
    \end{pmatrix}=
    \begin{pmatrix}
      u & 0 \\ 0 & z
    \end{pmatrix}
    \begin{pmatrix}
      p^{m} & 0 \\ 0 & p^{k}
    \end{pmatrix}
  \end{equation*}
  with $u,z\in\Z_{p}^{\times}$, and thus we have shown that
  \begin{equation*}
    x=m_{1}^{-1}\begin{pmatrix}
      u & 0 \\ 0 & z
    \end{pmatrix}
    \begin{pmatrix}
      p^{n} & 0 \\ 0 & p^{k}
    \end{pmatrix}
    m_{2}^{-1}.
  \end{equation*}
  As $m_{1}\mathrm{diag}(u,z)\in\GL_{2}(\bZ_p)$, we obtain the claim under the assumption that $\lVert x\rVert_{p}=\lvert a\rvert_{p}$. If $\lVert x\rVert_{p}\neq\lvert a\rvert_{p}$, then we distinguish two cases. If $\lVert x\rVert_{p}=\lvert d\rvert_{p}$, then we conjugate $x$ by the matrix $m=(\begin{smallmatrix} 0 & 1 \\ 1 & 0 \end{smallmatrix})$, so that the maximal entry comes to lie in the top left corner. If $\lVert x\rVert_{p}=\lvert b\rvert_{p}$ or $\lVert x\rVert_{p}=\lvert c\rvert_{p}$, then it is a property of non-archimedian absolute values that for either $x(\begin{smallmatrix} 1 & 0 \\ 1 & 1 \end{smallmatrix})$ or $(\begin{smallmatrix} 1 & 1 \\ 0 & 1 \end{smallmatrix})x$ the top left entry will be maximal. These operations all follow from multiplying $x$ with matrices in $\GL_{2}(\Zp)$ and thus the claim is proven.
\end{proof}

In what follows, given $x\in\GL_{d+1}(\bQ_{p})$, we denote by $[x]\in\Gbf(\bQ_{p})$ its image under $\Phi$ as introduced in the proof of Proposition~\ref{prop:classnumber}.
\begin{proposition}\label{prop:generalKAK}
  Let $p$ be a finite rational prime and assume that $g\in\Gbf(\Qp)$. Then there exist uniquely determined nonnegative integers $n_{1}\geq\cdots\geq n_{d}$ as well as elements $k_{1},k_{2}\in\Gbf(\Zp)$ such that
  \begin{equation*}
    g=k_{1}[\mathrm{diag}(p^{-n_{1}},\ldots,p^{-n_{d}},1)]k_{2}.
  \end{equation*}
\end{proposition}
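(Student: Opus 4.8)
The plan is to reduce the statement, which concerns $\Gbf = \PGL_{d+1}$, to the already-known classical Cartan (KAK) decomposition for $\GL_{d+1}(\bQ_p)$, transported through the surjection $\Phi\colon \GL_{d+1}(\bQ_p) \twoheadrightarrow \Gbf(\bQ_p)$. First I would pick any $x \in \GL_{d+1}(\bQ_p)$ with $g = \Phi(x) = [x]$, which is possible by the Skolem--Noether theorem. The classical $p$-adic Cartan decomposition (obtained by applying the elementary-divisor / Smith normal form argument over the PID $\bZ_p$, exactly as in the $2\times2$ case of Lemma~\ref{lem:KAK2x2} but for general size, or cited from a standard reference) gives $x = h_1\, \mathrm{diag}(p^{m_1},\ldots,p^{m_{d+1}})\, h_2$ with $h_1,h_2 \in \GL_{d+1}(\bZ_p)$ and integers $m_1 \leq \cdots \leq m_{d+1}$ (uniquely determined). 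Applying $\Phi$ and using the multiplicativity of $\Phi$ yields $g = [h_1]\,[\mathrm{diag}(p^{m_1},\ldots,p^{m_{d+1}})]\,[h_2]$.

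Next I would normalize the diagonal part. Since $\Phi(\lambda x) = \Phi(x)$ for any scalar $\lambda \in \bQ_p^\times$, I may multiply the diagonal matrix by $p^{-m_{d+1}}$ without changing its image, producing $[\mathrm{diag}(p^{m_1 - m_{d+1}},\ldots,p^{m_d - m_{d+1}}, 1)]$. Setting $n_i = m_{d+1-i} - m_{d+1} \geq 0$ for $i = 1,\ldots,d$ reorders the exponents into a non-increasing sequence $n_1 \geq \cdots \geq n_d \geq 0$ (conjugating by the permutation matrix reversing the coordinates, which lies in $\GL_{d+1}(\bZ_p)$, and absorbing it into $h_1, h_2$). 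Since $\Phi(\GL_{d+1}(\bZ_p)) = \Gbf(\bZ_p) = M_p$ — this was established in the proof of Lemma~\ref{lem:identification-space-of-lattices} — the elements $k_1 := [h_1]$ and $k_2 := [h_2]$ (after absorbing the permutation) lie in $\Gbf(\bZ_p)$, giving the claimed decomposition $g = k_1 [\mathrm{diag}(p^{-n_1},\ldots,p^{-n_d},1)] k_2$ once I relabel $n_i \mapsto -n_i$ in the exponent to match the statement's sign convention.

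Finally, uniqueness of the $n_i$: I would deduce it from the invariance property~\eqref{eq:invariance of norm} together with the description of the norm $\lVert \cdot \rVert_p$ on $\Gbf(\bQ_p)$ via the adjoint action. Concretely, $\lVert g \rVert_p = \lVert a \rVert_p$ where $a = [\mathrm{diag}(p^{-n_1},\ldots,p^{-n_d},1)]$ by Corollary~\ref{cor:trivialcorollary}, and more generally the full multiset of exponents is recovered from the elementary divisors of the lattice $a \cdot \Mat_{d+1}(\bZ_p) \cdot a^{-1}$ relative to $\Mat_{d+1}(\bZ_p)$ — equivalently from the eigenvalue multiset $\{p^{n_j - n_i}\}$ of $\Ad_a$, which is a conjugacy invariant of $g$ under $M_p$ on both sides. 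Alternatively, uniqueness descends directly from the (standard) uniqueness of the Cartan exponents $m_1 \leq \cdots \leq m_{d+1}$ for $\GL_{d+1}(\bQ_p)$ modulo the simultaneous shift $m_i \mapsto m_i + c$, which is exactly the ambiguity killed by the normalization $n_d = 0$ (i.e. the bottom-right entry being $1$).

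The main obstacle is not conceptual but bookkeeping: one must carefully justify that the reordering permutation and the scalar normalization can be carried out while keeping the flanking elements inside $\Gbf(\bZ_p)$ (this is where $\Phi(\GL_{d+1}(\bZ_p)) = \Gbf(\bZ_p)$ is essential and must be invoked, not taken for granted), and one must be precise about which sign convention for the exponents is in force. The underlying general $p$-adic Cartan decomposition for $\GL_{d+1}$ itself I would simply cite, or note that it follows from Smith normal form over $\bZ_p$ by the same elementary row/column operations used in Lemma~\ref{lem:KAK2x2}.
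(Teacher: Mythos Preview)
Your proposal is correct and follows essentially the same approach as the paper: both lift $g$ to $\GL_{d+1}(\bQ_p)$ via $\Phi$, invoke the $p$-adic Cartan decomposition there, normalize the diagonal using a central scalar and a permutation, and descend using $\Phi(\GL_{d+1}(\bZ_p))=\Gbf(\bZ_p)$ from Lemma~\ref{lem:identification-space-of-lattices}. The only difference is that the paper supplies a short self-contained inductive proof of the $\GL_{d+1}(\bQ_p)$ decomposition (clearing the first row and column using embedded copies of $\SL_2(\bZ_p)$ and reducing to $\GL_d$), whereas you cite it as Smith normal form over the PID $\bZ_p$; and your index bookkeeping is slightly tangled (the correct normalization is simply $n_i=m_{d+1}-m_i$ after dividing by $p^{m_{d+1}}$, with no reversal needed), though you correctly flag this as the only delicate point.
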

\begin{proof}
  As $[\GL_{d+1}(\bZ_{p})]=\Gbf(\bZ_{p})$, cf.~the proof of Proposition~\ref{prop:classnumber}, it suffices to prove the existence of a decomposition of any element $x\in\GL_{d+1}(\Qp)$ into a product of the form $k_{1}Dk_{2}$, where $D$ is a diagonal matrix whose non-zero entries are powers of $p$ for a set of exponents uniquely determined (with multiplicity) by $x$. If this is the case, we can use elements in $\GL_{d+1}(\Zp)$ to arrange the diagonal entries in decreasing order with respect to the $p$-adic absolute value. Furthermore, there will be an element $\omega$ in the center of $\GL_{d+1}(\Qp)$ such that $\omega^{-1}D$ is of the form required by the proposition and we note that $[\omega^{-1}D]=[D]$.
  
  So let $x\in\GL_{d+1}(\Qp)$ be arbitrary and in view of Lemma~\ref{lem:KAK2x2} assume that $d\geq2$. For the existence of a decomposition of $x$, we apply elements in $\GL_{d+1}(\Zp)$ so that $\lVert x\rVert_{p}=\lvert x_{11}\rvert_{p}$. Then one can use the copies of $\SL_{2}(\Zp)$ in $\GL_{d+1}(\Qp)$ associated with spans of pairs of the standard basis to reduce the matrix $x$ to a matrix of the form
  \begin{equation*}
    x^{\prime}=\begin{pmatrix}
      x_{11} & 0 \\
      0 & y
    \end{pmatrix},
  \end{equation*}
  where $y\in\GL_{d}(\Qp)$ and $\lVert y\rVert_{p}\leq\lvert x_{11}\rvert_{p}$. Using $\Qp=\bigsqcup_{n\in\Z}p^{n}\Zp^{\times}$ and multiplication by a diagonal matrix with diagonal entries contained in $\Zp^{\times}$, we can assume without loss of generality that $x_{11}=\lVert g\rVert_{p}^{-1}$ in the expression for $x^{\prime}$ obtained above. Now we proceed by induction on $d$.
\end{proof}

Let us give a more intrinsic interpretation of Proposition~\ref{prop:generalKAK}. In what follows, we consider
\begin{equation*}
  \liepgl_{d+1}(\Qp)=\{v\in\mathrm{Mat}_{d+1}(\Qp):\tr (v)=0\}.
\end{equation*}
Then $\Ad:\Gbf(\Qp)\to\GL(\liepgl_{d+1}(\Qp))$ given by 
\begin{equation*}
  \Ad_{g}(v)=gvg^{-1}\qquad(v\in\liepgl_{d+1}(\Qp))
\end{equation*}
is a well-defined, faithful representation. We let $\lVert\cdot\rVert_{p}$ be the norm on $\GL(\liepgl_{d+1}(\bQ_{p}))$ induced by the operator norm $\lVert\cdot\rVert_{p}$ on $\GL(\Mat_{d+1}(\bQ_{p}))$ via restriction of the isomorphism to $\liepgl_{d+1}(\bQ_{p})$. To this end we note that any isomorphism of $\liepgl_{d+1}(\bQ_{p})$ extends trivially to the center of $\Mat_{d+1}(\bQ_{p})$.
\begin{lem}\label{lem:KAKvsOpNorm}
  Let $k_{1},k_{2}\in\Gbf(\Zp)$ and let $n_{1}\geq\cdots\geq n_{d}$ nonnegative integers. Set
  \begin{equation*}
    g=k_{1}[\mathrm{diag}(p^{-n_{1}},\ldots,p^{-n_{d}},1)]k_{2}.
  \end{equation*}
  Then $\lVert\Ad_{g}\rVert_{p}=\lVert\Ad_{g^{-1}}\rVert_{p}=p^{n_{1}}$.
\end{lem}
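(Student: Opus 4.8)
\textbf{Proof plan for Lemma~\ref{lem:KAKvsOpNorm}.}

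The plan is to reduce the computation to the diagonal case using the $\GL_D$-invariance of the operator norm already established in Corollary~\ref{cor:trivialcorollary}, and then to compute the operator norm of $\Ad$ of a diagonal element explicitly via its eigenvalues on $\liepgl_{d+1}(\Qp)$. First I would observe that $\Ad_{k_i}$ for $i=1,2$ are operator-norm $1$ maps of $\liepgl_{d+1}(\Qp)$: indeed $k_i\in\Gbf(\Zp)=\Phi(\GL_{d+1}(\Zp))$, so $\Ad_{k_i}$ is conjugation by an element of $\GL_{d+1}(\Zp)$ on $\Mat_{d+1}(\Zp)$, which preserves the lattice $\Mat_{d+1}(\Zp)$ (and hence restricts to a norm-preserving map of $\liepgl_{d+1}(\Qp)$ by Corollary~\ref{cor:trivialcorollary}). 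Since $\Ad$ is a homomorphism, $\Ad_g=\Ad_{k_1}\Ad_{a}\Ad_{k_2}$ with $a=[\mathrm{diag}(p^{-n_1},\dots,p^{-n_d},1)]$, and submultiplicativity of the operator norm together with $\lVert\Ad_{k_i}\rVert_p=\lVert\Ad_{k_i^{-1}}\rVert_p=1$ gives $\lVert\Ad_g\rVert_p=\lVert\Ad_a\rVert_p$, exactly as in the proof of Corollary~\ref{cor:trivialcorollary}. The same reasoning applied to $g^{-1}=k_2^{-1}a^{-1}k_1^{-1}$ shows $\lVert\Ad_{g^{-1}}\rVert_p=\lVert\Ad_{a^{-1}}\rVert_p$.

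Next I would compute $\lVert\Ad_a\rVert_p$ directly. Write $a=[\mathrm{diag}(a_1,\dots,a_{d+1})]$ with $a_i=p^{-n_i}$ for $i\le d$ and $a_{d+1}=1$, so $n_1\ge\cdots\ge n_d\ge n_{d+1}:=0$. The operator $\Ad_a$ acts on $\Mat_{d+1}(\Qp)$ diagonally in the standard basis $\{E_{ij}\}$: one has $\Ad_a(E_{ij})=(a_i/a_j)E_{ij}=p^{n_j-n_i}E_{ij}$. Restricting to $\liepgl_{d+1}$ (which is spanned by the off-diagonal $E_{ij}$ together with the trace-zero diagonal matrices, the latter fixed by $\Ad_a$), the eigenvalues of $\Ad_a$ are exactly $\{p^{n_j-n_i}:1\le i,j\le d+1\}$. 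Since $\lVert\cdot\rVert_p$ restricted to operators that are diagonal in the standard basis is the maximum of the $p$-adic absolute values of the diagonal entries (by the ultrametric inequality, as in Lemma~\ref{lem:padic supnorm}), we get
\begin{equation*}
  \lVert\Ad_a\rVert_p=\max_{1\le i,j\le d+1}\lvert p^{n_j-n_i}\rvert_p=\max_{1\le i,j\le d+1}p^{\,n_i-n_j}=p^{\,n_1-n_{d+1}}=p^{n_1},
\end{equation*}
using $n_1\ge n_i\ge n_{d+1}=0$ for all $i$. The identical computation for $a^{-1}=[\mathrm{diag}(p^{n_1},\dots,p^{n_d},1)]$ yields $\lVert\Ad_{a^{-1}}\rVert_p=\max_{i,j}p^{\,n_i-n_j}=p^{n_1}$ as well, since the set of exponents $\{n_i-n_j\}$ is symmetric under interchanging $i$ and $j$. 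Combining with the previous paragraph gives $\lVert\Ad_g\rVert_p=\lVert\Ad_{g^{-1}}\rVert_p=p^{n_1}$.

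I do not expect a serious obstacle here; the only point requiring a little care is the precise description of how $\liepgl_{d+1}(\Qp)$ sits inside $\Mat_{d+1}(\Qp)$ and why the operator norm on $\GL(\liepgl_{d+1})$ agrees with the one computed on all of $\Mat_{d+1}$ — this is handled by the remark preceding the lemma that any automorphism of $\liepgl_{d+1}(\bQ_p)$ extends trivially to the center, so that the standard lattice $\liepgl_{d+1}(\Zp)=\liepgl_{d+1}(\Qp)\cap\Mat_{d+1}(\Zp)$ is the relevant integral structure and $\Ad_a$ is diagonal with respect to the basis $\{E_{ij}\}_{i\ne j}\cup\{E_{ii}-E_{d+1,d+1}\}_{i\le d}$ of this lattice. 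The mild subtlety that the diagonal $E_{ii}$ themselves are not in $\liepgl_{d+1}$ is irrelevant, since $\Ad_a$ fixes the whole diagonal subspace pointwise and its contribution to the norm is $1\le p^{n_1}$. Everything else is the routine invariance argument already used in Corollary~\ref{cor:trivialcorollary}.
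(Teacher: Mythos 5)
Your proof is correct and follows essentially the same route as the paper: reduce to the diagonal element via $\Gbf(\Zp)$-bi-invariance of the operator norm (Corollary~\ref{cor:trivialcorollary}), diagonalize $\Ad_a$ on $\Mat_{d+1}(\Qp)$ with eigenvalues $p^{n_j-n_i}$, and read off the norm as $p^{n_1}$. The only cosmetic difference is that the paper obtains the statement for $g^{-1}$ by observing the spectrum is invariant under $x\mapsto x^{-1}$, whereas you recompute directly (and also note the symmetry), which amounts to the same thing.
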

\begin{proof}
  By Proposition~\ref{prop:generalKAK} and Corollary~\ref{cor:trivialcorollary}, we can assume without loss of generality that $g=[\mathrm{diag}(p^{-n_{1}},\ldots,p^{-n_{d}},1)]$. We let $n_{d+1}=0$. An elementary calculation shows that the operator $\Ad_{g}$ acting on $\Mat_{d+1}(\bQ_{p})$ is diagonalizable with eigenvalues $p^{n_i-n_j}$ for $:1\leq i,j\leq d+1$.
  Therefore $\Ad_{g}$ has maximal eigenvalue $p^{-n_{1}}$ and $\lVert\Ad_{g}\rVert_{p}=p^{n_{1}}$. As $\sigma(\Ad_{g})$ is symmetric under multiplicative inversion, we also have $\lVert\Ad_{g^{-1}}\rVert_{p}=p^{n_{1}}$.
\end{proof}


\section{Modular character on the Borel subgroup} 
\label{sec:modular char}
The goal of this section is to prove~\eqref{eq:determineshellexponent}. Recall first that the $p$-adic value satisfies that for all $x\in\Qp$ and for all $f\in\compact(\Qp)$
\begin{equation*}
  \lvert x\rvert_{p}\int_{\Qp}f(xt)\der m_{\Qp}(t)=\int_{\Qp}f(t)\der m_{\Qp}(t),
\end{equation*}
where $m_{\Qp}$ denotes any choice of a Haar measure on $\Qp$. This implies that up to normalization the Haar measure on $\Qp^{\times}$ is given by
\begin{equation*}
  \int_{\Qp^{\times}}f(y)\der m_{\Qp^{\times}}(y)=\int_{\Qp^{\times}}\frac{f(y)}{\lvert y\rvert_{p}}\der m_{\Qp}(y)\quad(f\in\compact(\Qp^{\times})).
\end{equation*}
We let $B_{p}=\Bbf(\Qp)$ and note that $B_{p}=A_{p}U_{p}$, where $A_{p}$ denotes the image of the diagonal subgroup of $\GL_{d+1}(\Qp)$ and $U_{p}$ is the (injective) image of the subgroup of upper triangular unipotent matrices. Note that $U_{p}$ is homeomorphic to $\Qp^{d^{\prime}}$ with $d^{\prime}=\frac{1}{2}d(d+1)$ and that the push forward $m_{U_{p}}$ of the Haar measure on $\Qp^{d^{\prime}}$ to $U_{p}$ defines a Haar measure on $U_{p}$. Therefore one finds that for a matrix $b=au\in B_{p}$ with $u\in U_{p}$ and $a=\mathrm{diag}(a_{1},\ldots,a_{d},1)$ we have
\begin{align*}
  \der m_{B_{p}}^{\mathrm{left}}(b)&\propto\der m_{U_{p}}(u)\prod_{i=1}^{d}\frac{\der m_{\Qp}(a_{i})}{\lvert a_{i}\rvert_{p}^{d+2-i}},&
  \der m_{B_{p}}^{\mathrm{right}}(b)&\propto\der m_{U_{p}}(u)\prod_{i=1}^{d}\frac{\der m_{\Qp}(a_{i})}{\lvert a_{i}\rvert^{i}}.
\end{align*}

As the modular function satisfies
$
  \der m_{B_{p}}^{\mathrm{right}}(b)\propto\delta_{\Bbf}(b)\der m_{B_{p}}^{\mathrm{left}}(b)
$,
we find
\begin{equation*}
  \delta_{\Bbf}(b)=\prod_{i=1}^{d}\lvert a_{i}\rvert_{p}^{d+2-2i}.
\end{equation*}
Equation~\eqref{eq:determineshellexponent} now follows by plugging in $a_{\mathbf{n}}$ for $b$.

\section{Integrability of Matrix Coefficients}\label{sec:integrability-eta}
In this section, we show that $\eta_{S}\in \mrm{L}^{v(d)+\varepsilon}(\Gbf_{S})$. Given $v\in S$, let $\eta_{v}:\Gbf(\bQ_{v})\to(0,\infty)$ denote
\begin{equation*}
  \eta_{v}(g_{v})=\lVert g_{v}\rVert^{-1}_{v}
  \quad(g_{v}\in\Gbf(\bQ_{v})).
\end{equation*}
As $\eta_{\Gbf}$ is the product of the various $\eta_{v}$, $v\in S$, and as the Haar measure on $\Gbf_{S}$ is the product measure, it suffices to show that $\eta_{v}\in \mrm{L}^{v(d)+\varepsilon}(\Gbf_{v})$ for all places $v$ of $\bQ$.
\subsection{Integrability in the Archimedean place}\label{sec:integrability-of-eta-infty}
We recall that the Haar measure on $\Gbf_{\infty}$ is explicitly given by the formula
\begin{equation*}
  \int_{\Gbf_{\infty}}f(g)\der g=\int_{K}\int_{A_{\infty}^{+}}\int_{K}f(k_{1}ak_{2})\prod_{\alpha\in\Sigma_{\infty}^{+}}(\sinh\alpha (\log a))^{m_{\alpha}}\der k_{1}\der a\der k_{2}\quad(f\in \mrm{C}_{c}(\Gbf_{\infty})),
\end{equation*}
where $m_{\alpha}$ denotes the multiplicity of the positive root $\alpha\in\Sigma_{\infty}^{+}$ and where the Haar measure on $A$ is the push-forward of the Lebesgue measure on the Cartan subalgebra under the exponential map; cf.~\cite[Prop.~5.28]{Knapp1986}. Note that in our situation $m_{\alpha}=1$ for all $\alpha\in\Sigma_{\infty}^{+}$. By definition of the hyperbolic sine we have
\begin{equation*}
  \prod_{\alpha\in\Sigma_{\infty}^{+}}\sinh\alpha (\log a)\leq e^{\beta(\log a)}\quad(a\in A_{\infty}^{+}),
\end{equation*}
where $\beta=\sum_{\alpha\in\Sigma_{\infty}^{+}}\alpha$. We recall that for any $\alpha\in\Sigma_{\infty}^{+}$ there are $1\leq i<j\leq d+1$ such that for all $a=\mathrm{diag}(a_{1},\ldots,a_{d+1})\in A_{\infty}^{+}$, we have $ \alpha(\log a)=\log a_{i}-\log a_{j}$.
Hence, we get
\begin{equation*}
  \beta(\log a)=\sum_{i=1}^{d+1}(d+2-2i)\log a_{i}.
\end{equation*}
As argued in the proof of Lemma~\ref{lem:padicshells} and recalling that we parametrize $A_{\infty}^{+}$ such that $a_{d+1}=1$, we thus find that $\beta(\log a)\leq v(d)\log a_{1}$.
Using the lower bound in~\eqref{eq:squeeze}, it follows that $ \eta_{\infty}(k_{1}ak_{2})\leq a_{1}^{-1}$ and thus
\begin{align*}
  \int_{\Gbf_{\infty}}\eta_{\infty}(g)^{q}\der g&=\int_{K}\int_{A_{\infty}^{+}}\int_{K}\eta_{\infty}(k_{1}ak_{2})^{q}\prod_{\alpha\in\Sigma_{\infty}^{+}}\sinh\alpha(\log a)\der k_{1}\der a\der k_{2}
  \ll_{\vartheta}\int_{A_{\infty}^{+}}a_{1}^{-q+v(d)}\der a.
\end{align*}
By definition of $A_{\infty}^{+}$ and recalling that the Haar measure on the connected component of the diagonal subgroup of $\Gbf_{\infty}$ is the push-forward of the Lebesgue measure on the Lie algebra
\begin{equation*}
  \mathfrak{a}\cong\{(t_{1},\ldots,t_{d},t_{d+1}):t_{1}+\cdots+t_{d+1}=0\}
\end{equation*}
under the exponential map, we have for any $s>0$ that
\begin{equation*}
\int_{A_{\infty}^{+}}a_{1}^{-s}\der a=\int_{0}^{\infty}\int_{t_{d}}^{\infty}\cdots\int_{t_{2}}^{\infty}e^{-st_{1}}\der t_{1}\cdots\der t_{d}=\frac{1}{s^{d}}.
\end{equation*}
Hence, whenever $q>v(d)$, then 
\begin{equation*}
  \int_{\Gbf_{\infty}}\eta_{\infty}(g)^{q}\der g<\infty.
\end{equation*}

\subsection{Integrability in the finite places}
In what follows, we note that
\begin{equation*}
  \Gbf(\Qp)=\bigsqcup_{n\in\bN_{0}}S_{n}, \qquad S_{n}=\{g\in\Gbf(\Qp):\lVert g\rVert_{p}=p^{n}\}.
\end{equation*}
Moreover, Lemma~\ref{lem:padicshells} yields that for all $n\in\bN_{0}$ we have
\begin{align*}
  \int_{\Gbf(\Qp)}\eta_{p}(g)^{q}\der g=\sum_{n\in\bN_{0}}\int_{S_{n}}\eta_{p}(g)^{q}\der g \ll_{d}
  \sum_{n\in\bN_{0}} p^{-nq}\vol{S_{n}}\ll_{\varepsilon,d}
  \sum_{n\in\bN_{0}}p^{n(-q+v(d)+\varepsilon)}.
\end{align*}
It follows that $\eta_{p}\in \mrm{L}^{v(d)+\varepsilon}(\Gbf(\bQ_{p}))$
whenever $q>v(d)$.
\subsection{Integrability of matrix coefficients}\label{sec:integrability-xi}
This argument was mentioned in the proof of Proposition~\ref{prop:uniformspectralgap} and it follows readily from Section~\ref{sec:integrability-of-eta-infty}. Recall from Proposition~\ref{prop:boundHarishChandra} that
\begin{equation*}
  \xi_{\infty}(g)^{2/(1-2\varepsilon)}\ll_{\varepsilon,d}\eta_{\infty}(g)\quad(g\in\Gbf_{\infty}).
\end{equation*}
Hence for all $0<\varepsilon<\frac{1}{2}$ and for $q>\frac{2v(d)}{1-2\varepsilon}$ we find
\begin{equation*}
  \int_{\Gbf_{\infty}}\xi_{\infty}(g)^{q}\der g\ll_{\varepsilon,d}\int_{\Gbf_{\infty}}\eta_{\infty}(g)^{(\frac{1}{2}-\varepsilon)q}\der g<\infty
\end{equation*}
as $(\frac{1}{2}-\varepsilon)q>v(d)$ and $\eta_{\infty}\in \mrm{L}^{v(d)+\varepsilon}(\Gbf_{\infty})$.
\begin{remark}
  The same argument works for finite places.
\end{remark}


\def\cprime{$'$} \providecommand{\bysame}{\leavevmode\hbox
  to3em{\hrulefill}\thinspace}
\providecommand{\MR}{\relax\ifhmode\unskip\space\fi MR }
\providecommand{\MRhref}[2]{%
  \href{http://www.ams.org/mathscinet-getitem?mr=#1}{#2} }
\providecommand{\href}[2]{#2}

\bibliographystyle{amsalpha}

\end{document}